\newcommand{\Aa}{{\mathcal A}}
\newcommand{\Mor}{{\rm Mor}}
\newcommand{\Obj}{{\rm Obj}}
\newcommand{\ul}{\underline}
\renewcommand{\Hat}{\widehat}
\newcommand{\HBb}{\Hat{\Bb}}
\newcommand{\PSL}{{\rm PSL}}
\newcommand{\pbar}{{\ov {\p}_J}}
\newcommand{\Cc}{{\mathcal C}}
\newcommand{\Kk}{{\mathcal K}}
\newcommand{\Jj}{{\mathcal J}}
\newcommand{\im}{{\rm im\,}}
\newcommand{\less}{{\smallsetminus}}
\newcommand{\bla}{{\bigl\langle}}
\newcommand{\bra}{{\bigl\rangle}}
\newcommand{\supp}{{\rm supp\,}}
\newcommand{\p}{{\partial}}
\newcommand{\al}{{\alpha}}
\newcommand{\be}{{\beta}}
\newcommand{\om}{{\omega}}
\newcommand{\eps}{{\varepsilon}}
\newcommand{\de}{{\delta}}
\newcommand{\De}{{\Delta}}
\newcommand{\ga}{{\gamma}}
\newcommand{\Ga}{{\Gamma}}
\newcommand{\io}{{\iota}}
\newcommand{\ka}{{\kappa}}
\newcommand{\la}{{\lambda}}
\newcommand{\La}{{\Lambda}}
\newcommand{\si}{{\sigma}}
\newcommand{\Si}{{\Sigma}}
\newcommand{\Uu}{{\mathcal U}}
\newcommand{\Bb}{{\mathcal B}}
\newcommand{\Ww}{{\mathcal W}}
\newcommand{\Mm}{{\mathcal M}}
\newcommand{\Tt}{{\mathcal T}}
\newcommand{\oMm}{{\overline {\Mm}}}
\newcommand{\ov}{\overline}
\newcommand{\id}{{\rm id}}
\newcommand{\rd}{{\rm d}}
\newcommand{\rT}{{\rm T}}
\newcommand{\Exp}{{\rm Exp}}
\renewcommand{\Tilde}{\widetilde}
\newcommand{\coker}{{\rm coker\,}}
\newcommand{\Ee}{{\mathcal E}}
\newcommand{\Ii}{{\mathcal I}}
\newcommand{\Vv}{{\mathcal V}}
\newcommand{\N}{{\mathbb N}}
\newcommand{\Q}{{\mathbb Q}}
\newcommand{\R}{{\mathbb R}}
\newcommand{\C}{{\mathbb C}}
\newcommand{\Z}{{\mathbb Z}}
\newcommand{\Hom}{{\rm Hom}}
\newcommand{\Nn}{{\mathcal N}}
\newcommand{\Pp}{{\mathcal P}}
\newcommand{\ev}{{\rm ev}}
\newcommand{\bB}{{\bf B}}
\newcommand{\bC}{{\bf C}}
\newcommand{\bG}{{\bf G}}
\newcommand{\bE}{{\bf E}}
\newcommand{\bK}{{\bf K}}
\newcommand{\bX}{{\bf X}}
\newcommand{\bz}{{\bf z}}
\newcommand{\bZ}{{\bf Z}}
\newtheorem{theorem}{Theorem}[subsection]
\newtheorem{thm}[theorem]{Theorem}
\newtheorem{lemma}[theorem]{Lemma}
\newtheorem{prop}[theorem]{Proposition}
\newtheorem{definition}[theorem]{Definition}
\newtheorem{defn}[theorem]{Definition}
\newtheorem{example}[theorem]{Example}
\newtheorem{remark}[theorem]{Remark}
\newtheorem{rmk}[theorem]{Remark}
\numberwithin{figure}{subsection}
\numberwithin{equation}{subsection}
\newcommand{\MS}{{\medskip}}
\newcommand{\NI}{{\noindent}}
\newcommand{\ti}{\tilde}
\newcommand{\Ti}{\widetilde}
\newcommand{\gr}{\operatorname{graph}}
\newcommand{\pr}{{\rm pr}}
\newcommand{\lm}{\Lambda^{\rm max}\,}
\newenvironment{enumlist}
   { \begin{list} {}
         {  \setlength{\itemsep}{.5ex} \setlength{\leftmargin}{0ex} } }
   { \end{list} }
   \newcounter{qcounter}
\newenvironment{itemlist}
   { \begin{list} {$\bullet$}
         {  \setlength{\itemsep}{.5ex} \setlength{\leftmargin}{2.5ex} } }
   { \end{list} }
\newcommand*{\longhookleftarrow}{\ensuremath{\leftarrow\joinrel\relbar\joinrel\rhook}}
\newcommand*{\longhookrightarrow}{\ensuremath{\lhook\joinrel\relbar\joinrel\rightarrow}}
\newcommand{\leftsub}[2]{{\vphantom{#2}}_{#1}{#2}}
\newcommand\quotient[2]{
        \mathchoice
            {
                \text{\raise1ex\hbox{$#1$}\Big/\lower1ex\hbox{$#2$}}%
            }
            {
                #1\,/\,#2
            }
            {
                #1\,/\,#2
            }
            {
                #1\,/\,#2
            }
    }
\newcommand\quot[2]{
                \text{\raise1ex\hbox{$#1$}/\lower1ex\hbox{$\scriptstyle#2$}}
  }
\newcommand\quo[2]{
                \text{\raise1ex\hbox{$#1\!\!$}/\lower1ex\hbox{$\!\scriptstyle#2$}}
  }
\newcommand\qu[2]{
                \text{\raise.8ex\hbox{$\scriptstyle#1\!$}/\lower.8ex\hbox{$\!\scriptstyle#2$}}
  }
\newcommand\qq[2]{
                \text{\raise.8ex\hbox{$#1\!$}/\lower.8ex\hbox{$#2$}}
}
 \title[Smooth Kuranishi atlases with trivial isotropy]{Smooth Kuranishi atlases with trivial isotropy}
 \author{Dusa McDuff}
\author{Katrin Wehrheim}\thanks{partially supported by NSF grants  
DMS 0905191 and DMS 0844188}
\begin{document}
\maketitle

\tableofcontents
\section{Introduction}

Kuranishi structures were introduced to symplectic topology by Fukaya and Ono \cite{FO},
and recently refined by Joyce \cite{J}, in order to extract homological data from compactified moduli spaces of holomorphic maps in cases where geometric regularization approaches such as
perturbations of the almost complex structure do not yield a smooth structure on the moduli space.
These geometric methods generally cannot handle curves that are nowhere injective.  The first instance in which it was important to overcome these limitations
was the case of nowhere injective spheres, which are then multiply covered and
have nontrivial isotropy.\footnote
{This is not the case for discs. For example a disc with boundary on the equator can wrap two and a half times around the sphere.
This holomorphic curve, called the lantern, has trivial isotropy. }  Because of this, the development of virtual transversality techniques in \cite{FO}, and the related work by Li and Tian \cite{LT}, was focussed on dealing with finite isotropy groups, while some topological and analytic issues were 
not resolved. 

The goal of this paper is to explain these issues and provide the beginnings of a framework for resolving them.
To that end we focus on the most fundamental issues, which are already present in applying virtual transversality techniques to moduli spaces of holomorphic spheres without nodes or nontrivial isotropy.
We give a general survey of regularization techniques in symplectic topology in Section~\ref{s:fluff}, pointing to some general analytic issues in Sections~\ref{ss:geom} --\ref{ss:kur}, and discussing the specific 
algebraic and
topological issues of the Kuranishi approach in Sections~\ref{ss:alg} and \ref{ss:top}.
In the main body of the paper we provide an abstract framework of Kuranishi atlases which separates the analytic and topological issues as outlined in the following.

The main analytic issue in each regularization approach is in the construction of transition maps for a given moduli space, where one has to deal with the lack of differentiability of the reparametrization action on infinite dimensional function spaces discussed in Section~\ref{s:diff}.
When building a Kuranishi atlas on a moduli space, this issue also appears in a sum construction for basic charts on overlaps, and has to be dealt with separately for each specific moduli space.
We explain the construction of basic Kuranishi charts, their sums, and transition maps in the case of spherical Gromov--Witten moduli spaces in Section~\ref{s:construct}, outlining the proof of a more precise version of the following in Proposition~\ref{prop:A1} and Theorem~\ref{thm:A2}.
\MS

\NI {\bf Theorem A.}\,\,{\it  Let $(M,\om,J)$ be a symplectic manifold with tame almost complex structure, and let $\Mm(A,J)$ be the space of simple $J$-holomorphic maps $S^2\to M$ in class $A$ with one marked point, modulo reparametrization. 
If $\Mm(A,J)$ is compact (e.g.\ if $A$ is 
\lq\lq $\omega$-minimal"), 
then there exists an open cover
$\Mm(A,J)= \bigcup_{i=1,\ldots,N} F_i$ by ``footprints'' of basic Kuranishi charts $(\bK_i)_{i=1,\ldots,N}$.

Moreover, for any tuple $(\bK_i)_{i\in I}$ of basic charts whose obstruction spaces $E_i$ satisfy a ``transversality condition'' we can construct transition data as follows.
There exists a ``sum chart'' $\bK_I$ with obstruction space  $\prod_{i\in I}E_i$ and footprint  $F_{I}=\bigcap_{i\in I} F_i \subset \Mm(A,J)$, such that a restriction of each basic chart $\bK_i|_{F_I}$ includes into $\bK_I$ by a coordinate change.}\MS

The notions of a Kuranishi chart, a restriction, and a coordinate change are defined in detail in Section~\ref{s:chart}. Throughout, we simplify the discussion by assuming that all isotropy groups are trivial.
In that special case our definitions largely follow \cite{FO,J}, though instead of working with germs, we define a Kuranishi atlas as a covering family of basic charts together with transition data satisfying a cocycle condition involving an inclusion requirement on the domains of the coordinate changes.
In addition to Theorem~A, this requires the construction of further coordinate changes satisfying the cocycle condition.
At this point one could already use ideas of \cite{LiuT} to reduce the cover and construct compatible transverse perturbations of the sections in each Kuranishi chart.
However, there is no guarantee that the perturbed zero set modulo transition maps is a closed manifold, in particular Hausdorff.
This is an essential requirement in the construction of a {\it virtual moduli cycle}
(VMC for short), which, as its name indicates, is a cycle in an appropriate homology theory
 representing  the {\it virtual fundamental class}  (or VFC)  $[X]^{vir}_\Kk$ of $X$.
We remedy this situation in the main technical part of this paper by constructing a {\it virtual neighbourhood} of the moduli space, with paracompact Hausdorff topology, in which the perturbed zero set modulo transition maps is a closed subset.
The construction of this virtual neighbourhood requires a tameness property of the domains of the coordinate changes, in particular a strong cocycle condition which requires equality of domains.
However, the coordinate changes arising from sum constructions as in Theorem~A can usually only be made to satisfy a weak cocycle condition on the overlap of domains.
On the other hand, these constructions naturally provide an additivity property for the obstruction spaces. In Sections~\ref{ss:tame} and \ref{ss:Kcobord} we develop these notions,
proving in Propositions~\ref{prop:proper} and~\ref{prop:cobord2}
that a suitable shrinking of such an additive weak
Kuranishi atlas induces a tame Kuranishi atlas that is well defined up to cobordism.
We can then ``reduce" the Kuranishi atlas in order to facilitate the construction of sections.
As a result, in Section~\ref{s:VMC} we obtain a precise version of the following.\footnote{To see why we use rational, rather than integer, \v{C}ech homology see Remark~\ref{rmk:Cech}.
}

\MS

\NI {\bf Theorem B.}\,\,{\it
Let $\Kk$ be an oriented,  $d$-dimensional, 
weak, additive Kuranishi atlas with trivial isotropy groups on a compact metrizable space $X$. Then $\Kk$ determines a cobordism class of smooth, oriented, compact manifolds, and an element 
$[X]^{vir}_\Kk$ in the \v{C}ech homology group $\check{H}_d(X;\Q)$.
Both depend only on the cobordism class of $\Kk$.}
\MS

Making our constructions applicable to general holomorphic curve moduli spaces will
require two generalizations of Theorem~B, which we are working on in \cite{MW:ku2}.
Firstly, a groupoid version of the theory (extending ideas from \cite{FO,J}), will allow for nontrivial isotropy groups. Secondly, allowing the structure maps in the Kuranishi category to be stratified smooth rather than smooth permits the construction of Kuranishi charts using standard gluing analysis, as established e.g.\ in \cite{MS}. With this framework in place, we hope to extend Theorem~A to giving a completely detailed construction of a Kuranishi atlas for spherical Gromov--Witten invariants in \cite{MW:gw}, 
by combining the ideas of finite dimensional reduction in \cite{FO} with the explicit obstruction bundles in \cite{LT} and the analytic results in \cite{MS}.
\MS

\noindent
{\bf Organization:} The following remarks together with Sections~\ref{s:fluff} and \ref{s:diff} provide a survey of regularization techniques in symplectic topology and their pitfalls. Section~\ref{s:construct} continues this discussion for the specific example of Kuranishi atlases for genus zero Gromov--Witten moduli spaces, and also outlines an approach to proving Theorem~A.
All of these sections are essentially self-contained and can be read in any order.
The main technical parts of the paper, Sections~\ref{s:chart}, \ref{s:Ks}, and  \ref{s:VMC} , are independent of the previous sections, but strongly build on each other towards a proof of Theorem~B. 
Much of the work here concerns the topological underpinnings of the theory.
An introduction and outline for these technical parts can be found in Sections~\ref{ss:kur} and \ref{ss:top}.

\smallskip

Since this project revisits fifteen year old, much used theories, let us explain some motivations, relations to other work, and give an outlook for further development. 

\smallskip
\noindent
{\bf Background:}
In a 2009 talk at MSRI \cite{w:msritalk}, KW posed some basic questions on the
currently available abstract transversality approaches.
DM, who had been uneasily aware for some time of analytic problems with the approach  of Liu--Tian~\cite{LiuT}, the basis of her expository article \cite{Mcv}, decided that now was the time to clarify the constructions once and for all.
The issue here is the lack of differentiability of the reparametrization action, which enters if constructions are transferred between different infinite dimensional local slices of the action, or if a differentiable Banach manifold structure on a quotient space of maps by reparametrization is assumed. The same issue is present for Deligne--Mumford type spaces of domains and maps, and is discussed in detail in Section~\ref{s:diff}.
In studying the construction of a virtual moduli cycle for Gromov--Witten invariants via a Kuranishi atlas we soon encountered the same differentiability issue.

Extrapolating remarks by Aleksey Zinger and Kenji Fukaya, we next realized that the geometric construction of obstruction spaces in \cite{LT} could be used to construct smoothly compatible Kuranishi charts.
However, in making these constructions explicit, we needed to resolve ambiguities in the definition of a Kuranishi structure, concerning the precise meaning of germ of coordinate changes and the cocycle condition, discussed in Section~\ref{ss:alg}.
More generally, we found it difficult to find a definition of Kuranishi structure that on the one hand clearly has a virtual fundamental class (a generalization of Theorem B), and on the other hand arises from fairly simple analytic techniques for holomorphic curves (a generalization of Theorem A).
One issue that we will touch on only briefly in Section~\ref{ss:approach} is the lack of smoothness of the standard gluing constructions, which affects the smoothness of the Kuranishi charts near nodal or broken curves.
A more fundamental topological issue is the necessity to ensure that the zero set of a transverse perturbation is not only smooth, but also remains compact as well as Hausdorff, and does not acquire boundary in the regularization.
These properties, as far as we are aware nowhere previously addressed in the literature, are crucial for obtaining a global triangulation and thus a fundamental homology class.
Another topological issue is the necessity of refining the cover by Kuranishi charts to a ``good cover'', in which the unidirectional transition maps allow for an iterative construction of perturbations.
(These topological issues will be discussed in Section~\ref{ss:top}.)
So we decided to start from the very basics and give a definition of Kuranishi atlas and a completely explicit construction of a VFC in the simplest nontrivial case.

\smallskip
\noindent
{\bf Relation to other Kuranishi notions:}
As this work was nearing completion, we alerted Fukaya et al and Joyce to some of the issues we had uncovered, and the ensuing discussion eventually resulted in \cite{FOOO12} and some parts of \cite{JD}.
To clarify the relation between these approaches and ours, we now use the language of atlases, which in fact is more descriptive.
While the previous definitions of Kuranishi structures in \cite{FO,J} are algebraically inconsistent as explained in Section~\ref{ss:alg}, our approach is compatible with the notions of \cite{FOOO,FOOO12}.  Indeed we show in Remark~\ref{rmk:otherK} how to construct a Kuranishi structure in this sense from a weak Kuranishi atlas.  
Similarly, when there is no isotropy there is a relation between the ideas behind a ``good coordinate system" and our notion of a reduction. However, as will be clear when \cite{MW:ku2} is completed, the two approaches differ significantly when there is nontrivial isotropy.  
One can make an analogy with the development of the theory of orbifolds:  The approach of \cite{FOOO12} is akin to Satake's definition of a $V$-manifold, while our definitions are much closer to  the idea of describing an orbifold  as the realization of an \'etale proper groupoid.
  
In our view, weak atlases in the sense of Definition~\ref{def:Kwk} are the natural outcomes of constructions of compatible finite dimensional reductions, and we see a clear abstract path from an atlas to a VMC.   Constructing a weak atlas involves checking only a finite number of consistency conditions for the coordinate changes, while uncountably many such conditions must be checked if one tries to construct a Kuranishi structure directly.  
We will further compare our approach to that of Fukaya et al in Remark~\ref{rmk:otherK}.

Another approach to constructing virtual fundamental classes from local finite dimensional reductions is proposed in \cite{JD} using so-called ``d-orbifolds'' which have more algebraic properties than Kuranishi structures. We cannot comment on the details of this approach apart from noting that it does not offer a direct approach to regularizing moduli spaces, but instead seems to require special types of Kuranishi structures or polyfold Fredholm sections as a starting point.\footnote{
\cite[Thm.15.6]{JD} claims ``virtual class maps'' for d-orbifolds under an additional ``(semi)effectiveness'' assumption, which in our understanding could only be obtained from the constructions of \cite[Thm.16.1]{JD} under the assumption of obstruction spaces invariant under the isotropy action. However, this is almost a case of equivariant transversality in which e.g.\ the polyfold setup should allow for a global finite dimensional reduction as smooth section of an orbifold bundle along the lines of \cite{DZ}. 
}

\smallskip
\noindent
{\bf Outlook:}
Based on our results in the present paper, we have a good idea of how to generalize and apply these constructions to all genus zero Gromov--Witten moduli spaces.
We believe that Kuranishi atlases for other moduli spaces of closed holomorphic curves
can be constructed analogously, though each case would require a geometric construction of
local slices as well as
obstruction bundles specific to the setup, and careful gluing analysis.
An alternative route is provided by the construction of Kuranishi structures from a proper Fredholm section in a polyfold bundle, as announced in \cite{DZ}.
In fact, this approach would induce a smooth, rather than stratified smooth Kuranishi structure.

The case of moduli spaces with boundary given as the fiber product of other moduli spaces, as required for the construction of $A_\infty$-structures, is beyond the scope of our project.
While finishing this manuscript, we learned that Jake Solomon \cite{Sol} has been developing an approach to dealing with boundaries and making ``pull-push'' construction as required for chain level theories.
Such a framework will need to generalize our notion of Kuranishi cobordism on $X\times [0,1]$ to underlying moduli spaces with a much less natural ``boundary and corner stratification'', in particular facing a further complication in the already highly nontrivial construction of relative perturbations in Proposition~\ref{prop:ext2}.
Moreover, it has to solve the additional task of constructing regularizations that respect the fiber product structure on the boundary. This issue, also known as  constructing coherent perturbations, has to be addressed separately in each specific geometric setting, and requires a hierarchy of moduli spaces which permits one to construct the regularizations iteratively.
In the construction of the Floer differential on a finitely generated complex, such an iteration can be performed using an energy filtration thanks to the algebraically simple gluing operation.
In more `nonlinear' algebraic
settings, such as $A_\infty$ structures, one needs to artificially deform the gluing operation, e.g.\ when dealing with homotopies of data \cite{SEID}.
It seems to us that in such situations, especially when one needs to understand the moduli space's boundary and corners, it is more efficient to use the polyfold theory of Hofer-Wysocki-Zehnder [HWZ1--4] since this gives a cleaner structure.
However, in the Gromov--Witten setting, the less technologically sophisticated approach via Kuranishi atlases still has value for applications, specially in very geometric situations such as \cite{McT}, or in situations in which the symplectic situation is very close to that in complex algebraic geometry
such as in \cite{Z2} or \cite{Mcu}.

%

Another fundamental issue surfaced when we tried to understand how Floer's proof of the Arnold conjecture is extended to general symplectic manifolds using abstract regularization techniques. In the language of Kuranishi structures, it argues that a Kuranishi space $X$ of virtual dimension $0$, on which $S^1$ acts such that the fixed points $F\subset X$ are isolated solutions, allows for a perturbation whose
zero set is given by the fixed points. At that abstract level, the argument in both \cite{FO} and \cite{LiuT} is that
$(X\less F)/S^1$ can be equipped with a Kuranishi structure of virtual dimension $-1$, which induces a trivial fundamental cycle for $X\less F$. 
However, they give no details of this construction.
It seems that any such pullback construction would require the $S^1$-action to extend to an ambient space $X\less F\subset \Bb$ such that the Kuranishi structure for $(X\less F)/S^1$ has domains in 
the quotient space $\Bb/S^1$.
While polyfold theory offers a direct approach to this question of equivariant transversality, we believe that any VMC approach will require, as a starting point, the construction of a virtual neighbourhood as introduced in the present paper. Such a VMC approach -- which however we cannot comment on -- is now announced in \cite{FOOO12}.

\smallskip
\noindent
{\bf Acknowledgements:}
We would like to thank
Mohammed Abouzaid,
Kenji Fukaya,
Tom Mrowka,
Kaoru Ono,
Yongbin Ruan,
Dietmar Salamon,
Bernd Siebert,
Cliff Taubes,
Gang Tian,
and
Aleksey Zinger
for encouragement and enlightening discussions about this project.
We moreover thank MSRI, IAS, and BIRS for hospitality.

\section{Regularizations of holomorphic curve moduli spaces}  \label{s:fluff}

One of the central technical problems in the theory of holomorphic curves, which provides many of the modern tools in symplectic topology, is to construct algebraic structures by extracting homological information from moduli spaces of holomorphic curves in general compact symplectic manifolds $(M,\om)$.
We will refer to this technique as {\it regularization} and note that it requires two distinct components. On the one hand, some perturbation technique is used to achieve {\it transversality}, which gives the moduli space a smooth structure that induces a count or chain. On the other hand some cobordism technique is used to achieve {\it invariance}, i.e.\ independence of the resulting algebraic structure from the choices involved in achieving transversality.

The aim of this section is to give an overview of the different regularization approaches in the example of genus zero Gromov--Witten invariants $\bla \al_1,\ldots,\al_k\bra_{A} \in \Q$.
These are defined as a generalized count of $J$-holomorphic genus $0$ curves in class $A\in H_2(M;\Z)$ that meet $k$ representing cycles of the homology classes $\al_i\in H_*(M)$.
This number should be independent of the choice of $J$ in the contractible space of $\om$-compatible almost complex structures, and of the cycles representing $\alpha_i$.
For complex structures $J$ one can work in the algebraic setting, in which the curves are cut out by holomorphic functions on $M$, but general symplectic manifolds do not support an integrable $J$.  For non-integrable $J$, the approach introduced by Gromov \cite{GRO} is to view the (pseudo-)holomorphic curves as maps to $M$ satisfying the Cauchy--Riemann PDE, modulo reparametrizations by automorphisms of the complex domain.

To construct the Gromov--Witten moduli spaces of holomorphic curves, one starts out with the typically noncompact quotient space
$$
\Mm_k(A,J) := \bigl\{ \bigl( f: S^2 \to M, \bz\in (S^2)^k
\less \Delta \bigr) \,\big|\, f_*[S^2]=A , \pbar f = 0 \bigr\} / \PSL(2,\C)
$$
of equivalence classes of tuples $(f,\bz)$, where $f$ is a $J$-holomorphic map, the marked points
$\bz= (z_1,\ldots z_k)$ are pairwise disjoint, and the equivalence relation is given by the reparametrization action $\ga\cdot(f,\bz)=(f\circ\ga,\ga^{-1}(\bz))$
of the M\"obius group $\PSL(2,\C)$.
This space is contained (but not necessarily dense) in the compact moduli space $\oMm_{k}(A,J)$ formed by the equivalence classes of $J$-holomorphic genus $0$ stable maps $f:\Si \to M$ in class $A$ with $k$ pairwise disjoint marked points.  There is a natural evaluation map
\begin{equation} \label{eq:ev}
\ev: \oMm_{k}(A,J)\to M^k, \quad
[\Si,f,(z_1,\ldots,z_k)]\mapsto \bigl(f(z_1),\ldots,f(z_k)\bigr),
\end{equation}
and one expects the Gromov--Witten invariant
$$
\bla \al_1,\ldots,\al_k\bra_{A}:=\ev_*[\oMm_{k}(A,J)]\cap (\al_1\times\ldots\times \al_k)
$$
to be defined as intersection number of a homology class $\ev_*[\oMm_{k}(A,J)]\in H_*(M;\Q)$ with the class $\al_1\times\ldots\times \al_k$.
The construction of this homology class requires a {\it regularization} of $\oMm_{k}(A,J)$.
In the following Sections~\ref{ss:geom} - \ref{ss:kur} we give a brief overview of the approaches using geometric means or an abstract polyfold setup, and review the fundamental ideas behind Kuranishi atlases.
Sections ~\ref{ss:alg} and~\ref{ss:top} then discuss the algebraic and topological issues in constructing a virtual fundamental class from a Kuranishi atlas.

\subsection{Geometric regularization} \hspace{1mm}\\ \vspace{-3mm}  \label{ss:geom}

For some special classes of symplectic manifolds, the regularization of holomorphic curve moduli spaces can be achieved by a choice of the almost complex structure $J$, or more generally a perturbation of the Cauchy--Riemann equation $\pbar f =0$ that preserves the symmetry under reparametrizations, and whose Hausdorff compactification is given by nodal solutions.
Note that these properties are generally not preserved by perturbations of a nonlinear Fredholm operator such as $\pbar$, so this approach requires a class of perturbations
that preserves the geometric properties of $\pbar$.

The construction of Gromov--Witten invariants from a regularization of $\oMm_{k}(A,J)$ most easily fits into this approach if $A$ is a  homology  class on which $\om(A)>0$ is minimal, since then $A$
cannot be represented by a multiply covered or nodal holomorphic sphere.
For short, we call such $A$ $\om$-{\it minimal.}
In this case $\Mm_{k}(A,J')$ is smooth for generic $J'$, and compact if $k\le 3$.
More generally, this approach applies to all spherical Gromov--Witten invariants in semipositive symplectic manifolds, since in this case it is possible to compactify the image $\ev(\Mm_{k}(A,J'))$ by adding codimension-$2$ strata.
Full details for this construction can be found in \cite{MS}.
The most general form of this {\bf geometric regularization approach} proceeds in the following steps.

\begin{enumlist}
\item{\bf Fredholm setup:}
Write the (not necessarily compact) moduli space $\Mm = \si^{-1}(0)/{\rm Aut}$ as the
quotient, by an appropriate reparametrization group ${\rm Aut}$, of an equivariant smooth Fredholm section $\si:\Hat\Bb\to\Hat\Ee$ of a Banach vector bundle $\Hat\Ee\to\Hat\Bb$.
For example, $\Mm=\Mm_k(A,J)$ is cut out from $\Hat\Bb=W^{m,p}(S^2,M)$ by the Cauchy--Riemann operator $\si=\pbar$, which is equivariant with respect to ${\rm Aut}=\PSL(2,\C)$.

\item {\bf Geometric perturbations:}
Find a Banach manifold $\Pp\subset\Ga^{\rm Aut}(\Hat\Ee)$ of {\it equivariant }
sections for which the perturbed sections $\si+p$ have the same compactness properties as $\si$.
For example, the contractible set $\Jj^\ell$ of compatible $\Cc^\ell$-smooth almost complex structures for $\ell\geq m$ provides equivariant sections $p=\overline{\partial}_{J'}-\pbar$ for all $J'\in\Jj^\ell$. Moreover, $J'$-holomorphic curves also have a Gromov compactification $\oMm_k(A,J')$.

\item{\bf Sard--Smale:}
Check transversality of the section $(p,f)\to (\si + p)(f)$ to deduce that the universal moduli space
$$
{\textstyle \bigcup_{p\in\Pp}} \; \{p\}\times (\si + p)^{-1}(0) \;\subset\; \Pp\times\Hat\Bb
$$
is a differentiable Banach manifold. (In the example it is $\Cc^\ell$-differentiable.)
Then the regular values of the projection to $\Pp$ (guaranteed by the Sard--Smale theorem for sufficiently high differentiability of the universal moduli space) provide a comeagre subset $\Pp^{\rm reg}\subset\Pp$ for which the perturbed sections $\si_p:=\si+p$ are transverse to the zero section.
For holomorphic curves and perturbations given by $\Jj^\ell$, this transversality holds if all holomorphic maps are {\it somewhere injective}.
For $\om$-minimal  $A$ this weak form of injectivity
is a consequence of unique continuation (cf. \cite[Chapter~2]{MS}),
but for general Gromov--Witten moduli spaces this step only
applies to the subset $\Mm_k^*(A,J)$ of simple (i.e.\ not multiply covered) curves.
\item {\bf Quotient:}
For $p\in \Pp^{\rm reg}$, the perturbed zero set $\si_p^{-1}(0)\subset \Hat\Bb$  is
a smooth manifold by the implicit function theorem.
If, moreover, the action of ${\rm Aut}$ on $\si_p^{-1}(0)$ is smooth, free, and properly discontinuous, then the moduli space $\Mm^p := \si_p^{-1}(0) / {\rm Aut}$ is a smooth manifold.
For holomorphic curves, the smoothness of the action can be achieved if all solutions of $\overline{\partial}_{J'}f=0$ are smooth. For that purpose one can use e.g.\ the Taubes' trick to find regular perturbations given by smooth $J'$.

\item {\bf Compactification:}
For Gromov--Witten moduli spaces with $A$ $\om$-minimal and $k\le 3$, the previous steps already give $\Mm_k(A,J')$ the structure of a compact smooth manifold.
Thus the Gromov--Witten invariants can be defined using its fundamental class $[\Mm_{k}(A,J')]$.
In the semipositive case, the previous steps give $\Mm_k^*(A,J')$ a smooth structure such that
$\ev:\Mm_k^*(A,J')\to M$ defines a pseudocycle.
Indeed, its image is compact up to $\ev\bigl(\oMm_{k}(A,J) \less  \Mm_{k}^*(A,J)\bigr)$ which is given by the images of nodal and multiply covered maps.
Since the underlying simple curves are regular and of lower Fredholm index, these additional sets are smooth and of codimension at least $2$.
A more general approach for showing this pseudocycle property is to use {\bf gluing techiques}, which also apply to moduli spaces 
whose regularization is expected to have boundary.

Generally, one obtains a compactification $\oMm\,\!^p$ of the perturbed moduli space by constructing gluing maps into $\Mm^p$, whose images cover the complement of a compact set, and which are compatible on their overlaps.
In the Gromov--Witten case the gluing construction provides local homeomorphisms
$$
((1,\infty)\times S^1)\,\!^{N} \times \oMm\,\!^{*N}_k(A,J') \;\hookrightarrow\; \Mm_{k}(A,J')
$$
to neighbourhoods of the strata $\oMm\,\!^{*N}_k(A,J')$ of simple stable curves with $N$ nodes.
The Gromov compactification $\oMm\,\!^p=\oMm_k\,\!^*(A,J')$ is then constructed by completing
each cylinder  to a disc $\bigl((1,\infty)\times S^1\bigr) \cup \{\infty\}$,
where we identify the added set $ \{\infty\}\times  \oMm\,\!^{*N}_k(A,J') $ with the
stratum $ \oMm\,\!^{*N}_k(A,J') \subset \oMm_{k}(A,J)$.
Then  $\oMm_{k}(A,J)$ is compact up to  stable maps containing
multiply covered  components.

\item {\bf Invariance:}
To prove that invariants extracted from the perturbed moduli space $\oMm\,\!^p$ are well defined, one chooses $\Pp$ to be a connected neighbourhood of the zero section and constructs a cobordism between $\oMm\,\!^{p_0}$ and $\oMm\,\!^{p_1}$ for any regular pair $p_0,p_1\in\Pp^{\rm reg}$ by repeating the last five steps for the section $[0,1]\times\Hat\Bb\to\Hat\Ee$, $(t,b)\mapsto (\si+p_t)(b)$ for any smooth path $(p_t)_{t\in[0,1]}\in\Pp$.
In the semipositive Gromov--Witten example, the same argument is applied to find a pseudochain with boundary $\ev(\Mm_{k}^*(A,J_0)) \sqcup \ev(\Mm_{k}^*(A,J_1))$.
\end{enumlist}

\begin{remark} \rm
For Gromov-Witten theory, the evaluation map \eqref{eq:ev} generally does not represent a well defined rational homology class in $M^k$.  Although $\oMm_{k}(A,J)$ is compact and has a well understood formal dimension $d$ (given by the Fredholm index of $\pbar$ minus the dimension of the automorphism group), it need not be a manifold or orbifold of dimension $d$ for any~$J$.  Indeed it may contain subsets of dimension larger than $d$ consisting of stable maps with a component that is a
multiple cover on which $c_1(f^*\rT S^2)$ is negative.
In the case of spherical Gromov--Witten theory on manifolds with $[\om]\in H^2(M;\Q)$, it is possible to avoid this problem by first finding a consistent way to ``stabilize the domain'' to obtain a global description of the moduli space that involves no reparametrizations, and then allowing a richer class of perturbations; cf.\ \cite{CM, Io}. This approach has recently been extended in \cite{IoP}.  
\end{remark}

The main nontrivial steps in the geometric approach, which need to be performed in
careful detail for any given moduli space, are the following.
\begin{itemlist}
\item Each setting requires a different, precise definition of a Banach space of perturbations. Note in particular that spaces of maps with compact support in a given open set are not compact. The proof of transversality of the universal section is very sensitive to the specific geometric setting, and in the case of varying $J$ requires each holomorphic map to have suitable injectivity properties.
\item The gluing analysis is a highly nontrivial Newton iteration scheme and
should have an abstract framework that does not seem to be available at present. In particular, it requires surjective linearized operators, and so only applies after perturbation. Moreover, gluing of noncompact spaces requires uniform quadratic estimates, which do not hold in general.
Finally, injectivity of the gluing map does not follow from the Newton iteration and needs to be checked in each geometric setting.
\end{itemlist}

\subsection{Approaches to abstract regularization} \hspace{1mm}\\ \vspace{-3mm}
\label{ss:approach}

In order to obtain a regularization scheme that is generally applicable to holomorphic curve moduli spaces, it seems to be necessary to work with abstract perturbations $f\mapsto p(f)$ that need not be differential operators. Thus we recast the question more abstractly into one of regularizing a
compactification of the quotient of the zero set of a Fredholm operator.\footnote{
As pointed out by Aleksey Zinger, the ``Gromov compactification'' of a moduli space of holomorphic curves in fact need not even be a compactification in the sense of containing the holomorphic curves with smooth domains as dense subset. For example, it could contain an isolated nodal curve.
}
From this abstract differential geometric perspective, the geometric regularization scheme provides a highly nontrivial generalization of the well known finite dimensional regularization based on Sard's theorem, see e.g.\ \cite[ch.2]{GuillP}.

\label{finite reg}
\medskip
\noindent
{\bf Finite Dimensional Regularization Theorem:}  {\it
Let $E\to B$ be a finite dimensional vector bundle, and let $s:B\to E$ be a smooth section such that $s^{-1}(0)\subset B$ is compact.
Then there exists a compactly supported, smooth perturbation section $p:B\to E$ such that $s+p$ is transverse to the zero section, and hence $(s+p)^{-1}(0)$ is a smooth manifold.
Moreover, $[(s+p)^{-1}(0)]\in H_*(B,\Z)$ is independent of the choice of such perturbations.
}

\begin{remark}\rm   \label{equivariant BS}
\begin{itemlist}
\item
Using multisections, this theorem generalizes to equivariant sections under a finite group action, yielding orbifolds as regularized spaces and thus a well defined rational homology class $[(s+p)^{-1}(0)]\in H_*(B,\Q)$.
\item
For nontrivial Lie groups $G$ acting by bundle maps on $E$, equivariance and transversality are in general contradictory requirements on a section.
Only if $G$ acts smoothly, freely, and properly on $B$ and $E$,  can
one obtain $G$-equivariant transverse sections by pulling back transverse sections of $E/G\to B/G$.
%
%
%
\item
Finite dimensional regularization also holds for noncompact zero sets $s^{-1}(0)$,
but the homological invariance of the zero set fails in the simplest examples.

\item
There have been several attempts to extend this theorem to the case of a Fredholm section $s:\Hat\Bb\to \Hat \Ee$ of a Banach (orbi)bundle.
The paper by Lu--Tian~\cite{LuT} develops some abstract analysis, which we have not studied in detail since it does not apply to Gromov--Witten moduli spaces; see below.
Similarly,  Chen--Tian present in~\cite[\S5]{CT}  an idea of a \lq\lq Fredholm system"  that in its global form (even after replacing the nonsensical properness assumption by compactness of the zero set) is irrelevant to most Gromov--Witten moduli spaces, and when localized runs into the same problems to do with smoothness of coordinate changes and lack of suitable cut off functions that we discuss in detail in Remark~\ref{LTBS} below.
 \end{itemlist}
\end{remark}

Note here that no typical moduli space of holomorphic curves, nor even the moduli spaces in gauge theory or Morse theory, has a currently available description as the zero set of a Fredholm section in a Banach groupoid bundle.
In the case of holomorphic curves or Morse trajectories, the first obstacle to such a description is the differentiability failure of the action of ${\rm Aut}$ on any Sobolev space of maps $\Hat\Bb$ explained in Section~\ref{ss:nodiff}.
In gauge theory, the action of the gauge group typically is smooth, but in all 
theories the typical moduli spaces are compactified by gluing constructions, for which there is not even a natural description as a zero set in a topological vector bundle.

In comparison, the geometric regularization approach works with a smooth section $\si:\Hat\Bb\to\Hat\Ee$ of a Banach bundle, which has a noncompact solution set $\si^{-1}(0)$ and is equivariant under the action of a noncompact Lie group.
From an abstract topological perspective, the nontrivial achievement of this approach is that it produces equivariant transverse perturbations and a well defined homology class by compactifying quotients of perturbed spaces, rather than by directly perturbing the compactified moduli space.

\begin{remark}\rm
Another notable analytic feature of the perturbations obtained by altering $J$ is that they preserve the compactness and Fredholm properties of the nonlinear differential operator, despite changing it nonlinearly in highest order. Indeed, in local coordinates, $\si= \partial_s + J \partial_t$ is a first order operator, and changing $J$ to $J'$ amounts to adding another first order operator $p=(J'-J)\partial_t$.
This preserves the Fredholm operator since it preserves ellipticity of the symbol. In general, one retains Fredholm properties only with lower order perturbations, i.e.\ by adding a compact operator to the linearization.
For the Cauchy--Riemann operator, that would mean an operator involving no derivatives, e.g.\ $p(f)=X\circ f$ given by a vector field.
Note also that the compactness properties of solution sets of nonlinear operators are generally not even preserved under lower order perturbations that are supported in a neighbourhood of a compact solution set, since in the infinite dimensional setting such neighbourhoods are never compact.
\end{remark}

This discussion shows that a regularization scheme for general holomorphic curve moduli spaces needs to work with more abstract perturbations and directly on the compactified moduli space -- i.e.\ after quotienting and taking the Gromov compactification. The following approaches are currently used in symplectic topology.

\begin{enumlist}
\item{The {\bf global obstruction bundle approach}} as in \cite{LiuT, Sieb, Mcv} aims to extend successful techniques from algebraic geometry and gauge theory to the holomorphic curve setting, by means of a weak orbifold structure on a suitably stratified Banach space completion of the space of equivalence classes of smooth stable maps.

\item{The {\bf Kuranishi approach}}, introduced by Fukaya-Ono \cite{FO} and implicitly Li-Tian \cite{LT} in the 1990s, aims to construct a virtual fundamental class from finite dimensional reductions of the equivariant Fredholm problem and gluing maps near nodal curves.

\item{The {\bf polyfold approach}}, developed by Hofer-Wysocki-Zehnder since 2000, aims to generalize the finite dimensional regularization theorem so that it applies directly to the compactified moduli space, by expressing it as the zero set of a smooth section.
\end{enumlist}

\MS

The first two approaches construct a virtual fundamental class $[\oMm_{k}(A,J)]^{vir}$ and are hence also referred to as virtual transversality.
They have been used for concrete calculations of Gromov--Witten invariants, e.g.\ \cite{McT,Mcu,Z} by building a VMC using geometrically meaningful perturbations.
The third approach is more functorial and produces a VMC with significantly more structure, e.g.\ as a smooth orbifold in the case of Gromov--Witten invariants \cite{HWZ:gw}. This allows to define e.g.\ symplectic field theory (SFT) invariants on chain level. The book \cite{FOOO} uses the Kuranishi approach to a similar end in the construction of chain level Lagrangian Floer theory.
We will make no further comments on chain level theories.
Instead, let us compare how the different approaches handle the fundamental analytic issues.

\medskip\noindent
{\bf Dividing by the automorphism group:}
Unlike the smooth action of the (infinite dimensional) gauge group on Sobolev spaces of connections, the reparametrization groups (though finite dimensional) do not act differentiably on any known Banach space completion of spaces of smooth maps (or pairs of domains and maps from them); see Section~\ref{s:diff}.
In the global obstruction bundle approach this causes a significant differentiability failure in the relation between local charts in Liu--Tian \cite{LiuT}, and hence in the survey article McDuff~\cite{Mcv} and subsequent papers such as Lu~\cite{GLu}.
For more details of the problems here see Remark~\ref{LTBS}.
This differentiability issue was not mentioned in \cite{FO,LT}.
However, as we explain in detail in Section~\ref{s:construct} below, it needs to be addressed when defining charts that combine two or more basic charts since this must be done in the Fredholm setting {\it before} passing to a finite dimensional reduction.
We make this explicit in our notion of ``sum chart", but the same construction is used implicitly in \cite{FO,FOOO}.
In this setting, it can be overcome by working with special obstruction bundles, as we outline in Section~\ref{ss:gw}.
In the polyfold approach, this issue is resolved by replacing the notion of smoothness in Banach spaces by a notion of scale-smoothness which applies to the reparametrization action.
To implement this, one must redevelop linear as well as nonlinear functional analysis in the scale-smooth category.

\medskip
\noindent
{\bf Gromov compactification:}
Sequences of holomorphic maps can develop various kinds of singularities: bubbling (energy concentration near a point), breaking (energy diverging into a noncompact end of the domain -- sometimes also induced by stretching in the domain), buildings (parts of the image diverging into a noncompact end of the target -- sometimes also induced by stretching the domain at a hypersurface).
These limits are described as tuples of maps from various domains, capturing the Hausdorff limit of the images. In quotienting by reparametrizations, note that for the limit object this group is a substantially larger product of various reparametrization groups.

In the geometric and virtual regularization approaches, charts near the singular limit objects are constructed by gluing analysis, which involves a pregluing construction and a Newton iteration. The pregluing creates from a tuple of holomorphic maps a single map from a nonsingular domain, which solves the Cauchy--Riemann equation up to a small error. The Newton iteration then requires quadratic estimates for the linearized Cauchy--Riemann operator to find a unique exact solution nearby.
In principle, the construction of a continuous gluing map should always be possible along the lines of \cite{MS}, though establishing the quadratic estimates is nontrivial in each setting. However, additional arguments specific to each setting are needed to prove surjectivity, injectivity, and openness of the gluing map.
Moreover, while homeomorphisms to their image suffice for the geometric regularization approach, the virtual regularization approaches 
all require stronger differentiability of the gluing map; e.g.\ smoothness in \cite{FO,FOOO,J}.

None of \cite{LT,LiuT,FO,FOOO} give all details for the construction of a gluing map.
In particular, \cite{FO,FOOO} construct gluing maps with image in a space of maps,
 but give few details on the induced map to the quotient space,
even in the nonnodal case as discussed in Remark~\ref{FOglue}.
For closed nodal curves, \cite[Chapter~10]{MS} constructs continuous gluing maps in full detail, but 
does not claim that the glued curves depend differentiably 
on the gluing parameter $a\in \C$ as $a\to 0$.
By rescaling $|a|$, it is possible to establish more differentiability for $a\to 0$. For example
Ruan~\cite{Ruan} uses local $\Cc^1$ gluing maps. However, as pointed out by Chen--Li~\cite{ChenLi}, 
this $\Cc^1$ structure is not intrinsic, so may not be preserved under coordinate changes.
This problem was ignored in \cite{FO}, but discussed in both in the appendix to \cite{FOOO} and more recently in \cite{FOOO12}. 

The polyfold approach reinterprets the pregluing construction as the chart map for an ambient space $\Tilde \Bb$ which contains the compactified moduli space, essentially making the quadratic estimates part of the definition of a Fredholm operator on this space. The Newton iteration is replaced by an abstract implicit function theorem for transverse Fredholm operators in this setting. The injectivity and surjectivity issues then only need to be dealt with at the level of pregluing. Here injectivity fails dramatically but in a way that can be reinterpreted in terms of a generalization of a Banach manifold chart, where the usual model domain of an open subset in a Banach space is replaced by a relatively open subset in the image of a scale-smooth retraction of a scale-Banach space.
This makes it necessary to redevelop differential geometry in the context of retractions and scale-smoothness.

\subsection{Regularization via polyfolds}  \hspace{1mm}\\ \vspace{-3mm}
\label{ss:poly}

In the setting of holomorphic maps with trivial isotropy (but allowing for general compactifications by e.g.\ nodal curves), the result of the entirely abstract development of scale-smooth nonlinear functional analysis and retraction-based differential geometry is the following direct generalization
of the finite dimensional regularization theorem, see \cite{HWZ3}.
The following is the relevant version for trivial isotropy, in which ambient spaces have the structure of an M-polyfold --- a generalization of the notion of Banach manifold, essentially given by charts in open subsets of images of retraction maps, and scale-smooth transition maps between the ambient spaces of the retractions.

\medskip
\noindent
{\bf M-polyfold Regularization Theorem:} {\it
Let $\Tilde{\Ee}\to\Tilde{\Bb}$ be a strong M-polyfold bundle, and let $s:\Tilde{\Bb}\to\Tilde{\Ee}$ be a scale-smooth Fredholm section such that $s^{-1}(0)\subset\Tilde{\Bb}$ is compact.
Then there exists a class of perturbation sections $p:\Tilde{\Bb}\to\Tilde{\Ee}$ supported near $s^{-1}(0)$ such that $s+p$ is transverse to the zero section, and hence $(s+p)^{-1}(0)$ carries the structure of a smooth finite dimensional manifold.
Moreover, $[(s+p)^{-1}(0)]\in H_*(\Tilde\Bb,\Z)$ is independent of the choice of such perturbations.
}

\medskip
For dealing with nontrivial, finite isotropies, \cite{HWZ3} transfers this theory to a groupoid setting to obtain a direct generalization of the orbifold version of the finite dimensional regularization theorem.
It is these groupoid-type ambient spaces $\Tilde\Bb$, whose object and morphism spaces are M-polyfolds, that are called polyfolds.
These abstract regularization theorems should be compared with the definition of Kuranishi atlas and the abstract construction of a virtual fundamental class for any Kuranishi atlas that will be outlined in the following sections.
While the language of polyfolds and the proof of the regularization theorems in \cite{HWZ1,HWZ2,HWZ3} is highly involved, it seems to be developed in full detail and is readily quotable.
A survey of the basic philosophy and language is now available in \cite{gffw}.

Just as in the construction of a Kuranishi atlas for a given holomorphic curve moduli space discussed in Section~\ref{s:construct} below, the application of the polyfold regularization approach still requires a description of the compactified moduli space as the zero set of a Fredholm section in a polyfold bundle.
It is here that the polyfold approach promises the most revolutionary advance in regularization techniques. Firstly, fiber products of moduli spaces with polyfold descriptions are naturally described as zero sets of a Fredholm section over a product of polyfolds. For example, one can obtain a polyfold setup for the PSS morphism by combining the polyfold setup for SFT with a smooth structure on Morse trajectory spaces, see \cite{afw:arnold}.
Secondly, Hofer--Wysocki--Zehnder are currently working on formalizing a ``modular''
approach to the polyfold axioms in such a way that the analytic setup can be given locally in domain and target for every singularity type. With that, the polyfold setup for a new moduli space that combines previously treated singularities in a different way would merely require a Deligne--Mumford type theory for the underlying spaces of domains and targets.

\begin{remark} \rm \label{polyfold BS checklist}
While the polyfold framework is a very powerful method for constructing algebraic invariants from holomorphic curve moduli spaces, it also has some pitfalls in geometric applications.
\begin{itemlist}
\item
Some caution is required with arguments involving the geometric properties of solutions after regularization.
The reason for this is that
the perturbed solutions do not solve a PDE but an abstract compact perturbation of the Cauchy--Riemann equation. Essentially, one can only work with the fact that the perturbed solutions can be made to lie arbitrarily close to the unperturbed solutions in any metric that is compatible with the scale-topology (e.g.\ any $\Cc^k$-metric in the case of closed curves).
\item
Despite reparametrizations acting scale-smoothly on spaces of maps, the question of equivariant regularization for smooth, free, proper actions remains nontrivial due to the interaction with retractions, i.e.\ gluing constructions.
In the example of the $S^1$-action on spaces of Floer trajectories for an autonomous Hamiltonian, the unregularized compactified Floer trajectory spaces of virtual dimension $0$ may contain broken trajectories. The corresponding stratum of the quotient space,
$$
\oMm(p_-,p_+)/S^1 \;\supset\; {\textstyle \bigcup_q} \bigl(\oMm(p_-,q) \times \oMm(q,p_+)\bigr)/S^1 ,
$$
is an $S^1$-bundle over the fiber product $\oMm(p_-,q)/S^1\times \oMm(q,p_+)/S^1$ of quotient spaces, rather than the fiber product itself.
Due to these difficulties, as yet, there is no quotient theorem for polyfolds, and hence no understanding of when a description of $\oMm$ as zero set of an $S^1$-equivariant Fredholm section would induce a description of $\oMm/S^1$ as zero set of a Fredholm section with smaller Fredholm index. Moreover, such a quotient would not even immediately induce an equivariant regularization of the Floer trajectory spaces compatible with gluing.
\end{itemlist}
\end{remark}

\subsection{Regularization via Kuranishi atlases}  \hspace{1mm}\\ \vspace{-3mm} \label{ss:kur}

Continuing the notation of Section~\ref{ss:geom}, the basic idea of regularization via Kuranishi atlases is to describe the compactified moduli space $\oMm$ by local finite dimensional reductions of the ${\rm Aut}$-equivariant section $\si:\Hat\Bb \to \Hat\Ee$, and by gluing maps near the nodal curves. 
There are different  ways to formalize the construction.  We will proceed  via a notion of {\bf Kuranishi atlas}
in the following steps.\MS

\begin{enumlist}
\item {\bf Compactness:}
Equip the compactified moduli space $\oMm$ with a compact, metrizable topology; namely as Gromov compactification of $\Mm=\si^{-1}(0)/{\rm Aut}$.
\MS

\item {\bf Equivariant Fredholm setup:}
As in the geometric approach, a significant subset $\Mm\subset\oMm$ of the compactified moduli space is given as the zero set of a Fredholm section modulo a finite dimensional Lie group,
\vspace{-10mm}
\[
\begin{aligned}
&\phantom{\Mm} \\
&\phantom{\Mm} \\
& \Mm = \frac{\si^{-1}(0)}{{\rm Aut}} ,
 \end{aligned}
 \qquad\qquad
\xymatrix{
 \Hat\Ee   \ar@(ul,dl)_{\textstyle \rm Aut}\ar@{->}[d]     \\
 \Hat\Bb \ar@(ul,dl)_{\textstyle \rm Aut} \ar@/_1pc/[u]_{\textstyle \si}
}
\]
One can now relax the assumption of $\rm Aut$ acting freely to the requirement that
 the isotropy subgroup $\Ga_{f}:=\{ \ga \in {\rm Aut} \,|\, \ga\cdot f = f \}$ be finite for every solution
 $f\in\si^{-1}(0)$.\MS

\item {\bf Finite dimensional reduction:}
Construct {\bf basic Kuranishi charts}
for every ${[f]\in\Mm}$,
\vspace{-5mm}
\[
\begin{aligned}
&\phantom{\Mm} \\
&\phantom{\Mm} \\
 \Mm \; \overset{\psi_f}{\longhookleftarrow} \;\frac{\ti s_f^{-1}(0)}{\Ga_f} ,
 \end{aligned}
 \qquad\qquad
\xymatrix{
*+[r]{\Tilde E_f }  \ar@(ul,dl)_{\textstyle \Ga_f}
  \ar@{->}[d]    \\
 *+[r]{U_f} \ar@(ul,dl)_{\textstyle \Ga_f} \ar@/_1pc/[u]_{\textstyle \ti s_f}
}
\]
which depend on a choice\footnote{
In practice the Kuranishi data will be constructed from many choices, including that of a representative. So we try to avoid false impressions by using the subscript $f$ rather than $[f]$.
}
of representative $f$, and consist of the following data:
\begin{itemize}
\item
the {\bf domain} $U_f$ is a finite dimensional smooth manifold (constructed from a local slice of the ${\rm Aut}$-action on a thickened solution space
$\{g \,|\, \pbar g \in \Hat E_{f}\}$);
\item
the {\bf obstruction bundle} $\Ti E_f =\Hat E_f|_{U_f}\to U_f$, a finite rank vector bundle (constructed from the cokernel of the linearized Cauchy--Riemann operator at $f$), which is isomorphic $\Ti E_f\cong U_f\times E_f$ to a trivial bundle, whose fiber $E_f$ we call the {\bf obstruction space};
\item
the {\bf section} $\ti s_{f}: U_{f} \to \Tilde E_{f}$ (constructed from $g \mapsto \pbar g$),
which induces a smooth map $s_f : U_f\to E_f$ in the trivialization;
\item
the {\bf isotropy group} $\Ga_{f}$ acting on $U_{f}$ and $E_{f}$ such that $\ti s_{f}$ is equivariant; \item
the {\bf footprint map} $\psi_{f}: \ti s_{f}^{-1}(0)/\Ga_{f} \to \oMm$, a homeomorphism to a neighbourhood of $[f]\in\oMm$ (constructed from $\{g \,|\, \pbar g=0 \}
\ni g \mapsto [g]$).
\end{itemize}
(See Section~\ref{ss:Kchart} for a detailed outline of this construction for $\oMm_1(A,J)$  with $\Ga_f=\{\rm id\}$.)\MS

\item {\bf Gluing:}
Construct basic Kuranishi charts covering $\oMm\less \Mm$ by combining finite dimensional reductions with gluing analysis similar to the geometric approach.\MS

\item {\bf Compatibility:}
Given a finite cover of $\oMm$ by the footprints of basic Kuranishi charts $\bigl(\bK_i = (U_i,E_i,\Ga_i,s_i,\psi_i)\bigr)_{i=1,\ldots,N}$, construct transition data satisfying suitable compatibility conditions.
In the case of trivial isotropies $\Ga_i=\{{\rm id}\}$, any notion of compatibility will have to induce the following minimal transition data for any element $[g]\in \im\psi_i\cap \im\psi_j\subset \oMm$ in an overlap of two footprints:
\begin{itemize}
\item
a
{\bf transition Kuranishi chart} $\bK^{ij}_g = (U^{ij}_g,E^{ij}_g,s^{ij}_g,\psi^{ij}_g)$ whose footprint $\im\psi^{ij}_g \subset  \im\psi_i\cap \im\psi_j$ is a neighbourhood of $[g]\in\oMm$;
\item
{\bf coordinate changes} $\Phi^{i,ij}_g : \bK_i \to \bK^{ij}_g$ and $\Phi^{j,ij}_g :\bK_j \to \bK^{ij}_g$ consisting of embeddings and linear injections
$$
\phi^{\bullet,ij}_g :\;  U_\bullet \supset V^{\bullet,ij}_g\; \longhookrightarrow\; U^{ij}_g , \qquad
\Hat\phi^{\bullet,ij}_g :\;  E_\bullet \; \longhookrightarrow\; E^{ij}_g \qquad
\text{for}\; \bullet = i,j
$$
which extend $\phi^{\bullet,ij}_g|_{\psi_\bullet^{-1}(\im\psi^{ij}_g)} = (\psi^{ij}_g)^{-1}\circ \psi_\bullet$ to open subsets $V^{\bullet,ij}_g \subset U_\bullet$ such that
$$
s^{ij}_g \circ \phi^{\bullet,ij}_g   \; =\;  \Hat\phi^{\bullet,ij}_g\circ s_\bullet  \qquad
\text{for}\; \bullet = i,j .
$$
\end{itemize}
Further requirements on the domains, an index or ``tangent bundle"
condition, coordinate changes between multiple overlaps, and cocycle 
conditions are discussed in Section~\ref{ss:top}.
Sections~\ref{s:diff}, \ref{s:construct} discuss the relevant smoothness issues.
The collection of such data -- basic charts, transition charts and coordinate changes -- will be called a {\bf Kuranishi atlas}.
\MS

\item {\bf Abstract Regularization:}
For suitable transition data, (multivalued) perturbations $s_f': U_f\to E_f$ of the sections in the 
Kuranishi charts (both basic charts and the transition charts) should yield the following regularization theorem:
\MS

\noindent
{\it Any Kuranishi atlas $\Kk$ on a compact space $\oMm$ induces a virtual fundamental class $[\oMm]_\Kk^{\rm vir}$.}

\item {\bf Invariance:}
Prove that $[\oMm]_\Kk^{\rm vir}$ is independent of the different choices in the previous steps, in particular the choice of local slices and obstruction bundles. This involves the construction of a Kuranishi  atlas for 
$\oMm\times [0,1]$ that restricts to two given choices $\Kk^0$ on $\oMm\times \{0\}$ and $\Kk^1$ on $\oMm\times \{1\}$. Then an abstract cobordism theory for Kuranishi atlases should imply 
$[\oMm]_{\Kk^0}^{\rm vir}=[\oMm]_{\Kk^1}^{\rm vir}$.
\end{enumlist}

The construction of a Kuranishi  atlas for a given holomorphic curve moduli space is explained in more detail in Section~\ref{s:construct}.
The rest of the paper (Sections~\ref{s:chart},~\ref{s:Ks} and~\ref{s:VMC}) then discusses
the abstract regularization theorem underlying this approach.
For that purpose we restrict to the case of trivial isotropy groups $\Ga_{f}=\{{\rm id}\}$ in all
Kuranishi charts. This simplifies constructions in two ways. First, it guarantees
the existence of restrictions of Kuranishi charts to any open subset of the footprint.
(In general, restrictions of Kuranishi charts with nontrivial isotropy will only exist as generalized Kuranishi charts whose domain is a groupoid.)
Second, for trivial isotropy one can construct the virtual fundamental class from the zero sets of perturbed sections
$s_f + \nu_f \approx s_f$
that are transverse,
$s_f+\nu_f\pitchfork 0$,
rather than replacing each $\Ga_f$-equivariant section $s_f$ with a transverse multisection.

\subsection{Algebraic issues in the use of germs for Kuranishi structures}  \hspace{1mm}\\ \vspace{-3mm}  \label{ss:alg}

A natural approach, adopted in \cite{FO,J} for formalizing the compatibility of Kuranishi charts is to work with germs of charts and coordinate changes.
Recent discussions have led to an agreement that this approach has serious algebraic issues in making sense of a cocycle condition for germs of coordinate changes, which we explain here. This issue is rooted in the fact that only the footprints of Kuranishi charts have invariant meaning, so that
the coordinate changes between Kuranishi charts are fixed by the charts only on the zero sets. Thus in the definition of a germ of charts the traditional equivalence of maps with common restriction to a smaller domain is extended by equivalence of maps that are intertwined by a diffeomorphism of the domains. This leads to an ambiguity in the definition of germs of coordinate changes between germs of charts.
As a result, germs of coordinate changes are defined as conjugacy classes of coordinate changes with respect to diffeomorphisms of the domains that fix the zero sets. However, in this setting the composition of germs is ill defined, so there is no meaningful cocycle condition.
Alternatively, one might want to view (charts, coordinate changes, equivalences of coordinate changes) as a $2$-category with ill-defined $2$-composition. Either way, there is no general procedure for extracting the data necessary for a construction of a VFC:
 a finite set of charts and coordinate changes that satisfy the cocycle condition.
In the following, we spell out in complete detail the usual definitions of germs and point out the algebraic issues that arise from the equivalence under conjugation.

\MS

To simplify notation let us (falsely) pretend that all obstruction spaces are finite rank subspaces $E_f\subset\Ee$ of the same space and the linear maps $\Hat\phi$ in the coordinate changes are restrictions of the identity. We moreover assume that all isotropy groups are trivial $\Ga_f=\{{\rm id}\}$ and only consider germs of charts and coordinate changes at a fixed point $p\in\oMm$.
In the following all neighbourhoods are required to be open.

\MS\NI
To the best of our understanding, \cite{FO,J} define a germ of Kuranishi chart as follows.

\begin{itemlist}
\item
A Kuranishi chart consists of a neighbourhood $U\subset\R^k$ of $0$ for some $k\in\N$, a map $s:U\to E \subset\Ee$ with $s(0)=0$, and an embedding $\psi:s^{-1}(0)\to\oMm$ with $\psi(0)=p$,
$$
\oMm \;\overset{\psi}{\longhookleftarrow}\; s^{-1}(0)
\;\subset\;
U \;\overset{s}{\longrightarrow}\; E.
$$
\item
Two Kuranishi charts $(U_1,s_1,\psi_1)$, $(U_2,s_2,\psi_2)$ are equivalent if the transition map
$$
\psi_2^{-1}\circ\psi_1 :\; s_1^{-1}(0) \;\supset\; \psi_1^{-1}(\im\psi_2)
\;\longrightarrow \; \psi_2^{-1}(\im\psi_1) \;\subset\; s_2^{-1}(0)
$$
extends to a diffeomorphism $\theta: U'_1\to U'_2$ between neighbourhoods $U_i' \subset U_i$ of $0$ that intertwines the sections $s_1|_{U'_1}= s_2|_{U'_2}\circ \theta$.
\item
A germ of Kuranishi chart at $p$ is an equivalence class of Kuranishi charts.
\item[$\mathbf{\bigtriangleup} \hspace{-2.08mm} \raisebox{.3mm}{$\scriptscriptstyle !$}\,$]
Note that $s_1= s_2\circ \theta$ does not necessarily determine the diffeomorphism $\theta$ except on the (usually singular and not dense) zero set.
Hence there may exist auto-equivalences, i.e.\ a nontrivial diffeomorphism $\theta: U'_1\to U'_2$ between restrictions $U'_1,U'_2\subset U$ of the same Kuranishi chart $(U,\ldots)$, satisfying $\theta|_{s^{-1}(0)}={\rm id}$ and $s= s\circ \theta$.
\end{itemlist}

Next, one needs to define the notion of a coordinate change between two germs of Kuranishi charts $[U_I,s_I,\psi_I]$ and $[U_J,s_J,\psi_J]$.\footnote{
In the notation of the previous section,
an example of a required coordinate change is one for index sets $I=\{i\}$, $J=\{i,j\}$, where
$[U_I, \ldots]$ denotes the germ at $[f]$ induced by $(U_i,\ldots)$, and $[U_J, \ldots]$ denotes the germ at $[f]$ induced by $(U^{ij}_f, \ldots)$.
}
It is here that ambiguities in the compatibility conditions appear, so we give what seems like the most natural definition, which is at least closely related to \cite{FO,J}.

\begin{itemlist}
\item
A coordinate change $(U_{IJ},\phi_{IJ}) : (U_I,s_I,\psi_I)\to (U_J,s_J,\psi_J)$ between Kuranishi charts consists of a neighbourhood $U_{IJ}\subset U_I$ of $0$ and an embedding $\phi_{IJ}:U_{IJ} \hookrightarrow U_J$ that extends the natural transition map and intertwines the sections,
$$
\phi_{IJ}|_{s_J^{-1}(0)\cap U_{IJ}} \;=\; \psi_J^{-1}\circ\psi_I , \qquad
s_J|_{U_{IJ}}  \; =\; s_I \circ \phi_{IJ} .
$$
\item
Let $(U_{I,1},s_{I,1},\psi_{I,1})\sim(U_{I,2},s_{I,2},\psi_{I,2})$ and $(U_{J,1},s_{J,1},\psi_{J,1})\sim (U_{J,2},s_{J,2},\psi_{J,2})$ be two pairs of equivalent Kuranishi charts. Then two coordinate changes
\begin{align*}
& (U_{IJ,1},\phi_{IJ,1}) : (U_{I,1},s_{I,1},\psi_{I,1})\to (U_{J,1},s_{J,1},\psi_{J,1}) \\
\text{and}\quad &
(U_{IJ,2},\phi_{IJ,2}) : (U_{I,2},s_{I,2},\psi_{I,2})\to (U_{J,2},s_{J,2},\psi_{J,2})
\end{align*}
are equivalent if there exist diffeomorphisms $\theta_I: U'_{I,1}\to U'_{I,2}$ and $\theta_J: U'_{J,1}\to U'_{J,2}$ between smaller neighbourhoods of $0$ as in the definition of equivalence of Kuranishi charts (i.e.\
$s_{I,1}|_{U'_{I,1}}= s_{I,2}|_{U'_{I,2}}\circ \theta_I$ and $s_{J,1}|_{U'_{J,1}}= s_{J,2}|_{U'_{J,2}}\circ \theta_J$) that intertwine the coordinate changes on a neighbourhood of $0$,
$$
\theta_J \circ \phi_{IJ,1} =  \phi_{IJ,2} \circ \theta_I  .
$$
\item
A germ of coordinate changes between germs of Kuranishi structures at $p$ is an equivalence class of coordinate changes.
\item[$\mathbf{\bigtriangleup} \hspace{-2.08mm} \raisebox{.3mm}{$\scriptscriptstyle !$}\,$]
As a special case, two coordinate changes $\phi_{IJ}, \phi'_{IJ} : (U_{I},\ldots )\to (U_{J},\ldots)$
 between the same Kuranishi charts are equivalent
if there exist auto-equivalences $\theta_I: U'_{I,1}\to U'_{I,2}$ and $\theta_J: U'_{J,1}\to U'_{J,2}$ such that
$\theta_J \circ \phi_{IJ} =  \phi'_{IJ} \circ \theta_I  $.
\item[$\mathbf{\bigtriangleup} \hspace{-2.08mm} \raisebox{.3mm}{$\scriptscriptstyle !$}\,$]
Given a germ of coordinate change $[U_{IJ},\phi_{IJ}] : [U_I,\ldots]\to [U_J,\ldots]$ and choices of representatives $(U'_I,\ldots), (U'_J,\ldots)$ of the germs of charts, a representative of the coordinate change now only exists between suitable restrictions $(U''_I\subset U'_I,\ldots), (U''_J\subset U'_J,\ldots)$, and even with fixed choice of restrictions may not be uniquely determined.
\end{itemlist}

\MS\NI
Finally, it remains to make sense of the cocycle condition for germs of coordinate changes.
At this point \cite{FO,J} simply write equations such as $[\Phi_{JK}]\circ [\Phi_{IJ}] = [\Phi_{IK}]$ on the level
of conjugacy classes of maps, which do not make strict sense.
The following is an attempt to phrase the cocycle condition on the level of germs, but we will see that it falls short of implying the existence of compatible choices of representatives that is required for the construction of a VMC.

\begin{itemlist}
\item
Let $[U_{I},s_{I},\psi_{I}]$, $[U_{J},s_{J},\psi_{J}]$, and $[U_{K},s_{K},\psi_{K}]$ be germs of Kuranishi charts. Then we say that a triple of germs of coordinate changes $[U_{IJ},\phi_{IJ}], [U_{JK},\phi_{JK}],[U_{IK},\phi_{IK}]$ satisfies the cocycle condition if there exist representatives of the coordinate changes
between representatives $(U_{I},\ldots)$, $(U_{J},\ldots)$, $(U_{K},\ldots)$ of the charts,
\begin{align*}
 (U_{IJ},\phi_{IJ}) : \;(U_{I},s_{I},\psi_{I})&\to (U_{J},s_{J},\psi_{J}) , \\
 (U_{JK},\phi_{JK}) : (U_{J},s_{J},\psi_{J})&\to (U_{K},s_{K},\psi_{K}) , \\
 (U_{IK},\phi_{IK}) : \;(U_{I},s_{I},\psi_{I})&\to (U_{K},s_{K},\psi_{K}) ,
\end{align*}
such that on a neighbourhood of $0$ we have
\begin{align} \label{algcc}
\phi_{JK} \circ \phi_{IJ} = \phi_{IK} .
\end{align}
\item[$\mathbf{\bigtriangleup} \hspace{-2.08mm} \raisebox{.3mm}{$\scriptscriptstyle !$}\,$]
Note that the above cocycle condition for some choice of representatives does not imply a cocycle condition for different choices of representatives. For example, suppose that $\phi_{IJ},\phi_{JK},\phi_{IK}$ satisfy \eqref{algcc}, and consider other representatives
$$
\phi_{IJ}' = \theta_J \circ \phi_{IJ} \circ \theta_I^{-1} ,  \quad
\phi_{JK}' = \Theta_K \circ \phi_{JK} \circ \Theta_J^{-1}
$$
given by auto-equivalences $\theta_I,\theta_J,\Theta_J, \Theta_K$.
Then these fit into a cocycle condition
$$
\phi_{JK}' \circ \phi_{IJ}' \;=\;
\bigl( \Theta_K \circ \phi_{JK} \circ \Theta_J^{-1} \bigr)
\circ
\bigl( \theta_J \circ \phi_{IJ} \circ \theta_I^{-1} \bigr)  \;=\; \phi'_{IK} \;\in\; [\phi_{IK}]
$$
only if $\Theta_J=\theta_J$ and $\phi'_{IK} = \Theta_K \circ \phi_{IK} \circ \theta_I^{-1}$.
That is, the choice of one representative in the cocycle condition between three germs of coordinate changes essentially fixes the choice of the other two representatives.
This causes problems as soon as one considers the
compatibility of four or more coordinate changes.
\end{itemlist}
\MS

Now suppose that a Kuranishi structure on $\oMm$ is given by germs of charts at each point and germs of coordinate changes between each suitably close pair of points, satisfying a cocycle condition. Then the fundamentally important first step towards the construction of a VMC is the claim of \cite[Lemma~6.3]{FO} that any such Kuranishi structure has a ``good coordinate system".
The latter, though the definitions in \cite{FO,FOOO} are slightly ambiguous, is a finite cover of $\oMm$ by partially ordered charts (where two charts should be comparable iff the footprints intersect) with coordinate changes according to the partial order, and satisfying a weak cocycle condition.
In order to extract such a finite cover from a tuple of germs of charts and germs of coordinate changes, one makes a choice of representative in each equivalence class of charts and picks a finite subcover.
The first nontrivial step is to make sure that these representatives were chosen sufficiently small for coordinate changes between them to exist in the given germs of coordinate changes.
The second  crucial step is to make specific choices of representatives of the coordinate changes such that the cocycle condition is satisfied.
However, \cite[(6.19.4)]{FO} does not address the need to choose specific, rather than just sufficiently small, representatives.
 In order to reduce the number of constraints,  this would require a rather special structure of the overlaps of charts.
In general, the choice of a representative for $[\phi_{AB}]$ would affect the choice of
representatives for $[\phi_{CA}]$ or $[\phi_{AC}]$ for all $C$ with $\dim U_C\leq \dim U_B$, and for $[\phi_{BC}]$ or $[\phi_{CB}]$ when $\dim U_C \geq \dim U_A$.
These are algebraic issues, governed by the intersection pattern of the charts.

One approach to solving these algebraic issues could be to replace the definition of Kuranishi structure by that of a  good coordinate system. However, we know of no direct way to construct such ordered covers and explicit cocycle conditions for a given moduli space $\oMm$.
We solve both problems by defining the notion of a Kuranishi atlas as a weaker version of a good coordinate system --- without a partial ordering on the charts, but satisfying an explicit cocycle condition --- that can in practice be constructed.
We then construct a good coordinate system in Proposition~\ref{prop:red} by an abstract refinement of the Kuranishi atlas.

\begin{rmk}\rm
One potential attraction of the notion of germs of Kuranishi charts is that for moduli spaces $\oMm$ arising from a Fredholm problem, there could be the notion of a \lq\lq natural germ" of charts at a point
$[f]\in\oMm$ given by the finite dimensional reductions at any representative $f$.
However, the present definition of germ does not provide a notion of equivalence between finite dimensional reductions with obstruction spaces of different dimension.
So the only natural choice would be to require obstruction spaces to have minimal rank at $f$.
But with such a choice it is not clear how to make compatible choices of  the needed coordinate changes.   As we will see, given two different charts at $[f]$ there is usually no natural  choice of a coordinate change from one to another; the natural maps arise by including each of them into a bigger chart (here called their sum).
Such a construction takes one quickly out of the class of minimal germs.
\end{rmk}

\subsection{Topological issues in the construction of a virtual fundamental class}  \hspace{1mm}\\ \vspace{-3mm}  \label{ss:top}

After one has solved the analytic issues involved in constructing compatible basic Kuranishi
charts as defined in Section~\ref{ss:kur} for a given moduli space $\oMm$, the further difficulties in constructing the virtual fundamental class $[\oMm]^{\rm virt}$ are all essentially topological, though their solution will impose further requirements on the construction of a Kuranishi atlas.
The basic idea for constructing a VMC is to make transverse perturbations $s_i+\nu_i\approx s_i$ of the section in each basic chart, such that the smooth zero sets modulo a relation given by the transition maps provide a regularization of the moduli space
$$
\oMm \,\!^\nu := \; \quotient{{\underset{{i=1,\ldots,N}}{\textstyle \bigsqcup}
}\; (s_i+\nu_i)^{-1}(0)} { \sim}
$$
There are various notions of regularizations; the common features (in the case of trivial isotropy and empty boundary) are that $\oMm\,\!^\nu$ should be a CW complex with a distinguished homology class $[\oMm\,\!^\nu]$ (e.g.\ arising from an orientation and triangulation), and that in some sense this class should be independent of the choice of perturbation~$\nu$.
For example, \cite{FO,FOOO} require that for any CW complex $Y$ and continuous map $f:\oMm \to Y$ that extends compatibly to the Kuranishi charts, the induced map $f:\oMm\,\!^\nu \to Y$ is a cycle, whose homology class is independent of the choice of $\nu$ and extension of $f$.
The basic issues in any regularization are that we need to make sense of the equivalence relation and ensure that the zero set of a transverse perturbation is not just locally smooth (and hence can be triangulated locally), but also that the transition data glues these local charts to a compact Hausdorff space without boundary. These properties are crucial for obtaining a global triangulation and thus well defined cycles.
For simplicity we aim here for the strongest version of regularization, giving $\oMm \,\!^\nu$ the structure of an oriented, compact, smooth manifold, which is unique up to cobordism.
That is, we wish to realize $\oMm \,\!^\nu$ as an abstract compact 
manifold as follows.
(We simplify here by deferring the discussion of orientations to the end of this section.)

\begin{definition} \label{def:mfd}
An abstract compact smooth manifold of dimension $d$ consists of
\begin{itemlist}
\item[{\bf (charts)}]
a finite disjoint union $\underset {{i=1,\ldots,N}}{\bigsqcup} V_i$ of open subsets $V_i\subset \R^d$,
\item[{\bf (transition data)}] 
for every pair $i,j\in\{1,\ldots,N\}$ an open subset $V_{ij} \subset V_i$ 
and a smooth embedding $\phi_{ij} : V_{ij} \hookrightarrow V_j$
such that $V_{ji} = \phi_{ij}(V_{ij})$ and $V_{ii} = V_i$,
\end{itemlist}
satisfying the {\bf cocycle condition}
$$
\phi_{jk} \circ \phi_{ij} = \phi_{ik} \qquad\text{on}\;\; \phi_{ij}^{-1}(V_{jk}) \subset V_{ik}
\qquad\quad
\forall i,j,k\in\{1,\ldots,N\},
$$
and such that the induced topological space
\begin{equation} \label{quotient}
\quotient{{\textstyle \underset{{i=1,\ldots,N}}{\bigsqcup}} V_i}{\sim}
\qquad\text{with}\quad
  x \sim y \;:\Leftrightarrow\; \exists \; i,j :  y=\phi_{ij}(x)
\end{equation}
is Hausdorff and compact.
\end{definition}

Note here that it is easy to construct examples of charts and transition data that satisfy the cocycle condition but fail to induce a Hausdorff  space, e.g.\ $V_1=V_2=(0,2)$ with $V_{12}=V_{21}=(0,1)$ and $\phi_{12}(x)=\phi_{21}(x)=x$ does not separate the points $1\in V_1$ and $1\in V_2$.
However, if we rephrase the data of charts and transition maps in terms of groupoids, then, as we now show, the Hausdorff property of the quotient is simply equivalent to a properness condition.
In this paper we take a groupoid to be a topological category whose morphisms are invertible, whose spaces of objects and morphisms are smooth manifolds, and whose structure maps (encoding source, target, composition, identity, and inverse) are local diffeomorphisms.
 Such groupoids are often called {\it \'etale}.
For further details see e.g.\ \cite{Moe,Mbr}.

\begin{rmk}\label{rmk:grp}\rm
A collection of charts and transition data satisfying the cocycle condition as in Definition~\ref{def:mfd} induces a topological groupoid $\bG$, that is a category with
\begin{itemlist}
\item
the topological space of objects $\Obj=\Obj_\bG = {\textstyle{\bigsqcup}_{i=1,\ldots,N}} V_i$
induced by the charts,
\item
the topological space of morphisms $\Mor=\Mor_\bG= {\textstyle{\bigsqcup}_{i,j=1,\ldots,N}} V_{ij}$
induced by the transition domains, with
\begin{itemize}
\item[-] source map $s: \Mor \to \Obj$, $(x\in V_{ij}) \mapsto (x\in V_i)$,
and target map $ t: \Mor \to \Obj$, $(x\in V_{ij}) \mapsto (\phi_{ij}(x)\in V_j)$ induced
by the transition maps,
\item[-] composition $\Mor \leftsub{t}{\times}_s \Mor\to \Mor$,
$\bigl( (x\in V_{ij}), (y\in V_{jk})\bigr) \mapsto (x\in V_{ik})$ if $\phi_{ij}(x)=y$,
which is well defined by the cocycle condition,
\item[-]
identities $\Obj \to \Mor, x \mapsto x\in V_{ii} \cong V_i$, and inverses $\Mor \to \Mor, (x\in V_{ij}) \mapsto (\phi_{ij}(x)\in V_{ji})$, again well defined by the cocycle condition.
\end{itemize}
\end{itemlist}
Moreover, $\bG$ has the following properties.
\begin{itemlist}
\item[{\bf (nonsingular)}]
For every $x\in\Obj$ the isotropy group $\Mor(x,x)=\{\rm id_x\}$ is trivial.
\item[{\bf (smooth)}]
The object and morphism spaces $\Obj$ and $\Mor$ are smooth manifolds.
\item[{\bf (\'etale)}]
All structure maps are local diffeomorphisms.
\end{itemlist}

\NI
The quotient space \eqref{quotient} is now given as the
{\it realization}
of the groupoid $\bG$, that is
$$
|\bG| := \Obj_\bG/\sim \qquad\text{with}\qquad
x \sim y \; \Leftrightarrow\; \Mor(x,y) \neq \emptyset .
$$
This realization is a compact manifold iff $\bG$ has the following additional properties.
\begin{itemlist}
\item[{\bf (proper)}]  The product of the 
source and target map $s\times t : \Mor \to \Obj \times \Obj$ is proper,  i.e.\ preimages of compact sets are compact.
%
%
\item[{\bf (compact)}]
$|\bG|$ is compact.
%
%
\end{itemlist}
\end{rmk}

Now let $(\bK_i = (U_i,E_i,s_i,\psi_i))_{i=1,\ldots,N}$ be a cover of  
a compact moduli space 
$\oMm$ by basic Kuranishi charts with footprints $F_i: = \psi_i(s_i^{-1}(0))\subset \oMm$.
Our guiding idea is to make from these charts two categories, the base category called $\bB_\Kk$, formed from the domains
$U_i$, and the 
bundle category $\bE_\Kk$ formed from the obstruction bundles.
The morphism spaces in both will arise from some type of transition maps between the basic charts.
The projections $U_i\times E_i \to U_i$ and sections $s_i$ should then induce a projection functor $\pi_\Kk:\bE_\Kk\to \bB_\Kk$ and a section functor $s_\Kk: \bB_\Kk \to \bE_\Kk$.
Further, the footprint maps $\psi_i$ induce a surjection $\psi_\Kk: s_\Kk^{-1}(0)\to \oMm$ from the zero set onto the moduli space. This induces natural morphisms in the subcategory $s_\Kk^{-1}(0)$, given by
$$
\psi_j^{-1} \circ \psi_i \, : \; s_i^{-1}(0)\cap \psi_i^{-1}(F_i\cap F_j) \;\longrightarrow\; s_j^{-1}(0) .
$$
If we use only these morphisms and their lifts to $s_i^{-1}(0)\times\{0\}\subset U_i\times E_i$, then composition in the categories $\bB_\Kk, \bE_\Kk$ is well defined, $\pi_\Kk, s_\Kk, \psi_\Kk$ are functors, and $|\psi_\Kk|:|s_\Kk^{-1}(0)|\to \oMm$ is a homeomorphism, which identifies the unperturbed moduli space $\oMm$ with a subset of the realization $|\bB_\Kk|$. However, these morphism spaces may be highly singular, so that the structure maps are merely local homeomorphisms between topological spaces.
This structure is insufficient for a regularization by transverse perturbation of the section $s_\Kk$, hence a Kuranishi atlas requires an extension of the transition maps $\psi_j^{-1} \circ \psi_i$ to diffeomorphisms between submanifolds of the domains. 

Recall here that the domains of the charts $U_i$ may not have the same dimension, since one can only expect the Kuranishi charts $\bK_i $ to have constant index $d= \dim U_i-\dim E_i$. 
Hence transition data is generally given by ``transition charts'' $\bK^{ij}$ and coordinate changes  $\bK_i \to \bK^{ij} \leftarrow \bK_j$ which in particular involve embeddings from open subsets $U_i^{ij}\subset U_i, U_j^{ij}\subset U_j$ into $U^{ij}$. (Here we simplify the notion from Section~\ref{ss:kur} by assuming that a single transition chart covers the overlap $F_i\cap F_j$.)
Now one could appeal to Sard's theorem to find a transverse perturbation in each basic chart,
$$
\nu=(\nu_i:U_i\to E_i)_{i=1,\ldots,N} \qquad\text{with}\quad s_i+\nu_i \pitchfork 0  \quad\forall i=1,\ldots,N ,
$$
and use this to regularize $\oMm \cong |s_\Kk^{-1}(0)|=\qu{s_\Kk^{-1}(0)}{\sim}$. Here the relation $\sim$ is given by morphisms, so the regularization ought to be the realization $|(s_\Kk+\nu)^{-1}(0)| = \qu{(s_\Kk + \nu)^{-1}(0)}{\sim}$ of a subcategory. 
Hence the perturbations $\nu_i$ need to be compatible with the morphisms, i.e.\ transition maps.
Given such compatible transverse perturbations, one obtains the charts and transition data for an abstract manifold as in Definition~\ref{def:mfd}, but still needs to verify the cocycle condition, Hausdorffness, compactness, and an invariance property to obtain a generalization of the finite dimensional regularization theorem on page \pageref{finite reg} to Kuranishi atlases along the following lines.

\medskip
\noindent
{\bf Kuranishi Regularization:} {\it
Let $s_\Kk: \bB_\Kk\to\bE_\Kk$ be a Kuranishi section of index~$d$ such that $|s_\Kk^{-1}(0)|$ is compact. Then there exists a class of smooth perturbation functors $\nu:\bB_\Kk\to\bE_\Kk$ such that the subcategory $\bZ_\nu:=(s_\Kk+\nu)^{-1}(0)$ carries the structure of an abstract compact smooth manifold of dimension $d$ in the sense of Definition~\ref{def:mfd}.
Moreover, $[\, |\bZ_\nu | \,]\in H_d(\bB_\Kk,\Z)$ is independent of the choice of such perturbations.
}

\medskip

However, in general there is no theorem of this precise form, since the topological issues  discussed below require various refinements of the perturbation construction.

\MS\NI
{\bf Compatibility:}
In order to obtain well defined transition maps, i.e.\ a space of morphisms in $\bZ_\nu$ with well defined composition, the perturbations $\nu_i$ clearly need to be compatible.
Since $(s_i + \nu_i)^{-1}(0)$ and $(s_j + \nu_j)^{-1}(0)$ are not naturally identified via $\psi_j^{-1} \circ \psi_i$ for $\nu\not\equiv 0$, this requires that one include in $\bB_\Kk$  the choice of specific transition data between the basic charts. 
Next, since the intersection of these embeddings $\phi_i^{ij}:U_i^{ij} \to U^{ij}$ and $\phi_j^{ij}:U_j^{ij} \to U^{ij}$ in the ``transition chart'' $\bK^{ij}$ is not controlled, the direct transition map
$(\phi_j^{ij})^{-1} \circ \phi_i^{ij}$ may not have a smooth extension that is defined on an open set.  
Therefore, we do not want to consider such maps to be morphisms in $\bB_\Kk$ since that would violate the \'etale property.
Instead, we include $\bK^{ij}$ into the set of charts, and ask that the pushforward of each perturbation $\nu_i,\nu_j$ extend to to a perturbation $\nu^{ij}$.  But now one must consider triple composites, and so on.

The upshot is that, as well as the system of basic charts $(\bK_i)_{i=1,\ldots,N}$ with footprints $F_i$,  one is led to consider a full collection of transition charts $(\bK_I)_{I\subset\{1,\ldots,N\}}$ with footprints $F_I: = \cap _{i\in I} F_i$.
To make a category, each of these ``sum charts" should have a chosen domain $U_I$, which is a smooth manifold, and the objects in $\bB_\Kk$ should be $\sqcup_I U_I$.  Further
the morphisms in the category should come from coordinate changes 
between these charts, which in particular involve embeddings $\phi_{IJ}:U_{IJ} \to U_J$ of open subsets $U_{IJ}\subset U_I$. 
Thus the space  $\Mor_{\bB_\Kk}$ will be the disjoint union of the domains $U_{IJ}$ of these coordinate changes over all relevant pairs $I,J$.\footnote{
See Definitions~\ref{def:Ku} and~\ref{def:catKu} for detailed definitions of the categories $\bB_\Kk$ and $\bE_\Kk$ along these lines.
It is worth noting here that we do not require the domains of charts to be open subsets of Euclidean space.  We could achieve this for the basic charts, since these can be arbitrarily small. However, for the transition charts one may need to make a choice between having a single sum chart for each overlap and having sum charts whose domains are topologically trivial.
We construct the former in Theorem~\ref{thm:A2}.
}

For this to form a category, all composites must exist, which is equivalent to the    
cocycle condition $\phi_{JK}\circ\phi_{IJ}=\phi_{IK}$ including the condition that domains be chosen such that $\phi_{IJ}^{-1}(U_{JK}) \subset U_{IK}$.
However, natural constructions as in Section~\ref{ss:gw} only satisfy the cocycle condition on the overlap $\phi_{IJ}^{-1}(U_{JK}) \cap U_{IK}$.
Thus already the construction of an equivalence relation from the transition data requires a refinement of the choice of domains, which we achieve in Theorem~\ref{thm:K} by iteratively choosing subsets of each $U_I$ and $U_{IJ}$. 
If we assume the cocycle condition, then 
$\bB_\Kk$ satisfies all 
properties of a nonsingular
 groupoid except 
\begin{itemize}
\item[-]  we do not assume that inverses exist;
\item[-] the \'etale condition is relaxed to require that the structure maps are smooth embeddings
(as spelled out in Definition~\ref{def:map}) rather than diffeomorphisms.
\end{itemize}
We write $|\Kk|$ for the realization $\Obj(\bB_\Kk)/\!\sim$ of $\bB_\Kk$, where $\sim$ is the equivalence relation generated by the morphisms, and denote by $\pi_\Kk:\Obj(\bB_\Kk)\to|\Kk|$ the projection.
We show in Lemma~\ref{le:Knbhd1} that 
the natural inclusion $\io_\Kk:
\oMm\to |\Kk|$ that is a homeomorphism to its image. Therefore we think of $|\Kk|$ as a {\it virtual neighbourhood} of $\oMm$.

This categorical framework and the resulting virtual neighbourhood of the moduli space is new in the Kuranishi context. The approach of both \cite{FO} (which \cite{FOOO} builds on though using different definitions) and \cite{J} is to work with equivalence classes of charts  at every point. It runs into the algebraic difficulties discussed in Section~\ref{ss:alg}.

\MS\NI
{\bf Hausdorff property:}  For a category such as  $\bB_\Kk$, it is no longer true that
the properness of $s\times t$ implies that its realization
$|\Kk|$ is Hausdorff; cf. Example~\ref{ex:Haus}.
Therefore, the easiest way to ensure that  
the realization of the perturbed zero set $|\bZ_\nu|$ 
is Hausdorff is to make $|\Kk|$ Hausdorff and check that the inclusion $|\bZ_\nu|\to |\Kk|$ is 
continuous.

The Hausdorff property (or more general properness conditions in the case of nontrivial isotropy) are not addressed in \cite{FO,FOOO,J}. Our attempts to deal with these requirements motivated the introduction of categories and a virtual neighbourhood.
Given this framework, most of Section~\ref{s:Ks} is devoted to finding a way to shrink the domains of the charts to achieve not only the cocycle condition but also ensure that $|\Kk|$ is Hausdorff.  To this end we introduce the notion of  {\it tameness} in Definition~\ref{def:tame}, which is a very strong form of the cocycle condition that gives great control on the morphisms in $\bB_\Kk$, cf.\ Lemma~\ref{le:Ku2}. 
We can achieve this if the original Kuranishi charts are additive, that is the obstruction spaces $E_i$ of the basic charts are suitably transverse. 
We then show in Proposition~\ref{prop:Khomeo} that the realization of a tame Kuranishi atlas is not only Hausdorff, but also has the property that the natural maps $U_I\to |\Kk|$ are homeomorphisms to their image. This means that we can construct a perturbation over $|\Kk|$ by working with its pullbacks to  each chart.

\MS\NI
{\bf Compactness:}   Unfortunately, even when we can make $|\Kk|$ Hausdorff, 
it is almost never locally compact or metrizable.  In fact, a typical local model is the subset $S$ of $\R^2$ formed by the union of the line $y=0$ with the half plane  $x>0$, with $\io_\Kk(\oMm) = \{y=0\}$, but given the  topology as a quotient of the disjoint union $\{y=0\}\sqcup \{x>0\}$. As we show
in Example~\ref{ex:Khomeo}, the quotient topology on $S$ is not metrizable, and even in the 
weaker subspace topology from $\R^2$ the zero set $\io_\Kk(\oMm)$ does not have a locally compact  neighbourhood in $|\Kk|$.
Therefore ``sufficiently small'' perturbations $\nu$ cannot guarantee compactness of the 
perturbed zero set $|\bZ_\nu|$.
Instead, the challenge is to find subsets of $|\Kk|$ containing $\io_\Kk(\oMm)$ that are compact 
and -- while not open -- are still large enough to contain the zero sets of appropriately perturbed sections $s+\nu$.

Similar to the Hausdorff property, this compactness is asserted in \cite{FO,FOOO,J} by quoting an analogy to the construction of an Euler class of orbibundles. However, we demonstrate in Examples~\ref{ex:Haus} and \ref{ex:Khomeo} that nontrivial Kuranishi atlases (involving domains of different dimension) -- unlike orbifolds -- never provide locally compact Hausdorff ambient spaces for the perturbation theory.

To solve the compactness issue, we introduce in Section \ref{ss:red} precompact subsets $\pi_\Kk(\Cc)$ of $|\Kk|$ with these properties; cf.\ Proposition~\ref{prop:zeroS0}.
In fact, for reasons explained below, we are forced to consider nested pairs $\Cc\subset \Vv\subset \Obj(\bB_\Kk)$ of such subsets, where the perturbation $\nu$ is defined over $\Vv$ so that the realization of its zero set is contained in $\pi_\Kk(\Cc)$.
One can think of $\pi_\Kk(\Cc)$ as a kind of neighbourhood of $\io_\Kk(\oMm)$, 
but, even though $\Cc$ is an open subset of $\Obj(\bB_\Kk)$, the image 
$\pi_\Kk(\Cc)$  is not open in $|\Kk|$ because the different components of $\Obj(\bB_\Kk)$
have different dimensions. For example, if $|\Kk|=S$ as above then $\pi_\Kk(\Cc)$ could be the union of 
$\{y=0, x<2\}$ with the set 
$\{x>1, |y|<1\}$. 
(In this example, since $\oMm$ is not compact, we 
cannot expect $\pi_\Kk(\Cc)$ to be precompact, but its closure is locally compact.)
 
\MS\NI
{\bf Construction of sections:}  We aim to construct the perturbation $\nu$ by finding
a compatible family of local perturbations $\nu_I$ in each chart $\bK_I$. 
Thus, if 
basic charts $\bK_i$ and $\bK_j$ have nontrivial overlap and we start by defining $\nu_i$,
the most naive approach is to try to  extend the partially defined perturbation
$\nu_i\circ (\phi_i^{ij})^{-1} \circ \phi_j^{ij}$  over $U_j$.
But, as seen above, the image of $(\phi_j^{ij})^{-1} \circ \phi_i^{ij}$ might be too singular to allow for an extension, and since $\phi_i^{ij}$ and $\phi_j^{ij}$ have overlapping images in $U^{ij}$ it does not help to rephrase this in terms of finding an extension of the pushforwards of these sections to $U^{ij}$.
Thus one needs some notion of a \lq\lq good coordinate system" on $\oMm$ such as in \cite{FO}, in which all compatibility conditions between the perturbations are given by pushforwards with embeddings. That is, two charts $\bK_I$ and $\bK_J$ of a \lq\lq good coordinate system" have either no overlap or all morphisms are given by an embedding $\phi_{IJ}$ or $\phi_{JI}$.

An approach towards extracting a ``good coordinate system'' from a Kuranishi atlas is given in \cite{FO} and built on by \cite{FOOO,J} but does not address compatibility with overlaps or the cocycle condition. 

We achieve this ordering by constructing a {\it reduction} $\Vv\subset\Obj(\bB_\Kk)$ in Proposition~\ref{prop:cov2}. This does not provide another Kuranishi atlas or collection of compatible charts (though see Proposition~\ref{prop:red}), but merely is a subset of the domain spaces that covers the unperturbed moduli space $\pi_\Kk\bigl(\Vv\cap s_\Kk^{-1}(0)\bigr)=\oMm$ 
and whose parts project to disjoint subsets $\pi_\Kk(\Vv\cap U_I)\cap\pi_\Kk(\Vv\cap U_J)=\emptyset$ in the virtual neighbourhood $|\Kk|$ unless there is a direct coordinate change between $\bK_I$ and $\bK_J$. 
Since the unidirectional coordinate changes induce an ordering, this allows for an iterative approach to constructing compatible perturbations $\nu_I$.
However, this construction in Proposition~\ref{prop:ext} is still very delicate and requires great control over the perturbation $\nu$ since, to ensure compactness of the zero set, 
we must construct it so that $\pi_\Kk\bigl((s+\nu)^{-1}(0)\bigr)$ lies in a precompact but generally not open set $\pi_\Kk(\Cc)$.   
In particular, this construction requires a suitable metric on $\pi_\Kk(\Vv)$, cf.\ Definition~\ref{def:metric}, which raises the additional difficulty of working with different topologies since -- as explained above -- the natural quotient topologies are almost never metrizable.

\MS\NI{\bf Regularity of sections:}   
In order to deduce the existence of transverse perturbations in a single chart from Sard's theorem, the section must be $\Cc^k$, where $k\ge 1$ is larger than the index of the Kuranishi atlas.
(This was overlooked in \cite{FO} and comments of \cite{J}.)
For applications to pseudoholomorphic curve moduli spaces this means that either a refined gluing theorem with controls of the derivatives must be proven, or a theory of stratified smooth Kuranishi is needed, which we are developing in \cite{MW:ku2}.

Moreover, when extending a transverse section $(\phi_{IJ})_*(s_I+\nu_I)$ from the image of the embedding $\phi_{IJ}$ to the rest of $U_J$, we must control its behavior in directions normal to the submanifold $\im \phi_{IJ}$ so that zeros of $s_I+\nu_I:U_I\to E_I$ in $U_{IJ}$ correspond to transverse zeros of $s_J+\nu_J:U_J\to E_J$.
That is, the derivative $\rd (s_J+\nu_J)$ must induce a surjective map from the normal bundle of $\phi_{IJ}(U_{IJ})\subset U_J$ to $E_J/\Hat\phi_{IJ}(E_I)$.
If this is to be satisfied at the intersection of several embeddings to $U_J$, then the construction of transverse sections necessitates a tangent bundle condition, which was introduced in \cite{J} and then adopted in \cite{FOOO}.
We reformulate it as an {\it index condition} relating the kernel and cokernels of $\rd s_I$ and $\rd s_J$ and can then extend transverse perturbations by requiring that $\rd \nu_J=0$ in the normal directions to all embeddings $\phi_{IJ}$. (See the notion of {\it admissible sections} in Definition~\ref{def:sect}.)

\MS\NI{\bf Uniqueness  up to cobordism:}  
Another crucial requirement on the perturbation constructions is that the resulting manifold (in the case of trivial isotropy) is unique modulo cobordism. This requires considerable effort since it does not just pertain to nearby sections of one bundle, but to sections constructed with respect to different metrics in different shrinkings and reductions of the charts.
Finally, in applications to pseudoholomorphic curve moduli spaces, a notion of equivalence between different Kuranishi atlases is needed. Contrary to the finite dimensional charts for transverse moduli spaces, or the Banach manifold charts for ambient spaces of maps, two Kuranishi charts for the same moduli space may not be directly compatible. 
Section \ref{ss:Kcobord} instead introduces a notion of {\it commensurability} by a common extension. In the application to Gromov-Witten moduli spaces \cite{MW:gw}, we expect to obtain this equivalence from an infinite dimensional index condition relating the linearized Kuranishi section to the linearized Cauchy-Riemann operator.

In order to prove invariance of the abstract VFC construction, however, we need to work with a weaker notion of cobordism of Kuranishi atlases -- a very special case of Kuranishi atlas with boundary, namely for $\oMm\times[0,1]$.
As a result, we need to repeat all shrinking, reduction, and perturbation constructions in Section~\ref{s:VMC} in a relative setting to interpolate between fixed data for $\oMm\times\{0,1\}$. Again, the rather general categorical setting -- rather than a base manifold with boundary -- introduces unanticipated subtleties into these constructions.

\MS
\NI {\bf Orientability:}
The dimension condition $\dim U_I-\dim E_I = \dim U_J-\dim E_J=:d$ together with the fact that each $s_I+\nu_I$ is transverse to $0$ implies that
the zero sets  of $s_I+\nu_I$ and $s_J+\nu_J$ both have dimension $d$, so that the embedding
$\phi_{IJ}$ does restrict to a local diffeomorphism between these local zero sets.
Thus, if all the above conditions hold, then the zero sets $(s_I+\nu_I)^{-1}(0)$ and morphisms induced by the coordinate changes do form an \'etale proper groupoid $\bZ_\nu$.
To give its realization $|\bZ_\nu|$ a well defined fundamental cycle, it remains to orient the local zero sets compatibly, i.e.\ to pick compatible nonvanishing sections of the determinant line bundles $\La^d\bigl(\ker \rd(s_I+\nu_I)\bigr)$.
These should be induced from a notion of orientation of the Kuranishi atlas, i.e.\ of sections of the unperturbed determinant line bundles $\La^{\rm max} \ker \rd s_I \otimes \bigl(\La^{\rm max} \coker \rd s_I\bigr)^*$, which are compatible with fiberwise isomorphisms induced by the embeddings $\phi_{IJ}: U_{IJ}\to U_J$.

To construct this {\it determinant line bundle} $\det(s_\Kk)$ of the Kuranishi atlas in Proposition~\ref{prop:det0}, we have to compare trivializations of determinant line bundles that arise from stabilizations by trivial bundles of different dimension.  
As recently pointed out by Zinger~\cite{Z3}, there are several ways to choose local trivializations  that are compatible with all necessary structure maps. We shall use  one that is different from both the original and the revised construction in \cite[Theorem~A.2.2]{MS}, since these lead to sign incompatibilities.
Our construction, though, does seem to coincide with ordering conventions in the construction of a canonical K-theory class on the space of linear operators between fixed finite dimensional spaces.\footnote{Thanks to Thomas Kragh for illuminating discussions on the topic of determinant bundles.
}
Finally, we use intermediate determinant bundles $\La^{\rm max} \rT U_I \otimes \bigl(\La^{\rm max} E_I\bigr)^*$ in Proposition~\ref{prop:orient1} to transfer an orientation of $\det(s_\Kk)$ to $\bZ_\nu$.

\MS
Putting everything together, we finally conclude in Theorem~\ref{thm:VMC1} that every
oriented weak additive $d$-dimensional Kuranishi atlas $\Kk$ with trivial isotropy determines a unique cobordism class of  oriented $d$-dimensional compact manifolds, that is represented by the zero sets of a suitable class of  admissible sections.
Theorem~\ref{thm:VMC2} interprets this result in more intrinsic terms, defining
a \v{C}ech homology class on $\oMm$, which we call the {\it virtual fundamental class} (VFC).
This is a stronger notion than in \cite{FO,FOOO}, where a virtual fundamental cycle is supposed to associate to any ``strongly continuous map'' $f:\oMm\to Y$ a cycle in $Y$. On the other hand, our notion of VFC does not yet provide a ``pull-push'' construction as needed for e.g.\ the construction of a chain level $A_\infty$ algebra in \cite{FOOO} by pullback of cycles via evaluation maps $\ev_1,\ldots,\ev_k:\oMm\to L$ and push forward by another evaluation $\ev_0:\oMm\to L$. 

Finally, note that our definition of a Kuranishi atlas is designed to make it possible both to construct them in applications, such as Gromov--Witten moduli spaces, and to prove that they have natural virtual fundamental cycles.  Its basic ingredients (charts, coordinate changes) are closely related to those in \cite{FO,FOOO, J}, yet we already need small variations. We make essential changes to almost all global notions and constructions and compare our notion of Kuranishi atlas to 
the various notions of Kuranishi structures
 in Remark~\ref{rmk:otherK}.

\begin{rmk}\rm  This paper makes rather few references to Li--Tian~\cite{LT} because that deals mostly with
 gluing and isotropy; in other respects it is very sketchy.  For example, it does not mention any of the analytic details in Section \ref{s:construct} below. Its Theorem~1.1  constructs the oriented Euler class of a \lq\lq generalized Fredholm bundle" $[s:\Tilde\Bb\to \Tilde\Ee]$, avoiding the Hausdorff question
 by assuming that there is a global finite dimensional bundle $\Tilde\Bb\times F$ that maps onto the local approximations.  However, in the Gromov--Witten situation this is essentially never the case (even if there is no gluing) since $\Tilde\Bb$ is a quotient of the form $\Hat\Bb/G$.  Therefore, we must work in the situation described in \cite[Remark~3]{LT}, and here they just say that the extension to this case is easy, without further comment.
Also, the proof that the structure described in Remark 3 actually exists even in the simple Gromov--Witten case that we consider in Section \ref{s:construct} lacks almost all detail. The only reference to this question is on page 79 in the course of the proof of Proposition 2.2 (page 38 in the arxiv preprint). 
Their idea is to build a global object from a covering family of basic charts using sum charts (see condition (iv) at the beginning of Section~1) and partitions of unity to extend sections. 
The paper \cite{LiuT} explains this idea with much more clarity, but unfortunately, because it does not pass to  finite  dimensional reductions, it makes serious analytic errors; cf.\ Remark~\ref{LTBS}.
As we point out in this remark, there are also serious difficulties with using partitions of unity in this context that cannot be easily circumvented by passing to finite dimensional reductions. Therefore at present it is unclear to us whether this construction can be correctly carried out.
\end{rmk}

%
%
%

\section{Differentiability issues in abstract regularization approaches}
\label{s:diff}

Any abstract regularization procedure for holomorphic curve moduli spaces needs to deal with the fundamental analytic difficulty of the reparametrization action, which has been often overlooked in symplectic topology. We thus explain in Section~\ref{ss:nodiff} the relevant differentiability issues in the example of spherical curves with unstable domain.
In a nutshell, the reparametrization $f\mapsto f\circ\ga$ with a fixed diffeomorphism $\ga$ is smooth on infinite dimensional function spaces, but the action $(\ga,f)\mapsto f\circ\ga$ of any nondiscrete family of diffeomorphisms fails even to be differentiable in any standard Banach space topology.
In geometric regularization techniques, this difficulty is overcome by regularizing the space of parametrized holomorphic maps in such a way that it remains invariant under reparametrizations. Then the reparametrization action only needs to be considered on a finite dimensional manifold, where it is smooth. It has been the common understanding that by stabilizing the domain or working in finite dimensional reductions one can overcome this differentiability failure in more general situations.
We will explain in Section~\ref{ss:DMdiff} that reparametrizations nevertheless need to be dealt with in establishing compatibility of constructions in local slices, in particular between charts near nodal curves and local slices of regular curves.
In particular, we will show the difficulties in the
global obstruction bundle approach in Section~\ref{ss:nodiff}, and for the Kuranishi atlas approach will see explicitly in Section \ref{ss:Kcomp} that the action on infinite dimensional function spaces needs to be dealt with when establishing compatibility of local finite dimensional reductions.
Finally, Section~\ref{ss:eval} explains additional smoothness issues in dealing with evaluation maps.

\subsection{Differentiability issues arising from reparametrizations}  \hspace{1mm}\\ \vspace{-3mm}
\label{ss:nodiff}

The purpose of this section is to explain the implications of the fact that the action of a nondiscrete automorphism group ${\rm Aut}(\Si)$ on a space of maps $\{ f: \Si \to M \}$ by reparametrization is not continuously differentiable in any known Banach metric.
In particular,
the space
$$
\{ f: \Si \to M \,|\, f_*[\Si]\neq 0 \}/{\rm Aut}(\Si),
$$
of equivalence classes of (nonconstant) smooth maps from a fixed domain modulo repara\-metrization of the domain, has no known completion with differentiable Banach orbifold structure.
We discuss the issue in the concrete case of the moduli space $\oMm_{1}(A,J)$ of $J$-holomorphic spheres with one marked point.\footnote{
In order to understand how any given abstract regularization technique deals with the differentiability issues caused by reparametrizations, one can test it on the example of spheres with one marked point. This is a realistic test case since since sphere bubbles will generally appear in any compactified moduli space (before regularization).
}
For the sake of simplicity let us assume that the nonzero class $A\in H_2(M)$ is such that it excludes bubbling and multiply covered curves a priori, so that no nodal solutions are involved and all isotropy groups are trivial. In that case one can describe the moduli space
\begin{align*}
\oMm_{1}(A,J)
&:= \bigl\{ (z_1,f) \in S^2 \times \Cc^\infty(S^2,M) \,\big|\, f_*[S^2]=A, \pbar f = 0 \bigr\} / \PSL(2,\C) \\
& \cong
\bigl\{ f \in \Cc^\infty(S^2,M) \,\big|\, f_*[S^2]=A, \pbar f = 0 \bigr\} / G_\infty
\end{align*}
as the zero set of the section $f\mapsto \pbar f$ in an appropriate bundle over the quotient
$$
\HBb/G_\infty \quad\text{of}\quad
\Hat\Bb:= \bigl\{ f \in \Cc^\infty(S^2,M) \,\big|\, f_*[S^2]=A \bigr\}$$
by the reparametrization action $f\mapsto f\circ\ga$ of
$$G_\infty : = \{\ga\in \PSL(2,\C) \,|\, \ga(\infty)=\infty\}.
$$
The quotient space $\HBb/G_\infty$ inherits the structure of a Fr\'echet manifold, but note that the action on any Sobolev completion
\begin{align} \label{action}
\Theta: G_\infty \times W^{k,p}(S^2,M) \to W^{k,p}(S^2,M), \quad
(\gamma,f) \mapsto f\circ\gamma
\end{align}
does not even have directional derivatives at maps $f_0\in W^{k,p}(S^2,M) \less W^{k+1,p}(S^2,M)$ since the differential\footnote{
Here the tangent space to the automorphism group $\rT_{\rm Id}G_\infty \subset\Ga(\rT S^2)$ is the finite dimensional space of holomorphic (and hence smooth) vector fields $X:S^2 \to \rT S^2$ that vanish at $\infty\in S^2$.
}
\begin{align}\label{eq:actiond}
{\rm D}\Theta ({\rm Id},f_0) : \;
\rT_{\rm Id}G_\infty
\times W^{k,p}(S^2, f_0^*\rT M) &\;\longrightarrow\; W^{k,p}(S^2, f_0^*\rT M) \\
 {(X,\xi)} \qquad\qquad\qquad\;\;\; &\;\longrightarrow\; \;\;   \xi + \rd f_0 \circ X \nonumber
\end{align}
is well defined only if $\rd f_0$ is of class $W^{k,p}$.
In fact, even at smooth points $f_0\in{\mathcal C}^\infty(S^2,M)$, this ``differential'' only provides directional derivatives of \eqref{action}, for which the rate of linear approximation depends noncontinuously on the direction. Hence \eqref{action} is not classically differentiable at any point.

\begin{remark} \rm  \label{BSdiff}
To the best of our knowledge, the differentiability failure of \eqref{action} persists in all other completions of $\Cc^\infty(S^2,M)$ to a Banach manifold -- e.g.\ using H\"older spaces.
The restriction of \eqref{action} to $\Cc^\infty(S^2,M)$ does have directional derivatives, and the differential is continuous in the $\Cc^\infty$ topology. Hence one could try to deal with \eqref{action} as a smooth action on a Fr\'echet manifold.
Alternatively, one could equip $\Cc^\infty(S^2,M)$ with a (noncomplete) Banach metric. Then
$$
\Theta:G_\infty \times \Cc^\infty(S^2,M) \to \Cc^\infty(S^2,M)
$$  has a bounded differential operator
$$
{\rm D}\Theta (\ga,f) : \rT_\ga G_\infty \times \Cc^\infty(S^2, f^*\rT M) \to \Cc^\infty(S^2, \ga^*f^*\rT M).
$$
 However, the differential fails to be continuous with respect to $f\in  \Cc^\infty(S^2,M)$ in the Banach metric.
Now continuous differentiability could be achieved by restricting to a submanifold $\Cc\subset\Cc^\infty(S^2,M)$ on which the map $f\mapsto \rd f$ is continuous. However, in e.g.\ a Sobolev or
H\"older metric, the identity operator  $\rT\Cc\to\rT\Cc$ would then be compact,
 so that $\Cc$ would have to be finite dimensional.

Finally, one could observe that the action \eqref{action} is in fact $\Cc^\ell$ when considered as
a map $G_\infty \times W^{k+\ell,p}(S^2,M) \to W^{k,p}(S^2,M)$. This might be useful for fixing the differentiability issues in the virtual regularization approaches with additional analytic arguments. In fact, this is essentially the definition of scale-smoothness developed in \cite{HWZ1} to deal with reparametrizations directly in the infinite dimensional setting.
\end{remark}

It has been the common understanding that virtual regularization techniques deal with the differentiability failure of the reparametrization action by working in finite dimensional reductions, in which the action is smooth.
We will explain below for the global obstruction bundle approach, and in Section \ref{ss:Kcomp} for the Kuranishi atlas approach, that the action on infinite dimensional spaces nevertheless needs to be dealt with in establishing compatibility of the local finite dimensional reductions.
In fact, as we show in Section~\ref{s:construct}, the existence of a consistent set of such finite dimensional reductions with finite isotropy groups for a Fredholm section that is equivariant under a nondifferentiable group action is highly nontrivial.
For most holomorphic curve moduli spaces, even the existence of not necessarily compatible reductions relies heavily on the fact that,
despite the differentiability failure, the action of the reparametrization groups generally do have local slices.
However, these do not result from a general slice construction for Lie group actions on a Banach manifold, but from an explicit geometric construction using transverse slicing conditions.

We now explain this construction,
and subsequently
show that it only defers the differentiability failure to the transition maps \eqref{transition} between different local slices.

\medskip

In order to construct local slices for the action of $G_\infty$ on a Sobolev completion of $\Hat\Bb$,
$$
\Hat\Bb^{k,p}:= \bigl\{ f \in W^{k,p}(S^2,M) \,\big|\, f_*[S^2]=A \bigr\},
$$
we will assume $(k-1)p>2$ so that $W^{k,p}(S^2)\subset \Cc^1(S^2)$.
Then any element of $\Hat\Bb^{k,p}/G_\infty$ can be represented as $[f_0]$, where the parametrization $f_0\in W^{k,p}(S^2,M)$ is chosen so that $\rd f_0(t)$ is injective for $t=0, 1 \in S^2=\C\cup\{\infty\}$.
With such a choice, a neighbourhood of $[f_0]\in \Hat\Bb^{k,p}/G_\infty$ can be parametrized by $[\exp_{f_0}(\xi)]$, for $\xi$ in a small ball in the subspace
$$
\bigl \{  \xi\in W^{k,p}(S^2,f_0^*\rT M) \ | \  \xi(t)\in \im \rd f_0(t)^\perp \; \text{for}\; t=0,1\bigr\}.
$$
Moreover, the map $\xi \mapsto [\exp_{f_0}(\xi)]$ is injective up to an action of the finite {\bf isotropy group}
$$
G_{f_0} = \{ \ga \in G_\infty \,|\, f_0\circ\ga = f_0 \} .
$$
In other words, for sufficiently small $\eps>0$, a $G_{f_0}$-quotient of
\begin{equation}\label{eq:slice}
\Bb_{f_0}:= \bigl\{ f\in \Hat\Bb^{k,p} \,\big|\, d_{W^{k,p}}(f,f_0)<\eps , f(0)\in Q_{f_0}^0 , f(1) \in Q_{f_0}^1 \bigr\}
\end{equation}
is a local slice for the action of $G_\infty$, where for some $\delta>0$
\begin{equation} \label{eq:hypsurf}
Q_{f_0}^t=\bigl\{\exp_{f_0(t)} (\xi) \,\big|\,  \xi \in \im \rd f_0(t)^\perp , |\xi|<\delta \bigr\}
\subset M
\end{equation}
 are codimension $2$ submanifolds transverse to the image of $f_0$ in two extra marked points $t=0,1$.
 For simplicity we will in the following assume that the isotropy group $G_{f_0} =\{{\rm id}\}$ is trivial, and that the submanifolds $Q_{f_0}^{t}$ can be chosen so that $f_0^{-1}(Q_{f_0}^{t})$ is unique for $t=0,1$.
Then, for sufficiently small $\eps>0$, the intersections $\im f\pitchfork Q_{f_0}^t$ are unique and transverse for all elements of $\Bb_{f_0}$. This proves that $\Bb_{f_0}$ is a {\it local slice} to the action of $G_\infty$ in the following sense.

\begin{lemma} \label{lem:slice}
For every $f_0\in\Hat\Bb^{k,p}$ such that $\rd f_0(t)$ is injective for $t=0,1$ and $G_{f_0} =\{{\rm id}\}$, there exist $\eps,\delta>0$ such that $\Bb_{f_0} \to \Hat\Bb^{k,p}/G_\infty$, $f\mapsto [f]$ is a homeomorphism to its image.
\end{lemma}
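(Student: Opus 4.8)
The plan is to verify the three conditions for $f\mapsto[f]$ to be a homeomorphism onto its image: continuity, injectivity, and continuity of the inverse. Continuity is immediate, since this map is the restriction of the quotient projection $\pi\colon\Hat\Bb^{k,p}\to\Hat\Bb^{k,p}/G_\infty$, which is moreover an open map (quotients by continuous group actions are open). For injectivity, suppose $f,f'\in\Bb_{f_0}$ satisfy $f'=f\circ\ga$ with $\ga\in G_\infty$. The slicing conditions give $f(\ga(0))=f'(0)\in Q_{f_0}^0$ and $f(\ga(1))=f'(1)\in Q_{f_0}^1$, while the transversality and uniqueness arrangements made just before the lemma (valid for $\eps,\delta$ small, using $W^{k,p}(S^2)\hookrightarrow\Cc^1(S^2)$ together with the hypothesis that $f_0^{-1}(Q_{f_0}^t)$ is a single point) force $f^{-1}(Q_{f_0}^t)=\{t\}$ for every $f\in\Bb_{f_0}$. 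Hence $\ga(0)=0$, $\ga(1)=1$, and $\ga(\infty)=\infty$ as $\ga\in G_\infty$; a M\"obius transformation fixing three points is the identity, so $f=f'$. Equivalently, every $G_\infty$-orbit meets $\Bb_{f_0}$ in at most one point -- this will also drive the argument for the inverse.

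To produce a continuous inverse I would construct a local slice retraction. Choose submersions $\pi_t$ on neighbourhoods of $f_0(t)$ in $M$ with $\pi_t^{-1}(0)=Q_{f_0}^t$ and $\pi_t(f_0(t))=0$, and set $F\colon G_\infty\times\Hat\Bb^{k,p}\to\R^2\times\R^2$, $F(\ga,g)=\bigl(\pi_0(g(\ga(0))),\,\pi_1(g(\ga(1)))\bigr)$, so that $F(\id,f_0)=0$ and $g\circ\ga$ satisfies the slicing conditions exactly when $F(\ga,g)=0$. The crux is that $\partial_\ga F(\id,f_0)\colon\rT_{\id}G_\infty\to\R^2\oplus\R^2$ is an isomorphism: the $1$-jet evaluation $X\mapsto(X(0),X(1))$ identifies $\rT_{\id}G_\infty$ (the holomorphic vector fields $(\al+\be z)\partial_z$ vanishing at $\infty$) with $\rT_0S^2\oplus\rT_1S^2$, and since $\rd f_0(t)$ is injective by hypothesis and $\ker D\pi_t(f_0(t))=\rT_{f_0(t)}Q_{f_0}^t$, transversality of $\im f_0$ to $Q_{f_0}^t$ makes $D\pi_t(f_0(t))\circ\rd f_0(t)\colon\rT_tS^2\to\R^2$ an isomorphism for dimensional reasons. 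Because $\partial_\ga F$ is jointly continuous in $(\ga,g)$ -- the one point at which the differentiability failure of the reparametrization action is felt, and it is harmless since differentiability is needed only in the finite-dimensional $G_\infty$-variable, whereas in $g$ one needs only that $g\mapsto(g,\rd g)\in\Cc^0\times\Cc^0$ is continuous, which is precisely $W^{k,p}\hookrightarrow\Cc^1$ -- a parametrized implicit function theorem (continuous dependence on the Banach-space parameter suffices) provides $\rho>0$ and a continuous map $\ga\colon\{\,d_{W^{k,p}}(\cdot,f_0)<\rho\,\}\to G_\infty$ with $\ga(f_0)=\id$ such that $\ga(g)$ is the unique element of a fixed neighbourhood of $\id$ in $G_\infty$ solving $F(\ga(g),g)=0$.

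Now fix $\eps\in(0,\rho]$ and $\delta>0$ small enough for the transversality and uniqueness statements to hold on $\Bb_{f_0}$, and put $r(g):=g\circ\ga(g)$. This is continuous on $\{d_{W^{k,p}}(\cdot,f_0)<\rho\}$, takes values in the set of maps obeying the slicing conditions, and satisfies $r|_{\Bb_{f_0}}=\id$ (for $f\in\Bb_{f_0}$ the identity already solves $F(\cdot,f)=0$, so $\ga(f)=\id$ by uniqueness). Set $\Nn:=\{d_{W^{k,p}}(\cdot,f_0)<\eps\}\cap r^{-1}\bigl(\{d_{W^{k,p}}(\cdot,f_0)<\eps\}\bigr)$, an open neighbourhood of $f_0$ with $\Bb_{f_0}\subseteq\Nn$ and $r(\Nn)\subseteq\Bb_{f_0}$. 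By the injectivity above, $r$ is constant on the intersection of each $G_\infty$-orbit with $\Nn$ (two such values lie in $\Bb_{f_0}$ and in one orbit), hence descends along the open -- therefore quotient -- map $\pi|_{\Nn}\colon\Nn\to\pi(\Nn)$ to a continuous $\bar r\colon\pi(\Nn)\to\Bb_{f_0}$. Since $r(g)$ lies in the $G_\infty$-orbit of $g$ we get $\pi(\Nn)=\pi(\Bb_{f_0})=[\Bb_{f_0}]$, and $\bar r([f])=r(f)=f$ for $f\in\Bb_{f_0}$, so $\bar r$ is a continuous inverse of $f\mapsto[f]$; in particular $\Bb_{f_0}$ maps homeomorphically onto the open neighbourhood $\pi(\Nn)$ of $[f_0]$.

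I expect the main obstacle to be the construction in the second paragraph: carrying out the implicit function theorem correctly when the parameter lives in an infinite-dimensional function space on which $G_\infty$ acts only continuously -- the key being to separate out the finite-dimensional $G_\infty$-direction, where $F$ is genuinely differentiable -- and verifying the nondegeneracy of $\partial_\ga F(\id,f_0)$, which is exactly where the injectivity of $\rd f_0(0),\rd f_0(1)$ and the transverse slicing submanifolds $Q_{f_0}^0,Q_{f_0}^1$ are used. Everything else is either formal (injectivity via three fixed points) or a routine but slightly fiddly matter of choosing the radii $\eps,\delta,\rho$ and managing the quotient topology.
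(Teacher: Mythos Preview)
Your proof is correct and follows the approach the paper indicates: the paper itself does not prove this lemma in full but only records the key observation preceding it (for $\eps,\de$ small the intersection $f^{-1}(Q_{f_0}^t)$ is unique and transverse for all $f\in\Bb_{f_0}$), defers details to the companion paper \cite{MW:gw}, and later---in the transition-map discussion around \eqref{transition}---describes the inverse exactly as you do, via the implicit-function-theorem map $f\mapsto\ga_f$ determined by $\ga_f(t)\in f^{-1}(Q_{f_1}^t)$. Your write-up supplies precisely the missing details (the three-fixed-point injectivity argument, the parametrized IFT with only continuous dependence on the infinite-dimensional parameter $g$, and the descent of the retraction $r$ through the open quotient map), all of which are sound.
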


\begin{remark} \rm \label{rmk:unique}
If $f_0$ is pseudoholomorphic with closed domain, then trivial isotropy implies somewhere injectivity, see \cite[Chapter~2.5]{MS}; however this is not true for general smooth maps or other domains.
Thus to prove Lemma~\ref{lem:slice} for general $f_0$ with trivial isotropy, we must deal with the case of non-unique intersections. In that case one obtains unique transverse intersections for $f\approx f_0$ in a neighbourhood of the chosen points in $f_0^{-1}(Q_{f_0}^{t})$ and can prove the same result. We defer the details to \cite{MW:gw}, where we also prove an orbifold version of Lemma~\ref{lem:slice} in the case of  nontrivial isotropy.
In that case, one must define the local action of the isotropy group with some care.  However, it is always defined by a formula such as \eqref{transition}, and so in general is no more differentiable than the transition maps \eqref{transition} below.
\end{remark}

The topological embeddings $\Bb_{f}\to \Hat\Bb^{k,p}/G_\infty$ of the local slices provide a cover of $\Hat\Bb^{k,p}/G_\infty$ by Banach manifold charts.
The transition map between two such Banach manifold
charts centered at $f_0$ and $f_1$ is given in terms of the local slices by
\begin{equation}\label{transition}
\Gamma_{f_0,f_1} :\;
\Bb_{f_0,f_1}:=
 \Bb_{f_0}\cap G_\infty\Bb_{f_1}
\; \longrightarrow \;
\Bb_{f_1}, \qquad f\longmapsto f\circ \gamma_f ,
\end{equation}
where $\gamma_f\in G_\infty$ is uniquely determined by $\gamma_f(t)\in f^{-1}(Q_{f_1}^{t})$ for $t=0,1$ by our choice of $\Bb_{f_1}$.
Here the differentiability of the map
\begin{equation} \label{gf}
W^{k,p}(S^2,M)\to G_\infty, \quad f\mapsto \gamma_f
\end{equation}
is determined by that of the intersection points  with the slicing conditions for $t=0,1$,
$$
 W^{k,p}(S^2,M)\to S^2, \quad f \mapsto f^{-1}(Q_{f_1}^{t}) .
$$
By the implicit function theorem, these maps are $\Cc^\ell$-differentiable if $k>\ell + 2/p$ such that $W^{k,p}(S^2)\subset \Cc^\ell(S^2)$.
However, the transition map also involves the action \eqref{action}, and thus is non-differentiable at some simple examples of $f\in W^{k,p}\less W^{k+1,p}$, no matter how we pick $k,p$.

\begin{lemma} \label{le:Gsmooth}
Let $B\subset \Bb_{f_0}$ be a finite dimensional submanifold of $\Bb_{f_0}$ with the $W^{k,p}$-topology, and assume that it lies in the subset of smooth maps, $B\subset\Cc^\infty(S^2,M)\cap  \Bb_{f_0}$.
Then the transition map \eqref{transition} restricts to a smooth map
$$
B \cap G_\infty\Bb_{f_1}
\; \longrightarrow \; \Bb_{f_1}, 
\qquad f \;\longmapsto\; \Gamma_{f_0,f_1}(f) = f\circ \gamma_f .
$$
\end{lemma}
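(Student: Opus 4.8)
The plan is to factor the restricted transition map as
\[
B\cap G_\infty\Bb_{f_1}\;\longrightarrow\;B\times G_\infty\;\longrightarrow\;\Bb_{f_1}\subset W^{k,p}(S^2,M),
\qquad f\longmapsto(f,\ga_f)\longmapsto f\circ\ga_f ,
\]
and to prove that each of the two factors is smooth; the image then lies in $\Bb_{f_1}$ by the very definition of $\ga_f$ (the surrounding discussion of \eqref{transition}). The reason the nondifferentiability of \eqref{action} is not an obstruction here is that $B$ is finite dimensional and consists of smooth maps: a local parametrization $\phi\colon O\to B$, with $O\subset\R^n$ open, should induce a genuinely smooth evaluation map $\Hat\phi\colon O\times S^2\to M$, $(p,z)\mapsto\phi(p)(z)$, so that $p\mapsto\rd(\phi(p))$ and all higher jets of $\phi(p)$ depend smoothly on $p$, and no loss of derivatives occurs. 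I would take this auxiliary statement --- that a finite dimensional submanifold of $W^{k,p}(S^2,M)$ contained in $\Cc^\infty(S^2,M)$ has a smooth associated evaluation map, equivalently that its chart maps are smooth into $W^{k',p}(S^2,M)$ for every $k'\ge k$ --- as the core of the argument; granting it, the two factors above are handled by the implicit function theorem in finite dimensions together with the standard fact that a smooth map $O\times S^2\to M$ induces a smooth map $O\to W^{k,p}(S^2,M)$ (under the standing assumption on $k,p$).

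\emph{Smoothness of $f\mapsto\ga_f$.} Since $G_\infty=\{z\mapsto az+b\mid a\in\C^\ast,\ b\in\C\}$, an element $\ga\in G_\infty$ is determined by the pair $(\ga(0),\ga(1))$, and by the choice of the slice $\Bb_{f_1}$ and Lemma~\ref{lem:slice} the transverse intersection $\im f\pitchfork Q_{f_1}^t$ is unique for $f$ near a given $f_\ast\in B\cap G_\infty\Bb_{f_1}$, with $\ga_f(t)$ the point of $f^{-1}(Q_{f_1}^t)$ near $\ga_{f_\ast}(t)$, $t=0,1$. Writing $Q_{f_1}^t$ locally as $(g^t)^{-1}(0)$ for a submersion $g^t$ on an open subset of $M$, the number $\ga_f(t)$ is, in a chart $\phi\colon O\to B$, the unique zero near $\ga_{f_\ast}(t)$ of $w\mapsto g^t(\Hat\phi(p,w))$, and transversality of $\im f$ to $Q_{f_1}^t$ says $\p_w\bigl(g^t(\Hat\phi(p,w))\bigr)$ is invertible there. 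As $(p,w)\mapsto g^t(\Hat\phi(p,w))$ is smooth by the auxiliary fact, the finite dimensional implicit function theorem gives $p\mapsto\ga_{\phi(p)}(t)$ smooth for $t=0,1$, hence $f\mapsto\ga_f\in G_\infty$ is smooth on $B\cap G_\infty\Bb_{f_1}$.

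\emph{Smoothness of $(g,\ga)\mapsto g\circ\ga$ on $B\times G_\infty$.} In a chart $\phi\colon O\to B$ this map reads $(p,\ga)\mapsto\bigl(z\mapsto\Hat\phi(p,\ga(z))\bigr)$. Since the action $G_\infty\times S^2\to S^2$ on the finite dimensional sphere is smooth, $(p,\ga,z)\mapsto\Hat\phi(p,\ga(z))$ is a smooth map $O\times G_\infty\times S^2\to M$; applying once more the standard fact, now with parameter manifold $O\times G_\infty$, shows that $(p,\ga)\mapsto\phi(p)\circ\ga$ is smooth into $W^{k,p}(S^2,M)$. Composing the two smooth factors yields the lemma.

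\emph{Expected main obstacle.} The genuine content is the auxiliary fact invoked above, namely that the chart maps of $B$ --- a priori only smooth into $W^{k,p}(S^2,M)$ --- are actually smooth into $W^{k',p}(S^2,M)$ for all $k'\ge k$, equivalently that the evaluation map $\Hat\phi$ is smooth. This is not formal: a $\Cc^1$ curve in $W^{k,p}(S^2,M)$ all of whose values happen to be smooth maps need not have derivative lying in any stronger function space. I would prove it by a bootstrap, upgrading the regularity of $\phi$ one derivative at a time, using $\Cc^\infty(S^2,M)=\bigcap_{k'\ge k}W^{k',p}(S^2,M)$ together with either elliptic regularity or --- in the situations actually arising --- the explicit exponential-map form $\phi(p)=\exp_{f_0}(\xi(p))$ of the parametrizations, whose derivatives can be read off directly. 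This is exactly where the hypotheses ``finite dimensional'' and ``$B\subset\Cc^\infty(S^2,M)$'' are used, and it is what makes the transition map smooth on $B$ in spite of the nondifferentiability of the full reparametrization action \eqref{action}.
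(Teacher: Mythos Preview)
Your factoring $f\mapsto (f,\ga_f)\mapsto f\circ\ga_f$ and your treatment of the first factor via the finite-dimensional implicit function theorem are the same as the paper's. The difference lies in the second factor. The paper proves smoothness of $\Theta_B:G_\infty\times B\to W^{k,p}$, $(\ga,f)\mapsto f\circ\ga$, by direct computation: writing out the differential ${\rm D}\Theta_B(\ga_0,f_0)(X,\xi)=\xi\circ\ga_0+\rd f_0\circ X$, checking that each summand varies continuously in $(\ga_0,f_0)$, and then iterating. The crucial step there is that $f_0\mapsto\rd f_0$ is smooth as a map $B\to W^{k,p}$, which the paper justifies by the equivalence of the $W^{k,p}$-norm and the $\Cc^\ell$-norm on the finite-dimensional tangent spaces of $B$ --- exactly the auxiliary fact you isolate. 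Your route through the joint evaluation $\Hat\phi:O\times S^2\to M$ packages the same content more cleanly: once $\Hat\phi$ is smooth, both factors follow at once from finite-dimensional IFT and the standard ``smooth family $\Rightarrow$ smooth into Sobolev'' statement, with no derivative bookkeeping or iteration needed. Either way the genuine input is the same auxiliary fact; you are right that it is not entirely formal, and the paper's appeal to ``equivalence of norms on $\rT B$'' is essentially this same hypothesis in different language, resting on finite-dimensionality in the same way.
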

\begin{proof}
Since $B$ is finite dimensional, all norms on $\rT B$ are equivalent. In particular, we equip $B$ with the $W^{k,p}$-topology in which it is a submanifold of $\Bb_{f_0}$. Then the embeddings $B\to \Cc^\ell(S^2,M)$ for all $\ell\in\N$ are continuous and hence the above discussion shows that the map $B\to G_\infty$, $f\mapsto\ga_f$ given by restriction of \eqref{gf} is smooth.
To prove smoothness of $\Gamma_{f_0,f_1}|_{B \cap G_\infty\Bb_{f_1}}$ it remains to establish smoothness of the restriction of the action $\Theta$ in \eqref{action} to 
$$
\Theta_B \,: \; G_\infty \times B \;\longrightarrow \; W^{k,p}(S^2,M), \qquad (\ga,f) \;\longmapsto\;  f\circ \ga .
$$
For that purpose first note that continuity in $f\in B$ is elementary since, after embedding $M\hookrightarrow \R^N$, this is a linear map in $f$. Continuity in $\ga$ for fixed $f\in\Cc^\infty(S^2,M)$ follows from uniform bounds on the derivatives of $f$ (and could also be extended to infinite dimensional subspaces of $W^{k,p}(S^2,M)$ by density of the smooth maps). This proves continuity of $\Theta_B$.

Generalizing \eqref{eq:actiond}, with $\rT_{\ga_0} G_\infty\subset\Ga(\ga_0^*\rT S^2)$ the space of holomorphic (and hence smooth) sections $X:S^2 \to \ga_0^*\rT S^2$ that vanish at $\infty\in S^2$, the differential of $\Theta_B$ is 
\begin{align*}
{\rm D}\Theta_B (\ga_0,f_0) : \;
\rT_{\ga_0} G_\infty \times \rT_{f_0} B &\;\longrightarrow\; W^{k,p}(S^2, f_0^*\rT M) \\
 {(X,\xi)} \qquad&\;\longrightarrow\; \;\;   \xi\circ\ga_0 + \rd f_0 \circ X .
\end{align*}
It exists and is a bounded operator at all $(\ga_0,f_0)\in G_\infty \times B$ since by assumption $f_0$ is smooth, so it remains to analyze the regularity of this operator family under variations in $G_\infty \times B$.
Denoting by $L(E,F)$ the space of bounded linear operators $E\to F$, the second term,
$$
B \;\to\; L\bigl(\rT_{\ga_0} G_\infty , W^{k,p}(S^2, f_0^*\rT M)\bigr), \quad
f_0 \;\mapsto\; (\rd f_0)_* 
\qquad\text{given by}\;   (\rd f_0)_* X = \rd f_0 \circ X ,
$$
is smooth on the finite dimensional submanifold $B$ because 
$\Cc^\ell(S^2,\R^N) \to W^{k,p}(\rT S^2,\R^N)$, $f_0\mapsto \rd f_0$ is a bounded linear map for sufficiently large $\ell$, and the $\Cc^\ell$-norm on $B\subset \Cc^\infty(S^2,\R^N)$ is equivalent to the $W^{k,p}$-norm.
The first term,
\begin{equation}\label{thetag}
G_\infty \;\to\; L\bigl(\rT_{f_0} B , W^{k,p}(S^2, f_0^*\rT M)\bigr) , \quad
\ga_0 \;\mapsto\; \theta_{\ga_0} \qquad\text{given by}\;\theta_{\ga_0}(\xi) = \xi\circ\ga_0 ,
\end{equation}
is of the same type as $\Theta_B$, hence continuous by the above arguments. 

This proves continuous differentiability of $\Theta_B$. Then continuous differentiability of the first term \eqref{thetag} follows from the same general statement about differentiability of reparametrization by $G_\infty$, and thus implies continuous differentiability of ${\rm D}\Theta_B$. Iterating this argument, we see that all derivatives of $\Theta_B$ are continuous, and hence $\Theta_B$ is smooth, as claimed.
Note however that this argument crucially depends on the finite dimensionality of $B$ to obtain continuity for the second term of ${\rm D}\Theta_B$.
\end{proof}

An important observation here is that the Cauchy--Riemann operator
$$
\pbar : \; \Hat\Bb^{k,p} \;\longrightarrow\; \Hat\Ee:= {\textstyle \bigcup_{f\in\Hat\Bb^{k,p}}} W^{k-1,p}(S^2,\Lambda^{0,1}f^*\rT M)
$$
restricts to a smooth section $\pbar:\Bb_{f_i}\to\Hat\Ee|_{\Bb_{f_i}}$ in each local slice.
The bundle map
$$
\Hat\Ga_{f_0,f_1} : \; \Hat\Ee|_{\Bb_{f_0,f_1}}  \;\longrightarrow\;  \Hat\Ee|_{\Bb_{f_1}}, \qquad
 \Hat\Ee_f  \;\ni\; \eta \;\longmapsto\; \eta \circ \rd \ga_f^{-1}  \;\in\; \Hat\Ee_{f\circ\ga_f} ,
$$
intertwines the Cauchy--Riemann operators in different local slices,
$$
\Hat\Ga_{f_0,f_1} \circ \pbar = \pbar \circ \Ga_{f_0,f_1} .
 $$
However, general perturbations of the form $\pbar + \nu : \Bb_{f_1} \to \Hat\Ee|_{\Bb_{f_1}}$, where $\nu$ is a  $\Cc^1$ section of the bundle $\Hat\Ee|_{\Bb_{f_1}}$, are {\it not} pulled back to $\Cc^1$ sections of $\Hat\Ee|_{\Bb_{f_0,f_1}}$ by $\Hat\Ga_{f_0,f_1}$ since
\begin{equation}\label{trans2}
\Hat\Ga_{f_0,f_1}^{-1}\circ \nu \circ \Ga_{f_0,f_1} : \;  f \;\longmapsto\; \nu(f\circ\ga_f) \circ \rd\ga_f^{-1}
\end{equation}
does not depend differentiably on the points $f$ in the base $\Bb_{f_0}$.
In equations \eqref{graphsp} and \eqref{graph} we  give a geometric construction of a special class of sections $\nu$ that do behave well under this pullback.

\begin{remark}\rm \label{LTBS}
The lack of differentiability in \eqref{transition} and \eqref{trans2}
poses a significant problem in the global obstruction bundle approach to regularizing holomorphic curve moduli spaces.
This approach views the Cauchy--Riemann operator $\pbar:\Ti\Bb\to\Ti\Ee$ as a section of a topological vector bundle over an ambient space $\widetilde\Bb$ of stable $W^{k,p}$-maps modulo reparametrization, with a $W^{k,p}$-version of Gromov's topology. It requires a ``partially smooth structure'' on this space, in particular a smooth structure on each stratum. For example, the open stratum in the present Gromov--Witten example is $\Hat\Bb^{k,p}/G_\infty\subset\Ti\Bb$, for which smooth orbifold charts, isotropy actions, and transition maps are explicitly claimed in \cite[Proposition~2.15]{GLu} and implicitly in \cite{LiuT}.
The latter paper does not even prove continuity of isotropy and transition maps, though an argument was supplied by Liu for the 2003 revision of \cite[\S6]{Mcv}.  However, continuity does not suffice to preserve the differentiability of perturbation sections in local trivializations of $\Ti\Ee\to\Ti\Bb$ under pullback by isotropy or transition maps.

Another serious problem with this approach is its use of cutoff functions to extend  sections
defined on infinite dimensional local slices such as $\Bb_{f_0}$ to other local slices.
Since these cutoff sections are still intended to give Fredholm perturbations of $\pbar$, the cutoff functions must be $\Cc^1$ and remain so under coordinate changes.  The paper \cite{LiuT} gives no details here.
A construction is given in \cite[Appendix~D]{LuT},  but this paper implicitly assumes an invariant notion of smoothness on the strata of $\Ti\Bb$.  It is possible that one can avoid these problems by first passing to a finite dimensional reduction as in \cite{LT}.  However, as far as we are aware,
details of such an approach have not been worked out.
If they were to be worked out, we would consider them more as an approach of Kuranishi type than an approach using a global obstruction bundle.

The global obstruction bundle approach requires a smooth structure on the ambient infinite dimensional spaces such as $\Hat\Bb^{k,p}/G_\infty$.
One way to resolve this issue would be to use the scale calculus of polyfold theory, in which the action \eqref{action} and the coordinate changes \eqref{transition} are scale-smooth, hence $\Hat\Bb^{k,p}/G_\infty$ has the structure of a scale-Banach manifold -- the simplest nontrivial example of a polyfold. It is conceivable that the constructions of \cite{LiuT,Mcv} can be made rigorous by replacing Banach spaces with scale-Banach spaces, smoothness with scale-smoothness, and all standard calculus results (e.g.\ chain rule and implicit function theorem) with their correlates in scale calculus.
It is however also conceivable that the construction of a global obstruction bundle near nodal holomorphic curves requires a compatibility of strata-wise smooth structures, along the lines of a polyfold structure on $\widetilde\Bb$.

Siebert \cite[Theorem 5.1]{Sieb} also aims for a Banach orbifold structure on a space of equivalence classes of maps.
However, his notion is that of topological orbifold, i.e.\ with continuous transition maps. Indeed, his construction of local slices uses a (problematic for other reasons) averaged version of the slicing condition in \eqref{eq:slice}; thus the transition maps have the same form as \eqref{transition}, and hence fail differentiability.\footnote{
Instead, Siebert realized that differentiability does hold in all but finitely many directions.
This local classical differentiability can also be observed in all current applications of the polyfold approach. Differing from this approach, the construction of a ``localized Euler class'' in \cite[Thm.1.21]{Sieb} requires a section whose differential varies continuously in the operator norm, even in the nondifferentiable directions. However, at least in the fairly standard analytic setup of e.g.\ \cite{HWZ:gw, w:fred}, this is not the case.
}

\end{remark}

\subsection{Differentiability issues in general holomorphic curve moduli spaces}  \hspace{1mm}\\ \vspace{-3mm}
\label{ss:DMdiff}

The purpose of this section is to explain that the differentiability issues discussed in the previous section pertain to any holomorphic curve moduli space for which regularization is a nontrivial question.
The only exception to the differentiability issues are compactified moduli spaces that can be expressed as subspace of tuples of maps and complex structures on a {\it fixed domain},
\begin{equation}\label{safe}
\oMm = \bigl\{ (f,j) \in \Cc^\infty(\Si,M)\times {\mathfrak C}_\Si   \,\big|\, \overline{\partial}_{j,J} f = 0 \bigr\} ,
\end{equation}
where ${\mathfrak C}_\Si$ is a compact manifold of complex structures on a fixed smooth surface $\Si$.
In particular, this does not allow one to divide out by any equivalence relation of the type
\begin{equation} \label{rep}
(f,j) \sim (f\circ\phi, \phi^* j) \qquad \forall \phi\in{\rm Diff}(\Si) .
\end{equation}
For moduli spaces of this form, regularization can be achieved by the simplest geometric approach; namely choosing a generic domain-dependent almost complex structure $J:\Si \to \Jj(M,\om)$,
 with no further quotient or compactification needed.
One rare example of this setting is the $3$-pointed spherical Gromov--Witten moduli space $\oMm_3(A,J)$  for a class $A$ which excludes bubbling by energy arguments, since the parametrization can be fixed by putting the marked points at $0,1,\infty\in \C\cup\{\infty\}\cong S^2$, thus setting $\Si=S^2$ and ${\mathfrak C}_\Si=\{i\}$ in \eqref{safe}.
A similar setup exists for tori with $1$ or disks with $3$ marked points in the absence of bubbling, but we are not aware of further meaningful examples.
Generally, the compactified holomorphic curve moduli spaces are of the form
$$
\bigl\{ (\Si,\bz,f) \,\big|\, (\Si,\bz) \in {\mathfrak R}, f: \Si \to M, \overline{\partial}_{J} f = 0 \bigr\}  / \sim
$$
with
$$
 (\Si,\bz, f) \sim (\Si' , \phi^{-1}(\bz), f\circ\phi) \qquad \forall \phi:\Si'\to\Si .
$$
Here ${\mathfrak R}$ is some space of Riemann surfaces $\Si$ with a fixed number $k\in\N_0$ of pairwise distinct marked points $\bz\in\Si^k$, which contains regular as well as broken or nodal surfaces.
In important examples (Floer differentials and one point Gromov--Witten invariants arising from disks or spheres) all domains $(\Si,\bz)$ are unstable, i.e.\ have infinite automorphism groups.
If the regular domains are stable, unless bubbling is a priori excluded, ${\mathfrak R}/\!\!\sim$ is still not a 
Deligne--Mumford space since one has to allow nodal domains $(\Si,\bz)$ with unstable components to describe sphere or disk bubbles.

On the complement of nodal surfaces, these moduli spaces have local slices of the form \eqref{safe}
with additional marked points $\bz\in \Si^k \less \Delta$.
In the case of unstable domains, the slices are constructed by stabilizing the domain with additional marked points given by intersections of the map with auxiliary hypersurfaces.
In the case of stable domains, the slices are constructed by pullback of the complex structures to a fixed domain $\Si$, or fixing some of the marked points.
In fact, stable spheres, tori, and disks have a single slice covering the interior of the Deligne--Mumford space $\{(\Si,\bz) \;\text{regular, stable}\}/\!\!\sim$ given by fixing the surface and letting all but $3$ resp.\ $1$ marked point vary.
Using such slices, the differentiability issue of reparametrizations still appears in many guises:
\begin{enumerate}
\item
The transition maps between different local slices
-- arising from different choices of fixed marked points or auxiliary hypersurfaces --
are reparametrizations by biholomorphisms that vary with the marked points or the maps.
The same holds for local slices arising from different reference surfaces, unless the two families of diffeomorphisms to the reference surface are related by a fixed diffeomorphism, and thus fit into a single slice.
\item
A local chart for ${\mathfrak R}$ near a nodal domain is constructed by gluing the components of the nodal domain to obtain regular domains. Transferring maps from the nodal domain to the nearby regular domains involves reparametrizations of the maps that vary with the gluing parameters.
\item
The transition map between a local chart near a nodal domain and a local slice of regular domains is given by varying reparametrizations. This happens because the local chart produces a family of Riemann surfaces that varies with gluing parameters, whereas the local slice has a fixed reference surface.
\item
Infinite automorphism groups act on unstable components of nodal domains.
\end{enumerate}

The geometric regularization approach deals with issues (i), (iii), and (iv) by dealing with the biholomorphisms between domains only after equivariant transversality is achieved. This is possible only in restricted geometric settings; in particular it fails unless multiply covered spheres can be excluded in (iv).
Similarly, the geometric approach deals with issue (ii) by making gluing constructions only on finite dimensional spaces of smooth solutions that are cut out transversely.
We show in Remark~\ref{LTBS} and Section~\ref{ss:Kcomp} that these issues are highly nontrivial to deal with in abstract regularization approaches.
In the polyfold approach described in Section~\ref{ss:poly}, it is solved by introducing the notion of scale-smoothness for maps between scale-Banach spaces, in which the reparametrization action is smooth.
The other approaches have no systematic way of dealing with a symmetry group that acts nondifferentiably.

\begin{remark}\rm
One notable partial solution of the differentiability issues is the construction of Cieliebak-Mohnke \cite{CM} for Gromov--Witten moduli spaces in integral symplectic manifolds.\footnote
{
Ionel lays the foundations for a related approach in \cite{Io}.}
They use a fixed set of symplectic hypersurfaces to construct a global slice to the equivalence relation \eqref{rep}.
This reduces the differentiability issues to the gluing analysis near nodal curves, where the construction of a pseudocycle does not require differentiability.
This method fits into the geometric approach as described in Section~\ref{ss:geom} by working with a larger set of perturbations. The existence of suitable hypersurfaces is a special geometric property of the symplectic manifold and the type of curves considered.
\end{remark}

\subsection{Smoothness issues arising from evaluation maps} \hspace{1mm}\\ \vspace{-3mm} \label{ss:eval}

Another less dramatic differentiability issue in the regularization of holomorphic curve moduli spaces arises from evaluating maps at varying marked points.
This concerns evaluation maps of the form
$$
{\rm ev_i}: \;
\bigl\{ \bigl(\Si, \bz=(z_1,\ldots,z_k) ,f \bigr) \,\big|\,  \ldots \bigr\}/\!\!\sim \;\;\longrightarrow\;  M ,
\qquad
[\Si,\bz,f] \;\longmapsto\;  f(z_i)
$$
in situations when they need to be regularized while the moduli space is being constructed,
e.g.\ if they need to be transverse to submanifolds of $M$ or are involved in its definition via fiber products.
In those cases, the evaluation map needs to be included in the setup of a Fredholm section. However, on infinite dimensional function spaces its regularity depends on the Banach norm on the function space.
As a representative example, the map
\begin{equation} \label{evmap}
{\rm ev}: \; S^2 \times \Cc^\infty(S^2,M) \;\longrightarrow\; M , \qquad
(z,f) \;\longmapsto\; f(z)
\end{equation}
is $\Cc^\ell$ with respect to a Banach norm on $\Cc^\infty(S^2,M)$ only if the corresponding Banach space of functions, e.g.\ $\Cc^k(S^2)$ or $W^{k,p}(S^2)$, embeds continuously to $\Cc^\ell(S^2)$, e.g.\ if $k\geq\ell$ resp.\ $(k-\ell)p>2$. This can be seen from the explicit form of the differential
$$
{\rm D}_{(z_0,f_0)}{\rm ev}: \; T_{z_0}S^2 \times \Cc^\infty(S^2,f_0^*\rT M) \;\longrightarrow\; \rT_{f_0(z_0)}M , \qquad
(Z,\xi) \;\longmapsto\;  \rd f_0 (Z) + \xi(z_0)  ,
$$
whose regularity is ruled by the regularity of $\rd f_0$.
We will encounter this issue in the construction of a smooth domain for a Kuranishi chart in Section~\ref{ss:gw}, where the evaluation maps are used to express the slicing conditions that provide local slices to the reparametrization group. There we are able to deal with the lack of smoothness of \eqref{evmap} by first constructing a ``thickened solution space'', which is a finite dimensional manifold consisting of smooth maps and marked points that do not satisfy the slicing condition yet. Then the slicing conditions can be phrased in terms of the evaluation restricted to a finite dimensional submanifold of $\Cc^\infty(S^2,M)$.
This operator is smooth, but now it is nontrivial to establish its  transversality.

\begin{lemma} \label{le:evsmooth}
Let $B\subset W^{k,p}(S^2,M)$ be a finite dimensional submanifold, and assume that it lies in the subset of smooth maps, $B\subset\Cc^\infty(S^2,M)$.
Then the evaluation map \eqref{evmap} restricts to a smooth map
$$
\ev_B \; : \; S^2 \times B
\; \longrightarrow \; M , 
\qquad (z,f) \;\longmapsto\; f(z) .
$$
\end{lemma}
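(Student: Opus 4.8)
The plan is to reduce the smoothness of $\ev_B$ to a statement about finite-dimensional submanifolds of smooth functions, exactly parallel to the proof of Lemma~\ref{le:Gsmooth}. First I would embed $M\hookrightarrow\R^N$ isometrically, so that it suffices to prove smoothness of the $\R^N$-valued map $(z,f)\mapsto f(z)$ on $S^2\times B$; composition with a smooth retraction onto $M$ (defined in a tubular neighbourhood) then recovers the statement for $M$. Next, since $B$ is a finite-dimensional submanifold, all norms on $\rT B$ are equivalent; in particular, although $B$ carries the $W^{k,p}$-topology making it a submanifold of $W^{k,p}(S^2,M)$, the inclusion $B\hookrightarrow\Cc^\ell(S^2,\R^N)$ is a smooth embedding for every $\ell\in\N$. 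This is the crucial point where finite dimensionality of $B$ is used: the map $f\mapsto \rd^{\,\ell}\! f$ is a bounded linear operator $\Cc^m(S^2,\R^N)\to W^{k,p}(\Lambda^\ell, \R^N)$ for $m$ large, and on the finite-dimensional $B$ the $W^{k,p}$- and $\Cc^\ell$-norms are equivalent, so $B\subset\Cc^\infty(S^2,\R^N)$ with all the $\Cc^\ell$-structures agreeing with the submanifold structure.

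Having arranged this, the evaluation map in question factors as $S^2\times B \hookrightarrow S^2\times\Cc^1(S^2,\R^N)\overset{\ev}{\longrightarrow}\R^N$, and the classical evaluation $\Cc^1(S^2,\R^N)\times S^2\to\R^N$ is continuously differentiable, with differential at $(z_0,f_0)$ given by $(Z,\xi)\mapsto \rd f_0(Z)+\xi(z_0)$ as recorded in the excerpt. Composing with the smooth embedding $B\hookrightarrow\Cc^{\ell+1}(S^2,\R^N)$ and iterating: the derivative of $\ev_B$ at $(z_0,f_0)$ involves $\rd f_0$ evaluated at $z_0$ and the point evaluation $\xi\mapsto\xi(z_0)$, i.e.\ it is built from $\ev$ on $S^2\times B'$ where $B'$ is the finite-dimensional submanifold $\{\rd f : f\in B\}$ of $\Cc^\infty(S^2,\R^{N}\otimes(\rT S^2)^*)$ together with evaluation of derivatives. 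Since $B'$ is again a finite-dimensional submanifold lying in smooth sections, the same argument applies to it, and an induction on the order of the derivative shows that all derivatives $\rd^{\,m}\ev_B$ are continuous. Hence $\ev_B$ is $\Cc^\infty$.

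The main obstacle — though it is a mild one here, unlike in Lemma~\ref{le:Gsmooth} where the reparametrization action genuinely fails differentiability in infinite dimensions — is organizing the bookkeeping for the inductive step: one must check that the derivative $\rd\ev_B$ is again of the same form as $\ev_B$ (evaluation on a finite-dimensional submanifold of smooth maps), so that the induction closes. Concretely, $\rd\ev_B$ at $(z,f)$ is $\rd f(z)$ plus a linear pairing with $\xi(z)$; both terms are continuous because they are obtained by evaluating the smooth maps $f$ and $\rd f$ (which range in finite-dimensional submanifolds of $\Cc^\infty$) at the variable point $z$, and by the continuity of point evaluation $\Cc^1\to\R^N$ in $z$. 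Finally I would remark, as in Lemma~\ref{le:Gsmooth}, that the argument crucially uses the finite dimensionality of $B$ — on an infinite-dimensional subspace of $W^{k,p}$, the map $f\mapsto\rd f$ is not bounded into $W^{k,p}$, so the $\Cc^\ell$-control is lost and $\ev$ regains only finitely much differentiability, matching the Sobolev embedding $W^{k,p}\subset\Cc^\ell$.
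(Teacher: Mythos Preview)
Your proposal is correct and follows essentially the same strategy as the paper: embed $M\hookrightarrow\R^N$, use finite-dimensionality of $B$ to get smooth inclusions $B\hookrightarrow\Cc^\ell$ for every $\ell$, and then run an induction in which each derivative of the evaluation map is again expressed in terms of evaluation of (one more derivative of) the map. The paper organizes the induction slightly differently---it proves the abstract statement that ${\rm Ev}:\C\times\Cc^k(\C,\R^n)\to\R^n$ is $\Cc^k$ and then composes with the smooth inclusion $B\hookrightarrow\Cc^k$, rather than iterating directly on $B$ via $B'=\{df:f\in B\}$---but the content is the same, and both versions hinge on the identical observation that $f\mapsto df$ is bounded from $\Cc^{k+1}$ to $\Cc^k$, which on the finite-dimensional $B$ becomes a smooth map.
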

\begin{proof}
We will prove this by an iteration similar to the proof of Lemma~\ref{le:Gsmooth}, with Step $k$ asserting that maps of the type
\begin{equation}\label{type}
{\rm Ev} \;:\;
 \C \times \Cc^k(\C, \R^n )  \; \longrightarrow \; \R^n , \qquad (z,f) \;\longmapsto\; f(z) 
\end{equation}
are $\Cc^k$. 
In Step $0$ this proves continuity of the evaluation \eqref{evmap} on Sobolev spaces $W^{k,p}(S^2,M)$ that continuously embed to $\Cc^0(S^2,M)$.
In Step $k$ this proves that $\ev_B$ is $\Cc^k$ if we can check that the inclusion $B\hookrightarrow \Cc^k(S^2, M)$ is smooth if $B$ is equipped with the $W^{k,p}$-topology. Indeed, embedding $M\hookrightarrow\R^N$, this is the restriction of a linear map, which is bounded (and hence smooth) since $B$ is finite dimensional.
Hence to prove smoothness of $\ev_B$ it remains to perform the iteration.

Continuity in Step $0$ holds since we can estimate, given $\eps>0$,
\begin{align*}
\bigl| f(z) - f'(z') \bigr|
&\;\le\; 2 \| f - g\|_{\Cc^0} + \bigl| g(z) - g(z') \bigr| + \bigl|f(z') - f'(z')\bigr| \\
&\;\le\; \tfrac 12 \eps + \|\rd g\|_\infty |z-z'| + \|f - f'\|_{\Cc^0}  \;\le\;  \eps ,
\end{align*}
where we pick $g\in\Cc^1(\C,\R^n)$ sufficiently close to $f$, and then obtain the $\eps$-estimate for $(f',z')$ sufficiently close to $(f,z)$.

To see that Step $k$ implies Step $k+1$ we express the differential ${\rm D}_{(z_0,f_0)}\,{\rm Ev}:(Z,\xi)\mapsto \xi(z_0) + \rd_{z_0} f_0(Z)$ as sum of two operator families. The first family, 
$$
\C  \;\longrightarrow\; L\bigl( \Cc^{k+1}(\C, \R^n )  , \R^n \bigr) , \qquad
z_0 \;\longmapsto\;  {\rm Ev}(\cdot, z_0)  ,
$$
can be written as composition of $\C  \to L\bigl( \Cc^{k}(\C, \R^n )  , \R^n \bigr)$, $z_0 \mapsto {\rm Ev}(\cdot, z_0)$, which is $\Cc^k$ by Step $k$, and the bounded linear operator $L\bigl( \Cc^k(\C, \R^n )  , \R^n \bigr) \to L\bigl( \Cc^{k+1}(\C, \R^n )  , \R^n \bigr)$.
The second family,
$$
\C \times \Cc^{k+1}(\C, \R^n )  \;\longrightarrow\;  L\bigl( \C , \R^N \bigr) , \qquad
(z_0, f_0) \;\longmapsto\;  \rd_{z_0} f_0,
$$
can be written as composition of the linear map 
\begin{equation}\label{bugger}
\C \times \Cc^{k+1}(\C, \R^n )  \;\longrightarrow\; \C\times \Cc^k(\C,\bigl(\C, L(\C,\R^n) \bigr), \qquad 
(z_0, f_0) \;\longmapsto\;  (z_0, \rd f_0) ,
\end{equation}
which is a bounded linear operator hence smooth, and the evaluation map
$$
\C \times  \Cc^k(\C,\bigl(\C, L(\C,\R^n) \bigr) \;\longrightarrow\; L(\C,\R^n), \qquad
(z_0, \eta) \;\longmapsto\; \eta(z_0) ,
$$
which is of the type \eqref{type} dealt with in Step $k$, hence also $\Cc^k$ by iteration assumption.
This proves that the differential of evaluation maps of type ${\rm Ev} : \C \times \Cc^{k+1}(\C,\R^n) \to \R^n$  is $\Cc^k$, i.e.\ th emaps are $\Cc^{k+1}$, which finishes the iteration step and hence proof of smoothness of $\ev_B$.

Again note that this argument makes crucial use of the finite dimensionality of $B$ to obtain continuity of the embeddings $B\hookrightarrow \Cc^k(S^2, M)$ to prove that $\ev_B$ is $\Cc^k$. Here the increase in differentiability index $k$ is necessary  to obtain boundedness of \eqref{bugger} in the iteration step.
\end{proof}

\section{On the construction of compatible finite dimensional reductions}  \label{s:construct}

This section gives a general outline of the construction of a Kuranishi atlas on a given moduli space of holomorphic curves, concentrating on the issues of  dividing by the reparametrization action and making charts compatible. We thus use the example of
the Gromov--Witten moduli space $\oMm_{1}(A,J)$ of $J$-holomorphic curves of genus $0$ with one marked point, and assume that the nonzero class $A\in H_2(M)$ is such that it excludes bubbling and multiply covered curves a priori.
(For example, $A$ could be $\om$-minimal as assumed in Section~\ref{ss:geom}.)
This allows us to use the framework of smooth Kuranishi atlases with trivial isotropy, that is developed in
Sections ~\ref{s:chart}--\ref{s:VMC}
of this paper.
The additional difficulties of finite isotropy groups and nodal curves require a stratified smooth
groupoid setting and will be developed in \cite{MW:ku2,MW:gw}.
We do not claim that this is a general procedure for regularizing other moduli
spaces of holomorphic curves, but it does provide a guideline for similar constructions.

Recall that in this simplified setting the compactified Gomov--Witten moduli space
\begin{align*}
\oMm_{1}(A,J)
&:=  \bigl\{ (z_1=\infty, f) \in S^2\times \Cc^\infty(S^2,M) \,\big|\, f_*[S^2]=A, \pbar f = 0 \bigr\} / G_\infty
\end{align*}
is the solution space of the Cauchy--Riemann equation modulo reparametrization by
$$
G_\infty : = \{\ga\in \PSL(2,\C) \,|\, \ga(\infty)=\infty\}.
$$
We begin by discussing the construction of basic Kuranishi charts for $\oMm_{1}(A,J)$ in Section~\ref{ss:Kchart}, where we find that an abstract approach runs into differentiability issues in reducing to a local slice of the action of $G_\infty$. However, this can be overcome by using the infinite dimensional local slices that are constructed geometrically in Section~\ref{ss:nodiff}.
In Section~\ref{ss:Kcomp} we discuss the compatibility of a pair of basic Kuranishi charts, showing again that a  sum chart and coordinate changes cannot be constructed abstractly (e.g.\ from the given basic charts), but require specifically constructed obstruction bundles, which transfer well under the action of $G_\infty$.
Finally in Section~\ref{ss:gw} we give an outline of the construction of a full Kuranishi atlas for $\oMm_1(A,J)$.

\subsection{Construction of basic Kuranishi charts} \label{ss:Kchart}  \hspace{1mm}\\ \vspace{-3mm}

The construction of basic Kuranishi charts for the Gromov--Witten moduli space $\oMm_{1}(A,J)$ requires local finite dimensional reductions of the Cauchy--Riemann operator
\begin{equation} \label{eq:dbar}
\pbar : \; \Hat\Bb^{k,p}= W^{k,p}(S^2,M) \;\longrightarrow\; \Hat\Ee:= {\textstyle \bigcup_{f\in\Hat\Bb^{k,p}}} W^{k-1,p}(S^2,\Lambda^{0,1}f^*\rT M),
\end{equation}
and simultaneously a reduction of the noncompact Lie group $G_\infty$ to a finite isotropy group; namely the trivial group in the case considered here.
We begin by giving an abstract construction of a finite dimensional reduction for an abstract equivariant Fredholm section. Note that by the previous discussion, holomorphic curve moduli spaces do not exactly fall into this abstract setting.
However, our purpose is to demonstrate the need to deal with the reparametrization group in infinite dimensional settings.

\begin{remark}\rm
To simplify the reading of the following sections, let us explain our notational philosophy.
We use curly letters for locally noncompact spaces and roman letters for finite dimensional spaces.
We also use the hat superscript to denote spaces on which an automorphism group acts, or the slicing conditions are not (yet) applied. For example, $\Bb_{f_0} \subset \Hat \Bb^{k,p}$ is an infinite dimensional local slice in a Banach manifold $\Hat \Bb^{k,p}$ of maps,
the {\it local thickened solution space} $\Hat U$ is a finite dimensional submanifold of $\Hat \Bb$,
 and $U\subset \Hat U$ is the subset satisfying a slicing condition.

For bundles we again use curly letters if the fibers are infinite dimensional and roman letters if they are finite dimensional, with hats indicating that the base is infinite dimensional and tildes indicating that it is finite dimensional.
For example, $\Hat\Ee\to\Hat\Bb^{k,p}$ is a bundle with infinite dimensional fibers over a Banach manifold, while $\Hat E \subset \Hat\Ee|_{\Hat\Bb}$ has finite dimensional fibers $\Hat E |_f$ over points $f\in\Hat\Bb$ in an open subset $\Hat\Bb \subset \Hat\Bb^{k,p}$.
We will always write the fiber at a point as a restriction $\Ti E|_f$, since we require subscripts for other purposes.
Namely, when constructing a finite dimensional reduction near a point $f_0$, we use $f_0$ as subscript for the domains $U_{f_0}$ and restrictions of the bundles $\Ti E_{f_0}=\Hat E|_{U_{f_0}}$.
Moreover, we denote by $E_{f_0}$ a finite dimensional vector space isomorphic to the fibers
$(\Ti E_{f_0})|_f$ of $\Ti E_{f_0}$.

Finally, the symbol $\approx$  is used to mean ``sufficiently close to". Thus for $\ga\in G_\infty$, the set
$\{\ga\approx id\}$ is a neighbourhood of the identity.
\end{remark}

\begin{lemma} \label{le:fobs}
Suppose that $\si:\Hat\Bb\to\Hat\Ee$ is a smooth Fredholm section that is equivariant under the smooth, free, proper action of a finite dimensional Lie group $G$.
For any $f\in\si^{-1}(0)$ let $E_f\subset \Hat\Ee|_f$
be a finite rank complement of $\im {\rm D}_f\si\subset \Hat\Ee|_f$,
and let $\rT_f (Gf)^\perp \subset \ker{\rm D}_f\si$ be a complement of the tangent space of the $G$-orbit inside the kernel.
There exists a smooth map $s_f: W_f \to E_f$ on a neighbourhood $W_f\subset \rT_f (Gf)^\perp$ of~$0$ and a homeomorphism $\psi_f: s_f^{-1}(0)\to \si^{-1}(0)/G$ to a neighbourhood of $[f]$.
\end{lemma}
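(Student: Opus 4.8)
The plan is to construct the finite dimensional reduction via a standard Lyapunov--Schmidt argument, using the local slice for the $G$-action to cut down to finitely many dimensions, and then identifying the reduced zero set with the moduli space quotient. First I would set up the slice: since $G$ acts smoothly, freely, and properly, a neighbourhood of $f$ in $\Hat\Bb$ decomposes (via the exponential map for a $G$-invariant metric, or an explicit slicing construction as in Section~\ref{ss:nodiff}) as $G \times S_f$, where $S_f \subset \Hat\Bb$ is a submanifold through $f$ with $\rT_f S_f$ a chosen complement to $\rT_f(Gf)$; we may arrange $\rT_f S_f = \rT_f(Gf)^\perp \oplus \im D_f\si \cap (\text{something})$, but the cleanest choice is to take $\rT_f S_f$ to be any complement of $\rT_f(Gf)$ that contains $\rT_f(Gf)^\perp$. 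By equivariance, $\si|_{S_f}$ is a smooth Fredholm section of $\Hat\Ee|_{S_f}$, and its zero set near $f$ maps homeomorphically onto a neighbourhood of $[f]$ in $\si^{-1}(0)/G$ by $g \mapsto [g]$ (this uses freeness and properness to get the homeomorphism rather than just a continuous bijection).

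Next I would apply the implicit function theorem to the reduced section. On $S_f$ the linearization $D_f(\si|_{S_f}) : \rT_f S_f \to \Hat\Ee|_f$ has image $\im D_f\si$ (since adding back the orbit directions contributes nothing to the image of $D_f\si$ that wasn't already there — the orbit directions lie in $\ker D_f\si$) and kernel $\rT_f(Gf)^\perp = W_f$'s ambient space, so it is still Fredholm of the same index, now with a genuine finite dimensional cokernel $E_f$. Project $\si|_{S_f}$ to $\Hat\Ee|_f / E_f \cong \im D_f\si$ using a local trivialization of $\Hat\Ee$ near $f$; the resulting equation is transverse at $f$, so by the implicit function theorem its solution set near $f$ is a finite dimensional manifold whose tangent space at $f$ is $\rT_f(Gf)^\perp$. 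Parametrize this solution manifold by a neighbourhood $W_f \subset \rT_f(Gf)^\perp$ of $0$, and define $s_f : W_f \to E_f$ to be the remaining ($E_f$-)component of $\si|_{S_f}$ along this solution manifold, read off in the trivialization. Then $s_f$ is smooth, $s_f(0)=0$, and $s_f^{-1}(0)$ is exactly the zero set of $\si|_{S_f}$ near $f$; composing the identification of the latter with $\si^{-1}(0)/G$ gives the homeomorphism $\psi_f : s_f^{-1}(0) \to \si^{-1}(0)/G$ onto a neighbourhood of $[f]$.

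The main obstacle, as the surrounding discussion of this paper emphasizes at length, is that the first step — constructing a smooth local slice $S_f$ with a smooth reduced section $\si|_{S_f}$ — is precisely where abstract Lie-group-on-Banach-manifold slice theorems fail for the reparametrization action, because that action is not differentiable. So the honest content of the lemma is \emph{not} the Lyapunov--Schmidt reduction, which is routine once a smooth slice is in hand, but the \emph{input hypothesis} of a smooth proper free action; in the holomorphic curve application this hypothesis is false and must be replaced by the hands-on geometric slices $\Bb_{f_0}$ of Lemma~\ref{lem:slice}, together with the finite-dimensionality trick of Lemma~\ref{le:Gsmooth} (restricting to $B \subset \Cc^\infty \cap \Bb_{f_0}$) to recover smoothness. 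Thus within the abstract statement of Lemma~\ref{le:fobs} there is no real obstacle beyond bookkeeping — choosing the trivialization of $\Hat\Ee$, checking that the implicit-function parametrization has the advertised tangent space, and verifying the quotient map is a homeomorphism using properness — and I would flag that the lemma is stated abstractly only to isolate the reduction step, with the genuine analytic difficulty deferred to the concrete construction in Section~\ref{ss:gw}.
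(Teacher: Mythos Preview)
Your proof is correct; the organization differs from the paper's but the essential ingredients are the same.

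You slice first (building an infinite-dimensional slice $S_f$ via the smooth, free, proper action), then do Lyapunov--Schmidt on $\si|_{S_f}$. The paper reverses the order: it first applies the implicit function theorem on all of $\Hat\Bb$ to cut out the finite-dimensional thickened solution space $\Hat U_f = \{g : \si(g)\in \Hat E\}$ with $\rT_f\Hat U_f = \ker D_f\si$, parametrizes it by $\Hat W_f\subset \ker D_f\si$, and only \emph{then} slices by intersecting with $\rT_f(Gf)^\perp$; the homeomorphism property of $\psi_f$ is established afterwards via the local diffeomorphism $(\gamma,\xi)\mapsto \gamma\cdot\Exp_f(\xi)$, which is where the continuous differentiability of the action enters. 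Both routes arrive at the same $W_f$ and $s_f$, and both rely on smoothness of the $G$-action at exactly one place (you use it for the existence of a smooth infinite-dimensional slice; the paper uses it for the local diffeomorphism \eqref{GBS}). The paper's ordering is chosen deliberately to isolate that one use of differentiability and to make visible, in Remark~\ref{FOglue}, what breaks when the action is not smooth: having reduced to finite dimensions \emph{before} invoking the group action, one can ask whether finite-dimensional smoothness of the action might suffice --- and the remark explains why even that is delicate. Your route is cleaner as an abstract proof but front-loads the slice theorem, which is precisely the step unavailable in the holomorphic-curve setting; your closing paragraph correctly identifies this as the real content.
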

\begin{proof}
Let $\Hat E \subset\Hat\Ee|_{\Hat \Vv}$ be the trivial extension of $E_f\subset \Hat\Ee|_f$
given by a local trivialization $\Hat\Ee|_{\Hat \Vv} \cong \Hat\Vv \times \Hat\Ee|_f$
over an open neighbourhood $\Hat\Vv\subset\Hat\Bb$ of $f$.
Then $\Pi\circ\si : \Hat\Vv \to \Hat\Ee_{\Hat\Vv}/\Hat E$
is a smooth Fredholm operator that is transverse to the zero section. Thus by the implicit function theorem the thickened solution space
$$
\Hat U_f := \{ \, g\in \Hat\Vv \,|\, \si(g)\in \Hat E \, \} \; \subset \Hat\Bb
$$
is a submanifold of finite dimension ${\rm ind}\,{\rm D}_f\si + {\rm rk}\,E_f$. In particular, for small $\Hat\Vv$, there is an exponential map $\rT_f \Hat U_f \supset \Hat W_f \to \Hat U_f$. More precisely, this is a diffeomorphism
$$
\exp_f: \; \ker {\rm D}_f \si \;\supset\; \Hat W_f \;\overset{\cong}{\longrightarrow} \; \{\, g\in \Hat\Vv \,|\, \si(g)\in \Hat E \,\} \;=\; \Hat U_f
$$
from a neighbourhood $\Hat W_f\subset \ker {\rm D}_f \si$
of $0$ with $\exp_f(0)=f$ and $\rd_0\exp_f : \ker{\rm D}_f\si \to \rT_f \Hat\Bb$ the inclusion.
Note here that we chose the minimal obstruction space $E_f$ so that
$$
\rT_f \Hat U_f \;=\;
({\rm D}_f\si)^{-1}(E_f) \;=\; \ker {\rm D}_f(\Pi\circ\si) \;=\;  \ker {\rm D}_f\si.
$$
Via this exponential map we then obtain maps
\begin{align*}
\Hat s : \;\Hat W_f &\to \exp_f^*\Hat E,
\qquad\qquad\;\, \xi\mapsto \si(\exp_f(\xi))  , \\
\Hat\psi : \;\Hat s^{-1}(0) &\to \si^{-1}(0)/G, \quad\quad \xi\mapsto [\exp_f(\xi)]
\end{align*}
such that the section $\Hat s$ is smooth and  $\Hat\psi$ is continuous with image $[\Hat\Vv\cap\si^{-1}(0)]$.
Restricting to the complement of the infinitesimal action, $W_f:= \Hat W_f \cap \rT_f (Gf)^\perp$, and
trivializing
$\exp_f^*\Hat E \cong \Hat W_f \times E_f$ we obtain a smooth map $s_f$ and a continuous map $\psi_f$,
\begin{align*}
s_f:= \Hat s|_{\rT_f (Gf)^\perp} \;\; &: \; \quad W_f \to E_f,  \\
\psi_f:= \Hat\psi_f|_{\rT_f (Gf)^\perp} &: \; s_f^{-1}(0) \to \si^{-1}(0)/G .
\end{align*}
We need to check that $\psi_f$ is injective i.e.\  that every orbit of $G$ in $\Hat W_f$ intersects $\exp_f(s_f^{-1}(0))$ at most once.
We claim that this holds for $\Hat \Vv$ sufficiently small. By contradiction suppose $s_f^{-1}(0) \ni \xi_i, \xi'_i\to 0$, $\ga_i\in G\less\{{\rm id}\}$ satisfy $\ga_i\cdot \exp_f(\xi_i)= \exp_f( \xi'_i)$.
By continuity of $\exp_f$ this implies $(\ga_i\cdot\exp_f(\xi_i),\exp_f(\xi'_i))\to (f,f)$, and properness of the action implies $\ga_i\to\ga_\infty\in G$ for a subsequence. Since the action is also free, we have $\ga_\infty={\rm id}$.
This will constitute a contradiction once we have proven that the ``local action'' $\{\ga\approx{\rm id}\} \times W_f \to \si^{-1}(0)/G$ is injective on a sufficiently small neighbourhood of $({\rm id},0)$.
So far we have only used the differentiability of the $G$-action at a fixed point $f\in\Hat\Bb$ to define
$\rT_f(Gf)$. However, the proof of injectivity of the local action as well as local surjectivity of $\psi_f$ will rely heavily on the continuous differentiability of the $G$-action $G\times\Hat\Bb \to \Hat\Bb$.
(Intuitively, the problem is that our slice is given by a condition involving
a derivative of the $G$ action at $f$, and so is well behaved only if this derivative varies continuously with $f$.)

To finish the proof of the homeomorphism property of $\psi_f$ we pick $\Hat\Vv$ sufficiently small such that $\Hat U_f$ is covered by a single submanifold chart (i.e.\  a chart for $\Hat\Bb$ in a Banach space, within which $\Hat U_f$ is mapped to a finite dimensional subspace).
Then we can extend $\exp_f$ to an exponential map on the ambient space, i.e.\ a diffeomorphism
from a neighbourhood $\Hat\Ww_f\subset \rT_f\Hat\Bb$ of $\Hat W_f$,
$$
\Exp_f: \;   \Hat\Ww_f \;\overset{\cong}{\longrightarrow} \; \Hat\Vv   \qquad \text{with} \quad \Exp_f|_{\Hat W_f} = \exp_f , \quad \rd_0\Exp_f ={\rm id}_{\rT_f \Hat\Bb}.
$$
Note that
the existence of such an extension at least requires
continuous differentiability of $\Hat U_f$ resp.\ $\exp_f$.
Next, we also crucially use the continuous differentiability of the action $G\times\Hat\Bb\to\Hat\Bb$ to deduce that, for $\Hat \Vv$ sufficiently small, by the implicit function theorem
\begin{equation} \label{GBS}
\{ \ga \in G \,|\, \ga \approx {\rm id} \} \;\times\; \bigr(\Hat\Ww_f \cap  \rT_f (Gf)^\perp\bigl) \;\longrightarrow\; \Hat\Bb , \qquad
(\ga, \xi ) \;\longmapsto\; \ga\cdot \Exp_f(\xi)
\end{equation}
is a diffeomorphism to a neighbourhood of $f\in\Hat\Bb$.
The injectivity of \eqref{GBS} then implies that $\ga_i\cdot \exp_f(\xi_i) \neq \exp_f( \xi'_i)$ for  $\ga_i\neq{\rm id}$, which finishes the proof of injectivity of $\psi_f$.
More generally, the local diffeomorphism \eqref{GBS} implies that
$$
\Psi : \,\Hat\Ww_f \cap  \rT_f (Gf)^\perp \;\to\; \Hat\Bb/G, \qquad \xi\mapsto [\Exp_f(\xi)]
$$
is a homeomorphism to a neighbourhood $\Uu \subset  \Hat\Bb/G$ of $[f]$ (which in general is a proper subset of $[\Hat\Vv]$).
In particular, its image contains $\Hat\psi_f(\Hat s_f^{-1}(0))\cap\Uu=[\si^{-1}(0)]\cap\Uu$, and by construction
$$
\Psi \bigl( \Hat\Ww_f \cap  \rT_f (Gf)^\perp \bigr)
\;\cap\; \Hat\psi_f(\Hat s_f^{-1}(0))
\;=\; \Hat\psi_f\Bigl( \Hat\Ww_f \cap  \rT_f (Gf)^\perp \cap \Hat s_f^{-1}(0) \Bigr)
\;=\; \psi_f( s_f^{-1}(0) ) .
$$
This finally implies that the restriction $\psi_f =  \Psi|_{s_f^{-1}(0)}$ is a homeomorphism from  $s_f^{-1}(0)$ to the neighbourhood $\Uu\cap[\si^{-1}(0)] \subset \si^{-1}(0)/G$ of $[f]$, which completes the proof.
\end{proof}

\begin{remark} \rm \label{FOglue}
The above proof translates the construction of basic Kuranishi charts in \cite{FO}
in the absence of nodes and Deligne--Mumford parameters into a formal setup.
In \cite[12.23]{FO} this construction is described in the presence of nodes, in which case the construction of $\exp_f$ involves gluing analysis rather than just an exponential map.
Then the injectivity of $\psi_f$ is analogous to the claim of \cite[12.24]{FO}, where an argument is only given in the case of nontrivial Deligne--Mumford parameters.
In the case of a remaining nondiscrete automorphism group such as $G_\infty$, an abstract argument would have to proceed along the lines of Lemma~\ref{le:fobs}.
However, the map $(\ga,\xi)\mapsto \ga\cdot\exp_f(\xi)$ is continuously differentiable only if $\exp_f$ is $\Cc^1$ (which excludes most current gluing constructions) and has image in the smooth maps (which requires a very special construction of $\Hat E$). Moreover, one would at least need $\im \rd_0\exp_f(\rT_f (G_\infty f)^\perp ) + \rT_f (G_\infty f)$ to have maximal rank. This is not necessarily satisfied even for smooth gluing constructions  of $\exp_f$ since e.g.\ $\rd_0\exp_f$ could have nontrivial kernel. (In fact, one obvious method for making the gluing map smooth is to scale the gluing parameter such that $\rd_0\exp_f$ vanishes in that direction at the node.)
But note that this maximal rank does not seem to be sufficient to achieve the homeomorphism property of $\psi_f$.

Even in the absence of nodes, \cite{FO} construct the maps $\Hat s$ and $\Hat\psi$ on a ``thickened Kuranishi domain'' analogous to $\Hat W_f$ and thus need to make the same restriction to an ``infinitesimal local slice'' as in Lemma~\ref{le:fobs}.
Again, the argument for injectivity of $\psi_f$ given in Lemma~\ref{le:fobs} does not apply due to the differentiability failure of the reparametrization action of $G=G_\infty$ discussed in Section~\ref{ss:nodiff}.
One could apply the same argument to the embedding obtained by restricting \eqref{GBS} to the finite dimensional subspace $\Hat W_f$, as long as $\Hat U_f$ is contained in the smooth maps, 
and use the additional geometric information that the action of $G_\infty$ restricts to a smooth map from any finite dimensional submanifold consisting of smooth maps to $\Hat\Bb$.
(It does not restrict to a smooth action unless we find a finite dimensional, $G_\infty$-invariant submanifold of $\Hat\Bb$ consisting of smooth maps.)

Finally, the claim that $\psi_f$ has open image in $\si^{-1}(0)/G$ is analogous to \cite[12.25]{FO}, which asserts that ``$\Hat\psi(\Hat s^{-1}(0)\cap \exp_f(W_f))=\Hat\psi(\Hat s^{-1}(0))$ by definition''.\footnote{
Arguments towards a weaker localized version are now proposed in \cite[Prop.34.2]{FOOO12}.
}
A natural approach to proving this would use $G_\infty$-invariance of $\Hat U_f$. 
However, $G_\infty$-invariance of $\Hat U_f$ requires $G_\infty$-equivariance of $\Hat E$, i.e.\ an equivariant extension of $E_f$ to the infinite dimensional domain $\Hat\Vv$. A general construction of such extensions does not exist due to the differentiability failure of the $G_\infty$-action.
And again, the arguments of Lemma~\ref{le:fobs} do not apply since they use a local diffeomorphism to the infinite dimensional quotient space $\Hat\Bb/G$.
Now a finite dimensional version of these arguments would require an embedding of a finite dimensional slice into a $G_\infty$-invariant, smooth target space that contains a neighbourhood of $f$ in the solution set $\si^{-1}(0)$.  But there is no suitable candidate for such a space.
The unperturbed solution space $\si^{-1}(0)$ is $G$-invariant, so contains $\{\ga\approx {\rm id}\} \cdot \exp_f(s_f^{-1}(0))$, but may be highly singular, while the thickened solution space $\Hat U_f$ is smooth but generally not invariant under $G_\infty$, and so does not contain $\{\ga\approx {\rm id}\}\cdot \exp_f(W_f)$. 
Finally, some argument for the continuity of $\psi_f^{-1}$ is needed, though
 not mentioned in \cite{FO}; 
in Lemma~\ref{le:fobs} this also requires differentiability of the $G$-action on $\Hat\Bb$.
\end{remark}

In contrast to the differentiability failure of the reparametrization action discussed above, note that the gauge action on spaces of connections is generally smooth.
Hence Lemma~\ref{le:fobs} applies in gauge theoretic settings, with an infinite dimensional group $G$, and abstractly provides finite dimensional reductions or Kuranishi charts for the moduli spaces.
On the other hand, the differentiability issues in the construction of Kuranishi charts (and in particular coordinate changes between them) can only be resolved by using a geometrically explicit local slice $\Bb_{f}\subset \Hat\Bb^{k,p}$ as in \eqref{eq:slice}.
This is briefly mentioned in various places throughout the literature, e.g.\ \cite[Appendix]{FO}, but we could not find the analytic details that will be given in the following.

More precisely, the construction of a Kuranishi chart near $[f_0]\in\oMm_1(A,J)$ will depend on the choice~of
\begin{itemize}
\item
a representative $f_0\in[f_0]$;
\item
hypersurfaces $Q^0:=Q_{f_0}^0,Q^1:=Q_{f_0}^1 \subset M$ as in \eqref{eq:hypsurf}, and $\eps_{f_0}>0$ inducing a local slice
$$
\Bb_{f_0}:= \bigl\{ f\in \Hat\Bb^{k,p} \,\big|\, d_{W^{k,p}}(f,f_0)<\eps_{f_0} , f(0)\in Q_{f_0}^0 , f(1) \in Q_{f_0}^1 \bigr\} \;\subset \; \Hat\Bb^{k,p};
$$
\item
an obstruction space $E_{f_0}\subset\Hat\Ee|_{f_0}$ 
that covers the cokernel of the linearization at $f_0$ of the Cauchy-Riemann section \eqref{eq:dbar}, that is 
$\im {\rm D}_{f_0}\pbar  + E_{f_0} = \Hat\Ee|_{f_0}$;
\item
an extension of $E_{f_0}$ to a trivialized finite rank obstruction bundle $\Hat\Vv_{f_0}\times E_{f_0} \cong \Hat E_{f_0}  \subset\Hat\Ee|_{\Hat\Vv_{f_0}}$ over a neighbourhood $\Hat\Vv_{f_0}\subset\Hat \Bb^{k,p}$ of the slice $\Bb_{f_0}$.
\end{itemize}

With that we can construct the Kuranishi chart as a local finite dimensional reduction of the Cauchy--Riemann operator
$\pbar : \Bb_{f_0} \to \Hat\Ee|_{\Bb_{f_0}}$ in the slice to the action of $G_\infty$.
Note in the following that this construction requires the extension of the obstruction bundle $\Hat E_{f_0}$ to an open set of $\Hat\Bb^{k,p}$.

\begin{prop} \label{prop:A1}
For a sufficiently small slice $\Bb_{f_0}$, the subspace of generalized holomorphic maps with respect to the obstruction bundle $\Hat E_{f_0}$ is a finite dimensional manifold
\begin{equation}\label{Uf0}
U_{f_0}:=\bigl\{ f\in\Bb_{f_0} \,\big|\, \pbar f \in \Hat E_{f_0} \bigr\} .
\end{equation}
Moreover, $\Ti E_{f_0}:=\Hat E_{f_0}|_{U_{f_0}}\cong U_{f_0}\times E_{f_0}$ forms the bundle of a Kuranishi chart, whose smooth section and footprint map (a homeomorphism to a neighbourhood of $[f_0]$) are
$$
\begin{array}{rll}
\ti s_{f_0} \,:\; U_{f_0} &\to \; \Hat E_{f_0}|_{U_{f_0}}, & \quad f\mapsto \pbar f , \\
\psi_{f_0} \,: \; \ti s_{f_0}^{-1}(0) = \bigl\{ f\in \Bb_{f_0} \,\big|\, \pbar f =0\bigr\}&\to\; \oMm_{1}(A,J),& \quad
f\mapsto [f] .
\end{array}
$$
\end{prop}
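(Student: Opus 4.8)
The plan is to realize $U_{f_0}$ as the zero set of a transverse smooth Fredholm section obtained by projecting out the obstruction bundle, to invoke the implicit function theorem for the manifold structure and the chart data, and finally to identify $\ti s_{f_0}^{-1}(0)$ with the holomorphic locus inside the slice so that Lemma~\ref{lem:slice} yields the homeomorphism property of $\psi_{f_0}$.

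\textbf{Step 1: projection and transversality.} Let $\Pi\colon\Hat\Ee|_{\Hat\Vv_{f_0}}\to\Hat\Ee|_{\Hat\Vv_{f_0}}/\Hat E_{f_0}$ be the bundle projection modulo the finite rank subbundle $\Hat E_{f_0}$. Since the slicing conditions fix the points $0,1\in S^2$, $\Bb_{f_0}$ is a Banach submanifold of $\Hat\Bb^{k,p}$, and (as observed above, no reparametrization entering within a single slice) $\pbar$ restricts to a smooth section of $\Hat\Ee|_{\Bb_{f_0}}$; hence $\Pi\circ\pbar|_{\Bb_{f_0}}$ is a smooth Fredholm section. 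It is transverse to the zero section at $f_0$: its linearization there is $\Pi\circ{\rm D}_{f_0}\pbar|_{\rT_{f_0}\Bb_{f_0}}$, so surjectivity amounts to $({\rm D}_{f_0}\pbar)(\rT_{f_0}\Bb_{f_0})+E_{f_0}=\Hat\Ee|_{f_0}$. By the choice of $E_{f_0}$ we have $\im{\rm D}_{f_0}\pbar + E_{f_0}=\Hat\Ee|_{f_0}$ on the full tangent space; by construction of the slice, $\rT_{f_0}\Hat\Bb^{k,p}=\rT_{f_0}\Bb_{f_0}\oplus\rT_{f_0}(G_\infty f_0)$ (the slicing hypersurfaces are transverse to $\im f_0$); and differentiating the identity $s\mapsto\pbar(f_0\circ\exp(sX))\equiv0$, valid because $\pbar$ is $G_\infty$-equivariant and $\pbar f_0=0$, shows that ${\rm D}_{f_0}\pbar$ annihilates $\rT_{f_0}(G_\infty f_0)=\{\rd f_0\circ X\mid X\in\rT_{\rm Id}G_\infty\}$. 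Therefore $\im{\rm D}_{f_0}\pbar=({\rm D}_{f_0}\pbar)(\rT_{f_0}\Bb_{f_0})$, giving the transversality.

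\textbf{Step 2: the chart data.} Transversality to the zero section is an open condition on the base, so it holds on a neighbourhood of $f_0$ in $\Bb_{f_0}$; shrinking $\eps_{f_0}$ we may take $\Bb_{f_0}$ inside this neighbourhood. The implicit function theorem then makes $(\Pi\circ\pbar|_{\Bb_{f_0}})^{-1}(0)$ a smooth submanifold of $\Bb_{f_0}$ of dimension equal to the Fredholm index of $\Pi\circ\pbar|_{\Bb_{f_0}}$; this zero set is by definition $U_{f_0}=\{f\in\Bb_{f_0}\mid\pbar f\in\Hat E_{f_0}|_f\}$, which is therefore a finite dimensional manifold. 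Its restriction $\Ti E_{f_0}=\Hat E_{f_0}|_{U_{f_0}}$ is a smooth vector bundle of rank $\dim E_{f_0}$, trivialized as $\Ti E_{f_0}\cong U_{f_0}\times E_{f_0}$ via the given trivialization of $\Hat E_{f_0}$ over $\Hat\Vv_{f_0}$. For $f\in U_{f_0}$ one has $\pbar f\in\Hat E_{f_0}|_f$, so $\ti s_{f_0}\colon f\mapsto\pbar f$ is a well-defined section of $\Ti E_{f_0}$; it is smooth, being the restriction to the submanifold $U_{f_0}$ of the smooth Banach bundle section $\pbar|_{\Bb_{f_0}}$ valued in the complemented finite rank subbundle $\Hat E_{f_0}$, and in the trivialization it is the smooth map $s_{f_0}\colon U_{f_0}\to E_{f_0}$. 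Thus $(U_{f_0},E_{f_0},s_{f_0},\psi_{f_0})$ carries all the data of a basic Kuranishi chart with trivial isotropy once the footprint property is checked. For the footprint: since $0$ lies in every fibre $\Hat E_{f_0}|_f$, we get $\ti s_{f_0}^{-1}(0)=\{f\in U_{f_0}\mid\pbar f=0\}=\{f\in\Bb_{f_0}\mid\pbar f=0\}$, the set of $J$-holomorphic maps lying in the slice; these are smooth by elliptic regularity, so $[f]\in\oMm_1(A,J)$. Hence $\psi_{f_0}\colon f\mapsto[f]$ is the restriction of the topological embedding $\Bb_{f_0}\hookrightarrow\Hat\Bb^{k,p}/G_\infty$ from Lemma~\ref{lem:slice} to the subset $\Bb_{f_0}\cap\pbar^{-1}(0)$, hence a homeomorphism onto its image, and by $G_\infty$-equivariance of $\pbar$ that image is $[\,G_\infty\Bb_{f_0}\,]\cap\oMm_1(A,J)$, an open neighbourhood of $[f_0]$ in $\oMm_1(A,J)$ because $G_\infty\Bb_{f_0}$ is open in $\Hat\Bb^{k,p}$.

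\textbf{Main obstacle.} The only genuinely nontrivial point is the transversality in Step~1: one must ensure that restricting the infinite dimensional domain to the geometric slice $\Bb_{f_0}$ does not destroy the cokernel-covering property of $E_{f_0}$. This is precisely where equivariance of $\pbar$ and $\pbar f_0=0$ enter, showing that the orbit direction removed by the slicing conditions lies in $\ker{\rm D}_{f_0}\pbar$ and hence contributes nothing to the image. The remainder is a routine finite dimensional reduction by the implicit function theorem together with an appeal to Lemma~\ref{lem:slice}, the only care being to shrink $\Bb_{f_0}$ so that transversality propagates from $f_0$ to all of $U_{f_0}$.
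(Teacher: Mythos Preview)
Your proof is correct and rests on the same key observation as the paper's: since $\pbar f_0=0$ and $\pbar$ is $G_\infty$-equivariant, the orbit direction $\rT_{f_0}(G_\infty f_0)$ lies in $\ker{\rm D}_{f_0}\pbar$. The organization differs. You restrict to the Banach submanifold $\Bb_{f_0}$ first and apply the implicit function theorem to $\Pi\circ\pbar|_{\Bb_{f_0}}$ alone, arguing that removing the orbit direction from the domain does not shrink $\im{\rm D}_{f_0}\pbar$; the paper instead works on a ball in the full $\Hat\Bb^{k,p}$ with the combined section $f\mapsto\bigl([\pbar f],\,\Pi^\perp_{Q^0}(f(0)),\,\Pi^\perp_{Q^1}(f(1))\bigr)$ and checks transversality by showing that the orbit direction, being in $\ker{\rm D}_{f_0}\pbar$, surjects onto the slicing factors. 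These are dual formulations of the same linear fact. Your packaging is slightly cleaner for this proposition; the paper's is chosen with an eye toward the sum-chart construction in Theorem~\ref{thm:A2}, where the slicing conditions involve evaluation at \emph{varying} marked points and therefore cannot be imposed as smooth Banach-submanifold constraints prior to the finite-dimensional reduction.
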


\begin{proof}
%
%
%
We combine the local slice conditions and the perturbed Cauchy--Riemann equation to express $U_{f_0}$ as the zero set of
\begin{align*}
\Hat\Bb^{k,p} \;\supset\;
 \bigl\{ f \,\big|\, d_{W^{k,p}}(f,f_0)<\eps_{f_0} \bigr\}
&\;\longrightarrow\;
 \bigl( \Hat\Ee / \Hat E_{f_0}\bigr) \times
(\rT_{f_0(0)} Q^{0})^\perp\times (\rT_{f_0(1)} Q^{1})^\perp ,\\
f \quad &\longmapsto
\Bigl([\pbar f], \Pi^\perp_{Q^{0}}(f(0)), \Pi^\perp_{Q^{1}}(f(1))\Bigr),
\end{align*}
with projections $\Pi^\perp_{Q^t}$ near $f_0(t)$ along $Q^t$ to $T_{f_0(t)}(Q^t)^\perp$. 
Since the choice of $\Hat E_{f_0}$ guarantees that the linearized Cauchy-Riemann operator ${\rm D}_f\pbar$ maps onto $\Hat\Ee_f/\Hat E_{f_0}$ for $f=f_0$, and thus for nearby $f\approx f_0$, we obtain transversality of the full operator for sufficiently small $\eps_{f_0}>0$ if the linearized operator at $f_0$ maps the kernel of ${\rm D}_{f_0}\pbar$ onto the second and third factor. That is, we claim surjectivity of the map
$$
R_{f_0} : \;
\ker{\rm D}_{f_0}\pbar \;\ni\; \delta f \mapsto \bigl(\rd\Pi^\perp_{Q^{0}}(\delta f(0)), \rd\Pi^\perp_{Q^{1}}(\delta f(1))\bigr) .
$$
To check this, we can use the inclusion $\rT_{f_0} (G_\infty\cdot f_0)\subset\ker{\rm D}_{f_0}\pbar$ of a tangent space to the orbit together with the surjectivity of the infinitesimal action on two
points,
$$
\rT_{\rm id} G_\infty \; \to \; \rT_0 S^2 \times \rT_1 S^2
,\qquad \xi \; \mapsto \; \bigl(\xi(0), \xi(1) \bigr) .
$$
Combining these facts with $(\rT_{f_0(t)} Q^{t})^\perp =\im \rd f_0(t)$ we obtain transversality from
$$
R_{f_0} \bigl( \rT_{f_0} (G_\infty\cdot f_0) \bigr) \;=\;
\bigl(\rd\Pi^\perp_{Q^{0}} \times \rd\Pi^\perp_{Q^{1}}\bigr)
\bigl(\im \rd f_0(0) \times \im \rd f_0(1)\bigr) .
$$
This approach circumvents the differentiability failure of the $G_\infty$-action by working with the explicit local slice $\Bb_{f_0}$, which is analytically better behaved.
Moreover, the homeomorphism $\psi_{f_0}$ is given by restriction of the local homeomorphism $\Bb_{f_0}\to\Hat\Bb^{k,p}/G_\infty$ from Lemma~\ref{lem:slice}.
Finally, we need to find a trivialization of the obstruction bundle $\Ti E_{f_0}:=\Hat E_{f_0}|_{U_{f_0}}\cong U_{f_0}\times E_{f_0}$. For that purpose we choose $\eps_{f_0}>0$ even smaller. The effect of this on the bundle $\Ti E_{f_0}$ is a restriction to smaller neighbourhoods of $f_0$. Thus for sufficiently small $\eps_{f_0}>0$ the bundle over a smaller domain $U_{f_0}$ can be trivialized.
\end{proof}

A Kuranishi chart in the exact sense of  Definition~\ref{def:chart} can be obtained from the trivialization
$\Ti E_{f_0}\cong U_{f_0} \times E_{f_0}$.
However, to emphasize the geometric meaning of our constructions we
continue to use the notation for Kuranishi charts given in Section~\ref{ss:kur} in terms of a
bundle $\Tilde E_f\to U_f$ with section $\tilde s$.

\subsection{Compatibility of Kuranishi charts} \label{ss:Kcomp}  \hspace{1mm}\\ \vspace{-3mm}

As in Section~\ref{ss:kur} we oversimplify the formalism by saying that basic Kuranishi charts
$$
\bigl( \; \ti s_{f_i} : U_{f_i}\to \Ti E_{f_i}  \;,\;  \psi_{f_i} : \ti s_{f_i}^{-1}(0)\hookrightarrow \oMm_{1}(A,J) \;\bigr) \qquad \text{for}\; i=0,1 ,
$$
as constructed in the previous section from obstruction bundles  $\Hat E^i:=\Hat E_{f_i}$ over neighbourhoods of local slices $\Bb_{f_i}$,
are {\bf compatible} if the following transition data exists for every element in the overlap $[g_{01}]\in \im\psi_{f_0}\cap \im\psi_{f_1}\subset \oMm_{1}(A,J)$:
\begin{enumerate}
\item
a Kuranishi chart
$\quad\displaystyle
\bigl( \; \ti s_{g_{01}} : U_{g_{01}}\to \Ti E_{g_{01}}  \;,\;  \psi_{g_{01}} : \ti s_{g_{01}}^{-1}(0)\hookrightarrow \oMm_{1}(A,J) \;\bigr)
$\\
whose footprint $\im\psi_{g_{01}} \subset \im\psi_{f_0}\cap \im\psi_{f_1}$ is a neighbourhood of $[g_{01}]\in\oMm_1(A,J)$;
\item
for $i=0,1$ the transition map
arising from the footprints,
$$
\phi|_{\psi_{f_i}^{-1}(\im\psi_{g_{01}})} := \;
\psi_{g_{01}}^{-1}\circ \psi_{f_i} : \; \ti s_{f_i}^{-1}(0) \;\supset\; \psi_{f_i}^{-1}(\im\psi_{g_{01}}) \; \overset{\cong}{\longrightarrow}\; \ti s_{g_{01}}^{-1}(0)
$$
extends to a coordinate change consisting of an open neighbourhood $V_i\subset U_{f_i}$
of $\psi_{f_i}^{-1}(\im\psi_{g_{01}})$ and an embedding and linear injection in the trivialization $\Ti E_{f_i}\cong U_{f_i}\times E_{f_i}$ that intertwine the sections $\ti s_\bullet$,
$$
\phi :\; U_{f_i} \supset V_i \; \longhookrightarrow\; U_{g_{01}} , \qquad
\Hat\phi :\; E_{f_i} \; \longrightarrow\; E_{g_{01}} .
$$
\end{enumerate}

For notational convenience we will continue to construct the Kuranishi charts such that the domains have a canonical embedding $U \hookrightarrow \Bb^{k,p}/G_\infty$ (given by $f\mapsto [f]$ from a local slice $\Bb\subset \Bb^{k,p}$) which identifies the homeomorphism $\psi : s^{-1}(0) \hookrightarrow \oMm_1(A,J)$ with the identity on $\oMm_1(A,J)\subset \Bb^{k,p}/G_\infty$.
However, we will not use this ambient space for other purposes, since it has no direct generalization in the case of nodal curves.
In particular, the new domain $U_{g_{01}}$ cannot be constructed as an overlap of the domains $U_{f_i}$ since only the intersection of the possibly highly singular footprints $\im\psi_{f_0}\cap \im\psi_{f_1}\subset\oMm_1(A,J)$ has invariant meaning. Indeed, because the bundles $\Hat E^0, \Hat E^1$ may be quite different, the intersection $[U_{f_0}]\cap[U_{f_1}]\subset \Bb^{k,p}/G_\infty$ may only contain the intersection of footprints.
Moreover, the domains $U_{f_0},U_{f_1}\subset \Bb^{k,p}$ have no relation to each other beyond the fact that they are both spaces of perturbed solutions of the Cauchy--Riemann equation in a local slice.
Hence the Kuranishi chart (i) cannot be abstractly induced from the basic Kuranishi charts but needs to be constructed as another finite dimensional reduction of the Cauchy--Riemann operator.
With such a chart given, the transition map $\psi_{g_{01}}^{-1}\circ \psi_{f_i}$ between the zero sets is well defined, but its extension to a neighbourhood of $\psi_{f_i}^{-1}(\im\psi_{g_{01}})\subset \ti s_{f_i}^{-1}(0)$ in the domain $U_{f_i}$ also needs to be constructed. In fact, the need for this extension guides the construction of the chart.

For the rest of this section we will assume that the Kuranishi chart required in (i) can be constructed in the same way as the basic charts in Section~\ref{ss:Kchart}, and explain which extra requirements are necessary to guarantee the existence of a coordinate change (ii).
The chart (i) will be determined by the following data:

\begin{itemize}
\item
a representative $g_{01}\in[g_{01}]$;
\item
hypersurfaces $Q_{g_{01}}^0,Q_{g_{01}}^1 \subset M$ and $\eps_{g_{01}}>0$ inducing a local slice $\Bb_{g_{01}}\subset\Hat\Bb^{k,p}$;
\item
a finite rank subspace $E_{g_{01}}\subset\Hat\Ee |_{g_{01}}$ such that $\im {\rm D}_{g_{01}}\pbar  + E_{g_{01}} = \Hat\Ee|_{g_{01}}$;
\item
an extension  to a trivialized finite rank subbundle $\Hat\Vv_{g_{01}}\times E_{g_{01}} \cong \Hat E^{01}: =  \Hat E_{g_{01}} \subset\Hat\Ee|_{\Hat\Vv_{g_{01}}}$ over a neighbourhood $\Hat\Vv_{g_{01}}\subset\Hat\Bb^{k,p}$ of $\Bb_{g_{01}}$.
\end{itemize}

\NI
The coordinate change (ii) requires the construction of the following for $i=0,1$
\begin{itemize}
\item
open neighbourhoods $V_i\subset U_{f_i}$ of $\psi_{f_i}^{-1}(\im\psi_{g_{01}})$;
\item
embeddings $\phi_i : V_i \hookrightarrow U_{g_{01}}$ and a bundle map
$\Hat\phi_i : \Ti E_{f_i}|_{V_i} \to \Ti E_{g_{01}}$
covering $\phi_i$ and constant on the fibers in a trivialization, such that
$$
\Hat\phi_i \circ \ti s_{f_i} = \ti s_{g_{01}} \circ \phi_i , \qquad  \psi_{f_i} = \psi_{g_{01}} \circ \phi_i .
$$
\end{itemize}

In the explicit construction, we have
$V_i\subset U_{f_i} \subset \Bb_{f_i}$ and $U_{g_{01}} \subset\Bb_{g_{01}}$
both identified
with subsets of $\Bb^{k,p}/G_\infty$, and in this identification the embedding $\phi_i:V_i \hookrightarrow U_{g_{01}}$ is required to restrict to the identity on $\im\psi_{g_{01}}\subset\im\psi_{f_i}$.
So the natural extension of $\phi_i$ to a neighbourhood of $\psi_{f_i}^{-1}(\im\psi_{g_{01}})\subset U_{f_i}$ should lift the identity on $\Bb^{k,p}/G_\infty$. That is, with the domains $V_i\subset U_{f_i}$ still to be determined, we fix $\phi_i$ to be the transition map \eqref{transition}
between the local slices,
$$
\phi_i:= \Ga_{f_i,g_{01}}|_{V_i} : \; V_i \to \Bb_{g_{01}} , \quad f \mapsto f\circ\ga^{01}_f  ,
$$
where $\ga^{01}_f\in G_\infty$ is determined by $f\circ\ga^{01}_f\in\Bb_{g_{01}}$ .
Now in order for $\phi_i(V_i)$ to take values in $U_{g_{01}}$ we must have
\begin{equation}\label{givestrans}
\pbar f \in \Hat E^i|_{f} \;\Longrightarrow\; \pbar f\circ\rd\ga^{01}_f \in \Hat E^{01}|_{f\circ\ga^{01}_f} \qquad\forall \; f\in V_i.
\end{equation}
In particular for all $g\in \ti s_{g_{01}}^{-1}(0)$ we must have
\begin{equation} \label{E01 req}
\Hat E^0|_{g\circ \ga^0_g} \circ (\rd\ga^0_g)^{-1}
\;+\; \Hat E^1|_{g\circ\ga^1_g} \circ (\rd\ga^1_g)^{-1}  \;\subset\; \Hat E^{01}|_{g} ,
\end{equation}
where $\ga^i_g\in G_\infty$ is determined by $g\circ\ga^i_g\in\Bb_{f_i}$ and $\Hat E^i\subset\Hat \Ee|_{\Hat\Vv_{f_i}}$ is the obstruction bundle extending $E_{f_i}$.
Note here that we at least have to construct $\Hat E^{01}\to\Bb_{g_{01}}$ as a smooth obstruction bundle over an infinite dimensional slice, since this induces the smooth structure on the domain
$U_{g_{01}} =\{g\in\Bb_{g_{01}} \,|\, \pbar g \in \Hat E^{01} \}$.
(In fact, the proof of Lemma~\ref{Uf0} uses the obstruction bundle over an open set in $\Hat\Bb^{k,p}$.)
However, we encounter several obstacles in constructing $\Hat E^{01}$ such that \eqref{E01 req} is satisfied near ${g_{01}\in\Bb_{g_{01}}}$.
\MS

\begin{itemlist}
\item[{\bf \qquad\, 1.)}]
The left hand side of \eqref{E01 req} involves the pullbacks of $(0,1)$-forms by the transition map $\Ga_{g_{01}, f_i} : \Bb_{g_{01}} \to \Bb_{f_i}$ between local slices.
In fact, it is no surprise that the reparametrizations enter crucially, since $\Hat E^0$ and $\Hat E^1$ are bundles over neighbourhoods of the local slices $\Bb_{f_0}$ and $\Bb_{f_1}$ respectively, which may have no intersection in $\Hat\Bb^{k,p}$ at all, although they do have an open intersection in the quotient $\Hat\Bb^{k,p}/G_\infty$.
Since the transition maps are not continuously differentiable, the pullback bundles
$$
\Ga_{g_{01}, f_i}^*\Hat E^i
\,:= \;{\textstyle \bigcup_{g\in\Bb_{g_{01}}}} \Hat E^i|_{g\circ\ga^i_g} \circ (\rd\ga^i_g)^{-1}
$$
will not be differentiable in general. Thus we must find a special class of obstruction bundles, on which the pullback by reparametrizations acts smoothly.
\item[{\bf \qquad\, 2.)}]
Even if the pullback bundles $\Ga_{g_{01}, f_0}^*\Hat E^0$ and $\Ga_{g_{01}, f_1}^*\Hat E^1$ are differentiable, their fibers can have wildly varying intersections over $\Bb_{g_{01}}$. Here the diameter of the local slice can be chosen arbitrarily small, but it will always be locally noncompact. So it is unclear whether there even exists a finite rank subbundle of $\Hat\Ee|_{\Bb_{g_{01}}}$ that contains both pullback bundles.
To ensure this we must assume transversality at $g_{01}$,
$$
\bigl(\Hat E^0|_{g_{01}\circ(\ga^0_{g_{01}})^{-1}} \circ\rd\ga^0_{g_{01}} \bigr)
\cap
\bigl( \Hat E^1|_{g_{01}\circ(\ga^1_{g_{01}})^{-1}} \circ\rd\ga^1_{g_{01}} \bigr)  \;=\; \{ 0 \}  .
$$
\end{itemlist}

If the requirements in 1.) and 2.) are satisfied, then the sum of obstruction bundles
\begin{align*}
\Hat E^{01} &\,:=\;
\Ga_{g_{01}, f_0}^*\Hat E^0 \oplus \Ga_{g_{01}, f_1}^*\Hat E^1\\
&\;=\;
{\textstyle \bigcup_{g\in\Bb_{g_{01}}} } \bigl\{ \nu^0\circ (\rd\ga^0_g)^{-1} + \nu^1\circ (\rd\ga^1_g)^{-1}  \,\big|\, \nu^i\in \Hat E^i|_{g\circ\ga^{01}_g} \bigr\}
\end{align*}
is a smooth, finite rank subbundle of $\Hat\Ee$ over a local slice $\Bb_{g_{01}}$ of sufficiently small diameter $\eps_{g_{01}}>0$.
Under these assumptions, the constructions of Section~\ref{ss:Kchart} provide a Kuranishi chart for a neighbourhood of $[g_{01}]\in \oMm_1(A,J)$, which we also call {\bf sum chart} since it is given by a sum of obstruction bundles. Its domain and section are
$$
\ti s_{g_{01}} : \; U_{g_{01}} :=\{g\in\Bb_{g_{01}} \,|\, \pbar g \in \Hat E^{01} \}  \;\to\; \Hat E^{01} , \qquad g \mapsto \pbar g ,
$$
and the embedding $\Bb_{g_{01}}\to\Hat\Bb^{k,p}/G_\infty$ of the local slice restricts to a homeomorphism into the moduli space,
$$
\psi_{g_{01}}: \ti s_{g_{01}}^{-1}(0) \to  \oMm_1(A,J) , \qquad g\mapsto [g].
$$
Moreover, we already fixed the embeddings $\phi_i = \Ga_{f_i,g_{01}}$ and can read off from \eqref{givestrans} the corresponding embedding of obstruction bundles
$$
\Hat\phi_i:
\Hat E^i|_{f} \to \Hat E^{01}|_{f\circ\ga_f},
\qquad
\nu \mapsto  \nu\circ \rd\ga_f.
$$
Since this should be a constant linear map $E_{f_i}\to E_{g_{01}}$ in some trivialization
$\Hat E^{01}\cong U_{g_{01}}\times E^{01}_{g_{01}}$, the  trivialization map
$T^{01}(g) :\Hat E^{01}|_g \to E_{g_{01}}$ must be given by
$$
T^{01}(g) \,:\;
\sum_{i=0,1} \nu^i\circ (\rd\ga^i_g)^{-1}
\;\mapsto\;
\sum_{i=0,1} \Bigl( T^i(g_{01}\circ\ga^i_{g_{01}} ) ^{-1} \,T^i(g\circ\ga^i_g ) \; \nu^i \Bigr)
\circ (\rd\ga^i_{g_{01}})^{-1}
$$
in terms of the trivializations $T^i(f) :\Hat E^i|_f\overset{\cong}\to E_{f_i}$ of its factors.
In fact, this shows exactly what it means for the sum bundle $\Hat E^{01} = \Ga_{g_{01}, f_0}^*\Hat E^0 \oplus \Ga_{g_{01}, f_1}^*\Hat E^1$ to be smooth.

\MS

We now summarize the preceding discussion in the context of a tuple of $N$ charts  $\bigl(\bK_i = (U_{f_i},E_{f_i},s_{f_i},\psi_{f_i})\bigr)_{i=1,\ldots,N}$.
Generalizing conditions (i) and (ii) at the beginning of this section, we find that if these arise from obstruction bundles $\Hat E^i\to \Hat\Vv_{f_i}$ over neighbourhoods of local slices $\Bb_{f_i}$, the minimally necessary compatibility conditions require us to construct for every index subset $I\subset\{1,\ldots,N\}$ and every element $[g_0]\in\bigcap_{i\in I}\im\psi_i\subset\oMm_1(A,J)$ in the overlap of footprints
\begin{enumerate}
\item
 a {\bf sum chart} $\bK_{I,g_0}$ with obstruction space $E_{I,g_0} \cong
 \prod_{i\in I} E_{f_i}$, whose footprint $\im\psi_{I,g_0} \subset \bigcap_{i\in I}\im\psi_{f_i}$ is a neighbourhood of $[g_0]$;
\item
coordinate changes $\bigl(\bK_i \to \bK_{I,g_0}\bigr)_{i\in I}$ that extend the
transition maps
$\psi_{I,g_0}^{-1}\circ \psi_{f_i}$.
\end{enumerate}

\NI
The construction of a virtual fundamental class $[\oMm_1(A,J)]^{\rm vir}$ from a cover by compatible basic Kuranishi charts $\bigl(\bK_i \bigr)_{i=1,\ldots,N}$ in addition requires fixed choices of the above transition data, and further coordinate changes $\bK_{I,g_0}\to\bK_{J,h_0}$ satisfying a cocycle condition; see Section~\ref{ss:top}.
The main difficulty is to ensure that the sum charts are well defined.
The details of their construction are dictated by the existence of coordinate changes from the basic charts.
This construction is so canonical that coordinate changes between different sum charts exist essentially automatically, and satisfy the weak cocycle condition.
By the discussion in the case of two charts,
the following conditions on the choice of basic Kuranishi charts $\bigl(\bK_i = (U_{f_i},E_{f_i},s_{f_i},\psi_{f_i})\bigr)_{i=1,\ldots,N}$ ensure the existence of the sum charts (i) and transition maps (ii).

\begin{itemlist}
\item[{\bf \qquad\, Sum Condition I:}]
{\it For every $i\in \{1,\ldots,N\}$ let
$T^i(f) :\Hat E^i|_f\overset{\cong}\to E_{f_i}$ be induced by  the trivialization of the obstruction bundle.
Then for every $[g_0]\in\im\psi_i \cap\bigcap_{j\neq i}\im\psi_j$ and representative $g_0$ with sufficiently small local slice $\Bb_{g_0}$, the map}
\begin{align*}
\Bb_{g_{0}} \times E_{f_i} &\;\longrightarrow\; \quad \Hat\Ee \\
 (g, \nu_i )\quad & \;\longmapsto \; \bigl( T^i(g\circ\ga^i_g ) \, \nu_i \bigr) \circ (\rd\ga^i_g)^{-1}
\end{align*}
{\it is required to be smooth, despite the differentiability failure of $g\mapsto g\circ\ga^i_g$.}

An approach for satisfying this condition will be given in the next section.

\item[{\bf \qquad\, Sum Condition II:}]
{\it For every $I\subset\{1,\ldots,N\}$ and $[g]\in\bigcap_{i\in I}\im\psi_i$ we must ensure transversality
of the vector spaces
$\Hat E^i|_{g\circ(\ga^i_{g})^{-1}} \circ\rd\ga^i_{g}  = \bigl( T^i(g\circ(\ga^i_{g})^{-1})^{-1} E_{f_i}  \bigr) \circ\rd\ga^i_{g}$ for $i\in I$. That is, their sum needs to be a direct sum,
$$
\sum_{i\in I} \Hat E^i|_{g\circ(\ga^i_{g})^{-1}} \circ\rd\ga^i_{g}  \;=\;
\bigoplus_{i\in I} \Hat E^i|_{g\circ(\ga^i_{g})^{-1}} \circ\rd\ga^i_{g}    \quad\subset\;\Hat \Ee|_g .
$$
}

This means that, no matter how the obstruction bundles are constructed for each chart, the choices for a tuple need to be made ``transverse to each other''  along the entire intersection of the footprints
before transition data can be constructed.
\end{itemlist}

\subsection{Sum construction for genus zero Gromov--Witten moduli spaces}  \hspace{1mm}\\ \vspace{-3mm}  \label{ss:gw}

The purpose of this section is to explain the basic ideas of our project \cite{MW:gw} of constructing a Kuranishi atlas for the genus zero Gromov--Witten moduli spaces by combining the geometric perturbations of \cite{LT} with the gluing analysis of \cite{MS}.
A natural idea (suggested to us by e.g.\ Kenji Fukaya,
see \cite[Appendix]{FO}, and Cliff Taubes) for dealing with the failure of differentiability in the pullback construction for obstruction bundles is to introduce varying marked points so that the pullback by $\Gamma_{g_{01},f_i}$ no longer depends on the infinite dimensional space of maps, instead depending on a finite number of parameters.
For the sum construction of two Kuranishi charts $(U_{f_i},\ldots)_{i=0,1}$ arising from finite rank bundles $\Hat E^i\to \Hat\Vv_{f_i}$ over neighbourhoods of local slices $\Bb_{f_i}$, let us for simplicity of notation work in a slice $\Bb_{g_{01}}\subset\Bb_{f_0}$ so that $\ga^0_g\equiv {\rm id}$.
Thus we construct the domain of the sum chart as
$$
U_{g_{01}} = \bigl\{  \bigl( g , \ul{w} \bigr) \in \Bb_{g_{01}}\times (S^2)^2 \,\big|\, \pbar g \in \Hat E^0 + \Gamma_{\ul w}^* \Hat E^1 ,
 \ul{w}=(w^{0} , w^{1}) \in D_{01},
g(w^{t})\in Q_{f_1}^{t} \bigr\} .
$$
Here $D_{01}\subset (S^2)^2$ is a neighbourhood of $\ul{w}_{01}:=(w_{01}^0, w_{01}^1)$ with $w_{01}^t=g_{01}^{-1}(Q_{f_1}^t)$, and $\Gamma_{\ul w} : g\mapsto g\circ \gamma_{\ul w}$ is the reparametrization with
\begin{equation}\label{gaw}
\gamma_{\ul w}\in G_\infty  \quad \text{given by} \quad
\gamma_{\ul w}(t)=w^{t} \quad \text{for}\; t=0,1.
\end{equation}
Observe that, with varying marked points, the map $(g,w) \mapsto g(w)$ still only has the regularity of $g$, see Section~\ref{ss:eval}.
So the above $U_{g_{01}}$ is not cut out by a single smooth Fredholm section.
However, we may now consider the intermediate {\it thickened solution space}
$$
\Hat U_{g_{01}} = \bigl\{  ( g , \ul{w} ) \in \Bb_{g_{01}}\times D_{01} \,\big|\, \pbar g \in \Hat E^0 + \Gamma_{\ul w}^* \Hat E^1  \bigr\} \;\subset\; \Bb_{f_0}\times (S^2)^2,
$$
where we have not yet imposed the slicing conditions at the points $\ul{w}$.
Then the domain $U_{g_{01}}={\rm ev}^{-1}(Q_{f_1}^0\times Q_{f_1}^1)$ is cut out by the slicing conditions, which use the evaluation map on the finite dimensional thickened solution space:
$$
{\rm ev} : \; \Hat U_{g_{01}} \to (S^2)^2, \qquad  ( g , w^0, w^1 ) \mapsto  ( g(w^0) , g(w^1) ) .
$$
To check that this map is transverse to $Q_{f_1}^0\times Q_{f_1}^1$ at $(g_{01}, w_{01}^0, w_{01}^1)$, note that $\{0\}\times (\rT S^2)^2$ is tangent to the thickened solution space at this point
(crucially using the fact that the solution space
$\{g \,|\, \pbar g=0\}$
is $G_\infty$-invariant so that there is an infinitesimal action at $g_{01}$).
Moreover, at every point in the local slice $g\in\Bb_{f_1}$ we have $\im \rd_t g \pitchfork \rT_{g(t)} Q_{f_1}^t$, in particular at $g_{01}\circ\ga_{\ul w_{01}}$ with $\im \rd_t (g_{01}\circ\ga_{\ul w_{01}})= \im \rd_{w_{01}^t}g_{01}$.
Moreover, the evaluation map is smooth if we can ensure that the thickened solution space $\Hat U_{g_{01}}\subset \Cc^\infty(S^2,M)\times (S^2)^2$ contains only smooth functions.
Continuing the list of conditions on the choice of summable obstruction bundles from the previous section, this adds the following regularity requirement.

\begin{itemlist}
\item[{\bf \qquad\, Sum Condition III:}]
{\it The obstruction bundles $\Hat E^i\subset\Hat\Ee|_{\Hat\Vv_{f_i}}$ need to satisfy regularity,}
$$
\pbar g \in {\textstyle \sum_i } \, \Ga_{\ul w_i}^* \Hat E^i
\;\Longrightarrow \; g \in\Cc^\infty(S^2,M) .
$$
By elliptic regularity for $\pbar$, this holds if
$\Hat E^i|_{W^{\ell,p}\cap \Hat\Vv_{f_i}}\in W^{\ell,p}\cap\Hat\Ee$ for all $\ell\in\N$, or in terms of the trivializations $T^i(f):\Hat E^i_f \to E_{f_i}$ if the elements of $E_{f_i}$ are smooth
$1$-forms in $\Hat\Ee|_{f_i}$  and
$$
f\in W^{\ell,p} \; \Longrightarrow \;\im T^i(f)\subset W^{\ell,p} .
$$
This means that sections of $\Hat E^i$ are lower order, compact perturbations for $\pbar$, i.e.\ they are $sc^+$ in the language of scale calculus \cite{HWZ1}.
\end{itemlist}

\MS\NI
Finally, we need to ensure smoothness of the thickened solution space $\Hat U_{g_{01}}$, which can be viewed as the zero set of the section
$$
\Bb_{g_{01}} \times D_{01} \;\longrightarrow \; \Hat\Ee / ( \Hat E^0 + \Gamma^* \Hat E^1 ) , \qquad
(g,\ul w) \;\longmapsto \; \pbar g .
$$
Here the form of the summed obstruction bundle,
\begin{align*}
\Ga^* \Hat E^1 &\;=\; { \underset{\ul w \in (S^2)^2}{\textstyle{\bigcup}}}\Gamma_{\ul w}^* \Hat E^1 \; \longrightarrow \; \Bb_{g_{01}}\times D_{01}, \\
\bigl(\Ga^* \Hat E^1 \bigr)|_{(g,\ul w)} &\;=\; \bigl\{ \nu \circ \rd \ga_{\ul w}^{-1} \,\big|\, \ga_{\ul w} (t)= w^{t} , \nu \in \Hat E^1|_{g\circ\ga_{\ul w}} \bigr\},
\end{align*}
is dictated by fixing the natural embedding
$\phi_1 : U_{f_1}\cap G_\infty U_{g_{01}} \to U_{g_{01}}$
given by  $f\mapsto (f\circ\ga_f^{-1}, \ga_f(0), \ga_f(1))$, where $f\circ\ga_f^{-1}\in\Bb_{g_{01}}$.
Its inverse map is $(g,\ul w)\mapsto g\circ\ga_{\ul w}$, which maps to a neighbourhood of $\Bb_{f_1}$.
While the extension of $\Hat E^1$ to a neighbourhood of $\Bb_{f_1}\subset\Hat\Bb^{k,p}$ so far was mostly for convenience in the proof of Lemma~\ref{Uf0}, it now becomes crucial for the construction of this ``decoupled sum bundle''.
In fact, as in that lemma, we will also extend $\Hat E_{g_{01}}= \Hat E^0 + \Gamma^* \Hat E^1$ to a neighbourhood $\Hat\Vv_{g_{01}}$ of $\Bb_{g_{01}}$ to induce the smooth structure on $\Hat U_{g_{01}}$.
With this setup, Sum Condition I becomes smoothness of the map involving the trivialization
$T^1(f):\Hat E^1(f)\to E_{f_1}$,
\begin{equation}\label{wantsmooth}
\Hat\Vv_{g_{01}} \times D_{01} \times E_{f_1} \;\longrightarrow\; \Hat\Ee , \qquad
 (g, \ul w ,\nu ) \;\longmapsto \; \bigl( T^1(g\circ\ga_{\ul w} ) \, \nu \bigr) \circ \rd \ga_{\ul w}^{-1} .
\end{equation}
This still involves reparametrizations $(g,\ga_{\ul w})\mapsto g\circ \ga_{\ul w}$, which are not differentiable in any Sobolev topology on $\Hat\Vv_{g_{01}}$, since $\ul w\in D_{01}$ and thus $\ga_{\ul w}$ is allowed to vary. Thus the compatibility of Kuranishi atlases requires a very special form of the trivialization $T^1$, i.e.\ very special obstruction bundles $\Hat E^i$.

\MS\NI
{\bf Geometric construction of obstruction bundles:}
To solve the remaining differentiability issue, we now follow the more geometric approach of \cite{LT} and construct obstruction bundles by pulling back finite rank subspaces
\begin{equation}\label{graphsp}
E^i\subset \Cc^\infty(\Hom^{0,1}_J(S^2,M))
\end{equation}
of the space of smooth sections of the bundle over $S^2\times M$ of $(j,J)$-antilinear maps $\rT S^2 \to \rT M$.
Given such a subspace and a neighbourhood $\Hat\Vv_{f_i}$ of a local slice, we hope to obtain an obstruction bundle
\begin{equation}\label{graph}
\Hat E^i := \;{\textstyle \bigcup_{f\in\Hat\Vv_{f_i}} } \bigl\{ \nu|_{\gr f} \;\big|\; \nu \in E^i\bigr\} \;\subset\; \Hat\Ee|_{\Hat\Vv_{f_i}}
\end{equation}
by restriction to the graphs $\nu|_{\gr f} \in \Hat\Ee|_f = W^{k-1,p}(S^2, \Lambda^{0,1}f^* \rT M )$
given by
$$
\nu|_{\gr f} (z) = \nu(z,f(z)) \in \Hom^{0,1}_J(\rT_zS^2,\rT_{f(z)}M) .
$$
The disadvantage of this construction is that we need to assume injectivity of the map
$$
E^i\ni \nu\mapsto \nu|_{\gr f}\in \Hat\Ee|_f
$$
for each $f\in\Hat\Vv_{f_i}$ to obtain fibers of constant rank.
On the other hand, the inverse trivialization of the obstruction bundle
$$
(T^i)^{-1}:
\Hat\Vv_{f_i} \times E^i   \to \Hat E^i|_f  , \qquad (f,\nu) \mapsto \nu|_{\gr f}
$$
is now a smooth map, satisfying the regularity requirement in Sum Condition III, since
on the finite dimensional space $E^i$ consisting of smooth sections
the composition on the domain with $f\in\Hat\Vv_{f_i}\subset W^{k,p}(S^2,M)$ is smooth.
In fact, the pullback $\Ga^*\Hat E^1$ in \eqref{wantsmooth} now takes the special form, with $\ga_{\ul w}$ from \eqref{gaw},
$$
(g, \ul w ,\nu ) \mapsto \ga_{\ul w}^*\nu |_{\gr g}, \qquad
\ga_{\ul w}^*\nu (z,x)
 =  \nu ( \ga_{\ul w}^{-1}(z) , x ) \circ \rd_z \ga_{\ul w}^{-1} .
$$
This eliminates composition on the domain of infinite dimensional function spaces.
Indeed, we now have
$$
\ga_{\ul w}^*\nu |_{\gr g} (z)
 =  \nu ( \ga_{\ul w}^{-1}(z) , g(z) ) \circ \rd_z \ga_{\ul w}^{-1} ,
$$
whose derivatives in the directions of $g$ and $\ul{w}$ take forms that, unlike \eqref{eq:actiond}, do not involve derivatives of $g$.
Moreover, we will later make use of the special transformation of these obstruction bundles under the action of $\ga\in G_\infty$,
\begin{equation} \label{Eequivariant}
\ga_{\ul w}^*\nu |_{\gr g} \circ \rd\ga
\;=\;  \nu \bigl( \ga_{\ul w}^{-1}\circ\ga (\cdot) , g\circ \ga (\cdot) \bigr) \circ \rd \ga_{\ul w}^{-1} \circ \rd \ga
\;=\;  (\ga^{-1}\circ\ga_{\ul w})^*\nu |_{\gr g\circ \ga} .
\end{equation}
Thus we have replaced Sum Conditions I--III, including the highly nontrivial smoothness requirement in the previous section, by the following requirement for the compatibility of the geometrically constructed obstruction bundles.

\begin{itemlist}
\item[{\bf \qquad Sum Condition I$'$:}]
{\it For every $i\in \{1,\ldots,N\}$ the obstruction bundle $\Hat E^i \subset \Hat\Ee|_{\Hat\Vv_{f_i}}$ is given by \eqref{graph} from a subspace $E^i\subset\Cc^\infty(\Hom^{0,1}_J(S^2,M))$ such that}
$$
E^i \to \Hat\Ee|_f , \;\;  \nu \mapsto \nu|_{\gr f} \quad \text{is injective} \quad \forall \; f\in\Hat\Vv_{f_i} .
$$
\item[{\bf \qquad Sum Condition II$'$:}]
{\it For every $I\subset\{1,\ldots,N\}$, $[g]\in\bigcap_{i\in I}\im\psi_i$ with representative $g\in\Bb_{f_{i_0}}$ for some $i_0\in I$, and marked points $\ul w_i \in D_{i_0 i}\subset (S^2)^2$ in neighbourhoods of $(g^{-1}(Q_{f_i}^t))_{t=0,1}$ resp.\  $D_{i_0 i_0}=\{(0,1)\}$,
we must ensure linear independence of $\bigl\{ \ga_{\ul w_i}^* \nu^i |_{\gr g}  \;\big|\;  \nu^i\in E^i \bigr\}$ for $i\in I$. That is, their sum must be a direct sum}
$$
\sum_{i\in I} \bigl\{ \ga_{\ul w_i}^* \nu^i |_{\gr g}  \;\big|\;  \nu^i\in E^i \bigr\}
 \;=\; 
\bigoplus_{i\in I} \bigl\{ \ga_{\ul w_i}^* \nu^i |_{\gr g}  \;\big|\;  \nu^i\in E^i \bigr\}
 \quad\subset\;\Hat \Ee|_g.
$$

\end{itemlist}

Satisfying these two conditions always requires making the choices of the obstruction spaces $E^i$ ``suitably generic''. If they are satisfied, then they provide a construction of sum charts and coordinate changes as we will state next.
At this point, we can also incorporate a further requirement from Section~\ref{ss:top} into the compatibility condition (i) for a tuple of charts $(\bK_i)_{i=1,\ldots,N}$ by
constructing a single sum chart $\bK_{I,g_0}=\bK_I$ for each $I\subset\{1,\ldots, N\}$, whose footprint is the entire overlap of footprints $F_I:=\im\psi_I =  \bigcap_{i\in I} \im\psi_i$.
Moreover, we construct coordinate changes between any pair of tuples $I,J\subset\{1,\ldots,N\}$ with nonempty overlap $F_I\cap F_J\neq\emptyset$ that are, up to a choice of domains, directly induced from the basic charts. Thus our construction naturally satisfies the weak cocycle condition,
i.e.\ equality on overlap of domains as in Section~\ref{ss:top}.
Note here that $J\subset\{1,\ldots,N\}$ has a very different meaning from the almost complex structure which determines the Gromov--Witten moduli space $\oMm_1(A,J)$.
To avoid confusion, we will sometimes abbreviate $\overline\partial:=\pbar$.

For the construction of sum charts, we will moreover make the following simplifying assumption that all intersections with the slicing hypersurfaces are unique. This can be achieved in sufficiently small neighbourhoods of any holomorphic sphere with trivial isotropy, see Remark~\ref{rmk:unique}.

 \begin{itemlist}
\item[{\bf \qquad Sum Condition IV$'$:}]
{\it For every $i\in \{1,\ldots,N\}$ we assume that the representative $[f_i]$, slicing conditions $Q^t_{f_i}$, size $\eps>0$ of local slice $\Bb_{f_i}$, and its neighbourhood $\Hat\Vv_{f_i}\subset\Hat\Bb^{k,p}$ are chosen such that
for all $g\in \Hat\Vv_{f_i}$ and $t=0,1$ the intersection $g^{-1}(Q^t_{f_i}) =: \{w_i^t(g)\}$ is a unique point and transverse, i.e.\ $\im\rd_{w_i^t(g)}g\pitchfork \rT_{w_i^t(g)} Q^t_{f_i}$.
}

Then the same holds for $g\in G_\infty \Hat\Vv_{f_i}$. Hence for any $i_0\in I \subset \{1,\ldots,N\}$
the local slice $\Bb_{f_{i_0}}$ embeds topologically (as a homeomorphism to its image, with inverse given by  the projection $\Bb_{f_{i_0}} \times (S^2)^{2|I|} \to \Bb_{f_{i_0}}$) into a space of maps and marked points by
\begin{align}
\label{embed}
\iota_{i_0, I} : \; \Bb_{f_{i_0}} &\;\longhookrightarrow\; \Hat\Bb^{k,p} \times (S^2)^{2|I|}   \\
g &\;\longmapsto\; \bigl( g ,
\ul w(g) \bigr) ,
\qquad\qquad\quad
\ul w(g):= \bigl( g^{-1}(Q_{f_i}^t) \bigr)_{i\in I,t=0,1}.
 \nonumber
\end{align}
\end{itemlist}

\NI
Note that the elements of $\im \io_{i_0,I}$ have the form $\bigl(g,\ul w(g)=(\ul w_i)_{i\in I}\bigr)$ with $\ul w_{i_0} = (0,1)$.
In the following we denote by $\ul w = (\ul w_i)_{i\in I}\in (S^2)^{2|I|}$ any tuple of $\ul w_i = (w_i^0, w_i^1)\in S^2\times S^2$, even if it is not determined by a map $g$.
Then ``$\forall i, t$'' will be shorthand for ``$\forall i\in I, t\in \{0,1\}$''.

\begin{thm} \label{thm:A2}
Suppose that the tuple of basic Kuranishi charts
$$
\bigl(\bK_i = (U_{f_i},E_{f_i},s_{f_i},\psi_{f_i})\bigr)_{i=1,\ldots,N}
$$
is constructed as in Proposition~\ref{prop:A1} from local slices $\Bb_{f_i}$ and subspaces
$$
E^i\subset \Cc^\infty(\Hom^{0,1}_J(S^2,M)),
$$
that  induce obstruction bundles $\Hat E^i$ over neighbourhoods
$\Hat\Vv_{f_i}\subset\Hat\Bb^{k,p}$ of $\Bb_{f_i}$.
Assume moreover that this data satisfies Sum Conditions {\rm I}$'$, {\rm II}$'$, and {\rm IV}$'$.
Then for every index subset $I\subset\{1,\ldots,N\}$ with nonempty overlap of footprints
$$
F_I := \;{\textstyle \bigcap_{i\in I}}\im \psi_i  \;\neq\; \emptyset
$$
we obtain the following transition data.
\begin{enumerate}
\item
Corresponding to each choice of $i_0\in I$ and sufficiently small open set
 $$
 \Hat\Ww_{I,i_0} \subset \Bigl(\Hat\Bb^{k,p} \times (S^2)^{2|I|}\Bigr)\cap \bigl\{(g,\ul w)
 \,\big|\, \ul w_{i_0} = (0,1)\bigr\}
 $$
that covers a neighbourhood of the footprint $F_I$ in the sense that
\begin{align*}
\quad
 \bigl\{ ( g , \ul w ) \in \Hat\Ww_{I,i_0} \,\big|\, \pbar g = 0 , \;
 g(w_i^t)\in Q_{f_i}^t,\; \forall i, t \,\bigr\}
= \iota_{i_0,I}( \psi_{i_0}^{-1}(F_I) )
\end{align*}
there is a {\bf sum chart} $\bK_{I}: = \bK_{I,i_0}$ with
\begin{itemize}
\item
domain
\[
\qquad\qquad
U_{I} := \bigl\{ \bigl( g , \ul w \bigr) \in \Hat\Ww_{I,i_0}\,\big|\,
\pbar g \in {\textstyle \sum_{i\in I}} \Gamma_{\ul w_i}^* \Hat E^i,   \,
 g(w_i^t)\in Q_{f_i}^t  \ \forall i,t \,
\bigr\} ,
\]
\item
obstruction space $\displaystyle \; E_I:= {\textstyle \prod_{i\in I}} E^i$,
\item
section
$\displaystyle \;
s_{I} : U_{I} \to E_I , \;  ( g , \ul{w})   \mapsto  (\nu^i)_{i\in I}$
given by
$$
\pbar g = {\textstyle \sum_{i\in I}} \, \ga_{\ul w_i}^* \nu^i |_{\gr g} ,
$$
\item
footprint map
$\displaystyle\; \psi_{I} : s_{I}^{-1}(0) \overset{\cong}\to F_I, \;
(g,\ul w) \mapsto [g]$.
\end{itemize}\MS
\item
For every $I\subset J$ and choice of $i_0\in I, j_0\in J$ as above,
a coordinate change
$\Hat\Phi_{IJ}:
\bK_{I} \to \bK_{J}$ is given by
\begin{itemize}
\item
a choice of domain
$\displaystyle \; V_{IJ} \subset U_{I}$ such that
\begin{align}
\label{choice VIJ}
\qquad\qquad\qquad  V_{IJ}\cap s_{I}^{-1}(0) = \psi_{I}^{-1}(F_J), \qquad V_{IJ} \subset
\iota_{i_0,I} \bigl( \Ga_{f_{j_0},f_{i_0}} \bigl( \iota_{j_0,J}^{-1} ( U_{J} ) \bigr)\bigr)
\end{align}
with the embeddings \eqref{embed} and the reparametrization ${\Ga_{f_{j_0},f_{i_0}}:\Bb_{f_{j_0}}\to \Bb_{f_{i_0}}}$ as in \eqref{transition},
\item
embedding  $\phi_{IJ} := \iota_{j_0,J} \circ \Ga_{f_{i_0},f_{j_0}} \circ \iota_{i_0,I}^{-1}$, that is
\footnote{
This map also equals
$\; \phi_{IJ}\bigl( g , (w_i^t)_{i\in I,t=0,1} \bigr) \;=\;
\bigl(\, g\circ \ga \,,\,  (\ga^{-1}(w_i^t))_{i\in I,t=0,1} \cup (g^{-1}(Q^t_{f_j}))_{j\in J\less I, t=0,1} \,\bigr)$,
where $\ga=\ga_{\ul w_{j_0}}=\ga_g\in G_\infty$ is determined by
$\Ga_{f_{i_0},f_{j_0}}(g) = g\circ \ga_g$ or equivalently $\ga_{\ul w_{j_0}}(t)=w_{j_0}^t \;\forall t$.
}
$$
\qquad\qquad
\phi_{IJ} : V_{IJ} \to U_{J}, \quad
\bigl( g ,
\ul w \bigr)
\mapsto
\bigl(\, \Ga_{f_{i_0},f_{j_0}}(g) \,,\, (g^{-1}(Q^t_{f_j}))_{j\in J, t=0,1} \,\bigr) ,
$$
\item
linear embedding $\Hat\phi_{IJ}:E_I \hookrightarrow E_J$ given by the natural inclusion.
\end{itemize}
\end{enumerate}
Moreover, any choice of $i_0\in I$ and open sets $\Hat\Ww_{I,i_0}$ for each $F_I\neq\emptyset$, and domains $V_{IJ}$ for each $F_I\cap F_J\neq\emptyset$ forms a {\it weak Kuranishi atlas}
${(\bK_I, \Hat\Phi_{IJ})}$ in the sense of Definition~\ref{def:Kwk}; in particular satisfying the weak cocycle condition
$$
\phi_{JK} \circ \phi_{IJ} = \phi_{IK} \qquad\text{on}\;\; V_{IK} \cap \phi_{IJ}^{-1}(V_{JK}) .
$$
\end{thm}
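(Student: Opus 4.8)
The plan is to run the argument of Proposition~\ref{prop:A1} with the single obstruction bundle $\Hat E_{f_0}$ replaced by the ``decoupled sum bundle'' $\sum_{i\in I}\Ga_{\ul w_i}^*\Hat E^i$, and then to extract the coordinate changes and the weak cocycle condition from the equivariance identity \eqref{Eequivariant} and the group law $\Ga_{f_{j_0},f_{k_0}}\circ\Ga_{f_{i_0},f_{j_0}}=\Ga_{f_{i_0},f_{k_0}}$. For part~(1), fix $i_0\in I$ and choose $\Hat\Ww_{I,i_0}$ small enough that it has the stated footprint property and that, for all $(g,\ul w)\in\Hat\Ww_{I,i_0}$ and all $i\in I$, the reparametrization $g\circ\ga_{\ul w_i}$ lies in the neighbourhood $\Hat\Vv_{f_i}$ of the slice $\Bb_{f_i}$ over which $\Hat E^i$ was constructed. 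Realize $U_I$ in two stages. First, the thickened solution space
$$
\Hat U_I \;:=\; \bigl\{(g,\ul w)\in\Hat\Ww_{I,i_0} \;\big|\; \pbar g \in {\textstyle\sum_{i\in I}}\Ga_{\ul w_i}^*\Hat E^i\bigr\}
$$
is the zero set of the section $(g,\ul w)\mapsto[\pbar g]$ of the quotient bundle $\Hat\Ee\big/\sum_{i\in I}\Ga_{\ul w_i}^*\Hat E^i$. By the graph construction \eqref{graph} and Sum Condition~I$'$ each trivialization $(f,\nu)\mapsto\nu|_{\gr f}$ is smooth, and its reparametrized version $(g,\ul w,\nu)\mapsto\ga_{\ul w}^*\nu|_{\gr g}$ involves no composition on the domain of an infinite-dimensional function space, while Sum Condition~II$'$ keeps the fibers of $\sum_{i\in I}\Ga_{\ul w_i}^*\Hat E^i$ of constant rank $\sum_{i\in I}\dim E^i$; hence this is a smooth finite-rank subbundle of $\Hat\Ee$ and the quotient section is smooth and Fredholm. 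It is transverse to the zero section because already a single summand $\Ga_{\ul w_i}^*\Hat E^i$ covers the cokernel of ${\rm D}_g\pbar$, by the construction of the basic charts in Proposition~\ref{prop:A1}. The implicit function theorem then makes $\Hat U_I$ a finite-dimensional manifold, and elliptic regularity for $\pbar$ (Sum Condition~III, automatic from the graph construction) shows $\Hat U_I\subset\Cc^\infty(S^2,M)\times(S^2)^{2|I|}$.

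Second, cut $U_I$ out of $\Hat U_I$ by the slicing conditions, i.e.\ as the preimage ${\rm ev}^{-1}\bigl(\prod_{i\in I}(Q_{f_i}^0\times Q_{f_i}^1)\bigr)$ under the evaluation map ${\rm ev}:\Hat U_I\to(S^2)^{2|I|}$, $(g,\ul w)\mapsto(g(w_i^0),g(w_i^1))_{i\in I}$, which is smooth by Lemma~\ref{le:evsmooth} since $\Hat U_I$ consists of smooth maps. Transversality of ${\rm ev}$ to $\prod Q_{f_i}^t$ along $\iota_{i_0,I}(\psi_{i_0}^{-1}(F_I))$ is checked exactly as in Proposition~\ref{prop:A1}: $\{0\}\times(\rT S^2)^{2|I|}$ is tangent to $\Hat U_I$ at such points (using $G_\infty$-invariance of $\pbar^{-1}(0)$ to produce an infinitesimal action, together with the surjectivity of $\rT_{\rm id}G_\infty\to\rT_0S^2\times\rT_1S^2$), while $\im\rd_{w_i^t}g\pitchfork\rT Q_{f_i}^t$ holds in the local slice by Sum Condition~IV$'$. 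Thus $U_I$ is a smooth manifold; Sum Condition~II$'$ makes the decomposition $\pbar g=\sum_{i\in I}\ga_{\ul w_i}^*\nu^i|_{\gr g}$ unique, giving the smooth section $s_I$; and $\psi_I$ is the restriction of the local-slice homeomorphism $\Bb_{f_{i_0}}\to\Hat\Bb^{k,p}/G_\infty$ of Lemma~\ref{lem:slice}, with image exactly $F_I$ by the defining property of $\Hat\Ww_{I,i_0}$. The index $\dim U_I-\dim E_I$ is independent of $I$, since enlarging $I$ to $J$ enlarges both $E_I$ and the allowed values of $\pbar g$ by $\sum_{j\in J\less I}\dim E^j$.

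For part~(2), the embedding $\phi_{IJ}=\iota_{j_0,J}\circ\Ga_{f_{i_0},f_{j_0}}\circ\iota_{i_0,I}^{-1}$ is forced by the requirement that it lift the identity on the overlap of footprints inside $\Hat\Bb^{k,p}/G_\infty$; it is a smooth embedding of $V_{IJ}$ by Lemmas~\ref{le:Gsmooth} and~\ref{le:evsmooth} (both apply since $V_{IJ}$ is a finite-dimensional submanifold of smooth maps), and it takes values in $U_J$ by the second inclusion of \eqref{choice VIJ} together with the computation that, writing $\Ga_{f_{i_0},f_{j_0}}(g)=g\circ\ga_g$ and using $\pbar(g\circ\ga_g)=\ga_g^*(\pbar g)$ and \eqref{Eequivariant}, one obtains $\pbar(g\circ\ga_g)=\sum_{i\in I}(\ga_g^{-1}\circ\ga_{\ul w_i})^*\nu^i|_{\gr(g\circ\ga_g)}\in\sum_{i\in I}\Ga^*\Hat E^i\subset\sum_{j\in J}\Ga^*\Hat E^j$ with respect to the reparametrized marked points; the same identity gives $\Hat\phi_{IJ}\circ s_I=s_J\circ\phi_{IJ}$, the components of $s_J$ indexed by $J\less I$ vanishing by uniqueness of the direct-sum decomposition. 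The footprint compatibility $\psi_I=\psi_J\circ\phi_{IJ}$ and the identity $V_{IJ}\cap s_I^{-1}(0)=\psi_I^{-1}(F_J)$ are then immediate, the latter being part of the choice \eqref{choice VIJ} (which is possible since $\psi_I^{-1}(F_J)$ lies in the open set on the right of \eqref{choice VIJ}), and $\Hat\phi_{IJ}$ is the injective natural inclusion $\prod_{i\in I}E^i\hookrightarrow\prod_{j\in J}E^j$. The weak cocycle condition then follows from the composition law $\Ga_{f_{j_0},f_{k_0}}\circ\Ga_{f_{i_0},f_{j_0}}=\Ga_{f_{i_0},f_{k_0}}$ (valid wherever both sides are defined): it gives $\phi_{JK}\circ\phi_{IJ}=\iota_{k_0,K}\circ\Ga_{f_{i_0},f_{k_0}}\circ\iota_{i_0,I}^{-1}=\phi_{IK}$ on $V_{IK}\cap\phi_{IJ}^{-1}(V_{JK})$, while $\Hat\phi_{JK}\circ\Hat\phi_{IJ}=\Hat\phi_{IK}$ holds trivially as all three are natural inclusions; the index (``tangent bundle'') condition required by Definition~\ref{def:Kwk} follows from the explicit product decomposition of $s_J$ transverse to $\phi_{IJ}(V_{IJ})\subset U_J$.

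I expect the main obstacle to be the first stage of part~(1): showing that $\sum_{i\in I}\Ga_{\ul w_i}^*\Hat E^i$ really is a \emph{smooth} finite-rank subbundle over the infinite-dimensional base $\Hat\Ww_{I,i_0}$, and that $\pbar$ remains transverse after passing to the quotient bundle. This is exactly the point at which the differentiability failure of the reparametrization action from Section~\ref{ss:nodiff} would obstruct the construction, and it is circumvented only because the geometric graph construction \eqref{graph} and Sum Conditions~I$'$ and~II$'$ are designed to remove composition on the domain and to hold the ranks constant. The remaining work --- choosing $\Hat\Ww_{I,i_0}$ for every $F_I\neq\emptyset$ and $V_{IJ}$ for every $F_I\cap F_J\neq\emptyset$ compatibly, so that even the weak cocycle condition makes sense --- is routine bookkeeping over the finite index set $\{1,\ldots,N\}$, but must be organised with care.
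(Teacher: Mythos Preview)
Your proposal is correct and follows essentially the same route as the paper: construct the thickened solution space $\Hat U_I$ via transversality of the quotient section (using Sum Conditions~I$'$, II$'$), cut by the evaluation map (Lemma~\ref{le:evsmooth}, transversality from the $G_\infty$-action plus Sum Condition~IV$'$), define $\phi_{IJ}$ as $\iota_{j_0,J}\circ\Ga_{f_{i_0},f_{j_0}}\circ\iota_{i_0,I}^{-1}$, and derive the weak cocycle condition from the group law for the slice transition maps.

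The one organisational difference worth noting is how the index condition is verified. You assert it ``follows from the explicit product decomposition of $s_J$ transverse to $\phi_{IJ}(V_{IJ})$,'' which is the right idea but not yet a computation. The paper makes this step explicit by introducing intermediate Kuranishi charts $\bK^B_I$ with domains $B_{I,i_0}=\iota_{i_0,I}^{-1}(U_I)\subset\Bb_{f_{i_0}}$ (i.e.\ forgetting the marked points), and factorising the coordinate change through them. In that model the tangent spaces are visibly $\rT_g B_{I,j_0}=({\rm D}_g\pbar)^{-1}\bigl(\sum_{i\in I}\Ga_{\ul w_i(g)}^*\Hat E^i|_g\bigr)$, so the quotient $\rT_g B_{J,j_0}/\rT_g B_{I,j_0}$ is mapped isomorphically by ${\rm D}_g\pbar$ onto $\sum_{j\in J\setminus I}\Ga_{\ul w_j(g)}^*\Hat E^j|_g\cong E_J/\Hat\phi_{IJ}(E_I)$, which is exactly the tangent bundle condition \eqref{tbc}. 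Your direct approach is equivalent, but if you write it out you will in effect reproduce this calculation; the detour through the $B$-charts is what makes it clean. A minor point: in your transversality sketch for the evaluation map, note that the $i_0$-component has $\ul w_{i_0}=(0,1)$ fixed, so $\{0\}\times(\rT S^2)^{2|I|}$ is not literally tangent to $\Hat U_I$; the $i_0$-factor needs the infinitesimal $G_\infty$-action on $g$ as in Proposition~\ref{prop:A1}, while the $j\neq i_0$ factors use variation of $\ul w_j$.
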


\begin{proof}
The sum charts $\bK_{I}$ will be constructed as in Proposition~\ref{prop:A1}. In fact, let us begin by showing that the necessary choices of neighbourhoods in (i) always exist.
Since $F_I\subset\oMm_1(A,J)$ is open and $\psi_{i_0}$ is a homeomorphism to $s_{i_0}^{-1}(0)\subset U_{i_0}\subset\Bb_{f_{i_0}}$, there exists an open set $\Bb_{I,i_0}\subset\Bb_{f_{i_0}}$
such that $\Bb_{I,i_0}\cap s_{i_0}^{-1}(0) = \bigl\{  g \in \Bb_{I,i_0} \,\big|\, \pbar g = 0 \bigr\} = \psi_{i_0}^{-1}(F_I)$. Next, since $\iota_{I,i_0}$ is an embedding to
$\Hat\Bb^{k,p} \times \bigl\{ (\ul w_{i}) \in (S^2)^{2|I|}   \,\big|\, \ul w_{i_0} = (0,1) \bigr\}$,
it contains an open set $\Hat\Ww_{I,i_0}$ such that $\Hat\Ww_{I,i_0} \cap\im\iota_{I,i_0} = \iota_{i_0,I}( \Bb_{I,i_0})$. Together, this implies the requirement in (i).
Note moreover that elements $(g,\ul w)\in \im \iota_{i_0,I}$ satisfy $g(w_i^t)\in 
Q^t_{f_i}$ and hence $\Hat\Ww_{I,i_0}$ can be chosen such that $g(w_i^t)$ lies in a given neighbourhood of the hypersurface  
$Q^t_{f_i}$ near $f_i(t)$ for any $(g,\ul w)\in \Hat\Ww_{I,i_0}$.

Next, note that Sum Condition II$'$ is assumed to be satisfied for $g\in\psi_{i_0}^{-1}(F_I)\subset\Bb_{f_{i_0}}$, and hence continues to hold for $\bigl(g,(\ul w_i) \bigr) \in\Hat\Ww_{I,i_0}$ in a sufficiently small neighbourhood of $\iota_{i_0,I}(\psi_{i_0}^{-1}(F_I))$. Thus we obtain a well defined bundle
\begin{equation}\label{HEI}
\Hat E_I \; \to \; \Hat\Ww_{I,i_0}   , \qquad
\Hat E_I|_{(g,\ul w)}:=
{\textstyle \sum_{i\in I}} \bigl( \Gamma_{\ul w_i}^* \Hat E^i \bigr)|_{g}  \;\subset\; \Hat\Ee|_g.
\end{equation}
In order to construct a Kuranishi chart $\bK_{I}$ with footprint $F_I$ from $\Hat E_I$ along the lines of Proposition~\ref{prop:A1}, we need to express the domain $U_I$ as the zero set of a smooth transverse Fredholm operator.
Recall here from Section~\ref{ss:eval} that $\Cc^\infty(S^2,M) \times S^2 \ni (g, w_i^t) \mapsto g(w_i^t) \in M$ is not smooth in any standard Banach norm.
Hence we first
construct
the thickened solution space
\[
\Hat U_{I} := \bigl\{ ( g , \ul{w} ) \in \Hat\Ww_{I,i_0} \,\big|\, \; \pbar g \in \Hat E_I|_{(g,\ul w)}
\bigr\},
\]
which is the zero set of the smooth Fredholm operator
$$
\Hat\Ww_{I,i_0} \;\longrightarrow\;
\bigcup_{(g, \ul w)}\quotient{\Hat\Ee|_g }{ \Hat E_I |_{(g,\ul w)}}  ,
\qquad
( g , \ul{w})  \; \longmapsto\; [\pbar g] .
$$
We can achieve transversality of this operator by choosing $\Hat\Ww_{I,i_0}$ to be a sufficiently small neighbourhood of $\iota_{i_0,I}(\psi_{i_0}^{-1}(F_I))$, since $\pbar$ is transverse to $\Hat\Ee/\Hat E^{i_0}$ over $\psi_{i_0}^{-1}(F_I) \subset \Hat\Vv_{f_{i_0}}$, and for
$( g , \ul{w})\in  \iota_{i_0,I}(\psi_{i_0}^{-1}(F_I))$
we have
$\Hat E^{i_0}|_g \subset \Hat E_I |_{(g,\ul w)}$.

Finally, the domain $U_{I}\subset\Hat U_{I}$ is the zero set of the map
\begin{align} \label{BQW}
\Hat U_{I}  \quad &\;\longrightarrow\;  \underset{i\in I}{ \textstyle {\prod}}\bigl( (\rT_{f_i(0)} Q_{f_i}^{0})^\perp\times (\rT_{f_i(1)} Q_{f_i}^{1})^\perp \bigr) , \\
( g , \ul{w})
&\; \longmapsto\;
\underset{i\in I}{ \textstyle {\prod}} \bigl( \Pi^\perp_{Q_{f_i}^{0}}(g(w^0_i)), \Pi^\perp_{Q_{f_i}^{1}}(g(w^1_i))\bigr),
\nonumber
\end{align}
which is well defined for sufficiently small choice of $\Hat\Ww_{I,i_0}$, such that the $g(w^t_i)$ lie in the domain of definition of the projections $\Pi^\perp_{Q_{f_i}^{t}}$.
Moreover, this map is smooth, since by the regularity in Sum Condition III (which is satisfied by construction) we have $\Hat U_I\subset\Cc^\infty(S^2,M)$.
To see that it is transverse, it suffices to consider any given point $(g ,\ul{w}) \in \iota_{i_0,I}(\psi_{i_0}^{-1}(F_I))$, since transversality at these points persists in an open neighbourhood, and then $\Hat\Ww_{I,i_0}$ can be chosen sufficiently small to achieve transversality on all of $\Hat U_I$.
At these points we understand some parts of the tangent space $\rT_{(g ,\ul{w})}\Hat U_I$ because $\{ (f, \ul v ) \in \Hat\Ww_{I,i_0} \,|\,  \pbar f=0 \}$ is a subset of $\Hat U_I$ which contains $\iota_{i_0,I}(\psi_{i_0}^{-1}(F_I))$.
Hence we have $\bigl(\delta g, (\delta w_i)_{i\in I}\bigr) \in \rT_{(g, \ul w)} \Hat U_I$ for any $\delta g \in \ker{\rm D}_g\pbar$ and $\delta w_i \in \rT_{w_i}(S^2)^2$ with $\delta w_{i_0}=0$.
In particular, we have $\rT_{g}(G_\infty g) \times \{0\} \subset \rT_{(g, \ul w)} \Hat U_I$ since $\{ (f, \ul v ) \in \Hat\Ww_{I,i_0} \,|\,  \pbar f=0 \}$ is invariant under the action $\ga : (f,\ul v)\mapsto (f\circ \ga , \ul v)$ of $\{\ga\approx{\rm id}\}\subset G_\infty$, unlike the thickened solution space $\Hat U_I$ itself.
(Neither space is invariant under the more natural action $(f,\ul v)\mapsto (f\circ \ga , \ga^{-1}(\ul v))$ that will be important below, since at the moment $\ul w_{i_0}$ is fixed.)

Now the $i_0$ component of the linearized operator
of \eqref{BQW} at any point 
simplifies, since the marked points $w^t_{i_0}=t$ are fixed, to
\begin{equation}\label{linop0}
\rT_{(g, \ul w)} \Hat U_I \;\ni\; \bigl(\delta g, (\delta w_i)_{i\in I}\bigr)
 \;\mapsto\;  \bigl( \rd\Pi^\perp_{Q^{0}_{f_{i_0}}} \delta g (0) , \rd\Pi^\perp_{Q^{1}_{f_{i_0}}} \delta g (1) \bigr) .
\end{equation}
At points with $\pbar g=0$, its restriction to $\rT_{g}(G_\infty g) \times \{0\} \subset \rT_{(g, \ul w)} \Hat U_I$ is surjective by the same argument as in Proposition~\ref{prop:A1}, which uses the fact that $\im\rd_t g$ projects onto $(\rT_{f_{i_0}(t)} Q_{f_{i_0}}^{t})^\perp$ by the construction of the local slice $\Bb_{f_{i_0}}$ at $g\approx f_{i_0}$.
Next, 
the  
$j\in I\less\{i_0\}$ component of the linearized operator for fixed $\delta g$ is
\begin{equation}\label{linopi}
\rT_{(g, \ul w)} \Hat U_I \;\ni\; 
\bigl(\delta g, (\delta w_i)_{i\in I}\bigr)
\;\mapsto\;
\Bigl(  \rd\Pi^\perp_{Q^{t}_{f_j}}   \bigl(\delta g (w^t_j )  + \rd_{w^t_j} g ( \delta w^t_j)  \bigr)\Bigr)_{t=0,1}  .
\end{equation}
We claim that this is surjective for any given $\delta g \subset \rT_{g}(G_\infty g)$ (given by the surjectivity requirements for $i_0$), just by variation of $\delta w_j$.
Indeed, for $(g, \ul w)\in \iota_{i_0,I}(\psi_{i_0}^{-1}(F_I))$ we 
have $\bigl(\delta g, (\delta w_i)_{i\in I}\bigr) \in \rT_{(g, \ul w)} \Hat U_I$ for any $\delta g \subset \rT_{g}(G_\infty g)$ and $\delta w_i \in \rT_{w_i}(S^2)^2$.
Moreover, we have $\im\rd_{w^t_j} g = 
\im 
\rd_{t} (g\circ\ga_{\ul w_j})$, which projects onto $(\rT_{f_j(t)} Q_{f_j}^{t})^\perp$ by the construction of the local slice $\Bb_{f_j}$ at $g\circ\ga_{\ul w_j}\approx f_j$.
This proves surjectivity of \eqref{linopi} for $j\neq i_0$  
by variation of $\delta w_j$, 
and together with the surjectivity of\eqref{linop0} 
by variation of $\delta g$
proves transversality of \eqref{BQW} for sufficiently small $\Hat\Ww_{I, i_0}$.

Now that the domain $U_I$ is equipped with a smooth structure, we can construct a Kuranishi atlas $\bK_I$ as in Proposition~\ref{prop:A1} by pulling back the smooth section
$$
\ti s_I : U_I \;\to\; \Hat E_I|_{U_I} , \qquad  (g, \ul w) \;\mapsto\; \pbar g
$$
to the trivialization $\Hat E_I|_{U_I}\cong U_I \times E_I$ given by construction of the sum bundle.
The induced homeomorphism
$$
\psi_I :  \; \ti s_I^{-1}(0) \; \overset{\cong}{\longrightarrow} \;  F_I \;\subset\; \oMm_1(A,J) ,
\qquad  (g, \ul w)\;\mapsto\; [g]
$$
maps
$\ti s_I^{-1}(0) \subset \im\iota_{i_0,I}$
to the desired footprint since we chose the neighbourhoods $\Hat\Ww_{I, i_0}$ and $\Bb_{I,i_0}:=\iota_{i_0,I}^{-1}(\Hat\Ww_{I, i_0})\subset \Bb_{f_{i_0}}$ such that
$$
\psi_I( \ti s_I^{-1}(0) )
\;=\; \pr \bigl( \iota_{i_0,I}^{-1}(\ti s_I^{-1}(0)) \bigr)
\;=\;  \pr \bigl( \iota_{i_0,I}^{-1}(\Hat\Ww_{I,i_0}) \cap \pbar^{-1}(0) \bigr)
\;=\;  \pr \bigl( \psi_{i_0}^{-1}(F_I)  \bigr)
\;=\; F_I ,
$$
where $\pr :\Hat\Bb^{k,p}\to \Hat\Bb^{k,p}/G_\infty$ denotes the quotient.
This finishes the construction for (i).

To construct the coordinate changes, we can now forget the marked points, which were only a technical means to obtaining smooth sum charts.
For that purpose fix a pair $i_0\in I$ and note that the forgetful map $\Pi_I:\Hat\Bb^{k,p}\times (S^2)^{2|I|}\to\Hat\Bb^{k,p}$ is a left inverse to the embedding $\io_{i_0,I}:\Bb_{f_{i_0}}\hookrightarrow \Hat\Bb^{k,p}\times (S^2)^{2|I|}$ from \eqref{embed}, whose image contains the smooth finite dimensional domain $U_I\subset\Cc^\infty(S^2,M)\times (S^2)^{2|I|}$.
Hence it restricts to a topological embedding to a space of perturbed holomorphic maps in the slice,
\begin{align}\label{UB}
\Pi_I|_{U_I} : \;   U_I   \;\longrightarrow\;
B_{I,i_0} :=&\; \bigl\{ g\in \Bb_{f_{i_0}} \,\big|\, \exists \ul w \in (S^2)^{2|I|} : (g,\ul w) \in U_I \bigr\} \\
=&\; \bigl\{ g\in \Bb_{I,i_0} \,\big|\, \pbar g \in \Hat E_I |_{(g,\ul w(g))} \bigr\}   .  \nonumber
\end{align}
In fact, this is a smooth embedding since the forgetful map is smooth and we can check that the differential of the forgetful map $\Pi_I|_{U_I}$ is injective. Indeed, its kernel at $(g,\ul w)$  is the vertical part of the tangent space
$\rT_{(g,\ul w)} U_{I} \cap \bigl( \{0\} \times \rT_{\ul w} (S^2)^{2|I|}\bigr)$, which in terms of the linearized operators \eqref{linopi} is given by the kernel of
$$
\rT_{\ul w} (S^2)^{2|I|} \; \ni \;
(\delta w^t_i )_{i\in I, t=0,1} \;\longmapsto\;
\Bigl(  \rd_{g(w^t_i)}\Pi^\perp_{Q^{t}_{f_i}}   \bigl( \rd_{w^t_i} g ( \delta w^t_i)  \bigr)\Bigr)_{i\in I, t=0,1}
\;\in \; \underset{i\in I, t=0,1}{\textstyle\prod} \im \rd_t f_i .
$$
This operator is injective (and hence surjective) since by Sum Condition IV$'$
$$
\bigl(\im\rd_{w^t_i} g\bigr)\; \pitchfork\; \bigl(\rT_{w^t_i}Q^{t}_{f_i}\bigr)=\ker\rd_{g(w^t_i)}\Pi^\perp_{Q^{t}_{f_i}}.
$$
Thus $\io_{i_0,I}: B_{I,i_0}\to U_I$ is a diffeomorphism, and since it also intertwines the Cauchy--Riemann operator on the domains and the projection to $\oMm_1(A,J)$, this forms a map
$$
\Hat\Pi_{I,i_0}:=\bigl(\Pi_I|_{U_I} , \id_{E_I} \bigr): \; \bK_I \longrightarrow \bK^B_I,
$$
from the sum chart
$\bK_I=\bigl(\, U_I \,,\, \bigcup_{(g,\ul w)\in U_I} \Hat E_I|_{(g,\ul w)} \,,\, \ti s_I(g,\ul w)=\pbar g \,,\, \psi_I(g,\ul w)=[g] \,\bigr)$ to the Kuranishi chart
$$
\bK^B_I: = \bigl(\, B_{I,i_0} \,,\,
{\textstyle \bigcup_{g\in B_{I,i_0}}} \Hat E_I|_{(g,\ul w(g))} \,,\, \ti s(g)=\pbar g \,,\, \psi(g)=[g] \,\bigr).
$$
(Here we indicated the obstruction bundles before trivialization to $E_I$.)
The inverse map $\Hat\Pi_{I,i_0}^{-1}:=\bigl(\io_{i_0,I} , \id_{E_I} \bigr)$ is also a map between Kuranishi charts, and both are coordinate changes since the index condition is automatically satisfied when $\phi_{IJ}$ and $\Hat\phi_{IJ}$ are both diffeomorphisms.
Indeed, in this case, both target and domain in the tangent bundle condition \eqref{tbc} are trivial.

Next, we will obtain further coordinate changes $\Hat\Phi^I_{i_0 j_0} : (B_{I,i_0},\ldots) \to (B_{I,j_0},\ldots)$ for different choices of index $i_0,j_0\in I$. Here the choices of neighbourhoods $\Hat\Ww_{I,\bullet}$ induce neighbourhoods in the local slices $\Bb_{I,\bullet}\subset\Bb_{f_\bullet}$ such that
$B_{I,\bullet} :=\; \bigl\{ g\in \Bb_{f_\bullet} \,\big|\, \pbar g \in \Hat E_I |_{(g,\ul w(g))} \bigr\}$.
These domains are intertwined by the transition map between local slices $\Ga_{f_{i_0},f_{j_0}}$. Indeed, using the $G_\infty$-equivariance of the obstruction bundles \eqref{Eequivariant}, we have
$$
\pbar g = {\textstyle \sum_{i\in I}} \, \ga_{\ul w_i(g)}^*\nu^i |_{\gr g}
\quad\Longrightarrow\quad
\pbar(g\circ\ga) = {\textstyle \sum_{i\in I}}\,  \ga_{\ul w_i (g\circ \ga)}^*\nu^i |_{\gr g\circ\ga} ,
$$
where $\ga^{-1}\circ\ga_{\ul w_i} = \ga_{\ul w_i(g\circ\ga)}$ since $\ga^{-1}(\ga_{\ul w_i}(t))= \ga^{-1}( w_i^t)= w_i^t(g\circ\ga)$.
Thus we obtain a well defined map
$\Ga_{f_{i_0},f_{j_0}} : B_{I,i_0} \cap G_\infty\Bb_{I,j_0} \to  B_{I,j_0}$.
It is a topological embedding with open image, since its inverse is $\Ga_{f_{j_0},f_{i_0}} |_{B_{I,j_0} \cap G_\infty\Bb_{I,i_0}}$. In fact, it is a local diffeomorphism since both maps are smooth by Lemma~\ref{le:Gsmooth}.
The above also shows that this diffeomorphism intertwines the sections, given by the Cauchy--Riemann operator, and the footprint maps, given by the projection $g\mapsto [g]\in\oMm_1(A,J)$.
Since the index condition is automatic as above, we obtain the required coordinate change by
$\Hat\Phi^I_{i_0 j_0}:=\bigl(\, \Ga_{f_{i_0},f_{j_0}} \,,\, \id_{E_I} \,\bigr)$
with domain $B_{I,i_0} \cap G_\infty\Bb_{I,j_0} \subset B_{I,i_0}$.

With these preparations, a natural coordinate change  for $I\subsetneq J$ and any choice of $i_0\in I$, $j_0\in J$ arises from the composition of the above coordinate changes (all of which are local diffeomorphisms on the domains) with another natural coordinate change
$\Hat\Phi^{i_0}_{IJ} : (B_{I,j_0},\ldots) \to (B_{J,j_0},\ldots)$ given by the inclusion
$$
\phi^{j_0}_{IJ} := {\rm id}_{\Bb_{j_0}} : \; B_{I,j_0} \cap \Bb_{J,j_0} \;\hookrightarrow\; B_{J,j_0} .
$$
Again, this naturally intertwines the sections and footprint maps with
$$
s_I^{-1}(0)\cap B_{I,j_0} \cap \Bb_{J,j_0}= \psi_I^{-1}(F_J).
$$
To check the index condition for this embedding together with the linear embedding $\Hat\phi^{j_0}_{IJ} := {\rm id}_{E_I} : E_I \hookrightarrow E_J$ we express the tangent spaces to both domains in terms of the linearization of the Cauchy--Riemann operator on the local slice $\overline\partial: \Bb_{j_0} \to \Hat\Ee|_{\Bb_{j_0}}$. Comparing
$$
 \rT_g B_{I,j_0} =  ({\rm D}_g \overline{\partial}) ^{-1} \Bigl( \textstyle{\sum_{i\in I}} (\Ga_{\ul w_i(g)}^*\Hat E^i)|_g \Bigr) , \qquad
\rT_g B_{J,j_0} = ({\rm D}_g \overline\partial) ^{-1} \Bigl(  {\textstyle \sum_{j\in J}} (\Ga_{\ul w_j(g)}^*\Hat E^j)|_g \Bigr)
$$
as subsets of $\rT_g\Bb_{f_{j_0}}$, we can identify
$$
\quotient{\rT_g B_{J,j_0}}{\rd_g\phi^{j_0}_{IJ} \bigl( \rT_g B_{I,j_0} \bigr)}
\;=\;
 \quotient{ ({\rm D}_g \overline\partial) ^{-1}  \left( \textstyle{\sum_{j\in J\less I}} (\Ga_{\ul w_j(g)}^*\Hat E^j)|_g\right) }{\ker {\rm D}_g\overline\partial
}
$$
to see that the linearized section (given by the linearized Cauchy Riemann operator together with the trivialization of obstruction bundles) satisfies the tangent bundle condition \eqref{tbc}
\begin{eqnarray*}
{\rm D}_{g} \overline\partial \;:\;
\quotient{\rT_g B_{J,j_0}}{\rd_g\phi^{j_0}_{IJ} \bigl( \rT_g B_{I,j_0} \bigr)}
\;&\stackrel{\cong} \longrightarrow\; &
 {\textstyle\sum_{j\in J\less I}} (\Ga_{\ul w_j(g)}^*\Hat E^j)|_g \\
&&\qquad\quad \;\cong\;
\quotient{\Hat E_J|_{(g,(\ul w_j(g))_{j\in J})}}
{\Hat E_I|_{(g,(\ul w_i(g))_{i\in I})}}.
\end{eqnarray*}
Finally, we can compose the coordinate changes to
$$
\Hat\Phi_{IJ} \,:= \; \Hat\Pi_{J,j_0}^{-1} \circ \Hat\Phi^J_{i_0 j_0} \circ \Hat\Phi^{i_0}_{IJ} \circ \Hat\Pi_{I,i_0} \; : \;\;
\bK_I \; \to \; \bK_J .
$$
By Lemma~\ref{le:cccomp} this defines a coordinate change with the maximal domain
$$
\iota_{i_0,I}(B_{I,i_0}  \cap G_\infty \Bb_{J,j_0}) \;=\; \iota_{i_0,I} \bigl( \Ga_{f_{j_0},f_{i_0}} \bigl( \iota_{j_0,J}^{-1} ( U_{J} ) \bigr)\bigr),
$$
which we can restrict to any smaller choice of $V_{IJ}$ containing $\psi_I^{-1}(F_J)$.
The linear embedding, after the fixed trivialization of the bundle, is the trivial embedding $\Hat\phi_{IJ}: E_I\hookrightarrow E_J$, whereas the nonlinear embedding $\phi_{IJ}:=\iota_{i_0,I}^{-1}\circ \Ga_{f_{i_0},f_{j_0}}\circ \iota_{j_0,J}: V_{IJ} \to U_J$ of domains is given by the restriction to $V_{IJ}$ of the composition
$$
U_I \;\overset{\iota_{i_0,I}}{\longhookleftarrow}\; B_{I,i_0}  \cap G_\infty \Bb_{J,j_0} \; \xrightarrow[\cong]{\Ga_{f_{i_0},f_{j_0}}}\; B_{I,j_0} \cap \Bb_{J,j_0}
 \;\overset{\id_{\Bb_{j_0}}}{\longhookrightarrow}\; B_{J,j_0}
 \;\overset{\iota_{j_0,J}}{\longhookrightarrow}\; U_J .
$$
This completes the proof of (ii).
Finally, the cocycle condition on the level of the linear embeddings $\Hat\phi_{IJ}$ holds trivially,
whereas the weak cocycle condition for the embeddings between the domains follows, since $\iota_{j_0,J}^{-1}\circ\iota_{j_0,J}=\id_{\Bb_{J,j_0}}$ from the cocycle property of the local slices,
$$
\Ga_{f_{j_0},f_{k_0}}\circ \Ga_{f_{i_0},f_{j_0}}  = \Ga_{f_{i_0},f_{k_0}}
\qquad\text{on}\; \Bb_{f_{i_0}}\cap \bigl(G_\infty\cdot\Bb_{f_{j_0}}\bigr) \cap \bigl(G_\infty \cdot \Bb_{f_{k_0}}\bigr) .
$$
This completes the proof of Theorem~\ref{thm:A2}.
\end{proof}

Note that we crucially use the triviality of the isotropy groups, in particular in the proof of the cocycle condition. Nontrivial isotropy groups cause additional indeterminacy, which has to be dealt with in the abstract notion of Kuranishi atlases. The construction of Kuranishi atlases with nontrivial isotropy groups for Gromov--Witten moduli spaces will in fact require a sum construction already for the basic Kuranishi charts.
We will give a more detailed proof of Theorem~\ref{thm:A2} in \cite{MW:gw}, where we will also treat nodal curves and deal with the case of isotropy
or, more generally, nonunique intersections with the hypersurfaces $Q^t_{f_i}$.
Further, we will show that Sum Conditions I$'$ and II$'$ can always be satisfied by perturbing and shrinking a given set of basic charts.
Finally, in the language of Section~\ref{ss:top}, note that the Kuranishi charts and transition data
that we construct only satisfy the weak cocycle condition. However, our obstruction bundles are naturally additive. Therefore we obtain an additive weak Kuranishi atlas.
As we show in Theorem~\ref{thm:K} and Section~\ref{s:VMC}, this is precisely what we need to define the virtual fundamental class $[\oMm_1(A,J)]^{\rm virt}$.

\begin{remark} \rm  \label{rmk:smart}
 (i) The actual idea behind the choice of local slice conditions and the introduction of further marked points is of course a stabilization of the domain in order to obtain a theory over the Deligne--Mumford moduli space of stable genus zero Riemann surfaces with marked points. So one might want to rewrite this approach invariantly and, when summing two charts for example,
work over the Deligne--Mumford moduli space with five marked points instead of taking the points $(\infty,0,1,w_i^0,w_i^1)$.
When properly handled, this approach does give a good framework for discussing coordinate changes. However one does need to take care not to obscure the analytic problems by introducing these further abstractions and notations.
Moreover, this abstraction does not yield another approach to constructing the coordinate changes.
If there is a rigorous approach using the Deligne--Mumford formalism, then in a local model near $(\infty,0,1, w^0_{01},w^1_{01})$,  it would take exactly the form discussed above.\MS

\NI (ii)
The abstraction to equivalence classes of maps and marked points modulo automorphisms becomes crucial when one wants to extend the above approach to construct finite dimensional reductions near nodal curves, because the Gromov compactification  exactly mirrors the construction of Deligne--Mumford space. While we will defer the details of this construction to \cite{MW:gw}, let us note that the genus zero Deligne--Mumford spaces are defined by equivalence classes of pairwise distinct marked points on the sphere. Hence we will
need to make sure that the marked points $w^0,w^1\in S^2$ that we read off from intersection with the hypersurfaces $Q_{f_1}^{0},Q_{f_1}^{1}$ are disjoint from each other and from $\infty, 0,1$.
Thus, in order to be summable near nodal curves, the basic Kuranishi charts must be constructed from local slices with pairwise disjoint slicing conditions $Q_{f_i}^t$.
\MS

\NI (iii)
In view of Sum Condition II$'$ and the previous remark, one cannot expect any two given basic Kuranishi charts to have summable obstruction bundles and hence be compatible. This requires a perturbation of the basic Kuranishi charts, which is possible only when dealing with a compactified solution space, since each perturbation may shrink the image of a chart.
\MS

\NI (iv)
This discussion also shows that even a simple moduli space such as $\oMm_{1}(A,J)$ does not have a canonical Kuranishi atlas. Hence the construction of invariants from this space also involves constructing a Kuranishi atlas on the product cobordism $\oMm_{1}(A,J)\times [0,1]$ 
intertwining any two Kuranishi atlases for $\oMm_{1}(A,J)$ arising from different choices of basic charts and transition data. Note here that one could construct basic charts of the ``wrong dimension'' by simply adding trivial finite dimensional factors to the abstract domains or obstruction spaces.
A natural and necessary condition for constructing a well defined cobordism class of Kuranishi atlases with the ``expected dimension'' for $\oMm_{1}(A,J)$ is the following {\bf Fredholm
index condition for charts} pointed out to us by Dietmar Salamon:\MS

{\it Each Kuranishi chart must in some sense identify the kernel $\ker\rd s_{f_i}$ and cokernel $(\im\rd s_{f_i})^\perp$ of the finite dimensional reduction with the kernel modulo the infinitesimal action $\ker\rd_{f_i}\pbar/{\scriptstyle \rT_{f_i}(G_\infty f_i)}$ and cokernel $(\im\rd_{f_i}\pbar)^\perp$ of the Cauchy--Riemann operator.}\MS

In fact, one might argue that this identification should be part of a Kuranishi atlas on a moduli space. However, this would require giving the abstract footprint of a Kuranishi atlas more structure than that of a compact metrizable topological space,
in order to keep track of the kernel and cokernel of the Fredholm operator that arises by linearization from the PDE that defines the moduli space.
Whether the index condition picks out a unique cobordism class of Kuranishi atlases on $X$ is an interesting open question.
\MS

\NI (v)
The Fredholm index condition for Kuranishi charts, once rigorously formulated, should imply that any map between charts
which satisfy the index condition
should also satisfy the index condition for coordinate changes in Definition~\ref{def:change}
(a reformulation of the tangent bundle condition introduced by Joyce).
Conversely, a map between charts that satisfies the index condition for coordinate changes should also preserve the Fredholm index condition for charts.
More precisely, if $\Hat\Phi_{IJ} :\bK_I \to \bK_J$ is a map satisfying the index condition, and one of the charts $\bK_I$ or $\bK_J$ satisfies the Fredholm index condition, then both charts satisfy the Fredholm index condition (pending a rigorous definition of the latter).
\end{remark}

\section{Kuranishi charts and coordinate changes
with trivial isotropy
}
\label{s:chart}

Throughout this chapter, $X$ is assumed to be a compact and metrizable space.
This section defines Kuranishi charts with trivial isotropy for $X$ and coordinate changes between them.
The case of nontrivial isotropy is a fairly straightforward generalization using the language of groupoids, but the purpose of this paper is to clarify fundamental topological issues in the simplest example.
Hence we assume throughout that the charts have trivial isotropy and drop this qualifier from the wording.
Our definitions are motivated by \cite{FO};  an additional reference is \cite{J}.
Differing from both approaches, we work exclusively with charts whose domain and section are fixed, rather than with germs of charts as discussed in Section~\ref{ss:alg}. We moreover restrict our attention to charts 
without boundaries or corners. \cite[App.~A]{FOOO} also dispenses with germs and uses essentially the same basic definitions.  
However, our insistence on specifying the domain is new, as are our notion of Kuranishi atlas and
interpretation in terms of categories in Section~\ref{s:Ks} and our construction of the virtual fundamental class in Section~\ref{s:VMC}.

\subsection{Charts, maps, and restrictions}\label{ss:chart}

\begin{defn}\label{def:chart}
Let $F\subset X$ be a nonempty open subset.
A {\bf Kuranishi chart} for $X$ with {\bf footprint} $F$ is a tuple $\bK = (U,E,s,\psi)$ consisting of
\begin{itemize}
\item
the {\bf domain} $U$, which is an open smooth
$k$-dimensional manifold;
\item
the {\bf obstruction space} $E$, which is a finite dimensional real vector space;
\item
the {\bf section} $U\to U\times E, x\mapsto (x,s(x))$ which is given by a
smooth map $s: U\to E$;
 \item
the {\bf footprint map} $\psi : s^{-1}(0) \to X$, which is a homeomorphism to the footprint ${\psi(s^{-1}(0))=F}$.
\end{itemize}
The {\bf dimension} of $\bK$ is $\dim \bK: = \dim U-\dim E$.
\end{defn}

More generally, one could work with an obstruction bundle over the domain, but this complicates the notation and, by not fixing the trivialization, makes coordinate changes less unique.
In the application to holomorphic curve moduli spaces, there are natural choices of trivialized obstruction bundles.
The section $s$ is then given by the generalized Cauchy--Riemann operator, and elements in the footprint are $J$-holomorphic maps modulo reparametrization.

\MS\NI
Since we aim to define a regularization of $X$, the most important datum of a Kuranishi chart is its footprint.
So, as long as the footprint is unchanged, we can vary the domain $U$ and section $s$ without changing the chart in any important way. Nevertheless, we will always work with charts that have a fixed domain and section. In fact, our definition of a map between Kuranishi charts crucially involves
these domains.
The following definition is very general; the actual coordinate changes involved in a Kuranishi atlas will be a combination of a restriction, as defined below, and a map that satisfies an extra
index condition.

\begin{defn}\label{def:map} A {\bf map}  $\Hat\Phi : \bK \to \bK'$  between Kuranishi charts is a pair $(\phi,\Hat\phi)$ 
consisting of an embedding $\phi :U \to U'$ and a linear injection $\Hat \phi :E \to E'$ such that
\begin{enumerate}
\item the embedding restricts to $\phi|_{s^{-1}(0)}=\psi'^{-1} \circ\psi : s^{-1}(0) \to s'^{-1}(0)$, the transition map induced from the footprints in $X$;
\item
the embedding intertwines the sections, $s' \circ \phi  = \Hat\phi \circ s$, on the entire domain $U$.
\end{enumerate}
That is, the following diagrams commute:
\begin{equation}
 \begin{array} {ccc}
{U\times E}& \stackrel{\phi\times \Hat\phi} \longrightarrow &
{U'\times E'} 
\phantom{\int_Quark}  \\
\phantom{sp} \uparrow {s}&&\uparrow {s'} \phantom{spac}\\
\phantom{s}{U} & \stackrel{\phi} \longrightarrow &{U'} \phantom{spacei}
\end{array}
\qquad
 \begin{array} {ccc}
{s^{-1}(0)} & \stackrel{\phi} \longrightarrow &{s'^{-1}(0)} \phantom{\int_Quark} \\
\phantom{spa} \downarrow{\psi}&&\downarrow{\psi'} \phantom{space} \\
\phantom{s}{X} & \stackrel{{\rm Id}} \longrightarrow &{X}. \phantom{spaceiiii}
\end{array}
\end{equation}
\end{defn}

The dimension of the obstruction space $E$ typically varies as the footprint $F\subset X$ changes.
Indeed, the maps $\phi, \Hat\phi$ need not be surjective.  However, as we will see in Definition \ref{def:change}, the maps allowed as coordinate changes are carefully controlled in the normal direction.
Since we only defined maps of Kuranishi charts that induce an inclusion of footprints,
we now need to define a notion of restriction of a Kuranishi chart to a smaller subset of its footprint.

\begin{defn} \label{def:restr}
Let $\bK$ be a Kuranishi chart and $F'\subset F$
an open subset of the footprint.
A {\bf restriction of $\bK$ to $\mathbf{\emph F\,'}$} is a Kuranishi chart of the form
$$
\bK' = \bK|_{U'} := \bigl(\, U' \,,\, E'=E \,,\, s'=s|_{U'} \,,\, \psi'=\psi|_{s'^{-1}(0)}\, \bigr)
$$
given by a choice of open subset $U'\subset U$ of the domain such that $U'\cap s^{-1}(0)=\psi^{-1}(F')$.
In particular, $\bK'$ has footprint $\psi'(s'^{-1}(0))=F'$.
%
%
\end{defn}

The following lemma shows that we may easily restrict to any open subset of the footprint.
Moreover it provides a tool for restricting to precompact domains, which we require for refinements of Kuranishi atlases in Sections~\ref{ss:shrink} and \ref{ss:red}.
Here and throughout we will use the notation $V'\sqsubset V$ to mean that the inclusion $V'\hookrightarrow V$ is {\it precompact}. That is, ${\rm cl}_V(V')$ is compact, where ${\rm cl}_V(V')$ denotes the closure of $V'$ in the relative topology of $V$.  If both $V'$ and $V$ are contained in a compact space $X$, then $V'\sqsubset V$ is equivalent to the inclusion $\ov{V'}:={\rm cl}_X(V')\subset V$ of the closure of $V'$ with respect to the ambient topology.

\begin{lemma}\label{le:restr0}
Let $\bK$ be a Kuranishi chart. Then for any open subset $F'\subset F$ there exists a restriction $\bK'$ to $F'$ whose domain $U'$ is such that $\ov{U'}\cap s^{-1}(0) = \psi^{-1}(\ov{F'})$. If moreover $F'\sqsubset F$ is precompact, then $U'$ can be chosen to be precompact.
\end{lemma}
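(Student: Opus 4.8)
The statement is essentially a point-set topology exercise: given the homeomorphism $\psi : s^{-1}(0) \to F$ and an open $F' \subset F$, we must find an open $U' \subset U$ with $U' \cap s^{-1}(0) = \psi^{-1}(F')$ and, additionally, with $\ov{U'} \cap s^{-1}(0) = \psi^{-1}(\ov{F'})$ (and precompact $U'$ when $F' \sqsubset F$). First I would set $Z := \psi^{-1}(F') \subset s^{-1}(0)$; this is open in $s^{-1}(0)$, and its closure in $s^{-1}(0)$ is $\psi^{-1}(\ov{F'} \cap F)$ since $\psi$ is a homeomorphism onto $F$. The naive choice $U' = U \setminus (s^{-1}(0) \setminus Z)$ is not open, so instead the plan is to build $U'$ as a union of small open balls around the points of $Z$, shrinking their radii near the "bad" part of the boundary.

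\textbf{Construction of $U'$.} Fix a metric on $U$ (it is a manifold, hence metrizable). For each $x \in Z$, since $s^{-1}(0) \setminus Z$ is closed in $s^{-1}(0)$ and $x \notin s^{-1}(0) \setminus Z$, and also $x$ may have points of $\ov{F'} \setminus F$ (equivalently $\psi^{-1}(\ov{F'}) \setminus \psi^{-1}(F)$... but this set lies in $X$, not in $s^{-1}(0)$) — so in fact within $U$ the only obstacle is the closed set $C := {\rm cl}_U(s^{-1}(0)) \setminus Z$. Actually one must be careful: $s^{-1}(0)$ need not be closed in $U$ a priori, but $\psi : s^{-1}(0) \to F$ being a homeomorphism with $F$ open in $X$ does not immediately give closedness. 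So let $A := {\rm cl}_U\bigl(s^{-1}(0) \setminus Z\bigr)$ and $B := {\rm cl}_U\bigl(s^{-1}(0) \setminus \psi^{-1}(\ov{F'})\bigr)$. Define for $x \in Z$ the radius $r(x) := \tfrac{1}{2}\,{\rm dist}_U(x, A)$ if $x \notin A$; one checks $Z \cap A = \emptyset$ because $Z$ is open in $s^{-1}(0)$, so ${\rm dist}_U(x,A) > 0$ for each $x \in Z$ (as $A$ is closed and $x \notin A$). Set
$$
U' := \bigcup_{x \in Z} B_U\bigl(x, r(x)\bigr).
$$
This is open. I claim $U' \cap s^{-1}(0) = Z$: the inclusion $\supset$ is clear; for $\subset$, if $y \in s^{-1}(0)$ lies in $B_U(x, r(x))$ for some $x \in Z$ but $y \notin Z$, then $y \in s^{-1}(0) \setminus Z \subset A$, so ${\rm dist}_U(x,A) \le {\rm dist}_U(x,y) < r(x) = \tfrac12 {\rm dist}_U(x,A)$, a contradiction. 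Hence $\bK' := \bK|_{U'}$ is a restriction of $\bK$ to $F'$.

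\textbf{The closure condition.} It remains to arrange $\ov{U'} \cap s^{-1}(0) = \psi^{-1}(\ov{F'})$. Here $\ov{F'}$ means ${\rm cl}_X(F')$, and $\psi^{-1}(\ov{F'})$ — since $\psi$ is only defined on $s^{-1}(0)$ — should be read as $\psi^{-1}(\ov{F'} \cap F) = {\rm cl}_{s^{-1}(0)}(Z)$ (the second equality because $\psi$ is a homeomorphism onto $F$, which pushes closures of $Z$ to closures of $F'$ intersected with $F$). The inclusion $\ov{U'} \cap s^{-1}(0) \supset {\rm cl}_{s^{-1}(0)}(Z)$ holds since $Z \subset U'$ forces ${\rm cl}_U(Z) \subset \ov{U'}$. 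For the reverse inclusion I would \emph{not} use the $U'$ above but a slightly smaller one: choose the radii to also shrink near $B$, i.e. replace $r(x)$ by $r'(x) := \min\{r(x),\ \tfrac12 {\rm dist}_U(x, B) \text{ if } x \notin B,\ \text{else } 0\}$ — but one must check ${\rm dist}_U(x,B) > 0$ for $x \in Z$, which fails exactly when $x \in {\rm cl}_U(Z) \cap B$, i.e. $x$ is a limit in $U$ of points of $s^{-1}(0)$ outside $\psi^{-1}(\ov{F'})$. The point is such $x$ has $\psi(x) \in F' \cap {\rm cl}_X(F \setminus \ov{F'}) \subset F' \cap \ov{X \setminus \ov{F'}}$; but $F' \subset {\rm int}(\ov{F'})$ only if... this is not automatic. \textbf{I expect this closure condition to be the main obstacle}, and the correct fix is: since $\psi$ is a homeomorphism onto the open set $F$, we may first shrink $F'$ is not allowed (it is given), but we \emph{can} observe that ${\rm cl}_{s^{-1}(0)}(Z) = \psi^{-1}({\rm cl}_X(F') \cap F)$ is closed in $s^{-1}(0)$, hence its complement $s^{-1}(0) \setminus \psi^{-1}(\ov{F'})$ is open in $s^{-1}(0)$ and disjoint from ${\rm cl}_{s^{-1}(0)}(Z) \supset Z$; so the closed set $B = {\rm cl}_U\bigl(s^{-1}(0)\setminus\psi^{-1}(\ov{F'})\bigr)$ satisfies $B \cap Z = \emptyset$ (a point of $Z$ in $B$ would be a $U$-limit of points of $s^{-1}(0)$ outside $\psi^{-1}(\ov{F'})$, but near a point of $s^{-1}(0)$ the topology of $s^{-1}(0)$ agrees with the subspace topology, so it would be an $s^{-1}(0)$-limit, contradicting disjointness from the closed set ${\rm cl}_{s^{-1}(0)}(Z)$... wait, need $x \in Z \subset {\rm cl}_{s^{-1}(0)}(Z)$ and $x$ not in the closure of the complement — correct since $Z$ open in $s^{-1}(0)$). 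Thus ${\rm dist}_U(x, B) > 0$ for all $x \in Z$, the radii $r'(x)$ are positive, and $U' := \bigcup_{x\in Z} B_U(x, r'(x))$ works: the same argument as above gives $U' \cap s^{-1}(0) = Z$ and additionally $\ov{U'} \cap (s^{-1}(0) \setminus \psi^{-1}(\ov{F'})) = \emptyset$, giving $\ov{U'} \cap s^{-1}(0) \subset \psi^{-1}(\ov{F'})$; combined with the reverse inclusion this is equality.

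\textbf{Precompactness.} If $F' \sqsubset F$, then $\ov{F'}$ is compact and $\psi^{-1}(\ov{F'})$ is a compact subset of $s^{-1}(0)$, hence of $U$. Cover it by finitely many of the balls $B_U(x, r'(x))$, $x \in x_1,\dots,x_m$, together with, for each point $y$ of the compact set $\psi^{-1}(\ov{F'})$, a small ball $B_U(y, \rho(y))$ with $\ov{B_U(y,\rho(y))}$ compact (possible since $U$ is a manifold) and $\rho(y) \le r'(y)$ if $y \in Z$, $\rho(y)$ small enough that $B_U(y,\rho(y)) \cap s^{-1}(0) \subset Z$ if $y \in {\rm cl}_{s^{-1}(0)}(Z) \setminus Z$ (possible since such $y$ is not in the closed set $A$, again because ${\rm cl}_{s^{-1}(0)}(Z)\setminus Z$ consists of boundary points of $F'$ in $F$ whose preimages still have $U$-neighbourhoods meeting $s^{-1}(0)$ only in $\ov{F'}$-preimages; more simply, take $\rho(y) < \tfrac12{\rm dist}_U(y, {\rm cl}_U(s^{-1}(0)\setminus Z))$ using that $y$'s $s^{-1}(0)$-neighbourhood of $Z$-points has positive $U$-distance to that closed set). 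Extract a finite subcover of $\psi^{-1}(\ov{F'})$, let $U'$ be the union of this finite collection of balls intersected with the original $U'$; then $\ov{U'}$ is compact (finite union of compact closures), $U' \cap s^{-1}(0) = Z$ still holds (each ball meets $s^{-1}(0)$ inside $Z$), and $\ov{U'} \cap s^{-1}(0) \subset \psi^{-1}(\ov{F'})$ as before. This gives the desired precompact restriction. The whole argument uses only that $U$ is a metrizable manifold and $\psi$ a homeomorphism onto an open set, so no appeal to compactness of $X$ is needed except for the precompactness clause.
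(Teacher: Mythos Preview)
Your approach is essentially the same as the paper's: build an open neighbourhood of $Z=\psi^{-1}(F')$ whose ``width'' shrinks to zero as one approaches $s^{-1}(0)\setminus Z$. The paper does this in a single stroke by setting
\[
U' := \bigl\{x\in V : d(x,\psi^{-1}(F')) < d(x,\psi^{-1}(F\setminus F'))\bigr\},
\]
which is exactly the same principle as your union of balls $B(x,\tfrac12\,d(x,A))$, $x\in Z$. Your proof is correct, but note two simplifications. First, $s^{-1}(0)$ \emph{is} closed in $U$ (as $s$ is continuous), so your set $A={\rm cl}_U(s^{-1}(0)\setminus Z)$ equals $s^{-1}(0)\setminus Z$ and your worry there is unfounded. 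Second, since $s^{-1}(0)\setminus\psi^{-1}(\ov{F'})\subset s^{-1}(0)\setminus Z$, you have $B\subset A$, hence $\tfrac12\,d(x,B)\ge r(x)$ and $r'(x)=r(x)$; the entire detour through $B$ is vacuous, and your first construction with $r(x)$ already yields the closure condition by the very argument you give (if $y\in\ov{U'}\cap A$ then $x_n\to y$, so $y\in\ov Z=\psi^{-1}(\ov{F'})$). For precompactness, the paper simply takes $V$ precompact to begin with (possible since $\psi^{-1}(\ov{F'})$ is compact when $F'\sqsubset F$), which avoids your more elaborate finite-cover construction.
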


\begin{proof}
Since $F'\subset F$ is open and $\psi:s^{-1}(0)\to F$ is a homeomorphism in the relative topology of $s^{-1}(0)\subset U$, there exists an open set $V\subset U$ such that $\psi^{-1}(F')=U'\cap s^{-1}(0)$.
If $F'\sqsubset F$ then we claim that $U'\subset V$ can be chosen  so that  in addition its closure 
intersects $s^{-1}(0)$ in $\psi^{-1}(\ov{F'})$.
To arrange this we define
$$
U' := \bigl\{x\in V \,\big|\, d(x, \psi^{-1}(F')) < d(x, \psi^{-1}(F\less F'))\bigr\},
$$
where
$d(x,A): = \inf_{a\in A} d(x,a)$
denotes the distance between the point $x$ and the subset $A\subset U$ with respect to any metric $d$ on the finite dimensional manifold $U$. Then $U'$ is an open subset of $V$.
By construction, its intersection with $s^{-1}(0)=\psi^{-1}(F)$ is 
$V\cap \psi^{-1}(F')=\psi^{-1}(F')$.
To see that
$\ov{U'} \cap s^{-1}(0) \subset \psi^{-1}(\ov{F'})$, consider a sequence 
$x_n\in U'$ that converges to $x_\infty\in \psi^{-1}(F)$.
If $x_\infty\in \psi^{-1}(F\less F')$ then by definition of $U'$ there are points $y_n\in \psi^{-1}(F')$ such that $d(x_n,y_n)< d(x_n,x_\infty)$.
This implies $d(x_n,y_n)\to 0$, hence we also get convergence $y_n\to x_\infty$, which proves $x_\infty\in \ov{\psi^{-1}(F')} = \psi^{-1}(\ov{F'})$, where the last equality is by the homeomorphism property of $\psi$.
Finally, the same homeomorphism property implies the inclusion $\psi^{-1}(\ov{F'}) = \ov{\psi^{-1}(F')} \subset \ov{U'} \cap s^{-1}(0)$, and thus equality.
This proves the first statement.
The second statement will hold if we show that when $F$ is precompact, we may choose $V$, and hence $U'\subset V$ to be precompact in $U$.
For that purpose we use the homeomorphism property of $\psi$ and relative closedness of $s^{-1}(0)\subset U$ to deduce that $\psi^{-1}(F')\subset U$ is a precompact set in a finite dimensional manifold, hence has a precompact open neighbourhood $V\sqsubset U$.
To see this, note that  each point in $\ov{\psi^{-1}(F')}$ is the center of a precompact ball, by the Heine-Borel theorem; and a continuity and compactness argument provides a finite covering of $\ov{\psi^{-1}(F')}$ by precompact balls.
\end{proof}

\subsection{Coordinate changes}
\label{ss:coord}   \hspace{1mm}\\ \vspace{-3mm}

The following notion of coordinate change is key to the definition of Kuranishi atlases.
It involves a special kind of map from an intermediate chart that is obtained by restriction.
Here we begin using notation that will also appear in our definition of Kuranishi atlases. For now, $\bK_I=(U_I,E_I,s_I,\psi_I)$ and $\bK_J=(U_J,E_J,s_J,\psi_J)$ just denote different Kuranishi charts for the same space $X$.

\begin{figure}[htbp] 
   \centering
   \includegraphics[width=5in]{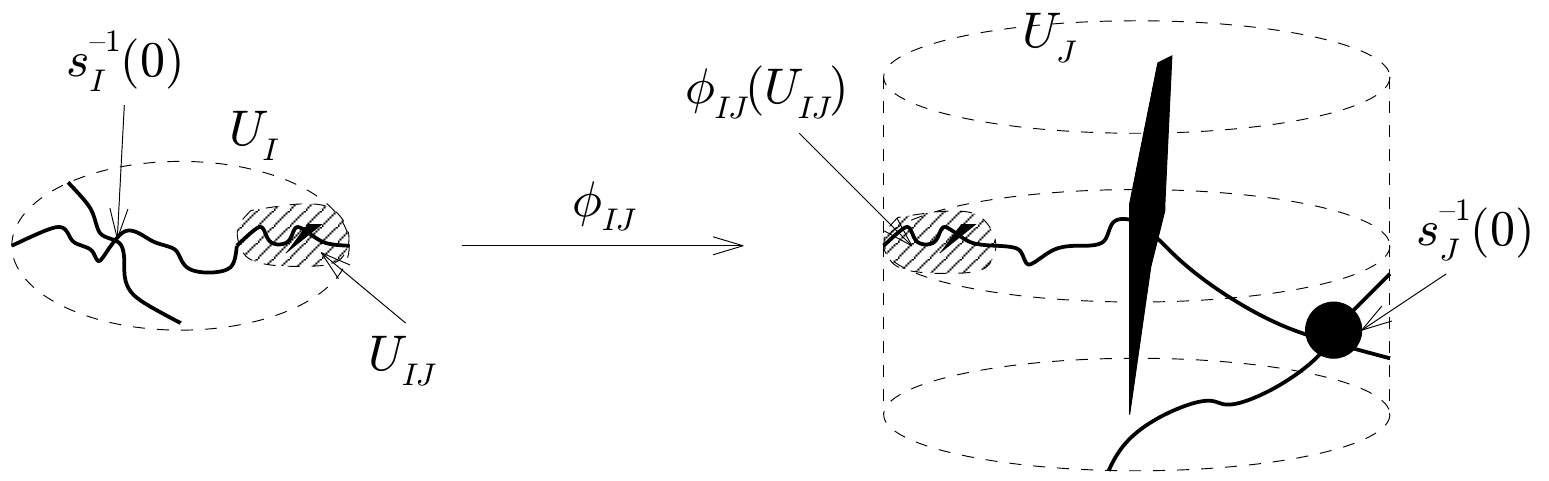}
   \caption{A coordinate change in which $\dim U_{J} =\dim U_I+ 1$.
   Both $U_{IJ}$ and its image $\phi(U_{IJ})$ are shaded.}
   \label{fig:2}
\end{figure}

\begin{defn}\label{def:change}
Let $\bK_I$ and $\bK_J$ be Kuranishi charts such that $F_I\cap F_J$ is nonempty.
A {\bf coordinate change} from $\bK_I$ to $\bK_J$ is a map
$\Hat\Phi: \bK_I|_{U_{IJ}}\to \bK_J$, which satisfies the {\bf index condition} in (i),(ii) below, and whose domain is a restriction of $\bK_I$ to $F_I\cap F_J$.
That is, the {\bf domain} of the coordinate change is an open subset $U_{IJ}\subset U_I$ such that
$\psi_I(s_I^{-1}(0)\cap U_{IJ}) = F_I\cap F_J$.
\begin{enumerate}
\item
The embedding $\phi:U_{IJ}\to U_J$ underlying the map $\Hat\Phi$ identifies the kernels,
$$
\rd_u\phi \bigl(\ker\rd_u s_I \bigr) =  \ker\rd_{\phi(u)} s_J    \qquad \forall u\in U_{IJ};
$$
\item
the linear embedding $\Hat\phi:E_I\to E_J$ given by the map $\Hat\Phi$ identifies the cokernels,
$$
\forall u\in U_{IJ} : \qquad
E_I = \im\rd_u s_I \oplus C_{u,I}  \quad \Longrightarrow \quad E_J = \im \rd_{\phi(u)} s_J \oplus \Hat\phi(C_{u,I}).
$$
\end{enumerate}
\end{defn}

Note that coordinate changes are in general {\it unidirectional} since the maps $U_{IJ}\to U_J$ and $E_{IJ}=E_I\to E_J$  are not assumed to have open images.
Note also that the footprint of the intermediate chart $\bK_I|_{U_{IJ}}$ is
always the full intersection $F_I\cap F_J$.
By abuse of notation, we often denote a
coordinate change by $\Hat\Phi: \bK_I\to \bK_J$, without specific mention of its domain $U_{IJ}$, even though it is really just a map from the restriction $\bK_I|_{U_{IJ}}$ of the chart $\bK_I$ to $F_I\cap F_J$.
This should cause no confusion with Definition~\ref{def:map} since the symbol $\Hat\Phi:\bK_I\to\bK_J$ can represent a map only in case $F_I\subset F_J$, in which case a map is the special case of a coordinate change with domain $U_{IJ}=U_I$. So in either case, the symbol $\Hat\Phi: \bK_I\to \bK_J$ means the choice of a domain $U_{IJ}\subset U_I$ and a map $\Hat\Phi:\bK_I|_{U_{IJ}}\to\bK_J$
from the restriction of $\bK_I$ with domain $U_{IJ}$.

The following lemma shows that the index condition is in fact equivalent to a tangent bundle condition which was first introduced, in a weaker version, by \cite{FO}, and formalized in the present version by \cite{J}.
We have chosen to present it as an index condition, 
since that is closer to the basic motivating question of how to associate canonical (equivalence classes of) Kuranishi atlases to moduli spaces described in terms of nonlinear Fredholm operators; see Remark~\ref{rmk:smart}~(iv).

\begin{lemma} \label{le:change}
The index condition is equivalent to the {\bf tangent bundle condition}, which requires isomorphisms for all $v=\phi(u)\in\phi(U_{IJ})$, 
\begin{equation}\label{tbc}
\rd_v s_J : \;\quotient{\rT_v U_J}{\rd_u\phi (\rT_u U_I)} \;\stackrel{\cong}\longrightarrow \; \quotient{E_J}{\Hat\phi(E_I)},
\end{equation}
or equivalently at all (suppressed) base points as above
\begin{equation}\label{inftame}
E_J=\im\rd s_J + \im\Hat\phi_{IJ} \qquad\text{and}\qquad
\im\rd s_J \cap \im\Hat\phi_{IJ} = \Hat\phi_{IJ}(\im\rd s_I).
\end{equation}
Moreover, the index condition implies that $\phi(U_{IJ})$ is an open subset of $s_J^{-1}(\Hat\phi(E_I))$,
and that the charts $\bK_I, \bK_J$ have the same dimension.
\end{lemma}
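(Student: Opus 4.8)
The statement to prove is Lemma~\ref{le:change}: that the index condition (i)--(ii) of Definition~\ref{def:change} is equivalent to the tangent bundle condition \eqref{tbc} (equivalently \eqref{inftame}), that it implies $\phi(U_{IJ})$ is open in $s_J^{-1}(\Hat\phi(E_I))$, and that it forces $\dim\bK_I=\dim\bK_J$. The plan is to set up, at a fixed (suppressed) base point $u\in U_{IJ}$ with $v=\phi(u)$, the two linear maps $\rd_u s_I:\rT_uU_I\to E_I$ and $\rd_vs_J:\rT_vU_J\to E_J$ together with the injections $\rd_u\phi:\rT_uU_I\to\rT_vU_J$ and $\Hat\phi:E_I\to E_J$, which by Definition~\ref{def:map}(ii) satisfy $\rd_vs_J\circ\rd_u\phi=\Hat\phi\circ\rd_us_I$. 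Everything is then pure linear algebra on this commuting square, and I would prove the three claims in order.

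First, the equivalence of the index condition with the tangent bundle condition. For ``index $\Rightarrow$ tangent bundle'': condition (i) says $\rd_u\phi$ carries $\ker\rd_us_I$ isomorphically onto $\ker\rd_vs_J$, so $\rd_vs_J$ descends to an \emph{injective} map $\ov{\rd_vs_J}:\rT_vU_J/\rd_u\phi(\rT_uU_I)\to E_J/\Hat\phi(E_I)$ (injectivity: if $\rd_vs_J(X)\in\Hat\phi(E_I)$, write $\rd_vs_J(X)=\Hat\phi(e)$; pick a $C_{u,I}$-complement of $\im\rd_us_I$ and use (ii) to see $\Hat\phi(e)\in\im\rd_vs_J\cap\Hat\phi(E_I)$ forces $e\in\im\rd_us_I$, then lift and correct $X$ by an element of $\im\rd_u\phi$ into $\ker\rd_vs_J=\rd_u\phi(\ker\rd_us_I)$). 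For surjectivity of $\ov{\rd_vs_J}$ one counts dimensions: from (i), $\dim\ker\rd_vs_J=\dim\ker\rd_us_I$, hence $\dim\im\rd_vs_J-\dim\im\rd_us_I=\dim U_J-\dim U_I$; combined with (ii), which gives $\dim E_J-\dim\Hat\phi(E_I)=\dim E_J-\dim E_I=\dim\im\rd_vs_J-\dim\im\rd_us_I$ (this is exactly where $\dim\bK_I=\dim\bK_J$ drops out as a byproduct), the injective map $\ov{\rd_vs_J}$ is between spaces of equal dimension, hence an isomorphism — this is \eqref{tbc}, and \eqref{inftame} is a restatement. For the converse ``tangent bundle $\Rightarrow$ index'': from \eqref{tbc} being an isomorphism, injectivity gives $\ker\rd_vs_J\subset\rd_u\phi(\rT_uU_I)$; combined with the commuting square and injectivity of $\Hat\phi$ this forces $\rd_u\phi(\ker\rd_us_I)=\ker\rd_vs_J$, i.e.\ (i); and surjectivity plus \eqref{inftame} gives (ii) by choosing complements and chasing the square.

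Second, openness of $\phi(U_{IJ})$ in $s_J^{-1}(\Hat\phi(E_I))$. The point is that $s_J^{-1}(\Hat\phi(E_I))$ is, near $v$, a smooth submanifold of $U_J$ of dimension $\dim U_J-(\dim E_J-\dim E_I)=\dim U_I$ (transversality of $s_J$ to $\Hat\phi(E_I)$, i.e.\ $E_J=\im\rd s_J+\Hat\phi(E_I)$, is the first half of \eqref{inftame}), with tangent space $(\rd_vs_J)^{-1}(\Hat\phi(E_I))$; and $\im\rd_u\phi\subset(\rd_vs_J)^{-1}(\Hat\phi(E_I))$ by the commuting square, while $\rd_u\phi$ is injective and $\dim\rT_uU_I=\dim U_I$ equals the dimension of this submanifold, so $\rd_u\phi$ maps $\rT_uU_I$ isomorphically onto the tangent space. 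Thus $\phi$ is, near $u$, an embedding into $s_J^{-1}(\Hat\phi(E_I))$ of full dimension, hence a local diffeomorphism onto an open subset; since this holds at every point of $U_{IJ}$ and $\phi$ is globally an embedding (Definition~\ref{def:map}), $\phi(U_{IJ})$ is open in $s_J^{-1}(\Hat\phi(E_I))$.

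Third, the dimension equality: as noted, it falls out of the dimension count in the first step, $\dim U_J-\dim U_I=\dim\im\rd_vs_J-\dim\im\rd_us_I=\dim E_J-\dim E_I$, so $\dim U_I-\dim E_I=\dim U_J-\dim E_J$. The main obstacle — really the only subtle bookkeeping — is organizing the linear-algebra of the commuting square so that injectivity of the descended map $\ov{\rd_vs_J}$ is established cleanly \emph{without} already assuming surjectivity, since injectivity uses (i) and (ii) together in the form of a complement-chase; once that is in hand the dimension count makes it an isomorphism automatically. I expect this to be a few lines of careful index arithmetic rather than any genuine difficulty.
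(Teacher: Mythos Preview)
Your proof is correct and follows essentially the same approach as the paper: both arguments are linear algebra on the commuting square $\rd_vs_J\circ\rd_u\phi=\Hat\phi\circ\rd_us_I$, with the openness claim coming from transversality of $s_J$ to $\Hat\phi(E_I)$. The only organizational differences are that for ``index $\Rightarrow$ tbc'' the paper proves surjectivity first (directly from the cokernel identification) and then injectivity (from $\ker\rd s_J\subset\im\rd\phi$), while you prove injectivity first via a complement-chase using both (i) and (ii) and then get surjectivity from the dimension count; and for openness, the paper sets up a normal bundle and applies the implicit function theorem rather than invoking transversality directly as you do --- these are equivalent standard arguments.
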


\begin{proof}
We will suppress most base points in the notation.
To see that the tangent bundle condition \eqref{tbc} implies the index condition, first note that the compatibility with sections, $\Hat\phi\circ\rd s_I = \rd s_J \circ \rd\phi$ implies
$$
\Hat\phi(\im\rd s_I)\subset\im\rd s_J , \qquad \rd\phi \bigl(\ker\rd s_I \bigr) \subset  \ker\rd s_J  .
$$
Since $\phi$ and $\Hat\phi$ are embeddings, this implies dimension differences $d,d'\geq 0$ in
$$
\dim\im\rd s_I  + d = \dim\im\rd s_J, \qquad
\dim\ker\rd s_I + d' = \dim\ker\rd s_J .
$$
The fact that \eqref{tbc} is an isomorphism implies that the Kuranishi charts have equal dimensions, and hence
\begin{align*}
\dim E_J  - \dim E_I
&= \dim U_J  - \dim U_I \\
&= \dim \ker\rd s_J + \dim\im\rd s_J - \dim \ker\rd s_I - \dim\im\rd s_I
\\ &= d+d' .
\end{align*}
Moreover, if we pick a representative space $C_I$ for the cokernel, i.e.\ $E_I = \im\rd s_I \oplus C_{I}$, then
then the surjectivity of $\rd s_J$ in \eqref{tbc}  gives
\begin{equation}\label{EJ}
E_J = \im \rd s_J + \Hat\phi(E_I)
= \im \rd s_J + \bigl( \Hat\phi(C_I) \oplus \Hat\phi(\im\rd s_I) \bigr) = \im \rd s_J + \Hat\phi(C_I) ,
\end{equation}
where
$$
\dim \Hat\phi(C_I) = \dim E_I - \dim\im\rd s_I
= \dim E_J - \dim\im\rd s_J - d' .
$$
Thus the sum \eqref{EJ} must be direct and $d'=0$, which implies the identification of cokernels and kernels.

Conversely, to see that the index condition implies the tangent bundle condition let
again
$C_I\subset E_I$ be a complement of $\im\rd s_I$. Then compatibility of the sections $s_J \circ\phi = \Hat\phi \circ s_I$ implies
$$
\Hat\phi(E_I) = \Hat\phi (\im\rd s_I) \oplus \Hat\phi(C_I)
= \rd s_J(\im\rd\phi) \oplus \Hat\phi(C_I)  .
$$
Moreover, let $N_u\subset \rT U_J$ be a complement of $\im \rd_u \phi$,
then the identification of cokernels takes the form
$$
E_J = \rd s_J(N_u) \oplus \rd s_J(\im\rd\phi) \oplus \Hat\phi(C_I) = \rd s_J(N_u) \oplus \Hat\phi(E_I).
$$
This shows that \eqref{tbc} is surjective, and for injectivity it remains to check injectivity of $\rd s_J |_{N_u}$. The latter holds since the identification of kernels implies $\ker\rd s_J \subset \im\rd\phi$.

To check the equivalence of \eqref{inftame} and \eqref{tbc} note that the first condition in \eqref{inftame} is the surjectivity of \eqref{tbc}, while the injectivity is equivalent to $\im\rd s_J \cap \im\Hat\phi_{IJ} \subset \rd s_J(\im\rd\phi_{IJ})$. 
The latter equals $\Hat\phi_{IJ}(\im\rd s_I)$ by the compatibility $s_J\circ\phi_{IJ}=\Hat\phi_{IJ}\circ s_I$ of sections. So \eqref{inftame} implies \eqref{tbc}, and for the converse it remains to check that \eqref{tbc} implies equality of the above inclusion. This follows from a dimension count as in \eqref{EJ} above.

Finally, to see that $\phi(U_{IJ})$ is an open subset of $s_J^{-1}(\Hat\phi(E_I))$, we may choose the complements $N_u$ above such that $\Nn:=\bigcup_{u\in U_{IJ}} N_u\subset {\rm T}U_J|_{\phi(U_{IJ})}$ is a normal bundle to $\phi(U_{IJ})$.
Then $\phi(U_{IJ})$ has a neighbourhood that is diffeomorphic to a neighbourhood of the zero section in $\Nn$, and we may pull back $s_J$ to a smooth map $\Nn\to E_J/\Hat\phi(E_I)$. It satisfies the assumptions of the implicit function theorem on the zero section by \eqref{tbc}, and hence for a sufficiently small open neighbourhood $\Uu\subset\Nn$ of $\phi(U_{IJ})$ we have $s_J^{-1}(\Hat\phi(E_I))\cap\Uu =\phi(U_{IJ})$.
\end{proof}

The next lemmas provide restrictions and compositions of coordinate changes.

\begin{lemma} \label{le:restrchange}
Let $\Hat\Phi:\bK_I|_{U_{IJ}}\to \bK_J$ be a coordinate change from $\bK_I$ to $\bK_J$, and let $\bK'_I=\bK_I|_{U'_I}$, $\bK'_J=\bK_J|_{U'_J}$ be restrictions of the Kuranishi charts to open subsets $F'_I\subset F_I, F'_J\subset F_J$ with $F_I'\cap F_J'\ne \emptyset$.
Then a {\bf restricted coordinate change} from $\bK'_I$ to $\bK'_J$ is given by
$$
\Hat\Phi|_{U'_{IJ}} := \bigl( \, \phi|_{U'_{IJ}} \,, \,\Hat\phi \, \bigr)
\; : \; \bK'_I|_{U'_{IJ}} \to \bK'_J
$$
for any choice of open subset $U'_{IJ}\subset U_{IJ}$ of the domain such that
$$
U'_{IJ} \subset U'_I\cap \phi^{-1}(U'_J) , \qquad \psi_I(s_I^{-1}(0)\cap U'_{IJ}) = F'_I \cap F'_J .
$$
\end{lemma}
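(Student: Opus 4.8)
\textbf{Proof plan for Lemma~\ref{le:restrchange}.}

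The plan is to verify directly that the tuple $\Hat\Phi|_{U'_{IJ}} = (\phi|_{U'_{IJ}}, \Hat\phi)$ satisfies all the requirements of Definition~\ref{def:change} as a coordinate change from $\bK'_I$ to $\bK'_J$, using that $\Hat\Phi = (\phi, \Hat\phi)$ already satisfies them as a coordinate change from $\bK_I$ to $\bK_J$. First I would check that $\bK'_I|_{U'_{IJ}}$ is a well-defined restriction of $\bK'_I$ to $F'_I\cap F'_J$. By hypothesis $U'_{IJ}\subset U'_I$ is open, and the condition $\psi_I(s_I^{-1}(0)\cap U'_{IJ}) = F'_I\cap F'_J$ is exactly the footprint compatibility required by Definition~\ref{def:restr} (noting $s'_I = s_I|_{U'_I}$ so that $(s'_I)^{-1}(0)\cap U'_{IJ} = s_I^{-1}(0)\cap U'_{IJ}$, since $U'_{IJ}\subset U'_I$). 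One must also note that such a $U'_{IJ}$ exists: e.g.\ $U'_{IJ} = U_{IJ}\cap U'_I\cap \phi^{-1}(U'_J)$ works, because intersecting with the zero set gives $\psi_I^{-1}(F_I\cap F_J)\cap\psi_I^{-1}(F'_I)\cap \psi_I^{-1}(F'_J) = \psi_I^{-1}(F'_I\cap F'_J)$ using $F'_I\subset F_I$, $F'_J\subset F_J$ and the fact that $\phi|_{s_I^{-1}(0)} = \psi_J^{-1}\circ\psi_I$ maps $\psi_I^{-1}(F'_J)$ onto $s_J^{-1}(0)\cap U'_J$.

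Next I would check that $\Hat\Phi|_{U'_{IJ}}$ is a map of Kuranishi charts in the sense of Definition~\ref{def:map}, from $\bK'_I|_{U'_{IJ}}$ to $\bK'_J$. Since $U'_{IJ}\subset \phi^{-1}(U'_J)$, the restriction $\phi|_{U'_{IJ}}$ indeed lands in $U'_J$, and it is still an embedding (restriction of an embedding to an open subset). The linear injection $\Hat\phi: E_I=E'_I \to E_J=E'_J$ is unchanged, hence still a linear injection. Condition (i) of Definition~\ref{def:map}: on $(s'_I)^{-1}(0)\cap U'_{IJ} = s_I^{-1}(0)\cap U'_{IJ} = \psi_I^{-1}(F'_I\cap F'_J)$ we have $\phi|_{s_I^{-1}(0)} = \psi_J^{-1}\circ\psi_I = \psi'^{-1}_J\circ\psi'_I$ since $\psi'_I, \psi'_J$ are restrictions of $\psi_I,\psi_J$ and the relevant footprints lie in $F'_I, F'_J$. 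Condition (ii): the section-intertwining identity $s_J\circ\phi = \Hat\phi\circ s_I$ holds on all of $U_{IJ}$, hence on $U'_{IJ}$, and there $s'_J = s_J|_{U'_J}$, $s'_I = s_I|_{U'_I}$ agree with $s_J, s_I$, so $s'_J\circ\phi|_{U'_{IJ}} = \Hat\phi\circ s'_I|_{U'_{IJ}}$.

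Finally I would verify the index condition. Both conditions (i) and (ii) of Definition~\ref{def:change} are pointwise conditions at points $u\in U_{IJ}$, involving only $\rd_u\phi$, $\Hat\phi$, and the derivatives $\rd_u s_I$, $\rd_{\phi(u)} s_J$. Since $s'_I, s'_J$ have the same germs (indeed the same values) as $s_I, s_J$ near points of $U'_{IJ}\subset U'_I$ and $\phi(U'_{IJ})\subset U'_J$, these derivatives are literally unchanged. Hence for every $u\in U'_{IJ}\subset U_{IJ}$ the kernel identification $\rd_u\phi(\ker\rd_u s'_I) = \ker\rd_{\phi(u)} s'_J$ and the cokernel identification hold because they hold for $\Hat\Phi$ at $u\in U_{IJ}$. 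This establishes that $\Hat\Phi|_{U'_{IJ}}$ is a coordinate change from $\bK'_I$ to $\bK'_J$. There is no real obstacle here; the only point requiring a moment's care is the bookkeeping that the various restricted footprint maps and sections genuinely agree on the sets where the conditions must be checked, which follows immediately from the definition of restriction (Definition~\ref{def:restr}) and the inclusions $U'_{IJ}\subset U'_I\cap\phi^{-1}(U'_J)$.
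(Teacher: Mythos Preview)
Your proposal is correct and follows essentially the same approach as the paper: verify existence of a valid domain $U'_{IJ}$, check that $\bK'_I|_{U'_{IJ}}$ is a restriction to $F'_I\cap F'_J$, verify the two map conditions of Definition~\ref{def:map}, and observe that the index condition, being pointwise, is preserved under restriction. Your treatment is slightly more explicit on the index condition than the paper's one-line remark, but the structure and ideas are identical.
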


\begin{proof}
First note that restricted domains $U'_{IJ}\subset U_{IJ}$ always exist
 since we can choose e.g.\ $U'_{IJ} = U'_I \cap\phi^{-1}(U'_J)$, which is open in $U_{IJ}$ by the continuity of $\phi$ and has the required footprint since
$$
\psi_I\bigl(s_I^{-1}(0) \cap U'_I \bigr) \cap  \psi_I\bigl(s_I^{-1}(0) \cap \phi^{-1}(U'_J) \bigr)
\;=\; F'_I\cap \psi_J\bigl(s_J^{-1}(0) \cap U'_J \bigr)
\;=\; F'_I\cap F'_J .
$$
Next, $\bK'_I|_{U'_{IJ}}=\bK_I|_{U'_{IJ}}$ is a restriction of $\bK'_I$ to $F'_I\cap F'_J$ since it has the required footprint
$$
\psi'_I({s_I'}\,\!^{-1}(0)\cap U'_{IJ}) \;=\; \psi_I\bigl(s_I^{-1}(0) \cap U'_{IJ} \bigr)
\;=\; F'_I\cap F'_J .
$$
Finally, $\Hat\Phi' := (\phi|_{U'_{IJ}} , \Hat\phi )$ is a map since it satisfies the conditions of  Definition~\ref{def:map},
\begin{enumerate}
\item
$\phi' |_{{s'_I}\,\!^{-1}(0)} = \phi|_{s_I^{-1}(0)\cap U'_{IJ}}
= \psi_J^{-1} \circ \psi_I |_{s_I^{-1}(0)\cap U'_{IJ}}
= {\psi'_J}\,\!^{-1} \circ \psi'_I $ ;
\item
$s'_J \circ \phi'
= s_J|_{U'_J} \circ \phi|_{U'_{IJ}}
= \Hat\phi \circ s_{IJ}|_{U'_{IJ}}
= \Hat\phi' \circ s'_{IJ}$.
\end{enumerate}
This completes the proof since the index condition is preserved under restriction.
\end{proof}

\begin{lemma} \label{le:cccomp}
Let $\bK_I,\bK_J,\bK_K$ be Kuranishi charts such that
$ F_I\cap F_K \subset F_J$, and let $\Hat\Phi_{IJ}: \bK_I\to \bK_J$ and $\Hat\Phi_{JK}: \bK_J\to \bK_K$ be coordinate changes.
(That is, we are given restrictions
$\bK_I|_{U_{IJ}}$ to $F_I\cap F_J$ and $\bK_J|_{U_{JK}}$ to $F_J\cap F_K$ and maps $\Hat\Phi_{IJ}: \bK_I|_{U_{IJ}}\to \bK_J$, $\Hat\Phi_{JK}: \bK_J|_{U_{JK}}\to \bK_K$
satisfying the index condition.)

Then the following holds.
\begin{itemize}
\item
The domain $U_{IJK}:=\phi_{IJ}^{-1}(U_{JK}) \subset U_I$ defines a restriction $\bK_I|_{U_{IJK}}$
to $F_I \cap F_K$.
\item
The compositions $\phi_{IJK}:=\phi_{JK}\circ\phi_{IJ}: U_{IJK}\to U_K$ and $\Hat\phi_{JK}\circ\Hat\phi_{IJ}: E_I\to E_K$ define a map $\Hat\Phi_{IJK}:\bK_I|_{U_{IJK}}\to\bK_K$
in the sense of Definition~\ref{def:map}.
\item $(\phi_{IJK},\Hat\phi_{IJK})$ satisfy the index condition, so define a coordinate change.
\end{itemize}
We denote the induced {\bf composite coordinate change} $\Hat\Phi_{IJK}=(\phi_{IJK},\Hat\phi_{IJK})$ by
$$
\Hat\Phi_{JK}\circ \Hat\Phi_{IJ}
:=\Hat\Phi_{IJK} : \; \bK_I|_{U_{IJK}} \; \to\; \bK_K.
$$
\end{lemma}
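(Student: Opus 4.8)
The plan is to verify each of the three bullet points in turn, using the previously established lemmas (especially Lemma~\ref{le:restrchange} on restrictions and Lemma~\ref{le:change} on the equivalence with the tangent bundle condition) to reduce the work to routine checks.

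\textbf{Step 1: the domain $U_{IJK}$ is a restriction to $F_I\cap F_K$.} First I would observe that $U_{IJK}:=\phi_{IJ}^{-1}(U_{JK})$ is an open subset of $U_I$ since $\phi_{IJ}$ is continuous and $U_{JK}\subset U_J$ is open; moreover $U_{IJK}\subset U_{IJ}$ since $\phi_{IJ}$ has domain $U_{IJ}$. It remains to compute its footprint. Using that $\phi_{IJ}|_{s_I^{-1}(0)\cap U_{IJ}} = \psi_J^{-1}\circ\psi_I$ and that $\psi_J(s_J^{-1}(0)\cap U_{JK}) = F_J\cap F_K$, one gets
$$
\psi_I\bigl(s_I^{-1}(0)\cap U_{IJK}\bigr)
= \psi_I\bigl(s_I^{-1}(0)\cap U_{IJ}\cap \phi_{IJ}^{-1}(U_{JK})\bigr)
= (F_I\cap F_J)\cap (F_J\cap F_K) = F_I\cap F_J\cap F_K.
$$
By the hypothesis $F_I\cap F_K\subset F_J$ this equals $F_I\cap F_K$, so $\bK_I|_{U_{IJK}}$ is a restriction of $\bK_I$ to $F_I\cap F_K$ in the sense of Definition~\ref{def:restr}.

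\textbf{Step 2: the composites form a map.} I would set $\phi_{IJK} := \phi_{JK}\circ\phi_{IJ}|_{U_{IJK}}$ and $\Hat\phi_{IJK}:=\Hat\phi_{JK}\circ\Hat\phi_{IJ}$. Since $\phi_{IJK}$ maps $U_{IJK}$ into $U_{JK}$ and then into $U_K$, it is a composition of smooth embeddings, hence an embedding; $\Hat\phi_{IJK}$ is a composition of linear injections, hence a linear injection. For condition (i) of Definition~\ref{def:map}, restrict to $s_I^{-1}(0)\cap U_{IJK}$: there $\phi_{IJ} = \psi_J^{-1}\circ\psi_I$ lands in $s_J^{-1}(0)\cap U_{JK}$, where $\phi_{JK}=\psi_K^{-1}\circ\psi_J$, so the composite is $\psi_K^{-1}\circ\psi_J\circ\psi_J^{-1}\circ\psi_I = \psi_K^{-1}\circ\psi_I$, as required. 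For condition (ii), on all of $U_{IJK}$ we compute $s_K\circ\phi_{IJK} = s_K\circ\phi_{JK}\circ\phi_{IJ} = \Hat\phi_{JK}\circ s_J\circ\phi_{IJ} = \Hat\phi_{JK}\circ\Hat\phi_{IJ}\circ s_I = \Hat\phi_{IJK}\circ s_I$, using the section-intertwining property of each factor.

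\textbf{Step 3: the index condition.} Here the cleanest route is to use Lemma~\ref{le:change} and verify the tangent bundle condition \eqref{tbc} for the composite, or equivalently to chain the kernel/cokernel identifications. For the kernels, at $u\in U_{IJK}$ with $v=\phi_{IJ}(u)$ we have $\rd_u\phi_{IJ}(\ker\rd_u s_I) = \ker\rd_v s_J$ and $\rd_v\phi_{JK}(\ker\rd_v s_J)=\ker\rd_{\phi_{JK}(v)}s_K$, so applying $\rd_v\phi_{JK}$ to the first gives $\rd_u\phi_{IJK}(\ker\rd_u s_I)=\ker\rd_{\phi_{IJK}(u)}s_K$. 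For the cokernels, given a complement $E_I=\im\rd_u s_I\oplus C_I$, the index condition for $\Hat\Phi_{IJ}$ gives $E_J = \im\rd_v s_J\oplus\Hat\phi_{IJ}(C_I)$; since $\Hat\phi_{IJ}(C_I)$ is then a complement of $\im\rd_v s_J$ in $E_J$, the index condition for $\Hat\Phi_{JK}$ applied to this complement yields $E_K = \im\rd_{\phi_{JK}(v)}s_K\oplus\Hat\phi_{JK}\bigl(\Hat\phi_{IJ}(C_I)\bigr) = \im\rd_{\phi_{IJK}(u)}s_K\oplus\Hat\phi_{IJK}(C_I)$. This is exactly condition (ii) for $\Hat\Phi_{IJK}$. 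Since the base point $u\in U_{IJK}$ was arbitrary, $(\phi_{IJK},\Hat\phi_{IJK})$ satisfies the index condition, and combined with Steps~1 and~2 it is a coordinate change $\bK_I|_{U_{IJK}}\to\bK_K$, which we name $\Hat\Phi_{JK}\circ\Hat\Phi_{IJ}$.

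I do not expect a genuine obstacle here — all three bullet points are bookkeeping, and the only point requiring a little care is keeping track of the fact that $U_{IJK}$ lies inside $U_{IJ}$ (so that $\phi_{IJ}$ is actually defined there) and that the hypothesis $F_I\cap F_K\subset F_J$ is precisely what makes the footprint of $\bK_I|_{U_{IJK}}$ come out to be the full intersection $F_I\cap F_K$ rather than the smaller set $F_I\cap F_J\cap F_K$. One could alternatively phrase Step~3 via \eqref{tbc} directly by composing the two isomorphisms $\rT U_J/\rd\phi_{IJ}(\rT U_I)\xrightarrow{\cong}E_J/\Hat\phi_{IJ}(E_I)$ and $\rT U_K/\rd\phi_{JK}(\rT U_J)\xrightarrow{\cong}E_K/\Hat\phi_{JK}(E_J)$ into an isomorphism $\rT U_K/\rd\phi_{IJK}(\rT U_I)\xrightarrow{\cong}E_K/\Hat\phi_{IJK}(E_I)$; I would include whichever version reads more transparently.
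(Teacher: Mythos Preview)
Your proposal is correct and follows essentially the same approach as the paper's proof: both verify the footprint of $U_{IJK}$ via the identity $\phi_{IJ}|_{s_I^{-1}(0)}=\psi_J^{-1}\circ\psi_I$ and the hypothesis $F_I\cap F_K\subset F_J$, check the two conditions of Definition~\ref{def:map} by composing the corresponding identities for $\Hat\Phi_{IJ}$ and $\Hat\Phi_{JK}$, and chain the kernel and cokernel identifications to obtain the index condition. The paper does not invoke the tangent bundle formulation \eqref{tbc} as an alternative, but your Step~3 via kernels and cokernels matches the paper's argument exactly.
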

\begin{proof}
In order to check that $\bigl(U_{IJK},E_I,s_I|_{U_{IJK}}, \psi_I|_{s_I^{-1}(0)\cap U_{IJK}}\bigr)$
is the required restriction,
we need to verify that it has footprint $F_I\cap F_K$.
 Indeed, $\psi_I\bigl(s_I^{-1}(0)\cap U_{IJK}\bigr) =F_I\cap F_K$ holds since we may decompose
$\psi_I=\psi_J\circ\phi_{IJ}$ on $s_I^{-1}(0)\cap U_{IJ}$ with $U_{IJK}\subset U_{IJ}$,
and then combine the identities
$$
\phi_{IJ}( s_I^{-1}(0) \cap U_{IJK} )
=\phi_{IJ}( s_{IJ}^{-1}(0) ) \cap U_{JK} \subset s_J^{-1}(0),
$$
$$
\psi_J\bigl( \phi_{IJ}( s_{IJ}^{-1}(0) ) \bigr)
= \psi_I( s_{IJ}^{-1}(0) ) = F_I\cap F_J ,
$$
$$
\psi_J( s_{J}^{-1}(0)\cap U_{JK}) = F_J\cap F_K .
$$
Finally, our assumption
$F_I\cap F_K\subset F_J$
ensures that
$ (F_I\cap F_J)\cap ( F_J\cap F_K) = F_I\cap F_K $.
This proves (i).
To prove (ii) we check the conditions of Definition~\ref{def:map}, noting that injectivity and homomorphisms are preserved under composition.
\begin{itemize}
\item[-]
On $U_{IJK}\cap s_{I}^{-1}(0)$ we have
$$
\phi_{IJK}
= \phi_{JK} \circ \phi_{IJ}
= \bigl( \psi_K^{-1}\circ\psi_{J} \bigr)
\circ \bigl(\psi_{J}^{-1} \circ \psi_{I}\bigr)
= \psi_K^{-1}\circ \psi_{I} .
$$
\item[-]
The sections are intertwined,
$$
s_K \circ \phi_{IJK}
= s_K \circ \phi_{JK} \circ \phi_{IJ}
= \Hat\phi_{JK} \circ s_J \circ \phi_{IJ}
= \Hat\phi_{JK}\circ \Hat\phi_{IJ} \circ s_{I}
= \Hat\phi_{IJK} \circ s_{IK} .
$$
\end{itemize}

Next, the index condition is preserved under composition by the following.
\begin{itemize}
\item[-] The kernel identifications
$\rd\phi_{IJ} \bigl(\ker\rd s_I \bigr) =  \ker\rd s_J $ and $\rd\phi_{JK} \bigl(\ker\rd s_J \bigr) =  \ker\rd s_K$ imply
$$
\rd\bigl(\phi_{JK} \circ \phi_{IJ} \bigr) \bigl(\ker\rd s_I \bigr) =
\bigl(\rd\phi_{JK} \circ \rd\phi_{IJ} \bigr) \bigl(\ker\rd s_I \bigr) =  \ker\rd s_K .
$$
\item[-]
Assuming $E_I = \im\rd s_I \oplus C_{I}$, the cokernel identification of $\Phi_{IJ}$ implies
$ E_J = \im \rd s_J \oplus C_J$ with  $C_J=\Hat\phi(C_I)$, so that  the cokernel identification of $\Phi_{JK}$ implies
$$
E_K = \im \rd s_K \oplus \Hat\phi_K(C_J) = \im \rd s_K \oplus (\Hat\phi_K\circ\Hat\phi_J)(C_I).
$$
\end{itemize}
\end{proof}

Finally, we introduce  two notions of equivalence between coordinate changes that may not have the same domain, and show that they are compatible with composition.

\begin{defn} \label{def:overlap}
Let $\Hat\Phi^\al :\bK_I|_{U^\al_{IJ}}\to \bK_J$ and  $\Hat\Phi^\be:\bK_I|_{U^\be_{IJ}}\to \bK_J$ be coordinate changes.
\begin{itemlist}
\item
We say the coordinate changes are {\bf equal on the overlap} and write $\Hat\Phi^\al\approx\Hat\Phi^\be$, if the restrictions of Lemma~\ref{le:restrchange} to $U'_{IJ}:=U^\al_{IJ}\cap U^\be_{IJ}$ yield equal maps
$$
\Hat\Phi^\al|_{U'_{IJ}} = \Hat\Phi^\be|_{U'_{IJ}}  .
$$
\item
We say that $\Hat\Phi^\be$ {\bf extends} $\Hat\Phi^\al$ and write $\Hat\Phi^\al\subset\Hat\Phi^\be$,
if $U_{IJ}^\al\subset U_{IJ}^\be$ and the restriction of Lemma~\ref{le:restrchange} yields equal maps
$$
\Hat\Phi^\be|_{U_{IJ}^\al} = \Hat\Phi^\al .
$$
\end{itemlist}
\end{defn}

\begin{lemma}  \label{le:cccompoverlap}
Let $\bK_I,\bK_J,\bK_K$ be Kuranishi charts such that
$F_I\cap F_K\subset F_J$,  and suppose $\Hat\Phi^\al_{IJ}\approx \Hat\Phi^\be_{IJ}:\bK_I\to \bK_J$ and $\Hat\Phi^\al_{JK}\approx\Hat\Phi^\be_{JK}: \bK_J\to \bK_K$ are coordinate changes that are equal on the overlap.
Then their compositions as defined in Lemma~\ref{le:cccomp} are equal on the overlap, $\Hat\Phi^\al_{JK}\circ \Hat\Phi^\al_{IJ}\approx \Hat\Phi^\be_{JK}\circ \Hat\Phi^\be_{IJ}$.

Moreover, if $\Hat\Phi^\al_{IJ}\subset \Hat\Phi^\be_{IJ}$ and $\Hat\Phi^\al_{JK}\subset\Hat\Phi^\be_{JK}$ are extensions, then
$\Hat\Phi^\al_{JK}\circ \Hat\Phi^\al_{IJ}\subset \Hat\Phi^\be_{JK}\circ \Hat\Phi^\be_{IJ}$
is an extension as well.
\end{lemma}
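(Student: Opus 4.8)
The statement asserts two facts about compositions of coordinate changes: (a) the ``equal on the overlap'' relation $\approx$ is preserved under composition, and (b) the ``extends'' relation $\subset$ is preserved under composition. Both are essentially bookkeeping about which open subsets of the domains the composite coordinate changes are defined on, combined with the fact that two maps agreeing on a set agree on any subset. The only subtlety is keeping track of the domains dictated by Lemma~\ref{le:cccomp}, namely that the composite $\Hat\Phi_{JK}\circ\Hat\Phi_{IJ}$ has domain $\phi_{IJ}^{-1}(U_{JK})$.

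\textbf{Step 1: Setup for part (a).} Suppose $\Hat\Phi^\al_{IJ}=(\phi^\al_{IJ},\Hat\phi^\al_{IJ})$ with domain $U^\al_{IJ}$ and $\Hat\Phi^\be_{IJ}=(\phi^\be_{IJ},\Hat\phi^\be_{IJ})$ with domain $U^\be_{IJ}$, and similarly for the $JK$ pair. By definition of $\approx$, the restrictions to $U_{IJ}^\al\cap U_{IJ}^\be$ coincide, so in particular $\phi^\al_{IJ}=\phi^\be_{IJ}=:\phi_{IJ}$ and $\Hat\phi^\al_{IJ}=\Hat\phi^\be_{IJ}$ on that overlap; likewise $\phi^\al_{JK}=\phi^\be_{JK}=:\phi_{JK}$ and $\Hat\phi^\al_{JK}=\Hat\phi^\be_{JK}$ on $U_{JK}^\al\cap U_{JK}^\be$. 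By Lemma~\ref{le:cccomp} the composite $\Hat\Phi^\al_{JK}\circ\Hat\Phi^\al_{IJ}$ has domain $(\phi^\al_{IJ})^{-1}(U^\al_{JK})$, and $\Hat\Phi^\be_{JK}\circ\Hat\Phi^\be_{IJ}$ has domain $(\phi^\be_{IJ})^{-1}(U^\be_{JK})$. I would first show that on the set
$$
W := U_{IJ}^\al\cap U_{IJ}^\be \cap \phi_{IJ}^{-1}\bigl(U_{JK}^\al\cap U_{JK}^\be\bigr)
$$
both composite maps agree: the first components are $\phi_{JK}\circ\phi_{IJ}$ in both cases (using that $\phi^\al_{IJ}=\phi^\be_{IJ}$ on $W$ and $\phi^\al_{JK}=\phi^\be_{JK}$ on the image $\phi_{IJ}(W)\subset U_{JK}^\al\cap U_{JK}^\be$), and the linear parts $\Hat\phi^\al_{JK}\circ\Hat\phi^\al_{IJ}=\Hat\phi^\be_{JK}\circ\Hat\phi^\be_{IJ}$ by the same reasoning on the obstruction spaces. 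The remaining point is that $W$ \emph{is} the overlap of the two composite domains, i.e.\ $W = (\phi^\al_{IJ})^{-1}(U^\al_{JK})\cap(\phi^\be_{IJ})^{-1}(U^\be_{JK})$. The inclusion $W\subset$ (composite overlap) is immediate; for the reverse, a point $u$ in the composite overlap lies in $U_{IJ}^\al$ and $U_{IJ}^\be$ (the composite domains are subsets of $U^\al_{IJ}$, $U^\be_{IJ}$ respectively), so $\phi^\al_{IJ}(u)=\phi^\be_{IJ}(u)=\phi_{IJ}(u)$, and this common value lies in both $U^\al_{JK}$ and $U^\be_{JK}$; hence $u\in W$. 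This gives $\Hat\Phi^\al_{JK}\circ \Hat\Phi^\al_{IJ}\approx \Hat\Phi^\be_{JK}\circ \Hat\Phi^\be_{IJ}$.

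\textbf{Step 2: Part (b).} Now assume $\Hat\Phi^\al_{IJ}\subset\Hat\Phi^\be_{IJ}$, so $U_{IJ}^\al\subset U_{IJ}^\be$ and $\phi^\be_{IJ}|_{U_{IJ}^\al}=\phi^\al_{IJ}$, $\Hat\phi^\be_{IJ}=\Hat\phi^\al_{IJ}$; similarly for $JK$. The composite $\Hat\Phi^\al_{JK}\circ\Hat\Phi^\al_{IJ}$ has domain $(\phi^\al_{IJ})^{-1}(U^\al_{JK})$ and the $\be$-composite has domain $(\phi^\be_{IJ})^{-1}(U^\be_{JK})$. I would check the domain inclusion: if $u\in(\phi^\al_{IJ})^{-1}(U^\al_{JK})$ then $u\in U_{IJ}^\al\subset U_{IJ}^\be$, so $\phi^\be_{IJ}(u)=\phi^\al_{IJ}(u)\in U^\al_{JK}\subset U^\be_{JK}$, giving $u\in(\phi^\be_{IJ})^{-1}(U^\be_{JK})$. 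On this smaller domain the $\be$-composite map equals $\phi^\be_{JK}\circ\phi^\be_{IJ}=\phi^\al_{JK}\circ\phi^\al_{IJ}$ (restricting $\phi^\be_{IJ}$ to $U_{IJ}^\al$, then using $\phi^\be_{JK}|_{U_{JK}^\al}=\phi^\al_{JK}$ on the image), and the linear parts agree since they are unchanged by hypothesis. Hence $\Hat\Phi^\al_{JK}\circ \Hat\Phi^\al_{IJ}\subset \Hat\Phi^\be_{JK}\circ \Hat\Phi^\be_{IJ}$.

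\textbf{Main obstacle.} There is no real analytic difficulty here; the proof is entirely set-theoretic. The one place to be careful is the identification of the overlap domain $W$ in Step~1 — one must correctly apply Lemma~\ref{le:cccomp}'s prescription for composite domains and not accidentally conflate $\phi_{IJ}^{-1}$ of an intersection with the intersection of preimages under \emph{different} maps $\phi^\al_{IJ}$, $\phi^\be_{IJ}$; this is resolved by first observing that on the relevant sets these two maps coincide. I expect the write-up to be short, essentially just expanding the definitions of $\approx$ and $\subset$ and invoking Lemma~\ref{le:restrchange} (restrictions of coordinate changes) and Lemma~\ref{le:cccomp} (composites).
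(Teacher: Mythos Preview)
Your proposal is correct and follows essentially the same approach as the paper's proof: both reduce to tracking the composite domains $(\phi^\al_{IJ})^{-1}(U^\al_{JK})$ and $(\phi^\be_{IJ})^{-1}(U^\be_{JK})$, observing that their intersection lies in $U^\al_{IJ}\cap U^\be_{IJ}$ where $\phi^\al_{IJ}=\phi^\be_{IJ}$, and then that the image lands in $U^\al_{JK}\cap U^\be_{JK}$ where $\phi^\al_{JK}=\phi^\be_{JK}$. Your write-up is a bit more explicit than the paper's (you verify both inclusions for the overlap equality, whereas the paper just notes the containment needed), but the argument is identical in substance.
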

\begin{proof}
The overlap of the domains of $\phi^\al_{IJK}=\phi^\al_{JK}\circ\phi^\al_{IJ}$ and $\phi^\be_{IJK}=\phi^\be_{JK}\circ\phi^\be_{IJ}$ is
\begin{align*}
U^\al_{IJK} \cap U^\be_{IJK}
\;=\; (\phi_{IJ}^\al)^{-1}(U^\al_{JK}) \cap (\phi_{IJ}^\be)^{-1}(U^\be_{JK})
\;\subset\; U^\al_{IJ}\cap U^\be_{IJ}.
\end{align*}
By assumption, the injective maps $\phi^\al_{IJ}$ and $\phi^\be_{IJ}$ agree on all points in $U^\al_{IJ}\cap U^\be_{IJ}$, and hence also map $U^\al_{IJK} \cap U^\be_{IJK}$ to $U^\al_{JK} \cap U^\be_{JK}$
The first claim follows because on this latter domain we have $\phi^\al_{JK}=\phi^\be_{JK}$.
To prove the second claim it remains to note that
$$
U^\al_{IJK}
= (\phi_{IJ}^\al)^{-1}(U^\al_{JK})
= (\phi_{IJ}^\be)^{-1}(U^\al_{JK})
\subset (\phi_{IJ}^\be)^{-1}(U^\be_{JK})
= U^\be_{IJK} .
$$
\end{proof}

\section{Kuranishi atlases with trivial isotropy}\label{s:Ks}

With the preliminaries of Section~\ref{s:chart}
in hand, we can now define a notion of Kuranishi atlas with trivial isotropy on a compact  metrizable space $X$, which will be fixed throughout this section.
As before, we work exclusively in the case of trivial isotropy and hence drop this qualifier from the wording.
We first define the notions of Kuranishi atlas $\Kk$ and Kuranishi neighborhood $|\Kk|$, showing in Examples~\ref{ex:Haus} and~\ref{ex:Knbhd} that $|\Kk|$ need not be Hausdorff and
that the maps from the domains of the charts into $|\Kk|$ need not be injective.
Further, as in Example~\ref{ex:Khomeo}, $|\Kk|$ need not be metrizable or locally compact.
Moreover, in practice one can construct only weak Kuranishi atlases in the sense of Definition~\ref{def:Kwk}, although they do often have the additivity property of Definition~\ref{def:Ku2}.  The main result of this section is Theorem~\ref{thm:K} which states that given a weak additive Kuranishi atlas one can construct a Kuranishi atlas $\Kk$, whose neighborhood $|\Kk|$ is Hausdorff and has the injectivity property, and that moreover is well defined up to a natural notion of cobordism.  This theorem is proved in Sections~\ref{ss:tame},~\ref{ss:shrink} and \ref{ss:Kcobord}.

\subsection{Covering families and transition data}\label{ss:Ksdef}  \hspace{1mm}\\ \vspace{-3mm}

We begin by introducing the notion of a Kuranishi atlas.
There are various ways that one might try to define a ``Kuranishi structure", but in practice every such structure on a compact moduli space of holomorphic curves is constructed from a covering family of basic charts with certain compatibility conditions 
akin to our notion of Kuranishi atlas.
We express the compatibility in terms of a further collection of charts for overlaps, and will discuss three different versions of a cocycle condition.
We compare our definition with others in Remark~\ref{rmk:otherK}.
The basic building blocks of our notion of Kuranishi atlases are the following.

\begin{defn}\label{def:Kfamily}
Let $X$ be a compact metrizable space.
\begin{itemlist}
\item
A {\bf covering family of basic charts} for $X$ is a finite collection $(\bK_i)_{i=1,\ldots,N}$ of Kuranishi charts for $X$ whose footprints cover $X=\bigcup_{i=1}^N F_i$.
\item
{\bf Transition data} for a covering family $(\bK_i)_{i=1,\ldots,N}$ is a collection of Kuranishi charts $(\bK_J)_{J\in\Ii_\Kk,|J|\ge 2}$ and coordinate changes $(\Hat\Phi_{I J})_{I,J\in\Ii_\Kk, I\subsetneq J}$ as follows:
\begin{enumerate}
\item
$\Ii_\Kk$ denotes the set of subsets $I\subset\{1,\ldots,N\}$ for which the intersection of footprints is nonempty,
$$
F_I:= \; {\textstyle \bigcap_{i\in I}} F_i  \;\neq \; \emptyset \;;
$$
\item
$\bK_J$ is a Kuranishi chart for $X$ with footprint $F_J=\bigcap_{i\in J}F_i$ for each $J\in\Ii_\Kk$ with $|J|\ge 2$, and for one element sets $J=\{i\}$ we denote $\bK_{\{i\}}:=\bK_i$;
\item
$\Hat\Phi_{I J}$ is a coordinate change $\bK_{I} \to \bK_{J}$ for every $I,J\in\Ii_\Kk$ with $I\subsetneq J$.
\end{enumerate}
\end{itemlist}
 \end{defn}

The transition data for a covering family automatically satisfies a cocycle condition on the zero sets, where due to the footprint maps to $X$ we have for $I\subset J \subset K$
$$
\phi_{J K}\circ \phi_{I J}
= \psi_K^{-1}\circ\psi_J\circ\psi_J^{-1}\circ\psi_I
= \psi_K^{-1}\circ\psi_I
= \phi_{I K}
\qquad \text{on}\; s_I^{-1}(0)\cap U_{IK} .
$$
Since there is no natural ambient topological space into which the entire domains of the Kuranishi charts map, the cocycle condition on the complement of the zero sets has to be added as axiom. For the linear embeddings between obstruction spaces, we will always impose $\Hat\phi_{J K}\circ \Hat\phi_{I J} = \Hat\phi_{I K}$.  However for the embeddings between domains of the charts there are three natural notions of cocycle condition with varying requirements on the domains of the coordinate changes.

\begin{defn}  \label{def:cocycle}
Let $\Kk=(\bK_I,\Hat\Phi_{I J})_{I,J\in\Ii_\Kk, I\subsetneq J}$ be a tuple of basic charts and transition data. Then for any $I,J,K\in\Ii_K$ with  $I\subsetneq J \subsetneq K$ we define the composed coordinate change $\Hat\Phi_{J K}\circ \Hat\Phi_{I J} : \bK_{I}  \to \bK_{K}$ as in Lemma~\ref{le:cccomp} with domain $\phi_{IJ}^{-1}(U_{JK})\subset U_I$.
We then use the notions of Definition~\ref{def:overlap} to say that the triple of coordinate changes
$\Hat\Phi_{I J}, \Hat\Phi_{J K}, \Hat\Phi_{I K}$ satisfies the
\begin{itemlist}
\item {\bf weak cocycle condition}
if $\Hat\Phi_{J K}\circ \Hat\Phi_{I J} \approx \Hat\Phi_{I K}$, i.e.\ the coordinate changes are equal on the overlap, in particular if
\begin{equation*}
\qquad
\phi_{J K}\circ \phi_{I J} = \phi_{I K}
\qquad \text{on}\;\;
\phi_{IJ}^{-1}(U_{JK}) \cap U_{IK} ;
\end{equation*}
\item {\bf cocycle condition}
if $\Hat\Phi_{J K}\circ \Hat\Phi_{I J} \subset \Hat\Phi_{I K}$, i.e.\  $\Hat\Phi_{I K}$ extends the composed coordinate change, in particular if
\begin{equation}\label{eq:cocycle}
\qquad
\phi_{J K}\circ \phi_{I J} = \phi_{I K}
\qquad \text{on}\;\;
\phi_{IJ}^{-1}(U_{JK}) \subset U_{IK} ;
\end{equation}
\item {\bf strong cocycle condition}
if $\Hat\Phi_{J K}\circ \Hat\Phi_{I J} = \Hat\Phi_{I K}$ are equal as coordinate changes, in particular if
\begin{equation}\label{strong cocycle}
\qquad
\phi_{J K}\circ \phi_{I J} = \phi_{I K}
\qquad \text{on}\; \;
\phi_{IJ}^{-1}(U_{JK}) = U_{IK} .
\end{equation}
\end{itemlist}
 \end{defn}

The relevance of the these versions is that the weak cocycle condition can be achieved in practice by constructions of finite dimensional reductions for holomorphic curve moduli spaces, whereas the strong cocycle condition is needed for our construction of a virtual moduli cycle in Section~\ref{s:VMC} from perturbations of the sections in the Kuranishi charts.
The cocycle condition is an intermediate notion which is too strong to be constructed in practice and too weak to induce a VMC, but it does allow us to formulate Kuranishi atlases categorically. This in turn gives rise, via a topological realization of a category, to a virtual neighbourhood of $X$ into which all Kuranishi domains map. In the following we use the intermediate cocycle condition to develop these concepts.

\begin{defn}\label{def:Ku}
A {\bf Kuranishi atlas of dimension $\mathbf d$} on a compact metrizable space
$X$ is a tuple
$$
\Kk=\bigl(\bK_I,\Hat\Phi_{I J}\bigr)_{I, J\in\Ii_\Kk, I\subsetneq J}
$$
consisting of a covering family of basic charts $(\bK_i)_{i=1,\ldots,N}$ of dimension $d$
and transition data $(\bK_J)_{|J|\ge 2}$, $(\Hat\Phi_{I J})_{I\subsetneq J}$ for $(\bK_i)$ as in Definition~\ref{def:Kfamily}, that satisfy the {\it cocycle condition} $\Hat\Phi_{J K}\circ \Hat\Phi_{I J} \subset \Hat\Phi_{I K}$ for every triple
$I,J,K\in\Ii_K$ with $I\subsetneq J \subsetneq K$.
\end{defn}

\begin{rmk}\label{rmk:Ku}\rm
We have assumed from the beginning that $X$ is compact and metrizable.
Some version of  compactness is essential in order for $X$ to define a VMC, but one might hope to weaken the metrizability assumption.  
However, for our charts to model open subsets of $X$ we require the footprint maps $\psi_i: s_i^{-1}(0)\to X$ to be homeomorphisms onto open subsets $F_i \subset X$.  Hence any space $X$ with a Kuranishi atlas is Hausdorff, and by compactness has a finite covering by the footprints $F_i$. Since each of these are regular and second countable, it follows that $X$ must also be regular and second countable, and hence metrizable.  For details of these arguments, see Proposition~\ref{prop:Ktopl1}~(iv).
\end{rmk}

It is useful to think of the domains and obstruction spaces of a Kuranishi atlas as forming the following categories.

\begin{defn}\label{def:catKu}
Given a Kuranishi atlas $\Kk$ we define its {\bf domain category} $\bB_\Kk$ to consist of
the space of objects\footnote{
When forming categories such as $\bB_\Kk$, we take always the space of objects 
to be the disjoint union of the domains 
$U_I$, even if we happen to have defined the sets $U_I$ 
as subsets of some larger space such as $\R^2$ 
or a space of maps as in the Gromov--Witten case.
Similarly, the morphism space is a disjoint union of the $U_{IJ}$ even though $U_{IJ}\subset U_I$ for all $J\supset I$.}
$$
\Obj_{\bB_\Kk}:= \bigcup_{I\in \Ii_\Kk} U_I \ = \ \bigl\{ (I,x) \,\big|\, I\in\Ii_\Kk, x\in U_I \bigr\}
$$
and the space of morphisms
$$
\Mor_{\bB_\Kk}:= \bigcup_{I,J\in \Ii_\Kk, I\subset J} U_{IJ} \ = \ \bigl\{ (I,J,x) \,\big|\, I,J\in\Ii_\Kk, I\subset J, x\in U_{IJ} \bigr\}.
$$
Here we denote $U_{II}:= U_I$ for $I=J$, and for $I\subsetneq J$ use
the domain $U_{IJ}\subset U_I$ of the restriction $\bK_I|_{U_{IJ}}$ to $F_J$
that is part of the coordinate change $\Hat\Phi_{IJ} : \bK_I|_{U_{IJ}}\to \bK_J$.

Source and target of these morphisms are given by
$$
(I,J,x)\in\Mor_{\bB_\Kk}\bigl((I,x),(J,\phi_{IJ}(x))\bigr),
$$
where $\phi_{IJ}: U_{IJ}\to U_J$ is the  embedding given by $\Hat\Phi_{I J}$, and we denote $\phi_{II}:={\rm id}_{U_I}$.
Composition is defined by
$$
\bigl(I,J,x\bigr)\circ \bigl(J,K,y\bigr)
:= \bigl(I,K,x\bigr)
$$
for any $I\subset J \subset K$ and $x\in U_{IJ}, y\in  U_{JK}$ such that $\phi_{IJ}(x)=y$.

The {\bf obstruction category} $\bE_\Kk$ is defined in complete analogy to $\bB_\Kk$ to consist of
the spaces of objects $\Obj_{\bE_\Kk}:=\bigcup_{I\in\Ii_\Kk} U_I\times E_I$ and morphisms
$$
\Mor_{\bE_\Kk}: = \bigl\{ (I,J,x,e) \,\big|\, I,J\in\Ii_\Kk, I\subset J,  x\in U_{IJ}, e\in E_I \bigr\}.
$$
\end{defn}

We also express the further parts of a Kuranishi atlas in categorical terms:

\begin{itemlist}
\item
The obstruction category $\bE_\Kk$ is a bundle over $\bB_\Kk$ in the sense that there is a functor
$\pr_\Kk:\bE_\Kk\to\bB_\Kk$ that is given on objects and morphisms by projection $(I,x,e)\mapsto (I,x)$ and $(I,J,x,e)\mapsto(I,J,x)$ with locally trivial fiber $E_I$.
\item
The sections $s_I$ induce a smooth
section of this bundle, i.e.\ a functor $s_\Kk:\bB_\Kk\to \bE_\Kk$ which acts smoothly
on the spaces of objects and morphisms, and whose composite with the projection
$\pr_\Kk: \bE_\Kk \to \bB_\Kk$ is the identity. More precisely, it is given by $(I,x)\mapsto (I,x,s_I(x))$ on objects and by $(I,J,x)\mapsto (I,J,x,s_I(x))$ on morphisms.
\item
The zero sets of the sections $\bigcup_{I\in\Ii_\Kk} \{I\}\times s_I^{-1}(0)\subset\Obj_{\bB_\Kk}$ form a very special strictly full subcategory $s_\Kk^{-1}(0)$ of $\bB_\Kk$. Namely, $\bB_\Kk$ splits into the subcategory $s_\Kk^{-1}(0)$ and its complement (given by the full subcategory with objects  $\{ (I,x) \,|\, s_I(x)\ne 0 \}$) in the sense that there are no morphisms of $\bB_\Kk$ between the underlying sets of objects.
(This is since given any morphism $(I,J,x)$ we have $s_I(x)=0$ if and only if $s_J(\phi_{IJ}(x))=\Hat\phi_{IJ}(s_I(x))=0$.)
\item
The footprint maps $\psi_I$ give rise to a surjective functor $\psi_\Kk: s_\Kk^{-1}(0) \to \bX$ 
to the category $\bX$ with object space $X$ and trivial morphism spaces.
It is given by $(I,x)\mapsto \psi_I(x)$ on objects and by $(I,J,x)\mapsto {\rm id}_{\psi_I(x)}$ on morphisms.
\end{itemlist}

\begin{lemma}\label{le:Kcat}  
The categories $\bB_{\Kk}$ and $\bE_{\Kk}$  are well defined. 
%
%
\end{lemma}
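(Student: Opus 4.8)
The plan is to verify directly that $\bB_\Kk$ and $\bE_\Kk$ satisfy the axioms of a category: that composition of morphisms is well defined (lands in $\Mor$ and has the asserted source and target), that it is associative, and that the identity morphisms $(I,I,x) = \id_{(I,x)}$ act as two-sided units. Since $\bE_\Kk$ is defined in complete analogy to $\bB_\Kk$ — adjoining a passive fiber coordinate $e\in E_I$ that plays no role in source, target, or composition beyond being carried along by $\Hat\phi_{IJ}$ — it suffices to treat $\bB_\Kk$ carefully and remark that the argument for $\bE_\Kk$ is identical, using the linear cocycle identity $\Hat\phi_{JK}\circ\Hat\phi_{IJ} = \Hat\phi_{IK}$ in place of the one for the $\phi$'s.

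First I would check that composition is well defined. Given $I\subset J\subset K$ with $x\in U_{IJ}$, $y\in U_{JK}$ and $\phi_{IJ}(x) = y$, the composite is declared to be $(I,K,x)$, so I must show $x\in U_{IK}$. This is where the cocycle condition of Definition~\ref{def:Ku} enters: the hypothesis $\Hat\Phi_{JK}\circ\Hat\Phi_{IJ}\subset\Hat\Phi_{IK}$ means in particular (Definition~\ref{def:overlap}, Lemma~\ref{le:cccomp}) that the domain $\phi_{IJ}^{-1}(U_{JK})$ of the composed coordinate change is contained in $U_{IK}$, and since $\phi_{IJ}(x)=y\in U_{JK}$ we get $x\in\phi_{IJ}^{-1}(U_{JK})\subset U_{IK}$, as needed. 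The asserted source of $(I,K,x)$ is $(I,x)$, matching the source of $(I,J,x)$; its target is $(K,\phi_{IK}(x))$, and the cocycle condition also gives $\phi_{IK}(x) = \phi_{JK}(\phi_{IJ}(x)) = \phi_{JK}(y)$, which is the target of $(J,K,y)$, so source and target are compatible. (The boundary cases where $I=J$ or $J=K$ reduce to $\phi_{II}=\id$ and are immediate.)

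Next I would verify associativity. Given $I\subset J\subset K\subset L$ and morphisms $(I,J,x)$, $(J,K,y)$, $(K,L,z)$ that are composable, both bracketings of the triple composite evaluate to $(I,L,x)$ by repeated application of the definition, so associativity holds on the nose once one checks that all intermediate composites are defined — which again follows from the cocycle condition applied to the relevant sub-triples, exactly as in the previous paragraph. Finally, for the unit axiom, $(I,I,x)=\id_{(I,x)}$ composes with $(I,J,x)$ to give $(I,J,x)$ since $\phi_{II}=\id_{U_I}$ forces the source-matching condition $\phi_{II}(x)=x$, and similarly on the other side; and a morphism $(I,J,x)$ composed with $\id_{(J,\phi_{IJ}(x))}=(J,J,\phi_{IJ}(x))$ returns $(I,J,x)$. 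The only genuine content here — the one step that is not purely bookkeeping — is the well-definedness of composition, and that is precisely the point at which the cocycle condition (as opposed to the merely weak cocycle condition) is used: without the inclusion $\phi_{IJ}^{-1}(U_{JK})\subset U_{IK}$ the composite $(I,K,x)$ need not be a morphism, so I would flag this as the crux of the lemma. The verification that $\pr_\Kk$, $s_\Kk$, $\psi_\Kk$ are functors (asserted in the surrounding text) is then a routine consequence of the same identities, but is not part of the statement to be proved here.
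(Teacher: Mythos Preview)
Your proof is correct and follows essentially the same approach as the paper: both identify the cocycle inclusion $\phi_{IJ}^{-1}(U_{JK})\subset U_{IK}$ as the key step ensuring that the composite $(I,K,x)$ is a valid morphism, then verify associativity and units by direct unwinding of the definition. Your treatment is slightly more explicit about source/target compatibility and the unit axiom, but the substance is identical.
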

\begin{proof}  
We must check that the composition of morphisms in $\bB_{\Kk}$ is well defined and associative.
To see this, note that the composition $\bigl(I,J,x\bigr)\circ \bigl(J,K,y\bigr)$ only needs to be defined for $x=\phi_{IJ}^{-1}(y)\in\phi_{IJ}^{-1}(U_{JK})$, i.e.\ for $x\in U_{IJK}$ in the domain of the composed coordinate change $\Phi_{JK}\circ\Phi_{IJ}$, which by axiom (d) is contained in the domain of $\Phi_{IK}$, and hence $\bigl(I,K,x\bigr)$ is a well defined morphism.
With this said, identity morphisms are given by $\bigl(I,I,x\bigr)$ for all $x\in U_{II}=U_I$, and the composition is associative since for any $I\subset J \subset K\subset L$, and $x\in U_{IJ}, y\in U_{JK}, z\in U_{KL}$ the three morphisms
$\bigl(I,J,x\bigr), \bigl(J,K,y\bigr),\bigl(K,L,z\bigr)$ are composable iff
$y=\phi_{IJ}(x)$ and $z=\phi_{JK}(y)$. In that case we have
$$
\bigl(I,J,x\bigr)\circ \Bigl(\bigl(J,K,y\bigr) \circ \bigl(K,L,z)\bigr) \Bigr) \\
\;=\;
\bigl(I,J,x\bigr)\circ \bigl(J,L,\phi_{IJ}(x)\bigr)
\;=\;
\bigl(I,L,x\bigr)
$$
and $z=\phi_{JK}(\phi_{IJ}(x))=\phi_{IK}(x)$, hence
$$
\Bigl( \bigl(I,J,x\bigr)\circ \bigl(J,K,y\bigr) \Bigr) \circ \bigl(K,L,z\bigr)
\;=\; \bigl(I,K,x\bigr)\circ \bigl(K,L,\phi_{IK}(x)\bigr)
\;=\; \bigl(I,L,x\bigr)
$$
%
\end{proof}

\begin{remark}\label{rmk:Kgroupoid}\rm
Because $\Kk$ has trivial isotropy, all sets of morphisms in $\bB_\Kk$ between fixed objects consist of at most one element. However, $\bB_\Kk$ cannot be completed to an \'etale groupoid\footnote
{
The notion of \'etale groupoid is reviewed in Remark~\ref{rmk:grp}.}
 since the inclusion of inverses of the coordinate changes, and their compositions, may yield singular spaces of morphisms. Indeed, coordinate changes $\bK_I\to\bK_K$ and $\bK_J\to\bK_K$ with the same target chart are given by embeddings $\phi_{IK}:U_{IK}\to U_K$ and $\phi_{JK}:U_{JK}\to U_K$, whose images may not intersect transversely (for example, often their intersection is contained only in the zero set $s_K^{-1}(0)$), yet this intersection would be a component of the space of morphisms from $U_{I}$ to $U_{J}$.
Moreover, since the map $\phi_{IJ}:U_{IJ}\to U_J$ underlying a coordinate change could well have target of higher dimension than its domain,
the target map  $t:\Mor_{\bB_\Kk}\to \Obj_{\bB_\Kk}$
is not in general a local  diffeomorphism.
However,  the spaces of objects and morphisms in  $\bB_\Kk$  are smooth manifolds, and all structure maps are smooth embeddings.
Thus $\bB_\Kk$ is in some ways  similar to the \'etale groupoids considered  in \cite{Mbr}.
\end{remark}

The categorical formulation of a Kuranishi atlas $\Kk$ allows us to construct a topological space  $|\Kk|$ which contains a homeomorphic copy $\io_{\Kk}(X)\subset |\Kk|$ of $X$ and hence may be viewed as a virtual neighbourhood of $X$.

\begin{defn}  \label{def:Knbhd}
Let $\Kk$ be a Kuranishi atlas for the compact space $X$.
Then the 
{\bf Kuranishi neighbourhood} or {\bf virtual neighbourhood} of $X$,
$$
|\Kk| := \Obj_{\bB_\Kk}/{\scriptstyle\sim}
$$
is the topological realization\footnote
{
As is usual in the theory of \'etale groupoids we take the realization of
the category $\bB_\Kk$ to be a quotient of its space of objects rather than the classifying space
of the category $\bB_\Kk$ (which is also sometimes called the topological realization), cf.\ \cite{ALR}.} of the category $\bB_\Kk$, that is the quotient of the object space $\Obj_{\bB_\Kk}$ by the equivalence relation generated by
$$
\Mor_{\bB_\Kk}\bigl((I,x),(J,y)\bigr) \ne \emptyset \quad \Longrightarrow \quad
(I,x) \sim (J,y) .
$$
We denote by  $\pi_\Kk:\Obj_{\bB_\Kk}\to |\Kk|$ the natural projection $(I,x)\mapsto [I,x]$, where $[I,x]\in|\Kk|$ denotes the equivalence class containing $(I,x)$.
We moreover equip $|\Kk|$ with the quotient topology, in which $\pi_\Kk$ is continuous.
Similarly, we define
$$
|\bE_\Kk|:=\Obj_{\bE_\Kk} /{\scriptstyle\sim}
$$
to be the topological realization of the
obstruction category $\bE_\Kk$.  The natural projection $\Obj_{\bE_\Kk}\to |\bE_\Kk|$ is still denoted $\pi_\Kk$.
\end{defn}

\begin{lemma} \label{le:realization}
The functor ${\rm pr}_\Kk:\bE_\Kk\to\bB_\Kk$ induces a continuous map
$$
|{\rm pr}_\Kk|:|\bE_\Kk| \to |\Kk|,
$$
which we call the {\bf obstruction bundle} of $\Kk$, although its fibers generally do not have the structure of a vector space.\footnote
{
Proposition~\ref{prop:linear} shows that the fibers do have a natural linear structure when $\Kk$ satisfies a natural additivity condition on its obstruction spaces as well as taming conditions on its domains.
Both conditions are necessary, see Example~\ref{ex:nonlin} and Remark~\ref{rmk:LIN}.
}
However, it has a continuous zero section
$$
|0_\Kk| : \; |\Kk| \to |\bE_\Kk| , \quad [I,x] \mapsto [I,x,0] .
$$
Moreover, the section $s_\Kk:\bB_\Kk\to \bE_\Kk$ descends to a continuous section
$$
|s_\Kk|:|\Kk|\to |\bE_\Kk| .
$$
These maps are sections in the sense that $|\pr_\Kk|\circ|s_\Kk| =  |\pr_\Kk|\circ |0_\Kk|= {\rm id}_{|\Kk|}$.
Moreover, there is a natural homeomorphism from the realization of the subcategory $s_\Kk^{-1}(0)$
to the zero set of the section, with the relative topology induced from $|\Kk|$,
$$
\bigr| s_\Kk^{-1}(0)\bigr| \;=\; \quotient{s_\Kk^{-1}(0)}{\sim_{\scriptscriptstyle s_\Kk^{-1}(0)}}
\;\overset{\cong}{\longrightarrow}\;
|s_\Kk|^{-1}(0) \,:=\; \bigl\{[I,x] \,\big|\, s_I(x)=0  \bigr\}  \;\subset\; |\Kk| .
$$
\end{lemma}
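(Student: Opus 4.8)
The plan is to reduce the statement to two elementary facts about the topological realizations of these categories: \textbf{(a)} any functor between categories of the type considered here (object and morphism spaces topological, structure maps continuous) whose object and morphism maps are continuous descends to a continuous map of realizations, and this assignment is functorial in the functor; \textbf{(b)} the quotient projection restricts to a quotient map on any saturated closed subset of its domain. Everything else is then bookkeeping.

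First I would record \textbf{(a)}. If $F\colon\bB_\Kk\to\bE_\Kk$ (or $\bE_\Kk\to\bB_\Kk$) is a functor acting continuously on objects and morphisms, then $F$ sends morphisms to morphisms, hence carries the relation generating $\sim$ into the relation generating $\sim$ on the target; thus $\pi_\Kk\circ F\colon\Obj\to|\bE_\Kk|$ is continuous and constant on $\sim$-classes, so by the universal property of the quotient topology it factors through a continuous map $|F|$, and one checks on representatives that $|F\circ G|=|F|\circ|G|$ and $|\id|=\id$. I would then apply this to three functors: $\pr_\Kk$ itself; the functor $0_\Kk\colon(I,x)\mapsto(I,x,0)$, $(I,J,x)\mapsto(I,J,x,0)$, which is a functor precisely because $\Hat\phi_{IJ}(0)=0$; and $s_\Kk$, which is a functor by the compatibility $s_J\circ\phi_{IJ}=\Hat\phi_{IJ}\circ s_I$ already noted in the text. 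This yields the continuous maps $|\pr_\Kk|$, $|0_\Kk|$, $|s_\Kk|$, with the stated formulas $[I,x]\mapsto[I,x,0]$ and $[I,x]\mapsto[I,x,s_I(x)]$ on representatives. The section identities then follow formally from functoriality of realization together with $\pr_\Kk\circ s_\Kk=\pr_\Kk\circ 0_\Kk=\id_{\bB_\Kk}$; e.g.\ $|\pr_\Kk|\circ|s_\Kk|=|\pr_\Kk\circ s_\Kk|=|\id_{\bB_\Kk}|=\id_{|\Kk|}$.

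For the homeomorphism in the last line I would exploit the splitting of $\bB_\Kk$ into the subcategory $s_\Kk^{-1}(0)$ and its complement. Put $Z:=\Obj_{s_\Kk^{-1}(0)}=\bigsqcup_{I}s_I^{-1}(0)\subset\Obj_{\bB_\Kk}$; since each $s_I^{-1}(0)$ is closed in $U_I$, the set $Z$ is closed. Because every morphism $(I,J,x)$ of $\bB_\Kk$ satisfies $s_I(x)=0$ exactly when $s_J(\phi_{IJ}(x))=\Hat\phi_{IJ}(s_I(x))=0$, any $\sim$-chain emanating from a point of $Z$ stays in $Z$; hence $Z$ is saturated, the subset $|s_\Kk|^{-1}(0)=\{[I,x]\mid s_I(x)=0\}\subset|\Kk|$ is well defined, and $\sim$ restricted to $Z$ coincides with the equivalence relation $\sim_{s_\Kk^{-1}(0)}$ generated by the morphisms of the subcategory. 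By fact \textbf{(b)}, $\pi_\Kk|_Z\colon Z\to\pi_\Kk(Z)=|s_\Kk|^{-1}(0)$ (with subspace topology) is a quotient map, and since the equivalence relation it induces on $Z$ is exactly $\sim_{s_\Kk^{-1}(0)}$, the canonical bijection $|s_\Kk^{-1}(0)|=Z/\!\sim_{s_\Kk^{-1}(0)}\;\longrightarrow\;|s_\Kk|^{-1}(0)$ is a homeomorphism.

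Almost all of this is routine, so the one place needing genuine care is the last paragraph: the splitting property of $\bB_\Kk$ must be invoked twice — once to see that $Z$ is saturated (so that $\pi_\Kk|_Z$ is a quotient map and $|s_\Kk|^{-1}(0)$ is well defined) and once to identify the relation induced on $Z$ with $\sim_{s_\Kk^{-1}(0)}$ — and one must cite the point-set fact that restricting a quotient map to a saturated closed subset gives a quotient map. I expect this to be the main, though modest, obstacle; the verifications that $0_\Kk$ and $s_\Kk$ are functors and that realization is functorial are immediate.
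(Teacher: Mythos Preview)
Your proof is correct. For the first part (existence and continuity of $|\pr_\Kk|$, $|0_\Kk|$, $|s_\Kk|$ and the section identities) you follow essentially the same line as the paper: realization is functorial on continuous functors, and the identities descend from the functor level.

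For the homeomorphism $|s_\Kk^{-1}(0)|\cong |s_\Kk|^{-1}(0)$, your argument differs from the paper's. You invoke the point-set fact that the restriction of a quotient map to a saturated closed subset is again a quotient map, and combine it with the splitting property to identify the two equivalence relations on $Z$. The paper instead proves continuity of the inverse by an explicit construction: given an open $Z\subset|s_\Kk^{-1}(0)|$, it uses the footprint maps $\psi_I$ to build an open saturated set $\Ww=\bigcup_I(U_I\less C_I)\subset\Obj_{\bB_\Kk}$, where $C_I=\psi_I^{-1}\bigl(X\less\bigcup_J\psi_J(Z_J)\bigr)$, and checks that $\pi_\Kk(\Ww)\cap|s_\Kk|^{-1}(0)=Z$. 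Your abstract route is shorter and avoids the footprint maps entirely; the paper's construction is more hands-on and incidentally exhibits the open neighbourhood in $|\Kk|$ explicitly, which is in the spirit of later arguments where such explicit enlargements are needed.
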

\begin{proof}
The zero section is induced by the functor $0_\Kk: \Kk \to \bE_\Kk$ given by the map $(I,x) \mapsto (I,x,0)$ on objects and $(I,J,x) \mapsto (I,J,x,0)$ on morphisms.
The existence and continuity of $|\pr_\Kk|$, $|0_\Kk|$, and $|s_\Kk|$ then follows from continuity of the maps induced by $\pr_\Kk$, $0_\Kk$, and $s_\Kk$ on the object space and the following general fact: Any functor $f:A\to B$, which is continuous on the object space, induces a continuous map between the realizations
(where these are given the quotient topology of each category).
Indeed, $|f|:|A|\to|B|$ is well defined since the functoriality of $f$ ensures $a\sim a' \Rightarrow f(a)\sim f(a')$. Then by definition we have $\pi_B\circ f = |f| \circ \pi_A$ with the projections $\pi_A:A\to |A|$ and $\pi_B:B\to |B|$.
To prove continuity of $|f|$ we need to check that for any open subset $U\subset |B|$ the preimage $|f|^{-1}(U)\subset|A|$ is open, i.e.\ by definition of the quotient topology, $\pi_A^{-1}\bigl(|f|^{-1}(U)\bigr)\subset A$ is open. But
$\pi_A^{-1}\bigl(|f|^{-1}(U)\bigr) = f^{-1}\bigl(\pi_B^{-1}(U)\bigr)$, which is open by the continuity of $\pi_B$ (by definition) and $f$ (by assumption).

Next, recall that the equivalence relation $\sim$ on $\Obj_{\bB_\Kk}$ that defines $|\Kk|$ is given by the embeddings $\phi_{IJ}$, their inverses, and compositions. Since these generators intertwine the zero sets $s_I^{-1}(0)$ and the footprint maps $\psi_I:s_I^{-1}(0)\to F_I$, we have the useful observations
\begin{align} \label{eq:useful1}
\psi_I(x)=\psi_J(y) \quad &\Longrightarrow \quad \;  (I,x)\sim (J,y) , \\
 \label{eq:useful2}
(I,x)\sim (J,y) , \; s_I(x)=0 \quad &\Longrightarrow \quad \;  s_J(y)=0, \; \psi_I(x)=\psi_J(y).
\end{align}
In particular, \eqref{eq:useful2} implies that the equivalence relation $\sim$ on $\Obj_{\bB_\Kk}$ that defines $|\Kk|=\qu{\Obj_{\bB_\Kk}}{\sim}$ restricted to the objects $\{(I,x) \,|\, s_I(x)=0\}$ of the subcategory $s_\Kk^{-1}(0)$ coincides with the equivalence relation $\sim_{\scriptscriptstyle s_\Kk^{-1}(0)}$ generated by the morphisms of $s_\Kk^{-1}(0)$.
Hence the map $\bigr| s_\Kk^{-1}(0)\bigr| \to |s_\Kk|^{-1}(0)$, $[(I,x)]_{\scriptscriptstyle s_\Kk^{-1}(0)} \mapsto [(I,x)]_{\Kk}$ is a bijection. It also is continuous because it is the realization of the functor $s_\Kk^{-1}(0)\to \bB_\Kk$ given by the continuous embedding of the object space.

To check that the inverse is continuous, consider an open subset $Z\subset \bigr| s_\Kk^{-1}(0)\bigr|$, 
that is with open preimage $\pi_\Kk^{-1}(Z)\subset \{(I,x) \,|\, s_I(x)=0\}$.
The latter is given the relative topology induced from $\Obj_{\bB_\Kk}$, hence we have
$\pi_\Kk^{-1}(Z) = 
\Ww\cap \{(I,x) \,|\, s_I(x)=0\}$ for some open subset $\Ww\subset\Obj_{\bB_\Kk}$. Now we need to check that $\Ww$ can be chosen so that 
$\Ww=\pi_\Kk^{-1}(\pi_\Kk(\Ww))$, i.e.\ $\pi_\Kk(\Ww)\subset|\Kk|$ is open, and
$\pi_\Kk(\Ww)\cap |s_\Kk|^{-1}(0)=Z$.
For that purpose note that each footprint $\psi_I(\pi_\Kk^{-1}(Z)\cap U_I)\subset X$ is open since $\psi_I$ is a homeomorphism from $s_I^{-1}(0)$ to an open subset of $X$ and $Z_I:= \pi_\Kk^{-1}(Z)\cap U_I \subset s_I^{-1}(0)$ is open by assumption.
Hence the finite union $\bigcup_{I\in\Ii_\Kk}\psi_I(Z_I)\subset X$ is open, thus has a closed complement, so that each preimage $C_J := \psi_J^{-1}\bigl( X\less \bigcup_I \psi_I(Z_I)\bigr)\subset U_J$ is also closed by the homeomorphism property of the footprint map $\psi_I$. 
Moreover, by \eqref{eq:useful1} and \eqref{eq:useful2}  the morphisms in $\bB_\Kk$ on the zero sets are determined by the footprint functors, so that we have $\psi_J^{-1}(\psi_I(Z_I))=\pi_\Kk^{-1}(\pi_\Kk(Z_I))\cap U_J=\pi_\Kk^{-1}(Z \cap \pi_\Kk(U_I))\cap U_J$ for each $I,J\in\Ii_\Kk$, and thus
$C_J = s_J^{-1}(0) \less \pi_\Kk^{-1}(Z)$.
With that we obtain an open set
$\Ww: = {\textstyle \bigcup_{I\in\Ii_\Kk}} \bigl( U_I \less C_I \bigr) \subset \Obj_{\bB_\Kk}$ such that $\pi_\Kk(\Ww)\subset|\Kk|$ is open since $\Ww$ is invariant under the equivalence relation by $\pi_\Kk$, namely
$$
\pi_\Kk(\Ww) \;=\;  {\textstyle \bigcup_{I\in\Ii_\Kk}} \pi_\Kk( U_I ) \less \bigl( \pi_\Kk(s_I^{-1}(0))  \less Z\bigr)  \;=\; |\Kk| \less  \bigl( |s_\Kk^{-1}(0)| \less Z\bigr) 
$$
so that its preimage is $\Ww$ and hence open, by the identity
$$
\pi_\Kk^{-1}(\pi_\Kk(\Ww))
\;=\; {\textstyle \bigcup_{I\in\Ii_\Kk}}  U_I \cap \pi_\Kk^{-1}\bigl( |\Kk| \less  ( |s_\Kk^{-1}(0)| \less Z ) \bigr) 
 \;=\; {\textstyle \bigcup_{I\in\Ii_\Kk}}  U_I \less \bigl( s_I^{-1}(0) \less Z \bigr).
$$
Finally, using the above, we check that $\pi_\Kk(\Ww)$ has the required intersection
\begin{align*}
\pi_\Kk(\Ww) \cap  |s_\Kk|^{-1}(0)
\;=\; \bigl(  |\Kk| \less  \bigl( |s_\Kk^{-1}(0)| \less Z \bigr) \bigr)  \cap  |s_\Kk|^{-1}(0)
\;=\; Z .
\end{align*}
This proves the homeomorphism between $\bigl|s_\Kk^{-1}(0)\bigr|$ and $ |s_\Kk|^{-1}(0)$.
\end{proof}

The next lemma shows that the zero set 
$\bigl|s_\Kk^{-1}(0)\bigr|\cong |s_\Kk|^{-1}(0)$
is also naturally homeomorphic to $X$, and hence $X$ embeds into the virtual neighbourhood $|\Kk|$.

\begin{lemma} \label{le:Knbhd1}
The footprint functor $\psi_\Kk: s_\Kk^{-1}(0) \to \bX$ descends to a homeomorphism $|\psi_\Kk| :  |s_\Kk|^{-1}(0) \to X$.   Its inverse is given by
$$
\io_{\Kk}:= |\psi_\Kk|^{-1} : \; X\;\longrightarrow\;  |s_\Kk|^{-1}(0) \;\subset\; |\Kk|, \qquad
p \;\mapsto\; [(I,\psi_I^{-1}(p))] ,
$$
where $[(I,\psi_I^{-1}(p))]$ is independent of $I\in\Ii_\Kk$ with $p\in F_I$.
\end{lemma}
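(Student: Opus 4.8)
The plan is to show that the footprint functor $\psi_\Kk$ descends to the quotient $|s_\Kk|^{-1}(0)$, that the resulting map is a continuous bijection with the asserted inverse, and finally that this bijection is a homeomorphism by an open-map argument that reuses the topological bookkeeping from the proof of Lemma~\ref{le:realization}.

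First I would check that $|\psi_\Kk|$ is well defined on $|s_\Kk|^{-1}(0)$. By the identification $\bigl|s_\Kk^{-1}(0)\bigr|\cong |s_\Kk|^{-1}(0)$ of Lemma~\ref{le:realization}, it suffices to descend the functor $\psi_\Kk: s_\Kk^{-1}(0)\to\bX$ to the realization. Since $\bX$ has trivial morphism spaces, a functor to $\bX$ is the same as a map on objects that is constant on each connected component of the morphism relation; and indeed $\psi_\Kk$ sends $(I,x)$ and $(J,y)$ with $s_I(x)=s_J(y)=0$ and $(I,J,x)\in\Mor$ to the same point, by observation \eqref{eq:useful2}, namely $\psi_I(x)=\psi_J(y)$. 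Thus $|\psi_\Kk|:|s_\Kk|^{-1}(0)\to X$ is well defined, and it is continuous as the realization of a functor that is continuous on objects (the footprint maps $\psi_I$ are homeomorphisms, in particular continuous), using the general fact established inside the proof of Lemma~\ref{le:realization}. Surjectivity is immediate from the covering property $X=\bigcup_i F_i$: given $p\in X$ pick $i$ with $p\in F_i$, then $\psi_i^{-1}(p)\in s_i^{-1}(0)$ maps to $p$. For injectivity, suppose $[I,x]$ and $[J,y]$ in $|s_\Kk|^{-1}(0)$ have $\psi_I(x)=\psi_J(y)=:p$; then $p\in F_I\cap F_J$, so $\{i\}\cup\{j\}$-type reasoning (more precisely $I\cup J\in\Ii_\Kk$ since $F_I\cap F_J\ne\emptyset$) together with \eqref{eq:useful1} gives $(I,x)\sim(J,y)$, hence $[I,x]=[J,y]$. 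At the same time \eqref{eq:useful1} shows that the formula $p\mapsto [(I,\psi_I^{-1}(p))]$ is independent of the choice of $I\in\Ii_\Kk$ with $p\in F_I$, and it visibly is a two-sided inverse to $|\psi_\Kk|$; call it $\io_\Kk$.

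It remains to prove that $\io_\Kk$ is continuous, equivalently that $|\psi_\Kk|$ is an open map onto $X$. Here I would argue directly: let $Z\subset|s_\Kk|^{-1}(0)$ be open, so $\pi_\Kk^{-1}(Z)\cap s_I^{-1}(0)=:Z_I$ is open in $s_I^{-1}(0)$ for every $I$. Since each $\psi_I$ is a homeomorphism onto the open subset $F_I\subset X$, the image $\psi_I(Z_I)$ is open in $X$, and by \eqref{eq:useful1}--\eqref{eq:useful2} (exactly as in the proof of Lemma~\ref{le:realization}) the morphisms of $\bB_\Kk$ restricted to zero sets are governed by the footprint functors, so $|\psi_\Kk|(Z)=\bigcup_{I\in\Ii_\Kk}\psi_I(Z_I)$. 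This is a finite union of open sets, hence open in $X$. Therefore $|\psi_\Kk|$ is open, $\io_\Kk$ is continuous, and $|\psi_\Kk|$ is a homeomorphism with inverse $\io_\Kk$.

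I expect the only genuinely delicate point to be bookkeeping the equivalence relation correctly --- namely verifying that on the zero sets the relation $\sim$ on $\Obj_{\bB_\Kk}$ is \emph{exactly} "same footprint image", so that $|\psi_\Kk|(Z)=\bigcup_I\psi_I(Z_I)$ with no extra identifications or omissions. This is not hard but must be done carefully, and it is precisely the content already extracted in observations \eqref{eq:useful1} and \eqref{eq:useful2} and used in Lemma~\ref{le:realization}; so in the write-up I would simply cite those, keeping this proof short. Everything else (well-definedness, the explicit inverse, continuity of $|\psi_\Kk|$) is formal once the realization machinery of Lemma~\ref{le:realization} is in place.
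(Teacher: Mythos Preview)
Your proof is correct and follows essentially the same approach as the paper's. The paper phrases the final step as verifying continuity of $\io_\Kk$ by computing $\io_\Kk^{-1}(V)=\bigcup_I\psi_I(W_I\cap s_I^{-1}(0))$ for $V\subset|\Kk|$ open, whereas you phrase it dually as showing $|\psi_\Kk|$ is open; these are the same computation since the subspace topology on $|s_\Kk|^{-1}(0)$ is inherited from $|\Kk|$.
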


\begin{proof}
To begin, recall that $\psi_\Kk$ is a surjective functor from $s_\Kk^{-1}(0)$ to $\bX$ with objects $X$ (i.e.\ the footprints $F_I = \psi_I (s_I^{-1}(0))$ cover $X$).
Hence the argument of Lemma~\ref{le:realization} proves that $|\psi_\Kk|$ is well defined, surjective, and continuous when $|\psi_\Kk|$ is considered as a map from the quotient space $|s_\Kk^{-1}(0)|$ (with its quotient topology rather than the relative topology induced by $|\Kk|$)  to $X$.

The map $|\psi_\Kk|=\io_\Kk^{-1}$ considered here is given by composing this realization of the functor $\psi_\Kk$ with the natural homeomorphism $\bigl|s_\Kk^{-1}(0)\bigr|\overset{\cong}{\to} |s_\Kk|^{-1}(0)$ from Lemma~\ref{le:realization}.
So it remains to check continuity of $\io_{\Kk}$ with respect to the subspace topology on $|s_\Kk|^{-1}(0)\subset|\Kk|$. 
For that purpose we need to consider an open subset $V\subset|\Kk|$, that is $\pi_\Kk^{-1}(V)\subset \Obj_{\bB_\Kk}$ is open. Since $\Obj_{\bB_\Kk}$ is a disjoint union that means $\pi_\Kk^{-1}(V)=\bigcup_{I\in\Ii_\Kk} \{I\}\times W_I$ is a union of open subsets $W_I\subset U_I$.  So in the relative topology $(W_I\cap s_I^{-1}(0))\subset s_I^{-1}(0)$ is open, as is its image under the homeomorphism $\psi_I: s_I^{-1}(0) \to F_I \subset X$.
Therefore
$$
\io_{\Kk}^{-1}(V)
\;=\;  |\psi_{\Kk}|(V)
\;=\;\psi_\Kk\left(s_\Kk^{-1}(0)\cap
{\textstyle
\bigcup_{I\in\Ii_\Kk}} \{I\}\times W_I\right)
\,=\;
{\textstyle \bigcup_{I\in\Ii_\Kk}} \psi_I(W_I\cap s_I^{-1}(0))
$$
is open in $X$ since it is a union of open subsets.  This completes the proof.
\end{proof}

Note that the injectivity of $\io_{\Kk}:X\to|\Kk|$ could be seen directly from the injectivity property \eqref{eq:useful2} of the equivalence relation $\sim$ on $s_\Kk^{-1}(0)\subset\Obj_{\bB_\Kk}$.
In particular, this property implies injectivity of the projection of the zero sets in fixed charts, $\pi_\Kk :s_I^{-1}(0) \to |\Kk|$.
This injectivity however only holds on the zero set.
On $U_I\less s_I^{-1}(0)$, the projections $\pi_\Kk: U_I\to |\Kk|$ need not be injective,
as the following example shows.

\begin{figure}[htbp] 
   \centering
   \includegraphics[width=4in]{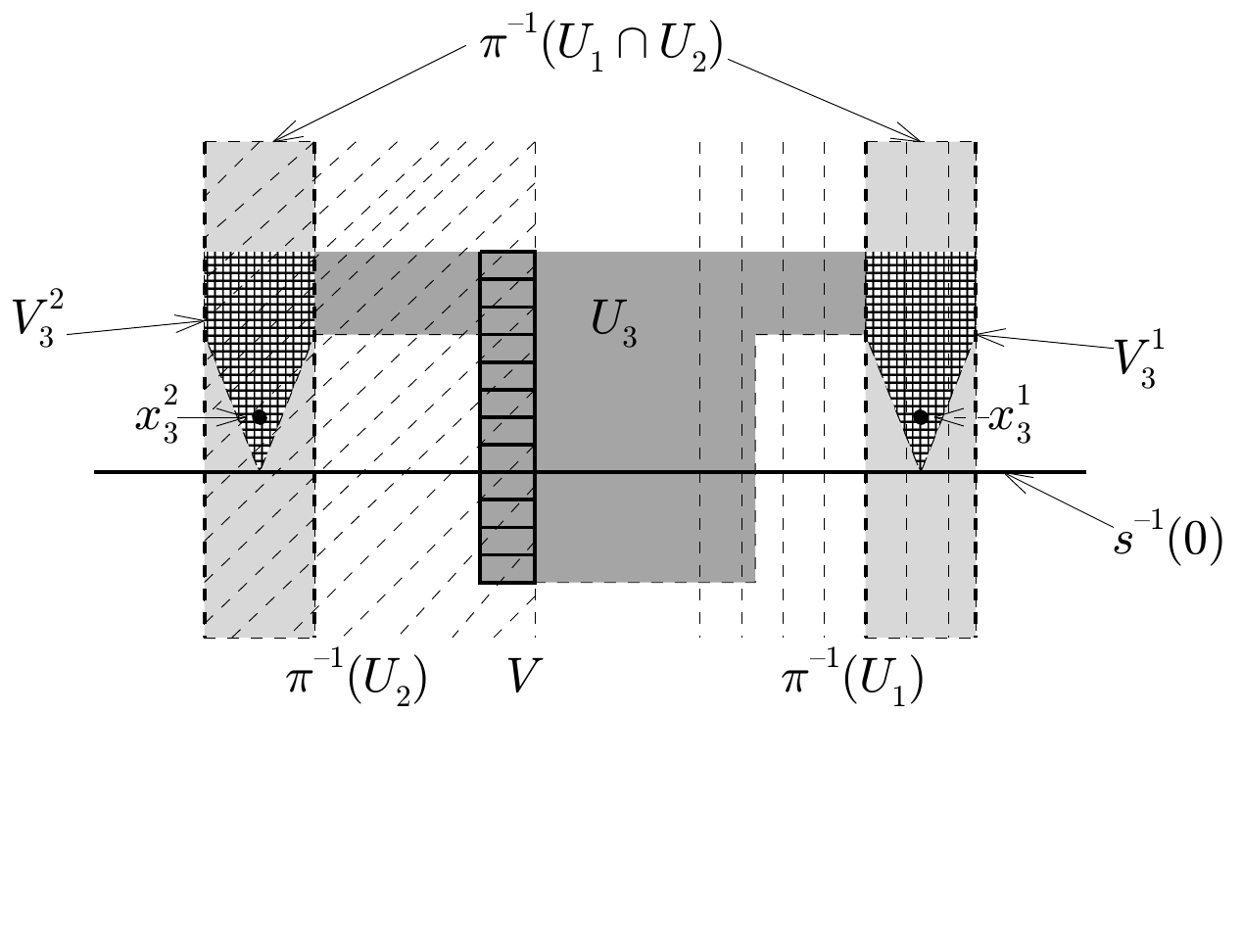}
   \caption{
The lift $\pi^{-1}(U_1\cap U_2)$ is shown as two light grey strips in $\R\times \R$, intersecting the dark grey region $U_3$ in the two shaded sets $V_3^1, V_3^2$.  The domains $U_1,U_2\subset S^1\times\R$ lift injectively to the dashed sets. The points $x_3^1\neq x_3^2 \in U_3$ have the same image in $|\Kk|\subset S^1\times\R$.
In Example~\ref{ex:nonlin} we will add another chart with domain $V\cup V^2_3$, where $V$ is barred.
}
   \label{fig:3}
\end{figure}

\begin{example}[Failure of Injectivity]\label{ex:Knbhd}\rm
The circle $X=S^1=\R/\Z$ can be covered by a single ``global'' Kuranishi chart $\bK_0$ of dimension $1$ with domain $U_0= S^1\times \R$, obstruction space $E_0 =\R$, section map $s_0 = \pr_\R$, and footprint map $\psi_i= \pr_{S^1}$.
A slightly more complicated Kuranishi atlas (involving transition charts but still no cocycle conditions) can be obtained by the open cover $S^1=F_1\cup F_2\cup F_3$ with $F_i=(\frac i3,\frac{i+2}3) \subset \R/\Z$ such that all pairwise intersections $F_{ij}:=F_i\cap F_j \neq \emptyset$ are nonempty, but the triple intersection $F_1\cap F_2\cap F_3$ is empty. We obtain a covering family of basic charts $\bigl(\bK_i:=\bK_0|_{U_i}\bigr)_{i=1,2,3}$ with these footprints by restricting $\bK_0$ to the open domains $U_i:=F_i\times (-1,1)\subset S^1\times \R$.
Similarly, we obtain transition charts $\bK_{ij}:=\bK_0|_{U_i\cap U_j}$ and coordinate changes $\Hat\Phi_{i, ij}:= \Hat\Phi_{0,0}|_{U_i\cap U_j}$ by restricting the identity map $\Hat\Phi_{0,0}=({\rm id}_{U_0}, {\rm id}_{E_0}):\bK_0 \to \bK_0$ to the overlap $U_{ij}:= U_i\cap U_j$.
These are well defined for any pair $i,j\in\{1,2,3\}$ (and satisfy all cocycle conditions), but for a Kuranishi atlas it suffices to restrict to $i<j$. That is, the transition charts $\bK_{12},\bK_{13},\bK_{23}$ and corresponding coordinate changes
$\Hat\Phi_{1,12}, \Hat\Phi_{2,12}, \Hat\Phi_{1,13}, \Hat\Phi_{3,13}, \Hat\Phi_{2,23}, \Hat\Phi_{3,23}$ form transition data, for which the cocycle condition is vacuous.
The realization of this Kuranishi atlas is $|\Kk|=U_1\cup U_2\cup U_3\subset S^1\times \R$, and the maps $U_i\to |\Kk|$ are injective.

However, keeping the same basic charts $\bK_1, \bK_2$, and transition data for $i,j\in\{1,2\}$, we may choose $\bK_3$ to have the same form as $\bK_0$ but with domain $U_3\subset (0,2)\times \R$ such that the projection $\pi:\R\times \R \to S^1\times \R$ embeds $U_3\cap (\R\times\{0\})=(1,\frac 23)\times\{0\}$ to $F_3\times\{0\}$. We can moreover choose $U_3$ so large that the inverse image of $U_1\cap U_2$ meets $U_3$ in two components $\pi^{-1}(U_1\cap U_2)\cap U_3 = V_3^1 \sqcup V_3^2$ with $\pi(V_3^1)=\pi(V_3^2)$, but there are continuous lifts $\pi^{-1}: U_i \cap \pi(U_3) \to U_3$ with $V^i_3\subset\pi^{-1}(U_i)$; cf.\ Figure~\ref{fig:3}.
These intersections $V^i_3\subset U_3$ necessarily lie outside of the zero section $s_3^{-1}(0)=F_3\times\{0\}$, though their closure might intersect it.
Then it remains to construct transition data from $\bK_i$ for $i=1,2$ to $\bK_3$.
We choose the transition charts as restrictions $\bK_{i3}:= \bK_3|_{U_{i3}}$ of $\bK_3$ to the domains $U_{i3}:= \pi^{-1}(U_1)\cap U_3$, with transition maps $\Hat\Phi_{3,i3}:=\Hat\Phi_{3,3}|_{U_{i3}}$. Finally, we construct the transition maps $\Hat\Phi_{i,i3}:\bK_i|_{U_{i,i3}}\to\bK_3$ for $i=1,2$ by the identity $\Hat\phi_{i,i3}:={\rm id}_{E_i}$ on the identical obstruction bundles $E_i=E_3=E_0$ and the lift
$\phi_{i,i3}:= \pi^{-1}$ on the domain $U_{i,i3}: = U_1\cap \pi(U_3)$.

This again defines a Kuranishi atlas with vacuous cocycle condition, but the map $\pi_\Kk: U_3\to |\Kk|$ is not injective.
Indeed any point $x_3^1\in V_3^1\subset U_3$ is identified $[x_3^1]=[x_3^2]\in|\Kk|$ with the corresponding point $x_3^2\in V_3^2$ with $\pi(x_3^1)=\pi(x_3^2)= y \in S^1\times\R$.
Indeed, denoting by $(ij, z)$ the point $z$ considered as an element of $U_{ij}$ (which is just a simplified version of the previous notation $(I,x)$ for a point $x\in U_I$), we have
\begin{equation}\label{eq:equiv123}
(3,x_3^1)\sim (13, x_3^1)\sim (1,y)\sim(12,y)\sim(2,y)\sim (23,x_3^2)\sim (3,x_3^2),
\end{equation}
 where each equivalence is induced by the relevant coordinate change.
Since there are such points $x_3^1$ arbitrarily close to the zero set $s_3^{-1}(0)=F_3\times\{0\}$, the projection $\pi_\Kk:U_3\to |\Kk|$ is not injective on any neighborhood of the zero set $s_3^{-1}(0)$.
\end{example}

We next adapt the above example so that the fibers of the bundle $\pr_\Kk:
|\bE_\Kk|\to |\Kk|$ also fail to have a linear structure.
(Remark~\ref{rmk:LIN} describes another scenario where linearity fails.)

\begin{example}
[Failure of Linearity]  \label{ex:nonlin}\rm
We can build on the construction of Example~\ref{ex:Knbhd} to obtain a Kuranishi atlas $\Kk'$ of dimension $0$ on $S^1$ with four basic charts, in which the fibers of $\pr_{\Kk'}$ are not even contractible.
The first three basic charts and the associated transition charts are obtained by replacing the obstruction space $\R$ with $\C$. That is, for $I\in \{1,2,3,12,13,23\}$, we identify $\R\subset\C$ to define the charts
$$
\bK_I': = \bigl( \, U'_I:=U_I \,,\, E'_I:=\C \,,\, s_I':= s_I \,,\, \psi_I':=\psi_I \bigr) ,
$$
where $(U_I,E_I=\R,s_I,\psi_I)$ are the charts of Example~\ref{ex:Knbhd} which yield a noninjective Kuranishi atlas.
We also define coordinate changes by the same domains and nonlinear embeddings as before, that is we set $U_{i,ij}':=U_{i,ij}$ and $\phi_{i,ij}':=\phi_{i,ij}$ for $i<j \in \{1,2,3\}$. However, we now have a choice for the linear embeddings $\Hat\phi\,'_{i,ij}$  since compatibility with the sections only requires $\Hat\phi\,'_{i,ij}|_{\R}={\rm id}_\R$.
We will take $\Hat\phi\,'_{i,ij} = \id_{\C}$ except for $\Hat\phi\,'_{2,23}(\alpha +\hat\iota \beta) :=\alpha + 2 \hat\iota \beta$ (where we denote the imaginary unit by $\hat\iota$ to prevent confusion with the index $i$).  As above the cocycle condition is trivially satisfied since there are no triple intersections of footprints.
Then with $x_3^i\in U'_3$ as in Example~\ref{ex:Knbhd} the chain of equivalences \eqref{eq:equiv123} lifts to the obstruction space $E_3'=\C$ as
\begin{equation} \label{fiber2}
(3, x_3^1,\alpha + \hat\iota \beta) \;\sim\; (3,x_3^2,\alpha + 2\hat\iota \beta) .
\end{equation}
In order to also obtain the equivalences
\begin{equation} \label{fiber1}
(3, x_3^1,\alpha + \hat\iota \beta) \;\sim\; (3,x_3^2,\alpha + \hat\iota \beta)
\end{equation}
we add another basic chart $\bK_4'= \bK_3'|_{U'_4}$ with
domain indicated in Figure~\ref{fig:3},
$$
U'_4: = V \cup V_3^2 ,\qquad
V: = \pi^{-1} \bigl(F_{13}\times \R\bigr) \cap U'_3.
$$
This chart has footprint $F'_4=F_{13}$, so it requires no compatibility with $\bK'_2$, and for $I\subset\{1,3,4\}$ we always have $F'_I = F_{13}$. We define the transition charts as restrictions
$$
\bK'_{14}: = \bK'_1|_{\pi(U'_4)},\qquad \bK'_{34} = \bK'_3|_{U'_4} , \qquad
\bK_{134}: = \bK_3|_V .
$$
Then we obtain the coordinate changes $\Hat\Phi_{I,J}$ for $I\subsetneq J \subset\{1,3,4\}$ by setting $\Hat\phi\,'_{IJ} := \id_\C$ and $\phi_{IJ}'$ equal either to the identity or to $\pi^{-1}$, as appropriate, on the domains
\begin{align*}
U'_{1,14} := \pi(U'_4), \qquad
&
U'_{4,14}= U'_{3,34}= U'_{4,34} := U'_4, \\
U'_{1,134} = U'_{14,134} := \pi(V), \qquad
&
U'_{3,134} = U'_{4,134} = U'_{13,134} := V .
\end{align*}
To see that the cocycle condition holds, note that we only need to check it for the triples $(i,34,134), i=3,4$, $(j,14,134), j=1,4$, and $(k, 13,134), k=1,3$, and in all of these cases both $\phi_{JK}'\circ\phi_{IJ}'$ and $\phi_{IK}'$ have equal domain, given by $V$ or $\pi(V)$. This provides another chain of morphisms between the same objects as in \eqref{eq:equiv123},
$$
 (3,x_3^2)\sim (34,x_3^2)\sim (4,x_3^2) \sim (14,y) \sim (1,y)\sim
 (13,x^1_3)\sim (3,x_3^1),
$$
whose lift to the obstruction space is \eqref{fiber1} since $\Hat\phi\,'_{IJ}= \id_\C$ for all coordinate changes involved.
Therefore, the fiber of $|\pi_{\Kk'}|: |\bE_{\Kk'}|\to |\Kk'|$ over $[3,x_3^1]=[3,x_3^2]$ is
$$
|\pr_\Kk|^{-1}([3,x_3^1]) \;=\;\;  \quotient{\C}{\scriptstyle \bigl( \alpha + \hat\iota \beta \;\sim\; \alpha + 2\hat\iota \beta \bigr)}
\quad \cong\;  \R \times S^1 ,
$$
which does not have the structure of a vector space, and in fact is not even contractible.
\end{example}

Finally, we  give a simple example where $|\Kk|$ is not Hausdorff in any neighbourhood of $\io_\Kk(X)$ even though the map $s\times t:\Mor_{\bB_\Kk}\to \Obj_{\bB_\Kk}\times \Obj_{\bB_\Kk}$ is
proper; cf.\ Section~\ref{ss:top}.

\begin{example}[Failure of Hausdorff property]  \label{ex:Haus}\rm
We construct a Kuranishi atlas for $X := \R$
by starting with a basic chart whose footprint $F_1=\R$ already covers $X$,
$$
\bK_1 := \bigl(\, U_1=\R^2 \,,\, E_1=\R \,,\, s_1(x,y)=y \,,\, \psi_1(x,0)=x \,\bigr) .
$$
We then construct a second basic chart $\bK_2:=\bK_1|_{U_2}$ with footprint $F_2=(0,\infty)\subset\R$ and the transition chart $\bK_{12}:=\bK_1|_{U_{12}}$ as restrictions of $\bK_1$ to the domains
$$
U_2: = \{-y<x\le0\}\cup \{x>0\}
, \qquad
U_{12} := \{ x>0\} .
$$
This induces coordinate changes $\Hat\Phi_{i,12}:=\Hat\Phi_{1,1}|_{U_{i,12}} : \bK_i|_{U_{i,12}}\to\bK_{12}$ for $i=1,2$ given by restriction of the trivial coordinate change $\bigl(\phi_{1,1}={\rm id}_{\R^2} , \Hat\phi_{1,1}={\rm id}_\R\bigr)$ to $U_{i,12} := U_{12}$.
This defines a Kuranishi atlas since there are no compositions of coordinate changes for which a cocycle condition needs to be checked.
Moreover, $s\times t$ is proper because
on each of the finitely many connected components of $\Mor_{\bB_\Kk}$ the target map $t$ restricts to a homeomorphism to a connected component of $\Obj_{\bB_\Kk}$. (For example, $t:\Mor_{\bB_\Kk} \supset U_{i,12} \to U_{12} \subset \Obj_{\bB_\Kk}$ is the identity.)

On the other hand the images in $|\Kk|$ of the points $(0,y)\in U_1$ and $(0,y)\in U_2$
for $y>0$ have no disjoint neighbourhoods
since for every $x>0$
$$
\bigl( 1, (x,y)\bigr) \sim
\bigl( 12, (x,y)\bigr) \sim
\bigl( 2, (x,y)\bigr) .
$$
Therefore $\io_\Kk(X)$ does not have a Hausdorff neighbourhood in $|\Kk|$.
\end{example}

In Section~\ref{ss:shrink} below we will achieve both the injectivity and the Hausdorff property by a subtle shrinking of the domains of charts and coordinate changes. However, we are still unable to make the 
Kuranishi
neighbourhood $|\Kk|$ locally compact or even metrizable, due to the following natural example.

\begin{example}
[Failure of metrizability and local compactness]  \label{ex:Khomeo}
\rm
For simplicity we will give an example with noncompact $X = \R$. (A similar example can be constructed with $X = S^1$.)
We construct a Kuranishi atlas $\Kk$ on $X$ by two basic charts, 
$\bK_1 = (U_1=\R, E_1=\{0\}, s=0,\psi_1=\id)$ and
$$
\bK_2 = \bigl(U_2=(0,\infty)\times \R,\ E_2=\R, \ s_2(x,y)= y,\ \psi_2(x,y)= x\bigr),
$$
one transition chart $\bK_{12} = \bK_2|_{U_{12}}$ with domain $U_{12} := U_2$, and the coordinate changes $\Hat\Phi_{i,12}$ induced by the natural embeddings of the domains $U_{1,12} := (0,\infty)\hookrightarrow (0,\infty)\times\{0\}$ and $U_{2,12} := U_2\hookrightarrow U_2$.
Then as a set $|\Kk| = \bigl(U_1\sqcup U_2\sqcup U_{12}\bigr)/\sim$
can be identified with $\bigl(\R\times\{0\}\bigr) \cup \bigl( (0,\infty)\times\R\bigr) \subset \R^2$.
However, the quotient topology at $(0,0)\in|\Kk|$ is strictly stronger than the subspace topology.
That is, for any $O\subset\R^2$ open the induced subset $O\cap|\Kk|\subset|\Kk|$ is open, but some open subsets of $|\Kk|$ cannot be represented in this way.
In fact,  
for any $\eps>0$ and continuous function $f:(0,\eps)\to (0,\infty)$,
the set
$$
U_{f,\eps} \, :=\; \bigl\{ [x] \,\big|\, x\in U_1, |x|< \eps \}  \;\cup\; \bigl\{ [(x,y)] \,\big|\, (x,y)\in U_2,  |x|< \eps , |y|<f(x)\} \;\subset\; |\Kk|
$$
is open in the quotient topology.  Moreover these sets form a basis for the 
neighbourhoods of 
$[(0,0)]$ in the quotient topology. 
To see this, let $V\subset |\Kk|$ be open in the quotient topology. Then,
since $\pi_\Kk^{-1}(V)\cap U_1$ is a neighbourhood of $0$, there is 
$\eps>0$ so that 
$\{ (x,0) \,|\,  |x|<\eps \} \subset V$.  Further, define 
$f:\{x\in\R \,|\, 0<x<\eps \} \to (0,\infty)$ by
$f(x) := \sup \{\de \,|\, B_\de(x,0)\subset V\}$, where $B_\de(x,0)$ is the open ball in $\R^2$ with radius $\de$. Then 
$f(x)>0$ for all 
$0<x<\eps$ because $\pi_\Kk^{-1}(V)\cap U_2$ is a neighbourhood of $(0,x)$.
The triangle inequality implies that 
$f(x')\ge f(x) - |x'-x|$ for all  $0<x,x'<\eps$. Hence $|f(x)- f(x')|\le |x'-x|$, so that $f$ is continuous.  
Thus we have constructed a neighbourhood $U_{f,\eps}\subset |\Kk|$ of $[(0,0)]$ of the above type with $U_{f,\eps}\subset V$.

We will use this to see that the point $[(0,0)]$ does not have a countable neighbourhood basis in the quotient topology.
Indeed, suppose by contradiction that $(U_k)_{k\in\N}$ is such a basis, then by the above we can iteratively find $1>\eps_k>0$ and $f_k:(0,\eps_k)\to(0,\infty)$ so that $U_{f_k,\eps_k}\subset U_k\cap U_{f_{k-1},\frac 12 \eps_{k-1}}$ (with $U_{f_0,\frac 12 \eps_0}$ replaced by $|\Kk|$). In particular, the inclusion $U_{f_k,\eps_k}\subset U_{f_{k-1},\frac 12 \eps_{k-1}}$ implies $\eps_k < \eps_{k-1}$.
Now there exists a continuous function $g:(0,1)\to (0,\infty)$ such that $g(\frac 12\eps_k) < f_k(\frac 12 \eps_k)$ for all $k\in\N$. 
Then the neighbourhood $U_{g,1}$ does not contain any of the $U_k$ because $U_{g,1}\supset U_k \supset U_{f_k,\eps_k}$ implies that $g(\frac 12\eps_k) \geq f_k(\frac 12 \eps_k)$.
This contradicts the assumption that $(U_k)_{k\in\N}$ is a neighbourhood basis of $[(0,0)]$, hence there exists no countable neighbourhood basis.

Note also that  the point $[(0,0)]\in|\Kk|$ has no compact neighbourhood with respect to the subspace topology from $\R^2$, and hence neither  with respect to the stronger quotient topology on $|\Kk|$.
\end{example}

\begin{rmk}\label{rmk:Khomeo}\rm  
For the Kuranishi atlas in Example~\ref{ex:Khomeo} there exists an exhausting sequence $\ov{\Aa^n}\subset \ov{\Aa^{n+1}}$ of closed subsets of $\bigcup_{I\in \Ii_\Kk} U_I$ with the properties
\begin{itemize}
\item  
each $\pi_\Kk(\ov{\Aa^n})$ contains $\iota_\Kk(X)$;
\item  
each $\pi_\Kk(\ov{\Aa^n})\subset |\Kk|$ is metrizable and locally compact in the subspace topology;
\item 
$\bigcup_{n\in\N} \ov{\Aa^n} = \bigcup_{I\in \Ii_\Kk} U_I$.
\end{itemize}
For example, we can take $\ov{\Aa^n}$ to be the disjoint union of the closed sets 
$$
\ov{A_1^n}= [-n,n]\subset U_1, \qquad 
\ov{A_{2}^n} : = \{(x,y)\in U_2 \,\big|\,  x \geq \tfrac 1n, |y| \leq  n\},
$$
and any closed subset $\ov{A_{12}^n} \subset \ov{A_2^n}$.
However, in the limit $[(0,0)]$ becomes a ``bad point'' because its neighbourhoods have to involve open subsets of $U_2$.  

In fact, if we altered Example~\ref{ex:Khomeo} to a Kuranishi atlas for the compact space $X=S^1$, then we could choose $\ov{\Aa^n}$ compact, so that the subspace and quotient topologies on  $\pi_\Kk(\ov{\Aa^n})$
coincide by Proposition~\ref{prop:Ktopl1}~(ii). We emphasize the subspace topology above because that is the one inherited by (open) subsets of $\ov{\Aa^n}$.  For example, the quotient topology on $\pi_\Kk(\Aa^n)$, where $\Aa^n: = \bigcup_I {\rm int}(\ov{A_I^n})$ has the same bad properties at $[(\frac 1n,0)]$ as the quotient topology on $|\Kk|$ has at $[(0,0)]$, while the subspace topology on $\pi_\Kk(\Aa_n)$ is metrizable.
 We prove in Proposition~\ref{prop:Ktopl1} that a similar statement holds for all $\Kk$,
 though there we only consider a fixed set $\ov\Aa$ since we have no need for an exhaustion of the domains.
 \end{rmk}

We end by comparing our choice of definition with the notions of Kuranishi structures in the current literature.

\begin{rmk}\label{rmk:otherK}\rm
(i)
We defined the notion of a Kuranishi atlas so that it is relatively easy to construct from an equivariant Fredholm section. The only condition that is difficult to satisfy is the cocycle condition since that involves making compatible choices of all the domains $U_{IJ}$.
However, we show in Theorem~\ref{thm:K} that, provided the obstruction bundles satisfy
an additivity condition, one can always construct a Kuranishi atlas from a tuple of charts and coordinate changes that satisfy the weak cocycle condition in Definition~\ref{def:cocycle}, which is much easier to satisfy in practice.
The additivity condition is also naturally satisfied by the sum constructions for finite dimensional reductions of holomorphic curve moduli spaces in e.g.\ \cite{FO} and Section~\ref{s:construct}.
\MS

\NI (ii)
A Kuranishi structure in the sense of \cite{FO,J} is given in terms of germs of charts at every point of $X$ and some set of coordinate changes.
While this is a natural idea, we were not able to find a meaningful notion of compatible coordinate changes; see the discussion in Section~\ref{ss:alg}.
Recently, there seems to be a general understanding in the field that  explicit charts and coordinate changes are needed.
\MS

\NI (iii)
A Kuranishi structure in the sense of \cite[App.~A]{FOOO} consists of a Kuranishi chart $\bK_p$ at every point $p\in X$ and coordinate changes $\bK_q|_{U_{qp}}\to \bK_p$
whenever $q\in F_p$, and requires the weak cocycle condition.
The idea from \cite{FO} for constructing such a Kuranishi structure also starts with a finite covering family of basic charts $(\bK_i)$. Then the chart at $p$ is obtained by a sum construction from the charts $\bK_i$ with $p\in F_i$.
We outline in Section~\ref{s:construct} how  the analytic aspects of this sum construction can be made rigorous in the case of genus zero Gromov--Witten moduli spaces. Moreover, the construction of the sum charts and coordinate changes needs to be essentially canonical in order to achieve even the weak cocycle condition.

In the case of trivial isotropy, an abstract weak Kuranishi atlas in the sense of Definition~\ref{def:Ku2} induces a Kuranishi structure in the sense of \cite[App.~A]{FOOO} as follows.
Given a covering family of basic charts $(\bK_i)_{i=1,\dots,N}$ with footprints $F_i$ and transition data $(\bK_I,\Hat\Phi_{IJ})$, choose a family of compact subsets $C_i\subset F_i$ that also cover $X$.
Then for any $p\in X$ one obtains a Kuranishi chart $\bK_p$ by choosing a restriction of $\bK_{I_p}$ to $F_p:=\cap_{i\in I_p} F_i\less \cup_{i\notin I_p} C_i$, where $I_p: =  \{i \,|\, p\in C_i\}$.
This construction guarantees that for $q\in F_p$ we have $I_q\subset I_p$  and thus can restrict the coordinate change $\Hat\Phi_{I_q I_p}$ to a coordinate change from $\bK_q$ to $\bK_p$. 
The weak cocycle condition is preserved by these choices. 
Note however that neither this notion of a Kuranishi structure nor a weak Kuranishi atlas is sufficient for our approach to the construction of a VMC. 
(We start from a weak Kuranishi atlas with extra additivity condition as in (i). This allows us to achieve the strong cocycle condition and a tameness property by a shrinking procedure. The latter are crucial to achieve compactness and Hausdorff properties of perturbed solution spaces.)

Essentially, Fukaya et al use the same approach for constructing a Kuranishi structure.
However, instead of formulating the notion of a (weak) atlas, they first work on the level of the infinite family of charts $(\bK_p)_{p\in X}$ and only later rebuild a finite covering by ``orbifold charts'' to form a ``good coordinate system".   There is some justification for this approach when there is isotropy, since in this case the notion of a weak Kuranishi atlas, although very natural, involves some new ideas such as coverings involved in the coordinate changes, see~\cite{MW:ku2}. However, when there is no isotropy it seems cleaner to work directly with the charts in the finite covering family rather than passing to the uncountably many small charts $\bK_p$.   

\MS
\NI (iv)
Some recent work uses the notion of  ``good coordinate system" from \cite{FO,FOOO,J} instead of a Kuranishi structure, which is introduced there as intermediate step in the construction of a VMC. 
The  
early versions 
of this notion had serious problems since the proof of existence
(in \cite[Lemma 6.3]{FO}) is based on notions of germs and does not address 
the cocycle condition. 
Moreover, it required a totally ordered set of charts but does not clarify the relationship between order and overlaps.
In its most recent version in~\cite{FOOO12} (and in the case when there is no isotropy), a good coordinate system requires a finite cover of $X$ by a partially ordered set of charts $(\bK_I)_{I\in\Pp}$ and coordinate changes $\bK_I \to \bK_J$ for $I\leq J$, where the order is compatible with the overlaps of the footprints in the sense that $F_I\cap F_J\ne \emptyset$ implies $I\leq J$ or $J\leq I$.
Moreover, a complicated set of extra conditions must be satisfied  to ensure that the resulting quotient space is well behaved.  The arguments for the existence of a good coordinate system are also very complicated because they must deal with two problems at once: In the language used here, the resulting 
type of atlas is in particular required to be both tame and reduced.  

In our approach these questions are separated in order to clarify exactly what choices and constructions are  needed. Thus we first establish tameness and then tackle the problem of reduction.
In  the case of trivial isotropy, after constructing a Kuranishi atlas $\Kk$ with additivity and 
the strong cocycle condition, we then refine the cover to obtain data with the most important properties of  a ``good coordinate system" as in \cite{FOOO12}.
More precisely, we construct in Proposition~\ref{prop:red} a Kuranishi atlas $\Kk^\Vv$, with basic charts $\bK^\Vv_I$ for $I\in\Ii_\Kk$ given by restriction of the charts in $\Kk$ to precompact subsets of the domain, such that the overlaps of footprints are compatible with the partial ordering by the inclusion relation on $\Ii_\Kk$.
We will show that the realization $|\Kk^\Vv|$ injects continuously into $|\Kk|$ and inherits the Hausdorff property from $|\Kk|$ as well as the homeomorphism property of the natural maps $U^\Vv_C\to |\Kk^\Vv|$ from the domain of each Kuranishi chart in $\Kk^\Vv$.
Here the advantage of constructing $\Kk^\Vv$ via $\Kk$ is that $\Kk$ has fewer basic charts and coordinate changes, each with large domain, which makes it relatively easy to analyze the properties of its realization $|\Kk|$. On the other hand, $\Kk^\Vv$ has smaller domains but many more coordinate changes, which makes it hard to deduce properties such as Hausdorffness directly. 
However, good topological properties transfer from $\Kk$ to $\Kk^\Vv$ because its coordinate changes are given by restriction from $\Kk$, hence are not independent of each other.
In fact, it turns out to be easier and perhaps more natural to deal with an associated subcategory $\bB_\Kk|_\Vv$ of $\bB_\Kk$, rather than with the Kuranishi atlas $\Kk^\Vv$ itself; cf.\ Definition~\ref{def:vicin}.
\end{rmk}

\subsection{Additivity, Tameness and the Hausdorff property}
\label{ss:tame}   \hspace{1mm}\\ \vspace{-3mm}

We begin by introducing the notion of an additive weak Kuranishi atlas, which can be constructed in practice on compactified holomorphic curve moduli spaces, as outlined in Theorem~A and Section~\ref{s:construct}.
In contrast, we then introduce tameness conditions for Kuranishi atlases that imply the Hausdorff property of the Kuranishi neighbourhood. Finally, we provide tools for refining Kuranishi atlases
to achieve the tameness condition.

\begin{defn}\label{def:Kwk}
A {\bf weak Kuranishi atlas of dimension $\mathbf d$} is a covering family of basic charts of dimension $d$ with transition data $\Kk=\bigl(\bK_I,\Hat\Phi_{I J}\bigr)_{I, J\in\Ii_\Kk, I\subsetneq J}$ as in Definition \ref{def:Kfamily}, that satisfy the {\it weak cocycle condition} $\Hat\Phi_{J K}\circ \Hat\Phi_{I J} \approx \Hat\Phi_{I K}$ for every triple $I,J,K\in\Ii_K$ with $I\subsetneq J \subsetneq K$.
\end{defn}

This weaker notion of Kuranishi atlas is crucial for two reasons. Firstly, in the application to moduli spaces of holomorphic curves, it is not clear how to construct Kuranishi atlases that satisfy the cocycle condition.
Secondly, it is hard to preserve the cocycle condition while manipulating Kuranishi atlases, for example by shrinking as we do below.
Note that if $\Kk$ is only a weak Kuranishi atlas then we cannot define its domain category $\bB_{\Kk}$ precisely as in Definition~\ref{def:catKu} since the given set of morphisms is not closed under composition.  We will deal with this by simply not considering this category unless $\Kk$ is a Kuranishi atlas, i.e.\ satisfies the standard cocycle condition \eqref{eq:cocycle}.

On the other hand, the constructions of transition data in practice, e.g.\ in Section~\ref{s:construct}, use a sum construction for basic charts, which has the effect of adding the obstruction bundles, and thus yields the following additivity property. Here we simplify the notation by writing $\Hat\Phi_{i I}:= \Hat\Phi_{\{i\} I}$ for the coordinate change $\bK_i =\bK_{\{i\}} \to \bK_I$ where $i\in I$.

\begin{defn}\label{def:Ku2}  
Let $\Kk$ be a weak Kuranishi atlas.  We say that $\Kk$ is {\bf additive} if for each $I\in \Ii_\Kk$ the linear embeddings $\Hat\phi_{i I}:E_i \to E_I$ induce an isomorphism
$$
{\textstyle \prod_{i\in I}} \;\Hat\phi_{iI}: \; {\textstyle \prod_{i\in I}} \; E_i \;\stackrel{\cong}\longrightarrow \; E_I  ,
\qquad\text{or equivalently} \qquad
E_I = {\textstyle \bigoplus_{i\in I}} \; \Hat\phi_{iI}(E_i) .
$$
In this case we abbreviate notation by $s_J^{-1}(E_I): = s_J^{-1}\bigl(\Hat\phi_{IJ}(E_I)\bigr)$ and $s_J^{-1}(E_\emptyset) := s_J^{-1}(0)$.
\end{defn}

The additivity property is useful since it extends the automatic control of transition maps on the
zero sets $s_J^{-1}(0)$ to a weaker control on larger parts of the Kuranishi domains $U_J$ as follows.

\begin{lemma}
Let $\Kk$ be an additive weak Kuranishi atlas. Then for any $H,I,J \in \Ii_\Kk$ with $H,I\subset J$ we have
\begin{align} \label{eq:addd}
\Hat\phi_{IJ}(E_I)  \; \cap \; \Hat\phi_{HJ}(E_H) &\;=\;  \Hat\phi_{(I\cap H) J}(E_{(I\cap H) J}) , \\
\label{eq:CIJ}
s_J^{-1}(E_I) \;\cap\; s_J^{-1}(E_H) &\;=\;
s_J^{-1}(E_{I\cap H}) .
\end{align}
In particular, we deduce
\begin{equation}\label{eq:CIJ0}
H\cup I\subset J, \;
I\cap H=\emptyset \quad \Longrightarrow \quad
s_J^{-1}(E_I)\cap s_J^{-1}(E_H)  = s_J^{-1}(0) .
\end{equation}
\end{lemma}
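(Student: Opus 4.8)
The statement asserts two facts about an additive weak Kuranishi atlas: first the equality \eqref{eq:addd} of the subspaces $\Hat\phi_{IJ}(E_I)$ and $\Hat\phi_{HJ}(E_H)$ with $\Hat\phi_{(I\cap H)J}(E_{(I\cap H)J})$ inside $E_J$, and second the corresponding equality \eqref{eq:CIJ} of the preimages under $s_J$. The key input is the additivity hypothesis, which provides the splitting $E_J = \bigoplus_{j\in J}\Hat\phi_{jJ}(E_j)$, together with the compatibility $\Hat\phi_{JK}\circ\Hat\phi_{IJ}=\Hat\phi_{IK}$ of the linear embeddings (which for the weak atlas holds exactly, not just up to overlap, since the linear parts are unconditionally required to compose). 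So the plan is: first prove \eqref{eq:addd} by a direct computation with the direct sum decomposition; then deduce \eqref{eq:CIJ} by applying $s_J^{-1}$; then specialize to $I\cap H=\emptyset$ to get \eqref{eq:CIJ0}, using the convention $s_J^{-1}(E_\emptyset)=s_J^{-1}(0)$.

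\textbf{Step 1: the linear identity \eqref{eq:addd}.} For any subset $K\subset J$ with $K\in\Ii_\Kk$, the composition property of the linear embeddings gives $\Hat\phi_{KJ}(E_K)=\sum_{k\in K}\Hat\phi_{KJ}\bigl(\Hat\phi_{kK}(E_k)\bigr)=\sum_{k\in K}\Hat\phi_{kJ}(E_k)=\bigoplus_{k\in K}\Hat\phi_{kJ}(E_k)$, the last equality because this is a sub-sum of the direct sum $E_J=\bigoplus_{j\in J}\Hat\phi_{jJ}(E_j)$. (One must check $K=I\cap H$ indeed lies in $\Ii_\Kk$; this is automatic since $F_{I\cap H}\supset F_I\cap F_H\ne\emptyset$ as $I,H\in\Ii_\Kk$ with a common refinement $J$, or rather since $F_I\cap F_H\supset F_J\ne\emptyset$.) Now with $V_j:=\Hat\phi_{jJ}(E_j)$ one has $\Hat\phi_{IJ}(E_I)=\bigoplus_{i\in I}V_i$ and $\Hat\phi_{HJ}(E_H)=\bigoplus_{h\in H}V_h$ as subspaces of $\bigoplus_{j\in J}V_j$; the intersection of two such "coordinate subspaces" of a direct sum is the coordinate subspace indexed by the intersection of the index sets, i.e.\ $\bigoplus_{k\in I\cap H}V_k=\Hat\phi_{(I\cap H)J}(E_{I\cap H})$. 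This is the only place additivity is essential, and it is the crux of the argument; everything else is formal. I would write the one-line justification that $(\bigoplus_{i\in I}V_i)\cap(\bigoplus_{h\in H}V_h)=\bigoplus_{k\in I\cap H}V_k$ explicitly by decomposing an arbitrary element of the intersection along the $V_j$.

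\textbf{Step 2: passing to preimages, and the empty-intersection case.} Intersecting preimages under the single map $s_J:U_J\to E_J$ is elementary: $s_J^{-1}(A)\cap s_J^{-1}(B)=s_J^{-1}(A\cap B)$ for any subsets $A,B\subset E_J$. Applying this with $A=\Hat\phi_{IJ}(E_I)$, $B=\Hat\phi_{HJ}(E_H)$ and using Step 1 gives $s_J^{-1}(E_I)\cap s_J^{-1}(E_H)=s_J^{-1}\bigl(\Hat\phi_{(I\cap H)J}(E_{I\cap H})\bigr)=s_J^{-1}(E_{I\cap H})$, which is \eqref{eq:CIJ}. Finally, for \eqref{eq:CIJ0}, when $I\cap H=\emptyset$ the coordinate subspaces $\bigoplus_{i\in I}V_i$ and $\bigoplus_{h\in H}V_h$ meet only in $\{0\}$, so $s_J^{-1}(E_I)\cap s_J^{-1}(E_H)=s_J^{-1}(0)=s_J^{-1}(E_\emptyset)$ by the stated abbreviation.

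\textbf{Expected main obstacle.} There is no serious analytic or topological difficulty here — the proof is pure linear algebra once additivity is invoked. The only thing requiring a moment of care is bookkeeping: making sure that all index sets appearing ($I\cap H$, and the singletons) are genuinely in $\Ii_\Kk$ so that the charts $\bK_{I\cap H}$ and the coordinate changes $\Hat\Phi_{(I\cap H)J}$ are defined, and that the composition identity $\Hat\phi_{KJ}\circ\Hat\phi_{kK}=\Hat\phi_{kJ}$ is being applied in the correct range (it holds for the linear embeddings unconditionally in a weak atlas). I would state these verifications briefly rather than belabour them.
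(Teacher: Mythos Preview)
Your proposal is correct and follows essentially the same route as the paper: use additivity and the linear cocycle condition to write $\Hat\phi_{KJ}(E_K)=\bigoplus_{k\in K}\Hat\phi_{kJ}(E_k)$ for $K=I,H,I\cap H$, intersect these coordinate subspaces inside the direct sum $E_J=\bigoplus_{j\in J}\Hat\phi_{jJ}(E_j)$, then apply $s_J^{-1}$ and specialize to $I\cap H=\emptyset$. Your additional bookkeeping remarks (that $I\cap H\in\Ii_\Kk$ since $F_{I\cap H}\supset F_J\ne\emptyset$, and that the linear cocycle identity holds unconditionally in a weak atlas) are correct and make the argument slightly more self-contained than the paper's version.
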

\begin{proof}
Generally, for $H,I\subset J$ we have a direct sum ${\textstyle\bigoplus_{i\in I\cup H}}\,\Hat\phi_{iI}(E_i) \subset E_J$ and hence
\begin{align*}
\Hat\phi_{IJ}(E_I)\cap \Hat\phi_{HJ}(E_H)
&=
\Hat\phi_{IJ}\Bigl({\textstyle\bigoplus_{i\in I}}\,\Hat\phi_{iI}(E_i)\Bigr)\cap \Hat\phi_{HJ}\Bigl({\textstyle\bigoplus_{i\in H}}\,\Hat\phi_{iH}(E_i)\Bigr)  \\
&=  \Bigl({\textstyle\bigoplus_{i\in I}}\,\Hat\phi_{iJ}(E_i)\Bigr)\cap \Bigl({\textstyle\bigoplus_{i\in H}}\,\Hat\phi_{iJ}(E_i)\Bigr)  \\
&={\textstyle\bigoplus_{i\in I\cap H}}\,\Hat\phi_{iJ}(E_i)
= \Hat\phi_{(I\cap H) J} \Bigl( {\textstyle\bigoplus_{i\in I\cap H}}\,\Hat\phi_{i (I\cap H)}(E_i) \Bigr) \\
& = \Hat\phi_{(I\cap H) J} (E_{I\cap H})   .
\end{align*}
This proves \eqref{eq:addd}.
Applying $s_J^{-1}$ to both sides and recalling our abbreviations then implies \eqref{eq:CIJ}.
If moreover $I\cap H = \emptyset$ then $E_{I\cap H}=E_\emptyset = \{0\}$, which implies
\eqref{eq:CIJ0}.
\end{proof}

Before stating the main theorem, we introduce 
a notion of metrics on Kuranishi atlases
that will be useful in the construction of 
perturbations in Section~\ref{ss:const}.

\begin{defn}\label{def:metric}  
A Kuranishi atlas $\Kk$ is said to be {\bf metrizable} if there is a bounded metric $d$ on the set $|\Kk|$ such that for each $I\in \Ii_\Kk$ the pullback metric $d_I:=(\pi_\Kk|_{U_I})^*d$ on $U_I$ induces the given topology on the manifold $U_I$.
In this situation we call $d$ an {\bf admissible metric} on $|\Kk|$. 
A {\bf metric Kuranishi atlas} is a pair $(\Kk,d)$ consisting of a metrizable Kuranishi atlas together with a choice of  admissible metric $d$.
For a metric Kuranishi atlas, we denote the $\de$-neighbourhoods of subsets $Q\subset |\Kk|$ resp.\ $A\subset U_I$ for $\de>0$ by
\begin{align*}
B_\de(Q) &\,:=\; \bigl\{w\in |\Kk|\ | \ \exists q\in Q : d(w,q)<\de \bigr\}, \\
B^I_\de(A) &\,:=\; \bigl\{x\in U_I\ | \ \exists a\in A : d_I(x,a)<\de \bigr\}.
\end{align*}
\end{defn}

We next show that if $d$ is an admissible metric on $|\Kk|$, then the metric topology on $|\Kk|$ is weaker (has fewer open sets) than the quotient topology, which generally is not metrizable by Example~\ref{ex:Khomeo}.

\begin{lemma}\label{le:metric}  
Suppose that $d$ is an admissible metric on the virtual neighbourhood $|\Kk|$ of a Kuranishi atlas $\Kk$.
Then the following holds.
\begin{enumerate}
\item
The identity $\id_{|\Kk|} :|\Kk| \to (|\Kk|,d)$ is continuous as a map from the quotient topology to the metric topology on $|\Kk|$.
\item
In particular, each set $B_\de(Q)$ is open in the quotient topology on $|\Kk|$, so that the 
existence of an admissible metric implies that
$|\Kk|$ is Hausdorff.
\item 
The embeddings $\phi_{IJ}$ that are part of the coordinate changes 
for $I\subsetneq J\in\Ii_\Kk$
are isometries when considered as maps $(U_{IJ},d_I)\to (U_J,d_J)$.
\end{enumerate}
\end{lemma}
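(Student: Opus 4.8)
The plan is to deduce all three parts directly from the defining property of an admissible metric $d$, namely that for each $I\in\Ii_\Kk$ the pullback $d_I := (\pi_\Kk|_{U_I})^*d$ induces the given manifold topology on $U_I$, together with the tautology $d\bigl(\pi_\Kk(x),\pi_\Kk(x')\bigr) = d_I(x,x')$ valid for all $x,x'\in U_I$.

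For (i), since $\id_{|\Kk|}$ is the identity on underlying sets, continuity from the quotient topology to the metric topology amounts to showing that every $d$-open subset of $|\Kk|$ is open in the quotient topology. As the open balls $B_\de(w)$, $w\in|\Kk|$, $\de>0$, form a basis for the metric topology and arbitrary unions of quotient-open sets are quotient-open, it suffices to prove that each $B_\de(w)$ is quotient-open. By definition of the quotient topology this reduces to showing $(\pi_\Kk|_{U_I})^{-1}(B_\de(w))$ is open in $U_I$ for every $I$, and since $d_I$ reproduces the manifold topology it is enough to check this preimage is $d_I$-open. This is just the triangle inequality: if $x$ lies in the preimage then $d(\pi_\Kk(x),w)<\de$, and any $x'\in U_I$ with $d_I(x,x')<\de-d(\pi_\Kk(x),w)$ satisfies $d(\pi_\Kk(x'),w)\le d_I(x',x)+d(\pi_\Kk(x),w)<\de$, so the open $d_I$-ball of that radius about $x$ is contained in the preimage.

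For (ii), the openness of $B_\de(Q)=\bigcup_{q\in Q}B_\de(q)$ in the quotient topology is then immediate from (i) and stability of openness under unions. For the Hausdorff property, given $w_1\ne w_2$ set $r:=d(w_1,w_2)>0$; then $B_{r/2}(w_1)$ and $B_{r/2}(w_2)$ are disjoint (again by the triangle inequality) quotient-open neighbourhoods of $w_1$ and $w_2$, so $|\Kk|$ is Hausdorff. For (iii), I would use that for $x\in U_{IJ}$ the coordinate change $\Hat\Phi_{IJ}$ contributes the morphism $(I,J,x)\in\Mor_{\bB_\Kk}$ from $(I,x)$ to $(J,\phi_{IJ}(x))$, whence $(I,x)\sim(J,\phi_{IJ}(x))$ and therefore $\pi_\Kk(x)=\pi_\Kk(\phi_{IJ}(x))$ in $|\Kk|$ (with $x$ regarded in $U_I$ on the left and $\phi_{IJ}(x)$ in $U_J$ on the right). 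Consequently, for all $x,y\in U_{IJ}$,
\[
d_I(x,y) \;=\; d\bigl(\pi_\Kk(x),\pi_\Kk(y)\bigr) \;=\; d\bigl(\pi_\Kk(\phi_{IJ}(x)),\pi_\Kk(\phi_{IJ}(y))\bigr) \;=\; d_J\bigl(\phi_{IJ}(x),\phi_{IJ}(y)\bigr),
\]
which is precisely the assertion that $\phi_{IJ}\colon(U_{IJ},d_I)\to(U_J,d_J)$ is an isometry.

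The argument is entirely formal and I do not anticipate a real obstacle; the only point demanding care is the chart-by-chart test for openness in (i), where it is essential to invoke admissibility to pass from ``open in the manifold $U_I$'' to ``$d_I$-open'' --- without this hypothesis the $d$-balls need not be open in the quotient topology, as the non-metrizability phenomenon in Example~\ref{ex:Khomeo} illustrates.
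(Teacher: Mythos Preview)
Your proof is correct and follows essentially the same approach as the paper: both verify that metric balls are quotient-open by checking that their $\pi_\Kk$-preimages in each $U_I$ are $d_I$-open via the triangle inequality, and both deduce (iii) from the fact that $\pi_\Kk\circ\phi_{IJ}=\pi_\Kk$ on $U_{IJ}$. Your version is slightly more explicit in spelling out the isometry computation for (iii), which the paper simply calls ``immediate from the construction.''
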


\begin{proof} 
Since the neighbourhoods of the form $B_\de(Q)$ define the metric topology, it suffices to prove that these are also open in the quotient topology, i.e.\ that each subset $U_I\cap \pi_\Kk^{-1}(B_\de(Q))$ is open in $U_I$. 
So consider $x\in U_I$ with $\pi_\Kk(x)\in B_\de(Q)$. By hypothesis there is $q\in Q$ and $\eps>0$ such that $d(\pi_\Kk(x),q)<\de-\eps$, and compatibility of metrics and the triangle inequality then imply the inclusion $\pi_\Kk(B^I_\eps(x))\subset B_\eps(Q)\subset B_\de(Q)$.
Thus $B^I_\eps(x)$ is a neighbourhood of $x\in U_I$ contained in $U_I\cap \pi_\Kk^{-1}(B_\de(Q))$. 
This proves the openness required for (i) 
and (ii).
Since every metric space is Hausdorff, $|\Kk|$ is therefore Hausdorff in the quotient topology as stated in (ii).   Claim (iii) is immediate
from the construction.
\end{proof}

One might hope to achieve the Hausdorff property by constructing an admissible metric, but the existence of the latter is highly nontrivial. Instead, in a refinement process that will take up the next two sections, we will first construct a Kuranishi atlas whose virtual neighbourhood has the Hausdorff property, then prove metrizability of certain subspaces, and finally obtain an admissible metric by pullback to a further refined Kuranishi atlas. 
This process will prove the following theorem whose formulation uses the notions of shrinking from Definition~\ref{def:shr}, tameness from Definition~\ref{def:tame}, and cobordism from Definition~\ref{def:Kcobord}.
The formulation below is somewhat informal; more precise statements may be found in the results quoted in its proof.

\begin{thm}\label{thm:K}
Let $\Kk$ be an additive weak Kuranishi atlas on a compact metrizable space $X$.
Then an appropriate shrinking of $\Kk$ provides a  metrizable tame Kuranishi atlas $\Kk'$ with domains 
$(U'_I\subset U_I)_{I\in\Ii_{\Kk'}}$ such that the realizations $|\Kk'|$ and $|\bE_{\Kk'}|$ are Hausdorff in the quotient topology.
In addition, for each $I\in \Ii_{\Kk'} = \Ii_\Kk$ the projection maps $\pi_{\Kk'}: U_I'\to |\Kk'|$ and
$\pi_{\Kk'}:U'_I\times E_I\to |\bE_{\Kk'}|$ are homeomorphisms onto their images and fit into a commutative diagram
$$
\begin{array}{ccc}
U_I'\times E_I & \stackrel{\pi_{\Kk'}}\longhookrightarrow & |\bE_{\Kk'}|  \quad \\
 \downarrow & & \;\; \downarrow \scriptstyle |\pr_{\Kk'}| \\
U_I' &
\stackrel{\pi_{\Kk'}} \longhookrightarrow  &|\Kk'| \quad
\end{array}
$$
where the horizontal maps intertwine the vector space structure on $E_I$ with a vector space structure on the fibers of $|\pr_{\Kk'}|$.

Moreover, any two such shrinkings are cobordant by a metrizable tame Kuranishi cobordism whose realization also has the above Hausdorff, homeomorphism, and linearity properties.
\end{thm}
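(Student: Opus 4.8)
The plan is to build the required shrinking of $\Kk$ in three stages, each of which tightens control over the domains and coordinate changes, and then to run the same sequence of constructions relative to $[0,1]$ in order to obtain the cobordism statement. First I would invoke the existence of a \emph{tame} shrinking: starting from the additive weak Kuranishi atlas $\Kk$, iteratively choose precompact open subsets $U'_I\sqsubset U_I$ and $U'_{IJ}\sqsubset U_{IJ}$, using Lemma~\ref{le:restr0} and Lemma~\ref{le:restrchange} to maintain the footprint conditions $U'_I\cap s_I^{-1}(0)=\psi_I^{-1}(F_I)$ and $\psi_I(s_I^{-1}(0)\cap U'_{IJ})=F_I\cap F_J$. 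The iteration has to be ordered so that, when one shrinks $U_{IJ}$ to impose equality $\phi_{IJ}^{-1}(U'_{JK})=U'_{IK}$ (the strong cocycle condition) and the further tameness identities of Definition~\ref{def:tame} (involving $s_J^{-1}(E_I)$ and the additivity relations \eqref{eq:addd}, \eqref{eq:CIJ}), these constraints propagate consistently across the finitely many index sets $I\in\Ii_\Kk$; finiteness of $\Ii_\Kk$ is exactly what makes this iteration terminate, in contrast to the germ-based approach criticized in Section~\ref{ss:alg}. Additivity is used precisely here, via \eqref{eq:CIJ0}, to control how images of different $\phi_{HJ}, \phi_{IJ}$ meet inside $U_J$.

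The second stage is to deduce from tameness the Hausdorff and homeomorphism properties of the realizations. With the strong cocycle condition the category $\bB_{\Kk'}$ is well defined (Definition~\ref{def:catKu}, Lemma~\ref{le:Kcat}), and tameness gives enough rigidity on the morphism spaces (the ``great control'' promised in Section~\ref{ss:tame}, to be proved in the lemma referenced there as Lemma~\ref{le:Ku2}) that $|\Kk'|$ is Hausdorff and each $\pi_{\Kk'}\colon U'_I\to|\Kk'|$ is a homeomorphism onto its image; this is the content of Proposition~\ref{prop:Khomeo}. Additivity plus tameness then gives the fiberwise linear structure on $|\pr_{\Kk'}|\colon|\bE_{\Kk'}|\to|\Kk'|$ (Proposition~\ref{prop:linear}), which makes the displayed commutative square in the statement meaningful and shows the horizontal maps intertwine the vector space structures; Hausdorffness of $|\bE_{\Kk'}|$ follows by the same argument applied to the bundle category. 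The third stage is metrizability: once $|\Kk'|$ is Hausdorff one first shows that precompact pieces $\pi_{\Kk'}(\ov\Aa)$ carry metrizable subspace topologies (Proposition~\ref{prop:Ktopl1}), and then one performs a \emph{further} shrinking to a Kuranishi atlas whose domains are precompact in those of $\Kk'$, pulls back a metric from a chosen metrization of such a precompact piece, and checks via Lemma~\ref{le:metric} that the pullbacks $d_I$ recover the manifold topologies on the $U'_I$ and that the $\phi_{IJ}$ become isometries. This yields the metric (hence metrizable) tame Kuranishi atlas asserted.

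For the final sentence — uniqueness up to cobordism — the plan is to apply all of the above in the relative setting of a Kuranishi atlas on $X\times[0,1]$. Given two shrinkings $\Kk'_0,\Kk'_1$ of $\Kk$ as produced above, one forms a weak Kuranishi cobordism (in the sense of Definition~\ref{def:Kcobord}) interpolating between them — this is essentially the product atlas $\Kk\times[0,1]$ with its boundary restrictions arranged to equal $\Kk'_0$ and $\Kk'_1$, which works because both are shrinkings of the \emph{same} $\Kk$ — and then repeats the tameness shrinking, the Hausdorff/homeomorphism/linearity arguments, and the metrization, now keeping the already-fixed data over $X\times\{0,1\}$ unchanged. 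The shrinking and metrization constructions must therefore be carried out in a collar-compatible way so that they do not disturb the boundary; this is the relative version of Propositions~\ref{prop:proper} and~\ref{prop:cobord2}.

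\textbf{Main obstacle.} The hard part will be the ordered iterative shrinking in the first stage: imposing the strong cocycle condition together with the full list of tameness identities simultaneously for all $I\in\Ii_\Kk$ requires choosing the domains $U'_I$ and $U'_{IJ}$ so that shrinking one does not destroy a previously arranged identity for an overlapping index set. Managing this bookkeeping — deciding the order in which index sets are treated (presumably by decreasing or increasing cardinality), tracking which intersections $s_J^{-1}(E_I)$ must be kept disjoint from which (using \eqref{eq:CIJ0}), and verifying that after finitely many passes all the equalities of Definition~\ref{def:tame} hold at once while the footprint conditions survive — is the technical heart of the matter, and is exactly where additivity and the finiteness of the covering are essential. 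The relative version over $X\times[0,1]$ then compounds this difficulty with the need to respect the boundary collar, but introduces no conceptually new ingredient beyond carrying an extra parameter through the same induction.
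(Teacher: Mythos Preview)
Your proposal is correct and follows essentially the same route as the paper: the proof there is precisely a roadmap citing Proposition~\ref{prop:proper} for the tame shrinking (iterating by increasing $|I|$, with Lemma~\ref{le:set} handling the bookkeeping you flag as the main obstacle), Proposition~\ref{prop:Khomeo} and Proposition~\ref{prop:linear} for the Hausdorff, homeomorphism, and linearity properties of the realization, Proposition~\ref{prop:metric} for metrizability via a further preshrunk tame shrinking as you describe, and Proposition~\ref{prop:cobord2} together with Lemma~\ref{le:cob0} for the collar-compatible cobordism statement. One small correction: the footprint cover itself is first shrunk to $(F'_i)$ with $F'_i\sqsubset F_i$, so the footprint condition in your first stage should read $U'_I\cap s_I^{-1}(0)=\psi_I^{-1}(F'_I)$ rather than $\psi_I^{-1}(F_I)$.
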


\begin{proof} 
The key step is Proposition~\ref{prop:proper}, which establishes the existence of a tame shrinking. As we show in   Proposition~\ref{prop:metric}, the existence of a metric tame shrinking is an easy consequence. 
Uniqueness up to metrizable tame cobordism is proven in Proposition~\ref{prop:cobord2}. 
By Proposition~\ref{prop:Khomeo}, tameness implies the Hausdorff and homeomorphism properties.
The diagram commutes since it arises as the realization of commuting functors to $\pr_{\Kk'}:\bE_{\Kk'}\to\bB_{\Kk'}$, and we prove the linearity property in Proposition~\ref{prop:linear}.
Finally, the last statement follows from Lemma~\ref{le:cob0} where we show that the realization of every tame Kuranishi cobordism has the Hausdorff, homeomorphism, and linearity properties.
\end{proof}

The Hausdorff property for the 
Kuranishi neighbourhood $|\Kk|$ will require the following control of the domains of coordinate changes, which we will achieve in Section~\ref{ss:shrink} by a shrinking from an additive weak Kuranishi atlas.

\begin{defn}\label{def:tame}
A weak Kuranishi atlas is {\bf tame} if it is additive, and for all $I,J,K\in\Ii_\Kk$ we have
\begin{align}\label{eq:tame1}
U_{IJ}\cap U_{IK}&\;=\; U_{I (J\cup K)}\qquad\qquad\;\;\;\;\,\qquad\forall I\subset J,K ;\\
\label{eq:tame2}
\phi_{IJ}(U_{IK}) &\;=\; U_{JK}\cap s_J^{-1}\bigl(\Hat\phi_{IJ}(E_I)\bigr) \qquad\forall I\subset J\subset K.
\end{align}
Here we allow equalities, using the notation $U_{II}:=U_I$ and $\phi_{II}:={\rm Id}_{U_I}$.
Further, to allow for the possibility that $J\cup K\notin\Ii_\Kk$, we define
$U_{IL}:=\emptyset$ for $L\subset \{1,\ldots,N\}$ with $L\notin \Ii_\Kk$.
Therefore \eqref{eq:tame1} includes the condition
$$
U_{IJ}\cap U_{IK}\ne \emptyset
\quad \Longrightarrow \quad F_J\cap F_K \ne \emptyset  \qquad \bigl( \quad \Longleftrightarrow\quad
J\cup K\in \Ii_\Kk \quad\bigr).
$$
\end{defn}

The first tameness condition \eqref{eq:tame1} extends the identity for footprints $\psi_I^{-1}(F_J)\cap \psi_I^{-1}(F_K) = \psi_I^{-1}(F_{J\cup K})$ to the domains of the transition maps in $U_I$. In particular, with $J\subset K$ it implies nesting of the domains of the transition maps,
\begin{equation}\label{eq:tame4}
U_{IK}\subset U_{IJ} \qquad\forall I\subset J \subset K.
\end{equation}
The second tameness condition \eqref{eq:tame2} extends the control of transition maps between footprints and zero sets
$\phi_{IJ}(\psi_I^{-1}(F_K)) = \psi_J^{-1}(F_K) = U_{JK}\cap s_J^{-1}(0) $ to the Kuranishi domains.
In particular, with $J=K$ it controls the image of the transition maps,
\begin{equation}\label{eq:tame3}
\im\phi_{IJ}:= \phi_{IJ}(U_{IJ}) =  s_J^{-1}(\Hat\phi_{IJ}(E_I)) \qquad\forall I\subset J.
\end{equation}
This implies that the image of $\phi_{IJ}$ is a closed subset of $U_J$,
and is a much strengthened form of the  "infinitesimal tameness"
 $ \im\rd_y\phi_{IJ}=(\rd s_J)^{-1}\bigr(\Hat\phi_{IJ}(E_I)\bigl)$
provided at the points $y\in \im\phi_{IJ}$ by Definition \ref{def:change}.
The next lemma shows, somewhat generalized, that every tame weak Kuranishi atlas
in fact satisfies the strong cocycle condition, so in particular is a Kuranishi atlas.

\begin{lemma}\label{le:tame0}
Suppose that the weak Kuranishi atlas $\Kk$ satisfies the tameness conditions \eqref{eq:tame1}, \eqref{eq:tame2} for all $I,J,K\in\Ii_\Kk$ with $|I|\leq k$. Then for all $I\subset J\subset K$ with $|I|\leq k$ the strong cocycle condition \eqref{strong cocycle} is satisfied, i.e.\ $\phi_{JK}\circ \phi_{IJ}=\phi_{IK}$ with equality of domains
$$
U_{IJ}\cap \phi_{IJ}^{-1}(U_{JK}) = U_{IK} .
$$
\end{lemma}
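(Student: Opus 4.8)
\textbf{Proof plan for Lemma~\ref{le:tame0}.}
The plan is to prove the two assertions --- the pointwise identity $\phi_{JK}\circ\phi_{IJ}=\phi_{IK}$ and the equality of domains $U_{IJ}\cap\phi_{IJ}^{-1}(U_{JK})=U_{IK}$ --- separately, deducing the first from the second together with the weak cocycle condition already built into a weak Kuranishi atlas. The starting point is that $\Kk$ is a weak Kuranishi atlas, so $\Hat\Phi_{JK}\circ\Hat\Phi_{IJ}\approx\Hat\Phi_{IK}$, meaning the composed coordinate change and $\Hat\Phi_{IK}$ agree on the overlap $\phi_{IJ}^{-1}(U_{JK})\cap U_{IK}$ of their domains. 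Hence it suffices to prove the equality of domains; once that is in hand the weak cocycle condition immediately upgrades to $\phi_{JK}\circ\phi_{IJ}=\phi_{IK}$ on $U_{IK}$, and since by Lemma~\ref{le:cccomp} the domain of the composite is $\phi_{IJ}^{-1}(U_{JK})$, the domain equality is exactly the strong cocycle condition \eqref{strong cocycle}.

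First I would establish the inclusion $U_{IK}\subset U_{IJ}\cap\phi_{IJ}^{-1}(U_{JK})$. Since $I\subset K$ and $|I|\le k$, the nesting consequence \eqref{eq:tame4} of \eqref{eq:tame1} (applied with the triple $I\subset J\subset K$, all of whose index sets lie in $\Ii_\Kk$) gives $U_{IK}\subset U_{IJ}$. So I only need $\phi_{IJ}(U_{IK})\subset U_{JK}$. Here I would invoke the second tameness condition \eqref{eq:tame2}, which with the triple $I\subset J\subset K$ reads $\phi_{IJ}(U_{IK})=U_{JK}\cap s_J^{-1}(\Hat\phi_{IJ}(E_I))$; in particular $\phi_{IJ}(U_{IK})\subset U_{JK}$, giving $U_{IK}\subset\phi_{IJ}^{-1}(U_{JK})$ and hence the forward inclusion. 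Note this direction only needs \eqref{eq:tame2} for $|I|\le k$, which is hypothesized.

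For the reverse inclusion, take $x\in U_{IJ}$ with $\phi_{IJ}(x)\in U_{JK}$ and show $x\in U_{IK}$. Set $y:=\phi_{IJ}(x)\in U_{JK}\subset U_J$. Since $\Hat\Phi_{IJ}$ is a map of charts, $s_J(y)=s_J(\phi_{IJ}(x))=\Hat\phi_{IJ}(s_I(x))$, so $y\in s_J^{-1}(\Hat\phi_{IJ}(E_I))$; combined with $y\in U_{JK}$ this gives $y\in U_{JK}\cap s_J^{-1}(\Hat\phi_{IJ}(E_I))$, which by \eqref{eq:tame2} equals $\phi_{IJ}(U_{IK})$. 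Thus $y=\phi_{IJ}(x')$ for some $x'\in U_{IK}$, and since $\phi_{IJ}$ is an embedding hence injective, $x=x'\in U_{IK}$. This completes the domain equality, and then the weak cocycle condition yields the pointwise identity as explained above. I would also remark that the generalization to ``$|I|\le k$'' is automatic: every triple $I\subset J\subset K$ used in the argument has $|I|\le k$, and the tameness identities \eqref{eq:tame1}, \eqref{eq:tame2} are invoked only for such triples.

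The main obstacle here is essentially bookkeeping rather than a deep difficulty: one must be careful that \eqref{eq:tame2} is being applied to a genuine triple $I\subset J\subset K$ in $\Ii_\Kk$ (which it is, since $\phi_{IJ}(x)\in U_{JK}$ forces $U_{JK}\ne\emptyset$ and hence $J\cup K=K\in\Ii_\Kk$, while $U_{IJ}\ne\emptyset$ forces $J\in\Ii_\Kk$), and that the identification $\phi_{IJ}(U_{IK})=U_{JK}\cap s_J^{-1}(\Hat\phi_{IJ}(E_I))$ is used in both directions. A minor subtlety worth flagging in the write-up is the degenerate cases $I=J$ or $J=K$, handled by the conventions $U_{II}:=U_I$, $\phi_{II}:=\mathrm{id}$, under which both tameness conditions and the claimed identity hold trivially.
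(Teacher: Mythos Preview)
Your proof is correct and follows essentially the same approach as the paper: both establish the domain equality from \eqref{eq:tame2} (together with the nesting \eqref{eq:tame4} from \eqref{eq:tame1}) and then invoke the weak cocycle condition to upgrade to the strong one. The paper compresses your two inclusions into the single chain $\phi_{IJ}(U_{IK})=U_{JK}\cap s_J^{-1}(\Hat\phi_{IJ}(E_I))=U_{JK}\cap\phi_{IJ}(U_{IJ})$ using \eqref{eq:tame3} and then applies $\phi_{IJ}^{-1}$, but your pointwise unpacking of the reverse inclusion via $s_J\circ\phi_{IJ}=\Hat\phi_{IJ}\circ s_I$ is exactly the same content.
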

\begin{proof}
From the tameness conditions \eqref{eq:tame1} and \eqref{eq:tame3} we obtain for all $I\subset J\subset K$ with $|I|\leq k$
$$
\phi_{IJ}(U_{IK})
= U_{JK}\cap s_J^{-1}(\Hat\phi_{IJ}(E_I)) 
= U_{JK}\cap \phi_{IJ}(U_{IJ}) .
$$
Applying $\phi_{IJ}^{-1}$ to both sides and using \eqref{eq:tame4} implies equality of the domains.
Then the weak cocycle condition $\phi_{JK}\circ \phi_{IJ}=\phi_{IK}$ on the overlap of domains is identical to the strong cocycle condition.
\end{proof}

The above remarks do not use additivity.  However, we formulated Definition~\ref{def:tame} so that tameness implies additivity, because the most useful consequences come by using additivity.  
In particular, we obtain a limited transversality for the embeddings of the domains involved in coordinate changes.
This property is crucial to guarantee the existence of coherent (i.e.\ compatible with coordinate changes) perturbations of the canonical section $s_\Kk$ of a tame Kuranishi atlas.
However, due to further technical complications, we will not use it directly in the constructions of Section~\ref{ss:const}.

\begin{lemma}\label{le:phitrans} 
If $\Kk$ is a tame Kuranishi atlas, then for any  $H,I,J\in\Ii_\Kk$ with $H \cap I\ne \emptyset$ and $H\cup I\subset J$ the two submanifolds $\im \phi_{H J}$ and $\im \phi_{IJ}$ of  $\im \phi_{(H\cup I) J}$ intersect transversally in $\im \phi_{(H\cap I)J}$.
\end{lemma}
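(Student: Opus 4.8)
The statement is a transversality claim about the images of the embeddings $\phi_{HJ}: U_{HJ}\to U_J$ and $\phi_{IJ}: U_{IJ}\to U_J$, viewed inside the submanifold $\im\phi_{(H\cup I)J}\subset U_J$. The natural strategy is to reduce everything to a statement about tangent spaces at a common point, and then invoke the algebraic identities provided by additivity together with the tameness identity \eqref{eq:tame3}. First I would observe that by \eqref{eq:addd} and \eqref{eq:CIJ} (applied with the roles of the index sets as stated), the set-theoretic intersection $\im\phi_{HJ}\cap\im\phi_{IJ}$ is $s_J^{-1}(\Hat\phi_{HJ}(E_H))\cap s_J^{-1}(\Hat\phi_{IJ}(E_I)) = s_J^{-1}(E_{H\cap I}) = \im\phi_{(H\cap I)J}$, using that $H\cap I\neq\emptyset$ so that $(H\cap I)\in\Ii_\Kk$. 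So the content is purely the transversality of the intersection, i.e.\ that at every point $v=\phi_{(H\cap I)J}(u)$ of this common image we have $\rT_v(\im\phi_{HJ}) + \rT_v(\im\phi_{IJ}) = \rT_v(\im\phi_{(H\cup I)J})$.

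The key step is to identify each of these three tangent spaces via the tangent bundle condition. By Lemma~\ref{le:change} applied to the coordinate change $\Hat\Phi_{(H\cap I)J}$ (and its factorizations through the intermediate charts), $\im\phi_{LJ}$ is an open subset of $s_J^{-1}(\Hat\phi_{LJ}(E_L))$ for each $L\in\{H,I,H\cap I,H\cup I\}$ with $L\subset J$; in particular at the point $v$ its tangent space is $\rd_v s_J^{-1}\bigl(\Hat\phi_{LJ}(E_L)\bigr) = (\rd_v s_J)^{-1}\bigl(\Hat\phi_{LJ}(E_L)\bigr)$, using surjectivity of $\rd_v s_J$ onto the relevant quotient from the tangent bundle condition \eqref{tbc}. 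Thus the desired tangential sum identity becomes, after applying the linear surjection $\rd_v s_J$,
\begin{equation*}
(\rd_v s_J)^{-1}\bigl(\Hat\phi_{HJ}(E_H)\bigr) + (\rd_v s_J)^{-1}\bigl(\Hat\phi_{IJ}(E_I)\bigr) \;=\; (\rd_v s_J)^{-1}\bigl(\Hat\phi_{(H\cup I)J}(E_{H\cup I})\bigr).
\end{equation*}
Since each of these subspaces contains $\ker\rd_v s_J$, this identity holds if and only if the images under $\rd_v s_J$ satisfy $\bigl(\im\rd_v s_J\cap \Hat\phi_{HJ}(E_H)\bigr) + \bigl(\im\rd_v s_J\cap \Hat\phi_{IJ}(E_I)\bigr) = \im\rd_v s_J\cap \Hat\phi_{(H\cup I)J}(E_{H\cup I})$.

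The hard part is this last linear-algebra identity, and it is exactly where additivity does the work. Here I would argue as follows: by additivity $E_{H\cup I} = \bigoplus_{i\in H\cup I}\Hat\phi_{iJ}(E_i)$ decomposes compatibly with the subspaces $\Hat\phi_{HJ}(E_H)=\bigoplus_{i\in H}\Hat\phi_{iJ}(E_i)$ and $\Hat\phi_{IJ}(E_I)=\bigoplus_{i\in I}\Hat\phi_{iJ}(E_i)$. The key point to extract from the tangent bundle condition at $v$, chasing through the factorization $\Hat\Phi_{(H\cap I)J} = \Hat\Phi_{HJ}\circ\Hat\Phi_{(H\cap I)H}$ etc., is that $\im\rd_v s_J$ is ``in general position'' with respect to this direct sum decomposition in the precise sense needed: namely \eqref{inftame} applied to the relevant coordinate changes gives $\im\rd_v s_J \cap \Hat\phi_{HJ}(E_H) = \Hat\phi_{HJ}(\im\rd_{u_H}s_H)$ where $u_H$ is the corresponding point of $U_H$, and similarly for $I$ and for $H\cap I$ and $H\cup I$. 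So the identity reduces to $\Hat\phi_{HJ}(\im\rd s_H) + \Hat\phi_{IJ}(\im\rd s_I) = \Hat\phi_{(H\cup I)J}(\im\rd s_{H\cup I})$, which in turn follows from the index condition relating kernels and cokernels of the $\rd s_L$ under coordinate changes, together with the direct-sum decomposition of the $E_L$. I expect the main obstacle to be keeping track of the base points $u_L\in U_L$ corresponding to $v$ under the various tame coordinate changes and verifying that the tameness identities \eqref{eq:tame1}, \eqref{eq:tame2} guarantee $v$ does lie simultaneously in all four images $\im\phi_{LJ}$ (so that all these tangent-space identifications are valid at the same point); once that bookkeeping is set up correctly, the linear algebra is forced by additivity plus the index/tangent bundle condition. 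A cleaner alternative, which I would try first, is to prove the tangential statement directly in $U_{(H\cup I)J}$: pull everything back by $\phi_{(H\cup I)J}$ so that one is comparing the submanifolds $\phi_{(H\cup I)J}^{-1}(\im\phi_{HJ}) = U_{(H\cup I)J}\cap\phi_{(H\cup I)J}^{-1}(s_J^{-1}(E_H))$ inside $U_{H\cup I}$, and use \eqref{eq:tame2} to rewrite these preimages as $s_{H\cup I}^{-1}(E_H)$ and $s_{H\cup I}^{-1}(E_I)$; then the transversality of $s_{H\cup I}^{-1}(E_H)$ and $s_{H\cup I}^{-1}(E_I)$ in $U_{H\cup I}$ follows from the additivity identity $E_H\oplus_{E_{H\cap I}} E_I$-type splitting of $E_{H\cup I}$ and the fact that $s_{H\cup I}$ has full-rank differential transverse to each summand, by the index condition for the coordinate changes $\Hat\Phi_{H(H\cup I)}$ and $\Hat\Phi_{I(H\cup I)}$.
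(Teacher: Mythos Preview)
Your proposal is correct and follows the same overall strategy as the paper: use tameness \eqref{eq:tame3} to write $\im\phi_{LJ}=s_J^{-1}(E_L)$ for each $L\in\{H,I,H\cap I,H\cup I\}$, deduce the set-theoretic intersection from additivity \eqref{eq:addd}--\eqref{eq:CIJ}, identify the tangent spaces at $v\in\im\phi_{(H\cap I)J}$ as $(\rd_v s_J)^{-1}(E_L)$ via the tangent bundle condition, and then prove the linear identity $(\rd_v s_J)^{-1}(E_H)+(\rd_v s_J)^{-1}(E_I)=(\rd_v s_J)^{-1}(E_{H\cup I})$. Your ``bookkeeping'' concern is easily resolved: since tameness gives $\im\phi_{LJ}=s_J^{-1}(E_L)$ globally (not just locally), the point $v$ with $s_J(v)\in E_{H\cap I}\subset E_L$ lies in all four images automatically.

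Where you differ from the paper is in closing the last identity. You translate it via \eqref{inftame} into $\Hat\phi_{HJ}(\im\rd s_H)+\Hat\phi_{IJ}(\im\rd s_I)=\Hat\phi_{(H\cup I)J}(\im\rd s_{H\cup I})$, an equality involving four base points $u_L$, and then assert this ``follows from the index condition and additivity'' without proof. The paper instead stays at the level of the preimages $(\rd_v s_J)^{-1}(E_L)$ and pulls back the additive decomposition $E_{H\cup I}\cong (E_H/E_{H\cap I})\oplus E_{H\cap I}\oplus(E_I/E_{H\cap I})$ directly. In either route the actual justification is a dimension count: surjectivity of $\rd_v s_J$ onto $E_J/E_L$ (the first half of \eqref{inftame}) gives $\dim(\rd_v s_J)^{-1}(E_L)=d+\dim E_L$; the intersection is $(\rd_v s_J)^{-1}(E_{H\cap I})$ since preimages commute with intersections and $E_H\cap E_I=E_{H\cap I}$; and then additivity gives $\dim E_H+\dim E_I=\dim E_{H\cap I}+\dim E_{H\cup I}$, forcing the inclusion $(\rd_v s_J)^{-1}(E_H)+(\rd_v s_J)^{-1}(E_I)\subset(\rd_v s_J)^{-1}(E_{H\cup I})$ to be an equality. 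This dimension count is what your vague final step needs, and once you supply it the detour through the $u_L$ becomes unnecessary. Your ``cleaner alternative'' via pullback to $U_{H\cup I}$ is also valid and amounts to the same argument with $J$ replaced by $H\cup I$.
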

\begin{proof}  
We will make crucial use of tameness, which identifies 
$$
\im \phi_{L J}=s_J^{-1}(E_L):=s_J^{-1}\bigl(\Hat\phi_{LJ}(E_L)\bigr)
$$
for $L=H,I,H\cap I, H\cup I$, with the preimages under $s_J$ of the images of the linear embeddings, $\im\Hat\phi_{LJ}= \Hat\phi_{LJ}(E_L) \subset E_J$.
The inclusions $s_J^{-1}\bigl(\im\Hat\phi_{LJ}\bigr) \subset s_J^{-1}\bigl(\im\Hat\phi_{(H\cup I)J}\bigr)$ for $L=H,I$ then follow from the linear cocycle condition $\Hat\phi_{LJ} = \Hat\phi_{(H\cup I)J} \circ \Hat\phi_{L(H\cup I)}$, which implies $\im\Hat\phi_{LJ} \subset \im\Hat\phi_{(H\cup I)J}$. 
The intersection identity $s_J^{-1}\bigl(\im\Hat\phi_{HJ}\bigr)\cap s_J^{-1}\bigl(\im\Hat\phi_{IJ}\bigr) = s_J^{-1}\bigl(\im\Hat\phi_{(H\cap I)J}\bigr)$ 
follows by applying $s_J^{-1}$ to the additivity property \eqref{eq:addd}.

To prove the transversality of intersection, we use additivity and the linear cocycle condition to obtain the decomposition
\begin{align*}
\im\Hat\phi_{(H\cup I)J} 
&\;\cong\;  
\quotient{\im\Hat\phi_{HJ}}{\im\Hat\phi_{(H\cap I)J}} \;\oplus\;
\im\Hat\phi_{(H\cap I)J} \;\oplus\;\quotient{\im\Hat\phi_{IJ}}{\im\Hat\phi_{(H\cap I)J}} .
\end{align*}
Applying the isomorphism $\rd s_J^{-1}$ to a complement of $\ker\rd s_J \subset \rT U_J$, and adding this kernel to the middle factor, this implies 
\begin{align*}
\rd s_J^{-1}\bigl(\im\Hat\phi_{(H\cup I)J}\bigr)
&\;\cong\;  
\frac{\rd s_J^{-1}\bigl(\im\Hat\phi_{HJ}\bigr)}{\rd s_J^{-1}\bigl(\im\Hat\phi_{(H\cap I)J}\bigr)} \;\oplus\;
\rd s_J^{-1}\bigl(\im\Hat\phi_{(H\cap I)J}\bigr) \;\oplus\;\frac{\rd s_J^{-1}\bigl(\im\Hat\phi_{IJ}\bigr)}{\rd s_J^{-1}\bigl(\im\Hat\phi_{(H\cap I)J}\bigr)} .
\end{align*}
Since $\im\Hat\phi_{(H\cap I)J} \subset \im\Hat\phi_{LJ}$ for $L=H,I$ this implies transversality
\begin{align*}
\rd s_J^{-1}\bigl(\im\Hat\phi_{(H\cup I)J}\bigr)
\;=\; 
\rd s_J^{-1}\bigl(\im\Hat\phi_{HJ}\bigr) \;+\; \rd s_J^{-1}\bigl(\im\Hat\phi_{IJ}\bigr)
\end{align*}
as claimed.
\end{proof}

We now show that the additivity and tameness conditions give us very useful control over the equivalence relation $\sim$ on $\Obj_{\bB_\Kk}$, given in Definition~\ref{def:Knbhd} by abstractly inverting the morphisms.
We reformulate it here with the help of a partial order given by the morphisms -- more precisely the embeddings $\phi_{IJ}$ that are part of the coordinate changes.

\begin{definition} \label{def:preceq}
Let $\preceq$ denote the partial order on $\Obj_{\bB_\Kk}$ given by
$$
(I,x)\preceq(J,y) \quad :\Longleftrightarrow \quad \Mor_{\bB_\Kk}((I,x),(J,y))\neq\emptyset .
$$
That is, we have $(I,x)\preceq (I,y)$ iff $x\in U_{IJ}$ and $y=\phi_{IJ}(x)$.
Moreover, for any $I,J\in\Ii_\Kk$ and a subset $S_I\subset U_I$ we denote the subset of points in $U_J$ that are equivalent to a point in $S_I$ by
$$
\eps_J(S_I) \,:=\; \pi_\Kk^{-1}(\pi_\Kk(S_I)) \cap U_J \;=\;
\bigl\{y\in U_J  \,\big|\, \exists\, x\in S_I : (I,x)\sim (J,y) \bigr\} \;\subset\; U_J.
$$
There is a similar partial order  on $\Obj_{\bE_\Kk}$ given by
$$
(I,x,e)\preceq(J,y,f) \quad :\Longleftrightarrow \quad \Mor_{\bE_\Kk}((I,x,e),(J,y,f))\neq\emptyset .
$$
\end{definition}

The notation $\eps_J(S_I)$ will obtain a more useful interpretation in Lemma \ref{le:tame0} below.
The relation $\preceq$ on $\Obj_{\bE_\Kk}$ is very similar to that on $\Obj_{\bB_\Kk}$.  Indeed,
$(I,x,e)\preceq (J,y,f)$ implies  $(I,x)\preceq (J,y)$. Conversely, if $(I,x)\preceq (J,y)$ then for every $e\in E_I$ there is a unique $f\in E_J$ such that $(I,x,e)\preceq (J,y,f)$.  Thus to ease notation we mostly work with the relation on $\Obj_{\bB_\Kk}$ though any statement about this has an immediate analog for the relation on $\Obj_{\bE_\Kk}$ (and vice versa).

\begin{lemma} \label{lem:eqdef}
The equivalence relation $\sim$ on $\Obj_{\Bb_\Kk}$ of Definition~\ref{def:Knbhd} is equivalently defined
by $(I,x)\sim (J,y)$ iff there is a finite tuple of objects $(I_0, x_0), \ldots, (I_k, x_k)\in\Obj_{\bB_\Kk}$ such that
\begin{align}\label{eq:ch}
&(I,x) = (I_0,x_0)\preceq  (I_1,x_1) \succeq (I_2,x_2) \preceq  \ldots (I_k, x_k)=(J,y)  \\
\text{or}\qquad &
(I,x) = (I_0,x_0)\succeq  (I_1,x_1) \preceq (I_2,x_2) \succeq  \ldots (I_k, x_k)=(J,y) . \nonumber
\end{align}
\end{lemma}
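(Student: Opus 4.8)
The statement is that the equivalence relation $\sim$ on $\Obj_{\bB_\Kk}$ generated by the morphisms in $\bB_\Kk$ — which a priori requires chains of morphisms and their formal inverses in arbitrary order — can in fact be realized by zigzag chains of the ``$V$-shape'' \eqref{eq:ch}, i.e.\ chains that alternate between going up ($\preceq$) and down ($\succeq$) the partial order. This is purely formal once one unpacks the definitions. First I would recall that $\sim$ is by definition the smallest equivalence relation containing the relation $R$ given by $(I,x)\, R\, (J,y)$ iff $\Mor_{\bB_\Kk}((I,x),(J,y))\neq\emptyset$, i.e.\ iff $(I,x)\preceq (J,y)$. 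Since an equivalence relation is symmetric and transitive and contains all identities, it must contain every pair $(I,x),(J,y)$ that can be joined by a finite chain in which each consecutive pair is related by $\preceq$ \emph{or} $\succeq$ (the symmetrization of $\preceq$), and conversely the set of such pairs is reflexive, symmetric, and transitive, hence \emph{is} the equivalence relation generated by $R$. So $\sim$ is exactly ``joinable by a finite $\preceq/\succeq$-chain''.

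The content of the lemma is therefore just that one may always rearrange such a chain into the alternating form \eqref{eq:ch}. I would prove this by a short induction on the length of a chain, using two elementary observations about the partial order $\preceq$ on $\Obj_{\bB_\Kk}$. The first is \textbf{transitivity of $\preceq$}: if $(I,x)\preceq (J,y)\preceq (K,z)$ then $(I,x)\preceq (K,z)$. This is exactly the statement that morphisms compose, i.e.\ that $\bB_\Kk$ is a category, which is Lemma~\ref{le:Kcat}; concretely $x\in U_{IJ}$, $y=\phi_{IJ}(x)\in U_{JK}$ forces $x$ into the domain of the composite coordinate change, which by the cocycle condition (axiom (d) in Definition~\ref{def:Ku}) lies in $U_{IK}$, and $\phi_{JK}\circ\phi_{IJ}=\phi_{IK}$ there. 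Consequently two consecutive ``up'' steps $\preceq\,\preceq$ collapse to a single ``up'' step, and dually two consecutive ``down'' steps $\succeq\,\succeq$ collapse to a single ``down'' step. The second observation is the trivial one that if $(I,x)\preceq(J,y)$ then also $(J,y)\succeq(I,x)$, i.e.\ $\preceq$-edges may be read as $\succeq$-edges when convenient, which is needed only to handle the two cases (starting with $\preceq$ vs.\ starting with $\succeq$) and to pad chains of odd ``shape'' by inserting trivial steps $(I_k,x_k)\preceq(I_k,x_k)$ using the identity morphism $\phi_{II}=\id_{U_I}$.

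With these in hand the induction is routine: given an arbitrary finite $\preceq/\succeq$-chain from $(I,x)$ to $(J,y)$, scan it left to right; whenever two consecutive edges have the same direction, merge them using the relevant collapsing observation, strictly decreasing the length; the process terminates in a chain whose edges strictly alternate, which (after possibly appending one trivial identity step at the end to match the parity requirement) is of one of the two forms in \eqref{eq:ch}. Conversely, any chain of the form \eqref{eq:ch} is in particular a $\preceq/\succeq$-chain, hence witnesses $(I,x)\sim(J,y)$; this direction is immediate. I do not expect any real obstacle here — the only things to be careful about are (i) citing Lemma~\ref{le:Kcat} / the cocycle condition correctly as the justification for transitivity of $\preceq$ (this is where ``well-definedness of $\bB_\Kk$'' is genuinely used, and it is the one place the argument is not pure set theory), and (ii) bookkeeping the two starting cases and the parity of the chain length so that the final chain literally matches one of the two displayed alternating patterns rather than merely being ``alternating up to relabelling''. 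An alternative, even shorter, phrasing avoids the induction: one simply notes that the relation ``joinable by an alternating chain'' is already reflexive (identity steps), symmetric (reverse the chain, which swaps the two alternating patterns), and transitive (concatenate; if the shapes clash at the join, insert a trivial identity step, then collapse the resulting repeated direction as above), and obviously contains $\preceq$; by minimality it therefore equals $\sim$. I would present it in this second, cleaner form, relegating the collapsing of repeated directions to a one-sentence appeal to transitivity of $\preceq$.
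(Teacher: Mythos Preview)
Your proposal is correct and takes essentially the same approach as the paper: both arguments observe that $\preceq$ is transitive by the cocycle condition (equivalently, that $\bB_\Kk$ is a category as in Lemma~\ref{le:Kcat}), and then collapse consecutive same-direction steps $\preceq\preceq$ or $\succeq\succeq$ into single steps to obtain an alternating chain. The paper's write-up is terser --- it simply notes transitivity and the collapsing observation without spelling out the induction or the alternative ``minimality'' phrasing --- but the mathematical content is identical.
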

\begin{proof}
The relation $\preceq$ is transitive by the cocycle condition \ref{def:Ku}~(d) and antisymmetric since the transition maps are directed. In particular, we have $(I,x)\preceq (I,y)$ iff $x=y$.
The two definitions of $\sim$ are equivalent since, if \eqref{eq:ch} had consecutive morphisms $(I_{\ell-1},x_{\ell-1})\preceq (I_{\ell},x_{\ell}) \preceq (I_{\ell+1},x_{\ell+1})$, these could be composed to a single morphism $(I_{\ell-1},x_{\ell-1})\preceq (I_{\ell+1},x_{\ell+1})$ by the cocycle condition.  Similarly, any consecutive morphisms $(I_{\ell-1},x_{\ell-1})\succeq (I_{\ell},x_{\ell}) \succeq (I_{\ell+1},x_{\ell+1})$ can be composed to a single morphism $(I_{\ell-1},x_{\ell-1})\succeq (I_{\ell+1},x_{\ell+1})$.
\end{proof}

When $\Kk$ is tame, the definition of $\sim$ in terms of $\preceq$ can be simplified, and thus has good topological properties, as follows.  Note that, because tame $\Kk$ are additive, we now denote $s_J^{-1}(E_I):=s_J^{-1}(\Hat\phi_{IJ}(E_I))\subset U_J$.

\begin{lemma} \label{le:Ku2} 
Let $\Kk$ be a tame Kuranishi atlas.
\begin{enumerate}
\item [(a)] 
For $(I,x),(J,y)\in\Obj_{\bB_\Kk}$ the following are equivalent.
\begin{enumerate}
\item[(i)] $(I,x)\sim (J,y)$;
\item[(ii)] there exists $z\in U_{I\cup J}$ such that $(I,x)\preceq (I\cup J,z) \succeq (J,y)$;
\item[(iii)]  there exists $w\in U_{I\cap J}$ such that $(I,x)\succeq (I\cap J,w) \preceq (J,y)$. $\phantom{\int_{x_x}}$
\end{enumerate}

\item[(b)]
For $(I,x,e),(J,y,f)\in\Obj_{\bE_\Kk}$ the following are equivalent.
\begin{enumerate}
\item[(i)] $(I,x,e)\sim (J,y,f)$;
\item[(ii)] $(I,x)\sim(J,y)$ and $\Hat\phi_{I (I\cup J)}(e) = g = \Hat\phi_{J (I\cup J)}(f)$ for some $g\in E_{I\cup J}$;
\item[(iii)]
$(I,x)\sim(J,y)$ and $\Hat\phi^{-1}_{(I\cup J)I}(e) = d = \Hat\phi_{(I\cap J) J}^{-1}(f)$ for some $d\in E_{I\cap J}$.
\end{enumerate}
\item[(c)]
$\pi_\Kk:U_I \to |\Kk|$ and $\pi_\Kk: U_I\times E_I \to |\Ee_\Kk|$ are injective for each $I\in\Ii_\Kk$, that is $(I,x,e)\sim (I,y,f)$ implies $x=y$ and $e=f$.
In particular, the elements $z$ and $w$ in (a) resp.\ $g$ and $d$ in (b) are automatically unique.
\item[(d)]
For any $I,J\in\Ii_\Kk$ and $S_I\subset U_I$ we have
$$
\eps_J(S_I) \;=\; \phi_{J(I\cup J)}^{-1}\bigl( \phi_{I (I\cup J)}(S_I) \bigr)
\;=\; \phi_{(I\cap J) J}\bigl( \phi_{(I\cap J)I}^{-1}(S_I) \bigr) ;
$$
in particular
$$
\eps_J(U_I) \;:=\; U_J\cap \pi_\Kk^{-1}\bigl(\pi_\Kk(U_I)\bigr) \;=\; 
 U_{J(I\cup J)}\cap s_J^{-1}(E_{I\cap J}).
$$
\item[(e)]
If $S_I\subset U_I$ is closed, then $\eps_J(S_I) \subset U_J$ is closed.
In particular we have $\ov{\eps_J(A_I)} \subset \eps_J(\ov{A_I})$ for any subset $A_I\subset U_I$.
\end{enumerate}
\end{lemma}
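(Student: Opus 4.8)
The plan is to prove the five parts of Lemma~\ref{le:Ku2} more or less in the order stated, bootstrapping each from the tameness conditions \eqref{eq:tame1}, \eqref{eq:tame2} and the additivity identities \eqref{eq:addd}, \eqref{eq:CIJ}, \eqref{eq:CIJ0}, together with the reformulation of $\sim$ via $\preceq$ in Lemma~\ref{lem:eqdef}. The heart of the argument is part (a): once one knows that any chain \eqref{eq:ch} of $\preceq$ and $\succeq$ steps connecting $(I,x)$ to $(J,y)$ can be collapsed to a single ``V'' through $I\cap J$ (or a single ``$\Lambda$'' through $I\cup J$), the remaining parts follow quickly. Parts (b), (c) are then essentially corollaries about lifting the relation to the obstruction category, part (d) is a restatement of (a) in terms of the sets $\eps_J(S_I)$, and part (e) is a topological consequence of the explicit formula in (d).

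For part (a), I would first record the building-block fact: if $(I,x)\preceq(J,y)$, i.e.\ $x\in U_{IJ}$ and $y=\phi_{IJ}(x)$, then by the tameness identity \eqref{eq:tame3} we have $y\in\im\phi_{IJ}=s_J^{-1}(\Hat\phi_{IJ}(E_I))$, so that $s_J(y)\in\Hat\phi_{IJ}(E_I)$; and conversely \eqref{eq:tame2} lets me pull $y$ back through intermediate charts. The key reduction is a ``two-step'' lemma: given $(H,u)\succeq(I,x)\preceq(J,y)$ with $I\subset H$, $I\subset J$, I claim there is $z\in U_{H\cup J}$ with $(H,u)\preceq(H\cup J,z)\succeq(J,y)$, and dually a $w\in U_{H\cap J}\supset U_I$ realizing a common lower bound. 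The existence of such $z$ uses that $x\in U_{IH}\cap U_{IJ}=U_{I(H\cup J)}$ by \eqref{eq:tame1}, so $z:=\phi_{I(H\cup J)}(x)$ is defined, and the cocycle condition (Lemma~\ref{le:tame0}, since tame atlases satisfy the strong cocycle condition) gives $\phi_{H(H\cup J)}(u)=\phi_{H(H\cup J)}(\phi_{IH}(x))=\phi_{I(H\cup J)}(x)=z$ and likewise $\phi_{J(H\cup J)}(y)=z$. Iterating this collapse along any chain \eqref{eq:ch}, the index sets appearing only grow (for the ``$\Lambda$'' normal form) or shrink (for the ``V'' normal form), and after finitely many steps one lands on $H\cup J\to I\cup J$ resp.\ $H\cap J\to I\cap J$; the equivalences (i)$\Leftrightarrow$(ii)$\Leftrightarrow$(iii) then drop out. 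I expect this collapsing induction to be the main obstacle, since one must carefully track how the index sets change at each elementary move and check that \eqref{eq:tame1}, \eqref{eq:tame2} and the strong cocycle condition apply at every stage; the bookkeeping, while not deep, is where the proof could go wrong.

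For part (b) I would argue: given $(I,x)\sim(J,y)$, use (a)(ii) to get $z\in U_{I\cup J}$ with $(I,x)\preceq(I\cup J,z)\succeq(J,y)$; then by the definition of morphisms in $\bE_\Kk$, $(I,x,e)\sim(J,y,f)$ holds iff the induced lifts agree at $z$, i.e.\ iff $\Hat\phi_{I(I\cup J)}(e)=\Hat\phi_{J(I\cup J)}(f)$, giving (i)$\Leftrightarrow$(ii). For (iii) I descend through $I\cap J$ using (a)(iii) and the fact that $\Hat\phi_{(I\cap J)I}$ is injective with $\Hat\phi_{(I\cap J)(I\cup J)}=\Hat\phi_{I(I\cup J)}\circ\Hat\phi_{(I\cap J)I}$, so that $e\in\im\Hat\phi_{(I\cap J)I}$ precisely because $\Hat\phi_{I(I\cup J)}(e)=\Hat\phi_{J(I\cup J)}(f)$ lies in $\im\Hat\phi_{(I\cap J)(I\cup J)}$ by additivity \eqref{eq:addd}. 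Part (c) is then immediate: if $(I,x)\sim(I,y)$ then (a)(iii) with $J=I$ gives $w\in U_{II}=U_I$ with $x=\phi_{II}^{-1}(\cdots)$, forcing $x=w=y$ since $\phi_{II}=\id$; likewise $e=f$ from (b) with $J=I$. Uniqueness of $z,w,g,d$ follows from injectivity of $\pi_\Kk$ on each $U_I$, $U_I\times E_I$.

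For part (d), the formula $\eps_J(S_I)=\phi_{J(I\cup J)}^{-1}(\phi_{I(I\cup J)}(S_I))$ is just the set-level translation of (a)(ii): $y\in\eps_J(S_I)$ iff $(J,y)\sim(I,x)$ for some $x\in S_I$ iff there is $z$ with $\phi_{I(I\cup J)}(x)=z=\phi_{J(I\cup J)}(y)$. The second equality comes from (a)(iii) similarly. The special case $\eps_J(U_I)=U_{J(I\cup J)}\cap s_J^{-1}(E_{I\cap J})$ then follows by taking $S_I=U_I$, observing $\phi_{I(I\cup J)}(U_{I(I\cup J)})=s_{I\cup J}^{-1}(E_I)$ by \eqref{eq:tame3}, pulling back under $\phi_{J(I\cup J)}$ via \eqref{eq:tame2} to get $U_{J(I\cup J)}\cap s_J^{-1}(E_I)$, and then intersecting with the image of $U_J$ which is governed by $s_J^{-1}(E_J)$, so that \eqref{eq:CIJ} yields $s_J^{-1}(E_I)\cap s_J^{-1}(E_J)=s_J^{-1}(E_{I\cap J})$. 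Finally, for part (e): the maps $\phi_{(I\cap J)I}^{-1}$ is continuous (restriction of a homeomorphism onto its image, which is closed in $U_I$ by \eqref{eq:tame3}) and $\phi_{(I\cap J)J}$ is a closed embedding onto $s_J^{-1}(E_{I\cap J})$, again by \eqref{eq:tame3}, so $\eps_J(S_I)=\phi_{(I\cap J)J}(\phi_{(I\cap J)I}^{-1}(S_I))$ is the image under a closed map of a closed set, hence closed; applying this to $S_I=\ov{A_I}$ and using $\eps_J(A_I)\subset\eps_J(\ov{A_I})$ gives $\ov{\eps_J(A_I)}\subset\eps_J(\ov{A_I})$. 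I would double-check that ``closed embedding onto a closed subset'' is exactly what \eqref{eq:tame3} provides, since that is the one spot in (e) where tameness (not just additivity) is essential.
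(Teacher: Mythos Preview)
Your overall strategy is the same as the paper's, and parts (a), (c), (e) are correct. There are two places where your writeup diverges from the paper and needs repair.

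First, for part~(b) you write: ``by the definition of morphisms in $\bE_\Kk$, $(I,x,e)\sim(J,y,f)$ holds iff the induced lifts agree at $z$''. This is exactly the implication (i)$\Rightarrow$(ii) you are trying to prove, and it does not follow from having already established (a) at the base level. Knowing $(I,x)\sim(J,y)$ and hence the existence of $z\in U_{I\cup J}$ tells you nothing about why $\Hat\phi_{I(I\cup J)}(e)$ and $\Hat\phi_{J(I\cup J)}(f)$ must coincide; the chain witnessing $(I,x,e)\sim(J,y,f)$ in $\bE_\Kk$ could a priori pass through different lifts. The paper fixes this by proving the two ``two-step'' claims (your V$\to\Lambda$ and its dual $\Lambda\to$V) directly at the $\bE_\Kk$ level, carrying the obstruction vectors along; the $\Lambda\to$V step is where additivity \eqref{eq:addd} enters, to show that $g\in\Hat\phi_{IK}(E_I)\cap\Hat\phi_{JK}(E_J)=\Hat\phi_{(I\cap J)K}(E_{I\cap J})$ and hence descends to $d\in E_{I\cap J}$. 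Then (a) follows by setting all vectors to zero. Your approach can be salvaged by simply redoing your collapsing argument in $\bE_\Kk$, but as written it is circular.

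Second, your computation of the special case $\eps_J(U_I)=U_{J(I\cup J)}\cap s_J^{-1}(E_{I\cap J})$ in (d) is garbled: you write ``pulling back under $\phi_{J(I\cup J)}$ to get $U_{J(I\cup J)}\cap s_J^{-1}(E_I)$'' and then invoke \eqref{eq:CIJ} with $s_J^{-1}(E_I)\cap s_J^{-1}(E_J)$. But $s_J^{-1}(E_I)$ is only defined when $I\subset J$, and \eqref{eq:CIJ} requires both index sets to be contained in $J$. The correct computation (as in the paper) stays in $U_{I\cup J}$: since $\im\phi_{J(I\cup J)}=s_{I\cup J}^{-1}(E_J)$, one has $\phi_{J(I\cup J)}^{-1}(s_{I\cup J}^{-1}(E_I))=\phi_{J(I\cup J)}^{-1}(s_{I\cup J}^{-1}(E_I)\cap s_{I\cup J}^{-1}(E_J))$, and now \eqref{eq:CIJ} applies in $E_{I\cup J}$ to give $s_{I\cup J}^{-1}(E_{I\cap J})$, which pulls back to $U_{J(I\cup J)}\cap s_J^{-1}(E_{I\cap J})$ via the section compatibility $s_{I\cup J}\circ\phi_{J(I\cup J)}=\Hat\phi_{J(I\cup J)}\circ s_J$.
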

\begin{proof}
We first prove the following intermediate results:

\medskip
\noindent
{\bf Claim 1:} {\it Suppose $(I,x,e)\preceq (K,z,g) \succeq (J,y,f)$ for some $(K,z,g)\in\Obj_{\bE_\Kk}$, then there exists $w\in U_{I\cap J}$ and $d\in E_{I\cap J}$ such that $(I,x,e)\succeq (I\cap J,w,d) \preceq (J,y,f)$.}
\MS

\NI
Indeed, tameness \eqref{eq:tame3} and additivity \eqref{eq:CIJ} imply
$$
z \in \phi_{IK}(U_{IK})\cap \phi_{JK}(U_{JK})
= s_K^{-1}(E_I)\cap s_K^{-1}(E_J)
 = s_K^{-1}(E_{I\cap J})
 =  \phi_{(I\cap J) K}(U_{(I\cap J)K}).
$$
Therefore we have $z=\phi_{(I\cap J)K}(w)$ for some $w\in U_{(I\cap J)K}$.
We also have $z=\phi_{IK}(x)$ by assumption, and Lemma \ref{le:tame0} implies that $\phi_{(I\cap J)K}(w) = \phi_{IK}\bigl( \phi_{(I\cap J)I}(w)\bigr)$, so the elements $x$ and $\phi_{(I\cap J)I}(w)$ of $U_{IK}$ have the same image under $\phi_{IK}$. Since the latter is an embedding we deduce $x=\phi_{(I\cap J) I}(w)$.
Similarly, $y=\phi_{(I\cap J) J}(w)$ follows from $\phi_{(I\cap J)K} = \phi_{JK}\circ \phi_{(I\cap J)J}$.
Moreover, we have $\Hat\phi_{IK}(e) = g = \Hat\phi_{JK}(f)$ by assumption, so $g\in (\Hat\phi_{IK}(E_I)) \cap (\Hat\phi_{JK}(E_J)) =  \Hat\phi_{(I\cap J) K}(E_{(I\cap J) K})$ by additivity \eqref{eq:addd}. Now applying $\Hat\phi_{(I\cap J) K}^{-1}$ together with the cocycle conditions  $\Hat\phi_{(I\cap J) K}=  \Hat\phi_{\bullet K} \circ \Hat\phi_{(I\cap J) \bullet}$ for $\bullet = I, J$ we obtain $\Hat\phi_{(I\cap J)I}^{-1}(e) = \Hat\phi_{(I\cap J) J}^{-1}(f) = \Hat\phi_{(I\cap J) K}^{-1}(g) =:d$.

\medskip

\noindent
{\bf Claim 2:} {\it Suppose $(I,x,e)\succeq (H,w,d) \preceq (J,y,f)$ for some $(H,w,d)\in\Obj_{\bE_\Kk}$, then there exists $z\in U_{I\cup J}$ and $g\in E_{I\cup J}$ such that $(I,x,e)\preceq (I\cup J,z,g) \succeq (J,y,f)$.}

\MS\NI
Indeed, \eqref{eq:tame1} implies that $w\in U_{H (I\cup J)}$ so that $z:=\phi_{H (I\cup J)}w\in U_{I\cup J}$ is defined.
We also have $x = \phi_{HI}(w)$ by assumption, which together with Lemma \ref{le:tame0} implies
$$
z = \phi_{H (I\cup J)}(w) = \phi_{(I\cup J) I}\bigl(\phi_{H I}(w)\bigr) =  \phi_{(I\cup J) I}(x) .
$$
Similarly, $z= \phi_{(I\cup J) J}(y)$ follows from $y=\phi_{HJ}(w)$ and the strong cocycle condition
\eqref{strong cocycle}, proved in Lemma~\ref{le:tame0}.
Moreover, we have $e=\Hat\phi_{HI}(d)$ and $f=\Hat\phi_{HJ}(d)$, so can apply the cocycle conditions $\Hat\phi_{H(I\cup J)}=  \Hat\phi_{\bullet (I\cup J)}\circ \Hat\phi_{H \bullet}$ for $\bullet = I, J$ to obtain $$
{\Hat\phi_{I(I\cup J)}(e) = \Hat\phi_{J(I\cup J)}(f) = \Hat\phi_{H (I\cup J)}(d) =:g}.
$$

\medskip

Now to prove part (a), observe that Claims 1 and 2 imply the analogous statements on $\Obj_{\bB_\Kk}$ by picking the zero vector for each $e,f,g,d$.
With that, (ii) $\Rightarrow $ (iii)  follows from Claim 1, and (iii) $\Rightarrow$ (i) holds by definition of the equivalence relation $\sim$.
The implication (i) $\Rightarrow$  (ii) is proven by noting as above that consecutive morphisms in \eqref{eq:ch} in the same direction can be composed to a single morphism by the cocycle condition. Combining this with Claim 1 and Claim 2 above, any tuple of morphisms \eqref{eq:ch} can be replaced by two morphisms $(I,x)\succeq (H,w) \preceq (J,y)$ or $(I,x)\preceq (K,z) \succeq (J,y)$. We then use once more Claim 2 or Claims 1 and 2 to deduce the existence of morphisms $(I,x)\preceq (I\cup J,z) \succeq (J,y)$.
This proves (a), and (b) is proven in complete analogy.

Next, part (c) is a consequence of (i)$\Rightarrow$(ii) since $\phi_{II}={\rm Id}_{U_I}$ and 
$\Hat\phi_{II}={\rm Id}_{E_I}$.  
The formulas for $\eps_J(S_I)$ in (d) follow immediately from the equivalent definitions of $\sim$ in~(a). 
In case $S_I=U_I$ we can moreover use \eqref{eq:tame3} and \eqref{eq:CIJ} to obtain
\begin{align*}
\eps_J(U_I) &
\;=\;  \phi_{J(I\cup J)}^{-1}\bigl( s_{I\cup J}^{-1}(E_I) \bigr) 
\;=\;  \phi_{J(I\cup J)}^{-1}\bigl( s_{I\cup J}^{-1}(E_I) \cap  s_{I\cup J}^{-1}(E_J) \bigr) \\
&\;=\;  \phi_{J(I\cup J)}^{-1}\bigl( s_{I\cup J}^{-1}(E_{I\cap J}) \bigr) 
\;=\;  U_{J(I\cup J)} \cap s_J^{-1}(E_{I\cap J}) \bigr) .
\end{align*}
If in addition $S_I\subset U_I$ is closed, then $\eps_J(S_I) = \phi_{(I\cap J) J}\bigl( \phi_{(I\cap J)I}^{-1}(S_I) \bigr) \subset \im\phi_{(I\cap J) J}$ is closed since the transition maps are homeomorphisms to their images. The tameness assumption \eqref{eq:tame3} ensures that $\im\phi_{(I\cap J) J}\subset U_J$ is closed, and hence $\eps_J(S_I)\subset U_J$ is closed.
Now for any subset $A_I\subset U_I$ we have $\eps_J(A_I)$ contained in the closed subset $\eps_J(\ov{A_I})\subset U_J$, hence by definition the closure $\ov{\eps_J(A_I)} \subset U_J$ is contained in $\eps_J(\ov{A_I})$. This finishes the proof of (e).
\end{proof}

With these preparations we  show in the following propositions that the additivity and tameness conditions imply the Hausdorff, homeomorphism, and linearity properties claimed in Theorem~\ref{thm:K}.

\begin{prop}\label{prop:Khomeo}
Suppose that the Kuranishi atlas $\Kk$ is tame. Then $|\Kk|$ and  $|\bE_\Kk|$ are Hausdorff, and for each $I\in\Ii_\Kk$ the quotient maps $\pi_{\Kk}|_{U_I}:U_I\to |\Kk|$ and $\pi_{\Kk}|_{U_I\times E_I}:U_I\times E_I\to |\bE_\Kk|$ are homeomorphisms onto their image.
\end{prop}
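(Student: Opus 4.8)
The plan is to exploit the structural results of Lemma~\ref{le:Ku2} about the equivalence relation $\sim$ on a tame Kuranishi atlas. For the \textbf{Hausdorff property} of $|\Kk|$, I would take two distinct points $[I,x]\ne [J,y]$ in $|\Kk|$ and use Lemma~\ref{le:Ku2}(a) to reduce to the model situation: either they are related via $U_{I\cup J}$ or not at all. If $(I,x)\not\sim(J,y)$, then in particular $x\notin\eps_I(\{y\})$ and $y\notin\eps_J(\{x\})$. Since singletons are closed, Lemma~\ref{le:Ku2}(e) shows $\eps_I(\{y\})\subset U_I$ and $\eps_J(\{x\})\subset U_J$ are closed, so $x$ has an open neighbourhood $N_x\subset U_I$ disjoint from $\eps_I(\{y\})$ and similarly $N_y\subset U_J$ disjoint from $\eps_J(\{x\})$. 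The issue is that $\pi_\Kk(N_x)$ and $\pi_\Kk(N_y)$ need not be disjoint; one must shrink further. The right approach is to use that all the domains are manifolds (in particular first countable and regular) together with Lemma~\ref{le:Ku2}(d), which gives $\eps_J(N_x) = \phi_{(I\cap J)J}(\phi_{(I\cap J)I}^{-1}(N_x))$, a concrete closed-map description. So I would argue: suppose no disjoint neighbourhoods exist; then one can find sequences $x_n\to x$ in $U_I$ and $y_n\to y$ in $U_J$ with $(I,x_n)\sim(J,y_n)$ for all $n$; by Lemma~\ref{le:Ku2}(a) each such equivalence passes through a point $z_n\in U_{I\cup J}$ with $(I,x_n)\preceq(I\cup J,z_n)\succeq(J,y_n)$, i.e. $z_n=\phi_{I(I\cup J)}(x_n)$; by continuity of $\phi_{I(I\cup J)}$ (defined on the open set $U_{I(I\cup J)}\ni x$ once $x_n$ is close to $x$, using $x\in\eps_I(\{x\})$ and tameness) we get $z_n\to z:=\phi_{I(I\cup J)}(x)$, and similarly $z_n=\phi_{J(I\cup J)}(y_n)\to\phi_{J(I\cup J)}(y)$, forcing $\phi_{I(I\cup J)}(x)=\phi_{J(I\cup J)}(y)$, hence $(I,x)\sim(J,y)$, a contradiction. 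The delicate point in this argument is ensuring $x_n$ eventually lies in $U_{I(I\cup J)}$ (the domain on which $\phi_{I(I\cup J)}$ is defined); this is where I expect to need a preliminary reduction, replacing $U_I$ by a neighbourhood of $x$ on which the relevant transition maps are defined, justified by the tameness identity \eqref{eq:tame1}.

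For the \textbf{homeomorphism property}, $\pi_\Kk|_{U_I}$ is continuous by definition of the quotient topology and injective by Lemma~\ref{le:Ku2}(c). It remains to show it is open onto its image, i.e. for $W\subset U_I$ open, $\pi_\Kk(W)$ is open in $\pi_\Kk(U_I)$ with the subspace topology. Equivalently, I must produce an open set $\Ww\subset\Obj_{\bB_\Kk}$ with $\Ww\cap U_I = W$ (up to intersecting with $\pi_\Kk^{-1}(\pi_\Kk(U_I))$) and $\Ww$ saturated, i.e. $\Ww=\pi_\Kk^{-1}(\pi_\Kk(\Ww))$. The natural candidate is $\Ww:=\bigcup_{J\in\Ii_\Kk}\eps_J(W)$; by Lemma~\ref{le:Ku2}(d), $\eps_J(W)=\phi_{(I\cap J)J}(\phi_{(I\cap J)I}^{-1}(W))$, which is open in $U_J$ since $\phi_{(I\cap J)I}$ is continuous and $\phi_{(I\cap J)J}$ is an open embedding (it is a homeomorphism onto its \emph{image}, but that image is $s_J^{-1}(E_{I\cap J})$ by \eqref{eq:tame3}, not open; so I must be more careful here). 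The fix: openness of $\pi_\Kk(W)$ in the subspace topology only requires $\Ww\cap U_J$ to be open \emph{relatively} in $\eps_J(U_I)=\pi_\Kk^{-1}(\pi_\Kk(U_I))\cap U_J$, and $\phi_{(I\cap J)J}$ restricted to $\phi_{(I\cap J)I}^{-1}(U_{(I\cap J)I})$ is a homeomorphism onto $\eps_J(U_I)$ by Lemma~\ref{le:Ku2}(d), so $\eps_J(W)$ is indeed open in $\eps_J(U_I)$. Then $\Ww:=\bigcup_J\eps_J(W)$ satisfies $\Ww$ is saturated (since $\eps_J\circ\eps_K = \eps_J$ on subsets of $U_K$ that are saturated within the relevant domains, which follows from transitivity of $\sim$ and Lemma~\ref{le:Ku2}(d)) and $\Ww\cap U_I\supset W$; conversely $\Ww\cap U_I\cap\pi_\Kk^{-1}(\pi_\Kk(U_I))=\eps_I(W)=W$ by injectivity in (c). Hence $\pi_\Kk(W)=\pi_\Kk(\Ww)$ is open in $\pi_\Kk(U_I)$.

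For $|\bE_\Kk|$ and $\pi_\Kk|_{U_I\times E_I}$, the arguments are \emph{mutatis mutandis} the same, using parts (b) and (c) of Lemma~\ref{le:Ku2} in place of (a) and (c), and the analog of Lemma~\ref{le:Ku2}(d),(e) for the $\bE_\Kk$-relation (which, as remarked after Definition~\ref{def:preceq}, is governed by the $\bB_\Kk$-relation together with the linear cocycle maps $\Hat\phi_{IJ}$; concretely $(I,x,e)\sim(J,y,f)$ iff $(I,x)\sim(J,y)$ and the fibre components are matched by the \emph{bijections} $\Hat\phi_{(I\cap J)I}^{-1}, \Hat\phi_{(I\cap J)J}$, which are continuous in $e$). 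I expect \textbf{the main obstacle} to be the Hausdorff argument — specifically, the careful handling of domains in the sequential argument above, making sure that when $x_n\to x$ the maps $\phi_{I(I\cup J)}$ are actually defined at $x_n$; this requires invoking \eqref{eq:tame1} to see that $U_{I(I\cup J)}$ is a neighbourhood of $U_{IJ}\cap U_{IK}$-type sets and doing a preliminary restriction. The homeomorphism statements, by contrast, reduce fairly mechanically to the explicit formula for $\eps_J$ in Lemma~\ref{le:Ku2}(d) once one is careful about subspace versus quotient topology on $\pi_\Kk(U_I)$.
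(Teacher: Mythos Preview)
Your Hausdorff argument has a genuine gap in the choice of intermediate chart. You route the equivalence $(I,x_n)\sim(J,y_n)$ through $U_{I\cup J}$ via Lemma~\ref{le:Ku2}(a)(ii), writing $z_n=\phi_{I(I\cup J)}(x_n)$. But the domain $U_{I(I\cup J)}\subset U_I$ is \emph{open}, so although each $x_n$ lies in it, the limit $x$ need not---and this is exactly what you must establish, since $(I,x)\not\sim(J,y)$ gives you no information placing $x$ in any transition domain. Your proposed fix via \eqref{eq:tame1} does not help: that identity only describes intersections of transition domains, not their closures. The paper avoids this by going through $I\cap J$ instead, using Lemma~\ref{le:Ku2}(a)(iii): writing $x_n=\phi_{HI}(w_n)$ with $H=I\cap J$, the key point is that $\im\phi_{HI}=s_I^{-1}(E_H)$ is \emph{closed} in $U_I$ by the tameness identity \eqref{eq:tame3}. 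Hence $x^\infty$ still lies in this image, $w_n\to w^\infty:=\phi_{HI}^{-1}(x^\infty)\in U_{HI}$ by the homeomorphism property of $\phi_{HI}$ onto its closed image, and continuity of $\phi_{HJ}$ then gives $y^\infty=\phi_{HJ}(w^\infty)$, hence $(I,x^\infty)\sim(J,y^\infty)$.

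Your homeomorphism argument has a parallel gap. Your candidate $\Ww=\bigcup_J\eps_J(W)$ is saturated, but $\eps_J(W)\subset s_J^{-1}(E_{I\cap J})$ is \emph{not} open in $U_J$ once $\dim U_J>\dim U_I$, so $\pi_\Kk(\Ww)=\pi_\Kk(W)$ is not open in $|\Kk|$. Your fix---observing that $\eps_J(W)$ is relatively open in $\eps_J(U_I)$---does not suffice: the subspace topology on $\pi_\Kk(U_I)$ still demands an honest open subset of $|\Kk|$, i.e.\ one whose preimage in every $U_J$ is open in $U_J$. The paper instead builds open sets $W_{I_\ell}\subset U_{I_\ell}$ iteratively, starting from $W_{I_1}=W$ and defining $W_{I_{\ell+1}}:=U_{I_{\ell+1}}\setminus\bigcup_{n\le\ell}\eps_{I_{\ell+1}}(U_{I_n}\setminus W_{I_n})$; each removed set is closed by Lemma~\ref{le:Ku2}(e), so $W_{I_{\ell+1}}$ is open, and one verifies that $\pi_\Kk\bigl(\bigcup_\ell W_{I_\ell}\bigr)$ is open with the correct intersection with $\pi_\Kk(U_I)$. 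A shorter variant of this idea: take $V:=|\Kk|\setminus\pi_\Kk(U_I\setminus W)$, which is open because each $\eps_J(U_I\setminus W)$ is closed in $U_J$ by Lemma~\ref{le:Ku2}(e), and satisfies $V\cap\pi_\Kk(U_I)=\pi_\Kk(W)$ by injectivity of $\pi_\Kk|_{U_I}$. In both parts the ingredient you are missing is the \emph{closedness} supplied by tameness through \eqref{eq:tame3} and Lemma~\ref{le:Ku2}(e); you are reaching for openness of images of open sets, when what actually drives the proof is closedness of images of closed sets.
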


\begin{proof}
The claims for $\Kk$ follow from that for $ |\bE_\Kk|$ since $|\Kk|$ can be identified with
the zero section of the bundle $\pr:  |\bE_\Kk| \to |\Kk|$, which is a closed subset.
However, to avoid carrying along unnecessary notation, we first prove the statement for $|\Kk|$,
and then sketch the necessary extensions of the argument for $|\bE_\Kk|$.

Since each component of $\Obj_{\bB_\Kk}=\bigcup_{I\in \Ii_\Kk} U_I$ is Hausdorff and locally compact,
its quotient $|\Kk|=\Obj_{\bB_\Kk}/\hspace{-1.5mm}\sim$ is Hausdorff exactly if the equivalence relation $\sim$ is closed. Since $\Ii_\Kk$ is finite, it suffices to consider sequences $U_I \ni x^\nu\to x^\infty$ and
$U_J\ni y^\nu\to y^\infty$ of equivalent objects $(I,x^\nu)\sim (J, y^\nu)$ for all $\nu\in\N$ and check that
 $(I, x^\infty)\sim (J, y^\infty)$.
For that purpose denote $H:=I\cap J$, then
by Lemma~\ref{le:Ku2}(a) there is a sequence $w^\nu\in U_H$ such that $x^\nu = \phi_{HI}(w^\nu)$ and $y^\nu = \phi_{HJ}(w^\nu)$.
Now it follows from the tameness condition \eqref{eq:tame3} that $x^\infty$ lies in the
relatively closed subset $\phi_{HI}(U_{HI})=s_I^{-1}(E_H)\subset U_I$, and since $\phi_{HI}$ is a homeomorphism to its image we deduce convergence $w^\nu\to w^\infty\in U_{HI}$ to a preimage of $x^\infty=\phi_{HI}(w^\infty)$.  Then by continuity of the transition map we obtain $\phi_{HJ}(w^\infty) = y^\infty$, so that $(I, x^\infty)\sim (J, y^\infty)$ as claimed. Thus $|\Kk|$ is Hausdorff.

To show that $\pi_\Kk|_{U_I}$ is a homeomorphism onto its image, first recall that it is injective by Lemma \ref{le:Ku2}~(c).
It is moreover continuous since $|\Kk|$ is equipped with the quotient topology. Hence it remains to show that $\pi_\Kk|_{U_I}$ is an open map to its image, i.e.\ for any given open subset $S_I\subset U_I$ we must find an open subset $\Ww\subset |\Kk|$ such that $\Ww\cap \pi_{\Kk}(U_I) = \pi_\Kk(S_I)$.
Since $|\Kk|$ is given the quotient topology, $\Ww$ is open precisely if
$\pi_{\Kk}^{-1}(\Ww)\cap U_J \subset U_J$  is open for each $J\in\Ii_\Kk$.
Equivalently, we could find open subsets $W_J\subset U_J$ such that $W_I=S_I$ and
$$
\pi_{\Kk}^{-1} \Bigl( \pi_\Kk \Bigl( {\textstyle \bigcup_{J\in\Ii_\Kk} W_J } \Bigr) \Bigr)  = {\textstyle \bigcup_{J\in\Ii_\Kk} W_J }  ,
$$
since then $\Ww:=\pi_\Kk\bigl( \bigcup_{J\in\Ii_\Kk} W_J \bigr)\subset |\Kk|$ is the required open set.
To construct these sets we order $\Ii_\Kk=\{I_1,I_2,\ldots,I_N\}$ in any way such that $I_1=I$. Then we will iteratively define open sets $W_{I_\ell}\subset U_{I_\ell}$ and not necessarily open sets
$$
\Ww_\ell:=  \pi_\Kk \bigl( {\textstyle \bigcup_{k=1}^\ell } W_{I_k}\bigr)\subset|\Kk|
$$
such that
\begin{equation} \label{wwell}
\pi_{\Kk}^{-1}(\Ww_\ell)\cap U_{I_k} = W_{I_k}  \quad \forall \;k=1,\ldots,\ell .
\end{equation}
Obviously, we also need to begin with $W_{I_1}:=S_I$, which is given as open.
Then $\Ww_1=\pi_\Kk(W_{I_1})$ satisfies \eqref{wwell} for $\ell=1$ as we check with the help of Lemma~\ref{le:Ku2}~(c),
$$
\pi_{\Kk}^{-1}(\Ww_1)\cap U_{I_1}
= \pi_{\Kk}^{-1}(\pi_\Kk (W_{I_1}))\cap U_{I_1}
= \eps_{I_1}(W_{I_1})
= W_{I_1} .
$$
Next, suppose that the open sets $W_{I_1},\ldots, W_{I_\ell}$ are defined and satisfy \eqref{wwell}.  Then we define
$$
W_{I_{\ell+1}}: = U_{I_{\ell+1}}\;\less\; {\textstyle \bigcup_{n\leq\ell}  \eps_{I_{\ell+1}}(U_{I_n}\less W_{I_n}) }
$$
by removing those points from the domain $U_{I_{\ell+1}}$ that would violate \eqref{wwell}.
This defines an open subset $W_{I_{\ell+1}}\subset U_{I_{\ell+1}}$ since by previous constructions $W_{I_n}$ is open, so $U_{I_n}\less W_{I_n}\subset U_{I_n}$ is closed, and by Lemma~\ref{le:Ku2}~(d) this implies closedness of $\eps_{I_{\ell+1}}(U_{I_n}\less W_{I_n})$.
To verify \eqref{wwell} with $\ell$ replaced by $\ell+1$ first note that for $k=1,\ldots,\ell$ we have
$$
\pi_\Kk^{-1}\bigl(\pi_\Kk(W_{I_{\ell+1}})\bigr)\cap U_{I_k}
= \eps_{I_k}(W_{I_{\ell+1}})
\subset
\eps_{I_k}\bigl( U_{I_{\ell+1}}\less \; \eps_{I_{\ell+1}}(U_{I_k}\less W_{I_k}) \bigr)
\subset W_{I_k}
$$
since the complicated expression consists of those $x\in U_{I_k}$ for which there exists $y \in U_{I_{\ell+1}}$ with $x \sim y$ and $y\not\sim z$ for any $z\in U_{I_k}\less W_{I_k}$, which implies $x\in W_{I_k}$.
Conversely, we have
$$
\pi_\Kk^{-1}(\Ww_\ell)\cap U_{I_{\ell+1}} \subset W_{I_{\ell+1}}
$$
since for every $n\leq\ell$ the intersection
$\pi_\Kk^{-1}(\Ww_\ell)\cap  \eps_{I_{\ell+1}}(U_{I_n}\less W_{I_n})$ is empty. Indeed, otherwise there exist $x\in U_{I_{\ell+1}}$ and $y\in U_{I_n}\less W_{I_n}$ with $x\sim y$, and on the other hand $\pi_\Kk(x)\in\Ww_\ell$. However, this implies $y\in \pi_\Kk^{-1}(\Ww_\ell)$ in contradiction to \eqref{wwell}.
Using these two inclusions we can now finish the proof by verifying the iteration of \eqref{wwell},
$$
\pi_\Kk^{-1}(\Ww_{\ell+1})\cap U_{I_k}
=
\bigl(\pi_\Kk^{-1}(\Ww_\ell)\cap U_{I_k}\bigr)
\cup
\bigl(\pi_\Kk^{-1}\bigl(\pi_\Kk(W_{I_{\ell+1}})\bigr)\cap U_{I_k}\bigr)
= W_{I_k}
$$
holds for $k=1,\ldots,\ell$, and for $k=\ell+1$ we have
$$
\pi_\Kk^{-1}(\Ww_{\ell+1})\cap U_{I_{\ell+1}}
=
\bigl(\pi_\Kk^{-1}(\Ww_\ell)\cap U_{I_{\ell+1}}\bigr)
\cup
\bigl(\pi_\Kk^{-1}\bigl(\pi_\Kk(W_{I_{\ell+1}})\bigr)\cap U_{I_{\ell+1}}\bigr)
= W_{I_{\ell+1}} .
$$
This completes the proof of the statements about $|\Kk|$.
The analogous statements for $|\bE_\Kk|$ hold by using part (b) of Lemma~\ref{le:Ku2} instead of part (a).
\end{proof}

\begin{prop}\label{prop:linear}
Let $\Kk$ be a tame Kuranishi atlas. Then there exists a unique linear structure on the fibers of  $|\pr_{\Kk}|: |\bE_\Kk| \to |\Kk|$ such that for every $I\in\Ii_\Kk$ the embedding $\pi_{\Kk} : U_I\times E_I \to  |\bE_{\Kk}|$ is linear on the fibers.
\end{prop}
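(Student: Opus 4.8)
The plan is to define the linear structure fiberwise over a point $[I,x]\in|\Kk|$ by transporting the vector space structure of $E_I$ through the bijection $\pi_\Kk|_{\{x\}\times E_I}:\{x\}\times E_I\to |\pr_\Kk|^{-1}([I,x])$, and then to check that this is (a) well defined, i.e.\ independent of the choice of representative $(I,x)$ of the point $[I,x]$, and (b) the unique such structure making every $\pi_\Kk|_{U_I\times E_I}$ fiberwise linear. Step (b) is immediate: if $\pi_\Kk|_{U_I\times E_I}$ is to be fiberwise linear, then the vector space structure on $|\pr_\Kk|^{-1}([I,x])$ is forced to be the pushforward of the one on $E_I$, so once (a) is established uniqueness follows and so does linearity of each $\pi_\Kk|_{U_J\times E_J}$.

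For (a), suppose $(I,x)\sim(J,y)$. Since $\Kk$ is tame (hence additive) I would apply Lemma~\ref{le:Ku2}(a)(iii) to get $w\in U_{I\cap J}$ with $(I,x)\succeq (I\cap J,w)\preceq (J,y)$, i.e.\ $x=\phi_{(I\cap J)I}(w)$ and $y=\phi_{(I\cap J)J}(w)$. By Lemma~\ref{le:Ku2}(b)(iii) the fiber of $|\pr_\Kk|$ over $[I,x]$ is then identified with $E_I$, with $E_J$, \emph{and} with $E_{I\cap J}$, the first two identifications being via the linear injections $\Hat\phi_{(I\cap J)I}$ and $\Hat\phi_{(I\cap J)J}$ composed with $\pi_\Kk$; concretely, $[I,x,e]=[J,y,f]$ iff $\Hat\phi_{(I\cap J)I}^{-1}(e)=\Hat\phi_{(I\cap J)J}^{-1}(f)$. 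Since $\Hat\phi_{(I\cap J)I}$ and $\Hat\phi_{(I\cap J)J}$ are \emph{linear} maps, the vector space structure pushed forward from $E_I$ and the one pushed forward from $E_J$ both coincide with the one pushed forward from $E_{I\cap J}$, hence agree. Thus the fiberwise structure is well defined. (In the edge case $I\subset J$ or $J\subset I$ the same argument applies with $I\cap J$ equal to the smaller index set; the hypotheses of Lemma~\ref{le:Ku2} are still in force.)

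I expect the main—indeed only—subtlety to be making sure that an arbitrary chain of the form \eqref{eq:ch} witnessing $(I,x)\sim(J,y)$ does not produce a different identification of the fibers than the two-step reduction via $I\cap J$. This is exactly where tameness is essential, and it is already packaged for us: Lemma~\ref{le:Ku2}(a) says that \emph{any} such chain can be collapsed to a single pair $(I,x)\succeq(I\cap J,w)\preceq(J,y)$, and Lemma~\ref{le:Ku2}(c) guarantees that the witness $w$ (and correspondingly $d\in E_{I\cap J}$ in part (b)) is unique. So there is no ambiguity, and the linear-algebra bookkeeping of the previous paragraph goes through verbatim. The continuity/topological compatibility of this structure (that it fits into the commutative square of Theorem~\ref{thm:K}) is not asserted in this proposition and needs nothing further here, since Proposition~\ref{prop:Khomeo} already gives that each $\pi_\Kk|_{U_I\times E_I}$ is a homeomorphism onto its image.
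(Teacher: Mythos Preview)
Your argument contains a genuine gap: the map $\pi_\Kk|_{\{x\}\times E_I}:\{x\}\times E_I\to |\pr_\Kk|^{-1}([I,x])$ is \emph{not} a bijection in general, only an injection. To see this concretely, take $I=\{1\}$, $J=\{2\}$ with $(I,x)\sim(J,y)$; then there is $z\in U_{\{1,2\}}$ with $[\{1,2\},z]=[I,x]$, and by additivity $E_{\{1,2\}}\cong E_1\oplus E_2$. Every $[\{1,2\},z,f]$ with $f\in E_{\{1,2\}}$ lies in the fiber over $[I,x]$, but $[I,x,e]=[\{1,2\},z,f]$ forces $f=\Hat\phi_{1,\{1,2\}}(e)\in\Hat\phi_{1,\{1,2\}}(E_1)$ by Lemma~\ref{le:Ku2}(b), which is a proper subspace. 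So pushing forward the linear structure of $E_I$ only equips a proper subset of the fiber with a linear structure.

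Relatedly, your well-definedness argument via $I\cap J$ goes in the wrong direction: the image of $E_{I\cap J}$ in the fiber is contained in both the image of $E_I$ and the image of $E_J$, so compatibility on that common subspace tells you nothing about the full fiber. The paper fixes exactly this by passing to the \emph{maximal} index set $I_p:=\bigcup\{I:\,p\in\pi_\Kk(U_I)\}$, showing $I_p\in\Ii_\Kk$ (using Lemma~\ref{le:Ku2}(a)(ii) iteratively), and observing that then $\pi_\Kk:\{x_p\}\times E_{I_p}\to|\pr_\Kk|^{-1}(p)$ is surjective since every representative $(I,x)$ of $p$ has $I\subset I_p$. Linearity of each $\pi_\Kk|_{\{x\}\times E_I}$ then follows because it factors as $\pi_\Kk|_{\{x_p\}\times E_{I_p}}\circ\Hat\phi_{II_p}$. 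Your proof becomes correct if you replace the intersection $I\cap J$ by the union, or better by this maximal element.
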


\begin{proof}
For fixed $p\in |\Kk|$
denote the union of index sets for which $p\in \pi_\Kk(U_I)$ by
$$
I_p:= \bigcup_{I\in\Ii_\Kk, p\in \pi_\Kk(U_I)}  I
\qquad \subset \{1,\ldots,N\} .
$$
To see that $I_p\in\Ii_\Kk$ we repeatedly use the observation that Lemma~\ref{le:Ku2}~(a) implies
$$
p \in \pi_\Kk(U_I)\cap\pi_\Kk(U_J) \quad\Rightarrow\quad (I,\pi_\Kk^{-1}(p)\cap U_I)\sim (J,\pi_\Kk^{-1}(p)\cap U_J) \quad\Rightarrow\quad I\cup J \in \Ii_\Kk .
$$
Moreover, $x_p:=\pi_\Kk^{-1}(p)\cap U_{I_p}$ is unique by Lemma~\ref{le:Ku2}~(c).
Next, any element in the fiber $[I,x,e]\in |\pr_{\Kk}|^{-1}(p)$ is represented  by some vector over $(I,x)\in \pi_\Kk^{-1}(p)$, so we have $I\subset I_p$ and $\phi_{I I_p}(x)=x_p$, and hence
$(I,x, e)\sim (I_p,x_p,\Hat\phi_{I I_p}(e))$. Thus $\pi_\Kk:\{x_p\}\times E_{I_p}\to |\pr_\Kk|^{-1}(p)$ is surjective, and by Lemma~\ref{le:Ku2}~(c) also injective.
Thus the requirement of linearity for this bijection induces a unique linear structure on the fiber $|\pr_\Kk|^{-1}(p)$.
To see that this is compatible with the injections $\pi_\Kk:\{x\}\times E_{I}\to |\pr_\Kk|^{-1}(p)$ for $(I,x)\sim(I_p,x_p)$ note again that $I\subset I_p$ since $I_p$ was defined to be maximal, and hence by Lemma~\ref{le:Ku2}~(b)~(ii) the embedding factors as $\pi_\Kk|_{\{x\}\times E_{I}} = \pi_\Kk|_{\{x_p\}\times E_{I_p}} \circ \Hat\phi_{I I_p}$, where $\Hat\phi_{I I_p}$ is linear by definition of coordinate changes. Thus $\pi_\Kk|_{\{x\}\times E_{I}}$ is linear as well.
\end{proof}

\begin{rmk}\label{rmk:LIN}\rm
It is tempting to think that additivity alone is enough to imply that the fibers of $|\pr_\Kk|:|\bE_\Kk|\to|\Kk|$ are vector spaces. However, if the first tameness condition \eqref{eq:tame1} fails because  there is $x\in (U_{IJ}\cap U_{IK}) \less U_{I(J\cup K)}$, then both $E_J$ and $E_K$ embed into the fiber $|\pr_\Kk|^{-1}([I,x]))$, but may not be summable, since such sums are well defined by additivity only in $E_{J\cup K}$.
\end{rmk}

We end this section with further topological properties of the Kuranishi neighbourhood of a tame Kuranishi atlas that will be useful when constructing 
an admissible metric in Section~\ref{ss:shrink} and eventually 
the virtual fundamental class in Section~\ref{s:VMC}.  For that purpose we need to be careful in differentiating between the quotient and subspace topology on subsets of the Kuranishi neighbourhood, as follows.

\begin{definition} \label{def:topologies}
For any subset $\Aa\subset \Obj_{\bB_\Kk}$ of the union of domains of a Kuranishi atlas $\Kk$, we denote by 
$$
\|\Aa\|:=\pi_\Kk(\Aa)\subset|\Kk| , 
\qquad\qquad
|\Aa|:=\pi_\Kk(\Aa)\cong \quot{\Aa}{\sim}
$$
the set $\pi_\Kk(\Aa)$ equipped with its subspace topology induced from the inclusion $\pi_\Kk(\Aa)\subset|\Kk|$ resp.\ its quotient topology induced from the inclusion $\Aa\subset \Obj_{\bB_\Kk}$ and the equivalence relation $\sim$ on $\Obj_{\bB_\Kk}$ (which is generated by all morphisms in $\bB_\Kk$, not just those between elements of $\Aa$).
\end{definition}

\begin{remark} \label{rmk:hom} \rm
In many cases we will be able to identify different topologies on subsets of the Kuranishi neighbourhood $|\Kk|$ by appealing to the following elementary {\bf nesting uniqueness of compact Hausdorff topologies}:

Let $f:X\to Y$ be a continuous bijection from a compact topological space $X$ to a Hausdorff space $Y$. Then $f$ is in fact a homeomorphism. 
Indeed, it suffices to see that $f$ is a closed map, i.e.\ maps closed sets to closed sets, since that implies continuity of $f^{-1}$. But any closed subset of $X$ is also compact, and its image in $Y$ under the continuous map $f$ is also compact, hence closed since $Y$ is Hausdorff.

In particular, if $Z$ is a set with nested compact Hausdorff topologies $\Tt_1\subset\Tt_2$, then $\id_Z: (Z,\Tt_2)\to (Z,\Tt_1)$ is a continuous bijection, hence homeomorphism, i.e.\ $\Tt_1=\Tt_2$.
\end{remark}

\begin{prop}\label{prop:Ktopl1}  
Let $\Kk$ be a tame Kuranishi atlas.
\begin{enumerate}
\item
For any subset $\Aa\subset \Obj_{\bB_\Kk}$ the identity map $\id_{\pi_\Kk(\Aa)}: |\Aa| \to \|\Aa\|$ is continuous.
\item 
If $\Aa \sqsubset \Obj_{\bB_\Kk}$ is precompact, then both $|\ov\Aa|$ and $\|\ov\Aa\|$ are compact. In fact, the quotient and subspace topologies on $\pi_\Kk(\ov\Aa)$ coincide, that is $|\ov\Aa|=\|\ov\Aa\|$ as topological spaces.
\item
If $\Aa \sqsubset \Aa' \subset \Obj_{\bB_\Kk}$, then $\pi_\Kk(\ov{\Aa}) = \ov{\pi_\Kk(\Aa)}$ and $\pi_\Kk(\Aa) \sqsubset \pi_\Kk(\Aa')$ in the topological space $|\Kk|$.
\item
If  $\Aa \sqsubset \Obj_{\bB_\Kk}$ is precompact, then $\|\ov{\Aa}\|=|\ov\Aa|$ is metrizable; in particular this implies that $\|\Aa\|$ is metrizable.
\end{enumerate}

\end{prop}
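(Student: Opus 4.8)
\textbf{Proof proposal for Proposition~\ref{prop:Ktopl1}.}

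The plan is to prove the four statements in order, each building on the previous. For (i), the key observation is that the continuous projection $\pi_\Kk:\Obj_{\bB_\Kk}\to|\Kk|$ restricts to a continuous map $\Aa\to\|\Aa\|$ (as a map into the subspace $\pi_\Kk(\Aa)\subset|\Kk|$ with its subspace topology), and this map factors through the quotient $|\Aa|=\Aa/\!\sim$ since the quotient topology on $|\Aa|$ is, by definition, the finest topology making $\Aa\to|\Aa|$ continuous; hence the induced bijection $|\Aa|\to\|\Aa\|$ is continuous. For (ii), I would first note that $\ov\Aa\sqsubset\Obj_{\bB_\Kk}$ is compact (being a closed precompact subset of the finite-dimensional manifold $\Obj_{\bB_\Kk}$), so its continuous image $\pi_\Kk(\ov\Aa)$ is compact in $|\Kk|$; thus $\|\ov\Aa\|$ is compact, and $|\ov\Aa|$, being the image of the compact set $\ov\Aa$ under the continuous quotient map $\Aa\to|\Aa|$ (wait --- more carefully: $|\ov\Aa|$ is the continuous image of the compact space $\ov\Aa$ under $\pi_\Kk|_{\ov\Aa}$ viewed into the quotient topology, which is continuous, hence compact). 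Then by (i), $\id:|\ov\Aa|\to\|\ov\Aa\|$ is a continuous bijection from a compact space to the Hausdorff space $\|\ov\Aa\|$ (Hausdorff since $|\Kk|$ is Hausdorff by Proposition~\ref{prop:Khomeo}, tameness being assumed). By the nesting uniqueness of compact Hausdorff topologies in Remark~\ref{rmk:hom}, this is a homeomorphism, so $|\ov\Aa|=\|\ov\Aa\|$ as topological spaces.

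For (iii), I would use the homeomorphism property of $\pi_\Kk|_{U_I}$ and the description of $\eps_J$ in Lemma~\ref{le:Ku2}~(d),~(e). First, $\pi_\Kk(\ov\Aa)$ is compact hence closed in $|\Kk|$ (using Hausdorffness again), and contains $\pi_\Kk(\Aa)$, so $\ov{\pi_\Kk(\Aa)}\subset\pi_\Kk(\ov\Aa)$. For the reverse inclusion, decompose $\Aa=\bigsqcup_I (A_I)$ with $A_I:=\Aa\cap U_I\sqsubset U_I$; since $\pi_\Kk^{-1}(\ov{\pi_\Kk(\Aa)})\cap U_J\supset\bigcup_I\eps_J(A_I)$, and by Lemma~\ref{le:Ku2}~(e) $\ov{\eps_J(A_I)}\subset\eps_J(\ov{A_I})$, while $\ov{A_I}\subset\ov{A_I'}$ is compact with $\phi$ a homeomorphism onto its (closed) image, one gets $\pi_\Kk(\ov\Aa)=\bigcup_I\pi_\Kk(\ov{A_I})=\bigcup_I\ov{\pi_\Kk(A_I)}\subset\ov{\pi_\Kk(\Aa)}$; I'd spell out the finite union argument carefully. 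Then $\pi_\Kk(\Aa)\sqsubset\pi_\Kk(\Aa')$ follows because $\ov{\pi_\Kk(\Aa)}=\pi_\Kk(\ov\Aa)$ is compact (hence the closure of $\pi_\Kk(\Aa)$ in $|\Kk|$ is compact) and contained in $\pi_\Kk(\Aa')$ since $\ov\Aa\subset\Aa'$.

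For (iv), the strategy is: $|\ov\Aa|=\|\ov\Aa\|$ is compact Hausdorff by (ii), so it suffices to show it is second countable (or metrizable) --- a compact Hausdorff second countable space is metrizable by Urysohn. Each $\ov{A_I}\subset U_I$ sits in a second-countable manifold, hence is second countable; $\ov\Aa=\bigsqcup_I\ov{A_I}$ is a finite disjoint union, hence second countable; and $\|\ov\Aa\|=|\ov\Aa|$ is a quotient of $\ov\Aa$ by a closed equivalence relation (closedness from Proposition~\ref{prop:Khomeo}'s argument, or from $|\Kk|$ Hausdorff) --- but a continuous open or closed image of a second countable space need not be second countable in general, so instead I would argue directly that $\|\ov\Aa\|$, as a subspace of $|\Kk|$, has a countable base: cover $\ov\Aa$ by finitely many charts, and use that the topology on $\|\ov\Aa\|$ is generated by the images $\pi_\Kk(O)$ for $O$ in a countable base of $\Obj_{\bB_\Kk}$ intersected appropriately --- here the homeomorphism property $\pi_\Kk|_{U_I}$ onto its image (Proposition~\ref{prop:Khomeo}) ensures these images form a base for the subspace topology on each $\pi_\Kk(U_I)\cap\|\ov\Aa\|$, and there are finitely many such pieces. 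Hence $\|\ov\Aa\|$ is second countable compact Hausdorff, so metrizable, and the final clause ($\|\Aa\|$ metrizable) follows since metrizability passes to subspaces and $\|\Aa\|\subset\|\ov\Aa\|$.

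The main obstacle I anticipate is part (iv): establishing second countability of $\|\ov\Aa\|$ cleanly, since the quotient topology on $|\Kk|$ is badly behaved (not even locally compact or metrizable globally, per Example~\ref{ex:Khomeo}), so one genuinely needs to exploit that on the \emph{compact} piece $\|\ov\Aa\|$ the subspace and quotient topologies agree (by (ii)) and that each $\pi_\Kk|_{U_I}$ is a homeomorphism onto its image --- reducing the problem to the finitely many second-countable chart pieces. Everything else is a fairly direct application of the nesting-uniqueness trick in Remark~\ref{rmk:hom} together with the tameness consequences already packaged in Lemma~\ref{le:Ku2}.
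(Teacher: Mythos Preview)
Your arguments for (i) and (ii) are correct and match the paper's proof essentially verbatim.

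For (iii), you have overcomplicated the ``reverse inclusion'' $\pi_\Kk(\ov\Aa)\subset\ov{\pi_\Kk(\Aa)}$: this is immediate from continuity of $\pi_\Kk$, since $f(\ov A)\subset\ov{f(A)}$ holds for any continuous map. The detour through $\eps_J$ and Lemma~\ref{le:Ku2}~(e) is unnecessary (and as written is not quite coherent, e.g.\ $\pi_\Kk(\ov{A_I})=\ov{\pi_\Kk(A_I)}$ would require $\pi_\Kk(U_I)$ to be closed in $|\Kk|$, which it generally is not). The paper's argument is exactly: continuity gives one inclusion, compactness plus Hausdorffness gives the other.

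For (iv), your argument has a genuine gap at the key step. You correctly observe that each $\pi_\Kk(\ov{A_I})$ is second countable in the subspace topology (via the homeomorphism $\pi_\Kk|_{U_I}$), and that there are finitely many such pieces. But it does \emph{not} follow that their union $\|\ov\Aa\|=\bigcup_I\pi_\Kk(\ov{A_I})$ is second countable: a set open in the subspace $\pi_\Kk(\ov{A_I})\subset|\Kk|$ need not be open in the larger subspace $\|\ov\Aa\|\subset|\Kk|$, so the countable bases of the pieces do not assemble into a base for the union. This is precisely the difficulty, since $|\Kk|$ itself is not second countable (Example~\ref{ex:Khomeo}), so nothing is inherited from the ambient space. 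The paper confronts this head-on by proving a nontrivial \textbf{Claim}: if $B,C$ are compact second countable subsets of a Hausdorff space $Y$, then $B\cup C$ is second countable in the subspace topology. The proof requires choosing metrics $d^B,d^C$ on $B,C$ separately (via Urysohn, applied inductively), establishing their equivalence on $B\cap C$ by compactness, and then constructing explicit open neighbourhoods $W_\eps(S)\subset B\cup C$ of points in $B\cap C$ from nested $\eps$-neighbourhoods in the two metrics. This construction is the real content of part (iv), and your sketch skips over it entirely.

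There is an alternative route you could take: $|\ov\Aa|=\|\ov\Aa\|$ is a Hausdorff continuous image of the compact metrizable space $\ov\Aa$, and such images are always metrizable (since the images of a countable base form a countable \emph{network}, and for compact Hausdorff spaces network weight equals weight). This is shorter than the paper's hands-on construction, but relies on a less elementary general-topology fact; the paper's approach is self-contained.
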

\begin{proof}
To prove (i) recall that openness of $\Uu\subset\pi_\Kk(\Aa)$ in the subspace topology implies the existence of an open subset $\Ww \subset |\Kk|$ with $\Ww \cap \pi_\Kk(\Aa)=\Uu$. Then we have $\Aa \cap \pi_\Kk^{-1}(\Uu)=\Aa \cap \pi_\Kk^{-1}(\Ww)$, where $\pi_\Kk^{-1}(\Ww) \subset \bigcup_{I\in\Ii_\Kk}U_I$ is open by definition of the quotient topology on $|\Kk|$. However, that exactly implies openness of $\Aa \cap \pi_\Kk^{-1}(\Uu)\subset\Aa$ and thus of $\Uu$ in the quotient topology. This proves continuity.

The compactness assertions in (ii) follow from the compactness of $\ov\Aa$ together with the fact that both $\pi_\Kk: \Aa \to |\Kk|$ and $\pi_\Kk: \Aa \to {\Aa}/{\sim}$ are continuous maps.
Moreover, $\|\Aa\|$ is Hausdorff because its topology is induced by the Hausdorff topology on $|\Kk|$.
Therefore the identity map $|\ov\Aa|\to\|\ov\Aa\|$ is a continuous bijection from a compact space to a Hausdorff space, and hence a homeomorphism by Remark~\ref{rmk:hom}, which proves the equality of topologies.

In (iii), the continuity of $\pi_\Kk$ implies $\pi_\Kk(\ov \Aa)\subset \ov{\pi_\Kk(\Aa)}$ for the closure in $|\Kk|$. On the other hand, the compactness of $\ov\Aa$ implies that $\pi_\Kk(\ov \Aa)$ is compact by (ii), in particular it is closed and contains $\pi_\Kk(\Aa)$, hence also contains $ \ov{\pi_\Kk(\Aa)}$. This proves equality $\pi_\Kk(\ov \Aa)=\ov{\pi_\Kk(\Aa)}$.  The last claim of (iii) then holds because $\ov{\pi_\Kk(\Aa)}= \pi_\Kk(\ov \Aa)\subset \pi_\Kk(\Aa')$, and $\pi_\Kk(\ov \Aa)$ is compact by (ii). 

To prove the metrizability in (iv), we will use Urysohn's metrization theorem, which says that any regular and second countable topological space is metrizable.
Here $\|\ov{\Aa}\|\subset |\Kk|$ is regular (i.e. points and closed sets have disjoint neighbourhoods) since it is a compact subset of a Hausdorff space. 
So it remains to establish second countability, i.e.\ to find a countable base for the topology, namely a countable collection of open sets, such that any other open set can be written as a union of part of the collection.

For that purpose first recall that each $U_I$ is a manifold, so is second countable by definition. This property is inherited by the subsets $\ov{A}_I\subset U_I$ for $I\in\Ii_\Kk$, and by their images $\pi_\Kk(\ov{A}_I)\subset|\Kk|$ via the homeomorphisms $\pi_\Kk|_{U_I}$ of Proposition~\ref{prop:Khomeo}.
Moreover, each $\pi_\Kk(\ov{A}_I)$ is compact since it is the image under the continuous map $\pi_\Kk$ of the closed subset $\ov{A}_I=\ov{\Aa}\cap U_I$ of the compact set $\ov{\Aa}$.
So, in order to prove second countability of the finite union $\|\ov{\Aa}\|=\bigcup_{I\in\Ii_\Kk} \pi_\Kk(\ov{A}_I)$ iteratively, it remains to establish second countability for a union of two compact second countable subsets, as follows.

\MS
\NI
{\bf Claim:} Let $B,C \subset Y$ be compact subsets of a Hausdorff space $Y$ such that $B,C $ are second countable in their subspace topologies. Then $B \cup  C$ is second countable in the subspace topology.

\MS
To prove this claim, let $(V_i^B)_{i\in\N}$ resp.\ $(V_i^C)_{i\in\N}$ be countable neighbourhood bases for $B$ and $C$. Then $(V_i^B \cap (B\less C))_{i\in\N}$ resp.\ $(V_i^C \cap (C\less B))_{i\in\N}$ are countable neighbourhood bases for the open subsets $B\less C \subset B$ resp.\ $C\less B  \subset C$.
To finish the construction of a countable neighborhood basis for $B\cup C$ it then suffices to find a countable collection of open sets $W_j\subset B\cup C$ with the property
\begin{equation}\label{eq:RAB}
R\subset B\cup C \quad\text{open}\qquad \Longrightarrow \qquad
R\cap (B\cap C) \; \subset {\textstyle \underset{W_j\subset R}\bigcup} W_j .
\end{equation}
For then $R$ will be the union of these $W_j$ together with all the sets $V^B_i\less C$ and $V^C_i\less B$ that are contained in $R$.

To construct the $W_j$, choose metrics $d^B, d^C$ on $B,C$ respectively (which are guaranteed by Urysohn's metrization theorem). Since $B\cap C$ is compact, the restrictions of the metrics to this intersection are equivalent, i.e.\ there is $\kappa>1$ such that
$$
 \tfrac 1\kappa d^B(x,y)\le d^C(x,y)\le \kappa \ d^B(x,y) \qquad\forall \ x,y\in B\cap C.
$$
For any subset $S\subset B$ and $\eps>0$ denote the $\eps$-neighbourhood of $S$ in $B$ by
$$
\Nn^B_\eps(S): = \bigl\{ y\in B \,\big|\, \ {\textstyle \inf_{s\in S}} d^B(y,s)<\eps \bigr\}
$$
and similarly define $\Nn^C_\eps(T)$ for $T\subset C$.
These are open sets by the triangle inequality.
Moreover, the triangle inequality for $d^C$ together with the above equivalence of metrics gives the following nesting of neighbourhoods for all $S\subset B\cap C$ and $\eps,\delta>0$,
\begin{equation} \label{skewtriangle}
 \Nn^C_\eps\bigl(\Nn^B_\de(S)\cap C\bigr) \;\subset\; \Nn^C_{\kappa\de+\eps}(S).
\end{equation}
Now for any $S \subset B\cap C$ and $\eps>0$ we define an $\eps$-neighbourhood in $B\cup C$ by
$$
W_\eps(S)\,:=\; \Nn^B_\eps(S) \;\cup\; \Bigl(\Nn^C_\eps\bigl(\Nn^B_\eps(S)\cap C)\bigr)
\;\less\; \bigl( B \,\less\, \Nn^B_\eps(S)\bigr)\Bigr) \;\subset\; B\cup C.
$$
Note here that $S = S\cap B\subset \Nn^B_\eps(S)$ already implies the inclusion $S \subset W_\eps(S)$. Moreover, the definition is made to satisfy the nesting property
\begin{equation} \label{eq:incl}
T \subset S \quad\Longrightarrow \quad W_\eps(T) \subset W_\eps(S) .
\end{equation}
To see that $W_\eps(S)\subset B\cup C$ is open, it suffices to check relative openness of the intersections with $B$ and $C$, since then both $B\less W_\eps(S)$ and $C\less W_\eps(S)$ are compact in the relative topology, so is their union $(B\less W_\eps(S))\cup(C\less W_\eps(S))=
 (B\cup C)\less W_\eps(S)$, and hence $W_\eps(S)\subset B\cup C$ is the complement of a closed subset.
Indeed, $W_\eps(S) \cap B = \Nn^B_\eps(S)$ is open since the $\eps$-neighbourhoods were constructed open, and we use the inclusion $T\subset \Nn^C_\eps(T)$ for $T=\Nn^B_\eps(S)\cap C$ to express
$$
W_\eps(S) \cap C \;=\; \Nn^C_\eps\bigl(\Nn^B_\eps(S)\cap C)\bigr) \;\less\;
 \bigl(B \,\less\, \Nn^B_\eps(S)\bigr)
$$
as complement of a closed set in an open set. This shows openness of $W_\eps(S) \cap C$ and hence of $W_\eps(S)$. Moreover, the equivalence of metrics gives for any $S\subset B\cap C$
$$
W_\eps(S)\cap C\;\subset\; \Nn^C_\eps\bigl(\Nn^B_\eps(S)\cap C)\bigr) \;\subset\;
 \Nn^C_\eps\bigl(\Nn^C_{\kappa\eps}(S)\cap C)\bigr) \;\subset\;  \Nn^C_{(\kappa+1)\eps}(S).
$$
Next, recall that compact metrizable spaces are separable. Since we can equip $B\cap C$ with either metric $d^B$ or $d^C$, we obtain a dense sequence $(x_n)_{n\in\N}\subset B\cap C$.
Now we claim that the countable collection
$$
( W_j )_{j\in\N} \;=\; \bigl( W_{1/m}(x_n) \bigr)_{m,n\in\N}
$$
satisfies \eqref{eq:RAB}.  To see this, we must check that for every $r\in R\cap (B\cap C)$ there is $m,n\in\N$ with $r\in W_{1/m}(x_n)$.
For that purpose first choose $\eps>0$ so that $\Nn^B_\eps(r)\subset R\cap B$
and $\Nn^C_\eps(r)\subset R\cap C$. Then choose $m\in\N$ so that $m>(2\kappa+1)/\eps$,
and choose $x_n\in \Nn^B_{1/m}(r)$. Then we use \eqref{eq:incl} and the equivalence of metrics to obtain the inclusion
\begin{align*}
W_{1/m}(x_n)
&\;\subset\;
W_{1/m}\bigl(\Nn^B_{1/m}(r)\bigr) \\
& \;\subset\;
\Nn^B_{1/m}\bigl(\Nn^B_{1/m}(r)\bigr)\; \cup \; \Nn^C_{1/m}\bigl(\Nn^B_{1/m}\bigl(\Nn^B_{1/m}(r)\bigr) \cap C\bigr)\\
&\; \subset \; \Nn^B_{2/m}(r)\; \cup \; \Nn^C_{(2\kappa+1)/m}(r)
\; \subset \; \Nn^B_{\eps}(r)\; \cup \; \Nn^C_{\eps}(r) \;\subset\; R
\end{align*}
as required.
This proves \eqref{eq:RAB} and hence the claim, which finishes the proof of (iv).
In particular, $\|\Aa\|$ is metrizable in the subspace topology, by restriction of a metric on $\pi_\Kk(\ov\Aa)\subset|\Kk|$.
\end{proof}

\subsection{Shrinkings and tameness}
\label{ss:shrink}  \hspace{1mm}\\ \vspace{-3mm}

The purpose of this section is to prove Theorem~\ref{thm:K} and Proposition~\ref{prop:metric} by giving a general construction of a metric, tame Kuranishi atlas starting from an additive, weak Kuranishi atlas. The  construction will be a suitable shrinking of the footprints along with the domains of charts and transition maps, as follows.

\begin{defn}\label{def:shr0}
Let $(F_i)_{i=1,\ldots,N}$ be an open cover of a compact space $X$.  We say that $(F_i')_{i=1,\ldots,N}$ is a {\bf shrinking} of $(F_i)$ if $F_i'\sqsubset F_i$ are
precompact open subsets, 
which cover $X= \bigcup_{i=1,\ldots,N} F'_i$, and are such that for all subsets $I\subset \{1,\ldots,N\}$ we have
\begin{equation} \label{same FI}
F_I: = {\textstyle\bigcap_{i\in I}} F_i \;\ne\; \emptyset
\qquad\Longrightarrow\qquad
F'_I: = {\textstyle\bigcap_{i\in I}} F'_i \;\ne\; \emptyset .
\end{equation}
\end{defn}

Recall here that precompactness $V'\sqsubset V$ is defined as the relative closure of $V'$ in $V$ being compact.  If $V$ is contained in a compact space $X$, then $V'\sqsubset V$ is equivalent to the closure $\ov{V'}$ in the ambient space being contained in $V$.

\begin{defn}\label{def:shr}
Let $\Kk=(\bK_I,\Hat\Phi_{I J})_{I, J\in\Ii_\Kk, I\subsetneq J}$ be a weak Kuranishi atlas.   We say that a weak Kuranishi atlas $\Kk'=(\bK_I',\Hat\Phi_{I J}')_{I, J\in\Ii_{\Kk'}, I\subsetneq J}$ is a {\bf shrinking} of $\Kk$~if
\begin{enumerate}
\item  the footprint cover $(F_i')_{i=1,\ldots,N'}$ is a 
shrinking of the cover $(F_i)_{i=1,\ldots,N}$,
in particular the numbers $N=N'$ of basic charts agree, and so do the index sets $\Ii_{\Kk'} = \Ii_\Kk$;
\item
for each $I\in\Ii_\Kk$ the chart $\bK'_I$ is the restriction of $\bK_I$ to a precompact domain $U_I'\subset U_I$
as in Definition \ref{def:restr};
\item
for each $I,J\in\Ii_\Kk$ with $I\subsetneq J$ the coordinate change $\Hat\Phi_{IJ}'$ is the restriction of $\Hat\Phi_{IJ}$  to the open subset $U'_{IJ}: =  \phi_{IJ}^{-1}(U'_J)\cap U'_I$
 as in Lemma~\ref{le:restrchange}.
\end{enumerate}
\end{defn}

\begin{rmk}\rm \label{rmk:shrink}
(i)
Note that any shrinking of an additive weak Kuranishi atlas preserves the weak cocycle condition (since it only requires equality on overlaps) and also the additivity condition.
Moreover, a shrinking  is determined by the choice of domains $U'_I\sqsubset U_I$ and so can be considered as the restriction of $\Kk$ to 
the subset
$\bigcup_{I\in\Ii_\Kk} U_I'\subset\Obj_{\Bb_\Kk}$.
However, for a shrinking to satisfy a stronger form of the cocycle condition (such as tameness) the  domains $U'_{IJ}:= \phi_{IJ}^{-1}(U'_J)\cap U'_I$ of the coordinate changes must satisfy appropriate compatibility conditions, so that the domains $U_I'$ can no longer be chosen independently of each other.
Since the relevant conditions are expressed in terms of the $U_{IJ}'$, we will find that the construction of a 
tame shrinking in Proposition~\ref{prop:proper} can be achieved by iterative choice of these sets $U_{IJ}'$.  
These will form shrinkings in each step, though we prove it only up to the level of the iteration. 
Our construction is made possible only because of the additivity conditions on $\Kk$. 
\MS

\NI (ii)
Given two tame shrinkings $\Kk^0$ and $\Kk^1$ of the same weak Kuranishi atlas, one might hope to obtain a ``common refinement'' $\Kk^{01}$ by intersection $U^{01}_{IJ}:=U^0_{IJ}\cap U^1_{IJ}$ of the domains.
This could in particular simplify the proof of compatibility of the Kuranishi atlases $\Kk^0,\Kk^1$ in Proposition~\ref{prop:cobord2}.
However, for this to be a valid approach the footprint covers $(F^0_i)_{i=1,\ldots,N}$ and $(F^1_i)_{i=1,\ldots,N}$ 
would have to be comparable in the sense that their intersections still cover $X=\bigcup_{i=1,\ldots,N} (F^0_i\cap F^1_i)$ and have the same index set, i.e.\ $F^0_I \cap F^1_I \neq \emptyset$ for all $I\in\Ii_\Kk$.
Once this is satisfied, one can check that $\Kk^{01}$ defines another tame shrinking of $\Kk$.
\end{rmk}

We can now prove the main result of this section. Note that by the above remark, the main challenge is to achieve the tameness conditions \eqref{eq:tame1}, \eqref{eq:tame2}.

\begin{prop}  \label{prop:proper}
Every additive weak Kuranishi atlas $\Kk$ has a shrinking $\Kk'$ that is a tame Kuranishi atlas -- for short called a {\bf tame shrinking}.
\end{prop}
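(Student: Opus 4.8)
The plan is to construct the tame shrinking by a finite iteration over the cardinality $|I|$ of index sets, at each stage choosing precompact domains $U'_{IJ}$ (equivalently the domains $U'_I$ via restriction) so that the two tameness conditions \eqref{eq:tame1} and \eqref{eq:tame2} hold for all triples with $|I|$ below the current threshold. Since tameness includes additivity, and additivity is automatically inherited by any shrinking (Remark~\ref{rmk:shrink}(i)), the only substantive work is to arrange the two nesting/image identities; and by Lemma~\ref{le:tame0} once these hold the strong cocycle condition follows for free, so the result is genuinely a Kuranishi atlas and not merely a weak one. The key point throughout is that \eqref{eq:tame1} and \eqref{eq:tame2} are \emph{relative} conditions on the $U_{IJ}$: one shrinks the small-$|I|$ data first, then is forced to shrink the larger domains compatibly, so the domains $U'_I$ can no longer be chosen independently.

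\textbf{Step 1: set up the footprint shrinking.} First I would fix a shrinking $(F_i')_{i=1,\dots,N}$ of the footprint cover $(F_i)$ in the sense of Definition~\ref{def:shr0}; such a shrinking exists by a standard argument on a compact metrizable space (shrink the $F_i$ one at a time keeping the cover property, and the condition \eqref{same FI} can be built in since each $F_I\ne\emptyset$ is open, so one has room to keep a nonempty precompact subset of it). In fact one should take several nested shrinkings $(F_i^{(k)})$, one for each iteration step, with $\overline{F^{(k+1)}_i}\subset F^{(k)}_i$, so that at step $k$ there is ``room'' inside the previously chosen sets. The footprint data then determines, via the homeomorphism property of the $\psi_I$ and Lemma~\ref{le:restr0}, precompact candidate domains for the charts and their restrictions; the real content is choosing them inside the $U_I$ in the right order.

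\textbf{Step 2: the iteration.} I would order tasks by $|I|$. At step $k$, assume inductively that precompact domains $U'_{IJ}$ (hence $U'_I$) have been chosen for all $I$ with $|I|<k$ so that \eqref{eq:tame1}, \eqref{eq:tame2} hold whenever the smallest index set involved has cardinality $<k$, and so that the footprints are the $F^{(k)}_I$. Now for each $I$ with $|I|=k$ one must shrink $U'_I$ (and correspondingly all $U'_{IJ}=\phi_{IJ}^{-1}(U'_J)\cap U'_I$) to enforce: (a) the intersection identity $U'_{IJ}\cap U'_{IK}=U'_{I(J\cup K)}$ for $J,K\supset I$, and (b) the image identity $\phi_{IJ}(U'_{IK})=U'_{JK}\cap s_J^{-1}(\widehat\phi_{IJ}(E_I))$ for $I\subset J\subset K$. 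For (b), note the right-hand side is already a closed subset of $U'_J$ controlled by the infinitesimal tameness of Definition~\ref{def:change} and the additivity identities \eqref{eq:addd}, \eqref{eq:CIJ}; on the zero set the identity holds automatically from the footprint relations, so one only needs to shrink $U'_I$ slightly around $\psi_I^{-1}(F^{(k)}_I)$ to kill the unwanted parts — here precompactness is essential, since it guarantees $\overline{\psi_I^{-1}(F^{(k+1)}_I)}\subset\psi_I^{-1}(F^{(k)}_I)$ and hence a definite collar in which to work. For (a), one intersects the candidate $U'_I$ with $\bigcap_{J,K}\bigl(\text{complement of }(U_{IJ}\cap U_{IK})\setminus U_{I(J\cup K)}\bigr)$ near the zero set, again possible because on $s_I^{-1}(0)$ the identity $\psi_I^{-1}(F_J)\cap\psi_I^{-1}(F_K)=\psi_I^{-1}(F_{J\cup K})$ holds. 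One must check that after this shrinking the footprint of $\bK'_I$ is still exactly $F^{(k+1)}_I$ (so the cover property and \eqref{same FI} persist) and that the lower-$|I|$ tameness conditions established earlier are not destroyed — the latter is where one uses that shrinking $U'_I$ forces a corresponding, compatible shrinking of the $U'_{IJ}$ with $I$ of smaller cardinality via $U'_{IJ}=\phi_{IJ}^{-1}(U'_J)\cap U'_I$, and that \eqref{eq:tame1}, \eqref{eq:tame2} are preserved under such matched restrictions.

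\textbf{Main obstacle.} I expect the crux to be Step~2's bookkeeping: showing that enforcing the two tameness identities at level $k$ does not undo them at levels $<k$, i.e.\ that the iteration actually converges (it is finite, so ``converges'' just means: terminates with all conditions simultaneously true). This is where additivity does the real work — it is precisely the additivity identities \eqref{eq:addd}--\eqref{eq:CIJ0} that make the constraints at different levels \emph{consistent} rather than contradictory, because they guarantee that the sets one is forced to remove from $U'_J$ when shrinking $U'_I$ (for $I\subset J$) are exactly of the form $\eps_J$ of what was removed from $U'_I$, so nothing new needs to be removed at a \emph{lower} level. Making this precise — keeping track, at each step, of which domains have been finalized and which are still being cut, and verifying the matched-restriction compatibility of \eqref{eq:tame1} and \eqref{eq:tame2} — is the technical heart of the argument, with the existence of nested precompact footprint shrinkings and Lemma~\ref{le:restr0} supplying the geometric room at every stage. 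Once the iteration is complete, tameness gives the strong cocycle condition by Lemma~\ref{le:tame0}, so $\Kk'$ is a tame Kuranishi atlas, completing the proof.
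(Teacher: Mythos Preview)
Your overall architecture is right: iterate over $|I|$, use additivity, and conclude via Lemma~\ref{le:tame0}. But there is a genuine gap in how you propose to achieve the first tameness identity $U'_{IJ}\cap U'_{IK}=U'_{I(J\cup K)}$, and a related misconception about what gets shrunk when.

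You write that one enforces \eqref{eq:tame1} by shrinking $U'_I$, intersecting it with the complement of $(U_{IJ}\cap U_{IK})\setminus U_{I(J\cup K)}$. This does not work, because in a shrinking the domains $U'_{IJ}$ are \emph{determined} by $U'_{IJ}=U'_I\cap\phi_{IJ}^{-1}(U'_J)$, so the identity you want involves $U'_J$, $U'_K$, and $U'_{J\cup K}$ simultaneously through the preimages $\phi_{I\bullet}^{-1}$. Shrinking $U'_I$ alone cannot produce an \emph{equality} of intersections; it only produces inclusions. The paper resolves this by inverting the logic: during the iteration it constructs the sets $U^{(k)}_{IJ}\subset U_I$ \emph{directly} (not as $U'_I\cap\phi_{IJ}^{-1}(U'_J)$), and only at the end verifies the shrinking form $U'_{IJ}=U'_I\cap\phi_{IJ}^{-1}(U'_J)$ as a consequence of tameness (see \eqref{kclaim}). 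The actual construction of the $U^{(k)}_{IK}$ with the exact intersection property (ii$'$) is the content of the paper's Step~A, and it relies on a separate technical lemma (Lemma~\ref{le:set}) which, given a closed set $Z$ and a compatible family of open sets $W_K$, produces open $U_K\subset W_K$ with $U_J\cap U_K=U_{J\cup K}$ via an explicit distance-function formula. This lemma is the missing ingredient in your sketch; without it, or an equivalent device, your Step~2(a) is not a construction.

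Secondly, your description of the ``main obstacle'' is slightly off. You anticipate that shrinking at level $k$ forces re-shrinking at levels $<k$, and worry about convergence. In the paper's scheme the domains $U^{(k)}_{IJ}$ with $|I|<k$ are \emph{frozen} at step $k$; what changes are the domains $U^{(k)}_{JK}$ with $|J|>k$, which are cut down by the explicit formula \eqref{eq:UJK(k)} to make \eqref{eq:tame2} hold. The nontrivial check (the paper's Step~B) is that this cutting does not remove anything from $s_J^{-1}(E_H)$ for $|H|<k$ --- and \emph{that} is where additivity enters, via \eqref{eq:CIJ}. So your instinct that additivity is the compatibility mechanism is correct, but it operates upward (adjusting larger $J$) rather than downward. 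Finally, a single footprint shrinking $(F_i')$ suffices; the nested sequence $(F_i^{(k)})$ you propose is unnecessary, since the footprints are fixed once and the iteration only moves the domains off the zero set.
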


\begin{proof}
Since $X$ is compact and metrizable and the footprint open cover $(F_i)$ is finite, it has a shrinking $(F_i')$ in the sense of Definition~\ref{def:shr0}.
In particular we can ensure that $F_I'\ne \emptyset$ whenever $F_I\ne \emptyset$ by choosing $\de>0$ so that every nonempty $F_I$ contains some ball $B_\de(x_I)$ and then choosing the $F_i'$ to contain $B_{\de/2}(x_I)$ for each $I\ni i$ (i.e.\ $F_I\subset F_i$).
Then we obtain $F'_I\neq\emptyset$ for all $I\in \Ii_\Kk$ since $B_{\de/2}(x_I)\subset \bigcap_{i\in I} F_i =F_I'$.

In another preliminary step we now find precompact open subsets $U_I^{(0)}\sqsubset U_I$ and open sets $U_{IJ}^{(0)}\subset U_{IJ}\cap U_I^{(0)}$ for all $I,J\in\Ii_\Kk$ such that 
\begin{equation}\label{eq:U(0)}
U_I^{(0)}\cap s_I^{-1}(0) = \psi_I^{-1}(F_I'),\qquad U_{IJ}^{(0)}\cap s_I^{-1}(0) = \psi_I^{-1}(F'_I\cap F_J').
\end{equation}
The restricted domains $U_I^{(0)}$ are provided by Lemma~\ref{le:restr0}. 
Next, we may take
\begin{equation}\label{eq:UIJ(0)}
U_{IJ}^{(0)}:= U_{IJ}\cap U_I^{(0)}\cap \phi_{IJ}^{-1}( U_J^{(0)}),
\end{equation}
which is open because $U_J^{(0)}$ is open and $\phi_{IJ}$ is continuous.
It has the required footprint
$$
U_{IJ}^{(0)}\cap s_I^{-1}(0) =U_{IJ}\cap \psi_I^{-1}(F_I')\cap \phi_{IJ}^{-1} \bigl(\psi_J^{-1}(F_J')\bigr) = \psi_I^{-1}(F_I'\cap F_J') =  \psi_I^{-1}(F_J').
$$
Therefore, this defines an additive weak Kuranishi atlas with footprints $F_I'$, which satisfies the conditions of Definition~\ref{def:shr} and so is a shrinking of $\Kk$.

We will construct  
the required shrinking 
$\Kk'$ by choosing possibly smaller domains  $U_I'\subset U_I^{(0)}$ and $U_{IJ}'\subset U_{IJ}^{(0)}$ but will preserve the footprints $F_I'$.
We will also arrange $U_{IJ}' = U_I'\cap \phi_{IJ}^{-1}(U_J')$, so that $\Kk'$ is a shrinking of the original $\Kk$.  Since $\Kk'$ is automatically additive, we just need to make sure that it satisfies the tameness conditions~\eqref{eq:tame1}  and~\eqref{eq:tame2}.
By Lemma \ref{le:tame0} it will then satisfy the cocycle condition and hence will be a Kuranishi atlas.
We will construct the domains $U_I', U_{IJ}'$ by a finite iteration, starting with $U_I^{(0)}, U_{IJ}^{(0)}$.
Here we streamline the notation by setting $U_{I}^{(k)}:=U_{II}^{(k)}$ and extend the notation to all pairs of subsets $I\subset J\subset\{1,\ldots,N\}$ by setting $U_{IJ}^{(k)}=\emptyset$ if $J\notin\Ii_\Kk$. (Note that $J\in\Ii_\Kk$ and $I\subset J$ implies $I\in\Ii_\Kk$.)
Then in the $k$-th step we will construct open subsets $U_{IJ}^{(k)}\subset U_{IJ}^{(k-1)}$ for all $I\subset J\subset\{1,\ldots,N\}$ such that the following holds.

\begin{enumerate}
\item
The zero set conditions $U_{IJ}^{(k)}\cap s_I^{-1}(0) = \psi_I^{-1}(F_J')$ hold for all $I\subset J$.
\item
The first tameness condition \eqref{eq:tame1} holds for all $I\subset J,K$ with $|I|\le k$, that is
$$
U_{IJ}^{(k)}\cap U_{IK}^{(k)}= U_{I (J\cup K)}^{(k)} .
$$
In particular, we have $U_{IK}^{(k)} \subset U_{IJ}^{(k)}$ for $I\subset J \subset K$ with $|I|\le k$.
\item
The second tameness condition \eqref{eq:tame2} holds for all $I\subset J\subset K$ with $|I|\le k$, that is
$$
\phi_{IJ}(U_{IK}^{(k)}) = U_{JK}^{(k)}\cap s_J^{-1}(E_I) .
$$
In particular we have $\phi_{IJ}(U_{IJ}^{(k)}) = U_{J}^{(k)}\cap s_J^{-1}(E_I)$ for all $I\subset J$ with $|I|\le k$.
\end{enumerate}\MS

Note that after the $k$-th step, the domains $U^{(k)}_{IJ}$ form a shrinking ``up to order $k$'' in the sense that
\begin{equation}\label{kclaim}
U_{IJ}^{(k)} = U_I^{(k)} \cap \phi_{IJ}^{-1}(U_J^{(k)}) \qquad \forall\; |I|\le k , I\subsetneq J .
\end{equation}
Indeed, for any such pair $I\subsetneq J$, property (iii) with $J=K$ implies 
$$
\phi_{IJ}(U_{IJ}^{(k)}) \;=\; U_{J}^{(k)}\cap s_J^{-1}(E_I)
\;=\; U_{J}^{(k)}\cap \im\phi_{IJ},
$$ 
where the second equality is due to the first implying $U_{J}^{(k)}\cap s_J^{-1}(E_I) \subset \im\phi_{IJ}$, and $\im \phi_{IJ}\subset s_J^{-1}(E_I)$, which follows from $s_J\circ\phi_{IJ}=\Hat\phi_{IJ}\circ s_I$.
Since $\phi_{IJ}$ is injective, this implies $U_{IJ}^{(k)}=\phi_{IJ}^{-1}(U_J^{(k)})$.
Now  \eqref{kclaim} follows since (ii) with $K=I$ implies $U_{IJ}^{(k)} \subset U_{II}^{(k)} = U_I^{(k)}$.

Thus, when the iteration is complete, that is when $k=M: =\max_{I\in \Ii_\Kk} |I|$, then $\Kk'$ is a shrinking of $\Kk$.
Moreover, the tameness conditions hold on $\Kk'$ by (ii) and (iii), and Lemma~\ref{le:tame0} implies that $\Kk'$ satisfies the strong cocycle condition. Hence $\Kk'$ is the desired tame Kuranishi atlas.
So it remains to implement the iteration. 

Our above choice of the domains $U_{IJ}^{(0)}$ completes the $0$-th step since conditions (ii) (iii) are vacuous. Now suppose that the $(k-1)$-th step is complete for some $k\geq 1$.
Then we define $U_{IJ}^{(k)}:=U_{IJ}^{(k-1)}$ for all $I\subset J$ with $|I|\leq k-1$. For $|I|=k$ we also set $U_{II}^{(k)}:=U_{II}^{(k-1)}$.
This ensures that (i) and (ii) continue to hold for $|I|<k$. In order to preserve (iii) for triples $H\subset I\subset J$ with $|H|<k$ we then require that the intersection $U_{IJ}^{(k)}\cap s_I^{-1}(E_H)= U_{IJ}^{(k-1)}\cap s_I^{-1}(E_H)$ is fixed.
In case
$H=\emptyset$, this is condition (i), and since $U_{IJ}^{(k)}\subset U_{IJ}^{(k-1)}$ it can generally be phrased as inclusion (i$'$) below.
With that it remains to construct the open sets $U_{IJ}^{(k)}\subset U_{IJ}^{(k-1)}$ as follows.
\begin{itemize}
\item[(i$'$)]
For all $H\subsetneq I\subset J$ with $|H|<k$ and $|I|\geq k$ we have $U_{IJ}^{(k-1)}\cap s_I^{-1}(E_H)\subset U_{IJ}^{(k)}$. Here we include $H=\emptyset$, in which case the condition says that
$U_{IJ}^{(k-1)}\cap s_I^{-1}(0)\subset U_{IJ}^{(k)}$ (which implies $U_{IJ}^{(k)}\cap s_I^{-1}(0) = \psi_I^{-1}(F_J')$, as explained above).
\item[(ii$'$)]
For all $I\subset J,K$ with $|I|= k$ we have
$U_{IJ}^{(k)}\cap U_{IK}^{(k)}= U_{I (J\cup K)}^{(k)}$.
\item[(iii$'$)]
For all $I\subsetneq J\subset K$ with $|I|=k$ we have
$\phi_{IJ}(U_{IK}^{(k)}) = U_{JK}^{(k)}\cap s_J^{-1}(E_I)$.
\end{itemize}

Here we also used the fact that (iii) is automatically satisfied for $I=J$ and so stated (iii$'$) only for $J\supsetneq I$.
By construction, the domains $U^{(k)}_{II}$ for $|I|=k$ already satisfy (i$'$), so we may now do this iteration step in two stages:

\MS\NI
{\bf Step A}\; will construct $U_{IK}^{(k)}$ for $|I|=k$ and $I\subsetneq K$ satisfying (i$'$),(ii$'$) and

\begin{itemize}
\item[(iii$''$)]
$U_{IK}^{(k)} \subset \phi_{IJ}^{-1}(U_{JK}^{(k-1)})$
for all $I\subsetneq J\subset K$ .
\end{itemize}
{\bf Step B} will construct $U_{JK}^{(k)}$ for $|J|>k$ and $J\subset K$ satisfying (i$'$) and (iii$'$).
\MS

\bigskip\NI
{\bf Step A:} We will accomplish this construction by applying Lemma \ref{le:set} below for fixed $I$ with $|I|=k$ to the complete metric space $U:=U_I$, its precompact open subset $U':=U^{(k)}_{II}$, the relatively closed subset
$$
Z :=
\bigcup_{H\subsetneq I} \bigl( U_{II}^{(k-1)} \cap s_I^{-1}(E_H) \bigr)
\;=\; \bigcup_{H\subsetneq I} \phi_{HI}\bigl(U_{HI}^{(k-1)}\bigr)
\;\subset\; U'
$$
and the relatively open subsets for all $i\in\{1,\ldots,N\}\less I$
$$
Z_i := U^{(k-1)}_{I (I\cup\{i\})} \cap Z
\;=\; \bigcup_{H\subsetneq I} \bigl( U_{I(I\cup\{i\})}^{(k-1)} \cap s_I^{-1}(E_H) \bigr)
\;=\; \bigcup_{H\subsetneq I} \phi_{HI}\bigl(U^{(k-1)}_{H (I\cup\{i\})} \bigr) .
$$
Here, by slight abuse of language, we define $\phi_{\emptyset I}\bigl(U_{\emptyset J}^{(k-1)}\bigr): =
U_{IJ}^{(k-1)} \cap s_I^{-1}(0) $.
Note that $Z\subset U'$ is relatively closed since $U_{II}^{(k-1)}=U_{II}^{(k)}=U'$, and
$Z_i\subset Z$ is relatively open since $U^{(k-1)}_{I (I\cup\{i\})} \subset U'$ is open.
Also the above identities  for $Z$ and $Z_i$ in terms of $\phi_{HI}$
follow from (iii) for the triples $H\subset I \subset I$ and $H\subset I \subset I\cup\{i\}$ with $|H|<|I|=k$.

To understand the choice of these subsets, note that in case $k=1$ with $I=\{i_0\}$ the set $Z$ is given by $H=\emptyset$ and $U^{(0)}_{II}=U^{(0)}_{i_0} \sqsubset U_{i_0}$, hence $Z = U^{(0)}_{i_0} \cap s_{i_0}^{-1}(0) = \psi_{i_0}^{-1}(F_{i_0}')$ is the preimage of the shrunk footprint and for $i\neq{i_0}$ we have $Z_{i}
= \psi_{i_0}^{-1}(F_{i_0}'\cap F'_i)$.
When $k\ge 1$, the index sets $K\subset\{1,\ldots,N\}$ containing $I$ as proper subset are in one-to-one correspondence with the nonempty index sets $K'\subset\{1,\ldots,N\}\less I$ via $K=I\cup K'$ and give rise to the relatively open sets
$$
Z_{K'} \,:=\; \bigcap_{i\in K'} Z_i
\;=\; Z \cap \bigcap_{i\in K'} U^{(k-1)}_{I (I\cup\{i\})}
\;=\; Z \cap U^{(k-1)}_{IK} .
$$
Here we used (ii) for $|H| < k$. We may also use the identity $Z_i=\bigcup_{H\subsetneq I} \ldots$ together with (ii) and
(iii) for $|H|<k$ to identify these sets with
\begin{equation}\label{eq:ZzK}
Z_{K'}
= \bigcup_{H\subsetneq I} \phi_{HI}\Bigl(\; \underset{i\in K'}{\textstyle \bigcap} U^{(k-1)}_{H (I\cup\{i\})} \Bigr)
= \bigcup_{H\subsetneq I} \phi_{HI}\bigl( U^{(k-1)}_{H (I\cup K')} \bigr)
= \bigcup_{H\subsetneq I} \bigl( U_{IK}^{(k-1)} \cap s_I^{-1}(E_H) \bigr) ,
\end{equation}
which explains their usefulness:
If we construct the new domains such that $Z_{K'}\subset U_{IK}^{(k)}$, then this implies the inclusion $U_{IK}^{(k-1)}\cap s_I^{-1}(E_H) \subset Z_{K'} \subset U_{IK}^{(k)}$ required by (i$'$).

Finally, in order to achieve the inclusion condition
$U_{IK}^{(k)} \subset \phi_{IJ}^{-1}(U_{JK}^{(k-1)})$ of (iii$''$),
we fix the open subsets $W_{K'}$ for all $I\subsetneq K = I \cup K'$ as
\begin{equation}\label{eq:WwK}
W_{K'}: = \bigcap_{I\subset J\subset K} \bigl( \phi_{IJ}^{-1} (U^{(k-1)}_{JK}) \cap U^{(k-1)}_{IJ} \bigr) \;\subset\; U' .
\end{equation}
If we require $U_{IK}^{(k)}\subset W_{K'}$ then this ensures (iii$''$)
as well as $U_{IK}^{(k)}\subset U_{IK}^{(k-1)}$.
The latter follows from the inclusion $W_{K'}\subset U_{IK}^{(k-1)}$, which holds by definition \eqref{eq:WwK} with $J=K$.
Now if we can ensure that $W_{K'}\cap Z = Z_{K'}$, then Lemma \ref{le:set} provides choices of open subsets $U_{IK}^{(k)}\subset U'$ satisfying (ii$'$) and the inclusions $Z_{K'}\subset U_{IK}^{(k)}\subset W_{K'}$. The latter imply (i$'$) and the desired inclusion (iii$''$), as discussed above.

Hence it remains to check that the sets $W_{K'}$ in \eqref{eq:WwK} do satisfy the conditions
$W_{K'}\cap Z = Z_{K'}$.
To verify this, first note that $W_{K'}$ is contained in $U^{(k-1)}_{IJ}$ for all $J\supset I$, in particular
for
$J=K$, and hence
$$
W_{K'}\cap Z \;\subset\; U^{(k-1)}_{IK} \cap Z \;=\; Z_{K'}.
$$
It remains to check $Z_{K'} \subset W_{K'}$.
By~\eqref{eq:WwK} and the  expression for $Z_{K'}$ in the middle of~\eqref{eq:ZzK},
it suffices to show that for all $H\subsetneq I \subset J \subset K$
$$
\phi_{HI}\bigl( U^{(k-1)}_{H K} \bigr) \;\subset\; \phi_{IJ}^{-1} (U^{(k-1)}_{JK}) \cap U^{(k-1)}_{IJ} .
$$
But, (ii) for $H\subset J\subset K$ and (iii) for $H\subset I \subset J$ imply
$$
\phi_{HI}( U^{(k-1)}_{H K} ) \subset \phi_{HI}( U^{(k-1)}_{H J} ) \subset U^{(k-1)}_{IJ},
$$
so it remains
to check that
$\phi_{HI}\bigl( U^{(k-1)}_{H K} \bigr) \subset \phi_{IJ}^{-1} ( U^{(k-1)}_{JK} )$.
For that purpose we will use the weak cocycle condition $\phi_{IJ}^{-1}\circ\phi_{HJ}=\phi_{HI}$ on
$U_{H J}\cap \phi_{HI}(U_{IJ})$. Note that $U^{(k-1)}_{H K}$ lies in this domain since, by (ii) for $|H|<k$, it is a subset of $U^{(k-1)}_{H J}$, which by (iii) for $H\subset I\subset J$ is contained in $\phi_{HI}^{-1}(U_{IJ})$. This proves the first equality in
$$
\phi_{HI}\bigl( U^{(k-1)}_{H K} \bigr)
=\phi_{IJ}^{-1}\bigl( \phi_{HJ}\bigl( U^{(k-1)}_{H K} \bigr) \bigr)
\subset \phi_{IJ}^{-1} ( U^{(k-1)}_{JK} ),
$$
and the last inclusion holds by (iii) for $H\subset J\subset K$.
This finishes Step A.

\MS\NI
{\bf Step B:} The crucial requirement on the construction of the open sets $U_{JK}^{(k)}\subset U_{JK}^{(k-1)}$ for $|J|\geq k+1$ and $J\subset K$ is (iii$'$), that is
$$
U_{JK}^{(k)}\cap s_J^{-1}(E_I) = \phi_{IJ}(U_{IK}^{(k)})
$$
 for all $I\subsetneq J$ with $|I|=k$.
Here $U_{IK}^{(k)}$ is fixed by Step A and satisfies $\phi_{IJ}(U_{IK}^{(k)})\subset U_{JK}^{(k-1)}\cap s_J^{-1}(E_I)$ by (iii$''$),
where the second part of the inclusion is automatic by $\phi_{IJ}$ mapping to $s_J^{-1}(E_I)$. Hence the maximal subsets $U_{JK}^{(k)}\subset U_{JK}^{(k-1)}$ satisfying (iii$'$) are
\begin{equation}\label{eq:UJK(k)}
U_{JK}^{(k)}\,:=\; U_{JK}^{(k-1)}\less \bigcup_{I\subset J, |I|= k}
 \bigl( s_J^{-1}(E_I)\less \phi_{IJ}(U^{(k)}_{IJ}) \bigr) .
\end{equation}
It remains to check that these subsets are open and satisfy (i$'$).
Here $U_{JK}^{(k)}$ is open since $s_J^{-1}(E_I)\subset U_J$ is closed and
 $\phi_{IJ}(U^{(k)}_{IJ})\subset s_J^{-1}(E_I)$ is relatively open by
 the index condition in Definition~\ref{def:change}.
Finally, condition (i$'$), namely
$$
 U_{JK}^{(k-1)}\cap s_J^{-1}(E_H) \subset U_{JK}^{(k)},
$$
follows from the following inclusions for all $H\subsetneq I \subsetneq J \subset K$ with $|I|=k$.
On the one hand we have $U_{JK}^{(k-1)}\cap s_J^{-1}(E_H) \subset s_J^{-1}(E_I)$ from the additivity of $\Kk$; on the other hand (iii) for $|H|<k$ together with the weak cocycle condition on $U_{HK}^{(k-1)}\subset U_{HJ}\cap\phi_{HI}^{-1}(U_{IJ})$
and (i$'$) for $|I|=k$ imply
\begin{align*}
U_{JK}^{(k-1)}\cap s_J^{-1}(E_H)
= \phi_{HJ}( U_{HK}^{(k-1)} )
&= \phi_{IJ}\bigl(\phi_{HI}( U_{HK}^{(k-1)} )\bigr) \\
&= \phi_{IJ}\bigl(U_{IK}^{(k-1)}\cap s_I^{-1}(E_H) \bigr)
\subset  \phi_{IJ}(U^{(k)}_{IJ}) .
\end{align*}
Hence no points of $ U_{JK}^{(k-1)}\cap s_J^{-1}(E_H)$ are removed from
$U_{JK}^{(k-1)}$ when we construct $U_{JK}^{(k)}$.
This finishes Step B and hence the $k$-th iteration step.

\MS
Since the order of $I\in\Ii_\Kk$ is bounded $|I|\leq N$ by the number of basic Kuranishi charts, this iteration provides a complete construction of the shrinking domains after at most $N$ steps. In fact, we obtain $U'_I=U^{(|I|-1)}_{II}$ and $U'_{IJ}=U^{(|I|)}_{IJ}$ since the iteration does not alter these domains in later steps.
\end{proof}

\begin{lemma} \label{le:set}
Let $U$ be a complete metric space, $U'\subset U$ a precompact open set, and $Z\subset U'$ a relatively closed subset. Suppose we are given a finite collection of relatively open subsets $Z_i\subset Z$ for $i=1,\ldots,N$ and open subsets $W_K\subset U'$ with
$$
W_K\cap Z= Z_K: = {\textstyle\bigcap_{i\in K}} Z_i
$$
 for all index sets $K\subset\{1,\ldots,N\}$.
Then there exist open subsets $U_K\subset W_K$ with $U_K\cap Z=Z_K$ and
$U_J\cap U_K = U_{J\cup K}$ for all $J,K\subset\{1,\ldots,N\}$.
\end{lemma}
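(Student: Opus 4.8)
The statement asks, given relatively closed $Z\subset U'$ with a cover-compatible family $W_K$, for open sets $U_K\subset W_K$ that still ``see'' $Z$ correctly ($U_K\cap Z=Z_K$) but in addition are strictly multiplicative under unions: $U_J\cap U_K=U_{J\cup K}$. The natural idea is to shrink the $W_K$ toward $Z_K$ using the metric on $U$, so that two shrunken sets $U_J,U_K$ can only meet very close to $Z_J\cap Z_K=Z_{J\cup K}$, where $W_{J\cup K}$ already covers. Concretely, I would first set aside the trivial index sets: for $K$ with $Z_K=\emptyset$ we must take $U_K=\emptyset$ (forced by $U_K\cap Z=Z_K$ if we also want the union identity to propagate, and harmless otherwise), and for $K=\emptyset$ we may take $U_\emptyset=W_\emptyset$ after possibly shrinking. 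So the real work is with nonempty $K$ for which $Z_K\neq\emptyset$.

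\textbf{Key steps.} First I would introduce a single continuous ``distance-to-$Z$-minus-part'' gauge. For each $i$, since $Z_i\subset Z$ is relatively open, the set $Z\setminus Z_i$ is relatively closed in $Z$, hence closed in the precompact set $\overline{U'}$; define $f_i:U\to[0,\infty)$ by $f_i(x):=d(x,Z\setminus Z_i)$ (with the convention $f_i\equiv +\infty$, or just a large constant bound, if $Z\setminus Z_i=\emptyset$), which is continuous and satisfies $f_i(x)>0$ exactly on the open set $U\setminus\overline{Z\setminus Z_i}$, a set which contains $Z_i$ but is disjoint from $Z\setminus Z_i$. The guiding heuristic: a point of $W_K$ which is ``$\varepsilon$-deep'' in the sense that $f_i>\varepsilon\cdot d(x,Z)$-type condition holds for all $i\in K$ will be forced, if it also lies in $W_J$ deep for $J$, to be deep for $J\cup K$. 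I would then define, for nonempty $K$ with $Z_K\neq\emptyset$,
\[
U_K:=\Bigl\{x\in W_K \;\Big|\; d(x,Z_K)<\min_{i\notin K}\, f_i(x)\Bigr\},
\]
where the $\min$ over the empty index set is taken to be $+\infty$ (so $U_{\{1,\dots,N\}}=W_{\{1,\dots,N\}}$ automatically). Each $U_K$ is open because $x\mapsto d(x,Z_K)$ and each $f_i$ are continuous. For $U_K\cap Z=Z_K$: if $x\in Z_K$ then $d(x,Z_K)=0$ and $x\in W_K\cap Z=Z_K$, and for $i\notin K$ we have $x\in Z\subset Z_i\cup(Z\setminus Z_i)$; if $x\notin Z_i$ then $x\in Z\setminus Z_i$ so $f_i(x)=0$, but $x\in Z_K=\bigcap_{i\in K}Z_i$ does not a priori give $x\in Z_i$ for $i\notin K$ — so I must instead use that $x\in Z_K$ forces the $\min_{i\notin K}f_i(x)>0$, which is NOT automatic. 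This is the subtle point, and it is where I would have to choose the threshold more carefully: replace the test function by one that only involves indices $i$ for which $Z_K\subset Z_i$ fails to help, i.e. use $d(x,Z_K)<\min_{i:\,Z_{K\cup\{i\}}\subsetneq Z_K} f_i(x)$, or better, after a finite induction on $|K|$ descending, perturb the $W_K$ first so that $W_K\subset\bigcap_{i\in K}W_{\{i\}}$. Conversely for $x\in U_K\cap Z$: $x\in W_K\cap Z=Z_K$, done. For the multiplicativity $U_J\cap U_K=U_{J\cup K}$: the inclusion $\supset$ follows once one checks $W_{J\cup K}\subset W_J\cap W_K$ (which one can arrange, or which follows from $Z_{J\cup K}\subset Z_J$ and re-choosing) together with $d(x,Z_{J\cup K})\geq d(x,Z_J)$ and the $\min$ over $i\notin J\cup K$ being a submin of those over $i\notin J$ and $i\notin K$; the inclusion $\subset$ is the heart: given $x\in U_J\cap U_K$, one shows $x$ must lie in $W_{J\cup K}$ and satisfy the $(J\cup K)$-threshold. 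The first needs the cover hypothesis $W_L\cap Z=Z_L$: a point deep for both $J$ and $K$ is within $d(x,Z)$ of a point of $Z_J\cap Z_K=Z_{J\cup K}\subset W_{J\cup K}$, and deepness controls how far; one then has to feed in that $W_{J\cup K}$ is open to conclude $x\in W_{J\cup K}$, which requires the threshold to beat $d(x,\partial W_{J\cup K})$ — so yet another term must be built into the definition of $U_K$, namely also demanding $d(x,Z_K)$ be smaller than $d(x,U\setminus W_{K'})$ for all $K'\supset K$ (this is exactly the role of the sets $W_{K'}$ nested as in the paper's use via $(\textrm{iii}'')$). So the final definition I would use is
\[
U_K:=\Bigl\{x\in W_K \;\Big|\; d(x,Z_K)<\min\bigl(\{f_i(x):i\notin K\}\cup\{d(x,U\setminus W_{K'}):K'\supsetneq K\}\bigr)\Bigr\}.
\]

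\textbf{Verification and main obstacle.} With this definition I would verify, in order: (1) openness of each $U_K$ from continuity of all the functions involved and the convention that an empty $\min$ is $+\infty$; (2) $U_K\cap Z=Z_K$, using that for $x\in Z_K$ all the competing quantities are positive (here $f_i(x)>0$ because $x\in Z_K$ with $Z_{K\cup\{i\}}=Z_K\cap Z_i$ forces $x\in Z_i$ whenever the relevant $f_i$ could vanish — this needs the preliminary reduction to $W_K\subset\bigcap_{i\in K}W_{\{i\}}$ so that $Z_K=\bigcap_{i\in K}Z_i$ already, and then for $i\notin K$ we only need $x\notin\overline{Z\setminus Z_i}$, which holds because $x\in Z$ and $x\in Z_i$ would give $f_i(x)>0$ — if $x\notin Z_i$ then $x\in Z\setminus Z_i$ and that point simply is excluded; to avoid this case one intersects further, so I'd instead only range $i$ over indices with $Z_K\subset Z_i$, an easy bookkeeping fix); (3) the union identity, where $\subset$ uses the $W_{K'}$-terms to keep the point inside $W_{J\cup K}$ and the $f_i$-terms to keep the threshold valid, and $\supset$ is a monotonicity check. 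The main obstacle — and the only genuinely delicate point — is getting the threshold function right so that all three properties hold \emph{simultaneously}: the $Z_K\cap Z$-correctness pushes the threshold up (it must be positive on $Z_K$), while the union identity pushes it down (a point deep for two sets must be deep for the union, which needs the threshold to shrink appropriately as $K$ grows). Balancing these is exactly what the displayed definition does, provided one first normalizes so that $Z_J\cap Z_K=Z_{J\cup K}$ and $W_{J\cup K}\subset W_J\cap W_K$; this normalization is a routine preliminary shrinking of the given $W_K$ (intersect $W_K$ with $\bigcap_{i\in K}W_{\{i\}}$ and re-check $W_K\cap Z=Z_K$, which still holds since $Z_K\subset Z_{\{i\}}=W_{\{i\}}\cap Z$ gives $Z_K\subset W_{\{i\}}$). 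I expect the bulk of the write-up to be these two reductions plus the three verifications, each a short metric-space estimate with no surprises once the correct threshold is in place.
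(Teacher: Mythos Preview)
Your approach has a genuine gap that your own discussion partially anticipates but does not resolve. The ``final definition''
\[
U_K=\Bigl\{x\in W_K \;\Big|\; d(x,Z_K)<\min\bigl(\{f_i(x):i\notin K\}\cup\{d(x,U\setminus W_{K'}):K'\supsetneq K\}\bigr)\Bigr\}
\]
fails the correctness condition $U_K\cap Z=Z_K$. Take $N=2$, $Z=[-2,2]\subset U'=(-10,10)$, $Z_1=[-2,1)$, $Z_2=(-1,2]$, and $W_{\{1\}}=(-3,1)$, $W_{\{2\}}=(-1,3)$, $W_{\{1,2\}}=(-1,1)$; all hypotheses hold. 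For $x=-1.5\in Z_1\setminus Z_2$ you compute $d(x,Z_1)=0$, $f_2(x)=d(x,Z\setminus Z_2)=d(-1.5,[-2,-1])=0$, and $d(x,U\setminus W_{\{1,2\}})=0$ since $x\notin W_{\{1,2\}}$. So the strict inequality $0<0$ fails and $x\notin U_{\{1\}}$, contradicting $U_{\{1\}}\cap Z=Z_1$. The very term $d(x,U\setminus W_{K'})$ you introduced to force $U_J\cap U_K\subset W_{J\cup K}$ now excludes every point of $Z_K$ lying outside $W_{K'}$---and such points exist whenever $Z_{K'}\subsetneq Z_K$. Your ``bookkeeping fix'' of restricting the range of $i$ in the $f_i$-terms does not touch these $W_{K'}$-terms, and dropping the latter would in turn break the inclusion $U_J\cap U_K\subset U_{J\cup K}$ you introduced them for. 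This is not a matter of normalization: the tension between ``$U_K$ must contain all of $Z_K$'' and ``$U_K$ must avoid a neighbourhood of $U\setminus W_{K'}$'' is irreconcilable when $Z_K\not\subset W_{K'}$, so no threshold of the form $d(x,Z_K)<g_K(x)$ with $g_K$ continuous on $U$ can work.

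The paper's proof takes a structurally different route that sidesteps this tension entirely: it sets $U_K:=\bigcap_{i\in K}U_{f_i}$ for suitable per-index sets $U_{f_i}\subset U'$, so that the identity $U_J\cap U_K=U_{J\cup K}$ is \emph{automatic} from the definition, and one only has to check $U_{f_i}\cap Z=Z_i$ and $U_K\subset W_K$. The key construction is that each $f_i$ is a lower semicontinuous function on $\overline{Z}$ (not on $U$), and $U_{f_i}=\{x\in U':d(x,Z)<\inf f_i|_{M_x}\}$ where $M_x\subset\overline{Z}$ is the set of nearest points to $x$. The function $f_i(z)$ is defined as a minimum of $d(z,U'\setminus W_J)$ over only those $J\ni i$ for which $d(z,Z\setminus Z_i)=d(z,Z\setminus Z_J)$---so which $W_J$'s constrain a given $x$ depends on the local structure of $Z$ near $M_x$, not on a global index list. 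For $z\in Z_i$ this minimum is automatically positive (each $W_J$ under consideration contains $z$), which gives $Z_i\subset U_{f_i}$ without the obstruction you encountered; the containment $U_K\subset W_K$ then follows from a short argument with the defining condition on the indexing of the minimum. If you want to salvage a direct approach, the lesson is that the threshold must be allowed to depend on \emph{where in $\overline{Z}$ the nearest points of $x$ lie}, not just on $d(x,Z_K)$.
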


\begin{proof}
Let us first introduce a general construction of an open set $U_f\subset U'$ associated to any lower semi-continuous function $f:\overline{Z}\to [0,\infty)$, where $\overline{Z}\subset U$ denotes the closure in $U$.
By assumption, $\overline{Z}$ is compact, hence the distance function $\overline{Z}\to [0,\infty), z\mapsto d(x,z)$ for fixed $x\in U'$ achieves its minumum on a nonempty compact set $M_x\subset \overline{Z}$. Hence we have
$$
d(x,Z) := \inf_{z\in Z} d(x,z) = d(x,\overline{Z}) = \min_{z\in\overline Z} d(x,z) = \min_{z\in M_x} d(x,z)
$$
for the distance between $x$ and the set $Z$, or equivalently the closure $\overline{Z}$.
\MS

\NI {\bf Claim:}  {\it For any lower semi-continuous function $f:\overline{Z}\to [0,\infty)$ the
set}
$$
U_f := \bigl\{ x\in U' \,\big|\, d(x,Z) < \inf f|_{M_x} \bigr\} \subset U'
$$
{\it is open (in $U'$ or equivalently in $U$) and satisfies}
\begin{equation} \label{eq:supp}
U_f \cap Z = \supp f \cap Z = \bigl\{ z\in Z \,\big|\, f(z)>0 \bigr\}.
\end{equation}

\NI {\it Proof of Claim.}
For $x\in Z$ we have $d(x,Z)=0$ and $M_x=\{x\}$, so $d(x,Z) < \inf f|_{M_x}$ is equivalent to $0<f(x)$. We prove openness of $U_f\subset U'$ by checking closedness of $U'\less U_f$.
Thus, we consider a convergent sequence $U'\ni x_i\to x_\infty\in U'$ with $d(x_i, Z)\geq \inf f|_{M_{x_i}}$ and aim to prove $d(x_\infty, Z)\geq \inf f|_{M_{x_\infty}}$.
Since $f$ is lower semi-continuous and each $M_{x_i}$ is compact, we may choose a sequence $z_i\in\overline{Z}$ with $z_i\in M_{x_i}$ and $f(z_i)=\inf f|_{M_{x_i}}$.
(Indeed, for fixed $i$ any minimizing sequence $z^\nu_i\in M_{x_i}$ with $\lim_{\nu\to\infty}f(z^\nu_i) = \inf f|_{M_{x_i}}$ has a convergent subsequence $z^\nu_i\to z_i\in M_{x_i}$ and the limit satisfies $f(z_i) \leq \lim f(z^\nu_i) = \inf f|_{M_{x_i}}$, hence $f(z_i) = \inf f|_{M_{x_i}}$.)
Since $\overline{Z}$ is compact, we may moreover choose a subsequence, again denoted by $(x_i)$ and $(z_i)$, such that $z_i\to z_\infty\in\overline{Z}$ converges. Then by continuity of the distance functions we deduce $z_\infty\in M_{x_\infty}$ from
$$
d(x_\infty, z_\infty) = \lim d(x_i,z_i) = \lim d(x_i, Z) = d(x_\infty, Z) ,
$$
and finally the lower semi-continuity of $f$ implies the claim
$$
d(x_\infty, Z) = \lim d(x_i,Z) \geq \lim f(z_i) \geq f(z_\infty) \geq \inf f|_{M_{x_\infty}} .
$$
\medskip

We now use this general construction to define the sets $U_K:=\bigcap_{i\in K} U_{f_i}$ as intersections of the subsets $U_{f_i}\subset U'$ arising from functions $f_i: \overline{Z}\to [0,\infty)$ defined
by
$$
f_i(z) := \min \bigl\{  d( z, U' \less W_J) \,\big|\, J\subset\{1,\ldots,N\} : \;  i\in J, \; d(z,Z\less Z_i) = d(z,Z\less Z_J) \bigr\}.
$$
To check that $f_i$ is indeed lower semi-continuous, consider a sequence $z_\nu\to z_\infty\in \overline{Z}$. Then $f_i(z_\nu)= d(z_\nu,U'\less W_{J^\nu})$ for some index sets $J^\nu$ with $i\in J^\nu$ and $d(z_\nu,Z\less Z_i) = d(z_\nu,Z\less Z_{J^\nu})$. Since the set of all index sets is finite, we may choose a subsequence, again denoted $(z_\nu)$, for which $J^\nu=J$ is constant. Then in the limit we also have $d(z_\infty,Z\less Z_i) = d(z_\infty,Z\less Z_J)$ and hence
$$
f_i(z_\infty) \leq d(z_\infty, U'\less W_J) = \lim d(z_\nu, U'\less W_J) = \lim f_i(z_\nu).
$$
Thus $f_i$ is lower semi-continuous.
Therefore, the above Claim implies that each $U_{f_i}$ is open, and hence also that each $U_K$ is open as the finite intersection of open sets.

The intersection property holds by construction:
$$
U_J\cap U_K = \bigcap_{i\in J} U_{f_i} \cap \bigcap_{i\in K} U_{f_i}
= \bigcap_{i\in J\cup K} U_{f_i} = U_{J\cup K}.
$$
To obtain $U_K\cap Z = \bigcap_{i\in K} \bigl( U_{f_i} \cap Z \bigr) = Z_K$ it suffices to verify that $U_{f_i}\cap Z = Z_i$.
In view of \eqref{eq:supp}, and unravelling the meaning of $f_i(z)>0$ for $z\in Z$, that means we have to prove the
following equivalence for $z\in Z$,
$$
z\in Z_i \quad \Longleftrightarrow \quad  d(z, U'\less W_J)>0  \quad \forall J\subset\{1,\ldots N\} : i\in J ,\; d(z,Z\less Z_i) = d(z,Z\less Z_J).
$$
Assuming the right hand side, we may choose $J=\{i\}$ to obtain $d(z,U'\less W_{\{i\}})>0$, and hence, since $U'\less W_{\{i\}}$ is closed, $z\in Z \less (U'\less W_{\{i\}})= Z_i$.
On the other hand, $z\in Z_i$ implies $d(z,Z\less Z_i)>0$ since $z\in Z$ and $Z\less Z_i \subset Z$ is relatively closed. So for any $J$ with $d(z,Z\less Z_i) = d(z,Z\less Z_J)$ we obtain
$d(z,Z\less Z_J)>0$.  Hence $z\in Z_J\subset W_J$, so that $d(z, U'\less W_J)>0$.
This proves
the desired equivalence,
and hence $U_K\cap Z = \bigcap_{i\in K}Z_i = Z_K$.

Finally, we need to check that $U_K\subset W_K$.
Unravelling the construction, note that $U_K$ is the set of all $x\in U'$ that satisfy
\begin{equation}\label{eq:UK}
d(x, Z) <  d( z , U' \less W_J)
\end{equation}
for all $z\in M_x$ and all $J\subset\{1,\ldots,N\}$ such that there exists $i\in J\cap K $ satisfying
$d(z,Z\less Z_i) = d(z,Z\less Z_J)$.
Now suppose by contradiction that there exists a point $x\in U_K\less W_K$, and pick $z\in M_x$.
Then $d(x,Z) = d(x, z) \geq d( z , U' \less W_K)$ since $x\in U'\less W_K$.
This contradicts \eqref{eq:UK} with $J=K$. On the other hand, the condition $d(z,Z\less Z_i) = d(z,Z\less Z_J)$ for $J=K$ is always satisfied for some $i\in K$ since we have $d(z,Z\less Z_K) = \min_{j\in K} d(z,Z\less Z_j)$.
This provides the contradiction and hence proves $U_K\subset W_K$.
\end{proof}

This lemma completes the proof that every weak additive $\Kk$ has a tame shrinking.
We will return to these ideas in Section~\ref{ss:Kcobord} when discussing cobordisms.
We end this section by Proposition~\ref{prop:metric}, which constructs admissible metrics on certain 
 tame shrinkings by pullback with the map in the following lemma.

\begin{lemma}\label{le:injtameshr}  
Let $\Kk'$ be a tame shrinking of a tame Kuranishi atlas $\Kk$. Then the natural map $\io:|\Kk'|\to |\Kk|$
induced by the inclusion of domains $\io_I:U'_I\to U_I$ is injective.
\end{lemma}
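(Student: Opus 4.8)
The plan is to show that if $\io([I,x])=\io([J,y])$ for two points $(I,x)\in U'_I$, $(J,y)\in U'_J$, then already $[I,x]=[J,y]$ in $|\Kk'|$. Since $\io$ is induced by the functor $\bB_{\Kk'}\to\bB_\Kk$ coming from the inclusions $\io_I:U'_I\hookrightarrow U_I$ (and the identity on obstruction spaces), the hypothesis $\io([I,x])=\io([J,y])$ means precisely that $(I,\io_I(x))\sim_\Kk (J,\io_J(y))$ in $\Obj_{\bB_\Kk}$, i.e.\ these two points become equivalent under the equivalence relation generated by the morphisms of the \emph{big} atlas $\Kk$. The goal is then to promote this to equivalence under the morphisms of the \emph{shrunk} atlas $\Kk'$.

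The key tool is Lemma~\ref{le:Ku2}(a)(iii), applied to the tame Kuranishi atlas $\Kk$: since $(I,\io_I(x))\sim_\Kk (J,\io_J(y))$, there is a unique $w\in U_{I\cap J}$ with $(I,\io_I(x))\succeq (I\cap J,w)\preceq (J,\io_J(y))$ in $\bB_\Kk$, that is $\io_I(x)=\phi_{(I\cap J)I}(w)$ and $\io_J(y)=\phi_{(I\cap J)J}(w)$. The plan is to show $w\in U'_{(I\cap J)I}\cap U'_{(I\cap J)J}$, i.e.\ that $w$ actually lies in the shrunk domains, so that the corresponding morphisms $(I\cap J,w)\preceq (I,x)$ and $(I\cap J,w)\preceq (J,y)$ exist already in $\bB_{\Kk'}$, giving $[I,x]=[J,y]$ in $|\Kk'|$. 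First I would recall from Definition~\ref{def:shr}(c) that $U'_{(I\cap J)I}=\phi_{(I\cap J)I}^{-1}(U'_I)\cap U'_{I\cap J}$; thus I must check both that $\phi_{(I\cap J)I}(w)=\io_I(x)\in U'_I$ (which is immediate, since $x\in U'_I$) and that $w\in U'_{I\cap J}$. Symmetrically for $J$. So everything reduces to proving $w\in U'_{I\cap J}$.

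To get $w\in U'_{I\cap J}$ I would use the tameness of $\Kk$, specifically the image identity \eqref{eq:tame3} together with the fact that $\Kk'$ is a \emph{tame} shrinking: by \eqref{eq:tame3} for $\Kk'$ applied to the pair $(I\cap J)\subset I$ we have $\phi_{(I\cap J)I}(U'_{(I\cap J)I})=(s'_I)^{-1}\bigl(\Hat\phi_{(I\cap J)I}(E_{I\cap J})\bigr)=U'_I\cap s_I^{-1}(E_{I\cap J})$, and similarly for $J$. Now $\io_I(x)=\phi_{(I\cap J)I}(w)$ lies in $s_I^{-1}(E_{I\cap J})$ automatically (since $s_I\circ\phi_{(I\cap J)I}=\Hat\phi_{(I\cap J)I}\circ s_{I\cap J}$), and it lies in $U'_I$ by assumption; hence $\io_I(x)\in U'_I\cap s_I^{-1}(E_{I\cap J})=\phi_{(I\cap J)I}(U'_{(I\cap J)I})$, so there is $w'\in U'_{(I\cap J)I}\subset U'_{I\cap J}$ with $\phi_{(I\cap J)I}(w')=\io_I(x)=\phi_{(I\cap J)I}(w)$. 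Since $\phi_{(I\cap J)I}$ is injective, $w=w'\in U'_{I\cap J}$, as desired. Thus $w\in U'_{(I\cap J)I}$ and, by the symmetric argument with $x$ replaced by $y$ and $I$ by $J$, also $w\in U'_{(I\cap J)J}$; the two morphisms of $\bB_{\Kk'}$ witnessing $(I,x)\sim_{\Kk'}(I\cap J,w)\sim_{\Kk'}(J,y)$ then exist, proving injectivity of $\io$.

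The main obstacle I anticipate is bookkeeping rather than conceptual: one must be careful that the ``tame shrinking'' $\Kk'$ of a tame atlas $\Kk$ genuinely satisfies \eqref{eq:tame3} (this is guaranteed since by Proposition~\ref{prop:proper}-type reasoning a tame shrinking is itself a tame Kuranishi atlas), and that the domains $U'_{IJ}$ of the coordinate changes of $\Kk'$ are exactly $\phi_{IJ}^{-1}(U'_J)\cap U'_I$ as prescribed by Definition~\ref{def:shr}(c) — it is this explicit description, combined with the image identity, that makes the pullback argument close. One subtlety worth double-checking is that the uniqueness clause in Lemma~\ref{le:Ku2}(c) is what lets us conclude $w=w'$ without separately verifying that the $w$ produced by part (a)(iii) is ``the same'' $w$ as the one we extract from the shrunk image; in fact we only ever need injectivity of the single map $\phi_{(I\cap J)I}$, so even the global uniqueness of Lemma~\ref{le:Ku2}(c) is more than required here.
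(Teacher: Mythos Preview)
Your proof is correct and follows essentially the same approach as the paper's: use Lemma~\ref{le:Ku2}(a)(iii) for the tame atlas $\Kk$ to find $w\in U_{I\cap J}$ with $\phi_{(I\cap J)I}(w)=x$ and $\phi_{(I\cap J)J}(w)=y$, then apply the tameness identity \eqref{eq:tame3} for the shrunk atlas $\Kk'$ to conclude $w\in U'_{(I\cap J)I}\cap U'_{(I\cap J)J}$, whence the morphisms already live in $\bB_{\Kk'}$. The paper's version is slightly terser---it goes directly from $x\in U'_I\cap s_I^{-1}(E_{I\cap J})=\phi_{(I\cap J)I}(U'_{(I\cap J)I})$ to $w\in U'_{(I\cap J)I}$ without naming an intermediate $w'$---but the logic is identical.
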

\begin{proof} 
We write $U_I, U_{IJ}$ for the domains of the charts and coordinate changes of $\Kk$ and $U_I', U_{IJ}'$  for those of $\Kk'$, so that $U_I'\subset U_I, U_{IJ}'\subset U_{IJ}$ for all $I,J\in \Ii_\Kk = \Ii_{\Kk'}$.
Suppose that $\pi_\Kk(I,x) = \pi_\Kk(J,y)$ where $x\in U_I', y\in U_J'$.  Then we must show that $\pi_{\Kk'}(I,x) = \pi_{\Kk'}(J,y)$.  Since $\Kk$ is tame, 
Lemma~\ref{le:Ku2}~(a) implies that there is $w\in U_{I\cap J}$ such that
$\phi_{(I\cap J)I} (w)$ is defined and equal to $x$.  Hence $x\in s_I^{-1}(E_{I\cap J})\cap U_I' =\phi_{(I\cap J)I} (U'_{(I\cap J)I})$ by the tameness equation \eqref{eq:tame3}  for $\Kk'$.
Therefore $w\in U'_{(I\cap J)I}$. 
Similarly, because $\phi_{(I\cap J)J} (w)$ is defined and equal to $y$, we have $w\in U'_{(I\cap J)J}$.
Then by definition of $\pi_{\Kk'}$ we deduce
 $\pi_{\Kk'}(I,x) = \pi_{\Kk'}(I\cap J,w) =\pi_{\Kk'}(J,y)$.
\end{proof}

In order to construct metric tame Kuranishi atlases, we will find it
useful to consider tame shrinkings $\Kk_{sh}$ of a
weak Kuranishi atlas $\Kk$ that are obtained as shrinkings of an intermediate tame shrinking $\Kk'$ of $\Kk$.
For short we will call such $\Kk_{sh}$ a 
{\bf preshrunk tame shrinking} of $\Kk$.

\begin{prop}\label{prop:metric}  
Let $\Kk$ be an additive weak Kuranishi atlas.
Then every preshrunk tame shrinking of $\Kk$ is metrizable. In particular, 
$\Kk$ has a metrizable tame shrinking.
\end{prop}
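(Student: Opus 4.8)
\textbf{Proof plan for Proposition~\ref{prop:metric}.}
The plan is to build an admissible metric on a preshrunk tame shrinking by pulling back a metric along an injective, ``sufficiently large'' map into a compact subset of an intermediate tame shrinking, where we already know the topology is metrizable by Proposition~\ref{prop:Ktopl1}~(iv). So let $\Kk$ be an additive weak Kuranishi atlas, and let $\Kk_{sh}$ be a preshrunk tame shrinking: by definition there is an intermediate tame shrinking $\Kk'$ of $\Kk$ (which exists by Proposition~\ref{prop:proper}) of which $\Kk_{sh}$ is itself a shrinking. Write $U'_I, U'_{IJ}$ for the domains of $\Kk'$ and $U^{sh}_I\sqsubset U'_I$ for those of $\Kk_{sh}$; note that $\Kk_{sh}$ is again a tame Kuranishi atlas since tameness is inherited by shrinkings (Remark~\ref{rmk:shrink}), so Proposition~\ref{prop:Khomeo} and the injectivity statements of Lemma~\ref{le:Ku2} apply to both $\Kk'$ and $\Kk_{sh}$.

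The first step is to record the relevant map and its properties. The inclusions $U^{sh}_I\hookrightarrow U'_I$ are compatible with the coordinate changes (which for $\Kk_{sh}$ are by definition restrictions of those of $\Kk'$), so they induce a continuous map $\iota: |\Kk_{sh}|\to|\Kk'|$ with $\iota\circ\pi_{\Kk_{sh}} = \pi_{\Kk'}$ on each $U^{sh}_I$. By Lemma~\ref{le:injtameshr} (applied with the tame atlas $\Kk'$ in the role of $\Kk$ and $\Kk_{sh}$ in the role of $\Kk'$), the map $\iota$ is \emph{injective}. Moreover, since each $U^{sh}_I\sqsubset U'_I$ is precompact, the set $\Aa:=\bigcup_{I\in\Ii_\Kk} U^{sh}_I$ is a precompact subset of $\Obj_{\bB_{\Kk'}}$, so by Proposition~\ref{prop:Ktopl1}~(ii),(iv) the set $\|\ov\Aa\|=\pi_{\Kk'}(\ov\Aa)\subset|\Kk'|$ is compact and metrizable in the subspace topology; fix such a metric $d'$ on $\|\ov\Aa\|$ and rescale it to be bounded. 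Since $\iota(|\Kk_{sh}|)\subset\pi_{\Kk'}(\Aa)\subset\|\ov\Aa\|$, we may define $d:=\iota^*d'$ on $|\Kk_{sh}|$, i.e.\ $d(p,q):=d'(\iota(p),\iota(q))$; this is a bounded metric on the set $|\Kk_{sh}|$ because $\iota$ is injective.

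The second step is to check that $d$ is admissible in the sense of Definition~\ref{def:metric}, i.e.\ that for each $I\in\Ii_\Kk$ the pullback $d_I:=(\pi_{\Kk_{sh}}|_{U^{sh}_I})^*d$ induces the manifold topology on $U^{sh}_I$. By construction $d_I = (\iota\circ\pi_{\Kk_{sh}}|_{U^{sh}_I})^*d' = (\pi_{\Kk'}|_{U^{sh}_I})^*d'$. Now $\pi_{\Kk'}|_{U'_I}: U'_I\to|\Kk'|$ is a homeomorphism onto its image by Proposition~\ref{prop:Khomeo}, hence so is its restriction to the open subset $U^{sh}_I\subset U'_I$; and on $\pi_{\Kk'}(U^{sh}_I)\subset\|\ov\Aa\|$ the metric $d'$ induces the subspace topology from $|\Kk'|$, which by Proposition~\ref{prop:Ktopl1}~(ii) (applied to $\ov\Aa$) agrees with the quotient topology used in defining $|\Kk'|$. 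Chasing these identifications, $d_I$ induces on $U^{sh}_I$ the topology transported from $\pi_{\Kk'}(U^{sh}_I)$ via the homeomorphism $\pi_{\Kk'}|_{U^{sh}_I}$, which is exactly the original manifold topology. Thus $d$ is an admissible metric on $|\Kk_{sh}|$ and $(\Kk_{sh},d)$ is a metric Kuranishi atlas, proving that $\Kk_{sh}$ is metrizable. The final sentence of the proposition follows immediately: start from $\Kk$, apply Proposition~\ref{prop:proper} to get a tame shrinking $\Kk'$, then apply it again (or simply shrink $\Kk'$ slightly, e.g.\ by Lemma~\ref{le:restr0}) to get a preshrunk tame shrinking, which is metrizable by what we just proved.

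\textbf{Main obstacle.} The only real subtlety is the interplay of the three topologies on subsets of $|\Kk'|$ --- the quotient topology on $|\Kk_{sh}|$, the subspace topology $\|\ov\Aa\|$ from $|\Kk'|$, and the metric topology of $d'$ --- and making sure the precompactness hypothesis $U^{sh}_I\sqsubset U'_I$ is genuinely used to land inside the metrizable compact set $\|\ov\Aa\|$ (this is where ``preshrunk'' is essential: a tame shrinking need not itself be precompactly contained in anything metrizable, cf.\ the failure in Example~\ref{ex:Khomeo}). Once Propositions~\ref{prop:Khomeo}, \ref{prop:Ktopl1} and Lemma~\ref{le:injtameshr} are in hand, the argument is a diagram chase with no hard analysis.
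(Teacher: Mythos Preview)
Your proof is correct and follows essentially the same route as the paper's: obtain a metric on the compact subset $\|\ov\Aa\|\subset|\Kk'|$ via Proposition~\ref{prop:Ktopl1}~(iv), pull it back along the injective map $\iota:|\Kk_{sh}|\to|\Kk'|$ from Lemma~\ref{le:injtameshr}, and verify admissibility using the homeomorphism property of $\pi_{\Kk'}|_{U'_I}$ from Proposition~\ref{prop:Khomeo}. One small correction: tameness is \emph{not} automatically inherited by shrinkings (Remark~\ref{rmk:shrink} in fact warns against this), but you don't need that claim since $\Kk_{sh}$ is tame by the very definition of ``preshrunk tame shrinking''; similarly, in the last sentence the alternative ``simply shrink $\Kk'$ slightly by Lemma~\ref{le:restr0}'' would not yield a tame atlas, so stick with the second application of Proposition~\ref{prop:proper}.
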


\begin{proof}  
First use Proposition~\ref{prop:proper} to construct a  tame shrinking $\Kk'$ of $\Kk$ with domains $(U_I'\subset U_I)_{I\in \Ii_\Kk}$, and then use this result again to construct a tame shrinking $\Kk_{sh}$ of $\Kk'$ with domains $(U_I^{sh}\sqsubset U_I')_{I\in \Ii_\Kk}$. We claim that $\Kk_{sh}$ is metrizable. 

For that purpose we apply Proposition~\ref{prop:Ktopl1}~(iv) to the precompact subset $\Aa: = \bigcup_{I\in\Ii_\Kk} U_I^{sh}$ of $\Obj_{\bB_{\Kk'}}$ to obtain a metric $d'$ on $\pi_{\Kk'}(\ov\Aa)$ that induces the relative topology on the subset $\pi_{\Kk'}(\ov\Aa)$ of $|\Kk'|$, that is $\bigl( \pi_{\Kk'}(\ov\Aa) , d'\bigr) = \|\ov\Aa\|$.
Further, since $\pi_{\Kk'}(\ov\Aa)$ is compact, the metric $d'$ must be bounded. 
Now, by Lemma~\ref{le:injtameshr} the natural map $\io: |\Kk_{sh}|\to |\Kk'|$ is injective, with image $\pi_{\Kk_{sh}}(\Aa)$, so that the pullback $d:=\io^* d'$ is a bounded metric  on $|\Kk_{sh}|$ that is compatible with the relative topology induced by $|\Kk'|$; in other words $\io : \bigl(|\Kk_{sh}|, d \,\bigr) \to \|\Aa\|$ is an isometry.

Next, note that the pullback metric $d_I$ on $U^{sh}_I$ does give the usual topology since 
$\pi_{\Kk^{sh}}: U^{sh}_I \to \pi_{\Kk^{sh}}(U^{sh}_I) \subset \bigl( |\Kk^{sh}| , d \bigr)$  is a homeomorphism to its image.  
Indeed, by Lemma~\ref{le:injtameshr} it can also be written as $\pi_{\Kk^{sh}}|_{U^{sh}_I} = \io^{-1} \circ \pi_{\Kk'} \circ \io_I$ with the embedding $\io_I: U^{sh}_I \to U'_I$. The latter is a homeomorphism to its image, as is $\pi_{\Kk'} : U'_I \to \pi_{\Kk'}(U'_I) \subset |\Kk'|$ by Proposition~\ref{prop:Khomeo}, and $\io^{-1}$ by the definition of the metric topology on $|\Kk^{sh}|$.  
\end{proof}

\subsection{Cobordisms of Kuranishi atlases}\label{ss:Kcobord}  \hspace{1mm}\\ \vspace{-3mm}

Since there are many choices involved in constructing a Kuranishi atlas, and holomorphic curve moduli spaces in addition depend on the choice of an almost complex structure, it is important to have suitable notions of equivalence.
Since we are only interested here in constructing the VMC as cobordism class, resp.\ the VFC as a homology class, a notion of uniqueness up to cobordism will suffice for our purposes, and should e.g.\ arise from paths of almost complex structures.
We will defer this general notion of cobordism to \cite{MW:ku2} and concentrate here on developing tools for constructing well defined VMC/VFC for a fixed compact moduli space. This requires several types of results.
Within the abstract theory, we firstly need to prove the uniqueness part of Theorem~\ref{thm:K}, saying that metric tame shrinkings of additive weak Kuranishi atlases are unique up to a suitable notion of cobordism, which will be the content of this section. Secondly, we prove in Theorems~\ref{thm:VMC1} and \ref{thm:VMC2}  that cobordant Kuranishi atlases induce the same VMC/VFC.

On the other hand, for any given holomorphic curve moduli space, we need to construct Kuranishi atlases that are canonical up to a suitable notion of equivalence.
Here the most natural notion of equivalence would be along the lines of Morita equivalence, cf.\ Remark~\ref{rmk:Morita}.
However, our constructions of Kuranishi atlases for a fixed Gromov--Witten moduli space in  \cite{MW:gw} will depend on choices (in particular slicing conditions and obstruction spaces for the basic charts) only up to the following simpler notion of commensurability, in the sense that any two choices will yield Kuranishi atlases that are both commensurate to a third.
Finally, we require another abstract result to imply that commensurate Kuranishi atlases induce the same VFC.
In this section we hence fix a compact metrizable space $X$ and introduce the notions of commensurability and cobordism of Kuranishi atlases on $X$.

\begin{defn}\label{def:Kcomm}
Two (weak) Kuranishi atlases $\Kk^0,\Kk^1$ on the same compact space $X$ are {\bf commensurate} if there exists a common extension $\Kk^{01}$. This means that $\Kk^{01}$ is a (weak) Kuranishi atlas on $X$ with basic charts $(\bK^\al_i)_{(\al,i)\in\Nn^{01}}$, where
$$
\Nn^{01} := \Nn^0 \sqcup \Nn^1 ; \qquad \Nn^\al := \bigl\{ (\al,i) \,\big|\, 0 \leq i \leq N^\al \bigr\} ,
$$
and transition data $(\bK^{01}_I,\Hat\Phi^{01}_{IJ})_{I\subset\Nn^{01}, I\subsetneq J}$ such that $\bK^{01}_I=\bK^0_I$ and $\Hat\Phi^{01}_{IJ}=\Hat\Phi^\al_{IJ}$ whenever $I,J\subset\Nn^\al$ for fixed $\al=0$ or $\al=1$.

Moreover, if $\Kk^0,\Kk^1$ are additive, we say they are {\bf additively commensurate} if there exists a common extension $\Kk^{01}$ that in addition is additive.
\end{defn}

We will see that commensurability is stronger than cobordism, but note that it does not satisfy transitivity, hence is not an equivalence relation.
In order to define the notion of cobordism such that it is transitive, we will need a special form of charts and coordinate changes at the boundary that allows for gluing of cobordisms.
Here, in order to avoid having to deal with general Kuranishi atlases with boundary, we restrict our work to a notion of cobordism of Kuranishi atlases on the same space $X$, which involves Kuranishi atlases on the space $X\times[0,1]$, whose 
``boundary'' $X\times\{0,1\}$
has a natural collar structure. We deal with this boundary by requiring all charts and coordinate changes in a sufficiently small collar to be of product form as introduced below.
These notions of collars and product forms will also be used for introducing a more general notion of ``cobordism with Kuranishi atlas'' in \cite{MW:ku2}.

\begin{defn} \label{def:Cchart}
\begin{itemlist}
\item
Let $\bK^\al=(U^\al,E^\al,s^\al,\psi^\al)$ be a Kuranishi chart on $X$, and let $A\subset[0,1]$ be a relatively open interval. Then we define the {\bf product chart} for $X\times[0,1]$ with footprint $F_I^\al\times A$ as
$$
\bK^\al \times A :=\bigl(U^\al \times A, E^\al, s^\al\circ{\rm pr}_{U^\al}  ,\psi^\al\times\id_{A} \bigr) .
$$
\item
A {\bf Kuranishi chart with collared boundary} on $X\times[0,1]$ is a tuple $\bK = (U,E ,s ,\psi )$ of domain, obstruction space, section, and footprint map as in Definition~\ref{def:chart}, with the following boundary variations and collar form requirement:
\begin{enumerate}
\item
The footprint $F =\psi (s ^{-1}(0))\subset X\times[0,1]$ intersects the boundary $X\times\{0,1\}$.
\item
The domain $U $ is a smooth manifold whose boundary splits into two parts $\partial U  = \partial^0 U  \sqcup \partial^1 U $ such that $\partial^\al U $ is nonempty iff $F $ intersects $X\times\{\al\}$.
\item
If $\partial^\al U \neq\emptyset$ then there is a relatively open neighbourhood $A^\al\subset [0,1]$ of $\al$ and an embedding $\iota^\al:\partial^\al U \times A^\al  \hookrightarrow  U $ onto a neighbourhood of $\partial^\al U \subset U $ such that
$$
\bigl( \, \partial^\al U \times A^\al \,,\, E  \,,\, s \circ \iota^\al  \,,\, \psi \circ \iota^\al  \, \bigr)
\; = \; \partial^\al\bK  \times A^\al
$$
is the product of $A^\al$ with some Kuranishi chart $\partial^\al\bK $ for $X$
with footprint $F\subset X$ such that $(X\times A^\al) \cap F  = F\times A^\al$.
\end{enumerate}
\item
For any Kuranishi chart with collared boundary $\bK $ on $X\times[0,1]$ we call the uniquely determined Kuranishi charts $\partial^\al\bK $ for $X$ the {\bf restrictions of $\bK $ to the boundary} for $\al=0,1$.
\end{itemlist}
\end{defn}

We now define a coordinate change between charts on $X\times [0,1]$ that may have boundary.   Because in a Kuranishi atlas there is a coordinate change $\bK_I\to \bK_J$ only when $F_I\supset F_J$, we will  restrict to this case here.  (Definition~\ref{def:change} considered a more general scenario.)
In other words, we need not consider coordinate changes from a chart without boundary to a chart with boundary.

\begin{defn} \label{def:Ccc}
\begin{itemlist}
\item
Let $\Hat\Phi^\al_{IJ}:\bK^\al_I\to\bK^\al_J$ be a coordinate change between Kuranishi charts for $X$, and let $A_I,A_J\subset[0,1]$ be relatively open intervals.
Then the {\bf product coordinate change}
$\Hat\Phi^\al_{IJ}\times \id_{A_I\cap A_J} : \bK^\al_I\times A_I \to \bK^\al_J\times A_J$ is given by
$$
\phi^\al_{IJ}\times \id_{A_I\cap A_J}: \; U^\al_{IJ}\times (A_I\cap A_J) \;\to\; U^\al_J\times A_J,
\qquad
\Hat\phi^\al_{IJ}:  E_I^\al \to  E_J^\al.
$$
\item
Let $\bK _I,\bK _J$ be Kuranishi charts on $X\times[0,1]$ such that only $\bK _I$ or both
$\bK _I,\bK _J$ have collared boundary.
Then a {\bf coordinate change with collared boundary} $\Hat\Phi_{IJ} :\bK _I\to\bK _J$ is a tuple $\Hat\Phi_{IJ}  = (U_{IJ} ,\phi_{IJ} ,\Hat\phi_{IJ} )$ of domain and embeddings as in Definition~\ref{def:change}, with the following boundary variations and collar form requirement:
\begin{enumerate}
\item
The domain is a relatively open subset $U _{IJ}\subset U _I$ with boundary components
$\partial^\al U _{IJ}:= U _{IJ} \cap \partial^\al U _I$;
\item
If $F_J\cap (X\times \{\al\}) \ne \emptyset$ for  $\al = 0$ or $1$, there
is a relatively open neighbourhood $B^\al\subset [0,1]$
of $\al$
such that
\begin{align*}
(\iota_I^\al)^{-1}(U _{IJ})
\cap \bigl(\partial^\al U _I \times B^\al\bigr)
&\;=\; \partial^\al U _{IJ} \times B^\al , \\
(\iota_J^\al)^{-1}(\im\phi _{IJ})
\cap \bigl(\partial^\al U _J \times B^\al \bigr)
&\;=\;
 \phi _{IJ}(\partial^\al U _{IJ}) \times B^\al ,
\end{align*}
and
$$
\bigl( \,
\partial^\al U _{IJ} \times B^\al \,,\, (\iota_J^\al)^{-1}\circ \phi _{IJ} \circ \iota_I^\al  \,,\,  \Hat\phi _{IJ} \, \bigr)
\;=\; \partial^\al\Hat\Phi_{IJ} \times {\rm id}_{B^\al} ,
$$
where $\partial^\al\Hat\Phi_{IJ} : \partial^\al\bK _I \to \partial^\al\bK _J$ is a coordinate change.
\item
If $\p^\al F_J= \emptyset$ but $\p^\al F_I\ne \emptyset$ for  $\al = 0$ or $1$
there is a neighbourhood $B^\al\subset [0,1]$
of $\al$
such that
$$
U _{IJ}\cap \iota_I^\al \bigl(\partial^\al U _I \times B^\al \bigr) = \emptyset.
$$
\end{enumerate}
\item
For any coordinate change with collared boundary $\Hat\Phi_{IJ} $ on $X\times[0,1]$ we call the uniquely determined coordinate changes $\p^\al \Hat\Phi_{IJ}$ for $X$ the {\bf restrictions of $\Hat\Phi_{IJ} $ to the boundary} for $\al=0,1$.
\end{itemlist}
\end{defn}

\begin{defn}\label{def:CKS}
A {\bf (weak) Kuranishi cobordism} on $X\times [0,1]$ is a tuple
$$
\Kk^{[0,1]} = \bigl( \bK^{[0,1]}_{I} , \Hat\Phi_{IJ}^{[0,1]} \bigr)_{I,J\in \Ii_{\Kk^{[0,1]}}}
$$
of basic charts and transition data as in Definition~\ref{def:Ku} resp.\ \ref{def:Kwk}, with the following boundary variations and collar form requirements:
\begin{itemlist}
\item
The charts of $\Kk^{[0,1]}$ are either Kuranishi charts with collared boundary or standard Kuranishi charts whose footprints are precompactly contained in $X\times(0,1)$.
\item
The coordinate changes $\Hat\Phi_{IJ}^{[0,1]}: \bK^{[0,1]}_{I} \to \bK^{[0,1]}_{J}$ are either standard coordinate changes on $X\times(0,1)$ between pairs of standard charts, or coordinate changes with collared boundary between pairs of charts, of which at least the first has collared boundary.
\end{itemlist}
Moreover, we call $\Kk^{[0,1]}$ {\bf additive resp.\ tame} if it satisfies the additivity condition of Definition~\ref{def:Ku2}, resp.\ the additivity and tameness conditions of Definition~\ref{def:tame}.
\end{defn}

\begin{remark} \label{rmk:restrict}\rm
Any (weak) Kuranishi cobordism $\Kk^{[0,1]}$ induces by restriction two 
(weak) 
Kuranishi atlases $\partial^\al\Kk^{[0,1]}$ on $X$ for $\al=0,1$ with
\begin{itemize}
\item 
basic charts $\p^\al\bK_i$ given by restriction of basic charts of $\Kk^{[0,1]}$ with $F_i\cap X\times\{\al\}\neq\emptyset$;
\item 
index set $\Ii_{\p^\al\Kk^{[0,1]}}=\{I\in\Ii_{\Kk^{[0,1]}}\,|\, F_I\cap X\times\{\al\}\neq\emptyset\}$;
\item 
transition charts $\p^\al\bK_I$ given by restriction of transition charts of $\Kk^{[0,1]}$;
\item
coordinate changes $\p^\al\Hat\Phi_{IJ}$ given by restriction of coordinate changes of $\Kk^{[0,1]}$.
\end{itemize}
If $\Kk^{[0,1]}$ is additive or tame, then so are the restrictions $\partial^\al\Kk^{[0,1]}$.
Finally, $\Kk^{[0,1]}$ provides a cobordism from $\p^0\Kk^{[0,1]}$ to $\p^1\Kk^{[0,1]}$ in the sense of the following Definition~\ref{def:Kcobord}.
\end{remark}

With this language in hand, we can now introduce the cobordism relation between Kuranishi atlases. 
While such notions exist (and generally are equivalence relations) for all flavours of (additive/weak/tame) Kuranishi atlases, we restrict ourselves here to the cobordism relation under which the VMC/VFC will be well defined, namely additive cobordism for weak atlases.
In contrast, it will be important to  prove the existence of many kinds of cobordisms as in Propositions~\ref{prop:cobord2} and~\ref{prop:cov2}.

\begin{defn}\label{def:Kcobord}
Two additive weak Kuranishi atlases $\Kk^0, \Kk^1$ on $X$ are {\bf additively cobordant} if there exists an additive weak Kuranishi cobordism $\Kk^{[0,1]}$ from $\Kk^0$ to $\Kk^1$.
That is, $\Kk^{[0,1]}$ is an additive weak Kuranishi cobordism on $X\times[0,1]$, which restricts to $\partial^0\Kk^{[0,1]}=\Kk^0$ on $X\times\{0\}$ and to $\p^1\Kk^{[0,1]}=\Kk^1$ on $X\times\{1\}$.
More precisely, there are injections $\iota^\al:\Ii_{\Kk^\al} \hookrightarrow \Ii_{\Kk^{[0,1]}}$ for $\al=0,1$ such that $\im\iota^\al=\Ii_{\partial^\al\Kk}$ and for all $I,J\in\Ii_{\Kk^\al}$ we have
$$
\bK^\al_I = \p^\al \bK^{[0,1]}_{\iota^\al(I)}, \qquad
\Hat\Phi^\al_{IJ} = \p^\al \Hat\Phi^{[0,1]}_{\iota^\al(I) \iota^\al (J)} .
$$
 \end{defn}

In the following we will usually identify the index sets $\Ii_{\Kk^\al}$ of cobordant Kuranishi atlases with the restricted index set $\Ii_{\partial^\al\Kk}$ in the cobordism index set $\Ii_{\Kk^{[0,1]}}$, so that $\Ii_{\Kk^0}, \Ii_{\Kk^1}\subset \Ii_{\Kk^{[0,1]}}$ are the not necessarily disjoint subsets of charts whose footprints intersect $X\times\{0\}$ resp.\ $X\times \{1\}$.

\begin{example} \label{ex:triv}\rm
Let $\Kk= \bigl( \bK_I, \Hat\Phi_{IJ}\bigr)_{I,J\in\Ii_\Kk}$ be an additive weak Kuranishi atlas on $X$.
Then the {\bf product Kuranishi cobordism} $\Kk\times [0,1]$ from $\Kk$ to $\Kk$ is the weak Kuranishi  
cobordism on $X\times [0,1]$ consisting of the product charts $\bK_I\times [0,1]$ and the product coordinate changes $\Hat\Phi_{IJ}\times \id_{[0,1]}$ for $I,J\in\Ii_\Kk$.
Note that $\Kk\times [0,1]$ inherits additivity from $\Kk$. 
Similarly, if $\Kk$ is tame, then so is $\Kk\times [0,1]$.

If $|\Kk|$ is a Kuranishi atlas, so that the Kuranishi neighbourhood $|\Kk|$ is defined, 
then there is a natural bijection from the quotient $|\Kk\times [0,1]|$ to the product $|\Kk|\times [0,1]$.  
This map is continuous.  However, it is not clear that it is always a homeomorphism.\footnote{At this time, we have neither a proof nor a counter example. We hope to resolve this question, but none of the following depends on this.}
Further, when we come to put metrics on the product $\Kk\times [0,1]$ we will certainly sometimes consider metrics that define a topology on $|\Kk\times [0,1]|$ that is not a product.  
Nevertheless we will often denote the realization of $\Kk\times [0,1]$ as  $|\Kk|\times [0,1]$ with the understanding that this is an equivalence of sets, not of topological spaces.
\end{example}

In order to discuss further the collar structure near the boundary of Kuranishi cobordisms, we denote for 
$1\ge\eps>0$
the collar neighbourhoods of $0,1\in[0,1]$ by
\begin{equation}\label{eq:Naleps}
A_\eps^0: = [0,\eps)  \qquad\text{and}\qquad A_\eps^1: = (1-\eps,1] .
\end{equation}
We will see that the footprints of the Kuranishi charts in a Kuranishi cobordism are collared in the following sense.

\begin{defn}\label{def:collared}
We say that an open subset $F\subset X\times [0,1]$ is {\bf collared} 
if there is $\eps>0$ such that for $\al\in \{0,1\}$ we have
$$
F \cap (X\times A^\al_\eps)\ne \emptyset
\;\; \Longleftrightarrow\;\;
F \cap (X\times A^\al_\eps)
= \p^\al F \times A^\al_\eps .
$$
Here we denote by
$$
\partial^\al F :=  \pr_{X} \bigl( F \cap (X\times\{\al\}) \bigr)
$$
the image of the intersection with the ``boundary component'' $X\times\{\al\}$ under its projection to $X$,
noting that this is just a convenient notation, not a topological boundary.
\end{defn}

\begin{remark} \rm \label{rmk:Ceps} 
Since the index set $\Ii_{\Kk^{[0,1]}}$ in Definition~\ref{def:CKS} is finite, there exists a uniform 
{\bf collar width} $\eps>0$ such that all collar embeddings  $\io^\al_I$ are defined on $A^\al = A_\eps^\al$, all coordinate changes are of collar form on $B^\al=A_\eps^\al$, and all charts without collared boundary have footprint contained in $X\times (\eps,1-\eps)$. In particular, all footprints are collared in the sense of Definition~\ref{def:collared}.
\end{remark}

\begin{rmk}\rm  \label{rmk:cobordreal}
Let $\Kk^{[0,1]}$ be a Kuranishi cobordism from $\Kk^0$ to $\Kk^1$.
Its associated categories $\bB_{\Kk^{[0,1]}}, \bE_{\Kk^{[0,1]}}$ with projection, section, and footprint functor, as well as their realizations $|\Kk^{[0,1]}|, |\bE_{\Kk^{[0,1]}}|$ are defined as for Kuranishi atlases without boundary in Section~\ref{ss:Ksdef}.
We will see that these also have collared boundaries with $\eps>0$ from Remark~\ref{rmk:Ceps}~(i).

\begin{itemlist}
\item
We can think of the virtual neighbourhood $|\Kk^{[0,1]}|$ of $X\times[0,1]$ as a ``cobordism'' from $|\Kk^0|$ to $|\Kk^{1}|$ in the following sense:
There are natural functors $\bB_{\Kk^\al}\times A^\al_\eps\to \bB_{\Kk^{[0,1]}}$ given by the inclusions $\iota^\al_I : U^\al_I\times A^\al_\eps \hookrightarrow U^{[0,1]}_{I}$
on objects and $\iota^\al_I : U^\al_{IJ}\times A^\al_\eps\hookrightarrow U^{[0,1]}_{IJ}$
on morphisms, where $A^\al_\eps$ is defined in \eqref{eq:Naleps}.
The axioms on the interaction of the coordinate changes with the collar neighbourhoods
then imply that the functors map to full subcategories
that split $\bB_{\Kk^{[0,1]}}$ in the sense that there are no morphisms between any other object and this subcategory.
Hence they induce ``collar neighbourhoods'' of the ``boundaries'' $|\Kk^\al|$ on the topological realization $|\Kk^{[0,1]}|$, 
 i.e.\ topological embeddings
$$
\rho^0:  |\Kk^0| \times [0,\eps) \longhookrightarrow |\Kk^{[0,1]}| , \qquad
\rho^1:  |\Kk^1| \times (1-\eps,1] \longhookrightarrow |\Kk^{[0,1]}|
$$
such that for all $0<\eps'<\eps$ at $\al=0$ (and similarly for $\al=1$) we have
$$
\partial \bigl(  |\Kk^{[0,1]}| \less  \rho^0(|\Kk^0| \times [0,\eps') ) \bigr) =  \rho^0(|\Kk^0| \times \{\eps'\}).
$$
However, recall from Example~\ref{ex:triv} that
the topology on the collars $|\Kk|\times A^\al_\eps$ is the quotient topology from $|\Kk\times A^\al_\eps|$
which may not be the product topology.
In view of this remark, we shall write 
$$
\p^\al |\Kk^{[0,1]}| \,:= \; \rho^\al\bigl(|\Kk^\al|\times \{\al\}\bigr) \;\subset\; |\Kk^{[0,1]}|  
$$ 
for the $\al$-boundary of the Kuranishi cobordism neighbourhood $|\Kk^{[0,1]}|$, which is homeomorphic to the Kuranishi neighbourhood $|\Kk^\al|$ of the boundary $\p^\al\Kk^{[0,1]}$ via $\rho^\al(\cdot,\al)$.

\item
The obstruction bundle ``with boundary'' $|\pr_{\Kk^{[0,1]}}|: |\bE_{\Kk^{[0,1]}}|\to |\Kk^{[0,1]}|$ can also be thought of as a \lq\lq cobordism" between the obstruction bundles
$|\pr_{\Kk^\al}|: |\bE_{\Kk^\al}|\to |\Kk^\al|$ in the sense that $\rho^{\al\;*}  |\bE_{\Kk^{[0,1]}}| =  |\bE_{\Kk^{\al}}| \times A^\al_\eps$.
\smallskip
\item
The embeddings $\rho^\al$ extend the natural map between footprints
$$
|s_{\Kk^{[0,1]}}|^{-1}(0)
\;\;
\xleftarrow{\iota_{\Kk^{[0,1]}}}\;\;
X\times A_\eps^\al
\;\;\xrightarrow{\iota_{\Kk^{\al}}\times{\rm id}}\;\;
|s_{\Kk^{\al}}|^{-1}(0)\times A_\eps^\al  .
$$
\item
The footprint functor to $X\times[0,1]$ for $\Kk^{[0,1]}$ induces a continuous surjection
$$
{\rm pr}_{[0,1]}\circ \psi_{\Kk^{[0,1]}}
: \; s_{\Kk^{[0,1]}}^{-1}(0) \;\to\; X\times [0,1]  \;\to\;  [0,1] .
$$
In general we do not assume that this extends to a functor $\bB_{\Kk^{[0,1]}}\to [0,1]$.
However, all the Kuranishi cobordisms that we construct explicitly do have this property.
\end{itemlist}
\end{rmk}

\begin{lemma}\label {le:cob0}
Let $\Kk^{[0,1]}$ be a tame Kuranishi cobordism.
Then its realization $|\Kk^{[0,1]}|$ has the Hausdorff, homeomorphism, and linearity properties stated in Theorem~\ref{thm:K}.
\end{lemma}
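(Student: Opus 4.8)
\textbf{Proof plan for Lemma~\ref{le:cob0}.}
The plan is to reduce the statement about the cobordism $\Kk^{[0,1]}$ to the already-established results for Kuranishi atlases without boundary, namely Propositions~\ref{prop:Khomeo} and~\ref{prop:linear}. The key observation is that the categorical constructions ($\bB_{\Kk^{[0,1]}}$, $\bE_{\Kk^{[0,1]}}$, the realizations, the partial order $\preceq$, the equivalence relation $\sim$, etc.) and the tameness conditions \eqref{eq:tame1}, \eqref{eq:tame2} are purely formal: they never used that the domains $U_I$ were boundaryless manifolds, only that they are smooth (possibly with boundary) manifolds, that the structure maps are smooth embeddings, and that the additivity and tameness identities hold. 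Thus the first step is simply to observe that Lemmas~\ref{le:tame0}, \ref{lem:eqdef}, and \ref{le:Ku2} carry over verbatim to a tame Kuranishi cobordism, since their proofs use only additivity, tameness, and the injectivity of the embeddings $\phi_{IJ}$ --- all of which hold by hypothesis. In particular Lemma~\ref{le:Ku2}(a)--(e) gives us the same control over the relation $\sim$ on $\Obj_{\bB_{\Kk^{[0,1]}}}$.

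With this in hand, the second step is to run the proof of Proposition~\ref{prop:Khomeo} unchanged. That proof needs only three ingredients: each component $U_I$ of $\Obj_{\bB_{\Kk^{[0,1]}}}$ is Hausdorff and locally compact (true for manifolds with boundary); tameness condition \eqref{eq:tame3} giving closedness of $\im\phi_{HI}$ together with the homeomorphism property of $\phi_{HI}$ onto its image (true by Definition~\ref{def:Ccc}, since collared coordinate changes are still embeddings onto relatively closed subsets); and the combinatorial argument with the sets $W_{I_\ell}$, $\Ww_\ell$ built from Lemma~\ref{le:Ku2}(c),(d), which is purely set-theoretic. This yields that $|\Kk^{[0,1]}|$ and $|\bE_{\Kk^{[0,1]}}|$ are Hausdorff and that each $\pi_{\Kk^{[0,1]}}|_{U_I}$ and $\pi_{\Kk^{[0,1]}}|_{U_I\times E_I}$ is a homeomorphism onto its image. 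The third step is likewise to copy the proof of Proposition~\ref{prop:linear} --- it uses only Lemma~\ref{le:Ku2}(a),(b),(c) and the linearity of the maps $\Hat\phi_{II_p}$ --- to obtain the unique linear structure on the fibers of $|\pr_{\Kk^{[0,1]}}|$ compatible with the embeddings $\pi_{\Kk^{[0,1]}}:\{x\}\times E_I\to|\bE_{\Kk^{[0,1]}}|$.

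The one place where the boundary is not purely cosmetic, and where I expect the only real work, is checking that the collared structure is consistent with these claims --- i.e.\ that nothing goes wrong at $\p^\al|\Kk^{[0,1]}|$. Concretely, one should verify that the collar embeddings $\rho^\al$ of Remark~\ref{rmk:cobordreal} are well-defined topological embeddings: this requires that the product subcategories $\bB_{\Kk^\al}\times A^\al_\eps$ are full and ``split'' $\bB_{\Kk^{[0,1]}}$ (no morphisms between a collared object and an object outside the collar except those coming from $\p^\al\Hat\Phi_{IJ}\times\id$), which follows from the collar-form axioms in Definitions~\ref{def:Cchart} and~\ref{def:Ccc}. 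Given this, the Hausdorff/homeomorphism/linearity properties restrict correctly to $\p^\al|\Kk^{[0,1]}|\cong|\Kk^\al|$, consistently with Theorem~\ref{thm:K} applied to the tame atlases $\p^\al\Kk^{[0,1]}$. I would state this as a short remark that the general proofs localize in $\bB_{\Kk^{[0,1]}}$ and hence apply equally to the collared setting, and conclude. The main obstacle is thus less a deep difficulty than the bookkeeping of confirming that every lemma invoked in Propositions~\ref{prop:Khomeo} and~\ref{prop:linear} was genuinely boundary-insensitive; once that audit is done, the proof is a direct transcription.
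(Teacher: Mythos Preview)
Your proposal is correct and matches the paper's approach exactly: the paper's proof is a single sentence stating that the properties follow by precisely the same arguments as in Propositions~\ref{prop:Khomeo} and~\ref{prop:linear}, noting that the presence of collared boundaries is irrelevant in this context. Your additional discussion of collar consistency via the $\rho^\al$ embeddings is more than what is needed for the statement as written (which concerns only the Hausdorff, homeomorphism, and linearity properties, not their interaction with the boundary), though it does no harm.
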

\begin{proof}
These properties are proven by precisely the same arguments as in Proposition~\ref{prop:Khomeo} and Proposition~\ref{prop:linear}.  The fact that some charts have collared boundaries is irrelevant in this context.
\end{proof}

We now turn to the question of constructing additive cobordisms.
As we saw, additivity is an essential hypothesis in Proposition~\ref{prop:Khomeo}, which establishes that $|\Kk|$ has the Hausdorff and homeomorphism properties.  However, note that when manipulating charts
other than by shrinking the domains, one may easily destroy this property.  For example, when constructing cobordisms one must guard against taking two
products of the same basic chart, e.g.\ $\bK_i\times [0,\frac 13)$ and $\bK_i\times (\frac 14,\frac 12)$.
The natural transition chart for the overlap of footprints $F_i\times (\frac 14,\frac 13)$ is $\bK_i\times (\frac 14,\frac 13)$, but it fails additivity since $E_i \not\cong E_i \times E_i$ unless the obstruction space is trivial.
This means that in the following we have to construct cobordisms with great care.

\begin{lemma}\label{le:cobord1}
\begin{enumerate}
\item
Additive cobordism is an equivalence relation on the set of additive weak Kuranishi atlases on a fixed compact space $X$.
\item
Any two commensurate additive weak Kuranishi atlases are additively cobordant.
\end{enumerate}
\end{lemma}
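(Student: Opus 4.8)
\textbf{Proof strategy for Lemma~\ref{le:cobord1}.}

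The plan is to prove (i) and (ii) separately, deriving (ii) from the constructions used in (i). For part (i), the three properties to check are reflexivity, symmetry, and transitivity of additive cobordism on additive weak Kuranishi atlases. Reflexivity is handled by Example~\ref{ex:triv}: the product Kuranishi cobordism $\Kk\times[0,1]$ is an additive weak Kuranishi cobordism from $\Kk$ to $\Kk$, since it inherits additivity and the weak cocycle condition from $\Kk$, and its restrictions to $X\times\{0,1\}$ are both $\Kk$ by construction of the product charts and product coordinate changes. Symmetry is immediate by reparametrizing the interval: if $\Kk^{[0,1]}$ is an additive weak Kuranishi cobordism from $\Kk^0$ to $\Kk^1$, then precomposing all domains, collar embeddings, and coordinate changes with the orientation-reversing diffeomorphism $t\mapsto 1-t$ of $[0,1]$ produces an additive weak Kuranishi cobordism from $\Kk^1$ to $\Kk^0$; one checks that collared-boundary charts and collared-boundary coordinate changes are carried to collared-boundary charts and coordinate changes with the roles of $\partial^0$ and $\partial^1$ interchanged, and that additivity and the weak cocycle condition are preserved since these are pointwise/combinatorial conditions unaffected by the reparametrization.

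The substantive step is transitivity: given additive weak Kuranishi cobordisms $\Kk^{[0,1]}$ from $\Kk^0$ to $\Kk^1$ and $\Kk'^{[0,1]}$ from $\Kk^1$ to $\Kk^2$, I would construct a glued cobordism from $\Kk^0$ to $\Kk^2$. First, rescale the interval: identify $\Kk^{[0,1]}$ with a cobordism over $X\times[0,\tfrac12]$ and $\Kk'^{[0,1]}$ with one over $X\times[\tfrac12,1]$, using the collar width $\eps>0$ from Remark~\ref{rmk:Ceps} for each (after shrinking we may assume a common $\eps$, and after rescaling that $\eps<\tfrac12$). The collar form requirements in Definitions~\ref{def:Cchart}, \ref{def:Ccc}, \ref{def:CKS} guarantee that near $t=\tfrac12$ both cobordisms are literally products $\partial^1\bK^{[0,1]}_I\times A = \bK^1_I\times A$ and $\partial^0\bK'^{[0,1]}_I\times A' = \bK^1_I\times A'$ over overlapping collar intervals, with coordinate changes also of product form $\partial\Hat\Phi_{IJ}\times\id$ on these collars. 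Since $\partial^1\Kk^{[0,1]} = \Kk^1 = \partial^0\Kk'^{[0,1]}$ as Kuranishi atlases (identifying index sets via the injections $\iota^\al$), these product pieces agree on the overlap $(\tfrac12-\eps,\tfrac12+\eps)$, so the domains $U^{[0,1]}_I$ and $U'^{[0,1]}_I$ can be glued along their collars to a smooth manifold $U''_I$, the collar embeddings and coordinate changes glue correspondingly, and the footprint maps glue since on the overlap both equal $\psi^1_I\times\id$. The result $\Kk''^{[0,1]}$ is then verified to be an additive weak Kuranishi cobordism over $X\times[0,1]$ (reindexed linearly): additivity holds chart-by-chart because each $E''_I$ equals $E^{[0,1]}_I = E^1_I = E'^{[0,1]}_I$ with the same linear embeddings; the weak cocycle condition holds since it is a local condition and each glued coordinate change restricts to one of the original weak cobordism coordinate changes on each piece; and the collar form near $t=0$ resp.\ $t=1$ is inherited from $\Kk^{[0,1]}$ resp.\ $\Kk'^{[0,1]}$, with new interior (uncollared) charts being those that were interior in either piece or straddle $t=\tfrac12$ — and the latter are already of product form there, hence cause no additivity problem precisely because no \emph{transition} chart over the overlap is a sum of two copies of the same chart. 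Its restrictions are $\partial^0\Kk''^{[0,1]} = \Kk^0$ and $\partial^1\Kk''^{[0,1]} = \Kk^2$.

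For part (ii), suppose $\Kk^0$ and $\Kk^1$ are additively commensurate, with additive common extension $\Kk^{01}$ as in Definition~\ref{def:Kcomm}, having basic charts indexed by $\Nn^{01} = \Nn^0\sqcup\Nn^1$. I would produce an additive weak Kuranishi cobordism from $\Kk^0$ to $\Kk^1$ as follows. Take the product cobordism $\Kk^{01}\times[0,1]$ of the common extension, which is an additive weak Kuranishi cobordism from $\Kk^{01}$ to itself (Example~\ref{ex:triv}); then restrict/shrink the footprints of the basic charts coming from $\Nn^1$ near the end $t=0$ and those from $\Nn^0$ near $t=1$, using a collared shrinking as in Section~\ref{ss:shrink} adapted to the collar structure, so that the $\Nn^1$-charts have footprint disjoint from $X\times\{0\}$ and the $\Nn^0$-charts disjoint from $X\times\{1\}$, while all footprints still cover $X\times[0,1]$ (the $\Nn^0$-footprints cover near $t=0$ and the $\Nn^1$-footprints near $t=1$, each covering $X$ by the covering property of $\Kk^0,\Kk^1$). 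One then checks that this shrinking preserves additivity and the weak cocycle condition (by Remark~\ref{rmk:shrink}(i) in the collared setting), that the charts/coordinate changes remain of collar form, and that the restriction to $t=0$ is exactly $\Kk^0$ (the surviving charts there are those indexed by $\Nn^0$, with $\bK^{01}_I = \bK^0_I$ and $\Hat\Phi^{01}_{IJ} = \Hat\Phi^0_{IJ}$ for $I,J\subset\Nn^0$ by definition of common extension) and to $t=1$ is $\Kk^1$. I expect the main obstacle to be the transitivity argument in (i) — specifically, verifying in full detail that the glued cobordism $\Kk''^{[0,1]}$ satisfies additivity and the weak cocycle condition globally (not merely on each half), which requires careful bookkeeping of the index set of the glued cobordism and of which charts are of product form near $t=\tfrac12$, together with the delicate point flagged in the paragraph before the lemma that one must never create a transition chart that is a sum of two copies of the same basic chart; ensuring the collared shrinking in (ii) is compatible with all coordinate changes simultaneously is a secondary technical point, handled by the methods of Section~\ref{ss:shrink}.
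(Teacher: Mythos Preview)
Your proposal is correct and follows essentially the same approach as the paper for part~(i): reflexivity via the product cobordism, symmetry via the reflection $t\mapsto 1-t$, and transitivity via gluing along the common boundary using the collar form (the paper concatenates over $[0,2]$ then rescales, you rescale first to $[0,\tfrac12]$ and $[\tfrac12,1]$, which is equivalent). Your handling of additivity for glued charts matches the paper's.

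For part~(ii) your idea is right but framed more indirectly than necessary. The paper does not pass through a ``shrinking'' of $\Kk^{01}\times[0,1]$; instead it directly writes down the cobordism: the index set is $\Ii_{\Kk^{01}}$, the charts are $\bK^{01}_I\times A_I$ with $A_I=[0,\tfrac23)$ for $I\subset\Nn^0$, $A_I=(\tfrac13,1]$ for $I\subset\Nn^1$, and $A_I=(\tfrac13,\tfrac23)$ otherwise, and the coordinate changes are $\Hat\Phi^{01}_{IJ}\times\id_{A_I\cap A_J}$. This is exactly what your ``shrinking'' would produce, but note that it is \emph{not} a shrinking in the technical sense of Definition~\ref{def:shr} (the restricted footprints $F_i\times A_i$ are not precompact in $F_i\times[0,1]$ unless $F_i$ is closed), so invoking Remark~\ref{rmk:shrink}(i) is not quite legitimate. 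The direct construction avoids this issue: additivity and the weak cocycle condition are immediate because the obstruction spaces and linear embeddings are unchanged from $\Kk^{01}$, and the index set of the cobordism equals $\Ii_{\Kk^{01}}$ since all the intervals $A_I$ share the overlap $(\tfrac13,\tfrac23)$.
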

\begin{proof} 
Given an additive weak Kuranishi atlas $\Kk$ on $X$, the product atlas $\Kk\times [0,1]$ on $X\times[0,1]$ of Example~\ref{ex:triv} is an additive weak Kuranishi cobordism from $\Kk$ to $\Kk$. This shows that the cobordism relation is reflexive.

Next, suppose that $\Kk^{[0,1]}$ is an additive weak Kuranishi cobordism from $\Kk^0$ to $\Kk^1$ as in Definition~\ref{def:Kcobord}. We may compose every footprint map $\psi^{[0,1]}_{I'}$ with the reflection $X\times[0,1]\to X\times[0,1], (x,t) \mapsto (x, 1-t)$ to define another additive weak Kuranishi atlas for $X\times [0,1]$, which restricts to $\Kk^1$ near $X\times\{0\}$ and to $\Kk^0$ near $X\times\{1\}$, thus proving that the cobordism relation is symmetric.

Similarly, given an additive weak Kuranishi cobordisms from $\Kk^1$ to $\Kk^2$, we may compose the footprint maps with the shift $X\times[0,1]\to X\times[1,2], (x,t) \mapsto (x, 1+t)$ to construct an additive weak Kuranishi atlas with boundary $\Kk^{[1,2]}$ for $X\times [1,2]$, which restricts to $\Kk^1$ on $X\times\{1\}$ and to $\Kk^2$ on $X\times\{2\}$.
We may concatenate this with any Kuranishi cobordism $\Kk^{[0,1]}$ from $\Kk^0$ to $\Kk^1$ to obtain a Kuranishi atlas with boundary $\Kk^{[0,2]}$ on $X\times [0,2]$, which restricts to $\Kk^0$ near $X\times\{0\}$ and to $\Kk^2$ near $X\times\{2\}$.
More precisely, we define $\Kk^{[0,2]}$ as follows.
\begin{itemlist}
\item
The index set $\displaystyle \; \Ii_{\Kk^{[0,2]}} :=\;\Ii_{[0,1)}  \;\sqcup\; \Ii_{\Kk^1} \;\sqcup\; \Ii_{(1,2]} \;$ is given by $\Ii_{[0,1)}:=\Ii_{[0,1]}\less\iota^1(\Ii_{\Kk^1})$, $\Ii_{(1,2]}:=\Ii_{[1,2]}\less\iota^1(\Ii_{\Kk^1})$.
\item
The charts are
$\bK^{[0,2]}_{I} := \bK^{[0,1]}_{I}$ for $I\in\Ii_{[0,1)}$, and $\bK^{[0,2]}_{I} := \bK^{[1,2]}_{I}$ for $I\in\Ii_{(1,2]}$.
For $I\in\Ii_{\Kk^1}$ denote by
$I^{01}=\iota^1(I)\in\Ii_{\Kk^{[0,1]}}$, $I^{12}=\iota^1(I)\in\Ii_{\Kk^{[1,2]}}$
the labels of the charts that restrict to $\bK_I$.
In particular, this implies $\partial^1 U^{[0,1]}_{I^{01}} = U^1_I =\partial^1 U^{[1,2]}_{I^{12}}$.
Then define the glued chart (possibly with collared boundary at $X\times\{0\}$ or $X\times\{2\}$)
$$
\qquad
\bK^{[0,2]}_{I} :=
 \bK^{[0,1]}_{I^{01}}\underset{\scriptscriptstyle U^1_I}{\cup} \bK^{[1,2]}_{I^{12}} :=
\left( U^{[0,1]}_{I^{01}} \underset{\scriptscriptstyle U^1_I}{\cup} U^{[1,2]}_{I^{12}} \, , \, E^1_I \, ,
\left\{ \begin{aligned}
s^{[0,1]}_{I^{01}}\;,\; \psi^{[0,1]}_{I^{01}}  \quad &\text{on} \; U^{[0,1]}_{I^{01}}\\
s^{[1,2]}_{I^{12}}\;,\; \psi^{[1,2]}_{I^{12}}  \quad &\text{on} \;U^{[1,2]}_{I^{12}}
\end{aligned} \right\}
\right).
$$
Here the domain is the boundary connected sum
$$
U^{[0,2]}_{I} \;:=\; U^{[0,1]}_{I^{01}} \underset{\scriptscriptstyle U^1_I}{\cup}  U^{[1,2]}_{I^{12}}
\;:=\;
\quotient{ U^{[0,1]}_{I^{01}} \sqcup U^{[1,2]}_{I^{12}} }{ \scriptstyle
 \iota^1_{I^{01}}(x,1) \sim \iota^1_{I^{12}}(x,1)\quad \forall x\in U^1_I
} .
$$
Due to the collar requirements, this domain inherits a smooth structure with an embedded
product $U^1_I\times (1-\eps,1+\eps)$ for some $\eps>0$.
Moreover the sections and footprint maps fit smoothly since their pullbacks to this product agree
on $U^1_I\times \{1\}$.
Finally, the bundles are identical $E^{[0,1]}_{I^{01}} = E^1_I = E^{[1,2]}_{I^{12}}$.
\vspace{.09in}
\item
The coordinate changes are $\Hat\Phi^{[0,2]}_{IJ} := \Hat\Phi^{[0,1]}_{IJ}$ for $I,J\in\Ii_{[0,1)}$, and $\Hat\Phi^{[0,2]}_{IJ} := \bK^{[1,2]}_{IJ}$ for $I,J\in\Ii_{(1,2]}$, and the following.
\vspace{.09in}
\begin{itemize}
\item[-]
For $I,J\in\Ii_{\Kk^1}$ the
coordinate charts corresponding to
$I^{01}, J^{01}\in\Ii_{\Kk^{[0,1]}}$, $I^{12},J^{12}\in\Ii_{\Kk^{[1,2]}}$
fit together to give a glued coordinate change
(possibly with collared boundary at $X\times\{0\}$ or $X\times\{2\}$)
$$
\qquad
\Hat\Phi^{[0,2]}_{IJ} := \left( U^{[0,1]}_{I^{01}J^{01}} \underset{\scriptstyle U^1_{IJ}}\cup U^{[1,2]}_{I^{12}J^{12}} \; , \,
\left\{ \begin{aligned}
\phi^{[0,1]}_{I^{01}}  \;\quad &\text{on} \; U^{[0,1]}_{I^{01}J^{01}}\\
\phi^{[1,2]}_{I^{12}}  \;\quad &\text{on} \; U^{[1,2]}_{I^{12}J^{12}}
\end{aligned}
\right\}
 \, , \,
\Hat\phi^1_{IJ} \,
\right) .
$$
Here the embeddings of domains fit smoothly since as before their pullbacks to the product $U^1_{IJ}\times (1-\eps,1+\eps)\subset U^{[0,2]}_{IJ}$ agree on $U^1_{IJ}\times \{1\}$.
Moreover, the linear embeddings are identical $\Hat\phi^{[0,1]}_{I^{01}J^{01}} = \Hat\phi^1_{IJ} = \Hat\phi^{[1,2]}_{I^{12}J^{12}}$.
\vspace{.09in}
\item[{-}]
For $J\in\Ii_{[0,1)}$ and $I\in\Ii_{\Kk^1}$ corresponding to $I^{01}\in\Ii_{\Kk^{[0,1]}}$ with $I^{01}\subsetneq J$ the coordinate change $\Hat\Phi^{[0,2]}_{IJ} := \Hat\Phi^{[0,1]}_{I^{01} J}$
is well defined with domain $U^{[0,1]}_{I^{01}J} \subset U^{[0,2]}_{IJ}$;
similarly for $J\in\Ii_{(1,2]}$,  $I\in\Ii_{\Kk^1}$.\vspace{.09in}
\end{itemize}
\end{itemlist}

Note that we need not construct coordinate changes from $I\in\Ii_{[0,1)}$ (or $I\in\Ii_{(1,2]}$) to $J\in\Ii_{\Kk^1}$
since in these cases $F_J$ is not a subset of  $F_I$.
Now we may define the basic charts in $\Kk^{[0,2]}$ to consist of the basic charts in $\Ii_{[0,1)}$ and $\Ii_{(1,2]}$ whose footprints are disjoint from $X\times\{1\}$, together with one glued chart for each basic chart in $\Ii_{\Kk^1}$ (which is constructed from a pair of charts in ${\Kk^{[0,1]}}$ and ${\Kk^{[1,2]}}$ with matching collared boundaries).
The further charts and coordinate changes constructed above then cover exactly the overlaps of the new basic charts, since the charts from $\Ii_{[0,1)}$ have no overlap with those arising from $\Ii_{(1,2]}$.
The weak cocycle condition  for charts or coordinate changes in $\Ii_{[0,1)}\sqcup\Ii_{(1,2]}$ then follows directly from the corresponding property of $\Kk^{[0,1]}$ and $\Kk^{[1,2]}$.
Furthermore,
for $I\in\Ii_{\Kk^{1}}$ the glued chart $\bK^{[0,2]}_{I}=\bK^{[0,1]}_{I^{01}}\underset{\bK^1_I}{\cup} \bK^{[1,2]}_{I^{12}}$ has restrictions (up to natural pullbacks)
\begin{align*}
&\bK^{[0,2]}_{I}|_{{\rm int}(U^{[0,1]}_{I^{01}})} = \bK^{[0,1]}_{I^{01}}|_{{\rm int}(U^{[0,1]}_{I^{01}})}, \qquad
\bK^{[0,2]}_{I}|_{{\rm int}(U^{[1,2]}_{I^{12}})} = \bK^{[1,2]}_{I^{12}}|_{{\rm int}(U^{[1,2]}_{I^{12}})}, \\
&\bK^{[0,2]}_{I}|_{U^1_I \times (1-\eps,1+\eps)} = \bK^1_{I} \times (1-\eps,1+\eps)  .
\end{align*}
The cocycle  condition for any tuple of coordinate changes can be checked separately for these restrictions (which cover the entire domain of $\bK^{[0,2]}_{I}$) and hence follow from the corresponding property of $\Kk^{[0,1]}$, $\Kk^{[1,2]}$, and $\Kk^{1}$.

Similarly, the additivity condition for the charts in $\Ii_{[0,1)}$ or $\Ii_{(1,2]}$ follows directly from the additivity of $\Kk^{[0,1]}$ or $\Kk^{[1,2]}$.
However, the additivity for a chart $\bK^{[0,2]}_I$ with $I\in\Ii_{\Kk^1}$ is a little more subtle in that it requires additivity of the obstruction bundle $E^{02}_{I}$ with respect to all basic charts $\bK^{[0,2]}_i$ whose footprint contains $F^{[0,2]}_{I}=F^{[0,1]}_{I^{01}}\cup F^{[1,2]}_{I^{12}}$. However, note that these are exactly the glued basic charts corresponding to the basic charts $\bK^1_i$ whose footprint contains $F^{1}_{I}$. So additivity follows from additivity for $\bK^1_I$,
$$
E^{[0,2]}_{I} \;=\; E^{1}_{I} \;=\; \underset{F^1_i \supset F^1_I}{\bigoplus}  \Hat\phi^{1}_{i I}(E^{1}_i) \;=\; \underset{F^{[0,2]}_i \supset F^{[0,2]}_{I}}{\bigoplus}  \Hat\phi^{[0,2]}_{i I}(E^{[0,2]}_i).
$$
Thus $\Kk^{[0,2]}$ is an additive weak Kuranishi cobordism with restrictions 
$$
\p^0\Kk^{[0,2]}=\p^1\Kk^{[0,1]}=\Kk^0,\qquad \p^2\Kk^{[0,2]}=\p^2\Kk^{[1,2]}=\Kk^2.
$$
Here we write $\p^2$ for the restriction over $X\times\{2\}$ defined analogously to $\p^0,\p^1$.
Finally, we compose the footprint maps of $\Kk^{[0,2]}$ with the rescaling $X\times[0,2]\to X\times[0,1], (x,t) \mapsto (x, \tfrac 12 t)$ to obtain an additive weak Kuranishi cobordism from $\Kk^0$ to $\Kk^2$. 

\MS

To prove (ii) consider additive weak Kuranishi atlases $\Kk^0,\Kk^1$ and a common additive weak extension $\Kk^{01}$.  Then an additive weak Kuranishi cobordism $\Kk^{[0,1]}$ from $\Kk^0$ to $\Kk^1$ is given by
\begin{itemize}
\item
index set
$\displaystyle \;
\Ii_{\Kk^{[0,1]}} := \Ii_{\Kk^{01}}  =  \bigl\{ I\subset\Nn^{01} \,\big|\, {\textstyle\bigcap_{i\in I}} F^{01}_i \neq \emptyset \bigr\}$;
\item
charts $\displaystyle\; \bK^{[0,1]}_I := \bK^{01}_I \times A_I$
with $A_I= [0,\tfrac 23)$ for $I\subset\Nn^0$, $A_I = (\tfrac 13,1]$ for $I \subset \Nn^1$, and $A_I=(\tfrac 13,\tfrac 23)$ otherwise;
\item
coordinate changes
$\displaystyle\; \Hat\Phi^{[0,1]}_{IJ} := \Hat\Phi^{01}_{IJ} \times (A_I\cap A_J)$.
\end{itemize}
This proves (ii) since additivity and 
weak
cocycle conditions follow from the corresponding properties of $\Kk^{01}$. In particular, note that additivity makes use of the fact that $\Kk^{[0,1]}$ has the same index set as the additive Kuranishi atlas $\Kk^{01}$.
\end{proof}

The final  task in this section is to construct tame cobordisms between different tame shrinkings in order to establish the uniqueness claimed in Theorem~\ref{thm:K}.  It will also be useful to have suitable metrics on these cobordisms, since they are used in the construction of perturbations.  We therefore begin by discussing the notion of metric tame Kuranishi cobordism.   One difficulty here is that we are dealing with an arbitrary distance function, not a length metric such as a Riemannian metric. Hence we must prove various elementary results that would be clear in the Riemannian case.

\begin{defn}\label{def:mCKS}
A {\bf metric tame Kuranishi cobordism} on $X\times [0,1]$ is a tame Kuranishi cobordism $\Kk^{[0,1]}$ equipped with a  metric $d^{[0,1]}$ on $\bigl|\Kk^{[0,1]}\,\!\bigr|$ that satisfies the admissibility conditions of Definition~\ref{def:metric} and has a metric collar as follows:

There is $\eps>0$ such that for $\al=0,1$ 
with $\Kk^\al:=\p^\al\Kk^{[0,1]}$
the collaring maps 
$\rho^\al: |\Kk^\al|\times A^\al_\eps\to \bigl|\Kk^{[0,1]}\bigr|$ of Remark~\ref{rmk:cobordreal} are defined and pull back $d^{[0,1]}$ to the product metric 
\begin{equation} \label{eq:epsprod}
(\rho^\al)^* d^{[0,1]}
\bigl((x,t),(x',t')\bigr) \;=\; d^\al(x,x')+ |t'-t| \qquad \text{on} \;\; |\Kk^\al|\times A^\al_\eps ,
\end{equation}
where the metric $d^\al$ on $|\Kk^\al| = |\p^\al\Kk^{[0,1]}|$ is given by pullback of the restriction of $d^{[0,1]}$ to $\p^\al |\Kk^{[0,1]}| = \rho^\al\bigl(|\Kk^\al|\times\{\al\}\bigr)$,
which we denote by
$$
d^\al \,:=\; d^{[0,1]}|_{|\p^\al \Kk^{[0,1]}|} \,:=\; \rho^\al(\cdot, \al)^* d^{[0,1]}.
$$ 
In addition, we require
for all $y\in |\Kk^{[0,1]}|\less \rho^\al  \bigl( |\Kk^\al|\times A^\al_\eps\bigr)$ 
\begin{equation}\label{eq:epscoll}
d^{[0,1]}\bigl( y , \rho^\al(x,\al + t )\bigr) \;\ge\; 
\eps - |t|
\qquad \forall \;
(x,\al + t)\in   |\Kk^\al|\times A^\al_\eps  .
\end{equation}

More generally, we call a metric on $\bigl|\Kk^{[0,1]}\,\!\bigr|$ {\bf admissible} if it satisfies the conditions of Definition~\ref{def:metric}, and {\bf $\eps$-collared} if it satisfies \eqref{eq:epsprod} and \eqref{eq:epscoll}.
\end{defn}

Condition \eqref{eq:epscoll} controls the distance between points $\rho^\al(x,\al + t)$ in the collar and 
points $y$ outside of the collar.  
In particular, if $\de<\eps-|t|$, then the $\de$-ball around $\rho^\al(x,\al + t)$ is contained in the $\eps$-collar, while the $\de$-ball around $y$ does not intersect the $|t|$-collar $\rho^\al\bigl(|\Kk^{\al}|\times A^\al_{|t|}\bigr)$.

\begin{example} \label{ex:mtriv}\rm
\NI (i) 
Any admissible metric $d$ on $|\Kk|$ for a Kuranishi atlas $\Kk$ induces an admissible 
collared metric $d + d_\R$ on $|\Kk\times[0,1]|\cong |\Kk| \times [0,1]$, given by 
$$
\bigr(d + d_\R\bigl)\bigl( (x,t) , (x',t') \bigr) = d(x,x') + |t'-t| .
$$ 
For short, we call $d + d_\R$ a {\bf product metric.} 

\NI (ii)
Let $d$ be an admissible collared metric on $|\Kk|$ for a general Kuranishi cobordism $\Kk$, and let $b$ be an upper bound of $d^\al:=d\big|_{|\p^\al\Kk|}$ for $\al=0,1$. Then we claim that for any $\ka>b$ the truncated metric $\min(d,\ka)$ given by $(x,y) \mapsto \min(d(x,y),\ka)$ is an admissible $\eps'$-collared metric for $\eps':=\min(\eps,\ka-b)$. 
Indeed, the metric $\min(d,\ka)$  is also admissible because it induces the same topology on each $U_I$ as $d$.
Further, the product form on the collar is preserved since
\begin{equation}\label{eq:trunc}
|t-t'| < \ka-b \qquad\Longrightarrow\qquad  d^\al(x,x') + |t-t'| \;\le\; b + |t-t'| \;<\; \ka ,
\end{equation}
and \eqref{eq:epscoll} holds since for $(x,\al + t)\in  |\p^\al\Kk|\times A^\al_{\eps'}$ we have 
\[
d\bigl( y , \rho^\al(x,\al + t) \bigr) \;\ge\; 
\begin{cases}
\eps - |t| \ge \eps' - |t|  &\quad\text{if} \; y\in |\Kk|\less \rho^\al\bigl( |\p^\al\Kk|\times A^\al_\eps\bigr) , \\
|t' - t| \ge \eps' - |t|  &\quad\text{if} \; y=\rho^\al(x',\al+t')\in \rho^\al\bigl( |\p^\al\Kk|\times ( A^\al_\eps\less A^\al_{\eps'} )\bigr) 
\end{cases}
\]
by \eqref{eq:epscoll} for $d$ resp.\ the product form of the metric on the $\eps$-collar, and moreover $\ka > \ka - b - t \geq \eps' - t$.
Finally, the restrictions of this truncated metric are by $\ka>b$
$$
\min(d,\ka)\big|_{|\p^\al\Kk|} \;=\; \min(d^\al,\ka) \;=\; d^\al \qquad\text{for}\; \al=0,1 .
$$

\NI (iii)  
Let $(\Kk,d)$ be a metric tame Kuranishi cobordism with collar width $\eps>0$. Then for any $0<\de<\eps$ the $\de$-neighbourhood of the inclusion of the Kuranishi space $X\times[0,1]$, 
\begin{equation}\label{eq:metcoll2}
\Ww_\de: = B_\de\bigl(\io_\Kk(X\times [0,1])\bigr) = \bigl\{y\in |\Kk| \, \big| \, 
\exists z\in  \io_\Kk(X\times [0,1]) : d(y, z)<\eps\bigr\}
\end{equation}
has collar form with collars of width $\eps-\de$, that is
$$
\Ww_\de \cap \rho^\al\bigl(|\p^\al\Kk|\times A^\al_{\eps-\de}\bigr) 
= \rho^\al\bigl(B_\de(\io_{\p^\al\Kk}(X))\times A^\al_{\eps-\de}\bigr).
$$
This holds because  \eqref{eq:epscoll} implies that $B_\de\bigl(\io_\Kk\bigl(X\times ([0,1]\less A^\al_\eps) \bigr)\bigr)$ does not intersect the collar $\rho^\al\bigl(\p^\al\Kk\times A^\al_{\eps-\de}\bigr)$, and on the other hand
$B_\de\bigl(\io_\Kk\bigl(X\times A^\al_\eps \bigr)\bigr)\cap \rho^\al\bigl(|\p^\al\Kk|\times A^\al_\eps\bigr)$ has product form because the metric in the collar is a product metric.
\end{example}

Recall that 
an admissible metric $d$ on the virtual neighbourhood $|\Kk|$ of a Kuranishi cobordism 
has the property that  its pullback to each chart $U_I$ defines the given topology on that manifold.
However, 
even if the metric is also collared,
this implies little else about the induced topology on $|\Kk|$.
For example, if we consider a product cobordism $\Kk\times [0,1]$, then an admissible 
(collared)
metric $d$ on $|\Kk\times [0,1]|$ need not define the product topology on  $|\Kk\times [0,1]| \cong |\Kk|\times [0,1]$;
 all we know is that the pullback metrics $d_I$ on each domain $U_I\times [0,1]$ give the product topology.
The next lemma gives useful techniques for converting such a metric to one of product form in the collar,
and for proving uniqueness of admissible metrics up to cobordism.
Here, as in Lemma~\ref{le:cobord1}, we will consider product Kuranishi atlases $\Kk\times A$ for various intervals $A\subset \R$, that is with domain category $\Obj_{\bB_\Kk}\times A$. Their virtual neighbourhoods $|\Kk\times A|$ are canonically identified with $|\Kk|\times A$ by Remark~\ref{rmk:cobordreal}, 
and hence have continuous injections  $|\Kk|\times B \hookrightarrow |\Kk\times A|$ for any $B\subset A$ with respect to the quotient topologies on the realizations of the categories $\Obj_\Kk$ resp.\ $\Obj_\Kk\times A$.
However, for admissible metrics on $|\Kk|$ and $|\Kk\times A|$, we do not require these injections to remain continuous.

\begin{prop} \label{prop:metcoll}
Let $\Kk$ be a metrizable tame Kuranishi atlas.
\begin{enumerate}
\item 
Given any admissible metric $d$ on 
$|\Kk \times [0,1]|$
and $\eps>0$, 
there is 
another admissible metric $D$ on 
$|\Kk \times [0,1]|$
that restricts to $d$ on  
$|\Kk| \times [\eps,1-\eps]$
and restricts to the product 
$d_\al + d_\R$ on $|\Kk|\times A^\al_\eps$, where $d_\al$ is the restriction of $d$ to $|\Kk|\times \{\al\}\cong |\Kk|$.
Moreover, $D$
is $\eps$-collared in the sense of \eqref{eq:epscoll}.

\item  
Let $d_1$ be an admissible metric on $|\Kk|$, and suppose for $A = [0,1], [1,2]$ that $d_A$ are admissible metrics on $|\Kk \times A|$ that for some $\ka>0$ restrict on $|\Kk| \times \bigl([1-\ka,1+\ka]\cap A\bigr)$ to the product metric $d_1+d_\R$. Then 
there exists 
an admissible 
$\frac\ka2$-collared
metric $D$ on $|\Kk \times [0,2]|$
whose boundary restrictions  are
$$
\qquad
D\big|_{|\Kk|\times \{0\}} = \min(d_{[0,1]}\big|_{|\Kk|\times \{0\}}, \tfrac \ka 2)
\quad\text{and}\quad
D\big|_{|\Kk|\times \{2\}} = \min(d_{[1,2]}\big|_{|\Kk|\times \{2\}}, \tfrac \ka 2).
$$

\item  
Suppose that $d$ and $d'$ are admissible metrics on $|\Kk|$, where $d'$ is bounded by~$1$.
Then, for any $0<\eps
<\frac 14$,
there is an admissible $\eps$-collared metric $D$ on  $|\Kk\times [0,1]|$ that 
restricts to  $d +  d_\R$ on $|\Kk|\times [0,\eps]$ and to $d+d' + d_\R$ on $|\Kk|\times [1-\eps,1]$.

\item  
If $d^0$ and $d^1$ are any two admissible metrics on $|\Kk|$, then 
there exists an admissible collared metric $D$ on $|\Kk\times[0,1]|$ with restrictions $D|_{|\Kk\times\{\al\}|}=d^\al$ at the boundaries $\p^\al |\Kk\times[0,1]|=|\Kk|\times\{\al\}\cong|\Kk|$ for $\al=0,1$.

\item 
Finally, suppose that $\Kk^{[0,1]}$ is a Kuranishi cobordism and $d$ is an admissible (not necessarily collared) metric on $|\Kk^{[0,1]}|$.
Then there exists an admissible collared metric $D$ on $|\Kk^{[0,1]}|$ with $D|_{|\p^\al \Kk^{[0,1]}|}=d|_{|\p^\al \Kk^{[0,1]}|}$ for $\al=0,1$.
\end{enumerate}

\end{prop}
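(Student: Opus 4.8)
The plan is to reduce statement (v) to statement (i), which is the only part of the proposition that handles a general admissible metric on a cobordism realization rather than a product realization. The key observation is that the collar structure of a Kuranishi cobordism $\Kk^{[0,1]}$ identifies, via the embeddings $\rho^\al$ of Remark~\ref{rmk:cobordreal}, collar neighbourhoods $\rho^\al(|\Kk^\al|\times A^\al_\eps)$ of the two boundary components $\p^\al|\Kk^{[0,1]}|$ with products $|\Kk^\al|\times A^\al_\eps$, where $\Kk^\al=\p^\al\Kk^{[0,1]}$ is a tame Kuranishi atlas. Thus the entire problem is a ``cut-and-paste in the collar'' modification of the metric, interpolating from the given $d$ in the interior to a product metric $d^\al+d_\R$ near each boundary, while preserving admissibility and arranging the collar-distance condition \eqref{eq:epscoll}.

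First I would fix a uniform collar width $\eps>0$ as in Remark~\ref{rmk:Ceps}, so that both embeddings $\rho^\al$ are defined on $A^\al_\eps$ and all charts and coordinate changes of $\Kk^{[0,1]}$ are of collar form there. Pulling back $d$ by $\rho^\al$ gives an admissible metric $\rho^{\al*}d$ on $|\Kk^\al\times A^\al_\eps|$; after an affine reparametrization of $A^\al_\eps$ to a standard interval this is precisely the setting of part (i) applied to the tame Kuranishi atlas $\Kk^\al$ (here we only need the one-sided version of (i), interpolating near a single endpoint, which follows from the proof of (i) by symmetry). This produces, for each $\al$, a new admissible metric $D^\al$ on $|\Kk^\al\times A^\al_\eps|$ that agrees with $\rho^{\al*}d$ on the inner half-collar $|\Kk^\al|\times(A^\al_\eps\less A^\al_{\eps/2})$ and equals the product $d^\al+d_\R$ on $|\Kk^\al|\times A^\al_{\eps/2}$, where $d^\al$ is the restriction of $d$ to $\p^\al|\Kk^{[0,1]}|$; moreover part~(i) guarantees $D^\al$ is $(\eps/2)$-collared. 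Now define $D$ on $|\Kk^{[0,1]}|$ by declaring $D:=d$ outside the union of the $\eps$-collars and $D:=(\rho^\al)_*D^\al$ inside the $\eps$-collar of $\p^\al|\Kk^{[0,1]}|$ for $\al=0,1$; these agree on the overlap region (the inner half-collars, where $D^\al=\rho^{\al*}d$), so $D$ is well defined. Admissibility of $D$ on each chart domain $U_I$ of $\Kk^{[0,1]}$ follows since on the part of $U_I$ inside a collar the pullback metric is, by the collar form of the chart, a product of the pullback metric on $\p^\al U_I$ with $d_\R$, hence induces the given manifold topology; in the interior $D=d$ which is admissible by hypothesis. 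The required boundary restrictions $D|_{|\p^\al\Kk^{[0,1]}|}=d|_{|\p^\al\Kk^{[0,1]}|}$ hold because the product construction of $D^\al$ restricts to $d^\al$ at the endpoint, and $d^\al$ is by definition $d|_{|\p^\al\Kk^{[0,1]}|}$ transported by $\rho^\al(\cdot,\al)$.

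The main obstacle I expect is verifying the collar-distance condition \eqref{eq:epscoll} for the glued metric $D$, namely that points $\rho^\al(x,\al+t)$ in the collar stay at distance at least $\eps'-|t|$ (for a suitable $\eps'$, likely $\eps/2$) from every point $y$ outside the $\eps'$-collar. This is not automatic after cutting and pasting, because $D$ is a genuine distance function obtained by gluing, not a length metric, so lower bounds on distances between the two pieces do not follow from local data. The remedy is to truncate: as in Example~\ref{ex:mtriv}~(ii), replacing $D$ by $\min(D,\ka)$ for $\ka$ slightly larger than the common bound $b:=\max_\al\sup d^\al$ of the boundary metrics does not change admissibility or the boundary restrictions (since $\ka>b$), preserves the product form in the collars by the implication \eqref{eq:trunc}, and forces the desired separation: any path-free ``distance'' at least $\ka$ dominates $\eps'-|t|$ once $\ka-b\geq\eps'$. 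Concretely, after the truncation one checks \eqref{eq:epscoll} by the two-case argument in Example~\ref{ex:mtriv}~(ii) — if $y$ lies outside the $\eps$-collar the bound comes from the fact that $D\geq\min(D,\ka)$ and $\min(D,\ka)$ separates the inner and outer regions by at least $\ka-b$, while if $y$ lies in the remaining part of the $\eps$-collar the bound comes from the product form $d^\al(x,x')+|t-t'|\geq|t-t'|\geq\eps'-|t|$. One also needs the elementary fact that $D$ as glued is indeed a metric (symmetry and the triangle inequality across the gluing seam), which holds because the two pieces literally agree on an open overlap region and $D=d$ connects them globally on the interior.
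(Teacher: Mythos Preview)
Your reduction of (v) to (i) via the collar embeddings $\rho^\al$ is a reasonable starting point, but the argument has a genuine gap at the gluing step.  A distance function is a global function on \emph{pairs} of points, not a local section that can be patched on overlaps.  When you write ``$D:=d$ outside the $\eps$-collars and $D:=(\rho^\al)_*D^\al$ inside,'' you have not defined $D(x,y)$ for $x$ lying in the outer part of a collar (where $D^\al$ differs from $\rho^{\al*}d$) and $y$ lying outside that collar.  The claim that the pieces ``agree on an open overlap region'' is irrelevant here: agreement on a common domain does not prescribe the cross-terms, and any ad hoc choice (e.g.\ using $d$ for mixed pairs) will typically violate the triangle inequality.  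For instance, with $x,y$ both in the new product part of a collar and $z$ outside, one needs $D^\al(x,y)\le d(x,z)+d(z,y)$; since $D^\al(x,y)=d^\al(\bar x,\bar y)+|t_x-t_y|$ bears no controlled relation to $d(x,y)$ (the original $d$ is \emph{not} assumed collared), this can fail.  The truncation device from Example~\ref{ex:mtriv}~(ii) cannot rescue this: it presupposes that $D$ is already a metric satisfying \eqref{eq:epscoll} and merely adjusts the collar width, whereas here you are trying to manufacture both the metric axioms and \eqref{eq:epscoll} simultaneously.

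The paper's proof of (v) avoids this by a different mechanism.  Rather than modifying $d$ inside the collar and gluing, it first pushes $d$ forward by a homeomorphism $F$ of $|\Kk^{[0,1]}|$ that squeezes the $2\eps$-collar into the inner half-collar, leaving the outer $\eps$-collar empty.  Then it \emph{extends} $F_*d$ across the freed-up region by explicit piecewise formulas of exactly the type used in the proof of (i), with the crucial feature that the cross-terms $D(y,y')$ for $y$ in the interior and $y'=\rho^\al(x',t')$ in the new collar are defined as $F_*d(y,\rho^\al(x',\eps))+|\eps-t'|$, routing through the seam slice.  The triangle inequality is then verified case by case exactly as in (i), and \eqref{eq:epscoll} drops out of the formula.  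If you want to salvage your approach, you would have to open the black box of (i) and use the specific seam-routing structure of $D^\al$ to extend the cross-term formula globally---at which point you have essentially reproduced the paper's construction.
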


\begin{proof}  
The metric required by (i) can be obtained by first rescaling the given admissible metric to $|\Kk|\times [\eps,1-\eps]$, and then making collaring constructions to extend it to $|\Kk|\times[\eps,1]$ and then to $|\Kk|\times[0,1]$.
We will moreover see that the collar width $\eps$ plays no special role other than complicating the notation. So it suffices to consider a given admissible metric $d$ on $|\Kk|\times[0,1]$ and extend it by a collaring construction to a metric $D$ on $|\Kk|\times[0,2]$ that restricts to $d_1 + d_\R$ on $|\Kk|\times[1,2]$ with the metric $d_1:=d|_{|\Kk|\times\{1\}}$ on $|\Kk|\times\{1\}\cong|\Kk|$ and satisfying \eqref{eq:epscoll} with $\eps=1$, that is $D(y,(x,2-t))\ge 1 - t$ for $y \in |\Kk|\times[0,1]$ and $0\le t < 1$.
This metric can be constructed by symmetric extension of
\begin{equation}\label{eq:DDD}
D\bigl( (x,t) , (x',t') \bigr) \; :=\; 
\left\{\begin{array}{ll}
 d\bigl( (x,t) , (x',t') \bigr)  &\mbox{ if } t,t'\le 1,\\ 
 d_1(x,x') + |t-t'|, 
 &\mbox{ if } t,t'\ge 1,\\
d\bigl( (x,t) , (x',1) \bigr) + |t'-1| &\mbox{ if } t \le 1 \le t'.
\end{array}\right.
\end{equation}
This is well defined, positive definite, and symmetric. The triangle inequality 
\begin{equation}\label{eq:tri}
D\bigl( (x,t) , (x'',t'') \bigr)\le D\bigl( (x,t) , (x',t') \bigr) + D\bigl( (x',t') , (x'',t'') \bigr)
\end{equation}
directly follows from the construction in case $t,t',t''\le 1$ or $t,t',t''\ge 1$.
In the case $t' \le 1 < t, t''$ it follows from the triangle inequalities for both $d$ and $d_\R$,
\begin{align*}
 D\bigl( (x,t) , (x'',t'') \bigr)
&\;=\;
d\bigl( (x,1) , (x'',1) \bigr) + |t''-t| \\
&\;\le\;
d\bigl( (x,1) , (x',t') \bigr)  + d\bigl( (x',t') , (x'',1) \bigr) + |t-1|+ |t''-1| \\
&\;=\; D\bigl( (x,t) , (x',t') \bigr) + D\bigl( (x',t') , (x'',t'') \bigr) .
\end{align*}
Similarly, if $t\le 1<t', t''$ we have
\begin{align*}
D\bigl( (x,t) , (x'',t'') \bigr)&\; =\; d\bigl( (x,t) , (x'',1) \bigr) + |t''-1| \\
&\;\le
d\bigl( (x,t) , (x',1) \bigr) + d\bigl( (x',1) , (x'',1) \bigr)+ |t'-1|+ |t''-t'| \\
&\; =\; D\bigl( (x,t) , (x',t') \bigr) + D\bigl( (x',t') , (x'',t'') \bigr).
\end{align*}
In the case $t, t''\le 1< t'$ the triangle inequality for $d$ implies that for $D$,
\begin{align*}
D\bigl( (x,t) , (x'',t'') \bigr)&\; =\; d\bigl( (x,t) , (x'',t'') \bigr) \\
&\;\le\; d\bigl( (x,t) , (x',1) \bigr) + d\bigl( (x',1) , (x'',t'') \bigr)+ 2|t'-1|\\
&\; =\; D\bigl( (x,t) , (x',t') \bigr) + D\bigl( (x',t') , (x'',t'') \bigr).
\end{align*}
Finally, for  $t', t''\le 1< t$ we have
\begin{align*}
D\bigl( (x,t) , (x'',t'') \bigr)&\; =\; d\bigl( (x,1) , (x'',t'') \bigr)  + |t-1| \\
&\;\le\; d\bigl( (x,1) , (x',t') \bigr) + d\bigl( (x',t') , (x'',t'') \bigr)+ |t-1| \\
&\; =\; D\bigl( (x,t) , (x',t') \bigr) + D\bigl( (x',t') , (x'',t'') \bigr).
\end{align*}
This proves that $D$ is a well defined metric on $|\Kk|\times [0,2]$. Boundedness follows from that of $d$.
We next check that the topology given by the pullbacks $D_I$ of $D$ to $U_I\times [0,2]$ is the standard product topology. Since $d$ pulls back to the product topology on $U_I\times [0,1]$ by hypothesis, 
as does $d_1+d_\R$ on $U_I\times [1,2]$, the topology given by $D_I$ restricts to the product topology on both $U_I\times [0,1]$ and on $U_I\times [1,2]$.  
On the other hand, the product topology on 
$U_I\times [0,2]$ is 
the quotient topology obtained from
the product topologies 
on the disjoint union
$U_I\times [0,1]
\;\sqcup\;
U_I\times [1,2]$ by identifying the two copies of $U_I\times \{1\}$. 
Since this is precisely the topology given by  $D_I$, the metric $D$ is admissible as claimed.
Finally, the collaring requirement \eqref{eq:epscoll} holds since for $y=(x,t)\in |\Kk|\times [0,1]$ and 
$(x',2-t)\in  |\Kk|\times (1,2]$ we have $D\bigl( y , (x',2-t) \bigr) = d\bigl( y , (x',1) \bigr) + |2-t - 1| \ge 1-t$.
This completes the proof of (i).

To prove (ii), we first replace each metric $d_A$ by $\min(d_A,\frac {\ka}2)$
so that the metrics $d_A$ and $d_1$ are bounded by $\frac \ka 2$.
We next replace each $d_A$ 
by the metric $D_A'$ constructed as in (i) that 
restricts to $d_A$ on 
$|\Kk| \times \bigl(A\less [1-\ka,1+\ka]\bigr)$,
and to  $d_1+d_\R$ on $|\Kk| \times \bigl([1-\ka,1+\ka]\cap A\bigr)$, 
and for $A=[0,1]$ (and similarly for $A=[1,2]$) satisfies
$$
D_A'\bigl( (x,t),(x',t') \bigr) =  d_A\bigl( (x,t),(x,1-\ka)\bigr) + |t'-(1-\ka)| \qquad \forall\;  t\le 1-\ka \le t'\le 1.
$$ 
Next we set $D_A: = \min(D_A',\ka)$, which by Example~\ref{ex:mtriv} is still $\frac \ka 2$-collared. 
Finally, we claim that
$$
D\bigl((x,t),(x',t')\bigr): = \left\{
\begin{array}{ll} D_A\bigl((x,t),(x',t')\bigr)  &\mbox{ if } t,t'\in A,\\
\min
\bigl( \, d_1(x,x') +|t-t'| \, ,\, \ka \,\bigr) \phantom{\int_A^B} \!\!\!\!
 &\mbox{ if } 
t,t'\in [1-\ka,1+\ka], \\
\ka &\mbox{ otherwise }
\end{array}\right.
$$
is the required metric on $|\Kk\times [0,2]|$.
Indeed, $D$ is well defined since $D_A$ restricts to $\min(d_1+d_\R,\ka)$ on 
 $|\Kk| \times \bigl([1-\ka,1+\ka]\cap A\bigr)$ by construction.
Further $D$ has the required restrictions, and is symmetric,
positive definite, 
and bounded by $\ka$. To see  
that it satisfies the triangle inequality \eqref{eq:tri} we need only check triples with $D\bigl( (x,t) , (x',t') \bigr) + D\bigl( (x',t') , (x'',t'') \bigr)<\ka$, and (by symmetry) $t\le t''$. 
The proof of (i) shows that $D$ satisfies \eqref{eq:tri} whenever $t,t',t''\le 1+\ka$ or $t,t',t''\ge 1-\ka$.  
Otherwise at least one of $t,t',t''$ is less than $1-\ka$ while another is larger than $1+\ka$. 
This means that the points in at least one of the pairs $\{t,t'\}$, $\{t,t''\}$, and $\{t',t''\}$ lie in different components of the complement of the interval $[1-\ka,1+\ka]$,
and thus the corresponding points in $|\Kk\times[0,2]|$
have distance $\ka$. 
In particular, \eqref{eq:tri} holds by the upper bound on $D$ except in the second case. Assuming w.l.o.g.\ $t\le t''$, this reduces our considerations to the case $t<1-\ka\le t' \le 1+\ka < t''$ when either $|t'-(1-\ka)|\ge\ka$ or $|t'-(1+\ka)|\ge\ka$, and thus $D\bigl( (x,t) , (x',t') \bigr) \ge \ka$ or $D\bigl( (x',t') , (x'',t'') \bigr) \ge \ka$, which again proves \eqref{eq:tri}.
Thus in all cases $D$ satisfies the triangle inequality,  and so is a metric as claimed.
Further, $D$ is admissible because, 
by (i) its pullback induces the product topology on each $U_I\times [0,1+\ka]$ and $U_I\times [1-\ka,2]$, and hence 
 on $U_I\times [0,2]$. 
This proves (ii).

To prove (iii), we choose a smooth nondecreasing function $\be: [0,1]\to [0,1]$  
such that $\be|_{[0,\eps]}=0$, $\be|_{[1-\eps,1]}=1$, and the derivative is bounded by $\sup \be'<2$. 
(At this point we need to know that $(1-\eps)-\eps > \frac 12$, which holds by assumption $\eps<\frac 14$.)
For $r\in [0,1]$ we then obtain a metric $d_r$ on $|\Kk|$ by
$$
d_r(x,x'): = d(x,x') + \be(r) d'(x,x') 
$$
and note that $d_r(x,x') \le d_s(x,x')$ whenever $r\le s$.
Moreover, each $d_r$ is admissible on $|\Kk|$ since their pullback to the charts are analogous sums, and the sum of two metrics that induce the same topology also induces this topology.
Now we claim that 
$$
D\bigl( (x,t) , (x',t') \bigr) = d_{\min(t,t')}(x,x') + |t-t'|
$$
provides the required metric $D$ on $|\Kk \times [0,1]|$. This is evidently symmetric and positive definite, and by symmetry it suffices to check the triangle inequality \eqref{eq:tri} for $t\le t''$. 
In the case $t'< t\le t''$ we use $0\le \be(t) - \be(t') \le 2 (t-t')$ 
and $d'\le 1$
to obtain
\begin{align*}
& \; D\bigl( (x,t) , (x',t') \bigr) + D\bigl( (x',t') , (x'',t'') \bigr)\\
&\qquad \qquad  = \;
d(x,x') + d(x',x'') + \be(t') d'(x,x') +  \be(t') d'(x',x'') + |t-t'| + |t'-t''|\\
&\qquad \qquad \ge\;  d(x,x'') + \be(t') d'(x,x'') +  2|t-t'| \\
&\qquad \qquad\ge\; d(x,x'') + \be(t) d'(x,x'') 
\; =\; D\bigl( (x,t) , (x'',t'') \bigr).
\end{align*}
In the other cases $t\le t'\le t''$ resp.\ $t\le t''\le t'$, we can use the monotonicity $\be(t')\geq \be(t)$ resp.\ $\be(t'')\geq \be(t)$ to check \eqref{eq:tri}. Therefore $D$ is a metric on the product virtual neighbourhood. It is admissible because each $d_r$ is admissible on $|\Kk|$ so that the pullback metric on each set $U_I\times [0,1]$ induces the product topology. 
Finally it is $\eps$-collared by construction. In particular, it satisfies \eqref{eq:epscoll} due to the term $|t-t'|$ in its formula.
This proves (iii).

To prove (iv), let 
$C>0$ be a common upper bound for $d^0$ and $d^1$ and set $\ka:=\frac 16$.
By (iii) there is a $\ka$-collared metric $d_{[0,1]}$ on $|\Kk|\times [0,1]$ that equals 
$\frac \ka{3 C} d^0 +d_\R$ on 
$|\Kk|\times [0,\ka]$ and  $\frac \ka {3 C} (d^0 +d^1)+d_\R$ on $|\Kk|\times [1-\ka, 1]$. 
Similarly, there is a $\ka$-collared metric $d_{[1,2]}$ on $|\Kk|\times [1,2]$ that equals $\frac \ka{3C} (d^0 +d^1)+d_\R$ on $|\Kk|\times [1,1+\ka]$ and  $\frac \ka{3 C} d^1+d_\R$ on $|\Kk|\times [2-\ka, 2]$.
These satisfy the assumptions of (ii), 
so that we obtain a collared metric $D'$ on $|\Kk|\times [0,2]$ that restricts to $\min\{ \frac \ka {3 C} d^0+d_\R, \frac\ka 2 \}= \frac \ka{3 C } d^0+d_\R$ on 
$|\Kk|\times [0,\frac\ka6]$ since $\frac \ka {3 C} d^0 \le \frac\ka3$.
Similarly, it restricts  to $\min\{ \frac \ka{3C} d^1+d_\R , \frac{\ka}2 \} =  \frac \ka{3 C} d'+d_\R$ on 
$|\Kk|\times [2-\frac\ka 6,2]$.
Moreover, $D'$ satisfies \eqref{eq:epscoll} with $\eps=\ka=\frac 16$ by (iii).
Now define $D$ on $|\Kk|\times [0,1]$ to be the pullback of $\frac{3C}\ka D'$ by a rescaling map $(x,t)\mapsto (x,\be(t))$ with $\be(t) = \frac {\ka t}{3C}$ for $t$ near $0$ and $\be(t) = 2- \frac {\ka(1- t)}{3C}$ for $t$ near $1$.  Then $D$ restricts to $d^0+d_\R$ near $|\Kk|\times \{0\}$ and to $d^1+d_\R$ near $|\Kk|\times \{1\}$, and is collared by construction.  Hence it provides the required metric tame cobordism $(\Kk\times[0,1],D)$ from $(\Kk,d^0) $ to  $(\Kk,d^1)$.

To prove (v) let $2\eps>0$ be the collar width of $\Kk:=\Kk^{[0,1]}$. 
Then we first push forward the metric $d$ on $|\Kk|$ by the bijection
$$
F : |\Kk| \;\overset{\cong}{\longmapsto}\; |\Kk| \less \Bigl( \rho^0\bigl(|\p^0\Kk|\times [0,\eps)\bigr) \;\cup\; \rho^1\bigl(|\p^1\Kk|\times (1-\eps,\eps]\bigr) \Bigr)
$$
given by $F : \rho^0(x, t) \mapsto \rho^0(x, \eps + \frac 12 t )$ and $F : \rho^1(x, 1-t) \mapsto \rho^1(x, 1-\eps - \frac 12 t )$ for $t\in[0,2\eps)$, and the identity on the complement of the $2\eps$-collars.
The push forward $F_*d$ is admissible since $F$ pulls back to homeomorphisms supported in the collars of the domains $U_I$ of $\Kk$.
Next, let us denote $d_\al:=d|_{|\p^\al\Kk|}$, such that $d_0$ equals to the restriction $(F_*d)|_{\rho^0(|\p^0\Kk|\times\{\eps\})}$, pulled back via $\rho^0(\cdot, \eps)$, and similar for $d_1$ via $\rho^1(\cdot, 1-\eps)$.
Then we apply the same collaring construction as in (i) to extend $F_*d$ to an admissible collared metric on $|\Kk|$ given by symmetric extension of
\[
D\bigl( y , y' \bigr) \; :=\; 
\left\{\begin{array}{ll}
 d\bigl( y , y' \bigr)  &\mbox{ if } y,y' \in \im F ,\\ 
 \rho^\al_*(d_\al + d_\R) (y,y') &\mbox{ if } y,y'\in \rho^\al\bigl(|\p^\al\Kk|\times A^\al_\eps\bigr)  ,\\
d\bigl( y , \rho^\al(x', \eps) \bigr) + |\eps - t' | &\mbox{ if } y\in \im F, y'=\rho^0(x',t')
, \\
d\bigl( y , \rho^1(x', 1-\eps) \bigr) + |1-\eps - t' | &\mbox{ if } y\in \im F, y'=\rho^1(x',t')
, \\
d\bigl( \rho^0(x, \eps) , \rho^1(x', 1-\eps) \bigr)  &\mbox{ if } y=\rho^0(x,t), y'=\rho^1(x',t'). \\ 
\qquad + |\eps - t |+ |1-\eps - t' | &
\end{array}\right.
\]
Viewing this as a two stage extension to $\rho^\al\bigl(|\p^\al\Kk|\times A^\al_\eps\bigr)$ for $\al=0,1$, the proof of the triangle inequality and admissibility is the same as in (i), and the restrictions are $\rho^\al_*d_\al$ on $\rho^\al\bigl(|\p^\al\Kk|\times \{\al\}\bigr)$, as required. This finishes the proof.
\end{proof}

We are now in a position to prove the uniqueness part of Theorem~\ref{thm:K}, namely that different tame shrinkings of the same additive weak Kuranishi atlas are cobordant. 
This is a crucial ingredient in establishing that the virtual fundamental class associated to a given Kuranishi atlas is well defined. Since in practice one only associates a well defined cobordism class of Kuranishi atlases to a given moduli space, we need the full generality of the following result.
For this, we must revisit the construction of shrinkings in the proof of Proposition~\ref{prop:proper}.

\begin{remark}\rm
In the case of shrinkings $\Kk^0,\Kk^1$ of a fixed Kuranishi atlas $\Kk$ there is an easier construction 
of a cobordism in one special case:
If the shrinkings of the footprint covers $(F^\al_I)$ are compatible in the sense that their intersection
$(F^0_I\cap F^1_I)$ is also a shrinking (i.e.\ covers $X$ and has the same index set of nonempty intersections of footprints), then by Remark~\ref{rmk:shrink} the intersection of domains $U^0_{IJ}\cap U^1_{IJ}$ defines another shrinking of~$\Kk$. Thus one obtains an additive tame shrinking of the product Kuranishi cobordism $\Kk\times [0,1]$ by
$$
U_{IJ}^{[0,1]} :=  \bigl( U^0_{IJ} \times [0,\tfrac 13) \bigr)
\;\cup\; \bigl( \bigl( U^0_{IJ}\cap U^1_{IJ} \bigr) \times [\tfrac 13,\tfrac 23] \bigr)
\;\cup\; \bigl( U^1_{IJ} \times (\tfrac 23,1] \bigr) .
$$
\end{remark}

\begin{prop}\label{prop:cobord2}
Let $\Kk^{[0,1]}$ be an additive weak Kuranishi cobordism on $X\times [0,1]$, and let 
$\Kk^0_{sh}, \Kk^1_{sh}$ be preshrunk tame shrinkings of $\p^0\Kk^{[0,1]}$ and $\p^1\Kk^{[0,1]}$, 
that are hence metrizable as in Proposition~\ref{prop:metric}.
Then there is a preshrunk tame shrinking of $\Kk^{[0,1]}$ that provides a metrizable tame Kuranishi cobordism from $\Kk^0_{sh}$ to $\Kk^1_{sh}$.  
\end{prop}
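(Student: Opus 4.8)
The plan is to mimic the iterative construction of a tame shrinking from Proposition~\ref{prop:proper}, but carried out relative to the two given tame shrinkings $\Kk^0_{sh}$ and $\Kk^1_{sh}$ at the boundary, using the collar structure of $\Kk^{[0,1]}$ to keep everything in product form near $X\times\{0,1\}$. First I would fix a uniform collar width $2\eps>0$ for $\Kk^{[0,1]}$ as in Remark~\ref{rmk:Ceps}, so that all charts $\bK^{[0,1]}_I$ with $\partial^\al U^{[0,1]}_I\neq\emptyset$ are of product form $\partial^\al\bK^{[0,1]}_I\times A^\al_\eps$ on the $\eps$-collar, and similarly for the coordinate changes. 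By definition of restriction we have $\partial^\al\bK^{[0,1]}_I=\bK^\al_I$ and $\partial^\al\Hat\Phi^{[0,1]}_{IJ}=\Hat\Phi^\al_{IJ}$, where $\Kk^\al=\partial^\al\Kk^{[0,1]}$. Since $\Kk^\al_{sh}$ is a preshrunk tame shrinking of $\Kk^\al$, it is itself a shrinking of an intermediate tame shrinking $\Kk^\al_{int}$ of $\Kk^\al$, with domains $U^{\al,sh}_I\sqsubset U^{\al,int}_I\subset U^\al_I$; I will use both layers in the induction.

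Next I would build the shrinking of $\Kk^{[0,1]}$ in two stages, exactly parallel to Steps~A and~B in the proof of Proposition~\ref{prop:proper}, but at each stage \emph{prescribe the boundary behaviour}: on the $\eps$-collar the new domain $U^{[0,1],sh}_{IJ}$ must equal $U^{\al,sh}_{IJ}\times A^\al_\eps$ (using the appropriate collar $\io^\al$), and in the middle region it is constructed freely by the Lemma~\ref{le:set} machinery. Concretely, in the preliminary step I replace Lemma~\ref{le:restr0} with its collared version: choose a shrinking $(F^{[0,1]}_i)$ of the footprint cover of $\Kk^{[0,1]}$ that is collared in the sense of Definition~\ref{def:collared} and restricts to the footprint covers of $\Kk^0_{sh},\Kk^1_{sh}$ on the respective collars (possible because those covers cover $X$ and the collars only shrink the $[0,1]$-direction). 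Then at the $k$-th iteration step, when applying Lemma~\ref{le:set} to $U:=U^{[0,1]}_I$ with its precompact open subset and the relatively closed set $Z$ built from the lower-order coordinate-change images, I would additionally intersect the auxiliary open sets $W_{K'}$ with the collar-compatible sets $\io^\al_I\bigl(U^{\al,(k)}_{IJ}\times A^\al_\eps\bigr)$ and their complements so that the resulting $U^{[0,1],(k)}_{IJ}$ automatically has product form on a smaller collar $A^\al_{\eps_k}$; this works because over the collar the whole configuration ($Z$, the $Z_i$, the $W_{K'}$) is itself a product of the corresponding configuration for $\Kk^\al_{sh}$ with the interval, and the $\Kk^\al_{sh}$-iteration has already terminated so those sets are fixed. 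After at most $N$ steps this yields a tame shrinking $\Kk^{[0,1]}_{sh}$ of $\Kk^{[0,1]}$ that is a Kuranishi cobordism (the collar-form requirements of Definitions~\ref{def:Cchart} and~\ref{def:Ccc} hold by construction, and tameness gives the strong cocycle condition by Lemma~\ref{le:tame0}) and whose boundary restrictions are $\partial^\al\Kk^{[0,1]}_{sh}=\Kk^\al_{sh}$.

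Finally, to upgrade this to a \emph{metric} tame Kuranishi cobordism I would invoke the metrizability of preshrunk tame shrinkings: by Proposition~\ref{prop:metric}, $\Kk^{[0,1]}_{sh}$ — being a shrinking of the tame shrinking obtained after the analogous two-layer construction — is metrizable, so it carries an admissible metric $d^{[0,1]}$ on $|\Kk^{[0,1]}_{sh}|$. The boundary restrictions $d^{[0,1]}\big|_{|\partial^\al\Kk^{[0,1]}_{sh}|}$ need not equal the admissible metrics $d^\al$ that one naturally has on $|\Kk^\al_{sh}|$, but that is harmless: what the statement requires is \emph{a} metrizable tame cobordism \emph{from} $\Kk^0_{sh}$ \emph{to} $\Kk^1_{sh}$, and by Proposition~\ref{prop:metcoll}~(v) I can replace $d^{[0,1]}$ by an admissible \emph{collared} metric $D$ on $|\Kk^{[0,1]}_{sh}|$ with the same boundary restrictions, which is exactly the definition (Definition~\ref{def:mCKS}) of a metric tame Kuranishi cobordism. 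If in addition one wants the boundary metrics to be the originally chosen $d^\al$, one first uses Proposition~\ref{prop:metcoll}~(iv) to interpolate between $d^\al$ and $d^{[0,1]}\big|_{|\partial^\al\Kk^{[0,1]}_{sh}|}$ via collars glued onto $\Kk^{[0,1]}_{sh}$, using the concatenation construction of Lemma~\ref{le:cobord1}, and then truncates as in Example~\ref{ex:mtriv}~(ii) to match widths.

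I expect the main obstacle to be the bookkeeping in the inductive step: verifying that the collar-form constraints imposed on the auxiliary sets $W_{K'}$ are \emph{compatible} with the constraints $W_{K'}\cap Z=Z_{K'}$ that Lemma~\ref{le:set} requires, i.e.\ that intersecting with $\io^\al_I(U^{\al,(k)}_{IJ}\times A^\al_\eps)$ and complements does not break the identity $W_{K'}\cap Z=Z_{K'}$. This reduces to the assertion that over the $\eps$-collar the $\Kk^{[0,1]}$-data restricts \emph{literally} to the $\Kk^\al$-data times an interval — which holds because the collar embeddings intertwine all structure maps by Definitions~\ref{def:Cchart}(3) and~\ref{def:Ccc}(2) — together with the fact that the $\Kk^\al_{sh}$-iteration satisfies the corresponding identities at every level. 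Everything else (openness, tameness conditions (i)–(iii), the shrinking identity $U^{[0,1],sh}_{IJ}=U^{[0,1],sh}_I\cap\phi_{IJ}^{-1}(U^{[0,1],sh}_J)$) then transfers verbatim from the proof of Proposition~\ref{prop:proper}.
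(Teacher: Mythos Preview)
Your overall strategy matches the paper's proof closely: run the iterative tame-shrinking construction of Proposition~\ref{prop:proper} relative to the given boundary shrinkings, do it twice (once for the intermediate tame layer, once for the preshrunk layer), and then obtain an admissible collared metric via Propositions~\ref{prop:metric} and~\ref{prop:metcoll}~(v). That is exactly what the paper does.

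There is one technical point where your execution diverges from the paper and where your description is not quite right. You propose to enforce collar form by ``intersecting the auxiliary open sets $W_{K'}$ with the collar-compatible sets $\io^\al_I(U^{\al,(k)}_{IJ}\times A^\al_\eps)$ and their complements'' before applying Lemma~\ref{le:set}. This is problematic for two reasons. First, intersection alone cannot produce product form if $W_{K'}$ happens to be too small on the collar; you would need to enlarge, not shrink. Second, even if you feed Lemma~\ref{le:set} product-form inputs, its output is built from distance functions $f_i$ on $\ov Z$ and there is no reason these should respect the product structure on the collar. The paper sidesteps both issues by running Lemma~\ref{le:set} \emph{unmodified} to obtain sets $V_{IK}^{(k)}$, and then post-processing by taking the union
\[
U_{IK}^{(k)} \;:=\; V_{IK}^{(k)} \;\cup\; \iota^0_I\bigl(U^0_{IK}\times A^0_\eps\bigr) \;\cup\; \iota^1_I\bigl(U^1_{IK}\times A^1_\eps\bigr),
\]
and then directly verifying that (i$'$), (ii$'$), (iii$''$) still hold for this union, using the tameness of the boundary shrinkings $\Kk^\al_{sh}$. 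For Step~B the paper checks separately that the removal in the definition of $U^{(k)}_{JK}$ deletes no points in the $\eps$-collar, again because the boundary atlases are already tame. This union-then-verify tactic is the substantive content you were gesturing at with your ``main obstacle'' paragraph, but the mechanism is addition of a fixed collar piece rather than modification of the input to Lemma~\ref{le:set}.

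Your discussion of the metric is correct; the extra interpolation via Proposition~\ref{prop:metcoll}~(iv) is unnecessary for the statement as written, but harmless.
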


\begin{proof} 
As in the proof of Proposition~\ref{prop:metric} we first construct a tame shrinking between any pair of  
tame shrinkings  $\Kk^0, \Kk^1$ of $\p^0\Kk^{[0,1]}$ and $\p^1\Kk^{[0,1]}$.  
We will use this first to obtain a tame shrinking $\Kk'$ of $\Kk^{[0,1]}$ with $\p^\al\Kk'=\Kk^\al$ 
(the tame shrinkings of which $\Kk^\al_{sh}$ are precompact shrinkings), 
and then to obtain a 
precompact
tame shrinking $\Kk^{[0,1]}_{sh}$ of $\Kk'$ with $\p^\al\Kk^{[0,1]}_{sh}=\Kk^\al_{sh}$.
This Kuranishi cobordism $\Kk^{[0,1]}_{sh}$ supports an admissible metric $d_{sh}$ by the same argument as in Proposition~\ref{prop:metric}. 
Finally we may arrange that it is collared by 
Proposition~\ref{prop:metcoll}~(v).
Hence it remains to carry out the first construction.

We write the index set as the union $\Ii_{\Kk^{[0,1]}}= \Ii_0 \cup \Ii_{(0,1)}\cup \Ii_1$ of
$\Ii_\al := \Ii_ {\p^\al\Kk^{[0,1]}} \subset \Ii_{\Kk^{[0,1]}}$
and $\Ii_{(0,1)}:=\Ii_{\Kk^{[0,1]}}\less (\Ii_0\cup\Ii_1)$.
Since the footprint of a chart $\Kk^{[0,1]}$ might intersect both $X\times\{0\}$ and $X\times\{1\}$, the sets $\Ii_0$ and $\Ii_1$ may not be disjoint, though they are both disjoint from $\Ii_{(0,1)}$,
which indexes the charts with precompact footprint in $X\times (0,1)$.
We will denote the charts of the Kuranishi cobordism $\Kk^{[0,1]}$ by $\bK_I=(U_I,\ldots)$, while $\bK^\al_I=(U^\al_I,\ldots) = \p^\al \bK_I |_{U^\al_I}$ denotes the charts of the shrinking $\Kk^\al$ of $\p^\al\Kk^{[0,1]}$ with domains $U^\al_{IJ}\subset \partial^\al U_{IJ}$.
Recall moreover that by definition of a shrinking the index sets $\Ii_ {\Kk^\al}=\Ii_ {\p^\al\Kk}=\Ii_\al$ coincide.
We suppose that the charts and coordinate changes of $\Kk^{[0,1]}$ have uniform collar width $5\eps>0$ as in Remark~\ref{rmk:Ceps}. Then the footprints have induced $5\eps$-collars
$$
(X\times A^\al_{5\eps}) \cap F_I = \partial^\al F_I \times A^\al_{5\eps}
\qquad\text{with}\qquad
\partial^\al F_I = \pr_X\bigl(  F_I\cap (X\times\{\al\}) \bigr).
$$
By construction of the shrinkings $\Kk^\al$ of $\p^\al\Kk^{[0,1]}$, 
we have  precompact inclusions $F^\al_I \sqsubset \partial^\al F_I$.
Now one can form a  $3\eps$-collared shrinking $(F'_i\sqsubset F_i)_{i=1,\ldots,N}$ of the cover of $X\times[0,1]$ by the footprints of the basic charts by first choosing an arbitrary shrinking $(F_i'')$ as in Definition~\ref{def:shr0}, and then adding $3\eps$-collars to the boundary charts.
Namely, for $i\in \Ii_0\cup \Ii_1$ we define
$$
F_i': = \Bigl(F_i''\cup \bigl(F^\al_i\times A^\al_{4 \eps} \bigr)\Bigr)\;
\less \; \Bigl(
\bigl( X\less F^\al_i \bigr)\times \overline{A^\al_{3\eps}} \Bigr)  .
$$
By construction, these sets still cover $X\times A^\al_{4\eps}$, together with $F'_i:=F''_i$ for $i\in\Ii_{(0,1)}$ cover $X \times (3\eps, 1-3\eps)$, and hence cover all of $X\times[0,1]$. 
Moreover, each $F'_i$ 
is open with $3\eps$-collar $F_i'\cap \bigl(X\times A^\al_{3\eps} \bigr) = F^\al_i\times A^\al_{3\eps}$ and has compact closure in $F_i$ because $F_i''$ does by construction and $F^\al_i \times A^\al_{4\eps}
\sqsubset \partial^\al F_i \times A^\al_{5\eps} \subset F_i $.
Next, the induced footprints $F'_I=\bigcap_{i\in I} F'_i$ also have $3\eps$-collars, and $F_I\neq\emptyset$ implies $F'_I\neq\emptyset$ since either $F_I\cap A^\al_{5\eps}\neq \emptyset$ so that $\emptyset\neq F^\al_I\times A^\al_{4\eps} \subset F'_I$, or $F_I\subset X \times (5\eps, 1-5\eps)$ so that $\emptyset\neq F''_I\subset F'_I$. 
Hence $(F_i')_{i=1,\ldots,N}$ is a  shrinking of the footprint cover with $3\eps$-collars.

We now carry through the proof of Proposition~\ref{prop:proper},
in the $k$-th step choosing domains $  U^{(k)}_{IJ}\subset U^{(k-1)}_{IJ}\subset U_{IJ}$ for $I, J\in \Ii_{\Kk^{[0,1]}}$ satisfying the conditions
(i$'$), (ii$'$),(iii$'$) as well as
the following collar requirement which ensures that the resulting shrinking of $\Kk^{[0,1]}$ is a Kuranishi cobordism  between the given tame atlases $\Kk^0$ and $\Kk^1$:
\begin{equation}\label{collar}
 (\iota^\al_I)^{-1} \bigl( U^{(k)}_{IJ} \bigr) \;=\; U^{\al}_{IJ} \times  A^\al_\eps
\qquad\forall \; \al \in\{0,1\}, \; I\subset J\in \Ii_\al  .
\end{equation}
For $k=0$ we first must choose precompact sets $U^{(0)}_I\sqsubset U_I$ satisfying the zero set condition \eqref{eq:U(0)}, namely $U_I^{(0)}\cap s_I^{-1}(0) = \psi_I^{-1}(F_I')$.
For that purpose we apply Lemma~\ref{le:restr0} to
$$
F_I'\cap (X\times(2\eps,1-2 \eps)) \;\sqsubset \; \psi_I \Bigl(s_I^{-1}(0)\less  \bigcup_{\al=0,1} \io^\al_I(\p^\al U_I\times \ov{A^\al_{\eps}})\Bigr)
$$
to find
$$
U_I' \;\sqsubset \; U_I \;\less\;  \bigcup_\al \io^\al_I(\p^\al U_I\times \ov{A^\al_{\eps}})
\quad\text{with}\quad
U_I'\cap s_I^{-1}(0) = \psi_I^{-1}\bigl(F_I'\cap (X\times(2\eps,1-2\eps)\bigr)  .
$$
Then we add the image under $\io^\al_I$ of the precompact subsets
$U_I^\al\times A^\al_{3\eps} \sqsubset \partial^\al U_I \times A^\al_{4\eps} \subset U_I$, 
which have footprint $F_I^\al\times A^\al_{3\eps} = F_I' \cap (X\times   A^\al_{3\eps})$, to obtain the required domains
$$
U_I^{(0)} := U_I'\;\cup\; \bigcup_{\al=0,1} \io^\al_I(U_I^\al\times A^\al_{ 3\eps}) \quad\sqsubset\; U_I 
$$ 
with boundary $\p^\al U_I^{(0)} = U_I^\al$ and collar width $\eps$.
Next, the domains $U_{IJ}^{(0)}$ for $I\subsetneq J$ are determined by \eqref{eq:UIJ(0)} and satisfy \eqref{collar} since 
both $U_{IJ}$ and $U_{I}^{(0)}, U_{J}^{(0)}$ have $\eps$-collars,
and $\phi_{IJ}$ has product form on the collar.
For $I\in\Ii_{(0,1)}$ these constructions also apply, and reproduce the construction without boundary, if we denote $\im\io^\al_I:=\emptyset$ and recall that the footprints are contained in $X\times (5\eps,1-5\eps)$. In the following we will use the same conventions and hence need not mention $\Ii_{(0,1)}$ separately.

Now in each iterative step for $k\geq 1$ there are two adjustments of the domains. First, in Step~A the domains $U^{(k)}_{IK}\subset W_{K'}$ for $|I|=k$ are chosen using Lemma~\ref{le:set}, where $W_{K'}$ is given by~\eqref{eq:WwK}.
In order to give these sets $\eps$-collars, we denote the sets provided by Lemma~\ref{le:set} by $V_{IK}^{(k)}\subset W_{K'}$ and define 
\begin{equation}\label{eq:UIJeps1}
U_{IK}^{(k)} \,:=\; V_{IK}^{(k)} \cup U_{IK}^{0,1,\eps}
\qquad \text{with}\quad
U_{IK}^{0,1,\eps} \,:=\; 
\iota^0_I\bigl(U_{IK}^0\times A^0_\eps)  \;\cup\;  \iota^1_I\bigl(U_{IK}^1\times A^1_\eps ) .
\end{equation}
This set is open and satisfies \eqref{collar} because $V_{IK}^{(k)}$ is a subset of $U^{(k-1)}_{IK}$, which by induction hypothesis has the required $\eps$-collar.
We now show that  \eqref{eq:UIJeps1} satisfies the requirements of Step A.

\begin{itemlist}
\item[(i$'$)]
holds since $U_{IJ}^{(k-1)}\cap \bigl(s_I\bigr)^{-1}(E_H) \;\subset\; V_{IJ}^{(k)}\subset
U_{IJ}^{(k)}$, where the first inclusion holds by construction of $V_{IJ}^{(k)}$.
\item[(ii$'$)]
holds since $V_{IJ}^{(k)}\cap V_{IK}^{(k)}= V_{I (J\cup K)}^{(k)}$ by construction and
$U_{IJ}^{0,1,\eps}\cap U_{IK}^{0,1,\eps} =  U_{I (J\cup K)}^{0,1,\eps}$ by the tameness of the collars, so
\begin{align*}
U_{IJ}^{(k)}\cap U_{IK}^{(k)}
&\;=\;
\bigr(V_{IJ}^{(k)}\cap V_{IK}^{(k)}\bigl)
\;\cup\;
\bigr(U_{IJ}^{0,1,\eps}\cap V_{IK}^{(k)}\bigl)
\;\cup\;
\bigr(V_{IJ}^{(k)}\cap U_{IK}^{0,1,\eps} \bigl)
\;\cup\;
\bigr(U_{IJ}^{0,1,\eps} \cap U_{IK}^{0,1,\eps} \bigl)  \\
&\;=\;
V_{I(J\cup K)}^{(k)}
\;\cup\;
U_{I(J\cup K)}^{0,1,\eps}  \;=\; U_{I (J\cup K)}^{(k)} .
\end{align*}
Here the two mixed intersections are subsets of the collar $U^{0,1,\eps}_{IJ}\cap U^{0,1,\eps}_{IK}$ due to $V_{I\bullet}^{(k)}\subset U_{I\bullet}^{(k-1)}$.
\item[(iii$''$)]
holds since $V_{IK}^{(k)} \subset (\phi_{IJ})^{-1}(V_{JK}^{(k-1)})$ by construction and
$U^{0,1,\eps}_{IK} \subset (\phi^{[0,1]}_{IJ})^{-1}(U^{0,1,\eps}_{JK})$
by the tameness of the shrinkings $U^0_\bullet, U^1_\bullet$.
\end{itemlist}

\NI
This completes Step A.
In Step B the domains $U^{(k)}_{JK}$ for $|J|>k$ are constructed by \eqref{eq:UJK(k)}, namely
$$
U_{JK}^{(k)}\, :=\; U_{JK}^{(k-1)}\less \bigcup_{I\subset J, |I|= k}
 \bigl( s_J^{-1}(E_I)\less \phi_{IJ}(U^{(k)}_{IJ}) \bigr) .
$$
We must check that this removes no points in the collars, i.e.\
$$
\iota^\al_J(U_{JK}^\al\times A^\al_\eps)\cap s_J^{-1}(E_I) \;\subset\; \phi_{IJ}(U^{(k)}_{IJ}) .
$$
But in this collar $s_J$ and $\phi_{IJ}$ have product form induced from the corresponding maps in the Kuranishi atlases $\Kk^\al$, where tameness implies $U_{JK}^\al\cap (s_J^\al)^{-1}(E_I) =
\phi_{IJ}^\al (U^\al_{IJ})$.
Since the $U^{(k)}_{IJ}$ already have $\eps$-collar by construction, this guarantees the above inclusion. Thus, with these modifications, the $k$-th step in the proof of Proposition~\ref{prop:proper} carries through. After a finite number of iterations, we find a tame shrinking $\Kk'$ of $\Kk^{[0,1]}$ with given restrictions $\p^\al\Kk^{[0,1]}=\Kk^\al$ for $\al=0,1$.
This completes the proof.
\end{proof}

\begin{rmk}\label{rmk:Morita}\rm  
In some sense, the ``correct" notion of equivalence between Kuranishi atlases should generalize that of Morita equivalence for groupoids.  In other words, one should develop an appropriate notion of ``refinement'' of a Kuranishi atlas (e.g.\ by
replacing each chart  by a tuple of charts obtained by restriction to a finite cover of its footprint) and then should say that two Kuranishi atlases $\Kk, \Kk'$ on $\bX$ are equivalent if there is a diagram
$$
\Kk'\longleftarrow \Kk''\longrightarrow \Kk,
$$
where $\Kk''$ is a ``refinement" of $\Kk$, and an arrow $\Kk^1\to\Kk^2$
means (at a minimum) that there are functors $\bB_{\Kk^1} \to \bB_{\Kk^2}$ and $\bE_{\Kk^1} \to \bE_{\Kk^2}$,  which commute with the section
functors
$s_{\Kk^\al}:\bB_{\Kk^\al}\to\bB_{\Kk^\al}$ and the footprint
functors $\psi_{\Kk^\al}: s_{\Kk^\al}^{-1}(0)\to X$.
We do not pursue this formal line of reasoning here.  However, it will be useful to develop the notion of a particular kind of refinement (called a reduction) in order to construct sections; see 
Proposition~\ref{prop:red}.
\end{rmk}

\section{From Kuranishi atlases to  the Virtual  Fundamental Class}\label{s:VMC}

In this section we assume that $\Kk$ is an oriented, tame Kuranishi atlas (as throughout with trivial isotropy, and with the notion of orientation to be defined)  of dimension $d$ on a compact metrizable space $X$, and construct the virtual moduli cycle (VMC) and virtual fundamental class (VFC).

As a preliminary step, Section~\ref{ss:red} provides reductions of the cover of the Kuranishi neighbourhood $|\Kk|$ by the images of the domains $\pi_\Kk(U_I)$. The goal here is to obtain a  cover by a partially ordered set of Kuranishi charts, with coordinate changes governed by the partial order. This will allow for an iterative construction of perturbations.
In Section~\ref{ss:sect} we introduce the notion of transverse perturbations in a reduction, and 
-- assuming their existence --
construct the VMC as a closed manifold 
up to compact cobordism, so far unoriented, from the associated perturbed zero sets. One difficulty here is to ensure compactness of the zero set despite the fact that $\io_\Kk(X)\subset |\Kk|$ may not have a precompact neighbourhood.
Transverse perturbations are then constructed in Section~\ref{ss:const},
and orientations will be established in Section~\ref{ss:vorient}.
Finally, we construct the virtual fundamental cycle in Section~\ref{ss:VFC}.

\subsection{Reductions and Covers}\label{ss:red}  \hspace{1mm}\\ \vspace{-3mm}

The cover of $X$ by the footprints $(F_I)_{I\in \Ii_\Kk}$ of all the Kuranishi charts
(both the basic charts and those that are part of the transitional data) is closed under intersection. This makes it easy to express compatibility of the charts, since the overlap of footprints of any two charts $\bK_I$ and $\bK_J$ is covered by another chart $\bK_{I\cup J}$.
However, this yields so many compatibility conditions that a construction of compatible perturbations in the Kuranishi charts may not be possible. For example, a choice of perturbation in the chart $\bK_I$ also fixes the perturbation in each chart $\bK_J$ over $\phi_{J (I\cup J)}^{-1}\bigl( \im \phi_{I (I\cup J)}\bigr) \subset U_J$, whenever $I\cup J\subset\Ii_\Kk$.
Since we do not assume transversality of the coordinate changes, this subset of $U_J$ need not be a submanifold, and hence the perturbation may not extend smoothly to $U_J$.
We will avoid these difficulties, and also make a first step towards compactness, by reducing the domains of the Kuranishi charts to precompact subsets $V_I\sqsubset U_I$ such that all compatibility conditions between $\bK_I|_{V_I}$ and $\bK_J|_{V_J}$ are given by direct coordinate changes $\Hat\Phi_{IJ}$ or $\Hat\Phi_{JI}$.
The left diagram in Figure~\ref{fig:1} illustrates a typical family of sets $V_I$ for $I\subset\{1,2,3\}$ with the appropriate intersection properties.
As we explain in Remark~\ref{rmk:nerve} this reduction process is analogous to replacing the star cover of a simplicial set by the star cover of its first barycentric subdivision.
This method was used in the current context by Liu--Tian~\cite{LiuT}.

\begin{figure}[htbp] 
   \centering
   \includegraphics[width=4in]{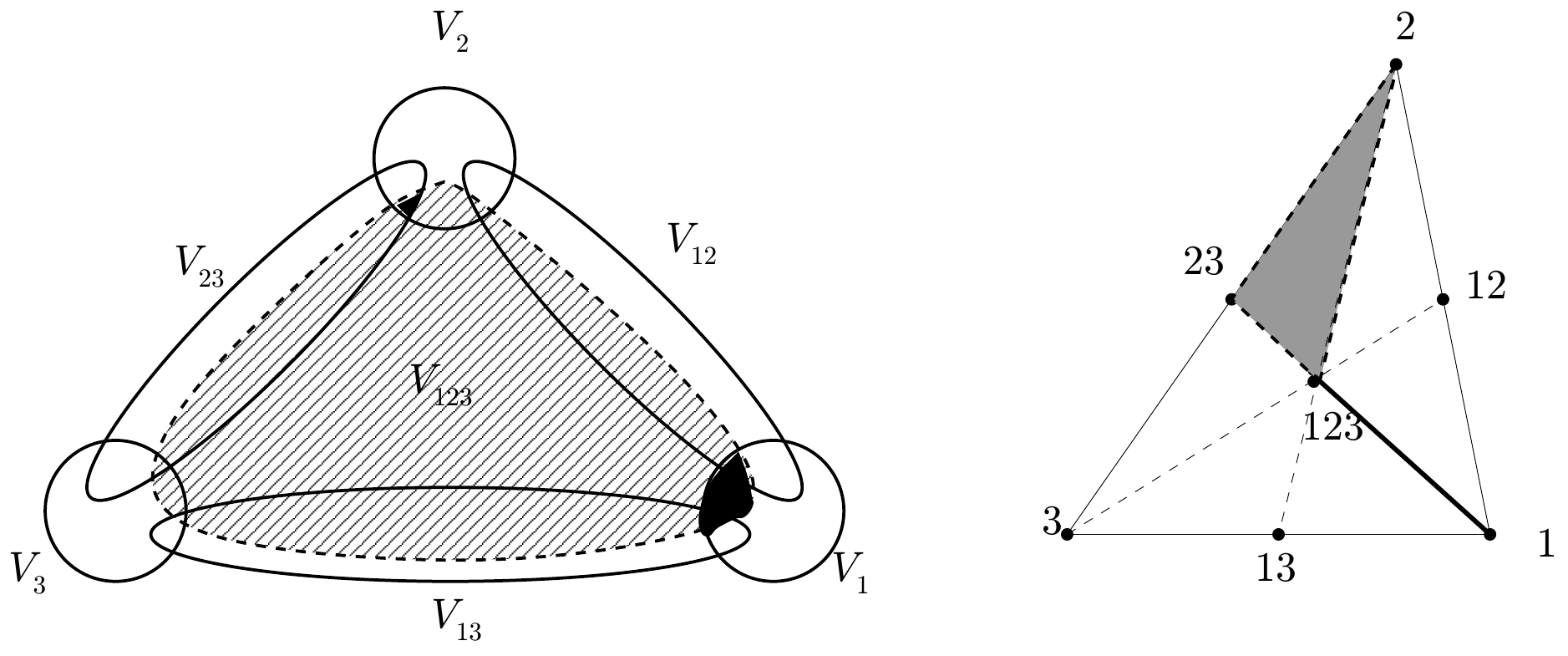}
     \caption{
The right diagram shows the
first barycentric subdivision of the triangle with vertices $1,2,3$.
It has three new vertices labelled $ij$ at the barycenters of the three edges and one vertex labelled $123$ at the barycenter of the triangle.
The left is a schematic picture of a 
reduction of the cover as in Lemma~\ref{le:cov0}.
The black sets are examples of multiple intersections of the new cover, which correspond to the simplices in the barycentric subdivision. E.g.\ $V_2\cap V_{23}\cap V_{123}$ corresponds to the triangle with vertices $2, 23, 123$, whereas $V_1\cap V_{123}$ corresponds to the edge between $1$ and $123$. }  \label{fig:1}
\end{figure}

\begin{rmk}\label{rmk:nerve}\rm
In algebraic topology it is often useful to consider the nerve $\Nn: = \Nn(\Uu)$ of an
open cover $\Uu: = (F_i)_{i=1,\ldots,N}$ of a space $X$, namely the
simplicial complex with one vertex for each open subset $F_i$ and a $k$-simplex for each nonempty intersection of $k+1$ subsets.\footnote
{
A simplicial complex is defined in \cite[\S2.1]{Hat} as a finite set of vertices (or $0$-simplices) $V$ and a subset of the power set $\Ii\subset 2^V$, whose $(k+1)$-element sets are called $k$-simplices for $k\geq 0$. The only requirements are that any subset $\tau \subsetneq \si$ of a simplex $\si\in\Ii$ is also a simplex $\tau\in\Ii$, and that each simplex is linearly ordered, compatible with a partial order on $V$.
In our case the ordering is provided by the linear order on $V=\{1,\dots,N\}$.
Then the $j$-th face of a $k$-simplex $\si: = \{i_0,\dots, i_k\}$, where $i_0<\dots<i_k$,
is given by the subset of $\si$ obtained by omitting its $j$-th vertex $i_j$. This provides the order in which faces are identified when constructing the realization of the simplicial complex.
}
We denote its set of simplices by
$$
\Ii_\Uu: = \bigl\{ I\subset \{1,\ldots,N\} \,\big|\, \cap _{i\in I} F_i\ne \emptyset \bigr\} .
$$
This combinatorial object is often identified with its realization,
the topological space
$$
|\Nn| :=\; \quotient{{\textstyle \coprod_{I\in \Ii_\Uu}} \{I\} \times \De^{|I|-1} }{\sim}
$$
where $\sim$ is the
equivalence relation under which the $|I|$ codimension $1$ faces of the simplex $\{I\}\times\De^{|I|-1}$ are identified with $\{I\less \{i\}\} \times \De^{|I|-2}$ for $i\in I$.
The realization $|\Nn|$ has a natural open cover by the stars $St(v)$ of its vertices $v$,
where $St(v)$ is the union of all (open) simplices whose closures contain $v$.
Notice that the nerve of the star cover of $|\Nn|$
can be identified with $\Nn$.

Next, let $\Nn_1:=\Nn_1(\Uu)$ be the first barycentric subdivision of $\Nn$.
That is, $\Nn_1$ is a simplicial complex with one vertex $v_I$ at the barycenter of each simplex
$I\in\Ii_\Uu$ and a $k$ simplex for each {\it chain} $I_0\subsetneq I_1\subsetneq\ldots\subsetneq I_k$ of simplices $I_0,\ldots,I_k\in\Ii_\Uu$.
This linear order on each simplex is induced from the partial order on the set of vertices
$\Ii_\Uu$ given by the inclusion relation for subsets of $\{1,\ldots,N\}$.
Further, the star $St(v_I)$ of the vertex $v_I$ in $\Nn_1$ is the union of all simplices given by chains that contain $I$ as one of its elements.
Hence two stars $St(v_I), St(v_J)$ have nonempty intersection if and only if $I\subset J $ or $J\subset I$, because this is a necessary and sufficient condition for there to be a chain containing both $I$ and $J$.
For example in the right hand diagram in Figure~\ref{fig:1} the stars of the vertices $v_{12}$ and $v_{13}$ are disjoint, as are the stars of $v_1$ and $v_2$.
As before the nerve of the star cover of $|\Nn_1|$ can be identified with $\Nn_1$ itself.
In particular, each nonempty intersection of sets in the star cover of $|\Nn_1|$  corresponds to a
simplex in $|\Nn_1|$, namely to  a chain $I_0\subset \ldots \subset I_k$ in the poset $\Ii_\Uu$.
Therefore the indexing set for this cover is the set $\Cc$ of chains in $\Ii_\Uu$; cf.\
Hatcher~\cite[p.119ff]{Hat}.

Now suppose that $\Uu = (F_i)_{i=1,\ldots,N}$ is the footprint cover provided by the basic charts of a tame Kuranishi atlas. Then $\Ii_\Uu=\Ii_\Kk$ is the index set of the Kuranishi atlas.
Hence $\Kk$ consists of one basic chart for each vertex of the nerve $\Nn(\Uu)$ and one transition chart $\bK_I$ for each simplex in $\Nn(\Uu)$.  We are aiming to construct from the original cover
$(F_i)$ of $X$ a {\it reduced} cover $(Z_I)_{I\in \Ii_\Uu}$ of $X$ whose pattern of intersections mimics that of the star cover of $|\Nn_1(\Uu)|$.
In particular, we will
require $\ov{Z_I}\cap \ov{Z_J}= \emptyset$ unless $I\subset J$ or $J\subset I$.
Next, we will aim to construct corresponding subsets $V_I\subset U_I$
with $V_I\cap s_I^{-1}(0)=\psi_I^{-1}(Z_I)$
and $\pi_\Kk(\ov{V_I})\cap \pi_\Kk(\ov{V_J})= \emptyset$ 
unless $I\subset J$ or $J\subset I$.
We will see in Proposition~\ref{prop:red} that such a reduction gives rise to a
Kuranishi atlas $\Kk^\Vv$ that has one {\it basic} chart for each vertex in $\Nn_1(\Uu)$, i.e.\ for
each element in $\Ii_\Kk$, and one transition chart for each simplex in $\Nn_1(\Uu)$, i.e.\ for each chain $C$ of elements in the poset $\Ii_\Kk$.
\end{rmk}

We will prove the existence of the following type of reduction in Proposition~\ref{prop:cov2} below.
As always, we denote the closure of a set $Z\subset X$ by $\ov Z$.

\begin{defn}\label{def:vicin}  
A {\bf reduction} of a tame Kuranishi atlas 
$\Kk$ is an open subset $\Vv=\bigcup_{I\in \Ii_\Kk} V_I \subset \Obj_{\bB_\Kk}$ i.e.\ a tuple of (possibly empty) open subsets $V_I\subset U_I$, satisfying the following conditions:
\begin{enumerate}
\item
$V_I\sqsubset U_I $ for all $I\in\Ii_\Kk$, and if $V_I\ne \emptyset$ then $V_I\cap s_I^{-1}(0)\ne \emptyset$;
\item
if $\pi_\Kk(\ov{V_I})\cap \pi_\Kk(\ov{V_J})\ne \emptyset$ then
$I\subset J$ or $J\subset I$;
\item
the zero set $\iota_\Kk(X)=|s_\Kk|^{-1}(0)$ is contained in 
$
\pi_\Kk(\Vv) \;=\; {\textstyle{\bigcup}_{I\in \Ii_\Kk}  }\;\pi_\Kk(V_I).
$
\end{enumerate}
Given a reduction $\Vv$, we define the {\bf reduced domain category} $\bB_\Kk|_\Vv$ and the {\bf reduced obstruction category} $\bE_\Kk|_\Vv$ to be the full subcategories of $\bB_\Kk$ and $\bE_\Kk$ with objects $\bigcup_{I\in \Ii_\Kk} V_I$ resp.\ $\bigcup_{I\in \Ii_\Kk} V_I\times E_I$, and denote by $s|_\Vv:\bB_\Kk|_\Vv\to \bE_\Kk|_\Vv$ the section given by restriction of $s_\Kk$. 
\end{defn}

Uniqueness of the VFC will be based on the following relative notion of reduction.

\begin{defn} \label{def:cvicin}
Let $\Kk$ be a tame Kuranishi cobordism. 
Then a {\bf cobordism reduction} of $\Kk$ is an open subset $\Vv=\bigcup_{I\in\Ii_{\Kk}}V_I\subset \Obj_{\bB_{\Kk}}$ that satisfies the conditions of Definition~\ref{def:vicin} and, in the notation of Section~\ref{ss:Kcobord}, has the following collar form:
\begin{enumerate}
\item[(iv)]
For each $\al\in\{0,1\}$ and $I\in 
\Ii_{\p^\al\Kk}\subset\Ii_{\Kk}$ 
there exists $\eps>0$ and a subset $\partial^\al V_I\subset \partial^\al U_I$ such that $\partial^\al V_I\ne \emptyset$ iff $V_I \cap \psi_I^{-1}\bigl( \partial^\al F_I \times \{\al\}\bigr)\ne \emptyset$,
and 
$$
(\iota^\al_I)^{-1} \bigl( V_I \bigr) \cap \bigl( \partial^\al U_I \times A^\al_\eps \bigr)
 \;=\; \partial^\al V_I \times A^\al_\eps .
$$
\end{enumerate}
We call 
$\partial^\al\Vv := \bigcup_{I\in\Ii_{\p^0\Kk}} \partial^\al V_I \subset \Obj_{\bB_{\p^\al\Kk}}$  
the {\bf restriction} of $\Vv$ to 
$\p^\al\Kk$.
\end{defn}

\begin{remark}\rm 
The restrictions $\partial^\al\Vv$ of a reduction $\Vv$ of a Kuranishi cobordism $\Kk$ are reductions of the restricted Kuranishi atlases $\p^\al\Kk$ for $\al=0,1$.
In particular condition (i) holds because part~(iv) of Definition~\ref{def:cvicin} implies that if $\p^\al V_I\ne \emptyset$ then $\p^\al V_I \cap \psi_I^{-1}\bigl( \partial^\al F_I\bigr)\ne \emptyset$
\end{remark}

The notions of reductions make sense for general Kuranishi atlases and cobordisms, however we will throughout assume additivity and tameness.
In some ways, the closest we come in this paper to constructing a ``good cover" in the sense of \cite{FO,J} is the category $\bB_\Kk|_\Vv$.  However, it is not a Kuranishi atlas.
For completeness, we show in Proposition~\ref{prop:red} that there is an associated
Kuranishi atlas  $\Kk^\Vv$ together with a faithful functor $\io^\Vv:\bB_{\Kk^\Vv}\to \bB_\Kk|_\Vv$ that induces an injection $|\Kk^\Vv|\to \pi_\Kk(\Vv)
\subset|\Kk|$. Since the extra structure in $\Kk^\Vv$ has no real purpose for us,  we use the simpler category $\bB_\Kk|_\Vv$ instead.
Its realization $|\bB_\Kk|_\Vv|$ also injects into $|\Kk|$, with image $|\Vv|=\pi_\Kk(\Vv)$ by a special case of the following result. In particular, this identifies the quotient topologies $|\Vv|\cong |\bB_\Kk|_\Vv|$ given by $\pi_\Kk$ resp.\ generated by the morphisms of $\bB_\Kk|_\Vv$.
Here, as before, we define the realization $|\bC|$ of a category $\bC$ to be the quotient $\Obj_{\bC}/\!\!\sim$, where $\sim$ is the equivalence relation generated by the morphisms in $\bC$.

\begin{lemma}\label{lem:full}  
Let $\Kk$ be a tame Kuranishi atlas with reduction $\Vv$, and suppose that $\bC$ is a full subcategory of the  reduced domain category $\bB_\Kk|_\Vv$. Then the map $|\bC|\to |\Kk|$, induced by the inclusion of object spaces, is
a continuous injection.
In particular, the realization $|\bC|$ is homeomorphic to its image $|\Obj_{\bC}|=\pi_\Kk(\Obj_{\bC})$ with the quotient topology in the sense of Definition~\ref{def:topologies}. 
\end{lemma}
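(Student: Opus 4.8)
The strategy is to reduce the statement to two claims: first, that the induced map $|\bC| \to |\Kk|$ is well defined and continuous, which follows from the general functoriality fact already used in the proof of Lemma~\ref{le:realization}; and second, that it is injective, which is where the reduction property (ii) of Definition~\ref{def:vicin} enters in an essential way. The last sentence then follows formally: a continuous injection from a quotient space whose realization is $|\bC|$ onto its image $\pi_\Kk(\Obj_{\bC})$, together with the observation (as in Proposition~\ref{prop:Ktopl1}~(i)) that the quotient topology is finer than the subspace topology, combine to give a homeomorphism $|\bC| \cong |\Obj_\bC|$.

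\textbf{Well-definedness and continuity.} First I would note that the inclusion of object spaces $\Obj_\bC \hookrightarrow \Obj_{\bB_\Kk}$ is a continuous map and respects the equivalence relations: if two objects of $\bC$ are related by a morphism of $\bC$, then (since $\bC$ is a subcategory of $\bB_\Kk$) they are related by a morphism of $\bB_\Kk$, hence become equal in $|\Kk|$. Therefore the composite $\Obj_\bC \hookrightarrow \Obj_{\bB_\Kk} \xrightarrow{\pi_\Kk} |\Kk|$ factors through $|\bC| = \Obj_\bC/\!\!\sim$, giving a map $|\bC| \to |\Kk|$ with image $\pi_\Kk(\Obj_\bC)$. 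Continuity is immediate from the definition of the quotient topology on $|\bC|$, exactly as in the general argument recalled in the proof of Lemma~\ref{le:realization}: the preimage of an open $U\subset|\Kk|$ pulls back to $\Obj_\bC \cap \pi_\Kk^{-1}(U)$, which is open in $\Obj_\bC$.

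\textbf{Injectivity — the main obstacle.} This is the heart of the argument. Suppose $(I,x), (J,y) \in \Obj_\bC$ satisfy $\pi_\Kk(I,x) = \pi_\Kk(J,y)$, i.e.\ $(I,x)\sim(J,y)$ in $\bB_\Kk$; I must show they are identified in $\bC$. Since $\Kk$ is tame, Lemma~\ref{le:Ku2}~(a) gives $w \in U_{I\cap J}$ with $(I,x) \succeq (I\cap J, w) \preceq (J,y)$, so $x = \phi_{(I\cap J)I}(w)$ and $y = \phi_{(I\cap J)J}(w)$. Now I invoke reduction property (ii): since $\pi_\Kk(x) = \pi_\Kk(y)$ lies in $\pi_\Kk(\ov{V_I}) \cap \pi_\Kk(\ov{V_J})$ (indeed in $\pi_\Kk(V_I)\cap\pi_\Kk(V_J)$ as $x\in V_I$, $y\in V_J$), we must have $I \subset J$ or $J \subset I$; say $I \subset J$ without loss of generality. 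Then $I \cap J = I$, so $w = \phi_{II}^{-1}(x) = x \in U_I$, and $y = \phi_{IJ}(x)$. Thus $(I,x)$ and $(J,y)$ are connected by the single morphism $(I,J,x)$ of $\bB_\Kk$. It remains to check this morphism lies in $\bC$: since $\bC$ is a \emph{full} subcategory of $\bB_\Kk|_\Vv$ and both $(I,x)$ and $(J,y)$ are objects of $\bC$, every $\bB_\Kk$-morphism between them is a morphism of $\bC$. Hence $(I,x)$ and $(J,y)$ represent the same point of $|\bC|$, proving injectivity. The subtle point to get right here is that the morphism $(I,J,x)$ does exist in $\bB_\Kk|_\Vv$ — i.e.\ that $x$ lies in the domain $U_{IJ}$ of the coordinate change — which follows from tameness: $x \in s_I^{-1}(E_{I\cap J}) \cap V_I$ and $I\cap J = I$ forces $x$ into $\phi_{II}(U_{II}) = U_I$, and $y = \phi_{IJ}(x) \in V_J \subset U_J$ gives $x \in \phi_{IJ}^{-1}(U_J)$, while $x\in U_I$, so $x \in U_{IJ} = U_I \cap \phi_{IJ}^{-1}(U_J)$ by the shrinking/tameness structure.

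\textbf{Conclusion.} With injectivity and continuity of $|\bC| \to |\Kk|$ established, the map is a continuous bijection onto $\pi_\Kk(\Obj_\bC) = |\Obj_\bC|$. By Proposition~\ref{prop:Ktopl1}~(i) applied to $\Aa = \Obj_\bC$, the identity $|\Obj_\bC| \to \|\Obj_\bC\|$ is continuous, i.e.\ the quotient topology on $\pi_\Kk(\Obj_\bC)$ is at least as fine as the subspace topology; but the bijection $|\bC|\to|\Kk|$ lands continuously in the subspace topology and is a homeomorphism onto its image precisely when these two topologies coincide. Since $|\bC|$ carries the quotient topology from $\Obj_\bC$ by definition, and the map $|\bC|\to|\Obj_\bC|$ is a continuous bijection in the quotient-to-quotient sense (it is the identity on underlying sets), we conclude $|\bC| \cong |\Obj_\bC|$ as claimed. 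This completes the proof.
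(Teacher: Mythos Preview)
Your proof is correct and follows essentially the same approach as the paper: reduction property (ii) forces $I\subset J$ or $J\subset I$, then tameness via Lemma~\ref{le:Ku2}~(a) produces the single morphism $(I,J,x)$, and fullness places it in $\bC$. The paper applies these two ingredients in the opposite order (first (ii), then Lemma~\ref{le:Ku2}~(a) with $I\cup J=J$), but the content is identical; your check that $x\in U_{IJ}$ is also redundant, since $y=\phi_{IJ}(w)=\phi_{IJ}(x)$ already places $x$ in the domain $U_{IJ}$.

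Your concluding paragraph is muddled, however. The lemma does \emph{not} assert that $|\bC|\to\|\Obj_\bC\|$ (subspace topology) is a homeomorphism, so the sentence about ``homeomorphism onto its image precisely when these two topologies coincide'' is beside the point. The claim is only that $|\bC|\cong|\Obj_\bC|$ with the \emph{quotient} topology, and this follows immediately once you have shown that $\sim_\bC$ and the restriction of $\sim_{\bB_\Kk}$ to $\Obj_\bC$ are the \emph{same} equivalence relation: both spaces are then the identical quotient of $\Obj_\bC$. Invoking ``continuous bijection'' is unnecessary and insufficient on its own. The paper states exactly this: ``the relations $\sim_{\bB_\Kk}$ and $\sim_{\bC}$ agree on $\Obj_{\bC}$, and thus $|\bC|\to |\pi_\Kk(\Obj_{\bC})|$ is a homeomorphism.'' The reference to Proposition~\ref{prop:Ktopl1}~(i) then only serves to record that $|\Obj_\bC|\to\|\Obj_\bC\|\subset|\Kk|$ is continuous, completing the ``continuous injection into $|\Kk|$'' part of the statement.
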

\begin{proof}  
The map $|\bC|\to |\Kk|$ is well defined because $(I,x) \sim_{\bC} (J,y)$ implies $(I,x) \sim_{\bB_\Kk} (J,y)$ since the morphisms in $\bC$ are a subset of those in $\bB_\Kk$.
In order for $|\bC|\to |\Kk|$ to be injective we need to check the converse implication, that is we consider objects $(I,x),(J,y)\in \Obj_{\bC}$, identify them with points $x\in V_I$ and $y\in V_J$, and assume $\pi_\Kk(I,x) = \pi_\Kk(J,y)$. 
Then we have $I\subset J$ or $J\subset I$ by Definition~\ref{def:vicin}~(ii), so that Lemma~\ref{le:Ku2}~(a) implies either $y=\phi_{IJ}(x)$ or $x=\phi_{JI}(y)$.  Since $\bC$ is a full subcategory of $\bB_\Kk|_\Vv$ and hence of $\bB_\Kk$, the corresponding morphism $(I,J,x)$ (or $(J,I,y)$) belongs to $\bC$. 
Hence $(I,x) \sim_{\bB_\Kk} (J,y)$ implies $(I,x) \sim_{\bC} (J,y)$, so that $|\bC|\to |\Kk|$ is injective.
In fact, this shows that the relations $\sim_{\bB_\Kk}$ and $\sim_{\bC}$ agree on $\Obj_{\bC}\subset\Obj_{\bB_\Kk}$, 
and thus $|\bC|\to |\pi_\Kk(\Obj_{\bC})|$ is a homeomorphism with respect to the quotient topology on $\pi_\Kk(\Obj_{\bC})$. Finally, Proposition~\ref{prop:Ktopl1}~(i) asserts that the identity map $|\pi_\Kk(\Obj_{\bC})| \to \|\pi_\Kk(\Obj_{\bC})\|\subset|\Kk|$ is continuous from this quotient topology to the relative topology induced by $|\Kk|$, which finishes the proof.
\end{proof}

\begin{example}\rm 
The inclusion $|\bC| \hookrightarrow |\Kk|$ does {\it not} hold for arbitrary full subcategories of $\bB_\Kk$.  For example, the full subcategory $\bC$ with objects $\bigcup_{i=1,\dots, N} U_i$ (the union of the domains of the basic charts) has only identity morphisms, so that $|\bC| = \Obj_{\bC}$ equals $|\Kk|$ only if there are no transition charts.
\end{example}

In order to prove the existence and uniqueness up to cobordism of reductions, we start by analyzing the induced footprint cover of $X$.
Since the induced vicinity contains the zero set $\io_\Kk(X)$, the further conditions on reductions imply that the reduced footprints $Z_I = \psi_I(V_I\cap s_I^{-1}(0))$ form a reduction of the footprint cover $X=\bigcup_{i=1,\ldots,N} F_i$ in the sense of the following lemma.
This lemma makes the first step towards existence of reductions by showing how to reduce the footprint cover. We will use the fact that every compact Hausdorff space is a {\it shrinking space} in the sense
that every open cover has a precompact
 shrinking -- in the sense of Definition~\ref{def:shr0} without requiring condition \eqref{same FI}.

\begin{lemma}\label{le:cov0}
For any finite open cover of a compact Hausdorff
space $X=\bigcup_{i=1,\ldots,N} F_i$ there exists a {\bf cover reduction} $\bigl(Z_I\bigr)_{I\subset \{1,\ldots,N\}}$ in the following sense:
The $Z_I\subset X$ are (possibly empty) open subsets satisfying
\begin{enumerate}
\item
$Z_I\sqsubset F_I
:= \bigcap_{i\in I} F_i$
for all $I$;
\item
if $\ov{Z_I}\cap \ov{Z_J}\ne \emptyset$ then $I\subset J$ or $J\subset I$;
\item
$X\,=\, \bigcup_{I} Z_I$.
\end{enumerate}
\end{lemma}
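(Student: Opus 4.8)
The plan is to construct the $Z_I$ explicitly from a partition of unity subordinate to the cover, and then to read off the three properties from a single inequality. First I would use that a compact Hausdorff space is normal to fix a partition of unity $(\rho_i)_{i=1,\dots,N}$ with $\rho_i\ge 0$, $\{\rho_i\ne 0\}\subset F_i$, and $\sum_{i}\rho_i\equiv 1$. Fixing the constant $c:=\tfrac1{2N^2}$, I would define for each $I\subset\{1,\dots,N\}$
\[
Z_I := \Bigl\{x\in X \,\Big|\, \rho_i(x) > \max_{j\notin I}\rho_j(x) + c \ \ \forall i\in I,\ \text{ and } \ \min_{i\in I}\rho_i(x) > c\Bigr\},
\]
with the convention $\max_{j\notin I}\rho_j(x):=0$ when $I=\{1,\dots,N\}$. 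Each $Z_I$ is open, and since $x\in Z_I$ forces $\rho_i(x)>c>0$, hence $x\in F_i$, for every $i\in I$, we get $Z_I\subset F_I$; in particular $Z_I=\emptyset$ whenever $F_I=\emptyset$, which is harmless. This is the analogue, at the level of the cover, of the star cover of the first barycentric subdivision in Remark~\ref{rmk:nerve}.

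For condition (i): passing to closures replaces the strict inequalities by non-strict ones, so $\ov{Z_I}\subset\{\rho_i\ge c\ \forall i\in I\}\subset\bigcap_{i\in I}\{\rho_i>0\}\subset F_I$; being closed in the compact space $X$ this closure is compact, so $Z_I\sqsubset F_I$. For condition (ii): if $I,J$ are incomparable, choose $a\in I\setminus J$ and $b\in J\setminus I$. Any $x\in\ov{Z_I}\cap\ov{Z_J}$ would satisfy $\rho_a(x)\ge\rho_b(x)+c$ (from $\ov{Z_I}$, using $a\in I$, $b\notin I$) and $\rho_b(x)\ge\rho_a(x)+c$ (from $\ov{Z_J}$, using $b\in J$, $a\notin J$); adding these gives $0\ge 2c>0$, a contradiction, so $\ov{Z_I}\cap\ov{Z_J}=\emptyset$.

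Condition (iii) is where the precise value of $c$ enters. Given $x\in X$, I would order the numbers $\rho_i(x)$ decreasingly as $v_1\ge\dots\ge v_N\ge 0$; the $N$ quantities $v_1-v_2,\dots,v_{N-1}-v_N,v_N$ telescope to $v_1=\max_i\rho_i(x)\ge\tfrac1N$, so one of them is at least $\tfrac1{N^2}=2c$. If this largest gap occurs at $v_m-v_{m+1}$ with $m<N$, then $v_m>v_{m+1}$, the set $I:=\{i:\rho_i(x)\ge v_m\}$ has $\max_{j\notin I}\rho_j(x)=v_{m+1}$, and $\min_{i\in I}\rho_i(x)=v_m\ge v_{m+1}+2c\ge 2c$, so $x\in Z_I$. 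If instead the largest gap is the last one, $v_N\ge 2c$, then all $\rho_i(x)\ge 2c>c$ and $x\in Z_{\{1,\dots,N\}}$. Either way $x\in\bigcup_I Z_I$, so $X=\bigcup_I Z_I$.

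The only genuine tension here is between (ii), which wants the $Z_I$ small and well separated, and (iii), which wants them to cover $X$; the displayed formula together with the pigeonhole estimate $\max$-gap $\ge\tfrac1{N^2}$ is exactly what reconciles the two, so once the formula is written down the verifications above are routine. The sole external input is the existence of a subordinate partition of unity on $X$, which is standard for a finite open cover of a normal space. (If one prefers to avoid partitions of unity entirely, the same statement can instead be obtained by downward induction on $|I|$ using the shrinking lemma, but then the main obstacle becomes keeping the disjoint-closure invariant alive through the induction, which is precisely what the explicit formula circumvents.)
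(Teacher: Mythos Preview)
Your proof is correct and takes a genuinely different route from the paper. The paper instead iterates the shrinking lemma to produce a tower of nested covers
\[
F_i^0\sqsubset G_i^1 \sqsubset F_i^1 \sqsubset G_i^2 \sqsubset\ldots \sqsubset F_i^{N} = F_i
\]
and then sets $Z_I := G_I^{|I|} \setminus \bigcup_{j\notin I} \ov{F^{|I|}_j}$, verifying (i)--(iii) by hand from the nesting. Your partition of unity formula is cleaner and the pigeonhole argument on the sorted gaps $v_m-v_{m+1}$ is a nice way to get the covering property~(iii) in one shot. Two minor remarks: your second defining inequality $\min_{i\in I}\rho_i(x)>c$ is already implied by the first (since $\max_{j\notin I}\rho_j\ge 0$), so the formula simplifies; and you should either restrict to nonempty $I$ or declare $Z_\emptyset:=\emptyset$, since as written your formula gives $Z_\emptyset=X$, which trivializes the statement. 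As for what each approach buys: the paper's explicit formula in terms of nested covers is reused verbatim later (in Lemma~\ref{le:cobred}) to produce cover reductions with collar form near the boundary of a cobordism, simply by feeding in collared shrinkings $F_i^k, G_i^k$. Your construction would adapt equally well there --- one would just need a partition of unity with product form in the collar --- but that step is not quite as immediate.
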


\begin{proof}
Since $X$ is compact Hausdorff, we may choose precompact open subsets $F_i^0\sqsubset F_i$ that still cover $X$.
Next, any choice of precompactly nested sets
\begin{equation}\label{eq:FGI}
F_i^0\,\sqsubset\, G_i^1 \,\sqsubset\, F_i^1
\,\sqsubset\, G_i^2 \,\sqsubset\,\ldots \,\sqsubset\,
F_i^{N} = F_i
\end{equation}
yields further open covers  
$X=\bigcup_{i=1,\ldots,N} F_i^n$ and $X=\bigcup_{i=1,\ldots,N} G_i^n$ 
for $n=1,\ldots,N$.
Now we claim that the required cover reduction can be constructed by
\begin{equation}\label{eq:ZGI}
Z_I \,: =\; \Bigl( {\textstyle\bigcap_{i\in I}} G_i^{|I|} \Bigr) \;\less\; {\textstyle \bigcup_{j\notin I}} \ov{F^{|I|}_j} .
\end{equation}
To prove this we will use the following notation: Given any open cover $X=\bigcup_{i=1,\ldots,N} H_i$ of $X$, we denote the intersections of the covering sets by $H_I: = \bigcap_{i\in I} H_i$ for all $I\subset \{1,\ldots,N\}$. This convention will apply to define $F_I^k$ resp.\ $G_I^k$ from the $F_i^k$ resp. $G_i^k$, but it does not apply to the sets $Z_I$ constructed above, since in particular the $Z_i$ generally do not cover $X$. With this notation we have
$$
Z_I \,: =\; G_I^{|I|} \;\less\; {\textstyle \bigcup_{j\notin I}} \ov{F^{|I|}_j}
\qquad\text{for all}\;\; I\subset \{1,\ldots,N\}.
$$
These sets are open since they are the complement of a finite union of closed sets in the open set $G_I^{|I|}$. The precompact inclusion $Z_I\sqsubset F_I$ in (i) holds since $G_I^{|I|}\sqsubset F_I$.

To prove the covering in (iii)  let $x\in X$ be given. Then we claim that $x \in Z_{I_x}$ for
$$
I_x :=  \underset{I\subset\{1,\ldots,N\}, x \in G^{|I|}_I}{\textstyle \bigcup}  I  \;\;\;\subset\;\;\; \{1,\ldots, N\} .
$$
Indeed, we have $x\in G^{|I_x|}_{I_x}$ since $i\in I_x$ implies $x\in G^{|I|}_i$ for some $|I|\leq |I_x|$, and hence $x\in G^{|I_x|}_i$
since $G_i^{|I|}\subset G_i^{|I_x|}$.
On the other hand, for all $j\notin I_x$ we have $x\notin G^{|I_x|+1}_{I_x\cup j}$ by definition.
However, $x\in G^{|I_x|+1}_{I_x}$ by the nesting of the covers, so for every $j\notin I_x$ we obtain $x\in X\less G^{|I_x|+1}_j$, which is a subset of $X\less \ov{F^{|I_x|}_j}$. This proves $x \in Z_{I_x}$ and hence~(iii).

To prove the intersection property (ii), suppose to the contrary that $x\in \ov{Z_I}\cap\ov{Z_J}$ where $|I|\le |J|$ but $I\less J\ne \emptyset$.  Then given $i\in I\less J$, we have $x\in \ov{Z_I} \subset G^{|I|}_I\subset F^{|J|}_i$ since $|I|\le |J|$, which contradicts $x\in \ov{Z_J} \subset X\less F^{|J|}_i$.
Thus the sets $Z_I$ form a cover reduction.
\end{proof}

To construct cobordism reductions with given boundary restrictions we need the following notion of collared cobordism of cover reductions.

\begin{defn}\label{def:cobred}
Given a finite open cover $X=\bigcup_{i=1,\ldots,N} F_i$ of a compact Hausdorff space, we say that two {\bf cover reductions 
$(Z_I^0)_{I\subset \{1,\ldots,N\}}, (Z_I^1)_{I\subset \{1,\ldots,N\}}$  
are collared cobordant} if there exists a family of open subsets $Z_I\sqsubset F_I\times [0,1]$ satisfying conditions (i),(ii),(iii) in Lemma~\ref{le:cov0} for the cover $X\times[0,1]=\bigcup_{i=1,\ldots,N} F_i\times[0,1]$, 
and in addition 
are collared
in the sense of Definition~\ref{def:collared}, namely:
\begin{enumerate}
\item[(iv)]
There is $\eps>0$ such that $Z_I\cap \bigl(X\times A^\al_\eps\bigr)= Z_I^\al\times A^\al_\eps$
for all $I\subset\{1,\ldots,N\}$ and $\al=0,1$.
\end{enumerate}
\end{defn}

\begin{lemma} \label{le:cobred1}
The relation of collared cobordism for cover reductions is reflexive, symmetric, and transitive.
\end{lemma}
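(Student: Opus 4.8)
The plan is to mimic the proof of Lemma~\ref{le:cobord1}, working entirely at the level of open subsets of $X$ and $X\times[0,1]$, since no Kuranishi data enters here.

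For \emph{reflexivity} I would take products: given a cover reduction $(Z_I)_{I\subset\{1,\dots,N\}}$ of $X=\bigcup_i F_i$, set $Z_I':=Z_I\times[0,1]\sqsubset F_I\times[0,1]$. Conditions (i)--(iii) of Lemma~\ref{le:cov0} pass to products because $\overline{Z_I\times[0,1]}=\overline{Z_I}\times[0,1]$ and $\bigcup_I(Z_I\times[0,1])=X\times[0,1]$, and the collar condition of Definition~\ref{def:cobred}(iv) holds with $\eps=1$ and $Z_I^0=Z_I^1=Z_I$. For \emph{symmetry}, given a collared cobordism $(Z_I)$ from $(Z_I^0)$ to $(Z_I^1)$, I would push it forward under the reflection $r:X\times[0,1]\to X\times[0,1]$, $(x,t)\mapsto(x,1-t)$. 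Since $r$ is a homeomorphism preserving each $F_i\times[0,1]$ and swapping the two boundary components, the images $(r(Z_I))$ again satisfy (i)--(iii) and carry $\eps$-collars, now restricting to $Z_I^1$ at $\al=0$ and $Z_I^0$ at $\al=1$; hence $(r(Z_I))$ is a collared cobordism from $(Z_I^1)$ to $(Z_I^0)$.

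The substantive part is \emph{transitivity}, which I would handle by a gluing construction as in Lemma~\ref{le:cobord1}. Given collared cobordisms $(Z_I^{01})$ from $(Z_I^0)$ to $(Z_I^1)$ and $(\tilde Z_I^{12})$ from $(Z_I^1)$ to $(Z_I^2)$, first shift the second so it lives over $X\times[1,2]$ via $(x,t)\mapsto(x,1+t)$, obtaining $(Z_I^{12})$ with $Z_I^{12}\sqsubset F_I\times[1,2]$, restricting to $Z_I^1$ over $X\times\{1\}$ and to $Z_I^2$ over $X\times\{2\}$. Using finiteness of the index set, fix a uniform $\eta>0$ so that $Z_I^{01}\cap(X\times(1-\eta,1])=Z_I^1\times(1-\eta,1]$ and $Z_I^{12}\cap(X\times[1,1+\eta))=Z_I^1\times[1,1+\eta)$ for all $I$, and define $Z_I^{02}:=Z_I^{01}\cup Z_I^{12}\subset F_I\times[0,2]$. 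I would then check: openness of $Z_I^{02}$ in $X\times[0,2]$ (near $X\times\{1\}$ it equals $Z_I^1\times(1-\eta,1+\eta)$, and away from it it is a union of relatively open sets); precompactness $Z_I^{02}\sqsubset F_I\times[0,2]$ (its closure is the compact set $\overline{Z_I^{01}}\cup\overline{Z_I^{12}}\subset F_I\times[0,2]$); the covering property $\bigcup_I Z_I^{02}=X\times[0,2]$ (immediate); and the collar condition at $\al=0$ and over $X\times\{2\}$, inherited from the two pieces with restrictions $Z_I^0$ and $Z_I^2$. Finally, pushing forward by the affine homeomorphism $X\times[0,2]\to X\times[0,1]$, $(x,t)\mapsto(x,t/2)$, which preserves collars (halving their width), yields a collared cobordism on $X\times[0,1]$ from $(Z_I^0)$ to $(Z_I^2)$.

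The step I expect to be the main obstacle is verifying the intersection property (condition (ii) of Lemma~\ref{le:cov0}) for the glued family $(Z_I^{02})$. Writing $\overline{Z_I^{02}}=\overline{Z_I^{01}}\cup\overline{Z_I^{12}}$, a nonempty intersection $\overline{Z_I^{02}}\cap\overline{Z_J^{02}}$ splits into four cases: the two ``pure'' cases $\overline{Z_I^{01}}\cap\overline{Z_J^{01}}\neq\emptyset$ and $\overline{Z_I^{12}}\cap\overline{Z_J^{12}}\neq\emptyset$ directly give $I\subset J$ or $J\subset I$ by property (ii) of the two given cobordisms, but a ``mixed'' case such as $\overline{Z_I^{01}}\cap\overline{Z_J^{12}}\neq\emptyset$ requires the collar structure: this intersection lies in $X\times\{1\}$, where the collar form identifies $\overline{Z_I^{01}}\cap(X\times\{1\})=\overline{Z_I^1}\times\{1\}$ and $\overline{Z_J^{12}}\cap(X\times\{1\})=\overline{Z_J^1}\times\{1\}$, so that $\overline{Z_I^1}\cap\overline{Z_J^1}\neq\emptyset$ and property (ii) of the cover reduction $(Z_I^1)$ gives the conclusion. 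Once this is in place, transitivity is complete, and together with reflexivity and symmetry it establishes that collared cobordism of cover reductions is an equivalence relation.
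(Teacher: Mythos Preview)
Your proposal is correct and follows exactly the approach the paper intends: the paper's own proof consists of the single sentence ``The proof is similar to (but much easier than) that of Lemma~\ref{le:cobord1},'' and what you have written is a faithful and careful unpacking of that analogy, including the one nontrivial point (the mixed intersection case in transitivity, handled via the collar at $X\times\{1\}$).
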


\begin{proof}
The proof is similar to (but much easier than) that of Lemma~\ref{le:cobord1}.
\end{proof}

With these preparations we can prove uniqueness of cover reductions up to collared cobordism, and also provide reductions for footprint covers of Kuranishi cobordisms.

\begin{lemma}\label{le:cobred}
\begin{enumerate}
\item
Any cover $X\times[0,1]=\bigcup_{i=1,\ldots,N} F_i$ by collared open sets $F_i\subset X\times [0,1]$ has a cover reduction $(Z_I)_{I\subset\{1,\ldots,N'\}}$ by collared sets $Z_I\subset X\times [0,1]$.
\item
Any two cover reductions $(Z_I^0)_{I\subset\{1,\ldots,N\}}$, $(Z_I^1)_{I\subset\{1,\ldots,N\}}$ of an open cover $X=\bigcup_{i=1,\ldots,N} F_i$ are collared cobordant.
\end{enumerate}
\end{lemma}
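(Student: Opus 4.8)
The two statements in Lemma~\ref{le:cobred} are the cobordism analogues of Lemma~\ref{le:cov0} and Lemma~\ref{le:cobord1}~(i), and I would prove them by running the construction of Lemma~\ref{le:cov0} on $X\times[0,1]$ (resp.\ $X\times[0,2]$), being careful to keep everything of collar form near the boundary. So the main work is bookkeeping the collars through the precompact-nesting construction~\eqref{eq:FGI}--\eqref{eq:ZGI}; there is no genuinely new idea here, only the need to verify that the collar hypotheses on the $F_i$ propagate to the $Z_I$.

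For part~(i), suppose the $F_i\subset X\times[0,1]$ are collared with some uniform $\eps_0>0$, i.e.\ $F_i\cap (X\times A^\al_{\eps_0}) = \p^\al F_i\times A^\al_{\eps_0}$ for $\al=0,1$ whenever this intersection is nonempty. First I would choose the precompact subcover $F_i^0\sqsubset F_i$ and all the intermediate sets $G_i^n, F_i^n$ of~\eqref{eq:FGI} to also be collared with some common width $\eps<\eps_0$; this is possible because on the collar region $X\times A^\al_\eps$ one just shrinks the slice $\p^\al F_i$ inside $X$ and takes the product, and on the complement $X\times[\eps,1-\eps]$ one shrinks as in Lemma~\ref{le:cov0} (using that $X\times[\eps,1-\eps]$ is compact Hausdorff), then glues the two using the collar structure. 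Since the formula~\eqref{eq:ZGI} for $Z_I$ is built entirely from finite intersections, unions, and complements of closures of the $G_i^{|I|}$ and $F_j^{|I|}$, and all of these commute with restriction to the product region $X\times A^\al_\eps$, one gets $Z_I\cap (X\times A^\al_\eps) = \bigl(\bigcap_{i\in I}\p^\al G_i^{|I|}\less \bigcup_{j\notin I}\ov{\p^\al F_j^{|I|}}\bigr)\times A^\al_\eps$, which has exactly the collar form~(iv) of Definition~\ref{def:cobred} with $\p^\al Z_I$ the bracketed set. Properties (i),(ii),(iii) for the $Z_I$ are then literally the conclusion of Lemma~\ref{le:cov0} applied to the cover $(F_i)$ of $X\times[0,1]$, which I can quote verbatim. (The statement of part~(i) writes $N'$ where the hypothesis has $N$; I would just write $N$.)

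For part~(ii), given cover reductions $(Z^0_I)$ and $(Z^1_I)$ of the same cover $X=\bigcup_i F_i$, I would first build a collared cobordism between $(Z^0_I)$ and an auxiliary reduction, and likewise for $(Z^1_I)$, then concatenate and rescale, exactly mimicking the proof of Lemma~\ref{le:cobord1}~(i) but in the much simpler setting of cover reductions (where there is no cocycle or additivity condition to track — this is the content of Lemma~\ref{le:cobred1}, whose proof I would dispatch in one line by analogy). Concretely: apply part~(i) to the collared cover $(F_i\times[0,1])$ of $X\times[0,1]$, but first replace the ``boundary data'' by arranging the input covers near $X\times\{0\}$ and $X\times\{1\}$ to be the products of the covers underlying $Z^0_I$ and $Z^1_I$ respectively — i.e.\ choose the $F_i^0\sqsubset G_i^1\sqsubset\cdots$ in~\eqref{eq:FGI} so that on $X\times A^0_\eps$ they restrict to a prescribed nesting with $\bigcap_{i\in I}\p^0 G_i^{|I|}\less\bigcup_{j\notin I}\ov{\p^0 F_j^{|I|}} = Z^0_I$, and similarly at $\al=1$. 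Such a choice of nesting exists because $Z^0_I$ is itself of the form~\eqref{eq:ZGI} for some nesting of the $F_i$ (or one can just insert it as one more term in a refined nesting), and the collaring-gluing argument from part~(i) lets one interpolate freely in the interior. Then the resulting $Z_I$ from~\eqref{eq:ZGI} give a collared cobordism with $\p^\al Z_I = Z^\al_I$, which is exactly what part~(ii) asserts.

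\textbf{Main obstacle.} The only delicate point is \emph{forcing the reduction to restrict on the boundary to the two prescribed cover reductions} in part~(ii): unlike in the interior, where any precompact nesting works, near $X\times\{\al\}$ one must choose the nesting~\eqref{eq:FGI} so that the output of formula~\eqref{eq:ZGI} reproduces the given $Z^\al_I$ exactly. I would handle this by first observing that each cover reduction $(Z^\al_I)$, being produced by Lemma~\ref{le:cov0}, already comes with (or can be re-derived from) a precompact nesting of the $F_i$ via~\eqref{eq:FGI}; one then uses those two boundary nestings as the prescribed collar values and invokes the collaring-gluing step of part~(i) to extend them to a nesting over all of $X\times[0,1]$ that is a product on $X\times A^\al_\eps$. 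Everything else — openness of the $Z_I$, the precompact inclusions $Z_I\sqsubset F_I\times[0,1]$, the covering property, and the intersection dichotomy~(ii) — is inherited directly from Lemma~\ref{le:cov0} since the formula is the same; and conditions~(iv) follow from the elementary fact that intersections, unions, and closures of collared sets restrict to the corresponding operations on their slices over the product region.
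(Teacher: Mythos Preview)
Your argument for part~(i) is fine and matches the paper's: once the nested shrinkings $F_i^k, G_i^k$ are chosen collared, the formula~\eqref{eq:ZGI} preserves collar form because it is built from operations that commute with restriction to the product region.

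There is, however, a genuine gap in your argument for part~(ii). You write that each given cover reduction $(Z^\al_I)$, ``being produced by Lemma~\ref{le:cov0}, already comes with (or can be re-derived from) a precompact nesting of the $F_i$ via~\eqref{eq:FGI}.'' But the statement of part~(ii) says \emph{any} two cover reductions --- these are arbitrary families satisfying the axioms (i)--(iii) of Lemma~\ref{le:cov0}, not necessarily outputs of the specific formula~\eqref{eq:ZGI}. There is no reason a general cover reduction should arise from some nesting via that formula: the formula imposes structural constraints (e.g.\ $Z_I$ is obtained from an intersection $G_I^{|I|}$ by removing sets $\ov{F_j^{|I|}}$ that must simultaneously serve in the formulas for all other $Z_J$ with $|J|=|I|$) that an abstract cover reduction need not satisfy. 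Your parenthetical alternative, ``insert it as one more term in a refined nesting,'' does not make sense either, since the $Z_I$ are indexed by subsets and cannot be slotted into a chain of shrinkings of the~$F_i$.

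The paper closes this gap with an intermediate step (Claim~A): every cover reduction $(Z_I)$ is collared cobordant to one of the special form~\eqref{eq:ZGI}. The mechanism is the elementary observation that any \emph{inclusion} $Z'_I\subset Z''_I$ of cover reductions yields a collared cobordism $\bigl(Z'_I\times[0,\tfrac23)\bigr)\cup\bigl(Z''_I\times(\tfrac13,1]\bigr)$. One then shrinks $(Z_I)$ to $(Z^0_I)$, uses the $Z_I$ themselves to manufacture a nesting $F_i^0:=\bigcup_{I\ni i}Z^0_I\sqsubset F_i':=\bigcup_{I\ni i}Z_I\sqsubset F_i$, builds a standard-form reduction $(Z'_I)$ from this nesting, checks that $(Z''_I):=(Z^0_I\cup Z'_I)$ is still a cover reduction, and strings together the three inclusion-cobordisms $Z^0_I\subset Z_I$, $Z^0_I\subset Z''_I$, $Z'_I\subset Z''_I$ via transitivity. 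Only after reducing both $(Z^0_I)$ and $(Z^1_I)$ to standard form does the paper carry out the interpolation you sketch, and even there the explicit nesting on $X\times[0,1]$ (using level-dependent interval widths) requires more care than ``invoke the collaring-gluing step of part~(i).''
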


\begin{proof}
To prove (i) first note that any finite open cover $X\times [0,1]=\bigcup_{i=1,\ldots,N} F_i$ by sets with collar form near the boundary $\partial^\al F \times A_\eps$ can be shrunk to sets $F_i'\sqsubset F_i$ that also have collar form near the boundary.
Indeed, taking a common $\eps>0$, one can first choose a shrinking $F^\al_i \sqsubset\partial^\al F_i$ of the covers of the ``boundary components'' $X\times\{\al\}$ and a general shrinking $F''_i \sqsubset F_i$, then the required shrinking is given by
\begin{equation}\label{eq:cobred1}
F'_i:=  F^0_i \times [0,\tfrac \eps 2) \;\cup\;  F^1_i \times (1-\tfrac \eps 2,1]  \;\cup\;
F''_i \cap X \times (\tfrac \eps 4, 1-\tfrac \eps 4 ) .
\end{equation}
Hence we may choose nested covers $F_i^0\ldots F_i^k\sqsubset G_i^{k+1}\sqsubset \ldots F_i$ as in \eqref{eq:FGI} of $X\times [0,1]$ that have collar form near the boundary.
Then the sets $(Z_I)$ defined
by intersections in \eqref{eq:ZGI}
also have collar form near the boundary, and the arguments of Lemma~\ref{le:cov0} prove (i).

To prove (ii), we first transfer to the standard form constructed in Lemma~\ref{le:cov0}.

\MS
\noindent
{\bf Claim A:} {\em
Any cover reduction $(Z_I)$ of a finite open cover $X=\bigcup_i F_i$ is collared cobordant to a cover reduction constructed from nested covers $F_i^0\ldots F_i^k\sqsubset G_i^{k+1}\sqsubset \ldots F_i$
by \eqref{eq:ZGI}.
}
\MS

To prove this claim, choose a shrinking $Z_I^0\sqsubset Z_I$ such that $X = \bigcup_I Z^0_I$.  Then
these covers induce precompactly nested open covers
$$
F_i^0: = {\textstyle \bigcup_{i\in I}} Z_I^0  \quad \sqsubset \quad F_i': = {\textstyle \bigcup_{i\in I}} Z_I   \quad \sqsubset \quad F_i = {\textstyle \bigcup_{i\in I}} F_I .
$$
As in \eqref{eq:FGI} we can choose interpolating sets
$$
F_i^0\sqsubset \ldots F_i^k\sqsubset G_i^{k+1}\sqsubset \ldots \sqsubset F_i^{2N} = F_i',
$$
and let $\bigl(Z_I' := G_I^{|I|}\less \bigcup_{j\notin I} \ov{F_j^{|I|}}\bigr)$ be the resulting cover reduction of $F_i'$ and hence of $F_i$.
We claim that the union $(Z_I'':= Z^0_I\cup Z'_I)$ is a cover reduction of $(F_i)$ as well.
Since $Z_I^0\sqsubset Z_I\sqsubset F_I$ we only have to check the mixed terms in the intersection axiom (iii) in Lemma~\ref{le:cov0}, i.e.\ we need to verify
$$
\Bigl(J\less I\ne
\emptyset,\; I\less J\ne \emptyset\Bigr)\; \Longrightarrow\;
\Bigl((\ov{Z^0_I}\cup \ov{Z'_I})\cap (\ov{Z^0_J}\cup \ov{Z'_J}) = \emptyset\Bigr).
$$
Indeed, for $j\in J\less I$ we obtain $Z^0_J\subset F_j^0\sqsubset F_j^{|I|}\subset X\less Z'_I$
so that $\ov{Z^0_J}\cap \ov{Z_I'}=\emptyset$.  Conversely, $\ov{Z^0_I}\cap \ov{Z_J'}=\emptyset$ follows from the existence of $i\in I\less J$.
Now we have a chain of inclusions between cover reductions of $(F_i)$, namely $Z_I^0\subset Z_I$, $Z_I^0\subset Z_I''$, and  $Z_I'\subset Z_I''$. We claim that this induces collared cobordisms $(Z_I^0)\sim(Z_I)$, $(Z_I^0)\sim(Z_I'')$, and $(Z_I')\sim(Z_I'')$, so that Lemma~\ref{le:cobred1} implies that $(Z_I)$ is collared cobordant to $(Z_I')$, which is constructed by \eqref{eq:ZGI}.
To check this last claim, consider any two cover reductions $Z_I'\subset Z_I''$ of $(F_i)$ and note that a collared cobordism is given by
$$
 \bigl(Z_I'\times [0,\tfrac 23)\bigr)\cup \bigl(Z_I''\times (\tfrac 13,1]\bigr)  \;\subset\; X\times [0,1] .
$$
Indeed, these open subsets are collared and form a cover reduction since each of $(Z_I'),(Z_I'')$ satisfies the axioms (i),(ii), and the mixed intersection in (iii) is
$$
\Bigl(\ov{Z_I'}\times [0,\tfrac 23]\Bigr)\cap \Bigl(\ov{Z_J''}\times [\tfrac 13,1]\Bigr)
\subset
\Bigl(\ov{Z_I'} \cap \ov{Z_J''} \Bigr) \times [\tfrac 13,\tfrac 23] ,
$$
which is empty unless $I\subset J$ or $J\subset I$. This proves Claim A.
\MS

Now to prove (ii) it suffices to consider cover reductions $(Z^\al_I)$ that are constructed from nested covers $F_i^{0,\al} \ldots F_i^{k,\al} \sqsubset G_i^{k+1,\al}\sqsubset \ldots\partial^\al F_i$ as in \eqref{eq:FGI}.
We extend these to
nested covers of the product cover 
$X\times [0,1]=\bigcup_i F_i\times [0,1]$
by choosing constants
$$
\tfrac 13=\eps_{2N}<\ldots<\eps_{0} < \tfrac 12<\de_{0}<\ldots<\de_{2N} = \tfrac 23,
$$
and then setting
\begin{align*}
F_i^{k}: & = \Bigl(F_i^{k,0}\times [0,\de_{2k})\Bigr) \cup  \Bigl(F_i^{k,1}\times (\eps_{2k}, 1]\Bigr)
\;\sqsubset\; F_i\times [0,1],\\
G_i^{k}: & = \Bigl(G_i^{k,0}\times [0,\de_{2k-1})\Bigr) \cup \Bigl( G_i^{k,1}\times (\eps_{2k-1}, 1] \Bigr)\;\sqsubset\; F_i\times [0,1].
\end{align*}
Since these sets satisfy the nested property in \eqref{eq:FGI} and have collar form near the boundary given by the nested covers $G_i^{k,\al}\sqsubset F_i^{k,\al}$, the cover reduction $(Z_I)$ defined in \eqref{eq:ZGI} is a collared cobordism between the reductions $(Z^0_I)$ and $(Z^1_I)$.
\end{proof}

We now prove existence and uniqueness of reductions.

\begin{prop} \label{prop:cov2} 
\begin{itemize}
\item[(a)]
Every tame Kuranishi atlas $\Kk$ has a reduction $\Vv$.
\item[(b)] 
Every tame Kuranishi cobordism $\Kk^{[0,1]}$ has a cobordism reduction $\Vv^{[0,1]}$.
\item [(c)]  
Let $\Vv^0,\Vv^1$ be reductions of a tame Kuranishi atlas $\Kk$. Then there exists a cobordism reduction $\Vv$ of $\Kk\times[0,1]$ such that $\p^\al\Vv = \Vv^\al$ for $\al = 0,1$. 
\end{itemize}
\end{prop}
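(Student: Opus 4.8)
The plan is to descend to the level of footprint covers, build there a collared cobordism of cover reductions, and then lift it to a cobordism reduction of $\Kk\times[0,1]$ by re-running, in a relative (collared) form, the construction that proves parts (a) and (b).

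First I would pass from the given reductions to the cover level. Each $\Vv^\al$ induces a cover reduction $(Z^\al_I)_{I\in\Ii_\Kk}$ of the footprint cover $X=\bigcup_{i=1}^N F_i$ via $Z^\al_I:=\psi_I\bigl(V^\al_I\cap s_I^{-1}(0)\bigr)$: conditions (i)--(iii) of Definition~\ref{def:vicin} translate directly into conditions (i)--(iii) of Lemma~\ref{le:cov0}, using that each $\pi_\Kk|_{s_I^{-1}(0)}$ is a homeomorphism onto $F_I$, so that $\pi_\Kk(\ov{V^\al_I})\cap\pi_\Kk(\ov{V^\al_J})\ne\emptyset$ forces $\ov{Z^\al_I}\cap\ov{Z^\al_J}\ne\emptyset$ and hence comparability of $I,J$. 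By Lemma~\ref{le:cobred}(ii) the cover reductions $(Z^0_I)$ and $(Z^1_I)$ are collared cobordant, so there is a collared cover reduction $(Z_I)_{I\in\Ii_\Kk}$ of $X\times[0,1]=\bigcup_i(F_i\times[0,1])$ with some collar width $\eps>0$ for which $Z_I\cap(X\times A^\al_\eps)=Z^\al_I\times A^\al_\eps$ for $\al=0,1$.

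Next I would fix the collar of $\Vv$ and then extend inward. For the product cobordism $\Kk\times[0,1]$ the chart domains are $U_I\times[0,1]$, and (up to the identification $\partial^\al(U_I\times[0,1])\cong U_I$) the boundary embeddings $\iota^\al_I$ are the inclusions $U_I\times A^\al_\eps\hookrightarrow U_I\times[0,1]$. On these collars I would set $V_I:=V^\al_I\times A^\al_{\eps/2}$. Because $\Vv^\al$ is a reduction of $\Kk$ and, by Remark~\ref{rmk:cobordreal}, the realization $|\Kk\times[0,1]|$ restricted to such a collar is --- as a point set, hence for the purposes of the equivalence relation $\sim$ --- the product $|\Kk|\times A^\al_{\eps/2}$, these collar pieces already satisfy conditions (i),(ii),(iv) of Definition~\ref{def:cvicin} with $\partial^\al V_I=V^\al_I$, their $\pi$-images cover $\iota_{\Kk\times[0,1]}(X\times A^\al_{\eps/2})$, and their footprints are $Z^\al_I\times A^\al_{\eps/2}=Z_I\cap(X\times A^\al_{\eps/2})$. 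Over the interior $X\times[\eps/4,1-\eps/4]$ I would then carry out the argument of part~(a): by Lemma~\ref{le:restr0} applied to the charts $\bK_I\times[0,1]$ one restricts $U_I\times[0,1]$ to a precompact open set with footprint $Z_I$, and then iteratively shrinks these sets to achieve the separation $\pi_{\Kk\times[0,1]}(\ov{V_I})\cap\pi_{\Kk\times[0,1]}(\ov{V_J})=\emptyset$ for incomparable $I,J$ --- possible since $\Kk\times[0,1]$ is again an additive tame Kuranishi cobordism and Proposition~\ref{prop:Ktopl1} supplies the metrizable precompact subsets of $|\Kk\times[0,1]|$ on which the shrinking is performed, exactly as in part~(a). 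The shrinking is to be done so that over $X\times A^\al_{\eps/4}$ it agrees with the already-fixed pieces $V^\al_I\times A^\al_{\eps/4}$; this is consistent because $(Z_I)$ is in product form there and the collar pieces are themselves valid restrictions satisfying the separation condition, so the two prescriptions coincide on the overlap region $X\times(\eps/4,\eps/2)$. Gluing the collar pieces to the interior construction then yields $\Vv=\bigcup_I V_I$, a cobordism reduction of $\Kk\times[0,1]$ with $\partial^\al\Vv=\Vv^\al$, as required.

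The main obstacle will be the interior step: one must run the shrinking process of part~(a) while the boundary collars of the $V_I$ are already pinned to $\Vv^0$ and $\Vv^1$. The global separation condition (ii) is precisely the constraint that has to be kept consistent between the prescribed collars and the interior --- the interior shrinking must not destroy the separation already present on the collars, and the fixed collar values must not obstruct achieving (ii) in the interior. This works because (ii) is verified chart by chart on the metrizable precompact pieces of $|\Kk\times[0,1]|$ and the collar structure makes the boundary data automatically compatible with such a chart-by-chart shrinking; nonetheless, making this precise amounts to re-deriving the proof of part~(a) in collared form, in the same spirit in which Proposition~\ref{prop:cobord2} re-derives Proposition~\ref{prop:proper}, and I would present it that way.
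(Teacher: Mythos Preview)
Your overall plan for (c) --- pass to footprints, invoke Lemma~\ref{le:cobred}(ii) to get a collared cover reduction $(Z_I)$ interpolating between $(Z^0_I)$ and $(Z^1_I)$, then lift back --- is the right starting point, and your first paragraph is correct. The gap is in the second paragraph: the assertion that the interior construction and the prescribed collar pieces ``coincide on the overlap region $X\times(\eps/4,\eps/2)$'' is not justified. The interior construction from part~(a) produces \emph{some} reduction with footprints $Z_I$; there is no reason its slices at $t\in(\eps/4,\eps/2)$ should equal the given $V^\al_I$. Two reductions with identical footprints need not agree off the zero set, and your consistency argument (``both satisfy the separation condition and have the same footprints'') does not force pointwise equality.

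The paper does not attempt this direct match. Instead, after obtaining $(Z_I)$ it runs part~(b) to get a cobordism reduction $\Vv'$ of $\Kk\times[0,1]$ with footprints $Z_I$ but arbitrary boundary restrictions $\p^\al\Vv'$ (which only share the footprints $Z^\al_I$ with $\Vv^\al$). The discrepancy $\Vv^\al\leftrightarrow\p^\al\Vv'$ is then bridged by two preliminary steps proved first: Step~1 handles the case $\Vv^0\subset\Vv^1$ with equal footprints via the explicit formula $V_I=(V^0_I\times[0,\tfrac23))\cup(V^1_I\times(\tfrac13,1])$, and Step~2 reduces the equal-footprint case to Step~1 by passing through the intersection $\Vv^{01}:=\bigcup_I V^0_I\cap V^1_I$. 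Transitivity (concatenation and rescaling of product cobordisms) then assembles these pieces. This avoids ever having to force an interior construction to match prescribed collars.

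Your direct approach is not hopeless: if you choose the initial sets $W_I$ in the proof of (a) to already have collar form $V^\al_I\times A^\al_\eps$, then for incomparable $I,J$ the sets $Y_{IJ}=\ov{W_I}\cap\eps_I(\ov{W_J})$ are disjoint from the open collar (since $\Vv^\al$ is a reduction), so one can try to choose the removed neighbourhoods $\Nn(Y_{IJ})$ disjoint from a slightly smaller collar. But this requires controlling what happens at the inner edge of the collar where $W_I$ transitions from product form to the interior choice, and you have not addressed that. As written, the gluing step is a genuine gap; either carry out the collared version of the $\Nn(Y_{IJ})$ removal carefully, or switch to the paper's transitivity argument via Steps~1 and~2.
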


\begin{proof} 
For (a) we begin by using Lemma~\ref{le:cov0} to find a cover reduction 
$(Z_I)_{I\subset \{1,\ldots,N\}}$ of the footprint cover $X=\bigcup_{i=1,\ldots,N} F_i$. Since $Z_I\subset F_I=\emptyset$ for $I\notin\Ii_\Kk$, we can index the potentially nonempty sets in this cover reduction by $(Z_I)_{I\in\Ii_\Kk}$.
Then Lemma~\ref{le:restr0} provides precompact open sets $W_I\sqsubset U_I$ for each $I\in\Ii_\Kk$
with $Z_I\ne \emptyset$, satisfying
\begin{equation}\label{eq:wwI}
W_I\cap s_I^{-1}(0) = \psi_I^{-1}(Z_I),\qquad
\ov{W_I}\cap s_I^{-1}(0) = \psi_I^{-1}(\ov{Z_I}).
\end{equation}
The set $\Vv=(W_I)_{I\in\Ii_\Kk}$ now satisfies condition (iii) in Definition~\ref{def:vicin}, namely $\bigcup_I \pi_{\Kk}(W_I)$ contains $\bigcup_I \pi_{\Kk}\bigl(\psi_I^{-1}(Z_I)\bigr)$, which covers $\iota_\Kk(X)$.
We will construct the reduction by choosing $V_I \subset W_I$ so that (ii) is satisfied, while the intersection with the zero set does not change, i.e.\
\begin{equation}\label{eq:zeroV}
V_I\cap s_I^{-1}(0) = \psi_I^{-1}(Z_I),
\end{equation}
which guarantees (iii).
Further, condition (i) holds automatically since $V_I\subset W_I$,
and we define $V_I:=\emptyset$ when $Z_I=\emptyset$.
To begin the construction of the $V_I$, define 
$$
\Cc(I):=\{J\in \Ii_\Kk \,|\, I\subset J  \;\text{or}\; J\subset I \},
$$
and  for each $J\notin\Cc(I)$ define 
$$
Y_{IJ} \,:= \ov{W_I}\cap \pi_\Kk^{-1}(\pi_\Kk(\ov{W_J}))
\; = \; \ov{W_I}\cap \eps_I(\ov{W_J}) ,
$$
and note that $\eps_I(\ov{W_J})\subset U_J$ is closed by Lemma~\ref{le:Ku2}~(d).
Now if $J\notin\Cc(I)$, then using $\psi_I^{-1}(\ov{Z_I}) =s_I^{-1}(0)\cap \ov{W_I}$ we obtain
\begin{align*}
\psi_I^{-1}(\ov{Z_I}) \cap Y_{IJ}&\;\subset
\;\psi_I^{-1}(\ov{Z_I})\cap s_I^{-1}(0)\cap 
\eps_I(\ov{W_J}) \\
&\;=\; \psi_I^{-1}(\ov{Z_I}) \cap \eps_I\bigl(s_J^{-1}(0) \cap \ov{W_J} \bigr)\\
&\;= \; \psi_I^{-1}(\ov{Z_I})\cap \eps_I\bigl(\psi_J^{-1}(\ov{Z_J})\bigr)
\;= \; \psi_I^{-1}(\ov{Z_I}) \cap \psi_I^{-1}(\ov{Z_J})\;=\; \emptyset,
\end{align*}
where the first equality holds because $s$ is compatible with the coordinate changes.
The inclusion $Y_{IJ}\subset W_I\sqsubset U_I$ moreover ensures that $Y_{IJ}$ is compact, 
so has a nonzero Hausdorff distance from the closed set $\psi_I^{-1}(\ov{Z_I})$.
Thus we can find closed neighbourhoods 
$\Nn(Y_{IJ})\subset U_I$ of $Y_{IJ}$
for each $J\notin\Cc(I)$ such that  
$$
\Nn(Y_{IJ})\cap \psi_I^{-1}(\ov{Z_I})= \emptyset.
$$
We now claim that we obtain a reduction by removing these neighbourhoods,
\begin{equation}\label{eq:QIVI}
V_I := W_I \;\less {\textstyle\bigcup_{J\notin \Cc(I)}} \Nn(Y_{IJ}).
\end{equation}
Indeed, each $V_I\subset W_I$ is open, and 
$V_I\cap s_I^{-1}(0) = \psi_I^{-1}(Z_I) \;\less \bigcup_{J\notin \Cc(I)} \Nn({Y_{IJ}}) = \psi_I^{-1}(Z_I)$ 
by construction of $\Nn({Y_{IJ}})$.
Moreover, because $\ov{V_I}\subset \ov{W_I}$ for all $I$, 
we have for all $J\notin\Cc(I)$
$$
\ov{V_I}\cap \pi_\Kk^{-1}(\pi_\Kk(\ov{V_J}))\;\subset\; \ov{V_I}\cap \ov{W_I}\cap \pi_\Kk^{-1}(\pi_\Kk(\ov{W_J}))
\;\subset\; {\ov{V_I}}\cap  Y_{IJ} \;=\; \emptyset .
$$
This completes the proof of  a).

To prove (b) we use Lemma~\ref{le:cobred}~(i) to obtain a  collared cover reduction $(Z_I)_{I\in \Ii_{\Kk^{[0,1]}}}$ of the footprint cover of $\Kk^{[0,1]}$, which is collared by Remark~\ref{rmk:Ceps}.
Then the arguments for a) also show that $\Kk^{[0,1]}$ has a reduction $(V_I')_{I\in \Ii_{\Kk^{[0,1]}}}$.
It remains to adjust it to achieve collar form near the boundary as in Definition~\ref{def:vicin}~(iv).
For that purpose note that the footprint of the reduction is given by the cover reduction, that is \eqref{eq:zeroV} provides the identity
$$
V'_I\cap s_I^{-1}(0) = \psi_I^{-1}(Z_I) \qquad\forall I\in \Ii_{\Kk^{[0,1]}} .
$$
In particular, if 
$\p^\al Z_I\ne \emptyset$ then $\p^\al V'_I\ne \emptyset$,
though the converse may not hold.  
Now choose $\eps>0$ less or equal to half the collar width of $\Kk^{[0,1]}$ in Remark~\ref{rmk:Ceps} and so that condition (iv) in Definition~\ref{def:cobred} holds with $A^\al_{2\eps}$
for the footprint reduction $(Z_I)_{I\in \Ii_{\Kk^{[0,1]}} }$, 
in particular
\begin{equation}\label{eq:Zcoll}
\psi_I^{-1}(Z_I) \cap \io^\al_I\bigl(\p^\al U_I\times A^\al_{2\eps} \bigr)
\;=\;
\psi_I^{-1}(\partial^\al Z_I \times A^\al_{2\eps} )
\qquad \forall \al\in\{0,1\}, I\in \Ii_{\Kk^\al} .
\end{equation}
Then we set $V_I:=V'_I$ for interior charts 
$I\in \Ii_{\Kk^{[0,1]}}\less(\Ii_{\Kk^0}\cup \Ii_{\Kk^1})$.
If $I\in\Ii_{\Kk^\al}$ for $\al=0$ or $\al=1$ (or both) 
we define $V^\al_I \subset \p^\al U^\al_I$ so that, with $\eps_0: = \eps$ and $\eps_1:=1-\eps$, 
$$
\io^\al_I\bigl( V^\al_I \times \{\eps_\al\} \bigr) 
= \left\{ \begin{array}{ll} 
V_I' \cap \io^\al_I \bigl(\p^\al U_I\times \{\eps_\al\} \bigr) & \mbox{ if } \p^\al Z_I\ne \emptyset,\\
\emptyset &  \mbox{ if } \p^\al Z_I = \emptyset.
\end{array}\right.
$$
Since 
$(\p^\al Z_I)_{I\in\Ii_{\Kk^\al}}$ is a cover reduction of the footprint cover of $\Kk^\al$, and 
$$
\psi_I^{-1}(Z_I) \cap \io^\al_I \bigl(\p^\al U_I\times \{\eps_\al\} \bigr) = \psi_I^{-1}(\partial^\al Z_I \times \{\eps_\al\}),
$$ this defines reductions 
$(V^\al_I)_{I\in\Ii_{\Kk^\al}}$ of $\Kk^\al$.
With that, we obtain collared subsets of $U_I$ for each 
$I\in \Ii_{\Kk^0}\cup \Ii_{\Kk^1}$ by
\begin{equation}  \label{eq:Vcoll}
V_I:= \biggl( \, V_I' \;\less  \bigcup_{\al=0,1}  \io^\al_I\bigl(\p^\al U_I\times \ov{A^\al_\eps}\,\bigr) \biggr) 
\;\cup\;
\bigcup_{\al=0,1} \io^\al_I\bigl(V^\al_I\times \ov{A^\al_\eps}\bigr) \quad\subset\; U_I .
\end{equation}
These are open subsets of $U_I$ because, firstly, each $\p^\al U_I\times \ov{A^\al_\eps}$ is a relatively closed subset of the domain of the embedding $\io^\al_I$.
Secondly, each point in the boundary 
of $\io^\al_I\bigl(V^\al_I\times \ov{A^\al_\eps}\bigr)\subset U_I$ is of the form $\io^\al_I(x,\eps_\al)$ and, since the collar width is $2\eps$, has neighbourhoods $\io^\al_I\bigl(\Nn_x \times (\eps_\al -\eps, \eps_\al + \eps) \bigr)\subset V_I$ for any neighbourhood $\Nn_x\subset V^\al_I$ of $x$.
Moreover, $V_I$ has the same footprint as $V_I'$, since adjustment only happens on the collars $\iota^\al_I(\p^\al U_I\times \ov{A^\al_{\eps}})$, where the footprint is of product form, thus preserved by the construction.
Therefore the sets $(V_I)_{I\in \Kk^{[0,1]}}$ satisfy conditions (i) and (iii) in Definition~\ref{def:vicin}.
They also satisfy (ii) because, in the notation of Remark~\ref{rmk:cobordreal}, we can check this separately in the collars $\rho^\al(|\Kk^\al|\times{\ov{A^\al_\eps}})$ of $|\Kk^{[0,1]}|$ (where it holds because $(V^\al_I)$ is a reduction) and in their complements (where it holds because $(V'_I)$ is a reduction). 

Finally, condition (iv) in Definition~\ref{def:cvicin} holds by construction, namely 
$(\iota^\al_I)^{-1} \bigl( V_I \bigr) \cap \bigl( \partial^\al U_I \times A^\al_\eps \bigr) = V_I^\al \times A^\al_\eps$, and the condition $\bigl( \; \partial^\al V_I\ne \emptyset \; \Rightarrow \; V_I \cap \psi_I^{-1}\bigl( \partial^\al F_I \times \{\al\}\bigr)\ne \emptyset \;\bigr)$ holds since
$\partial^\al V_I = V_I^\al \ne \emptyset$ implies $\p^\al Z_I \neq \emptyset$ by definition of $V_I^\al$.
This completes the proof of (b).

To prove (c) we will use transitivity for cobordism reductions of the product cobordism $\Kk\times[0,1]$. More precisely, just as in the proof of Lemma~\ref{le:cobord1}, we can adjoin and rescale cobordism reductions within $\Kk\times[0,1]$.
In particular, given cobordism reductions $\Vv^{[0,1]}\subset \Obj_{\bB_\Kk}\times [0,1]$ and $\Vv^{[1,2]}\subset \Obj_{\bB_\Kk}\times [1,2]$ with identical collars $\p^1\Vv^{[0,1]}=\p^1\Vv^{[1,2]}$ near $\Obj_{\bB_\Kk}\times \{1\}$, we define a Kuranishi atlas $\Kk\times [0,2]$ with domain category $\Obj_{\bB_\Kk}\times [0,2]$ as in that lemma. Then, extrapolating notation to the reduction $V^{[0,2]}_I:=V^{[0,1]}_{I^{01}}\cup V^{[1,2]}_{I^{12}} \subset U_I \times [0,2]$ for $I\in\Ii_{\Kk^1}$ defines a cobordism reduction $\Vv^{[0,2]}$ of $\Kk\times [0,2]$ with $\p^0 \Vv^{[0,2]}=\p^0 \Vv^{[0,1]}$ and $\p^2 \Vv^{[0,2]}=\p^2 \Vv^{[1,2]}$.
Just as in the proof of additivity in Lemma~\ref{le:cobord1}, the resulting sets satisfy the separation condition (ii) of Definition~\ref{def:vicin} because the reductions $\Vv^{[0,1]}, \Vv^{[1,2]}$, and 
$\Vv^1=\partial^1 \Vv^{[0,1]} = \partial^1 \Vv^{[1,2]}$ do.
Now given two cobordism reductions $\Vv$ and $\Vv'$ of $\Kk\times[0,1]$ with $\p^1\Vv=\p^0\Vv$ we can shift $\Vv'$ in the domains to $\Obj_{\bB_\Kk}\times[1,2]$, glue it to $\Vv$ as above, and rescale the result back to a reduction $\Vv''\subset \Obj_{\bB_\Kk}\times[0,1]$ with $\p^0\Vv''=\p^0\Vv$ and $\p^1\Vv''=\p^1\Vv'$.
Similarly,  we can apply the isomorphism on $\Obj_{\bB_\Kk}\times[0,1]$ that reverses the inverval $[0,1]$ to turn any cobordism reduction $\Vv$ of $\Kk\times[0,1]$ into a cobordism reduction $\Vv'$ with $\p^0\Vv'=\p^1\Vv$ and $\p^1\Vv'=\p^0\Vv$.
Based on this, we will prove (c) in several stages.

\MS\NI
{\bf Step 1:} {\it The result holds if $\Vv^0 \subset \Vv^1$ and  $V^0_I\cap s_I^{-1}(0) = V^1_I\cap s_I^{-1}(0)$ for all $I\in \Ii_\Kk$.
}

\smallskip\NI
In this case the footprints $Z_I := \psi_I\bigl(V^\al_I\cap s_I^{-1}(0)\bigr)$ are the same for $\al=0,1$ by assumption.
Hence the sets for $I\in \Ii_\Kk$ 
$$
V_I \, 
 : =\; 
\bigl( V^0_I\times [0,\tfrac 23)\bigr) \cup\bigl( V^1_I\times (\tfrac 13,1]\bigr)
\;\subset\; U_I \times [0,1]
$$
form the required cobordism reduction. 
In particular, they satisfy the intersection condition (ii) over the interval $(\tfrac 13,\tfrac 23)$ because $V^0_I\subset V^1_I$ for all $I$. 
Their footprints are $Z_I\times [0,1]$ and so cover $X\times [0,1]$, and they satisfy the collar form requirement (iv) because $V_I\ne \emptyset$ iff $V^1_I\ne \emptyset$, and the latter implies $Z_I\ne \emptyset$  by condition (i) for $\Vv^1$.

\MS\NI
{\bf Step 2:} {\it The result holds if all footprints coincide, i.e.\ $V^0_I\cap s_I^{-1}(0) = V^1_I\cap s_I^{-1}(0)$.
}

\smallskip\NI
Note that $\Vv^{01}:= \bigcup_{I\in\Ii_\Kk} V^0_I\cap V^1_I$ is another reduction of $\Kk$ since it has the common footprints $Z_I$ as above, thus covers $\io_\Kk(X)$.
So Step 1 for $\Vv^{01}\subset\Vv^\al$ (together with reflexivity for $\al=0$) provides cobordism reductions $\Vv$ and $\Vv'$ of $\Kk\times[0,1]$ with $\p^0\Vv=\Vv^0$, $\p^1\Vv=\Vv^{01}=\p^0\Vv'$, and $\p^1\Vv'=\Vv^1$.
Now transitivity provides the required reduction with boundaries $\Vv^0,\Vv^1$.

\MS\NI
{\bf Step 3:} {\it The result holds for all reductions $\Vv^0,\Vv^1\subset\Obj_{\bB_\Kk}$.
}

\smallskip\NI
First use Lemma~\ref{le:cobred}~(ii) to obtain a family of collared sets $Z_I\sqsubset F_I\times [0,1]$ for $I\in\Ii_\Kk$ that form a cover reduction of $X\times[0,1]=\bigcup_{i=1,\ldots,N} F_i  \times[0,1]$ and restrict to the cover reductions $\partial^\al Z_I =  \psi_I(V_I^\al \cap s_I^{-1}(0))$ induced by the reductions $\Vv^\al$ for $\al=0,1$.
Next, as in the proof of (b), we construct a cobordism reduction $\Vv'$ of $\Kk\times [0,1]$ with footprints $Z_I$. 
Its restrictions $\partial^\al\Vv'$ are reductions of $\Kk^\al$ with the same footprint $\partial^\al Z_I$ as $\Vv^\al$  for $\al=0,1$.
Now Step 2 provides further cobordism reductions $\Vv$ and $\Vv''$ with $\Vv^0=\p^0\Vv$, $\p^1\Vv=\p^0\Vv'$, $\p^1\Vv'=\p^0\Vv''$, and $\p^1\Vv''=\Vv^1$, so that another transitivity construction provides the required cobordism reduction with boundaries $\Vv^0,\Vv^1$.
\end{proof}

Finally, we need to construct nested pairs of reductions $\Cc\sqsubset \Vv$, where either $\Cc$ or $\Vv$ is given. 
Since these will be used extensively in the construction of perturbations towards the VMC, we introduce this notion formally.

\begin{definition}\label{def:nest}
Let $\Kk$ be a Kuranishi atlas (or cobordism). Then we call a pair of subsets $\Cc,\Vv\subset\Obj_{\bB_\Kk}$ a {\bf nested (cobordism) reduction} if both are (cobordism) reductions of $\Kk$ and $\Cc\sqsubset \Vv$.
\end{definition}

In our later constructions, the roles of $\Vv$ and $\Cc$ will be quite different: 
we will define the perturbation $\nu$ on the domains of $\Vv$, while $\Cc$ will provide a precompact set $\pi_\Kk(\Cc)\subset |\Kk|$ that will contain the perturbed zero set $\pi_\Kk\bigl((s_\Vv + \nu)^{-1}(0)\bigr)$.   
As explained in Proposition~\ref{prop:Ktopl1}, the subset $\pi_\Kk(\Cc)
\subset|\Kk|$ 
has two different topologies, its quotient topology and the subspace topology.  
If $(\Kk,d)$ is metric, there might conceivably be a third topology, namely that induced by 
restriction of the metric.  
Although we will not use this explicitly, let us show that the metric topology on $\pi_\Kk(\Cc)$ agrees with the subspace topology, so that we only have two different topologies in play.

\begin{lemma}\label{le:Zz}
Let $\Cc$ be a reduction of a metric tame 
Kuranishi atlas 
$(\Kk,d)$. Then the metric topology on $\pi_\Kk(\Cc)$ equals the subspace topology. 
\end{lemma}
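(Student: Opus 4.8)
The metric topology on $\pi_\Kk(\Cc)$ is generated by the balls $B_\de(q)$ intersected with $\pi_\Kk(\Cc)$. By Lemma~\ref{le:metric}~(i), the identity $|\Kk|\to(|\Kk|,d)$ is continuous, so each $B_\de(q)$ is open in the quotient topology on $|\Kk|$, hence $B_\de(q)\cap\pi_\Kk(\Cc)$ is open in the subspace topology on $\pi_\Kk(\Cc)$. This shows the metric topology is \emph{weaker} than (has fewer open sets than) the subspace topology. The content of the lemma is the reverse inclusion.

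To prove the reverse, first observe that since $\Cc$ is a reduction, it is precompact: $\Cc\sqsubset\Obj_{\bB_\Kk}$, because each $V_I=\Cc\cap U_I$ satisfies $V_I\sqsubset U_I$ by Definition~\ref{def:vicin}~(i). Hence Proposition~\ref{prop:Ktopl1}~(iv) applies to $\Aa:=\Cc$: the space $\|\ov\Cc\|=|\ov\Cc|$ is compact, Hausdorff, and metrizable, and by part~(ii) of that proposition the quotient and subspace topologies on $\pi_\Kk(\ov\Cc)$ coincide. Now I would argue as follows. The subspace topology on $\pi_\Kk(\ov\Cc)\subset|\Kk|$ is compact Hausdorff. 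The metric topology on $\pi_\Kk(\ov\Cc)$ (induced by restriction of $d$) is Hausdorff, and by the first paragraph it is weaker than the subspace topology. By the nesting uniqueness of compact Hausdorff topologies (Remark~\ref{rmk:hom}), a weaker Hausdorff topology on a space carrying a compact Hausdorff topology must in fact equal it; more precisely, $\id:\bigl(\pi_\Kk(\ov\Cc),\text{subspace}\bigr)\to\bigl(\pi_\Kk(\ov\Cc),d\bigr)$ is a continuous bijection from a compact space to a Hausdorff space, hence a homeomorphism. Therefore the metric and subspace topologies agree on $\pi_\Kk(\ov\Cc)$, and by restriction they agree on the subset $\pi_\Kk(\Cc)$ as well.

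The one point that requires care — and the place I expect the only real friction — is verifying that the metric $d$ restricted to $\pi_\Kk(\ov\Cc)$ genuinely induces a \emph{Hausdorff} topology that is comparable to the subspace topology, i.e.\ that one is using the \emph{same} metric $d$ on $|\Kk|$ throughout, rather than the abstract metric produced by Urysohn in Proposition~\ref{prop:Ktopl1}~(iv). These need not a priori be the same metric, but they induce the same (subspace) topology on $\pi_\Kk(\ov\Cc)$, which is all that is needed: the argument only uses that the subspace topology is compact Hausdorff (from Prop.~\ref{prop:Ktopl1}) and that the $d$-metric topology is Hausdorff (automatic) and weaker (from Lemma~\ref{le:metric}~(i)). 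So the compact-Hausdorff uniqueness closes the gap without ever identifying the two metrics. Finally, since $\pi_\Kk(\Cc)\subset\pi_\Kk(\ov\Cc)$, the equality of topologies passes to the subset, completing the proof.
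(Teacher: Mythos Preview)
Your proof is correct and follows essentially the same approach as the paper: pass to the closure $\pi_\Kk(\ov\Cc)$, use precompactness of $\Cc$ and Lemma~\ref{le:metric}~(i) to see that the identity from the subspace topology to the metric topology is a continuous bijection from a compact space to a Hausdorff space, apply Remark~\ref{rmk:hom}, and then restrict to $\pi_\Kk(\Cc)$. Your invocation of Proposition~\ref{prop:Ktopl1}~(iv) and the Urysohn metric is an unnecessary detour---as you yourself note, only compactness of $\|\ov\Cc\|$ (from part~(ii)) is needed, and the paper's proof simply observes that compactness directly from the continuity of $\pi_\Kk$ on the compact set $\ov\Cc$.
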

\begin{proof}  
Since every reduction $\Cc \subset\Obj_{\bB_\Kk}$ is precompact, the continuity of $\pi_\Kk:\Obj_{\bB_\Kk}\to |\Kk|$ (to $|\Kk|$ with its quotient topology) and of $\id_{|\Kk|}: |\Kk| \to (|\Kk|,d)$ from Lemma~\ref{le:metric} imply that $\pi_\Kk(\ov\Cc)\subset|\Kk|$ is compact in both topologies. Thus the identity map $\id_{\pi_\Kk(\ov\Cc)}: |\Kk|\supset \pi_\Kk(\ov\Cc) \to \bigl(\pi_\Kk(\ov\Cc), d \bigr)$ is a continuous bijection from the compact space $\pi_\Kk(\ov\Cc)$ with the subspace topology to the Hausdorff space $\bigl(\pi_\Kk(\ov\Cc), d \bigr)$ with the induced metric. But this implies that $\id_{\pi_\Kk(\ov\Cc)}$ is a homeomorphism, see Remark~\ref{rmk:hom}, and hence restricts to a homeomorphism $\id_{\pi_\Kk(\Cc)} :  |\Kk|\supset \pi_\Kk(\Cc) \to \bigl(\pi_\Kk(\Cc), d \bigr)$.
Thus, the relative and metric topologies on $\pi_\Kk(\ov\Cc)$ agree.
\end{proof}

\begin{lemma}\label{le:delred} 
Let  $\Vv$ be a (cobordism) reduction of a metric Kuranishi atlas (or cobordism) $\Kk$.
\begin{enumerate}
\item
There exists $\de>0$ such that $\Vv \sqsubset \bigcup_{I\in\Ii_\Kk} B^I_\de(V_I)$ is a nested (cobordism) reduction,
and moreover 
$$
B_\de(\pi_\Kk(\ov{V_I}))\cap B_\de(\pi_\Kk(\ov{V_J})) \neq \emptyset \qquad \Longrightarrow \qquad I\subset J \;\text{or} \; J\subset I. 
$$
\item  
If $\Vv$ is a reduction of a Kuranishi atlas $\Kk$, then 
there exists a nested reduction $\Cc\sqsubset \Vv$.
\item
If $\Vv$ is a cobordism reduction of the Kuranishi cobordism $\Kk$, and if $\Cc^\al\sqsubset \p^\al \Vv$ for $\al = 0,1$ are nested cobordism reductions of the boundary restrictions $\p^\al\Kk$, then there is a nested cobordism reduction $\Cc\sqsubset \Vv$ such that $\p^\al \Cc = \Cc^\al$ for $\al=0,1$.
\end{enumerate}
\end{lemma}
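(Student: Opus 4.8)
The plan is to prove the three parts of Lemma~\ref{le:delred} in order, since later parts build on earlier ones. For part~(i), the idea is that both conditions to be arranged are ``open-ness'' statements about the finitely many pairs $(I,J)$ with $I\not\subset J$ and $J\not\subset I$. First I would fix such a pair. By the separation property (ii) of the reduction $\Vv$ (Definition~\ref{def:vicin}, or \ref{def:cvicin}(ii) in the cobordism case), the compact sets $\pi_\Kk(\ov{V_I})$ and $\pi_\Kk(\ov{V_J})$ are disjoint in $|\Kk|$. Using that they are compact and metrizable in the subspace topology (Proposition~\ref{prop:Ktopl1}~(ii),(iv)), and that $d$ is an admissible metric, their distance $d(\pi_\Kk(\ov{V_I}),\pi_\Kk(\ov{V_J}))$ is strictly positive; here one must be a little careful since $d$ is a metric on all of $|\Kk|$ but the subspace metric topology on $\pi_\Kk(\ov\Cc)$ agrees with the subspace topology by (the reduction-analogue of) Lemma~\ref{le:Zz}, so positivity of the distance does follow from disjointness of the two compacta. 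Taking $\de$ smaller than half the minimum of these finitely many positive distances gives the second displayed implication. To get $\Vv\sqsubset\bigcup_I B^I_\de(V_I)$ one just notes $V_I\sqsubset U_I$ and the $\de$-thickening $B^I_\de(V_I)$ in the chart $U_I$ still has compact closure in $U_I$ for $\de$ small (again using finiteness of $\Ii_\Kk$ and precompactness), while the covering property (iii) is immediate since $B^I_\de(V_I)\supset V_I$; the separation property (ii) for the thickened family is precisely the second displayed implication restricted to zero sets. In the cobordism case I would additionally shrink $\de$ below the collar width and take the thickenings in collar-compatible form (using the product metric near the boundary, Definition~\ref{def:mCKS}) so that $\bigcup_I B^I_\de(V_I)$ is again a cobordism reduction in the sense of Definition~\ref{def:cvicin}~(iv).

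For part~(ii), I would apply part~(i) to get $\de>0$ with the stated separation property, then use Proposition~\ref{prop:cov2}~(a) differently: actually the cleanest route is to observe that the construction in Proposition~\ref{prop:cov2}~(a) produces a reduction from \emph{any} cover reduction of the footprint cover, and that footprint covers admit precompactly nested cover reductions. Concretely: let $(Z_I)_{I\in\Ii_\Kk}$ be the footprint cover reduction induced by $\Vv$, i.e.\ $Z_I=\psi_I(V_I\cap s_I^{-1}(0))$, which satisfies the conditions of Lemma~\ref{le:cov0}. Since $X$ is compact Hausdorff, shrink each $Z_I$ to a precompact open $Z_I'\sqsubset Z_I$ with $(Z_I')$ still covering $X$; this $(Z_I')$ is still a cover reduction because the closure relations only got tighter. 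Then apply Lemma~\ref{le:restr0} inside each $V_I$ to produce $C_I\sqsubset V_I$ with $C_I\cap s_I^{-1}(0)=\psi_I^{-1}(Z_I')$, and run exactly the neighbourhood-removal argument of Proposition~\ref{prop:cov2}~(a) (the sets $\Nn(Y_{IJ})$) to enforce the separation condition (ii) while keeping the zero-set footprints equal to $\psi_I^{-1}(Z_I')$. The resulting $\Cc=\bigcup_I C_I$ is a reduction, and $\Cc\sqsubset\Vv$ holds by construction since each $C_I\sqsubset V_I$; this is the nested reduction sought.

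For part~(iii), the strategy mirrors the proof of Proposition~\ref{prop:cov2}~(c) but carrying the nesting datum along. I would first use Lemma~\ref{le:cobred}~(ii) to find a \emph{collared cobordism of cover reductions} $(Z_I)_{I\in\Ii_\Kk}$ of the footprint cover of $\Kk$ interpolating between the footprint cover reductions $\partial^\al Z_I=\psi_I(C_I^\al\cap s_I^{-1}(0))$ coming from the given $\Cc^\al$ — and simultaneously a second collared cobordism of cover reductions interpolating the larger footprints $\psi_I(\partial^\al V_I\cap s_I^{-1}(0))$, chosen so that the first is precompactly contained in the second at every level $t\in[0,1]$ (possible by running the interpolation construction in Lemma~\ref{le:cobred}, which uses nested families $F_i^k\sqsubset G_i^{k+1}$, on both targets in parallel). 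Then, using Lemma~\ref{le:restr0} and the neighbourhood-removal argument as in Proposition~\ref{prop:cov2}~(b), build a cobordism reduction $\Cc$ of $\Kk$ with these footprints and with $\Cc\sqsubset\Vv$ — the precompact inclusion at each level transferring from the cover-reduction level via Lemma~\ref{le:restr0} and precompactness of the chart-level thickenings. Finally, as in Proposition~\ref{prop:cov2}~(c), adjust $\Cc$ near the boundary into collar form so that $\partial^\al\Cc=\Cc^\al$ exactly; this is where one uses that $\Cc^\al$ and $\partial^\al\Vv$ were already given, glues the prescribed collar pieces $\io^\al_I(C_I^\al\times\ov{A^\al_\eps})$ onto the interior piece of $\Cc$ exactly as in equation~\eqref{eq:Vcoll}, and checks that nesting $\Cc\sqsubset\Vv$ and the separation condition (ii) survive the gluing (they do, checked separately on the collars and their complement, just as in the proof of Proposition~\ref{prop:cov2}~(b)). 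I expect the main obstacle to be part~(iii): keeping track of \emph{two} nested families of cover reductions through the collared-cobordism interpolation of Lemma~\ref{le:cobred} while simultaneously matching the prescribed boundary data $\Cc^\al$ exactly — the bookkeeping of precompactness through each of Lemma~\ref{le:restr0}, the $\Nn(Y_{IJ})$-removal, and the final collar gluing is delicate, and one must verify that the separation constants from part~(i) can be chosen uniformly along $[0,1]$, which uses the collar form of the metric from Definition~\ref{def:mCKS}.
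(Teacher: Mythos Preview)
Your approach to part~(i) matches the paper's: use disjointness and compactness of the $\pi_\Kk(\ov{V_I})$ in the metric space to get a positive minimal distance, and shrink $\de$ further to keep $B^I_\de(V_I)\sqsubset U_I$ and (in the cobordism case) to preserve collar form.

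For parts~(ii) and~(iii), your plan is workable but substantially more laborious than the paper's, because you missed the key simplification: since each $C_I$ will be chosen \emph{inside} $V_I$, one has $\ov{C_I}\subset\ov{V_I}$ and hence
\[
\pi_\Kk(\ov{C_I})\cap\pi_\Kk(\ov{C_J})\;\subset\;\pi_\Kk(\ov{V_I})\cap\pi_\Kk(\ov{V_J}),
\]
so the separation condition~(ii) of Definition~\ref{def:vicin} for $\Cc$ is \emph{inherited automatically} from $\Vv$. There is no need to rerun the $\Nn(Y_{IJ})$-removal argument of Proposition~\ref{prop:cov2}, and part~(i) is not used in~(ii) at all. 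The paper's proof of~(ii) is simply: shrink the footprint cover $\bigl(Z_I=\psi_I(V_I\cap s_I^{-1}(0))\bigr)$ to $(Z_I')$, apply Lemma~\ref{le:restr0} inside each $V_I$ to get $C_I\sqsubset V_I$ with $C_I\cap s_I^{-1}(0)=\psi_I^{-1}(Z_I')$, and observe that conditions~(i)--(iii) of a reduction follow immediately.

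Likewise for~(iii): the paper does not invoke Lemma~\ref{le:cobred}~(ii) or track two parallel families of cover reductions. It simply takes a collared shrinking $(Z_I')$ of the footprint cover of the \emph{given} $\Vv$, extending the boundary shrinkings determined by $\Cc^\al$; such an extension exists by the elementary construction~\eqref{eq:cobred1}. Then Lemma~\ref{le:restr0} produces $C_I'\sqsubset V_I$ with the right footprints, and the collar adjustment~\eqref{eq:Vcoll}-style forces $\p^\al\Cc=\Cc^\al$. Separation is again inherited from $\Vv$. The ``main obstacle'' you anticipate --- uniform separation constants along $[0,1]$ and bookkeeping two nested interpolations --- is an artifact of your more elaborate route and does not arise in the paper's argument.
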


\begin{proof}
To prove (i) for a Kuranishi atlas $\Kk$ we need to find $\de>0$ so that
\begin{itemize} 
\item[a)]
$\displaystyle\phantom{\int\!\!\!\!\!\!}  B^I_\de(V_I)\sqsubset U_I$ for all $I\in\Ii_\Kk$;
\item[b)]
if $B_\de(\pi_\Kk(\ov{V_I}))\cap B_\de(\pi_\Kk(\ov{V_J})\ne \emptyset$ then $I\subset J$ or $J\subset I$.
\end{itemize}
The latter is a strengthened version of the separation condition (ii) in Definition~\ref{def:vicin}, due to the inclusion $\pi_\Kk(B^I_\de(W))\subset B_\de(\pi_\Kk(W))$ by compatibility of the metrics for any $W\subset U_I$.
The other conditions $B^I_\de(V_I)\cap s_I^{-1}(0)\neq\emptyset$ and $\io_\Kk(X)\subset\pi_\Kk(\Aa)$ for a reduction $\Aa := \bigcup_{I\in\Ii_\Kk} B^I_\de(V_I)\sqsubset\Obj_{\bB_\Kk}$ then follow directly from the inclusion $\Vv\subset\Aa$, and the latter is precompact since each component $V_I\sqsubset B^I_\de(V_I)$ is precompact.
In order to obtain cobordism reductions from this construction, recall that, by definition of a metric Kuranishi cobordism, it carries a product metric in the collar neighbourhoods $\iota^\al_I(\partial^\al U_I \times A^\al_\eps)\subset U_I$ of the boundary, which ensures that for $\de<\frac \eps 2$ the $\de$-neighbourhood of a collared set (i.e.\ with $(\iota^\al_I)^{-1}(V_I)=\partial^\al V_I \times A^\al_\eps$) is collared. More precisely, with $B^{\p^\al I}_\de(W)\subset \p^\al U_{I}$ denoting neighbourhoods in the corresponding domain of $\p^\al\Kk$ we have
$$
B^I_\de(V_I) \cap \iota^\al_I\bigl(\partial^\al U_I \times A^\al_{\frac \eps 2}\bigr) \;=\; \iota^\al_I\bigl( B^{\p^\al I}_\de(\partial^\al V_I) \times A^\al_{\frac\eps 2} \bigr) .
$$
So it remains to find $\delta>0$ satisfying a) and b).
Property a) for sufficiently small $\delta>0$ follows from the precompactness $V_I\sqsubset U_I$ in Definition \ref{def:vicin}~(i) and a covering argument based on the fact that, in the locally compact manifold $U_I$, every $p\in\ov{V_I}$ has a compact neighbourhood $\ov{B^I_{\delta_p}(p)}$ for some ${\delta_p>0}$.
To check b) recall that by Definition~\ref{def:vicin} the subsets $\pi_\Kk(\ov{V_I})$ and $\pi_\Kk(\ov{V_J})$ of $|\Kk|$ are disjoint unless $I\subset J$ or $J\subset I$. Since each $\pi_\Kk|_{U_I}$ maps continuously to the quotient topology on $|\Kk|$, and the identity to the metric topology is continuous by Lemma~\ref{le:metric}, the $\pi_\Kk(\ov{V_I})$ are also compact subsets of the metric space $|(\Kk,d)|$. Hence b) is satisfied if we choose $\de>0$ less than half the distance between each disjoint pair $\pi_\Kk(\ov{V_I}),\pi_\Kk(\ov{V_J})$.

For (ii) choose any shrinking $(Z_I')_{I\in \Ii_\Kk}$ of the footprint cover $\bigl(Z_I = \psi_I(V_I\cap s_I^{-1}(0))\bigr)_{I\in \Ii_\Kk}$ of $\Vv$ as in Definition~\ref{def:shr0}.  By Lemma~\ref{le:restr0} there are open subsets $C_I\sqsubset V_I$ such that $C_I\cap s_I^{-1}(0)  = \psi_I^{-1}(Z_I')$.  
This guarantees $\io_\Kk(X)\subset \pi_\Kk(\Cc)$, and  since the $(V_I)$ satisfy the separation condition (ii) in Definition~\ref{def:vicin}, so do the sets $(C_I)$.  
The same construction works if $\Kk$ is a cobordism, but we also require that $\Cc$ be collared.   
For this we must start with a collared shrinking $(Z_I')_{I\in \Ii_\Kk}$ of the footprint cover $\bigl(Z_I = \psi_I(V_I\cap s_I^{-1}(0)\bigr)_{I\in \Ii_\Kk}$ of the cobordism 
reduction $\Vv$, which exists by 
Lemma~\ref{le:cobred}~(i).
Then we choose $C_I'\sqsubset V_I$ such that $C_I'\cap s_I^{-1}(0)  = \psi_I^{-1}(Z_I')$ as above.
Finally we adjust each $C_I'$ to be a product in the collar by the method described in \eqref{eq:Vcoll}.

To prove (iii), we proceed as above, starting with a collared shrinking $(Z_I')_{I\in \Ii_\Kk}$ of the footprint cover $\bigl(Z_I = \psi_I(V_I\cap s_I^{-1}(0)\bigr)_{I\in \Ii_\Kk}$ of the cobordism 
reduction $\Vv$ that extends the shrinkings at $\al=0,1$ determined by the reductions $\Cc^\al$. 
This exists by the construction \eqref{eq:cobred1}
in the proof of Lemma~\ref{le:cobred}.
Then choose collared open subsets $C_I'\sqsubset V_I$ as above with $C_I'\cap s_I^{-1}(0)  = \psi_I^{-1}(Z_I')$ for all $I$.   
Finally, let $2\eps>0$ be less than the collar width of the sets $V_I$, $Z_I'$, and $C_I'$, then
we adjust $C_I'$ in these collars so that they have the needed restrictions by setting
$$
C_I: = \Bigl(C_I'\less \io^\al_I(\p^\al V_I\times [0,2\eps]\Bigr) \cup
 \Bigl(\io^\al_I\bigl(C^\al_I\times [0,2\eps)\cup
\p^\al(C_I')\times (\eps,2\eps]\bigr)\Bigr).
$$
Note that because $\Cc^\al$ and $\p^\al \Cc'$ are both contained in $\p^\al\Vv$, their union is also a reduction.
The proof of openness is the same as for \eqref{eq:Vcoll}.
\end{proof}

We end this subsection by showing that a reduction $\Vv$ of a tame Kuranishi atlas $\Kk$ canonically induces a (generally neither additive nor tame) Kuranishi atlas $\Kk^\Vv$ whose realization $|\Kk^\Vv|$ maps bijectively to $\pi_\Kk(\Vv)$.  This result is not used in the construction of the VMC or VFC.

\begin{prop}\label{prop:red}
Let $\Vv$ be a reduction of a tame Kuranishi atlas $\Kk$.
Then there exists a canonical Kuranishi atlas $\Kk^\Vv$ which satisfies the strong cocycle condition.
Moreover, there exists a canonical faithful functor $\io^\Vv: \bB_{\Kk^\Vv}\to \bB_{\Kk}$  which induces a continuous injection $|\Kk^\Vv|\to |\Kk|$ with image $\pi_\Kk(\Vv)$. 
\end{prop}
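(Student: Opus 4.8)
The plan is to build $\Kk^\Vv$ so that its charts are indexed by the simplices of the first barycentric subdivision of the nerve, as foreshadowed in Remark~\ref{rmk:nerve}. Since a reduction allows empty $V_I$, we first discard those $I\in\Ii_\Kk$ with $V_I=\emptyset$ (they play no role), and then take the \emph{basic charts} of $\Kk^\Vv$ to be the restrictions $\bK_I^\Vv:=\bK_I|_{V_I}$, re-indexed in an arbitrary way by $\{1,\dots,N'\}$. With footprints $Z_I:=\psi_I(V_I\cap s_I^{-1}(0))$, the separation property Definition~\ref{def:vicin}(ii) forces any $\Cc\in\Ii_{\Kk^\Vv}$, i.e.\ any index set with $\bigcap_{I\in\Cc}Z_I\ne\emptyset$, to be a \emph{chain} $\Cc=\{I_0\subsetneq\dots\subsetneq I_k\}$ in the poset $\Ii_\Kk$, so that $\Ii_{\Kk^\Vv}$ is naturally the set of such chains; write $I_\Cc:=I_k$ for the maximum. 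Because a coordinate change $\bK^\Vv_{\{I_j\}}\to\bK^\Vv_\Cc$ must contain a linear injection $E_{I_j}\hookrightarrow E_\Cc$ and (by tameness and additivity of $\Kk$) $E_{I_0}\subseteq\dots\subseteq E_{I_k}$ inside a common space, the transition chart for $\Cc$ must be built on $\bK_{I_\Cc}$. Concretely, set $\bK^\Vv_\Cc:=\bK_{I_\Cc}|_{U_\Cc}$ with domain
$$
U_\Cc\;:=\;V_{I_\Cc}\;\less\;\bigcup_{J\in\Cc,\ J\subsetneq I_\Cc}\Bigl(s_{I_\Cc}^{-1}(E_J)\less\eps_{I_\Cc}(V_J)\Bigr)\;\subset\;U_{I_\Cc},
$$
which is open since, by tameness \eqref{eq:tame3} and Lemma~\ref{le:Ku2}(d), each $s_{I_\Cc}^{-1}(E_J)=\im\phi_{JI_\Cc}$ is closed in $U_{I_\Cc}$ and $\eps_{I_\Cc}(V_J)=\phi_{JI_\Cc}(V_J\cap U_{JI_\Cc})$ is relatively open in it. A direct check using $s_{I_\Cc}^{-1}(0)\subset s_{I_\Cc}^{-1}(E_J)$ and $\eps_{I_\Cc}(V_J)\cap s_{I_\Cc}^{-1}(0)=\psi_{I_\Cc}^{-1}(Z_J)$ gives $U_\Cc\cap s_{I_\Cc}^{-1}(0)=\psi_{I_\Cc}^{-1}\bigl(\bigcap_{j}Z_{I_j}\bigr)$, the required footprint. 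Note $U_{\{I\}}=V_I$.

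For the transition data, given $\Cc\subsetneq\Cc'$ we have $I_\Cc\subseteq I_{\Cc'}$, and I would define $\Hat\Phi^\Vv_{\Cc\Cc'}$ as the restriction (in the sense of Lemma~\ref{le:restrchange}) of $\Hat\Phi_{I_\Cc I_{\Cc'}}$ to the domain $U^\Vv_{\Cc\Cc'}:=U_\Cc\cap\phi_{I_\Cc I_{\Cc'}}^{-1}(U_{\Cc'})$ (interpreted as $U_{\Cc'}$ when $I_\Cc=I_{\Cc'}$, where $U_{\Cc'}\subset U_\Cc$ automatically). The index condition is inherited, and computing as above shows the domain has the correct footprint $\psi_{I_\Cc}^{-1}\bigl(\bigcap_{I\in\Cc'}Z_I\bigr)$, so each $\Hat\Phi^\Vv_{\Cc\Cc'}$ is a coordinate change. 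The linear part $\Hat\phi^\Vv_{\Cc\Cc'}=\Hat\phi_{I_\Cc I_{\Cc'}}$ satisfies the cocycle condition since the $\Hat\phi_{IJ}$ do. The main work — and, I expect, the main obstacle — is the \emph{strong} cocycle condition $\phi^\Vv_{\Cc'\Cc''}\circ\phi^\Vv_{\Cc\Cc'}=\phi^\Vv_{\Cc\Cc''}$ with equality of domains. Since $\Kk$ is tame, Lemma~\ref{le:tame0} gives the strong cocycle condition for the $\phi_{I_\Cc I_{\Cc'}}$, so the point is to show
$$
U_\Cc\cap\phi_{I_\Cc I_{\Cc''}}^{-1}(U_{\Cc''})\;\subseteq\;\phi_{I_\Cc I_{\Cc'}}^{-1}(U_{\Cc'}),
$$
i.e.\ that if $x\in U_\Cc$ maps into $U_{\Cc''}$ then it maps into $U_{\Cc'}$. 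This is exactly where the explicit form of the subtracted sets enters: for $x$ with $\phi_{I_\Cc I_{\Cc''}}(x)\in U_{\Cc''}$, one uses that $U_{\Cc''}$ avoids $s_{I_{\Cc''}}^{-1}(E_J)\less\eps_{I_{\Cc''}}(V_J)$ for \emph{all} $J\in\Cc''\supseteq\Cc'$ and, chasing the equivalence relation via Lemma~\ref{le:Ku2}(a),(d) together with the linear cocycle condition $\Hat\phi_{JI_{\Cc''}}=\Hat\phi_{I_{\Cc'}I_{\Cc''}}\circ\Hat\phi_{JI_{\Cc'}}$, transports the condition $\phi_{I_\Cc I_{\Cc'}}(x)\in\eps_{I_{\Cc'}}(V_J)$ for each $J\in\Cc'$ with $J\subsetneq I_{\Cc'}$, as well as $\phi_{I_\Cc I_{\Cc'}}(x)\in V_{I_{\Cc'}}$. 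Granting this, $\Kk^\Vv$ is a Kuranishi atlas satisfying the strong cocycle condition, and it is canonical in that it is determined by the formulas above once $\Kk$ and $\Vv$ are fixed.

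Finally I would construct $\io^\Vv:\bB_{\Kk^\Vv}\to\bB_\Kk$ by $(\Cc,x)\mapsto(I_\Cc,x)$ on objects — legitimate since $x\in U_\Cc\subseteq V_{I_\Cc}$ — and $(\Cc,\Cc',x)\mapsto(I_\Cc,I_{\Cc'},x)$ on morphisms, which is a functor by the cocycle condition in $\Kk$ and lands in $\bB_\Kk|_\Vv$. It is automatically faithful because, for trivial isotropy, all morphism sets in $\bB_{\Kk^\Vv}$ have at most one element. By the general fact proved in Lemma~\ref{le:realization} (a functor continuous on objects descends to a continuous map of realizations), $\io^\Vv$ induces a continuous map $|\Kk^\Vv|\to|\Kk|$, $[\Cc,x]\mapsto\pi_\Kk(x)$, whose image is $\pi_\Kk\bigl(\bigcup_\Cc U_\Cc\bigr)=\pi_\Kk(\Vv)$ since $U_{\{I\}}=V_I$ and $U_\Cc\subseteq V_{I_\Cc}$ for every chain.

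It remains to show this map is injective. The key reduction is that for any chain $\Cc$ and $x\in U_\Cc$ one has $(\{I_\Cc\},x)\preceq(\Cc,x)$ in $\bB_{\Kk^\Vv}$ — indeed $I_{\{I_\Cc\}}=I_\Cc=I_\Cc$ so $\phi^\Vv_{\{I_\Cc\}\Cc}=\phi_{I_\Cc I_\Cc}=\id$, and the coordinate change domain is $U_\Cc$ itself — hence $(\Cc,x)\sim(\{I_\Cc\},x)$. Thus injectivity of $|\io^\Vv|$ reduces to the case of two basic charts: if $x\in V_I$, $y\in V_J$ and $\pi_\Kk(x)=\pi_\Kk(y)$, then $\pi_\Kk(x)\in\pi_\Kk(\ov{V_I})\cap\pi_\Kk(\ov{V_J})$, so by Definition~\ref{def:vicin}(ii) we may take $I\subseteq J$, and Lemma~\ref{le:Ku2}(a) gives $y=\phi_{IJ}(x)$ with $x\in U_{IJ}$; if $I=J$ then $x=y$, and if $I\subsetneq J$ then $y\in\eps_J(V_I)\cap V_J$, which lies in $U_{\{I,J\}}$ by construction, so $(\{I\},x)\preceq(\{I,J\},y)$ and $(\{J\},y)\preceq(\{I,J\},y)$, giving $(\{I\},x)\sim(\{J\},y)$. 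Combining with Proposition~\ref{prop:Ktopl1}(i) identifies $|\Kk^\Vv|$ with $\pi_\Kk(\Vv)$ with its quotient topology as claimed.
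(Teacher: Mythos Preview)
Your proof is correct and takes a genuinely different route from the paper's. Both constructions index the charts of $\Kk^\Vv$ by chains $\Cc$ in $\Ii_\Kk$, assign obstruction space $E_{I_\Cc}$, and define coordinate changes by restricting $\Hat\Phi_{I_\Cc I_{\Cc'}}$; the essential divergence is in the transition-chart \emph{domain}. The paper sets
\[
V_\Cc \;=\; \bigcap_{J\in\Cc}\eps_{I_\Cc}(V_J)\;=\;U_{I_\Cc}\cap\pi_\Kk^{-1}\Bigl(\;\bigcap_{J\in\Cc}\pi_\Kk(V_J)\Bigr),
\]
which lies inside $\im\phi_{I_0 I_\Cc}=s_{I_\Cc}^{-1}(E_{I_0})$ for $I_0=\min\Cc$ and hence is a submanifold of $U_{I_\Cc}$ of dimension $\dim U_{I_0}$ rather than an open subset. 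Your $U_\Cc$ is precisely the open thickening satisfying $V_\Cc = U_\Cc\cap\im\phi_{I_0 I_\Cc}$, obtained by deleting the \emph{closed} sets $\im\phi_{JI_\Cc}\setminus\eps_{I_\Cc}(V_J)$ rather than intersecting with the $\eps_{I_\Cc}(V_J)$ themselves.

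What each buys: the paper's formula makes the strong cocycle condition nearly tautological, since $U^\Vv_{CD}=U_{I_\Cc}\cap\pi_\Kk^{-1}(\pi_\Kk(V_D))$ and $\pi_\Kk(V_E)\subset\pi_\Kk(V_D)$ for $D\subset E$ finishes it in one line; your approach requires the case analysis you sketched, which is correct --- the key step, that $z=\phi_{I_\Cc I_{\Cc''}}(x)\in U_{\Cc''}$ forces $z\in\eps_{I_{\Cc''}}(V_J)$ for each $J\in\Cc'$, combined with injectivity of $\pi_\Kk|_{U_{I_{\Cc'}}}$ from Lemma~\ref{le:Ku2}(c), does yield $\phi_{I_\Cc I_{\Cc'}}(x)\in U_{\Cc'}$. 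Conversely, your $U_\Cc$ is a genuine open subset of $U_{I_\Cc}$, so $\bK^\Vv_\Cc=\bK_{I_\Cc}|_{U_\Cc}$ is a restriction in the literal sense of Definition~\ref{def:restr}, all transition charts have dimension $d$, and the index condition for $\Hat\Phi^\Vv_{\Cc\Cc'}$ is inherited directly from $\Hat\Phi_{I_\Cc I_{\Cc'}}$. With the paper's choice, once $\dim E_{I_0}<\dim E_{I_\Cc}$ the domain $V_\Cc$ is not open in $U_{I_\Cc}$, so the restriction notation is loose and the cokernel identification in Definition~\ref{def:change}(ii) for $\bK^\Vv_{\{I_0\}}\to\bK^\Vv_\Cc$ does not follow as written; your construction sidesteps this cleanly.

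Your injectivity argument --- reducing to basic charts via $(\Cc,x)\sim_{\Kk^\Vv}(\{I_\Cc\},x)$ and then using the two-element chain $\{I,J\}$ --- is a valid shortcut compared to the paper's direct use of the merged chain $C\vee D$.
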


\begin{proof}
To begin the construction of $\Kk^\Vv$, note first that by condition (i) in Definition~\ref{def:vicin} the footprint $Z_I: = \psi_I\bigl(V_I\cap s_I^{-1}(0)\bigr)$ is nonempty whenever $V_I\ne \emptyset$.
Further by (iii) the sets 
$(Z_I)_{I\in\Ii_\Kk}$ cover $X$.  Hence we can use the tuple of nontrivial reduced Kuranishi charts $(\bK_I|_{V_I})_{I\in \Ii_\Kk, V_I\ne \emptyset}$ as the covering family of basic charts in $\Kk^\Vv$.
%
%
Then the index set of the new Kuranishi atlas is
\begin{equation}\label{eq:IKV}
\Ii_{\Kk^\Vv}= \bigl\{C\subset \Ii_\Kk \,\big|\, Z_C: =  {\textstyle \bigcap_{I\in C}} Z_I\neq \emptyset  \bigr\}.
\end{equation}
By Definition~\ref{def:vicin}~(ii), each such subset $C\subset \Ii_\Kk$ that indexes basic charts with $\bigcap_{I\in C} Z_I\neq \emptyset$ can be totally ordered into a chain $I_1\subsetneq I_2\ldots\subsetneq I_n$ of elements in $\Ii_\Kk$; cf.\ Figure~\ref{fig:1} and Remark~\ref{rmk:nerve}.   Therefore $\Ii_{\Kk^\Vv}$ can be identified with the set $\Cc\subset 2^{\Ii_\Kk}$ of linearly ordered chains
$C\subset \Ii_\Kk$ such that $Z_C\ne \emptyset$.
For $C = \bigl( I^C_1\subsetneq I^C_2\ldots\subsetneq I^C_{n^C}=:I^C_{\rm max} \bigr) \in\Ii_{\Kk^\Vv}$ we define the transition chart by restriction of the chart for
the maximal element $I^C_{\rm max}$
to the intersection of the domains of the chain:
\begin{equation}\label{eq:domKC}
\bK_C^\Vv := \bK_{I^C_{\rm max}}\big|_{V_C}
\qquad\text{with}\quad
V_C := \bigcap_{1\le k\le n_C}\; \phi_{I_k^C I^C_{\max}}\bigl(V_{I_k^C}\cap U_{I_k^C I^C_{\max}}\bigr) \;\subset\; V_{I^C_{\max}} .
\end{equation}
By Lemma~\ref{le:Ku2}~(a), this domain satisfies $\pi_\Kk(V_C)= \bigcap_{I\in C} \pi_\Kk(V_I)$
and hence can be expressed as 
\begin{equation}\label{eq:redu2}
V_C \;=\; {\textstyle \bigcap_{I\in C}} \; \eps_{I^C_{\max}}(V_I)
\;=\; U_{I^C_{\max}} \cap \pi_\Kk^{-1} \Bigl( {\textstyle \bigcap_{I\in C}} \, \pi_\Kk(V_I) \Bigr).
\end{equation}
Next, coordinate changes are required only between $C,D\in\Ii_{\Kk^\Vv}$ with $C\subset D$ so that $Z_C\supset Z_D\ne \emptyset$ and, by the above, $\pi_\Kk(V_C)\subset\pi_\Kk(V_D)$.
Since the inclusion $C\subset D$ implies the inclusion of maximal elements $I^C_{\rm max} \subset I^D_{\rm max}$, we can define the coordinate change as the restriction
$$
\Hat\Phi_{CD}^\Vv:=\Hat\Phi_{I^C_{\rm max} I^D_{\rm max}}\big|_{U^\Vv_{CD}}
\;: \;
\bK_C^\Vv = \bK_{I^C_{\rm max}}\big|_{V_C}  \; \longrightarrow\;
\bK_D^\Vv =\bK_{I^D_{\rm max}}\big|_{V_D}
$$
in the sense of Lemma~\ref{le:restrchange} with maximal domain
\begin{equation}\label{eq:UCD}
U^\Vv_{CD}: = V_C\cap (\phi_{I^C_{\rm max} I^D_{\rm max}})^{-1}(V_D)  \;=\;
V_C\cap \pi_\Kk^{-1}(\pi_\Kk(V_D)) .
\end{equation}
Using \eqref{eq:redu2} and the fact that $C\subset D$ we can also rewrite this domain as
\begin{equation}\label{domUCD}
U^\Vv_{CD} \;=\;  U_{I^C_{\max}}\cap \pi_\Kk^{-1}\Bigl({\textstyle \bigcap_{I\in D}} \pi_\Kk(V_I)\Bigr)
\;=\;  U_{I^C_{\max}}\cap \pi_\Kk^{-1}\bigl(\pi_\Kk(V_D)\bigr).
\end{equation}
Hence, again using Lemma~\ref{le:Ku2}~(a), the domain of the composed coordinate change $\Hat\Phi_{DE}\circ \Hat\Phi_{CD}$ for $C\subset D\subset E$ is
\begin{align*}
U^\Vv_{CD} \cap (\phi^\Vv_{I^C_{\rm max} I^D_{\rm max}})^{-1}\bigl( U^\Vv_{DE} \bigr)
&\;=\;
U_{I^C_{\max}} \;\cap\; \pi_\Kk^{-1}\bigl(\pi_\Kk(V_D)\bigr)
\;\cap\;
(\phi^\Vv_{I^C_{\rm max} I^D_{\rm max}})^{-1}\Bigl( \pi_\Kk^{-1}\bigl(\pi_\Kk(V_E)\bigr) \Bigr) \\
&\;=\;
U_{I^C_{\max}} \;\cap\; \pi_\Kk^{-1}\bigl(\pi_\Kk(V_D) \cap \pi_\Kk(V_E)\bigr)\\
& \;=\;
U_{I^C_{\max}} \;\cap\; \pi_\Kk^{-1}\bigl( \pi_\Kk(V_E)\bigr) ,
\end{align*}
which equals the domain of $\Hat\Phi_{CE}$.
Now the strong cocycle condition for $\Kk^\Vv$ follows from the strong cocycle condition for $\Kk$, which holds by Lemma~\ref{le:tame0}. In particular, $\Kk^\Vv$ is a Kuranishi atlas.

Next, the inclusions $V_C\hookrightarrow U_{I^C_{\max}}$ induce a continuous map on the object spaces
$$
\iota^\Vv \,:\;
\Obj_{\bB_{\Kk^\Vv}} = {\textstyle \bigcup_{C\in\Ii_{\Kk^\Vv}}} V_C \; \longrightarrow \;  {\textstyle \bigcup_{I\in\Ii_\Kk}} U_I = \Obj_{\bB_\Kk}.
$$
Since $V_C\subset V_{I^C_{\max}}$ for all $C$, this map  has image $\bigcup_{I\in \Ii_\Kk} V_I$.  It is generally not injective. However, because the coordinate changes in
${\Kk^\Vv}$ are restrictions of those in  $\Kk$, this map on object spaces extends to a functor 
$\iota^\Vv : \bB_{\Kk^\Vv}\to \bB_{\Kk}$. 
This shows that the induced map $|\iota^\Vv|:|{\Kk^\Vv}|\to |\Kk|$ is continuous with image $\pi_\Kk(\Vv)$. Moreover, the functor $\iota^\Vv$ is faithful, i.e.\ for each $(C,x), (D,y)\in   \Obj_{\bB_{\Kk^\Vv}}$ the map
$$
\Mor_{\Kk^\Vv}\bigl((C,x),(D,y)\bigr)\;\longrightarrow\; \Mor_{\Kk}\bigl(\io^\Vv(C,x), \io^\Vv(D,y)\bigr)
$$
is injective.
To prove that $|\iota^\Vv|$ is injective 
we need to show for $x\in V_C, y\in V_D$ that
$$
(I^C_{\max}, x)\sim_\Kk (I^D_{\max},y) \;\Longrightarrow\;  (C,x)\sim_{\Kk^\Vv} (D,y).
$$
To see this, note that by assumption and \eqref{eq:redu2} we have
$\pi_\Kk(V_I)\ni \pi_\Kk(x) = \pi_\Kk(y) \in  \pi_\Kk(V_J)$ for all 
$I\in C$ and $J\in D$.
In particular, for each $I^C_k\in C, I^D_\ell\in D$ the intersection $\pi_\Kk(V_{I^C_{k}})\cap \pi_\Kk(V_{I^D_{\ell}})$ is nonempty.  Hence, by Definition~\ref{def:vicin}~(ii), the elements  $I^C_1,\ldots,I^C_{\max}, I^D_1,\ldots,I^D_{\max}$ of $\Ii_\Kk$ can be ordered into a chain $E:=C\vee D$ (after removing repeated elements) with maximal element $I^{E}_{\max}=I^{C}_{\max}$ or $I^{E}_{\max}=I^{D}_{\max}$, and such that $\pi_\Kk(x)=\pi_\Kk(y)\in \bigcap_{I\in C\vee D} \pi_\Kk(V_I)=\pi_\Kk(V_{C\vee D})$. 
In particular, $V_{C\vee D}$ is nonempty, so we have $E=C\vee D \in\Ii_{\Kk^\Vv}$ and
$x\in V_C\cap \pi_\Kk^{-1}(V_E) \subset  U_{I^C_{\max}I^E_{\max}}$ lies in the domain of $\phi^\Vv_{C (C\vee D)}=\phi_{I^C_{\max}I^E_{\max}}$, whereas $y\in V_D\cap \pi_\Kk^{-1}(V_E) \subset  U_{I^D_{\max}I^E_{\max}}$ lies in the domain of $\phi^\Vv_{D (C\vee D)}=\phi_{I^D_{\max}I^E_{\max}}$. 
Now Lemma~\ref{le:Ku2}~(a) for $I^C_{\max}\subset I^E_{\max}$ and $I^D_{\max}\subset I^E_{\max}$ implies $\phi^\Vv_{C (C\vee D)}(x) = \phi^\Vv_{D (C\vee D)}(y)$.  
 This proves $(C,x)\sim_{\Kk^\Vv} (D,y)$ as required, and thus completes the proof.
\end{proof}

\begin{remark}\label{rmk:KVv} \rm   
The resulting Kuranishi atlas $\Kk^\Vv$ is far from additive. In fact, 
the above proof 
shows that $\Kk^\Vv$ has the property that for any two charts $\bK_C^\Vv, \bK_D^\Vv$ with intersecting footprints 
$Z_C\cap Z_D\ne \emptyset$,
we must have $I^C_{\rm max} \subset I^D_{\rm max}$ or $I^D_{\rm max} \subset I^C_{\rm max}$, though possibly neither $C\subset D$ nor $D\subset C$. 
Assuming w.l.o.g.\ that $I^C_{\rm max} \subset I^D_{\rm max}$, there is a direct coordinate change 
$$
\Hat\Phi_{I^C_{\rm max} I^D_{\rm max}}|_{V_C\cap \pi_\Kk^{-1}(\pi_\Kk(V_D))}
 \;: \;  \bK_C^\Vv = \bK_{I^C_{\rm max}}\big|_{V_C}  \; \longrightarrow\; \bK_D^\Vv = \bK_{I^D_{\rm max}}\big|_{V_D} .
$$
It embeds one of the obstruction bundles  as a summand of the other; in this case $E_C=_{I^C_{\rm max}}\hookrightarrow \Hat\Phi_{I^C_{\rm max} I^D_{\rm max}}(E_C)\subset E_D = E_{I^D_{\rm max}}$.
Such a coordinate change is not explicitly included in the Kuranishi atlas  
$\Kk^\Vv$ unless $C\subset D$. It does however appear, as in Lemma~\ref{le:Ku2}, as the composite of a coordinate change $\bK_C^\Vv \to \bK_{C\vee D}^\Vv$ with the inverse of a coordinate change $\bK_D^\Vv \to \bK_{C\vee D}^\Vv$.
In this respect the subcategory $\bB_{\Kk}|_\Vv$ has much simpler structure, since the components  $V_I$ of its space of objects correspond to chains $C=(I)$ with just one element.
\end{remark}

\subsection{Perturbed zero sets}\label{ss:sect}  \hspace{1mm}\\ \vspace{-3mm}

Throughout this section, $\Kk$ is a fixed tame Kuranishi atlas on a compact metrizable space $X$ or a Kuranishi cobordism on $X\times[0,1]$.
We begin by introducing sections in a reduction and an infinitesimal version of an admissibility condition 
for sections in \cite[A.1.21]{FOOO}.\footnote
{
In fact, even the canonical section $(s_I)$ may not satisfy an identity
$s_J = \Hat\phi_{IJ}(s_I) \oplus \id_{E_J/\Hat\phi_{IJ}(E_I)}$ in a tubular neighbourhood of $\phi_{IJ}(U_{IJ})\subset U_J$ identified with $U_{IJ}\times E_J/\Hat\phi_{IJ}(E_I)$ in the way described in  \cite[A.1.21]{FOOO}.  (The new definition used in \cite{FOOO12} is closer to ours.)
}

\begin{defn}\label{def:sect} 
A {\bf reduced section} of $\Kk$ is a smooth functor $\nu:\bB_\Kk|_\Vv\to\bE_\Kk|_\Vv$ between the reduced domain and obstruction categories of 
some reduction $\Vv$ of $\Kk$, 
such that $\pr_\Kk\circ\nu$ is the identity functor. 
That is, $\nu=(\nu_I)_{I\in\Ii_\Kk}$ is given by a family of smooth maps $\nu_I: V_I\to E_I$ such that for each $I\subsetneq J$ we have a commuting diagram
\begin{equation}\label{eq:comp}
\xymatrix{
 V_I\cap \phi_{IJ}^{-1}(V_J)   \ar@{->}[d]_{\phi_{IJ}} \ar@{->}[r]^{\qquad\nu_I}    &  E_I \ar@{->}[d]^{\Hat\phi_{IJ}}   \\
V_J \ar@{->}[r]^{\nu_J}  & E_J.
}
\end{equation}
We say that a reduced section $\nu$ is an {\bf admissible perturbation} of 
$s_\Kk|_\Vv$ if 
\begin{equation}\label{eq:admiss}
\rd_y \nu_J(\rT_y V_J) \subset\im\Hat\phi_{IJ} \qquad \forall \; I\subsetneq J, \;y\in V_J\cap\phi_{IJ}(V_I\cap U_{IJ}) .
\end{equation}
\end{defn}

\begin{rmk}\label{rmk:sect} \rm  
(i)
Each reduced section  $\nu:\bB_\Kk|_\Vv\to\bE_\Kk|_\Vv$ induces a continuous map $|\nu|: |\Vv|\to |\bE_\Kk|$ such that $|\pr_\Kk|\circ |\nu| = \id$, where $|\pr_\Kk|$ is as in Theorem~\ref{thm:K}.
Each such map has the further property that $|\nu|\big|_{\pi_\Kk(V_I)}$ takes values in $\pi_\Kk(U_I\times E_I)$.  
\MS

\NI (ii) 
More generally, a section $\si$ of $\Kk$ is a functor $\bB_\Kk\to\bE_\Kk$ that satisfies the conditions of Definition~\ref{def:sect} with $\Vv$ replaced by $\Obj_{\bB_\Kk}$. 
But these compatibility conditions are now much more onerous.  
For example, except in the most trivial cases, the set $V_{12}\cap \bigcap_{i=1,2} \phi_{i,12}(U_{i,12}\less V_i)$ is nonempty, so that there is $x\in V_{12}$ with $\pi_\Kk(x)\in \bigl(\pi_\Kk(U_1)\cup \pi_\Kk(U_2)\bigl)\less \bigl( \pi_\Kk(V_1) \cup \pi_\Kk(V_2)\bigr)$.
A reduced section $\nu$ could take any value $\nu_{12}(x) \in E_{12}\cong \Hat\phi_{1,12}(E_1)\oplus \Hat\phi_{2,12}(E_2)$. On the other hand, a section $\si$ of $\Kk$ would have $\si(x)\in \bigcap_{i=1,2} \Hat\phi_{i,12}(E_i)=\{0\}$ since the compatibility conditions imply that $\nu_{12}|_{\im\phi_{i,12}}$ takes values in $\Hat\phi_{i,12}(E_i)$.  
We cannot achieve transversality under such conditions, which explains why we consider reduced sections.
\end{rmk}

Note that the zero section $0_\Kk$, given by $U_I\to 0 \in E_I$, restricts to an admissible perturbation $0_\Vv:\bB_\Kk|_\Vv\to\bE_\Kk|_\Vv$ in the sense of the above definition. 
Similarly, the canonical section $s:= s_\Kk$ of the Kuranishi atlas restricts to a section $s|_\Vv: \bB_\Kk|_\Vv\to\bE_\Kk|_\Vv$ of any reduction.
However, the canonical section is generally not admissible. In fact, as we saw in Lemma~\ref{le:change}, for all $y\in V_J\cap \phi_{IJ}(V_I\cap U_{IJ})$ the map 
$$
{\rm pr}_{E_I}^\perp\circ \rd_y s_J \,: \;\;  \quotient{\rT_y U_J} {\rT_y (\phi_{IJ}(U_{IJ}))} \; \longrightarrow \; \quotient{E_J}{\Hat\phi_{IJ}(E_I)} 
$$
is an isomorphism by the index condition \eqref{tbc}, while for an admissible section it is identically zero.
So for any reduction $\Vv$ and admissible perturbation $\nu$, the sum 
$$
s+\nu:=(s_I|_{V_I}+\nu_I)_{I\in\Ii_\Kk} \,:\; \bB_\Kk|_\Vv \;\to\; \bE_\Kk|_\Vv
$$
is a reduced section that satisfies the index condition
$$
{\rm pr}_{E_I}^\perp\circ\rd_y (s_J+\nu_J)\,: 
\;\; \quotient{\rT_y U_J}{\rT_y (\phi_{IJ}(U_{IJ}))} \;\stackrel{\cong}\longrightarrow \; \quotient{E_J}{\Hat\phi_{IJ}(E_I)}
\qquad\forall \; y\in V_J\cap \phi_{IJ}(V_I\cap U_{IJ}).
$$
We use this in the following lemma to show that transversality of the sections in Kuranishi charts is preserved under coordinate changes.
However, admissibility only becomes essential in the discussion of orientations; cf.\ Proposition~\ref{prop:orient}.

\begin{lemma}\label{le:transv}
Let $\Vv$ be a reduction of $\Kk$, and $\nu$ an admissible perturbation of $s_\Kk|_\Vv$. 
If $z\in V_I$ and $w\in V_J$ map to the same point in the virtual neighbourhood $\pi_\Kk(z)=\pi_\Kk(w)\in|\Kk|$, then $z$ is a transverse zero of $s_I|_{V_I}+\nu_I$ if and only if $w$ is a transverse zero of $s_J|_{V_J}+\nu_J$.
\end{lemma}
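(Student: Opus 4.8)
Since $\pi_\Kk(z)=\pi_\Kk(w)$ with $z\in V_I$, $w\in V_J$, and $\Vv$ is a reduction, Definition~\ref{def:vicin}~(ii) forces $I\subset J$ or $J\subset I$; by symmetry assume $I\subset J$. Then Lemma~\ref{le:Ku2}~(a) (applied to the tame atlas $\Kk$) gives $w=\phi_{IJ}(z)$, so in particular $z\in U_{IJ}$, and the section compatibility \eqref{eq:comp} together with $s_J\circ\phi_{IJ}=\Hat\phi_{IJ}\circ s_I$ gives $(s_J+\nu_J)(w)=\Hat\phi_{IJ}\bigl((s_I+\nu_I)(z)\bigr)$. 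Since $\Hat\phi_{IJ}$ is injective, $z$ is a zero of $s_I|_{V_I}+\nu_I$ iff $w$ is a zero of $s_J|_{V_J}+\nu_J$. So it remains to compare transversality at a common zero $z$, $w=\phi_{IJ}(z)$.

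\smallskip
The key computation is to split $\rT_w U_J$ using a complement $N_w$ of $\rd_z\phi_{IJ}(\rT_z U_I)=\rT_w\phi_{IJ}(U_{IJ})$, and correspondingly split $E_J = \im\Hat\phi_{IJ}\oplus C$ for some complement $C$. First I would differentiate the intertwining relation $(s_J+\nu_J)\circ\phi_{IJ}=\Hat\phi_{IJ}\circ(s_I+\nu_I)$ at $z$ to get
$$
\rd_w(s_J+\nu_J)\circ\rd_z\phi_{IJ} = \Hat\phi_{IJ}\circ\rd_z(s_I+\nu_I),
$$
which shows that $\rd_w(s_J+\nu_J)$ restricted to $\rd_z\phi_{IJ}(\rT_z U_I)$, followed by projection to $\im\Hat\phi_{IJ}$, is conjugate via $\Hat\phi_{IJ}$ to $\rd_z(s_I+\nu_I)$, while the $C$-component of $\rd_w(s_J+\nu_J)|_{\rd_z\phi_{IJ}(\rT_z U_I)}$ vanishes. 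Next I would use the index condition for $s+\nu$ recorded just before the statement of Lemma~\ref{le:transv}: for $w\in V_J\cap\phi_{IJ}(V_I\cap U_{IJ})$ the map
$$
\mathrm{pr}_{E_I}^\perp\circ\rd_w(s_J+\nu_J):\ \rT_w U_J/\rT_w(\phi_{IJ}(U_{IJ}))\ \xrightarrow{\ \cong\ }\ E_J/\Hat\phi_{IJ}(E_I)
$$
is an isomorphism, i.e. $\rd_w(s_J+\nu_J)$ maps $N_w$ isomorphically onto $C$ modulo $\im\Hat\phi_{IJ}$. Combining: in block-triangular form with respect to $\rT_w U_J = \rd_z\phi_{IJ}(\rT_z U_I)\oplus N_w$ and $E_J=\im\Hat\phi_{IJ}\oplus C$, the differential $\rd_w(s_J+\nu_J)$ is
$$
\begin{pmatrix}\Hat\phi_{IJ}\circ\rd_z(s_I+\nu_I)\circ(\rd_z\phi_{IJ})^{-1} & * \\ 0 & \text{iso}\end{pmatrix},
$$
so $\rd_w(s_J+\nu_J)$ is surjective iff $\rd_z(s_I+\nu_I)$ is surjective. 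This is exactly the equivalence of transversality.

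\smallskip
\textbf{Where admissibility enters, and the main obstacle.} The block-triangular shape above needs the $(2,1)$-entry to be irrelevant (it is, since we only care about surjectivity and the $(2,2)$ block is already onto $C$), and needs the lower-left block to vanish — which is precisely $\rd_w(s_J+\nu_J)\bigl(\rd_z\phi_{IJ}(\rT_z U_I)\bigr)\subset\im\Hat\phi_{IJ}$. For the canonical section $s_J$ this follows from $s_J\circ\phi_{IJ}=\Hat\phi_{IJ}\circ s_I$; for the perturbation term one needs $\rd_w\nu_J(\rT_w V_J)\subset\im\Hat\phi_{IJ}$, which is the admissibility hypothesis \eqref{eq:admiss}. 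I expect the only genuinely delicate point to be bookkeeping: making sure all the relevant points indeed lie in the domains where these identities hold (here $z\in V_I\cap U_{IJ}$ and $w=\phi_{IJ}(z)\in V_J$, which is guaranteed by the reduction property plus Lemma~\ref{le:Ku2}), and that the index condition for $s+\nu$ is being invoked at a point of $V_J\cap\phi_{IJ}(V_I\cap U_{IJ})$, which it is. Everything else is linear algebra. I would close by noting the argument is symmetric in $I,J$, covering the case $J\subset I$, and that since $\dim U_I-\dim E_I=\dim U_J-\dim E_J$ (Lemma~\ref{le:change}), surjectivity at $z$ and at $w$ moreover have kernels of the same dimension $d$, consistent with the later orientation discussion.
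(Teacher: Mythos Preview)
Your proof is correct and follows essentially the same approach as the paper: reduce to $w=\phi_{IJ}(z)$, then use the block-triangular form of $\rd_w(s_J+\nu_J)$ with respect to the splittings induced by $\im\rd_z\phi_{IJ}$ and $\im\Hat\phi_{IJ}$, where the $(2,2)$-block is an isomorphism by the index condition. Two minor remarks: your ``$(2,1)$-entry to be irrelevant'' should read ``$(1,2)$-entry'' (the $*$), and the vanishing of the genuine $(2,1)$-block already follows from compatibility \eqref{eq:comp} alone (differentiate $\nu_J\circ\phi_{IJ}=\Hat\phi_{IJ}\circ\nu_I$) --- admissibility is really only needed to ensure the $(2,2)$-block is an isomorphism, which you correctly obtain via the index condition for $s+\nu$.
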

\begin{proof}
Note that, since the equivalence relation $\sim$ on $\Obj_{\Bb_\Kk}$ is generated  by 
$\preceq$ and its inverse $\succeq$, it suffices to suppose that $(I,z)\preceq (J,w)$, i.e.\ $w=\phi_{IJ}(z)$. Now $s_J(w)=\Hat\phi_{IJ}(s_I(z))=0$ iff $s_I(z)=0$ since $\Hat\phi_{IJ}$ is injective.
Next, $z$ is a transverse zero of $s|_{\Vv}+\nu$ exactly if $\rd_z (s_I+\nu_I): \rT_z U_I\to E_I$ is surjective. On the other hand, we have splittings $\rT_w U_J \cong \im\rd_z\phi_{IJ} \oplus \tfrac{\rT_w U_J} {\im\rd_z\phi_{IJ}} $ and $E_J \cong \Hat\phi_{IJ}(E_I) \oplus \tfrac{E_J}{\Hat\phi_{IJ}(E_I)}$
with respect to which the differential at $w$ has product form
\begin{equation}\label{eq:dnutrans}
\rd_w (s_J + \nu_J) \cong \bigl(\; \Hat\phi_{IJ}\circ\rd_z (s_I+\nu_I) \circ (\rd_z\phi_{IJ})^{-1} \,,\, \rd_w s_J \, \bigr) ,
\end{equation}
by the admissibility condition on $\nu_J$. 
Here the second factor is an isomorphism by the index condition \eqref{tbc}. 
Since $\Hat\phi_{IJ}$ and $(\rd_z\phi_{IJ})^{-1}$ are isomorphisms on the relevant domains, 
this proves equivalence of the transversality statements.
\end{proof}

\begin{defn}\label{def:sect2}
A {\bf transverse perturbation} 
of $s_\Kk|_{\Vv}$
is a reduced section $\nu:\bB_\Kk|_\Vv\to\bE_\Kk|_\Vv$ 
whose sum with 
the canonical section $s|_\Vv$
is transverse to the zero section $0_\Vv$, that is $s_I|_{V_I}+\nu_I\pitchfork 0$ for all $I\in\Ii_\Kk$.

Given a transverse perturbation $\nu$, we  define the {\bf perturbed zero set} $|\bZ_\nu|$ to be the realization of the full subcategory $\bZ_\nu$ of $\bB_\Kk$ with object space 
$$
(s + \nu)^{-1}(0) := {\textstyle \bigcup_{I\in \Ii_\Kk}}(s_I|_{V_I}+\nu_I)^{-1}(0) \;\subset\;\Obj_{\bB_\Kk} . 
$$
That is, we equip
$$
|\bZ_\nu| : = \bigl|(s + \nu)^{-1}(0)\bigr| \,=\; \quotient{ {\textstyle\bigcup_{I\in\Ii_\Kk} (s_I|_{V_I}+\nu_I)^{-1}(0) }}{\!\sim} 
$$
with the quotient topology generated by the morphisms of $\bB_\Kk|_\Vv$.
By Lemma~\ref{lem:full} this is equivalent to the quotient topology induced by $\pi_\Kk$, 
and the inclusion $(s+\nu)^{-1}(0)\subset\Vv = \Obj_{\bB_\Kk|_\Vv}$ induces a continuous injection, which we denote by
\begin{equation}\label{eq:Zinject} 
i_\nu \,:\;  |\bZ_\nu| \;\longrightarrow\; |\Kk| .
\end{equation}
\end{defn}

To see that the above is well defined, recall that the canonical section restricts to a reduced section $s|_\Vv: \bB_\Kk|_\Vv\to\bE_\Kk|_\Vv$, so that the sum $s|_\Vv+\nu$ is a reduced section as well, with a well defined zero set $(s + \nu)^{-1}(0)$. 
Moreover, since $\bZ_\nu$ is the realization of a full subcategory of $\bB_\Kk|_\Vv$, Lemma~\ref{lem:full} asserts that the map $i_\nu$ is a continuous injection to $|\Kk|$, and moreover a homoeomorphism from $|\bZ_\nu|$ to $\pi_\Kk\bigl((s+\nu)^{-1}(0)\bigr)=|(s + \nu)^{-1}(0)|$ with respect to the quotient topology in the sense of Definition~\ref{def:topologies}. 
In particular, the continuous injection to the Hausdorff space $|\Kk|$ implies Hausdorffness of $|\bZ_\nu|$.
However, the image of $i_\nu$ is $\pi_\Kk\bigl((s+\nu)^{-1}(0)\bigr)$ with the relative topology induced by $|\Kk|$,  that is
$$
i_\nu ( |\bZ_\nu| )  =  \|(s+\nu)^{-1}(0)\| .
$$
So the perturbed zero set is equipped with two Hausdorff topologies -- the quotient topology on $|\bZ_\nu|\cong|(s+\nu)^{-1}(0)|$  and the relative topology on $\|(s+\nu)^{-1}(0)\|\subset|\Kk|$.
It remains to achieve local smoothness and compactness in one of the topologies. 
We will see below that local smoothness follows from transversality of the perturbation, though only in the topology of $|\bZ_\nu|$, which may contain smaller neighbourhoods than $\|(s + \nu)^{-1}(0)\|$.
On the other hand, compactness of $\|(s + \nu)^{-1}(0)\|$ is easier to obtain than that of $|\bZ_\nu|$, which may have more open covers.
For the first, one could use the fact that $\|(s+\nu)^{-1}(0)\|\subset\|\Vv\|$ is precompact in $|\Kk|$ by Proposition~\ref{prop:Ktopl1}~(iii), so it would suffice to deduce closedness of $\|(s+\nu)^{-1}(0)\|\subset|\Kk|$. This would follow if the continuous map $|s+\nu|:\|\Vv\| \to |\bE_\Kk|_\Vv|$ had a continuous extension to $|\Kk|$ with no further zeros. 
However, such an extension may not exist. In fact, generally $\|\Vv\|\subset |\Kk|$ fails to be open, 
$\io_\Kk(X)\subset |\Kk|$ does not have any precompact neighbourhoods (see Example~\ref{ex:Khomeo}),
 and even those assumptions would not guarantee the existence of an extension.
So compactness of either $\|(s+\nu)^{-1}(0)\|$ or $|\bZ_\nu|$ will not hold in general without further hypotheses on the perturbation that force its zero set to be ``away from the boundary"  of $\|\Vv\|$ in some appropriate sense.
For that purpose we introduce the following extra assumption on the perturbed section that will directly imply compactness of $|\bZ_\nu|$.

\begin{defn}\label{def:precomp}  
A reduced section $\nu: \bB_\Kk|_\Vv\to \bE_\Kk|_\Vv$ is said to be {\bf precompact} if its domain is part of a nested reduction $\Cc\sqsubset \Vv$ such that 
$
\pi_\Kk\bigl((s + \nu)^{-1}(0)\bigr)
\;\subset\; \pi_\Kk(\Cc).
$
\end{defn}

The smoothness properties follow more directly from transversality of the perturbation.
The next lemma shows that for transverse perturbations the object space $\bigcup_I (s_I|_{V_I}+\nu_I)^{-1}(0) \subset \bigcup_I V_I$ of $\bZ_\nu$ is a smooth submanifold of dimension $d: = \dim \Kk$, and that the morphisms spaces are given by local diffeomorphisms.
Hence the category $\bZ_\nu$ can be extended to a groupoid by adding the inverses 
to the space of morphisms.

\begin{lemma} \label{le:stransv}
Let $\nu: \bB_\Kk|_\Vv\to\bE_\Kk|_\Vv$ be a transverse perturbation of $s_\Kk|_\Vv$.
Then the domains of the perturbed zero set $(s_I|_{V_I}+\nu_I)^{-1}(0)\subset V_I$ are submanifolds for all $I\in\Ii_\Kk$; and for $I\subset J$ the map $\phi_{IJ}$ induces a diffeomorphism from ${V_J\cap U_{IJ}\cap (s_I|_{V_I}+\nu_I)^{-1}(0)}$ to an open subset of $(s_J|_{V_J}+\nu_J)^{-1}(0)$.
\end{lemma}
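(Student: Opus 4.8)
The statement has two parts: that each $(s_I|_{V_I}+\nu_I)^{-1}(0)$ is a submanifold of $V_I$ of dimension $d$, and that the transition maps $\phi_{IJ}$ restrict to diffeomorphisms between the appropriate open subsets of these zero sets. The first part is immediate from the transversality hypothesis: since $\nu$ is a transverse perturbation of $s_\Kk|_\Vv$, by Definition~\ref{def:sect2} we have $s_I|_{V_I}+\nu_I\pitchfork 0$ for each $I$, so the implicit function theorem shows that $(s_I|_{V_I}+\nu_I)^{-1}(0)\subset V_I$ is a smooth submanifold of dimension $\dim U_I-\dim E_I = d$, the last equality holding because $\Kk$ has dimension $d$ (equivalently by Lemma~\ref{le:change}, all charts have the same dimension). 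No real work here.

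For the second part, fix $I\subset J$ and recall that $\phi_{IJ}:U_{IJ}\to U_J$ is an embedding which by compatibility of sections satisfies $s_J\circ\phi_{IJ}=\Hat\phi_{IJ}\circ s_I$, and by the compatibility diagram \eqref{eq:comp} for the reduced section $\nu$ satisfies $\nu_J\circ\phi_{IJ}=\Hat\phi_{IJ}\circ\nu_I$ on $V_I\cap\phi_{IJ}^{-1}(V_J)$. Adding these, $(s_J+\nu_J)\circ\phi_{IJ}=\Hat\phi_{IJ}\circ(s_I+\nu_I)$ on that domain. Since $\Hat\phi_{IJ}$ is injective, a point $z\in V_I\cap U_{IJ}\cap\phi_{IJ}^{-1}(V_J)$ lies in $(s_I|_{V_I}+\nu_I)^{-1}(0)$ if and only if $\phi_{IJ}(z)\in(s_J|_{V_J}+\nu_J)^{-1}(0)$. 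Thus $\phi_{IJ}$ restricts to a continuous injection from $A_{IJ}:=V_I\cap U_{IJ}\cap\phi_{IJ}^{-1}(V_J)\cap(s_I|_{V_I}+\nu_I)^{-1}(0)$ into $(s_J|_{V_J}+\nu_J)^{-1}(0)$; it is a smooth embedding of submanifolds because $\phi_{IJ}$ is a smooth embedding of $U_{IJ}$ into $U_J$. (One should double-check here that $A_{IJ}$ is exactly the domain named in the statement, ``$V_J\cap U_{IJ}\cap(s_I|_{V_I}+\nu_I)^{-1}(0)$'' — in the reduction notation $V_J\cap U_{IJ}$ should be interpreted as $\{x\in V_I\cap U_{IJ}: \phi_{IJ}(x)\in V_J\}$, i.e.\ precisely $A_{IJ}$; I would insert a sentence clarifying this.)

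It remains to show that the image $\phi_{IJ}(A_{IJ})$ is open in $(s_J|_{V_J}+\nu_J)^{-1}(0)$ and that $\phi_{IJ}$ is a diffeomorphism onto it; since we already know it is a smooth injective immersion, it suffices to show its image is open and that it is a local diffeomorphism, i.e.\ that $\rd_z\phi_{IJ}$ maps $T_z(s_I+\nu_I)^{-1}(0)$ isomorphically onto $T_{\phi_{IJ}(z)}(s_J+\nu_J)^{-1}(0)$ for each $z\in A_{IJ}$. This is where the index condition and admissibility enter, and is the main point. By Lemma~\ref{le:transv}, $z$ is a transverse zero of $s_I+\nu_I$ iff $\phi_{IJ}(z)=:w$ is a transverse zero of $s_J+\nu_J$, and both hold by hypothesis. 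In the proof of that lemma, equation \eqref{eq:dnutrans} exhibits $\rd_w(s_J+\nu_J)$ in block form with respect to the splittings $T_wU_J\cong\im\rd_z\phi_{IJ}\oplus T_wU_J/\im\rd_z\phi_{IJ}$ and $E_J\cong\Hat\phi_{IJ}(E_I)\oplus E_J/\Hat\phi_{IJ}(E_I)$, the off-diagonal terms vanishing by admissibility \eqref{eq:admiss} and the index condition \eqref{tbc} making the second diagonal block $\rd_w s_J:T_wU_J/\im\rd_z\phi_{IJ}\to E_J/\Hat\phi_{IJ}(E_I)$ an isomorphism. From the block form, $\ker\rd_w(s_J+\nu_J)=\rd_z\phi_{IJ}(\ker\rd_z(s_I+\nu_I))$, which is exactly the statement that $\rd_z\phi_{IJ}$ carries $T_z(s_I+\nu_I)^{-1}(0)$ onto $T_w(s_J+\nu_J)^{-1}(0)$; injectivity is automatic since $\phi_{IJ}$ is an embedding and the dimensions agree (both are $d$). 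Openness of the image then follows: $\phi_{IJ}(A_{IJ})$ is an injectively immersed $d$-dimensional submanifold of the $d$-dimensional manifold $(s_J|_{V_J}+\nu_J)^{-1}(0)$, and a $d$-dimensional immersed submanifold of a $d$-manifold is open; alternatively one can cite the tameness identity \eqref{eq:tame3}, $\im\phi_{IJ}=s_J^{-1}(\Hat\phi_{IJ}(E_I))$, together with openness of $U_{IJ}, V_I,V_J$, to see directly that $\phi_{IJ}(A_{IJ})$ is the intersection of $(s_J|_{V_J}+\nu_J)^{-1}(0)$ with an open subset of $U_J$. The main obstacle is simply assembling the block-form argument correctly — in particular being careful that admissibility is hypothesized at all points of $V_J\cap\phi_{IJ}(V_I\cap U_{IJ})$, which includes $w$, so \eqref{eq:dnutrans} genuinely applies — but this is essentially a repackaging of Lemma~\ref{le:transv}'s proof rather than new analysis.
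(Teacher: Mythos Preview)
Your proof reaches the right conclusion, but it takes an unnecessarily complicated route and invokes a hypothesis that is not given. The lemma only assumes $\nu$ is a \emph{transverse} perturbation (Definition~\ref{def:sect2}), not an \emph{admissible} one; yet you cite Lemma~\ref{le:transv} and the block form \eqref{eq:dnutrans}, both of which require admissibility \eqref{eq:admiss}. So as written your argument assumes more than is stated.

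The paper's proof sidesteps this entirely with a pure dimension count: once both zero sets are known to be $d$-dimensional submanifolds (by transversality and the implicit function theorem), and once functoriality shows that the embedding $\phi_{IJ}$ carries the zero set into the zero set, one simply observes that an embedding from an open subset of a $d$-manifold into a $d$-manifold is automatically a diffeomorphism onto an open image. No admissibility, no block decomposition, no tangent-space computation is needed --- the injectivity of $\rd\phi_{IJ}$ together with equality of dimensions forces $\rd_z\phi_{IJ}:T_z(s_I+\nu_I)^{-1}(0)\to T_w(s_J+\nu_J)^{-1}(0)$ to be an isomorphism. In fact, this same dimension argument would have sufficed inside your own framework (you note the dimensions agree, but then do not use that to bypass the block form). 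Your approach would be the right one if you needed to identify the kernels explicitly, as is done later for orientations in Proposition~\ref{prop:orient1}; but for the bare diffeomorphism statement here, the paper's argument is both shorter and requires fewer hypotheses.
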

\begin{proof}
The submanifold structure of $(s_I|_{V_I}+\nu_I)^{-1}(0)\subset U_I$ follows from the transversality and the implicit function theorem, with the dimension given by the index $d:=\dim U_I-\dim E_I$.
For $I\subset J$ the embedding $\phi_{IJ}:U_{IJ}\to U_J$ then restricts to a smooth embedding  
\begin{equation}\label{eq:ZphiIJ}
\phi_{IJ}: U_{IJ}\cap(s_I|_{V_I}+\nu_I)^{-1}(0) \to (s_J|_{V_J}+\nu_J)^{-1}(0)
\end{equation}
by the functoriality of the perturbed sections.
Since this restriction of $\phi_{IJ}$ to this solution set is an embedding from an open subset of a $d$-dimensional manifold into a $d$-dimensional manifold, it has open image and is a diffeomorphism to this image.
\end{proof}

Assuming for now that precompact transverse perturbations exist (as we will show in Proposition~\ref{prop:ext}), we can now deduce smoothness and compactness of the perturbed zero set.

\begin{prop} \label{prop:zeroS0}
Let $\Kk$ be a tame $d$-dimensional Kuranishi atlas with a reduction $ \Vv\sqsubset \Kk$, and 
suppose that $\nu: \bB_\Kk|_\Vv \to \bE_\Kk|_\Vv$ is a precompact transverse perturbation.
Then $|\bZ_\nu| = |(s+ \nu)^{-1}(0)|$ is a smooth closed $d$-dimensional manifold. 
Moreover, its quotient topology agrees with the subspace topology on ${\|(s+ \nu)^{-1}(0)\|\subset|\Kk|}$.
\end{prop}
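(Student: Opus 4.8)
The plan is to establish the two assertions in turn --- first the smooth $d$-manifold structure on $|\bZ_\nu|$, which uses transversality of $\nu$, then compactness, which uses only precompactness of $\nu$ --- and finally to deduce that the quotient topology on $|\bZ_\nu|$ coincides with the subspace topology on $\|(s+\nu)^{-1}(0)\|$.

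For the manifold structure I would work with the sets $Z_I:=(s_I|_{V_I}+\nu_I)^{-1}(0)\subset V_I$, which by Lemma~\ref{le:stransv} are smooth submanifolds of dimension $\dim U_I-\dim E_I=d$ and whose images $\pi_\Kk(Z_I)$ cover $|\bZ_\nu|$. Since $\Kk$ is tame, Proposition~\ref{prop:Khomeo} shows $\pi_\Kk|_{U_I}$ is a homeomorphism onto its image, hence so is each $\pi_\Kk|_{Z_I}:Z_I\to\pi_\Kk(Z_I)$. To see that $\pi_\Kk(Z_I)\subset|\bZ_\nu|$ is open, note that any $w\in Z$ with $\pi_\Kk(w)\in\pi_\Kk(Z_I)$ lies in some $Z_J$ with $\pi_\Kk(V_I)\cap\pi_\Kk(V_J)\neq\emptyset$, so by Definition~\ref{def:vicin}(ii) and Lemma~\ref{le:Ku2}(a) we have $I\subset J$ or $J\subset I$ and $w$ is the image or preimage of a point of $Z_I$ under a comparison map; by Lemma~\ref{le:stransv} these maps restrict to diffeomorphisms between open subsets of the $Z_K$, so $\pi_\Kk^{-1}(\pi_\Kk(Z_I))\cap Z$ is open in $Z$. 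The same lemma shows that on overlaps the transition map of two such charts is $\phi_{IJ}|_{U_{IJ}\cap Z_I}$ (for $I\subset J$) or its inverse, a diffeomorphism; thus the charts $(\pi_\Kk(Z_I),(\pi_\Kk|_{Z_I})^{-1})$ form a smooth atlas. Hausdorffness of $|\bZ_\nu|$ was already recorded after Definition~\ref{def:sect2} via the continuous injection $i_\nu$ into the Hausdorff space $|\Kk|$; so $|\bZ_\nu|$ is a Hausdorff, locally Euclidean smooth $d$-manifold, and once it is shown compact it will automatically be a closed manifold.

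For compactness I would use the nested reduction $\Cc\sqsubset\Vv$ supplied by the precompactness of $\nu$, for which $\pi_\Kk(Z)\subset\pi_\Kk(\Cc)$ with $Z=(s+\nu)^{-1}(0)$. The key point is that $\pi_\Kk(Z\cap\Cc)=\pi_\Kk(Z)$: given $z\in Z_I$, pick $w\in C_J\subset V_J$ with $\pi_\Kk(w)=\pi_\Kk(z)$; then $\pi_\Kk(V_I)\cap\pi_\Kk(V_J)\neq\emptyset$ forces $I\subset J$ or $J\subset I$, so by Lemma~\ref{le:Ku2}(a) $z$ and $w$ are directly related by $\phi_{IJ}$ or $\phi_{JI}$, and the functoriality \eqref{eq:comp} of $\nu$ together with $s_J\circ\phi_{IJ}=\Hat\phi_{IJ}\circ s_I$ and the injectivity of the linear maps $\Hat\phi$ forces $(s_J+\nu_J)(w)=0$, i.e.\ $w\in Z_J\cap C_J\subset Z\cap\Cc$. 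Since each $Z_I$ is closed in $V_I$ and the closure of $C_I$ is a compact subset of $V_I$, the set $\ov{Z\cap\Cc}=\bigcup_I\ov{Z_I\cap C_I}$ is contained in $\bigcup_I(Z_I\cap\ov{C_I})\subset Z$ and is compact. The quotient map $Z\to|\bZ_\nu|$ is continuous and, by the displayed equality, already surjective from $Z\cap\Cc$; hence it maps the compact set $\ov{Z\cap\Cc}$ onto all of $|\bZ_\nu|$, so $|\bZ_\nu|$ is compact.

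Finally, $i_\nu$ restricts to a continuous bijection from the compact space $|\bZ_\nu|$ onto $\|(s+\nu)^{-1}(0)\|\subset|\Kk|$, which is Hausdorff as a subspace of $|\Kk|$; by the nesting uniqueness of compact Hausdorff topologies (Remark~\ref{rmk:hom}) it is a homeomorphism, so the quotient topology on $|\bZ_\nu|$ agrees with the subspace topology on $\|(s+\nu)^{-1}(0)\|$. I expect the main obstacle to be the compactness step, specifically the careful justification of $\pi_\Kk(Z\cap\Cc)=\pi_\Kk(Z)$ and of $\ov{Z\cap\Cc}\subset Z$: this is where precompactness of $\nu$, tameness of $\Kk$ (through Lemma~\ref{le:Ku2}), and the reduction property (through Definition~\ref{def:vicin}(ii)) must all be combined, whereas the manifold atlas and the concluding topological comparison are comparatively routine.
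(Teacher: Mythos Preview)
Your argument is correct, and for the manifold structure and the final topology comparison it follows essentially the same route as the paper (local charts via Lemma~\ref{le:stransv}, Hausdorffness via the injection $i_\nu$, and the compact--Hausdorff bijection of Remark~\ref{rmk:hom} at the end). One minor point: you do not explicitly address second countability before declaring $|\bZ_\nu|$ a manifold, whereas the paper does; but this is harmless, since once you establish compactness a finite cover by Euclidean charts gives it for free.

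The genuine difference is in the compactness step. The paper argues by sequential compactness: given a sequence in $|\bZ_\nu|$, it passes to a subsequence with lifts $x_k\in Z_I$ for fixed $I$, then uses $\pi_\Kk(x_k)\in\pi_\Kk(\Cc)$ to find a further subsequence lying in $V_I\cap\phi_{IJ}^{-1}(C_J)$ or $V_I\cap\phi_{JI}(C_J\cap U_{JI})$, and in each case extracts a convergent subsequence whose limit is shown to lie in the perturbed zero set. Your approach is more direct: you show $\pi_\Kk(Z\cap\Cc)=\pi_\Kk(Z)$ by the same reduction/tameness reasoning, observe that $\ov{Z_I\cap C_I}\subset Z_I\cap\ov{C_I}$ is compact (since $Z_I$ is relatively closed in $V_I\supset\ov{C_I}$), and conclude that $|\bZ_\nu|$ is the continuous image of the compact set $\ov{Z\cap\Cc}$. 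This buys you a cleaner argument that sidesteps the need to first justify that sequential compactness suffices (which the paper does via metrizability of manifolds). The paper's sequential argument, on the other hand, is closer in spirit to the cobordism version in Lemma~\ref{le:czeroS0} and makes the role of the two cases $I\subset J$ versus $J\subset I$ more explicit, which is useful pedagogy for later constructions.
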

\begin{proof}  
By Lemma~\ref{le:stransv}, the realization $|\bZ_\nu|$ is made from the (disjoint) union $\bigcup_I \bigl(s_I|_{V_I}+\nu_I)^{-1}(0)\bigr)$ of $d$-dimensional manifolds via an equivalence relation given by the smooth local diffeomorphisms \eqref{eq:ZphiIJ}.   
From this we can deduce that $|\bZ_\nu|$ is second countable (i.e.\ its topology has a countable basis of neighbourhoods).
Indeed, a basis is given by the projection of countable bases of each manifold $(s_I|_{V_I}+\nu_I)^{-1}(0)$ to the quotient.
The images are open in the quotient space since the relation between different components of $(s+\nu)^{-1}(0)$ is given by local diffeomorphisms, taking open sets to open sets. In other words: The preimage of an open set in $|\bZ_\nu|$ is a disjoint union of open subsets of $(s_I|_{V_I}+\nu_I)^{-1}(0)$.
This can be used to express any open set as a union of the basis elements. 
It also shows that $|\bZ_\nu|$ is locally smooth, since any choice of lift $x\in (s_I|_{V_I}+\nu_I)^{-1}(0)$ of a given point $[x]\in |\bZ_\nu|$ lies in some chart $\Nn \hookrightarrow \R^d$, where $\Nn\subset (s_I|_{V_I}+\nu_I)^{-1}(0)$ is open; thus as above $[\Nn]\subset|\bZ_\nu|$ is open and provides a local homeomorphism to $\R^d$ near $[x]$.

Moreover, as noted above, the continuous injection $|\bZ_\nu| \to |\Kk|$ from Lemma~\ref{lem:full} transfers the Hausdorffness of $|\Kk|$ from Proposition~\ref{prop:Khomeo} to the realization $|\bZ_\nu|$.
Thus $|\bZ_\nu|$ is a second countable Hausdorff space that is locally homeomorphic to a $d$-manifold.
 Hence it is a $d$-manifold, where we understand all manifolds to have empty boundary since the charts are to open sets in $\R^d$.

In order to prove compactness, it now suffices to prove sequential compactness, since every manifold is metrizable. (In fact, second countability suffices for the equivalence of compactness and sequential compactness, see \cite[Theorem~5.5]{Kel}.) 
For that purpose recall that $\nu$ is assumed to be precompact,  
in particular
there exists a precompact subset $\Cc\sqsubset \Vv$ so that
\begin{equation}\label{eq:C}
(s_I|_{V_I}+\nu_I)^{-1}(0) \;\subset\; \pi_\Kk^{-1}\bigl(\pi_\Kk(\Cc)\bigr) \cap V_I \qquad  \forall I\in \Ii_\Kk.
\end{equation}
Now to prove sequential compactness, consider a sequence $(p_k)_{k\in\N}\subset |\bZ_\nu|$. In the following we will index all subsequences by $k\in\N$ as well.
By finiteness of $\Ii_\Kk$ there is a subsequence of $(p_k)$ that has lifts in $(s_I|_{V_I}+\nu_I)^{-1}(0)$ for a fixed $I\in\Ii_\Kk$. 
In fact, by \eqref{eq:C}, and using the language of Definition~\ref{def:preceq}, the subsequence lies in
$$
V_I \cap \pi_\Kk^{-1}\bigl(\pi_\Kk(\Cc)\bigr)   \;=\;  V_I \cap {\textstyle \bigcup_{J\in\Ii_\Kk}} \eps_I(C_J)
\;\subset\; U_I .
$$
Here $\eps_I(C_J)=\emptyset$ unless $I\subset J$ or $J\subset I$, due to the intersection property (ii) of Definition~\ref{def:vicin} and the inclusion $C_J\subset V_J$.
So we can choose another subsequence and lifts $(x_k)_{k\in\N}\subset V_I$ with $\pi_\Kk(x_k)=p_k$ such that either 
$$
(x_k)_{k\in\N}\subset V_I \cap \phi_{IJ}^{-1}(C_J)
\qquad\text{or}\qquad 
(x_k)_{k\in\N}\subset V_I \cap \phi_{JI}(C_J\cap U_{JI})
$$ 
for some $I\subset J$ or some $J\subset I$.
In the first case, compatibility of the perturbations \eqref{eq:comp} implies that there are other lifts
$\phi_{IJ}(x_k)\in (s_J|_{V_J}+\nu_J)^{-1}(0)\cap C_J$, which by precompactness $\ov{C_J}\sqsubset V_J$ have a convergent subsequence $\phi_{IJ}(x_k)\to y_\infty \in (s_J|_{V_J}+\nu_J)^{-1}(0)$.
Thus we have found a limit point in the perturbed zero set 
$p_k = \pi_\Kk(\phi_{IJ}(x_k)) \to \pi_\Kk(y_\infty) \in |\bZ_\nu|$,
as required for sequential compactness.

In the second case we use the relative closeness of $\phi_{JI}(U_{JI})=s_J^{-1}(E_I)\subset U_I$ and the precompactness $V_I\sqsubset U_I$ to find a convergent subsequence 
$x_k\to x_\infty \in \ov{V_I} \cap \phi_{JI}(U_{JI})$.
Since $\phi_{JI}$ is a homeomorphism to its image, this implies convergence of the preimages
$y_k:= \phi_{JI}^{-1}(x_k) \to \phi_{JI}^{-1}(x_\infty) =: y_\infty \in U_{JI}$.
By construction and compatibility of the perturbations \eqref{eq:comp}, this subsequence $(y_k)$ of lifts of $\pi_\Kk(y_k)=p_k$ moreover lies in $(s_J|_{V_J}+\nu_J)^{-1}(0)\cap C_J$.
Now precompactness of $C_J\sqsubset V_J$ implies $y_\infty\in V_J$, and continuity of the section implies $y_\infty\in (s_J|_{V_J}+\nu_J)^{-1}(0)$. Thus we again have a limit point 
$p_k = \pi_\Kk(y_k) \to \pi_\Kk(y_\infty) \in |\bZ_\nu|$.
This proves that the perturbed zero set $|\bZ_\nu|$ is sequentially compact, and hence compact. 
Therefore it is a closed manifold, as claimed. 

Finally, the map \eqref{eq:Zinject} now is a continuous bijection between the compact space $|\bZ_\nu|$ and the Hausdorff space $\|(s+\nu)^{-1}(0)\|\subset|\Kk|$ with the relative topology induced by $|\Kk|$. As such it is automatically a homeomorphism $|\bZ_\nu|\cong \|(s+\nu)^{-1}(0)\|$, see Remark~\ref{rmk:hom}
\end{proof}

We now extend these results to a tame Kuranishi cobordism $\Kk^{[0,1]}$ from $\Kk^0$ to $\Kk^1$ with cobordism reduction $\Vv$.
Recall from Definition~\ref{def:cvicin} that $\Vv$ induces reductions $\partial^\al\Vv := \bigcup_{I\in\Ii_{\Kk^\al}} \partial^\al V_I \subset \Obj_{\bB_{\Kk^\al}}$ of $\Kk^\al$ for $\al=0,1$, where we identify the index set $\Ii_{\Kk^\al}\cong\io^\al(\Ii_{\Kk^\al})$ with a subset of $\Ii_{\Kk^{[0,1]}}$.

\begin{defn} \label{def:csect}
A {\bf reduced cobordism section} of $s_{\Kk^{[0,1]}}|_{\Vv}$ is a reduced section $\nu:\bB_{\Kk^{[0,1]}}|_\Vv\to\bE_{\Kk^{[0,1]}}|_{\Vv}$ as in Definition~\ref{def:sect} that in addition has product form in a collar neighbourhood of the boundary. 
That is, for $\al=0,1$ and 
$I\in \Ii_{\Kk^\al}\subset\Ii_{\Kk^{[0,1]}}$ 
there is $\eps>0$ and a map $\nu_I^\al: \p^\al V_I\to E_I$ such that 
$$
\nu_I \bigl( \io_I^\al ( x,t ) \bigr) 
= \nu_I^\al (x)  \qquad
\forall\, x\in \p^\al V_I , \   t\in A^\al_\eps .
$$ 
A {\bf precompact, transverse cobordism perturbation} of $s_{\Kk^{[0,1]}}|_{\Vv}$ is a reduced cobordism section $\nu$ as above that satisfies the transversality condition $s|_{V_I}+\nu_I \pitchfork 0$ on the interior of the domains $V_I$, and whose domain is part of a nested cobordism reduction $\Cc\sqsubset \Vv$ such that 
$\pi_\Kk\bigl((s + \nu)^{-1}(0)\bigr)\subset \pi_\Kk(\Cc)$.
We moreover call such $\nu$ {\bf admissible} if it satisfies \eqref{eq:admiss}.
\end{defn}

The collar form ensures that the transversality of the perturbation extends to the boundary of the domains, as follows.

\begin{lemma}\label{le:ctransv}
If $\nu:\bB_{\Kk^{[0,1]}}|_\Vv\to\bE_{\Kk^{[0,1]}}|_{\Vv}$ is a 
precompact,
transverse  
cobordism 
perturbation of $s_{\Kk^{[0,1]}}|_{\Vv}$, then the {\bf restrictions} $\nu|_{\partial^\al\Vv} := \bigl(  \nu_I^\al \bigr)_{I\in\Ii_{\Kk^\al}}$ for $\al=0,1$ are precompact, transverse perturbations of the restricted canonical sections $s_{\Kk^\al}|_{\partial^\al\Vv}$. If in addition $\nu$ is admissible, then so are the restrictions $\nu|_{\partial^\al\Vv}$.

Moreover, each perturbed section $s|_{V_I}+\nu_I : V_I \to E_I$ for $I\in\Ii_{\Kk^0}\cup\Ii_{\Kk^1}\subset \Ii_{\Kk^{[0,1]}}$ is transverse to $0$ as a map on a domain with boundary. That is, the kernel of its differential is transverse to the boundary $\partial V_I = \bigcup_{\al=0,1}\iota^\al_I (\partial^\al V_I\times \{\al\})$.
\end{lemma}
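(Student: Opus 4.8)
The plan is to reduce every assertion to the collar, where both the canonical section and the perturbation are by hypothesis pulled back from the boundary, so that all statements become either ordinary (boundaryless) transversality statements for the restricted sections or the trivial remark that a product with a half-open interval is transverse to the end of that interval. First I would record the relevant collar identities. By Definition~\ref{def:Cchart} the chart with collared boundary satisfies $s_I\circ\iota^\al_I(x,t)=\partial^\al s_I(x)$ on $\partial^\al U_I\times A^\al$, which under the identification $\Ii_{\Kk^\al}\cong\iota^\al(\Ii_{\Kk^\al})$ is just $s_{\Kk^\al}|_{\partial^\al U_I}(x)$; by Definition~\ref{def:csect}, $\nu_I\circ\iota^\al_I(x,t)=\nu^\al_I(x)$ on $\partial^\al V_I\times A^\al_\eps$; and since $\Vv$ is a cobordism reduction, $(\iota^\al_I)^{-1}(V_I)\cap(\partial^\al U_I\times A^\al_\eps)=\partial^\al V_I\times A^\al_\eps$. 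Hence on this collar $(s_I+\nu_I)\circ\iota^\al_I(x,t)=\bigl(s_{\Kk^\al}|_{\partial^\al V_I}+\nu^\al_I\bigr)(x)$, so the differential at $\iota^\al_I(x,t)$ splits as $\rd_x\bigl(s_{\Kk^\al}|_{\partial^\al V_I}+\nu^\al_I\bigr)$ on $\rT_x\partial^\al V_I$ and $0$ on $\rT_t A^\al_\eps$.

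From this split form the transversality half of the first assertion is immediate: for $\al=0$ (the case $\al=1$ is symmetric) and any $x\in\partial^0 V_I$, the point $\iota^0_I(x,t)$ with $0<t<\eps$ lies in the interior of $V_I$, where $s|_{V_I}+\nu_I\pitchfork 0$ by hypothesis, and the split form then forces $s_{\Kk^0}|_{\partial^0 V_I}+\nu^0_I$ to be transverse to $0$ at $x$. For precompactness I would restrict the nested cobordism reduction $\Cc\sqsubset\Vv$: by Definition~\ref{def:cvicin} the sets $\partial^\al\Cc$ are reductions of $\Kk^\al$ with $\partial^\al\Cc\sqsubset\partial^\al\Vv$, and the collar identity shows that $(s+\nu)^{-1}(0)$ meets each collar in the product $\iota^\al_I\bigl((s_{\Kk^\al}|_{\partial^\al V_I}+\nu^\al_I)^{-1}(0)\times A^\al_\eps\bigr)$; intersecting the containment $\pi_\Kk\bigl((s+\nu)^{-1}(0)\bigr)\subset\pi_\Kk(\Cc)$ with the collar $\rho^\al\bigl(|\Kk^\al|\times A^\al_\eps\bigr)$ of $|\Kk^{[0,1]}|$ from Remark~\ref{rmk:cobordreal} then yields $\pi_{\Kk^\al}\bigl((s_{\Kk^\al}+\nu|_{\partial^\al\Vv})^{-1}(0)\bigr)\subset\pi_{\Kk^\al}(\partial^\al\Cc)$, which is precisely the precompactness of $\nu|_{\partial^\al\Vv}$.

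Admissibility of the restrictions is again a collar computation. For $I\subsetneq J$ the coordinate change with collared boundary has, by Definition~\ref{def:Ccc}(ii), boundary restriction $\partial^\al\Hat\Phi_{IJ}=(\partial^\al\phi_{IJ},\Hat\phi_{IJ})$ with the same linear part, and $\phi_{IJ}$ carries $\partial^\al V_{IJ}\times B^\al$ onto $\phi_{IJ}(\partial^\al V_{IJ})\times B^\al$. Thus a point $x\in\partial^\al V_J\cap\phi^\al_{IJ}(\partial^\al V_I\cap\partial^\al U_{IJ})$ is the collar image of $\iota^\al_J(x,t)\in V_J\cap\phi_{IJ}(V_I\cap U_{IJ})$, where \eqref{eq:admiss} for $\nu_J$ gives $\rd\nu_J(\rT V_J)\subset\im\Hat\phi_{IJ}$; since $\nu_J$ is constant in $t$ on the collar, restricting this inclusion to $\rT_x\partial^\al V_J$ gives $\rd_x\nu^\al_J(\rT_x\partial^\al V_J)\subset\im\Hat\phi_{IJ}=\im\Hat\phi^\al_{IJ}$, which is \eqref{eq:admiss} for $\nu|_{\partial^\al\Vv}$. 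For the last statement, at a boundary zero $y=\iota^\al_I(x,\al)$ the same split differential gives $\ker\rd_y(s_I+\nu_I)=\ker\rd_x\bigl(s_{\Kk^\al}|_{\partial^\al V_I}+\nu^\al_I\bigr)\oplus\rT_\al A^\al_\eps$, while $\rT_y(\partial V_I)=\rT_x\partial^\al V_I\oplus\{0\}$; since the kernel contains the normal line $\{0\}\oplus\rT_\al A^\al_\eps$ we get $\ker\rd_y(s_I+\nu_I)+\rT_y(\partial V_I)=\rT_y V_I$, i.e. the kernel is transverse to the boundary. Equivalently, near $\partial V_I$ the perturbed zero set is the product of $(s_{\Kk^\al}|_{\partial^\al V_I}+\nu^\al_I)^{-1}(0)$ with the half-open interval $A^\al_\eps$, hence a manifold with boundary whose boundary is the zero set of the restricted section.

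I expect the only genuinely fiddly point to be the precompactness bookkeeping: one must match the collar structure of the cobordism reduction $\Cc$ against the collar structure $\rho^\al$ of $|\Kk^{[0,1]}|$ from Remark~\ref{rmk:cobordreal} carefully enough that the containment $\pi_\Kk\bigl((s+\nu)^{-1}(0)\bigr)\subset\pi_\Kk(\Cc)$ restricts cleanly to the boundary atlases. By contrast, the transversality, admissibility, and boundary-transversality claims are all immediate from the product form of the perturbed section and of the coordinate changes in the collar, via the one split-differential computation above.
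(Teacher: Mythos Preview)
Your proposal is correct and follows essentially the same approach as the paper: both arguments reduce everything to the collar, use the product form $(s_I+\nu_I)\circ\iota^\al_I(x,t)=(s^\al_I+\nu^\al_I)(x)$ to obtain the split differential, and read off transversality of the restrictions, admissibility by pullback, and boundary-transversality from the normal $\R$-direction lying in the kernel. Your treatment of precompactness is in fact more explicit than the paper's, which simply asserts that the restrictions $\partial^\al\Cc\sqsubset\partial^\al\Vv$ of the nested cobordism reduction do the job; your collar-matching argument correctly fills in why the zero-set containment restricts to the boundary.
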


\begin{proof}
Precompactness transfers to the restriction since the restrictions of the nested cobordism reduction are nested reductions $\p^\al\Cc \sqsubset \p^\al\Vv$ for $\al=0,1$.
Similarly, admissibility transfers immediately by pullback of \eqref{eq:admiss} to the boundaries via $\io^\al_I : \p^\al V_I \times \{\al\} \to V_I$. 
Transversality in a collar neighbourhood of the boundary $\io^\al_I(\partial^\al\Vv\times A^\al_\eps)\subset V_I$ is equivalent to transversality of the restriction $s|_{\partial^\al V_I}+\nu^\al_I \pitchfork 0$ since 
$\rd \bigl( \nu_I \circ \io_I^\al \bigr) =
 0 \rd t + \rd\nu_I^\al$.
Moreover, transversality of $s|_{V_I}+\nu_I : V_I \to E_I$
at the boundary of $V_I$, 
as a map on a domain with boundary, is equivalent under pullback with the embeddings $\io^\al_I$ to transversality of
$f := \bigl( s|_{V_I}+\nu_I \bigr)\circ\io^\al_I : 
\partial^\al V_I \times A^\al_\eps \to E_I$. 
For the latter, the kernel $\ker\rd_{s,x} f = \ker\rd_x \nu^\al_I   \times \R$ 
is indeed transverse to the boundary $\rT_x \partial^\al V_I\times \{\al\}$ in  $\rT_x \partial^\al V_I \times \R$.
\end{proof}

With that, we can show that 
precompact 
transverse perturbations of the Kuranishi cobordism induce smooth cobordisms (up to orientations) between the perturbed zero sets of the restricted perturbations. 

\begin{lemma} \label{le:czeroS0}
Let $\nu: \bB_{\Kk^{[0,1]}}|_{\Vv^{[0,1]}} \to \bE_{\Kk^{[0,1]}}|_{\Vv^{[0,1]}}$ be a precompact, transverse cobordism perturbation.
Then $|\bZ_{\nu}|$, defined as in Definition~\ref{def:sect2}, is  a compact manifold whose boundary $\p|\bZ_\nu|$ is diffeomorphic to the disjoint union of $|\bZ_{\nu^0}|$ and $|\bZ_{\nu^1}|$, where $\nu^\al :=\nu|_{\partial^\al\Vv}$ are the restricted transverse perturbations of $s_{\Kk^\al}|_{\p^\al \Vv}$ for $\al=0,1$.

\end{lemma}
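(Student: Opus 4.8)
\textbf{Proof plan for Lemma~\ref{le:czeroS0}.}
The plan is to combine the absolute statement of Proposition~\ref{prop:zeroS0} (applied both to the cobordism atlas $\Kk^{[0,1]}$ and to its boundary restrictions $\Kk^0,\Kk^1$) with the collar structure furnished by Definition~\ref{def:csect} and Lemma~\ref{le:ctransv}, in order to identify a collar of $\partial|\bZ_\nu|$ inside $|\bZ_\nu|$ with a product. First I would establish that $|\bZ_\nu|$ is a compact manifold with boundary. For the smooth structure and Hausdorff property the argument of Proposition~\ref{prop:zeroS0} carries over essentially verbatim: by Lemma~\ref{le:stransv} (whose proof only uses transversality, which now holds on the interior by hypothesis) and by Lemma~\ref{le:ctransv} (which upgrades transversality to the boundary of each domain $V_I$ for $I\in\Ii_{\Kk^0}\cup\Ii_{\Kk^1}$), each perturbed local zero set $(s_I|_{V_I}+\nu_I)^{-1}(0)$ is a $d$-dimensional manifold with boundary, cut out transversally as a map on a domain with boundary; the embeddings $\phi_{IJ}$ restrict to diffeomorphisms onto open (boundary-respecting) subsets by functoriality of $s+\nu$. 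Second countability and Hausdorffness of $|\bZ_\nu|$ follow exactly as before, using the continuous injection $i_\nu:|\bZ_\nu|\to|\Kk^{[0,1]}|$ from Lemma~\ref{lem:full} together with Proposition~\ref{prop:Khomeo} (Lemma~\ref{le:cob0} guarantees the Hausdorff property of $|\Kk^{[0,1]}|$ even though some charts have collared boundary). Compactness follows by the same sequential-compactness argument as in Proposition~\ref{prop:zeroS0}, since the precompactness hypothesis on $\nu$ provides the nested cobordism reduction $\Cc\sqsubset\Vv$ with $\pi_\Kk((s+\nu)^{-1}(0))\subset\pi_\Kk(\Cc)$, and the limit points extracted there lie in $|\bZ_\nu|$; one simply notes that limit points landing on $\partial V_I$ cause no trouble because the local zero sets are manifolds with boundary. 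Hence $|\bZ_\nu|$ is a compact smooth $d$-manifold with boundary.

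Third, and this is the real content, I would identify $\partial|\bZ_\nu|$ with $|\bZ_{\nu^0}|\sqcup|\bZ_{\nu^1}|$ together with a collar. By the collar form in Definition~\ref{def:csect}, for each $I\in\Ii_{\Kk^\al}$ and small $\eps>0$ the embedding $\io_I^\al:\partial^\al V_I\times A^\al_\eps\to V_I$ satisfies $\nu_I\circ\io_I^\al(x,t)=\nu_I^\al(x)$ and $s_I\circ\io_I^\al(x,t)=s_I^\al(x)$ (the latter by Definition~\ref{def:Cchart}(iii), the collar form of the chart), so that
$$
(\io_I^\al)^{-1}\bigl((s_I|_{V_I}+\nu_I)^{-1}(0)\bigr)\cap(\partial^\al V_I\times A^\al_\eps)
\;=\;(s^\al_I|_{\partial^\al V_I}+\nu_I^\al)^{-1}(0)\times A^\al_\eps.
$$
Because the coordinate changes of $\Kk^{[0,1]}$ are also of collar form on a common neighbourhood $B^\al=A^\al_\eps$ (Definition~\ref{def:Ccc}(ii), with uniform $\eps$ by Remark~\ref{rmk:Ceps}), these product identifications are compatible with the equivalence relation on $\bigcup_I(s_I|_{V_I}+\nu_I)^{-1}(0)$: a morphism between two points in the collar restricts to $\p^\al\Hat\Phi_{IJ}\times\id$. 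Therefore, passing to realizations, the collar embeddings $\io_I^\al$ assemble to a homeomorphism (indeed diffeomorphism onto an open collar neighbourhood of $\partial|\bZ_\nu|$)
$$
\rho^\al\;:\;|\bZ_{\nu^\al}|\times A^\al_\eps\;\longhookrightarrow\;|\bZ_\nu|,
$$
analogous to the collar maps $\rho^\al$ of Remark~\ref{rmk:cobordreal}; one checks it is well defined and injective by the same argument used there and in Lemma~\ref{le:cob0}, now restricted to the perturbed zero sets. Restricting $\rho^\al$ to $A^\al_\eps=\{\al\}$ exhibits a diffeomorphism $|\bZ_{\nu^\al}|\cong\rho^\al(|\bZ_{\nu^\al}|\times\{\al\})=\partial^\al|\bZ_\nu|$, and since $\partial|\bZ_\nu|=\partial^0|\bZ_\nu|\sqcup\partial^1|\bZ_\nu|$ (the two boundary pieces are disjoint because the corresponding footprints are, by Remark~\ref{rmk:Ceps}, separated in $[0,1]$), this gives the asserted diffeomorphism $\partial|\bZ_\nu|\cong|\bZ_{\nu^0}|\sqcup|\bZ_{\nu^1}|$. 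Note that Lemma~\ref{le:ctransv} already tells us that $\nu^\al$ is a precompact transverse perturbation of $s_{\Kk^\al}|_{\p^\al\Vv}$, so $|\bZ_{\nu^\al}|$ is a well-defined closed $d$-manifold by Proposition~\ref{prop:zeroS0}; this is needed for the statement to make sense.

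The step I expect to be the main obstacle is verifying carefully that the collar product identifications glue correctly across all coordinate changes simultaneously — i.e.\ that the locally defined product structures on $(s_I|_{V_I}+\nu_I)^{-1}(0)$ near the boundary are intertwined by the morphisms of $\bB_{\Kk^{[0,1]}}|_\Vv$ so that $\rho^\al$ descends to a well-defined embedding on the quotient. This requires checking that the uniform collar width $\eps$ of Remark~\ref{rmk:Ceps} can be shrunk to a single value compatible with the collar forms of the section $\nu$ (Definition~\ref{def:csect}), the charts, the coordinate changes, and the cobordism reduction $\Vv$ (Definition~\ref{def:cvicin}(iv)) all at once, and that on this common collar the source/target maps of $\bB_{\Kk^{[0,1]}}|_\Vv$ restricted to the perturbed zero set are exactly the products $\p^\al\Hat\Phi_{IJ}\times\id_{A^\al_\eps}$. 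Once this bookkeeping is done, the diffeomorphism is immediate; everything else is a routine transcription of Proposition~\ref{prop:zeroS0} and the collar discussion of Remark~\ref{rmk:cobordreal}.
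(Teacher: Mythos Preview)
Your proposal is correct and follows essentially the same approach as the paper's proof: both establish the compact-manifold-with-boundary structure by transcribing Proposition~\ref{prop:zeroS0} and Lemma~\ref{le:stransv} to the collared setting via Lemma~\ref{le:ctransv}, then use the product form of $s_I+\nu_I$ on the collars together with the collar form of the coordinate changes to assemble the boundary identification. The only minor presentational difference is that you build a full collar embedding $\rho^\al:|\bZ_{\nu^\al}|\times A^\al_\eps\hookrightarrow|\bZ_\nu|$ and then restrict to $\{\al\}$, whereas the paper phrases this directly as a fully faithful functor $j^\al:\bZ_{\nu^\al}\to\bZ_\nu$ on the boundary and checks it descends to a diffeomorphism $|j^\al|$; both routes rely on exactly the same collar compatibility that you correctly flag as the only point requiring care.
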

\begin{proof}
The topological properties of $|\bZ_{\nu}|$ follow from the arguments in Proposition~\ref{prop:zeroS0},
and smoothness of the zero sets follows as in Lemma~\ref{le:stransv}. However, the zero sets $(s_I|_{V_I}+\nu_I)^{-1}(0)$ for $I\in\Ii_{\Kk^\al}\subset\Ii_{\Kk^{[0,1]}}$ are now submanifolds with boundary, by the implicit function theorem on the interior of $V_I$ together with the smooth product structure on the collar neighbourhoods $\io_I^\al (\p^\al V_I \times A^\al_\eps)$ of the boundary. The latter follows from the smoothness of 
$(s_I|_{\p^\al V_I}+\nu^\al_I)^{-1}(0)$
from Lemma~\ref{le:stransv} and the embedding
$$
(s_I|_{V_I}+\nu_I)^{-1}(0)\;\cap\; \io_I^\al (\p^\al V_I \times \{\al\}) \;=\; \io^\al_I \bigl( 
(s_I|_{\p^\al V_I}+\nu^\al_I)^{-1}(0) \times\{\al\} \bigr)   .
$$
This gives $(s+\nu)^{-1}(0)$ the structure of a compact manifold with two disjoint boundary components for $\al=0,1$ given by
$$
\partial^\al \bigl( (s+\nu)^{-1}(0) \bigr) \;=\;  \underset{{I\in\Ii_{\Kk^\al}}}{\textstyle\bigcup} 
\io^\al_I \bigl( (s_I|_{\p^\al V_I}+\nu^\al_I)^{-1}(0) \times\{\al\} \bigr) ,
$$
which are diffeomorphic via $\partial^\al \Vv \ni (I,x) \mapsto \iota^\al_I(x,\al)$ 
to the submanifolds $(s+\nu^\al)^{-1}(0)\subset \partial^\al \Vv$ given by the restricted perturbations 
$\nu^\al=\bigl(\nu^\al_I\bigr)_{I\in\Ii_{\Kk^\al}}$.
By the collar form of the coordinate changes in $\Kk^{[0,1]}$ this induces fully faithful functors $j^\al$ from $\bZ_{\nu^\al}$ to the full subcategories of $\bZ_\nu$ with objects $\partial^\al \bigl( (s+\nu)^{-1}(0) \bigr)$.

Moreover, as in Lemma~\ref{le:stransv}, the morphisms are given by restrictions of the embeddings $\phi_{IJ}$, which are in fact local diffeomorphisms, and hence can be inverted to give $\bZ_\nu$ the structure of a groupoid.
Again using the collar form of the coordinate changes, there are no morphisms between $\partial^\al \bigl( (s+\nu)^{-1}(0) \bigr)$ and its complement in $(s+\nu)^{-1}(0)$, so the realization $|\bZ_{\nu}|$ inherits the structure of a compact manifold with boundary
$\partial |\bZ_{\nu}| = \bigcup_{\al=0,1} \partial^\al |\bZ_{\nu}|$ with two disjoint boundary components  
$$
\partial^\al |\bZ_{\nu}|  
\,:=\; \partial^\al |\Kk^{[0,1]}| \cap |\bZ_{\nu}|  
\;=\; \Bigl|  {\textstyle\bigcup}_{I\in\Ii_{\Kk^\al}} 
\io^\al_I \bigl( (s_I|_{\p^\al V_I}+\nu^\al_I)^{-1}(0) \times\{\al\} 
\bigr) \Bigr| .
$$
Since the fully faithful functors $j^\al$ are diffeomorphisms between the object spaces, they then descend to diffeomorphisms to the boundary components,
$$
|j^\al| \,:\;  |\bZ_{\nu^\al}| \;=\;  
 \Bigl|  {\textstyle\bigcup}_{I\in\Ii_{\Kk^\al}} 
 (s_I^\al+\nu^\al_I)^{-1}(0)  \Bigr|
 \;\overset{\cong}{\longrightarrow}\; \partial^\al |\bZ_{\nu}|  .
$$
Thus $|\bZ_{\nu}|$ is a (not yet oriented) cobordism between $|\bZ_{\nu^0}|$ and $|\bZ_{\nu^1}|$, as claimed.
\end{proof}

\subsection{Construction of perturbations} \label{ss:const}   \hspace{1mm}\\ \vspace{-3mm}

In this section, we let $(\Kk,d)$ be a metric tame Kuranishi atlas (or cobordism) and $\Vv$ a (cobordism) reduction, and construct precompact transverse (cobordism) perturbations of the canonical section $s_\Kk|_\Vv$.
In fact, we will construct a transverse perturbation with perturbed zero set contained in $\pi_\Kk(\Cc)$ for any given nested (cobordism) reduction $\Cc \sqsubset \Vv$.
This will be accomplished by an intricate construction that depends on the choice of two suitable constants $\de,\si>0$ depending on $\Cc\sqsubset\Vv$.
Consequently, the corresponding uniqueness statement requires not only the construction of transverse cobordism perturbations $\nu$ of $s_\Kk|_\Vv$ in a nested cobordism reduction $\Cc \sqsubset \Vv$, and with given restrictions $\nu|_{\p^\al\Vv}$, but also an understanding of the dependence on the choice of constants $\de,\si>0$.
We begin by describing the setup, which will be used to construct perturbations for both Kuranishi atlases and Kuranishi cobordisms. It is important to have this in place before describing the iterative constructions because, firstly, the iteration depends on the above choice of constants, and secondly, even the statements about uniqueness and existence of perturbations in cobordisms need to take this intricate setup into consideration.

First, we construct compatible norms on the obstruction spaces by using the additivity isomorphisms given by Definition~\ref{def:Ku2},
\begin{equation} \label{eq:iI}
{\textstyle \prod_{i\in I}} \;\Hat\phi_{iI}: \; {\textstyle \prod_{i\in I}} \; E_i \;\stackrel{\cong}\longrightarrow \; E_I \;=\;\oplus_{i\in I} \Hat\phi_{iI}(E_i) ,
\end{equation}
which gives a unique decomposition of any map $f_I:{\rm dom}(f_I)\to E_I$ into components $\bigl(f^i_I : {\rm dom}(f_I) \to \Hat\phi_{iI}(E_i)\bigr)_{i\in I}$ determined by $f_I = \sum_{i\in I} f^i_I$.  
Now we choose norms $\|\cdot\|$ on the basic obstruction spaces $E_i$ for $i=1,\dots N$, and extend them to all obstruction spaces $E_I$ by
$$
\| e \| \;=\;
\Bigl\| {\textstyle \sum_{i\in I}} \Hat\phi_{iI} (e_i) \Bigr\| \,:=\; \max_{i\in I} \| e_i\|
\qquad
\forall e=  {\textstyle \sum_{i\in I}} \Hat\phi_{iI} (e_i) \in E_I. 
$$
Here we chose the maximum norm on the Cartesian product since this guarantees estimates of the components $\|e_i\| \leq \|e\|$.
This construction guarantees that each embedding $\Hat\phi_{IJ}:(E_I,\|\cdot\|) \to (E_J,\|\cdot\|)$ is an isometry by the cocycle condition $\Hat\phi_{IJ}\circ\Hat\phi_{iI}=\Hat\phi_{iJ}$. 
Moreover, we will throughout use the supremum norm for functions, that is for any $f_I:{\rm dom}(f_I)\to (E_I,\|\cdot\|)$ we denote
$$
\bigl\| f_I \bigr\| \,:=\; \sup_{x\in {\rm dom}(f_I)} \bigl\| f_I(x) \bigr\|  \;=\; \sup_{x\in {\rm dom}(f_I)} \max_{i\in I} \, \bigl\| f^i_I(x) \bigr\|   
 \;=:\, \max_{i\in I}\, \bigl\| f^i_I \bigr\|   .
$$
Next, recall from Lemma~\ref{le:metric} (which holds in complete analogy for metric Kuranishi cobordisms) that the metric $d$ on $|\Kk|$ induces metrics $d_I$ on each domain $U_I$ such that the coordinate changes $\phi_{IJ}:(U_{IJ},d_I)\to (U_J,d_J)$ are isometries.
Moreover, Lemma~\ref{le:delred}~(i) provides $\de>0$ so that $B_{2\de}(V_I)\sqsubset U_I$  for all $I\in\Ii_\Kk$,
and $B_{2\de}(\pi_\Kk(\ov{V_I}))\cap B_{2\de}(\pi_\Kk(\ov{V_J})) = \emptyset$ unless $I\subset J$ or $J\subset I$, and hence the precompact neighbourhoods 
\begin{equation}\label{eq:VIk}
V_I^k \,:=\; B^I_{2^{-k}\de}(V_I) 
\;\sqsubset\; U_I \qquad \text{for} \; k \geq 0
\end{equation}  
form further (cobordism) reductions, all of which contain $\Vv$.
Here we chose separation distance $2\de$ so that compatibility of the metrics ensures the strengthened version of the separation condition (ii) in Definition~\ref{def:vicin} for $I\not\subset J$ and $J\not\subset I$,
\begin{equation}\label{desep}
B_\de\bigl(\pi_\Kk(V^k_I)\bigr) \cap B_\de\bigl(\pi_\Kk(V^k_J)\bigr) \subset  
B_{\de + 2^{-k}\de}\bigl(\pi_\Kk(V_I)\bigr) \cap B_{\de + 2^{-k}\de}\bigl(\pi_\Kk(V_J)\bigr) = \emptyset .
\end{equation}
In case $I\subsetneq J$, Lemma~\ref{le:Ku2} then gives the identities
\begin{align}\label{eq:N}\notag
V^k_I \cap \pi_\Kk^{-1}(\pi_\Kk(V^k_J))&\;=\;V^k_I  \cap \phi_{IJ}^{-1}(V^k_J)  ,\\
V^k_J \cap \pi_\Kk^{-1}(\pi_\Kk(V^k_I))&\;=\;V^k_J \cap \phi_{IJ}(V^k_I \cap U_{IJ})  \;=:\, N^k_{JI} 
\end{align}
for the sets on which we will require compatibility of the perturbations $\nu_I$ and $\nu_J$.
The analogous identities hold for any combinations of the nested precompact open sets 
$$
C_I \;\sqsubset\; V_I \;\sqsubset\; \ldots V^{k'}_I \;\sqsubset\; V^{k}_I \ldots \;\sqsubset\; V^0_I  ,
$$
where $k'>k >0$ are any positive reals.
For the sets $N^k_{JI}\subset U_J$ introduced in \eqref{eq:N} above, note that by the compatibility of metrics we have inclusions for any $H\subsetneq J$,
$$
\pi_\Kk(B^J_\de(N^k_{JH}))
\subset 
B_\de \bigl(\pi_\Kk(N^k_{JH})\bigr)
\subset 
B_\de \bigl(\pi_\Kk(\phi_{HJ}(V^k_H\cap U_{HJ}))\bigr)
\subset B_{\de}\bigl( \pi_\Kk(V^k_H) \bigr) .
$$
So \eqref{desep} together with the injectivity of $\pi_\Kk|_{U_J}$ implies  
for any $H,I \subsetneq J$
\begin{equation}\label{Nsep}
B^J_\de(N^k_{JH}) \cap B^J_\de(N^k_{JI}) \neq \emptyset \qquad \Longrightarrow \qquad H\subset I \;\text{or} \; I\subset H.
\end{equation}
Moreover, we have precompact inclusions for any $k'>k\geq 0$
\begin{equation} \label{preinc}
N^{k'}_{JI} \;=\;V^{k'}_J \cap  \phi_{IJ}(V^{k'}_I\cap U_{IJ}) \;\sqsubset\; V^k_J \cap \phi_{IJ}(V^k_I\cap U_{IJ})  \;=\;N^k_{JI} ,
\end{equation}
since $\phi_{IJ}$ is an embedding to the relatively closed subset $s_J^{-1}(E_I)\subset U_J$ and thus $\ov{\phi_{IJ}(V^{k'}_I\cap U_{IJ})} = \phi_{IJ}\bigl(\,\ov{V^{k'}_I}\cap U_{IJ}\bigr) \subset \phi_{IJ}(V^k_I\cap U_{IJ})$.
Next, we abbreviate
$$
N^k_J \, := \;{\textstyle \bigcup_{J\supsetneq I}} N^k_{JI} \;\subset\; V^k_J ,
$$  
and will call the union $N^{|J|}_J$ the {\it core} of $V^{|J|}_J$, since it is the part of this set on which we will prescribe $\nu_J$ in an iteration by a compatibility condition with the $\nu_I$ for $I\subsetneq J$.
In this iteration we will be working with quarter integers between $0$ and 
$$
M \,:=\; M_\Kk \,:=\; \max_{I\in\Ii_\Kk} |I|  ,
$$
and need to introduce another constant $\eta_0>0$ that controls the intersection with $\im\phi_{IJ}=\phi_{IJ}(U_{IJ})$ for all $I\subsetneq J$ as in Figure~\ref{fig:4},
\begin{equation}\label{eq:useful} 
\im \phi_{IJ} \;\cap\;  B^J_{2^{-k-\frac 12}\eta_0} \bigl( N_{JI}^{k+\frac 34} \bigr) \;\subset\; N^{k+\frac 12}_{JI} 
\qquad \forall \; k\in  \{0,1,\ldots,M\}.
\end{equation}
Since $\phi_{IJ}$ is an isometric embedding, this inclusion holds whenever 
$2^{-k-\frac 12}\eta_0 + 2^{-k-\frac 34}\de \leq 2^{-k-\frac 12}\de$ 
for all $k$. To minimize the number of choices in the construction of perturbations, we may thus simply fix $\eta_0$ in terms of $\de$ by
\begin{equation}\label{eq:eta0}
\eta_0 \,:=\; (1 -  2^{-\frac 14} ) \de.
\end{equation} 
Then we also have $2^{-k}\eta_0 + 2^{-k-\frac 12}\de <  2^{-k}\de$,
which provides the inclusions
\begin{equation}\label{eq:fantastic}
B^I_{\eta_k}\Bigl(\;\ov{V_I^{k+\frac 12}}\;\Bigr) \;\subset\; V_I^k \qquad\text{for} \;\; k\geq 0, \; \eta_k:=2^{-k}\eta_0 .
\end{equation}

\begin{figure}[htbp] 
   \centering
   \includegraphics[width=3in]{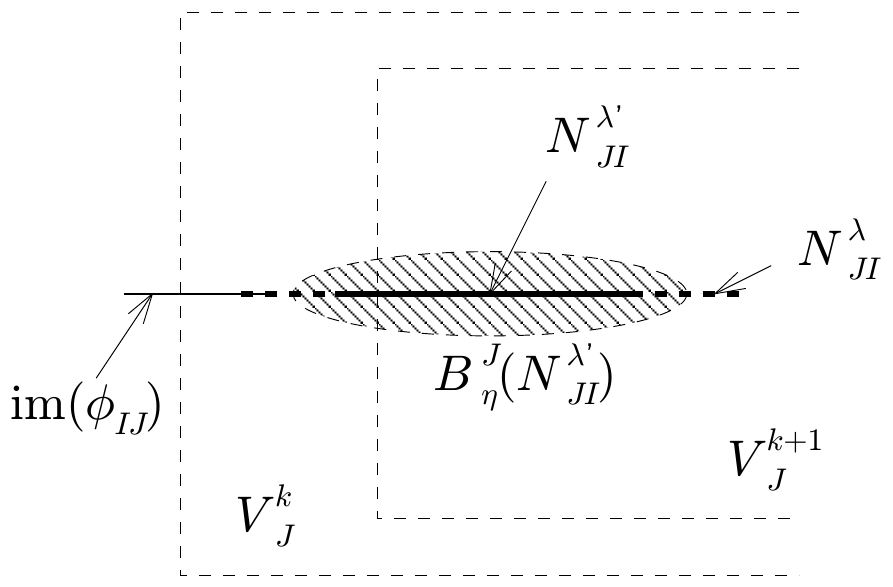} 
 \caption{
This figure illustrates the nested sets $V^{k+1}_J \sqsubset V^k_k$ and 
$ N^{\la'}_{JI}\sqsubset N^{\la}_{JI}\subset \im(\phi_{IJ})\cap V^k_J$ for $k+1>\la'=k+\frac 34 > \la =k+\frac 12>k$, the shaded neighbourhood $B^J_{\eta}(N^{\la'}_{JI})$ for $\eta = 2^{-\la}\eta_0$, and the inclusion given by \eqref{eq:useful}.
}
   \label{fig:4}
\end{figure}

Continuing the preparations, let a nested (cobordism) reduction $\Cc \sqsubset \Vv$ be given. Then we denote the open subset contained in $U_J\cap \pi_\Kk^{-1}(\pi_\Kk(\Cc))$ for $J\in\Ii_\Kk$ by
\begin{equation} \label{ticj}
\Ti C_J \,:=\;   {\textstyle \bigcup_{K\supset J}} \, \phi_{JK}^{-1}(C_K)   \;\subset\; U_J  .
\end{equation}
The assumption $\io_\Kk(X)\subset\pi_\Kk(\Cc)$ implies $s_J^{-1}(0)\subset \pi_\Kk^{-1}(\pi_\Kk(\Cc))$, and so by \eqref{eq:N} the zero set $s_J^{-1}(0)$ is contained in $\Ti C_J \;\cup\;  {\textstyle \bigcup_{I\subsetneq J}} \phi_{IJ}(C_I\cap U_{IJ})$, the union of an open set $\Ti C_J$ and a set in which the perturbed zero sets will be controlled by earlier iteration steps.

\begin{defn}  \label{def:admiss}
Given a reduction $\Vv$ of a metric Kuranishi atlas (or cobordism) $(\Kk,d)$, we set $\de_\Vv>0$ to be the maximal constant such that any $2\de< 2\de_\Vv$ satisfies the reduction properties of Lemma~\ref{le:delred}, that is 
\begin{align}
\label{eq:de1}
B_{2\de}(V_I)\sqsubset U_I\qquad &\forall I\in\Ii_\Kk , \\
\label{eq:dedisj}
B_{2\de}(\pi_\Kk(\ov{V_I}))\cap B_{2\de}(\pi_\Kk(\ov{V_J})) \neq \emptyset &\qquad \Longrightarrow \qquad I\subset J \;\text{or} \; J\subset I .
\end{align}
Given a nested reduction $\Cc\sqsubset\Vv$ of a metric Kuranishi atlas $(\Kk,d)$ and $0<\de<\de_\Vv$, we set $\eta_{|J|-\frac 12} :=  2^{-|J|+\frac 12} \eta_0 = 2^{-|J|+\frac 12} (1 -  2^{-\frac 14} ) \de$ and
\begin{equation*}
\si(\de,\Vv,\Cc) \,:=\; \min_{J\in\Ii_\Kk}
\inf \Bigl\{
\; \bigl\| s_J(x) \bigr\| \;\Big| \;
x\in \ov{V^{|J|}_J} \;\less\; \Bigl( \Ti C_J \cup {\textstyle \bigcup_{I\subsetneq J}} B^J_{\eta_{|J|-\frac 12}}\bigl(N^{|J|-\frac14}_{JI}\bigr) \Bigr) \Bigr\}  .
\end{equation*}
\end{defn}

In this language, the previous development of setup in this section shows that for any metric Kuranishi atlas or cobordism $(\Kk,d)$ we have $\de_\Vv>0$. We note some further properties of these constants. 
Note first that there is no general relation between $\si(\de,\Vv,\Cc)$ and $\si(\de',\Vv,\Cc)$ for $0<\de'<\de<\de_\Vv$ since both $V^{|J|}_J$ and $B^J_{\eta_{|J|-\frac 12}}\bigl(N^{|J|-\frac14}_{JI}\bigr)$ grow with growing $\de$, and hence the domains of the infimum are not nested in either way.

\begin{lemma}\label{le:admin}
\begin{enumerate}
\item
Let $\Cc\sqsubset\Vv$ be a nested reduction of a metric Kuranishi atlas or cobordism, and let $\de<\de_\Vv$, then we have $\si(\de,\Vv,\Cc)>0$.
\item 
For any reduction $\Vv$ of a metric Kuranishi atlas we have $\de_\Vv=\de_{\Vv\times[0,1]}$.
\item
Given a metric Kuranishi cobordism $(\Kk,d)$ we equip the Kuranishi atlases $\p^\al\Kk$ for $\al=0,1$ with the restricted metrics $d\big|_{|\p^\al\Kk|}$.
Then for any cobordism reduction $\Vv$ we have $\de_{\p^\al\Vv}\geq \de_\Vv$
for $\al=0,1$.
\item
In the setting of (iii), let $\eps>0$ be the collar width of $(\Kk,d)$.
Then the neighbourhood of radius $r<\eps$ of any $\eps$-collared set $W\subset U_I$ 
(i.e.\ with $W\cap \io^\al_I(\p^\al U_I \times A^\al_\eps)=\io^\al_I(\p^\al W \times A^\al_\eps)$) is $(\eps-r)$-collared,
\begin{equation} \label{eq:Wnbhd}
B^I_r(W) \cap \io^\al_I\bigl(\p^\al U_I \times A^\al_{\eps-r}\bigr)  = \io^\al_I\bigl( B^{I,\al}_r(\p^\al W)  \times A^\al_{\eps-r} \bigr) , 
\end{equation}
with $\p^\al B^I_r(W) = B^{I,\al}_r(\p^\al W)$, where we denote by $B^{I,\al}_r$ the neighbourhoods in $\p^\al U_I$ induced by pullback of the metric $d_I$ with $\io^\al_I:\p^\al U_I \times\{\al\} \to U_I$.
\item
If in (iv) the collared sets $W\subset U_I$ are obtained as products with $[0,1]$ in a product Kuranishi cobordism $\Kk=\Kk'\times[0,1]$, then \eqref{eq:Wnbhd} holds for any $r>0$ with $\eps-r$ replaced by $1$.
\end{enumerate}
\end{lemma}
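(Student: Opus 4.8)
All five claims are point‑set statements that become routine once the correct pictures are in place, so the plan is to dispatch them in order, reusing the product structure of the metric in collars (which was already extracted in Definition~\ref{def:mCKS} and used implicitly in Lemma~\ref{le:metric}) and the separation estimate \eqref{eq:epscoll}. For (i): the infimum defining $\si(\de,\Vv,\Cc)$ runs, for each $J\in\Ii_\Kk$, over the set $K_J:=\ov{V^{|J|}_J}\less\bigl(\Ti C_J\cup\bigcup_{I\subsetneq J}B^J_{\eta_{|J|-\frac12}}(N^{|J|-\frac14}_{JI})\bigr)$. Since $2\de<2\de_\Vv$, property \eqref{eq:de1} gives $B^I_{2\de}(V_I)\sqsubset U_I$, hence $V^k_I=B^I_{2^{-k}\de}(V_I)\sqsubset U_I$ for all $k\ge 0$, so $\ov{V^{|J|}_J}$ is compact and $K_J$, being closed inside it, is compact; as $\Ii_\Kk$ is finite it then suffices to show $\|s_J(x)\|>0$ on each $K_J$. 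I would prove $s_J(x)\neq 0$ for $x\in K_J$ by contradiction: if $s_J(x)=0$ then, using $\io_\Kk(X)\subset\pi_\Kk(\Cc)$ and the computation following \eqref{ticj} (which rewrites $s_J^{-1}(0)\subset \pi_\Kk^{-1}(\pi_\Kk(\Cc))\cap U_J$ as $\Ti C_J\cup\bigcup_{I\subsetneq J}\phi_{IJ}(C_I\cap U_{IJ})$), either $x\in\Ti C_J$, or $x\in\phi_{IJ}(C_I\cap U_{IJ})$ for some $I\subsetneq J$; in the latter case $C_I\subset V_I\subset V^{|J|-\frac14}_I$ and $\ov{V^{|J|}_J}\subset V^{|J|-\frac14}_J$ (since $2^{-|J|}\de<2^{-|J|+\frac14}\de$), so $x\in V^{|J|-\frac14}_J\cap\phi_{IJ}(V^{|J|-\frac14}_I\cap U_{IJ})=N^{|J|-\frac14}_{JI}\subset B^J_{\eta_{|J|-\frac12}}(N^{|J|-\frac14}_{JI})$ — either way contradicting $x\in K_J$. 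Continuity of $s_J$ on the compact $K_J$ then gives a positive minimum, and minimising over the finitely many $J$ gives $\si>0$. The cobordism case uses exactly the same argument with the cobordism versions of the cited results.

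\textbf{Parts (ii) and (iii).} Both are verifications that the defining properties \eqref{eq:de1} and \eqref{eq:dedisj} transfer. For (ii), equip $U_I\times[0,1]$ with $d_I+d_\R$ and $|\Kk\times[0,1]|\cong|\Kk|\times[0,1]$ with $d+d_\R$; then I would just compute $B_{2\de}(V_I\times[0,1])=B^I_{2\de}(V_I)\times[0,1]$, $\pi(\ov{V_I\times[0,1]})=\pi_\Kk(\ov{V_I})\times[0,1]$, and $B_{2\de}$ of the latter $=B_{2\de}(\pi_\Kk(\ov{V_I}))\times[0,1]$, from which \eqref{eq:de1} and \eqref{eq:dedisj} for $\Vv\times[0,1]$ at radius $2\de$ are equivalent to those for $\Vv$, giving $\de_{\Vv\times[0,1]}=\de_\Vv$. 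For (iii), the key input is that $\io^\al_I(\cdot,\al)\colon\partial^\al U_I\to U_I$ is an isometric embedding onto the closed subset $\partial^\al U_I\subset U_I$ when $\partial^\al U_I$ carries the restricted metric $d^\al_I$; this follows from $\pi_{\Kk^{[0,1]}}\circ\io^\al_I=\rho^\al\circ(\pi_{\Kk^\al}\times\id)$ together with the collar product form \eqref{eq:epsprod} of the metric, which also shows $\rho^\al(\cdot,\al)$ is an isometry onto $\partial^\al|\Kk^{[0,1]}|$. Using $\partial^\al V_I=V_I\cap\partial^\al U_I$ from Definition~\ref{def:cvicin}(iv), one gets $\ov{\partial^\al V_I}=\ov{V_I}\cap\partial^\al U_I$ and $B^{\partial^\al I}_{2\de}(\partial^\al V_I)\subset B^I_{2\de}(V_I)\cap\partial^\al U_I$, so \eqref{eq:de1} for $\Vv$ implies it for $\partial^\al\Vv$; and pushing the $\partial^\al\Vv$-version of \eqref{eq:dedisj} up through $\rho^\al(\cdot,\al)$ shows it follows from the $\Vv$-version. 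Hence every $2\de<2\de_\Vv$ is admissible for $\partial^\al\Vv$, i.e.\ $\de_{\partial^\al\Vv}\ge\de_\Vv$.

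\textbf{Parts (iv) and (v).} For (iv) I would work entirely inside the collar $\io^\al_I(\partial^\al U_I\times A^\al_\eps)$, where (as in (iii)) the metric is a product, $d_I(\io^\al_I(x,s),\io^\al_I(y,t))=d^\al_I(x,y)+|s-t|$. The inclusion ``$\supset$'' in \eqref{eq:Wnbhd} is immediate from the $\eps$-collar hypothesis on $W$ and this formula. For ``$\subset$'', let $z=\io^\al_I(y,t)$ with $|t-\al|<\eps-r$ and $d_I(z,w)<r$ for some $w\in W$; if $w$ lies in the collar, the $\eps$-collar hypothesis puts $w\in\io^\al_I(\partial^\al W\times A^\al_\eps)$ and the product formula gives $d^\al_I(y,\partial^\al W)<r$, i.e.\ $y\in B^{I,\al}_r(\partial^\al W)$. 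If $w$ lies outside the collar, then $\pi_{\Kk^{[0,1]}}(z)=\rho^\al(\pi_{\Kk^\al}(y),t)$ with $|t-\al|<\eps-r$ and $d_I(z,w)=d^{[0,1]}(\pi(z),\pi(w))$, so the contrapositive of \eqref{eq:epscoll} forces $\pi(w)\in\rho^\al(|\Kk^\al|\times A^\al_\eps)$; then the compatibility of the collar structures of the domains with that of $|\Kk|$ (Definition~\ref{def:Ccc}(2): coordinate changes are of product form near the boundary, so a point of $U_I$ that is $\pi$‑equivalent to a collar point of some $U_J$ is itself a collar point of $U_I$) shows $w$ is after all in the $U_I$‑collar, reducing to the first case. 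Taking the slice $t=\al$ then yields $\partial^\al B^I_r(W)=B^{I,\al}_r(\partial^\al W)$. Part (v) is the degenerate case: when $\Kk=\Kk'\times[0,1]$ carries the product metric, each $d_I=d'_I+d_\R$ is a \emph{global} product, so for $W=W'\times[0,1]$ one computes $B^I_r(W'\times[0,1])=B^{I'}_r(W')\times[0,1]$ for every $r>0$; intersecting with $\io^\al_I(\partial^\al U_I\times A^\al_1)=U'_I\times A^\al_1$ gives $B^{I'}_r(W')\times A^\al_1=\io^\al_I(B^{I,\al}_r(\partial^\al W)\times A^\al_1)$, which is \eqref{eq:Wnbhd} with $\eps-r$ replaced by $1$; no separation estimate is needed since the product form is not confined to a collar.

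\textbf{Main obstacle.} Everything except one step is bookkeeping with precompactness, isometries and the product formulas. The delicate point is the ``$w$ outside the $U_I$‑collar'' case in (iv): there one must combine the separation inequality \eqref{eq:epscoll}, which lives on $|\Kk|$, with the fact that the collar structure in each domain $U_I$ is compatible (via the product form of the coordinate changes) with the collar structure on the realization, in order to conclude that such a $w$ cannot be within distance $r<\eps-|t-\al|$ of a point $z$ sitting at collar‑depth $<\eps-r$. This is where I expect the proof to require the most care, and it is the reason (v) is singled out — in the product case the global product form of the metric bypasses this issue entirely.
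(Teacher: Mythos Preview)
Your proposal is correct and follows essentially the same approach as the paper. The only notable organizational difference is in (iv): the paper decomposes $W$ into its collar part $W\cap\im\io^\al_I=\io^\al_I(\p^\al W\times A^\al_\eps)$ and its complement $W'=W\less\im\io^\al_I$, then uses \eqref{eq:epscoll} directly to show $B^I_r(W')$ misses the $(\eps-r)$-collar entirely---this is cleaner than your pointwise case-split on $w$, but both arguments hinge on the same splitting of the collar subcategory (Remark~\ref{rmk:cobordreal} rather than Definition~\ref{def:Ccc} is the precise reference), and you have correctly identified this as the one nontrivial step.
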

\begin{proof}
To check statement (i) it suffices to fix $J\in\Ii_\Kk$ and consider the continuous function $\|s_J\|$ over the compact set $\ov{V^{|J|}_J} \;\less\; \bigl( \Ti C_J \cup {\textstyle \bigcup_{I\subsetneq J}} B^J_{\eta_{|J|-\frac 12}}\bigl(N^{|J|-\frac14}_{JI}\bigr) \bigr)$. We claim that its infimum is positive since the domain is disjoint from $s_J^{-1}(0)$. Indeed, the reduction property $\io_\Kk(X)\subset\pi_\Kk(\Cc)$ implies $s_J^{-1}(0)\subset \Ti C_J \cup \bigcup_{I\subsetneq J}  \phi_{IJ}(C_I\cap U_{IJ})$, the intersections $\ov{V^{|J|}_J}\cap \phi_{IJ}(C_I\cap U_{IJ})$ are contained in $N^{k}_{JI}$ for any $k<|J|$ since $V^{|J|}_J \sqsubset V^k_J$ and $C_I\sqsubset V_I \subset V^k_I$, and we have $N^{k}_{JI}\subset  B^J_{\eta_{|J|-\frac 12}}\bigl(N^{|J|-\frac14}_{JI}\bigr)$ for $k\geq -|J|+\frac 12$.

Statement (ii) holds since all sets and metrics
involved are of product form. 

Statement (iii) follows by pullback with $\io^\al_I|_{\p^\al U_I \times\{\al\}}$ since the $2^{-k}\de$-neighbourhood $(\p^\al V_I)^k$ of the boundary $\p^\al V_I$ within $\p^\al U_I$ is always contained in the boundary $\p^\al V^k_I$ of the $2^{-k}\de$-neighbourhood of the domain $V_I$.

To check (iv) note in particular the product forms $(\io^\al_I)^{-1}(V_I)=\p^\al V_I \times A^\al_\eps$ and
$(\io^\al_I)^*d_I = d^\al_I + d_\R$ on the $\eps$-collars, where $d^\al_I$ denotes the metric on $\p^\al U_I$ induced from the restriction of the metric on $|\Kk|$ to $|\p^\al\Kk|$.
Then for any $\p^\al W \subset \p^\al U_I$ the product form of the metric implies product form of the $r$-neighbourhoods 
$$
B^I_r\bigl(\io^\al_I(\p^\al W\times A^\al_\eps)\bigr)\cap \im\io^\al_I \;=\; \io^\al_I\bigl( B^{I,\al}_r(\p^\al W)\times A^\al_\eps\bigr) .
$$
Moreover, for any $W' \subset U_I\less\im\io^\al_I$ and $r < \eps$, 
the collaring condition \eqref{eq:epscoll} on the metric implies that 
\begin{equation}\label{eq:nob}
B^I_r(W') \cap \io^\al_I(\p^\al U_I\times A^\al_{\eps-r}) = \emptyset .
\end{equation}
The identity \eqref{eq:Wnbhd} now follows from applying the above identities with $W'=W \less \im\io^\al_I$.

In the product case (v), the complement of the (closed) collars is empty, so there is no need for the second identity and hence for the restriction $r<\eps$.
\end{proof}

In the case of a metric tame Kuranishi atlas we will construct transverse perturbations $\nu = \bigl(\nu_I : V_I \to E_I \bigr)_{I\in\Ii_\Kk}$ by an iteration which constructs and controls each $\nu_I$ over the larger set ${V_I^{|I|}}$.  In order to prove uniqueness of the VMC, we will moreover need to interpolate between any two such perturbations by a similar iteration. We will use the following definition to keep track of the refined properties of the sections in this iteration.

\begin{defn}  \label{a-e}
Given a nested reduction $\Cc\sqsubset\Vv$ of a metric tame Kuranishi atlas 
$(\Kk,d)$ and constants $0<\de<\de_\Vv$ and $0<\si\le\si(\de,\Vv,\Cc)$, we say that a perturbation $\nu$ of $s_\Kk|_\Vv$ is {\bf $(\Vv,\Cc,\de,\si)$-adapted} if the sections $\nu_I:V_I\to E_I$ extend to sections over ${V^{|I|}_I}$ (also denoted $\nu_I$) so that the following conditions hold for every $k=1,\ldots, M$ with 
$$
M_\Kk:= \max_{I\in\Ii_\Kk} |I|, \qquad
\eta_k:=2^{-k}\eta_0=2^{-k} (1-2^{-\frac 14})\de .
$$
\begin{itemize}
\item[a)]
The perturbations are compatible in the sense that the commuting diagrams in Definition~\ref{def:sect} hold on $\bigcup_{|I|\leq k} {V^k_I}$, that is
$$
\qquad
\nu_I \circ \phi_{HI} |_{{V^k_H}\cap \phi_{HI}^{-1}({V^k_I})} \;=\; \Hat\phi_{HI} \circ \nu_H |_{{V^k_H}\cap \phi_{HI}^{-1}({V^k_I})} 
\qquad \text{for all} \; H\subsetneq I , |I|\leq k .
$$
\item[b)]
The perturbed sections are transverse, that is $(s_I|_{{V^k_I}} + \nu_I) \pitchfork 0$ for each $|I|\leq k$.
\item[c)]
The perturbations are {\it strongly admissible} with radius $\eta_k$, that is for all $H\subsetneq I$ and $|I|\le k$ we have
$$
\qquad
\nu_I( B^I_{\eta_k}(N^{k}_{IH})\bigr) \;\subset\; \Hat\phi_{HI}(E_H) 
\qquad
\text{with}\;\;
N^k_{IH} = V^k_I \cap \phi_{HI}(V^k_H\cap U_{HI}) .
$$
In particular, the perturbations are admissible along the core $N^k_I$, that is we have $\im\rd_x\nu_I \subset \im\Hat\phi_{HI}$ at all $x\in N^k_{IH}$.
\item[d)]  
The perturbed zero sets are contained in $\pi_\Kk^{-1}\bigl(\pi_\Kk(\Cc)\bigr)$; more precisely
$$
(s_I |_{{V^k_I}}+ \nu_I)^{-1}(0) \;\subset\; {V^k_I} \cap \pi_\Kk^{-1}\bigl(\pi_\Kk(\Cc)\bigr)
\qquad
\forall |I|\leq k,
$$
or equivalently $s_I + \nu_I \neq 0$ on ${V^k_I} \less  \pi_\Kk^{-1}\bigl(\pi_\Kk(\Cc)\bigr)$.
\item[e)]
The perturbations are small, that is $\sup_{x\in {V^k_I}} \| \nu_I (x) \| < \si$
for $|I|\leq k$. 
\end{itemize}

Given a metric Kuranishi atlas $(\Kk,d)$, we say that a perturbation $\nu$  is {\bf adapted} if it is a $(\Vv,\Cc,\de,\si)$-adapted perturbation $\nu$ of $s_\Kk|_\Vv$ for some choice of nested reduction $\Cc\sqsubset\Vv$ and constants $0<\de<\de_\Vv$ and $0<\si\le\si(\de,\Vv,\Cc)$. 
\end{defn}

Next, we note some simple properties of these notions; in particular the fact that adapted perturbations are automatically admissible, precompact, and transverse.

\begin{lemma}\label{le:admin2}
\begin{enumerate}
\item
Any $(\Vv,\Cc,\de,\si)$-adapted perturbation $\nu$ of $s_\Kk|_\Vv$ is an admissible, precompact, transverse perturbation with $\pi_\Kk( (s+\nu)^{-1}(0))\subset\pi_\Kk(\Cc)$. 
\item 
If $\nu$ is a $(\Vv,\Cc,\de,\si)$-adapted perturbation, then it is also $(\Vv,\Cc,\de',\si')$-adapted for any $\de'\le\de$ and $\si'\in \bigl[\si, \si(\de',\Vv,\Cc)\bigr)$.
\end{enumerate}
\end{lemma}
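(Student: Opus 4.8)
\textbf{Plan of proof for Lemma~\ref{le:admin2}.}

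The plan is to verify each claimed property directly from the defining conditions (a)--(e) in Definition~\ref{a-e}, using the various inclusions among the nested sets $C_I\sqsubset V_I\sqsubset V_I^k$ established in the setup of Section~\ref{ss:const}.

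For part (i), I would argue as follows. \emph{Transversality} is immediate: condition (b) gives $(s_I|_{V_I^{|I|}}+\nu_I)\pitchfork 0$, and restricting to the smaller open set $V_I\subset V_I^{|I|}$ preserves transversality. \emph{Compatibility} of $\nu$ as a reduced section (the commuting squares of Definition~\ref{def:sect}) follows from condition (a) with $k=M_\Kk$, since then every $I\in\Ii_\Kk$ has $|I|\le M_\Kk$ and $V_I\subset V_I^{M_\Kk}$, so the required identity $\nu_I\circ\phi_{HI}=\Hat\phi_{HI}\circ\nu_H$ holds on $V_H\cap\phi_{HI}^{-1}(V_I)$. \emph{Admissibility} in the sense of \eqref{eq:admiss} requires $\rd_y\nu_J(\rT_yV_J)\subset\im\Hat\phi_{IJ}$ for all $I\subsetneq J$ and $y\in V_J\cap\phi_{IJ}(V_I\cap U_{IJ})$; but $V_J\cap\phi_{IJ}(V_I\cap U_{IJ})\subset V_J^{|J|}\cap\phi_{IJ}(V_J^{|J|}\cap U_{IJ}) = N^{|J|}_{JI}$ (using $V_J\subset V_J^{|J|}$ and $V_I\subset V_I^{|J|}$ since $|J|\ge 1$), and the last sentence of condition (c) with $k=|J|$ gives exactly $\im\rd_x\nu_J\subset\im\Hat\phi_{IJ}$ on $N^{|J|}_{JI}$. \emph{Precompactness} and the zero set containment: condition (d) with $k=|I|$ gives $(s_I|_{V_I^{|I|}}+\nu_I)^{-1}(0)\subset V_I^{|I|}\cap\pi_\Kk^{-1}(\pi_\Kk(\Cc))$; intersecting with $V_I$ and applying $\pi_\Kk$ shows $\pi_\Kk((s_I|_{V_I}+\nu_I)^{-1}(0))\subset\pi_\Kk(\Cc)$. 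Taking the union over $I$ yields $\pi_\Kk((s+\nu)^{-1}(0))\subset\pi_\Kk(\Cc)$, and since $\Cc\sqsubset\Vv$ is a nested reduction by hypothesis, this is precisely the precompactness condition of Definition~\ref{def:precomp}.

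For part (ii), I would observe that conditions (a), (b), (c), (e) in Definition~\ref{a-e} do not involve $\de$ or $\si$ at all except for (c) via the radius $\eta_k=2^{-k}(1-2^{-\frac14})\de$ and the sets $V_I^k=B^I_{2^{-k}\de}(V_I)$, $N^k_{IH}$, and (e) via the bound $\si$. Lowering $\de$ to $\de'\le\de$ shrinks the neighbourhoods $V_I^k$ and $N^k_{IH}$ and shrinks $\eta_k$, so conditions (a)--(d) continue to hold (they are "local" statements on sets that only get smaller, and transversality and zero-set containment are preserved under passing to smaller open sets); the extended sections $\nu_I$ over $V_I^{|I|}(\de')\subset V_I^{|I|}(\de)$ are just the restrictions of the given ones. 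For condition (e), if $\sup_{V_I^{|I|}(\de)}\|\nu_I\|<\si$ then a fortiori $\sup_{V_I^{|I|}(\de')}\|\nu_I\|<\si\le\si'$. The one genuine check is that $\si'$ is an admissible value, i.e.\ $\si'\le\si(\de',\Vv,\Cc)$, which is exactly the stated hypothesis $\si'\in[\si,\si(\de',\Vv,\Cc))$; note Lemma~\ref{le:admin}~(i) guarantees $\si(\de',\Vv,\Cc)>0$ so this interval is nonempty when $\si$ is small enough, and in any case the statement is conditional on $\si'$ lying in it.

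I do not expect a serious obstacle here; this is essentially an unpacking of definitions. The only point requiring mild care is the admissibility verification in part (i), where one must track that the relevant base points $y\in V_J\cap\phi_{IJ}(V_I\cap U_{IJ})$ indeed lie in $N^{|J|}_{JI}$ — this uses $|J|\ge 1$ so that $V_J\subset V_J^{|J|}$, which holds for all $J\in\Ii_\Kk$ — and that strong admissibility (condition (c)) with radius $\eta_{|J|}>0$ in particular forces the derivative condition on the core, not merely the weaker statement that $\nu_J$ maps into $\Hat\phi_{HI}(E_H)$. A second minor point in (ii) is confirming that the monotonicity of the sets $V_I^k$ in $\de$ is the right direction for each of conditions (a)--(d): all four are statements that become \emph{easier} on smaller domains, so shrinking $\de$ only helps.
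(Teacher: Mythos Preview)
Your proposal is correct and follows exactly the paper's approach, just with considerably more detail filled in; the paper's own proof is a two-sentence sketch that points to conditions (a)--(e) in the same way you do. One small typo: in your admissibility check you write $V_J\cap\phi_{IJ}(V_I\cap U_{IJ})\subset V_J^{|J|}\cap\phi_{IJ}(V_J^{|J|}\cap U_{IJ})$, but the second $V_J^{|J|}$ should be $V_I^{|J|}$ (matching the definition $N^{|J|}_{JI}=V^{|J|}_J\cap\phi_{IJ}(V^{|J|}_I\cap U_{IJ})$); the argument is unaffected.
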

\begin{proof}
To check statement (i), first note that $\nu$ is an admissible reduced section in the sense of Definition~\ref{def:sect} by c) and d), and is transverse by b). 
Restriction of a) implies that it satisfies the zero set condition $s_I+\nu_I \neq 0$ on $V_I \less \pi_\Kk^{-1}(\pi_\Kk(\Cc))$, and hence $\pi_\Kk( (s+\nu)^{-1}(0))\subset\pi_\Kk(\Cc)$, which in particular implies precompactness in the sense of Definition~\ref{def:precomp}.

Statement (ii) holds because the domains in a)-e) for $\de'$ are included in those for $\de$, and $\si$ only appears in the inequality of e).
\end{proof}

Using these notions, we now prove a refined version of the existence of admissible, precompact, transverse perturbations in every metric tame Kuranishi atlas.

\begin{prop}\label{prop:ext}
Let $(\Kk,d)$ be metric tame Kuranishi atlas with nested reduction $\Cc \sqsubset \Vv$.
Then for any $0<\de<\de_\Vv$ and $0<\si\le\si(\de,\Vv,\Cc)$ there exists a $(\Vv,\Cc,\de,\si)$-adapted perturbation $\nu$ of $s_\Kk|_{\Vv}$.  In particular, $\nu$ is admissible, precompact, and transverse, and its perturbed zero set $|\bZ_\nu|=|(s+\nu)^{-1}(0)|$ is compact with $\pi_\Kk\bigl((s+\nu)^{-1}(0)\bigr)$ contained in $\pi_\Kk(\Cc)$.
\end{prop}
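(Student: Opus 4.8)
The proof proceeds by an induction on $k = 1, \ldots, M_\Kk$, in each step constructing the sections $\nu_I$ for all $I$ with $|I| = k$ over the enlarged domains $V_I^{|I|} = V_I^k$, while preserving conditions a)--e) of Definition~\ref{a-e} for all charts of size $\leq k$. The base case $k=0$ is vacuous (no index sets of size $0$ are relevant, or one starts with $\nu \equiv 0$ on the unique minimal charts and checks that conditions a)--e) hold trivially for size $1$ after a first genuine step). At step $k$, for a fixed $I$ with $|I|=k$, the section $\nu_I$ must already be determined on the \emph{core} $N^k_I = \bigcup_{H\subsetneq I} N^k_{IH}$ by the compatibility condition a): on $N^k_{IH}$ we are forced to set $\nu_I = \Hat\phi_{HI}\circ\nu_H\circ\phi_{HI}^{-1}$. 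The separation estimate \eqref{Nsep} guarantees that on the overlaps $B^I_\de(N^k_{IH})\cap B^I_\de(N^k_{IL})$ one has $H\subset L$ or $L\subset H$, so these prescriptions are mutually consistent by the already-established compatibility among the $\nu_H$, and patch to a well-defined smooth section $\nu_I^{\mathrm{core}}$ on a neighbourhood of $N^k_I$. Because each $\nu_H$ satisfies strong admissibility c) with radius $\eta_{k}$ (after restriction, using $\eta_{k}\le\eta_{|H|}$ and the chain rule together with $\im\rd\phi_{HI}$ identifying kernels), the extended section $\nu_I^{\mathrm{core}}$ is strongly admissible with radius $\eta_k$ along $N^k_I$, and its zeros lie in $\pi_\Kk^{-1}(\pi_\Kk(\Cc))$ by d) for the $\nu_H$.

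\textbf{Extension and transversality.} Having $\nu_I^{\mathrm{core}}$ on an open neighbourhood $\Oo_I$ of $\ov{N^{k+1/2}_I}$ (shrink slightly using \eqref{preinc} so that the closure sits inside where the core section is defined), the next step is to extend it to all of $V_I^k$ while achieving transversality b) and preserving c), d), e). Here one uses: (1) a cutoff function $\beta_I$ supported in $\Oo_I$, equal to $1$ on a smaller neighbourhood of $\ov{N^{k+3/4}_I}$, to glue; (2) Sard's theorem (applicable since the atlas is smooth of finite dimension and, crucially for uniqueness of the VMC, one only needs $\Cc^1$, but smoothness is assumed here) to choose a generic small perturbation $p_I\colon V_I^k\to E_I$ with $\|p_I\|<\si$ so that $s_I+\nu_I^{\mathrm{core}}+p_I\pitchfork 0$ where $\beta_I$ is small, combined with the observation that near $N^k_I$ the section $s_I+\nu_I^{\mathrm{core}}$ is \emph{already} transverse because the restriction of $s_H+\nu_H$ is transverse by the inductive hypothesis and the index condition \eqref{tbc} supplies transversality in the normal directions to $\im\phi_{HI}$ (this is exactly the content of Lemma~\ref{le:transv} run backwards: a transverse zero of $s_H+\nu_H$ gives, via $\phi_{HI}$, a transverse zero of $s_I+\nu_I$, using the product form \eqref{eq:dnutrans} of the differential that holds by admissibility c)); (3) to preserve strong admissibility one takes $p_I$ itself admissible near the core, i.e.\ with $\rd p_I$ vanishing in the directions normal to the $\im\phi_{HI}$ — this is possible since admissible sections form an affine subspace that is still large enough for a Sard-type argument in the normal slice, which is precisely why the \emph{tangent bundle / index condition} was built into coordinate changes. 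The new section is $\nu_I := \nu_I^{\mathrm{core}} + (1-\beta_I')p_I$ for a suitable cutoff, chosen small enough in $\Cc^0$ and $\Cc^1$ that e) holds with constant $\si$ and that no new zeros appear outside $\pi_\Kk^{-1}(\pi_\Kk(\Cc))$: this last point is where the definition of $\si(\de,\Vv,\Cc)$ enters decisively, since on $\ov{V_I^{|I|}}\setminus\bigl(\Ti C_I\cup\bigcup_{H\subsetneq I}B^I_{\eta_{|I|-1/2}}(N^{|I|-1/4}_{IH})\bigr)$ one has $\|s_I\|\ge\si(\de,\Vv,\Cc)\ge\si>\|\nu_I\|$, forcing $s_I+\nu_I\ne 0$ there, while inside the neighbourhoods of the core the zero set is controlled by the inductively-established d) via \eqref{eq:useful}.

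\textbf{Bookkeeping across sizes and the main obstacle.} After completing all $I$ with $|I|=k$, one must verify that conditions a)--e) continue to hold on $\bigcup_{|I|\le k}V_I^k$ — in particular that enlarging the domain of a previously-constructed $\nu_H$ (from $V_H^{|H|}$ down to the set $V_H^k$, which is \emph{larger} since $k>|H|$) does not break compatibility with the new $\nu_I$; this is handled because in step $|H|$ the section $\nu_H$ was already constructed on all of $V_H^{|H|}\supset V_H^k$, and the compatibility on $N^k_{IH}\subset N^{|H|}_{IH}$ is exactly how $\nu_I^{\mathrm{core}}$ was defined. Finally, restricting all $\nu_I$ from $V_I^{|I|}$ back to $V_I\sqsubset V_I^{|I|}$ yields the desired $(\Vv,\Cc,\de,\si)$-adapted perturbation; the concluding assertions (admissible, precompact, transverse, with $\pi_\Kk((s+\nu)^{-1}(0))\subset\pi_\Kk(\Cc)$, and compactness of $|\bZ_\nu|$) then follow immediately from Lemma~\ref{le:admin2}~(i) and Proposition~\ref{prop:zeroS0}. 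I expect the main obstacle to be step~(3): simultaneously achieving transversality \emph{and} strong admissibility of the extension while keeping the $\Cc^0$-size below $\si$ and not creating zeros near the boundary of $V_I^k$. The tension is that Sard's theorem wants generic perturbations but admissibility restricts us to a thin affine subspace; the resolution is that the index condition \eqref{tbc} guarantees the normal component of $\rd s_I$ is already an isomorphism onto $E_J/\Hat\phi_{IJ}(E_I)$, so one only needs genericity \emph{within} $\im\Hat\phi_{HI}$-valued sections, and there the induction hypothesis on the lower charts already supplies it — the careful choice of the nested radii $\eta_k$, $2^{-k}\de$, and $N^{k+1/4}_{IH}\sqsubset N^k_{IH}$ (via \eqref{eq:fantastic}, \eqref{eq:useful}, \eqref{preinc}) is exactly what makes the cutoff-and-patch argument close without circularity.
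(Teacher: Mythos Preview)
Your overall inductive strategy is correct and matches the paper's proof: iterate on $k=|I|$, observe that compatibility forces $\nu_I$ on the core $N^k_I$, use the separation estimate to see the core prescriptions are consistent, extend, then achieve transversality by a small perturbation away from the core (where it already holds by the index condition and the inductive hypothesis, exactly as you say), and control the zero set via the definition of $\si(\de,\Vv,\Cc)$. The concluding appeal to Lemma~\ref{le:admin2}(i) and Proposition~\ref{prop:zeroS0} is also right.

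There is, however, a genuine gap at the extension step, and it is precisely where you flagged the ``main obstacle.'' The prescriptions $\Hat\phi_{HI}\circ\nu_H\circ\phi_{HI}^{-1}$ define $\nu_I^{\rm core}$ only \emph{on} the core $N^k_I$, which is a finite union of lower-dimensional submanifolds, not on an open neighbourhood of it. Extending smoothly to a neighbourhood so that the \emph{strong} admissibility condition c) holds --- that is, so that the values (not merely the differential) lie in $\Hat\phi_{HI}(E_H)$ on an entire tube $B^I_{\eta_k}(N^k_{IH})$ for every $H\subsetneq I$ --- is the technical heart of the argument, and your sketch does not supply it. The difficulty is that these value constraints are nested (smaller $H$ gives a stricter constraint) and the tubes overlap. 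The paper resolves this by exploiting additivity: write $E_I=\bigoplus_{j\in I}\Hat\phi_{jI}(E_j)$ and build each component $\Ti\nu_I^{\,j}$ separately via a sub-iteration over $\ell=0,\ldots,k-1$ with strictly decreasing tubular radii $r_\ell$; strong admissibility for $H$ with $j\notin H$ then becomes simply $\Ti\nu_I^{\,j}=0$ on the $r_\ell$-tube around $N^{k+\frac12}_{IH}$, and the shrinking radii allow these vanishing conditions to be imposed consistently as $|H|$ grows.

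A related confusion: you describe the transversality perturbation $p_I$ as needing ``$\rd p_I$ vanishing in normal directions,'' but that is the (weaker) admissibility condition \eqref{eq:admiss}, not strong admissibility. In the paper the final perturbation $\nu_\pitchfork$ is simply supported \emph{away} from the tube $B'\supset N^{k+1}_I$, so strong admissibility is preserved because $\nu_I=\Ti\nu_I$ on $B'$; no constraint on $\nu_\pitchfork$ is needed. (Minor slip: in your bookkeeping paragraph you call $V_H^k$ ``larger'' than $V_H^{|H|}$; it is smaller, as you then correctly use.)
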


\begin{proof}
We will construct $\nu_I: V^{|I|}_I\to E_I$ by an iteration over $k=0,\ldots,M= \max_{I\in\Ii_\Kk} |I|$, where in step $k$ we will define $\nu_I : V^k_I \to E_I$ for all $|I| = k$ that, together with the $\nu_I|_{V^k_I}$ for $|I|<k$ obtained by restriction from earlier steps, satisfy conditions a)-e) of Definition~\ref{a-e}.
Restriction to $V_I\subset V^{|I|}_I$ then yields a $(\Vv,\Cc,\de,\si)$-adapted perturbation $\nu$ of $s_\Kk|_\Vv$, which by Lemma~\ref{le:admin2}~(i) is automatically an admissible, precompact, transverse perturbation with $\pi_\Kk( (s+\nu)^{-1}(0))\subset\pi_\Kk(\Cc)$. 
Compactness of $|(s+\nu)^{-1}(0)|$ then follows from Proposition~\ref{prop:zeroS0}.
So it remains to perform the iteration.

For $k=0$ the conditions a)-e) are trivially satisfied since there are no index sets $I\in\Ii_\Kk$ with $|I|\leq 0$. Now suppose that $\bigl(\nu_I : V^k_I\to E_I\bigr)_{I\in\Ii_\Kk, |I|\leq k}$  are constructed such that a)-e) hold. In the next step we can then construct $\nu_J$ independently for each $J\in\Ii_\Kk$ with $|J|=k+1$, since for any two such $J,J'$ we have $\pi_\Kk(V_J^{k+1}) \cap \pi_\Kk(V_{J'}^{k+1})=\emptyset$ unless $J=J'$ by \eqref{desep}, and so the constructions for $J\neq J'$ are not related by the commuting diagrams in condition a).

\MS\NI
{\bf Construction for fixed $\mathbf {|J|=k+1}$:} 
We begin by noting that a) requires for all $I\subsetneq J$ 
\begin{equation} \label{some nu}
\nu_J|_{N^{k+1}_{JI}} \;=\; 
\nu_J|_{V^{k+1}_J \cap \phi_{IJ}(V^{k+1}_I\cap U_{IJ})} \;=\; \Hat\phi_{IJ}\circ \nu_I\circ\phi_{IJ}^{-1} .
\end{equation}
To see that these conditions are compatible, we note that for $H\neq I\subsetneq J$ with $\phi_{HJ}(V^k_H\cap U_{IJ}) \cap \phi_{IJ}(V^k_I\cap U_{IJ})\neq \emptyset $ property 
\eqref{Nsep}
implies $H\subsetneq I$ or $I\subsetneq H$. Assuming w.l.o.g.\ the first, we obtain compatibility from the strong cocycle condition \eqref{strong cocycle} and property a) for $H\subsetneq I$, 
\begin{align*}
&\Hat\phi_{HJ}\circ \nu_H\circ\phi_{HJ}^{-1} |_{V^k_J \cap \phi_{IJ}(V^k_I\cap U_{IJ})\cap \phi_{HJ}(V^k_H\cap U_{HJ})} \\
&=
\Hat\phi_{IJ}\circ \bigl(\Hat\phi_{HI}\circ \nu_H \bigr) |_{\phi_{HJ}^{-1}(V^k_J) \cap \phi_{HI}^{-1}(V^k_I) \cap V^k_H} \circ \phi_{HI}^{-1} \circ \phi_{IJ}^{-1} \\
&=
\Hat\phi_{IJ}\circ \bigl(\nu_I \circ \phi_{HI}\bigr) \circ \phi_{HI}^{-1} \circ \phi_{IJ}^{-1} 
\;=\;
\Hat\phi_{IJ}\circ \nu_I\circ\phi_{IJ}^{-1} .
\end{align*}
Here we checked compatibility on the domains $N^k_{JI}$, thus defining a map
\begin{equation}\label{eq:nuJ'}
\mu_J \,:\;  N^k_J = 
{\textstyle \bigcup_{I\subsetneq J}} 
N^k_{JI} \;\longrightarrow\; E_J , \qquad 
\mu_J|_{N^k_{JI}} := \Hat\phi_{IJ}\circ \nu_I\circ\phi_{IJ}^{-1} .
\end{equation}
Note moreover that by the compatible construction of norms on the obstruction spaces we have 
$$
\|\mu_J\| \,:=\; \sup_{y\in N^k_J} \|\mu_J(y)\| \;\leq\; \sup_{I\subsetneq J} \sup_{x\in V^k_I} \|\nu_I(x)\| \;<\; \si.
$$  
The construction of $\nu_J$ on $V^{k+1}_J$ then has three more steps.

\begin{itemize} 
\item {\bf Construction of extension:}
We construct an extension of the restriction of $\mu_J$ from \eqref{eq:nuJ'} to the enlarged core $N_J^{k+\frac 12}$. More precisely, we construct a smooth map $\Ti\nu_J : V^k_J \to E_J$ that satisfies
\begin{equation}\label{tinu}
\Ti\nu_J|_{N_J^{k+\frac 12}} \;=\; \mu_J|_{N_J^{k+\frac 12}} , \qquad\quad
\|\Ti\nu_J \| \;\leq \; \|\mu_J\| \;<\; \si ,
\end{equation}
and the strong admissibility condition on a larger domain than required in c),
\begin{equation}\label{value} 
\Ti\nu_J \bigl( B^J_{\eta_{k+\frac 12}}\bigl(N^{k+\frac 12}_{JI}\bigr) \bigr) \;\subset\; \Hat\phi_{IJ}(E_I) 
\qquad \forall\; I\subsetneq J .
\end{equation}
In case $k=1$ we achieve the analogous by setting $\Ti\nu_J:=0$.
\vspace{.03in}

\item {\bf Zero set condition:} 
We show that \eqref{value} and the control over $\|\Ti\nu_J\|$ imply the strengthened control of d) over the 
zero set of $s_J + \Ti\nu_J$, in particular
$$
\bigl(s_J|_{{V^{k+1}_J}} + \Ti\nu_J\bigr)^{-1}(0) \;\less\;  B^J_{\eta_{k+\frac 12}}\bigl(N^{k+\frac34}_J\bigr)   \;\subset\; \Ti C_J .
$$
In case $k=1$ this applies with the open set $\Ti C_J = C_J\subset U_J$.
\vspace{.03in}

\item {\bf Transversality:}  
We make a final perturbation $\nu_\pitchfork$ to obtain transversality for $s_J + \Ti\nu_J + \nu_\pitchfork$, 
while preserving conditions a),c),d), and then set $\nu_J: = \Ti\nu_J + \nu_\pitchfork$.
Moreover, taking $\|\nu_\pitchfork\|< \si - \|\Ti\nu_J\|$ ensures e).
\end{itemize}

\MS\NI
{\bf Construction of extensions:}  
 To construct $\Ti\nu_J$ in case $k\geq 2$ it suffices, in the notation of \eqref{eq:iI}, to extend each component $\mu^j_J$ for fixed $j\in J$.
For that purpose we iteratively construct smooth maps $\Ti\mu_\ell^j: W_\ell \to \Hat\phi_{jJ}(E_j)$ on the open sets 
\begin{equation}\label{eq:W}
W_\ell \,:=\; {\textstyle \bigcup _{I\subsetneq J,|I|\le \ell}}\, B^J_{r_\ell}(N^{k+\frac 12}_{JI}) 
\;=\;  B^J_{r_\ell}\bigl( {\textstyle \bigcup _{I\subsetneq J,|I|\le \ell}}\,N^{k+\frac 12}_{JI} \bigr) \;\subset\; U_J 
\end{equation}
with the radii 
$r_\ell:= \eta_k - \frac {\ell+1} {k+1} ( \eta_k-\eta_{k+\frac 12})$,
that satisfy the extended compatibility, admissibility, and smallness conditions
\vspace{.07in}
\begin{enumerate}
\item[(E:i)]
$\Ti\mu^j_\ell |_{N^{k+\frac 12}_{JI}} = \mu_J^j|_{N^{k+\frac 12}_{JI}}$ for all $I\subsetneq J$ with $|I|\leq \ell$ and $j\in I$;
\vspace{.07in}
\item[(E:ii)]
$\Ti\mu^j_\ell |_{B^J_{r_\ell}(N^{k+\frac 12}_{JI})} = 0$
for all $I\subsetneq J$ with $|I|\leq \ell$ and $j\notin I$;
\vspace{.07in}
\item[(E:iii)] 
$\bigl\|\Ti\mu^j_\ell \bigr\| \leq \|\mu^j_J\|$.
\end{enumerate}
\vspace{.07in}
Note here that the radii form a nested sequence $\eta_k=r_{-1} > r_0>r_1 \ldots > r_k = \eta_{k+\frac 12}$ and that when $\ell=k$ the function $\Ti\mu^j_k$ will satisfy  (E:i),(E:ii) for all $I\subsetneq J$,  and is defined on 
$W_k=B^J_{\eta_{k+\frac 12}}(N^{k+\frac 12}_J)\sqsupset N^{k+\frac 12}_J$.
So, after this iteration, we can define $\Ti\nu_J:= \beta {\textstyle\sum_{j\in J}} \, \Ti\mu^j_k$, where $\beta:U_J \to [0,1]$ is a smooth cutoff function with $\beta|_{N^{k+\frac 12}_J}\equiv 1$ and $\supp\beta\subset 
B^J_{\eta_{k+\frac 12}}(N^{k+\frac 12}_J)$, so that $\Ti\nu_J$ extends trivially to $U_J\less W_k$.
This has the required bound by (E:iii), satisfies \eqref{value} since $\Ti\nu_J^j |_{B^J_{\eta_{k+\frac 12}}(N^{k+\frac 12}_{JI})}\equiv 0$ for all $j\notin I$ by (E:ii). Finally, it has the required values on $N^{k+\frac 12}_J = \bigcup_{I\subsetneq J}N^{k+\frac 12}_{JI}$ since for each $I\subsetneq J$ the conditions (E:i), (E:ii) on $N^{k+\frac 12}_{JI}$ together with the fact $\mu_J(N_{JI}^{k+\frac 12}) \subset \Hat\phi_{IJ}(E_I)$ guarantee 
$$
\Ti \nu_J|_{N_{JI}^{k+\frac12}} \;=\; {\textstyle\sum_{j\in J}} \, \Ti\mu_k^j|_{N_{JI}^{k+\frac12}}
\;=\; {\textstyle\sum_{j\in I}} \, \mu_k^j|_{N_{JI}^{k+\frac12}} \;=\; \mu_J|_{N_{JI}^{k+\frac12}} .
$$ 
So it remains to perform the iteration over $\ell$, in which we now drop $j$ from the notation.
For $\ell=0$ the conditions are vacuous since $W_0=\emptyset$.
Now suppose that the construction is given on $W_\ell$. 
Then we cover $W_{\ell+1}$ by the open sets 
$$
B_L': = W_{\ell+1}\cap B^J_{r_{\ell-1}}(N^{k+\frac 12}_{JL})
\qquad
\text{for}\; L\subsetneq J, \; |L|=\ell+1,
$$ 
whose closures are pairwise disjoint by \eqref{Nsep} with $r_{\ell-1}<\delta$, and an open set 
$C_{\ell+1}\subset U_J$
covering the complement,
$$
C_{\ell+1} \,:=\; W_{\ell+1} \;\less\; {\textstyle \bigcup _{|L| = \ell+1}\, \ov{B^J_{r_{\ell}}(N^{k+\frac 12}_{JL})}} \;\sqsubset\; 
W_\ell \;\less\; {\textstyle \bigcup _{|L| = \ell+1}\, \ov{B^J_{r_{\ell+1}}(N^{k+\frac 12}_{JL})}} \;=:\, C_\ell ,
$$
which has a useful precompact inclusion into $C_\ell$,
as defined above,
by $r_{\ell+1}<r_\ell$.
This decomposition is chosen so that each $B^J_{r_{\ell+1}}(N^{k+\frac 12}_{JL})$ for $|L|=\ell+1$ (on which the conditions (E:i),(E:ii) for $I=L$ are nontrivial) has disjoint closure from $\ov{C_{\ell+1}}$ (a compact subset of the domain of $\Ti\mu_\ell$). 
Now pick a precompactly nested open set $C_{\ell+1} \sqsubset C' \sqsubset C_\ell$, 
in particular with $\ov{C'}\cap \ov{B^J_{r_{\ell+1}}(N^{k+\frac 12}_{JL})} = \emptyset$ for all $|L|=\ell +1$.
Then we will obtain a smooth map $\Ti\mu_{\ell+1}: W_{\ell+1} \to \Hat\phi_{jJ}(E_j)$ by setting $\Ti\mu_{\ell+1}|_{C_{\ell+1}} := \Ti \mu_\ell|_{C_{\ell+1}}$  and separately constructing smooth maps $\Ti\mu_{\ell+1}: B'_L \to \Hat\phi_{jJ}(E_j)$ for each $|L|=\ell+1$ such that $\Ti\mu_{\ell+1}=\Ti \mu_\ell$ on $B'_L \cap C'$.
Indeed, this ensures equality of all derivatives on the intersection of the closures $\ov{B_L'} \cap \ov{C_{\ell+1}}$, since this set is contained in $\ov{B_L'} \cap C'$, which is a subset of $\ov{B_L' \cap C'}$ because $C'$ is open, 
and by construction we will have $\Ti\mu_{\ell+1}=\Ti \mu_\ell$ with all derivatives on $\ov{B_L' \cap C'}$. So it remains to construct the extension $\Ti\mu_{\ell+1}|_{B'_L}$ for a fixed $L\subsetneq J$.
For that purpose note that the subset on which this is prescribed as $\Ti\mu_\ell$, can be simplified by the separation property \eqref{Nsep}, 
\begin{equation} \label{simply}
B'_L\cap C' \;\subset\; 
\Bigl( B^J_{r_{\ell-1}}(N^{k+\frac 12}_{JL}) \less \ov{B^J_{r_{\ell+1}}(N^{k+\frac 12}_{JL})} \; \Bigr) \;\cap\; {\textstyle \bigcup _{I\subsetneq L} }B^J_{r_\ell}(N^{k+\frac 12}_{JI}) \;\subset\; W_\ell.
\end{equation}
To ensure (E:i) and (E:ii) for $|I|\leq \ell+1$ first note that $\Ti\mu_{\ell+1}|_{C_{\ell+1}}$ inherits these properties from $\Ti\mu_\ell$ because $C_{\ell+1}$ is disjoint from $B^J_{r_{\ell+1}}(N^{k+\frac 12}_{JI})$ for all $|I|=\ell+1$.
It remains to fix $L\subset J$ with $|L|=\ell+1$ and construct the map $\Ti\mu_{\ell+1}: B'_L \to \Hat\phi_{jJ}(E_j)$ as extension of $\Ti\mu_\ell|_{B'_L\cap C'}$ so that it satisfies properties (E:i)--(E:iii) for all $|I|\le\ell+1$. 
 
In case $j\notin L$ we have $\Ti\mu_\ell|_{B'_L\cap C'}=0$ by iteration hypothesis (E:ii) for each $I\subsetneq J$. 
So we obtain a smooth extension by $\Ti\mu_{\ell+1}:=0$, which satisfies (E:ii) and (E:iii), whereas (E:i) is not relevant.

In case $j\in L$ the conditions (E:i),(E:ii) only require consideration of $I\subsetneq L$ since otherwise $B'_L \cap B^J_{r_\ell}(N^{k+\frac 12}_{JI})=\emptyset$ by \eqref{Nsep}. So we need to construct a bounded smooth map $\Ti\mu_{\ell+1} : B'_L=W_{\ell+1} \cap B^J_{r_{\ell-1}}(N^{k+\frac 12}_{JL}) \to \Hat\phi_{jJ}(E_j)$ that satisfies 
\begin{itemize}
\item[(i)]
$\Ti\mu_{\ell+1}|_{N^{k+\frac 12}_{JL}} = \mu_J^j|_{N^{k+\frac 12}_{JL}}$;
\vspace{.07in}
\item[(i$'$)]
$\Ti\mu_{\ell+1}|_{N^{k+\frac 12}_{JI}} = \mu_J^j|_{N^{k+\frac 12}_{JI}}$ for all $I\subsetneq L$ with $j\in I$;
\vspace{.07in}
\item[(ii)]
$\Ti\mu_{\ell+1}|_{B^J_{r_{\ell+1}}(N^{k+\frac 12}_{JI})} = 0$ for all $I\subsetneq L$ with $j\notin I$;
\vspace{.07in}
\item[(iii)] 
$\bigl\| \Ti\mu_{\ell+1}\bigr\| \leq \|\mu_J^j\|$;
\vspace{.07in}
\item[(iv)] 
$\Ti\mu_{\ell+1}|_{B'_L\cap C'} =\Ti\mu_{\ell}|_{B'_L\cap C'}$.
\end{itemize}
\vspace{.07in}
Because every open cover of $B'_L$ has a locally finite subcovering, such extensions can be patched together by partitions of unity. Hence it suffices, for the given $j\in L\subsetneq J$, to construct smooth maps $\Ti\mu_z: B^J_{r_z}(z)\to \Hat\Phi_{jJ}(E_j)$ on some balls of positive radius $r_z>0$ around each fixed $z\in B'_L$, that satisfy the above requirements.

\NI $\bullet$ 
For $z\in W_\ell \less \ov{N^{k+\frac 12}_{JL}}$ we find $r_z>0$ such that $B^J_{r_z}(z)\subset W_\ell \less \ov{N^{k+\frac 12}_{JL}}$ lies in the domain of $\Ti\mu_\ell$ and the complement of $N^{k+\frac 12}_{JL}$, so that $\Ti\mu_z:=\Ti\mu_\ell|_{B^J_{r_z}(z)}$ is well defined and satisfies all conditions with $\|\Ti\mu_z\|\leq \|\Ti\mu_\ell\|$.

\NI $\bullet$ 
For $z\in B'_L\less \bigl( W_\ell \cup \ov{N^{k+\frac 12}_{JL}}\bigr)$, we claim that there is $r_z>0$ such that $B^J_{r_z}(z)$ is disjoint from the closed subsets $\bigcup_{I\subset L} \ov{N^{k+\frac 12}_{JI}}$ and $\ov{C'}\subset C_\ell\subset W_\ell$. 
This holds because $\bigcup_{I\subsetneq L} \ov{N^{k+\frac 12}_{JI}}\subset W_\ell$ by \eqref{eq:W}.
 Then $\Ti\mu_z:=0$ satisfies all conditions since its domain is in the complement of the domains on which (i), (i$'$), and (iv) are relevant.

\NI $\bullet$ 
Finally, for $z\in \ov{N^{k+\frac 12}_{JL}}$ recall that $\ov{N^{k+\frac 12}_{JL}} \sqsubset N^k_{JL}$ is a compact subset of the smooth submanifold $N^k_{JL}=V^k_J\cap \phi_{LJ}(V^k_L\cap U_{LJ}) \subset A_J$. So we can choose $r_z>0$ such that $B^J_{r_z}(z)$ lies in a submanifold chart for $N^k_{JL}$. Then we define $\Ti\mu_z$ by extending $\mu_J^j|_{B^J_{r_z}(z)\cap N^k_{JL}}$ to be constant in the normal directions. This guarantees (i) and $\|\Ti\mu_z\|\leq \|\mu_J^j\|$, and we will choose $r_z$ sufficiently small to satisfy the further conditions.
First, $N^{k}_{JL}$ is disjoint from $C_{\ell}\sqsupset C'$, so we can ensure that $B^J_{r_z}(z)$ lies in the complement of $C'$, and hence condition (iv) does not apply.
To address (i$'$) and (ii) recall that for every $I\subsetneq L$ the strong cocycle condition of Lemma~\ref{le:tame0} implies that $N^{k+\frac 12}_{JI}\subset\im\phi_{IJ}  =\phi_{LJ}(U_{LJ}\cap\im\phi_{IL})$ is a submanifold of $\im\phi_{LJ}$, and by assumption $z$ lies in the open subset $N^k_{JL}\subset\im\phi_{LJ}$.

In case $j\in I$ and $z\in N^{k+\frac 12}_{JI}\cap \ov{N^{k+\frac 12}_{JL}}$, we can thus choose $r_z$ sufficiently small to ensure that $B^J_{r_z}(z)\cap N^{k+\frac 12}_{JI}$ is contained in the open neighbourhood $N^k_{JL}\subset\im\phi_{LJ}$ of~$z$.
Then $\Ti\mu_z$ satisfies (i$'$) by $\Ti\mu_z=\mu^j_J$ on $B^J_{r_z}(z)\cap N^{k+\frac 12}_{JI}$.

In case $j\notin I$ condition (ii) requires $\Ti\mu_z$ to vanish on $B^J_{r_z}(z)\cap B^J_{r_{\ell+1}}(N^{k+\frac 12}_{JI})$. Here we have $r_{\ell+1}\leq r_1 <\eta_k$, so if $z\notin B^J_{\eta_k}(N^{k+\frac 12}_{JI})$, then we can make this intersection empty by choice of $r_z$. It remains to consider the case 
$z\in B^J_{\eta_k}(N^{k+\frac 12}_{JI})\cap \ov{N^{k+\frac 12}_{JL}}$, where $I\subset L\subset J$ as above.
We pick $x_J\in N^{k+\frac 12}_{JI}$ with $d_J(z,x_J)\leq \eta_k$, then we have $x_J=\phi_{IJ}(x_I)$ for some $x_I\in V^{k+\frac 12}_I\cap U_{IJ}$. By tameness we also have $x_I\in U_{IL}$, and compatibility of the metrics then implies $d(z_L,x_L)=d(z,x_J)\leq \eta_k$ for $x_L:=\phi_{IL}(x_I)$ and $z_L:=\phi_{LJ}^{-1}(z)\in \ov{V^{k+\frac 12}_L}$.
This shows that $x_L$ lies in both $\phi_{IL}(V^{k+\frac 12}_I\cap U_{IJ})$ and $B_{\eta_k}( \ov{V^{k+\frac 12}_L} )$, where the latter is a subset of $V_L^k$ by \eqref{eq:fantastic}, and hence we deduce $x_L\in N^k_{LI}$.
From that we obtain $\nu^j_L|_{B^L_{\eta_k}(x_L)} = 0$ by the induction hypothesis d),
i.e.\ $\nu_L( B^L_{\eta_k}(N^{k}_{LI})\bigr) \;\subset\; \Hat\phi_{IL}(E_I)$.
This implies that the function $\mu^j_J$ of \eqref{eq:nuJ'} vanishes on 
$$
\phi_{LJ}(B^L_{\eta_k}(x_L)\cap U_{LJ})= B^J_{\eta_k}(x_J) \cap \phi_{LJ}(U_{LJ})
$$ 
Since $d_J(z,x_J) \leq r_{\ell+1} < \eta_k$ this set contains $z$, and thus $B^J_{r_z}(z)\cap\phi_{LJ}
(U_{LJ})$ for $r_z>0$ sufficiently small.
With that we have $\mu_J^j|_{B^J_{r_z}(z)\cap N^k_{JL}}=0$ and hence $\Ti\mu_z = 0$, so that (ii) is satisfied.
This completes the construction of $\Ti\mu_z$ in this last case, and hence of $\Ti\mu_{\ell+1}$, and thus by iteration
finishes the construction of the extension $\Ti\nu_J$.  
\MS

\NI {\bf Zero set condition:}
For the extended perturbation constructed above, we have $\bigl\|\Ti \nu_J\bigr\| \leq  \|\mu_J\| \le \sup_{I\subsetneq J} \sup_{x\in V^k_I} \|\nu_I(x)\|< \si$ by induction hypothesis e). We first consider the part of  the perturbed zero set near the core, and then look at the \lq\lq new part".  By \eqref{value}, 
the zero set near the core
$(s_J + \Ti\nu_J)^{-1}(0)\cap  B^J_{\eta_{k+\frac 12}}\bigl(N^{k+\frac34}_{JI}\bigr)$ 
consists of points with $s_J(x) = -\Ti\nu_J(x)\in \Hat\phi_{IJ}(E_I)$,
so must lie within $s_J^{-1}\bigl(\Hat\phi_{IJ}(E_I)\bigr) = \phi_{IJ}(U_{IJ})$.  
Hence \eqref{eq:useful} implies for all $I\subsetneq J$  the inclusion
\begin{equation} \label{eq:zeroset} 
(s_J + \Ti\nu_J)^{-1}(0)\;\cap\;  B^J_{\eta_{k+\frac 12}}\bigl(N^{k+\frac34}_{JI}\bigr)  
\;\subset\; N^{k+\frac12}_{JI} .
\end{equation}
Thus the inductive hypothesis d) together with the compatibility condition  $\Ti\nu_J =\mu_J$ on $N^{k+\frac12}_{JI}\subset \phi_{IJ}(V^k_I)$ from \eqref{tinu}, with $\mu_J$ given by \eqref{eq:nuJ'}, imply that
$s_J + \Ti\nu_J\ne 0$ on $N^{k+\frac12}_{JI}\less \pi_\Kk^{-1}(\pi_\Kk(\Cc))$. Therefore
$$
(s_J + \Ti\nu_J)^{-1}(0)\;\cap\;  B^J_{\eta_{k+\frac 12}}\bigl(N^{k+\frac34}_{JI}\bigr)  
\subset \pi_\Kk^{-1}(\pi_\Kk(\Cc)).
$$
Next, by Definition~\ref{def:admiss} we have
$$
\si < \si(\de,\Vv,\Cc) \le \| s_J(x) \|  \qquad\forall x\in \ov{V^{k+1}_J} \;\less\; \Bigl( \Ti C_J \cup {\textstyle \bigcup_{I\subsetneq J}} B^J_{\eta_{k+\frac 12}}\bigl(N^{k+\frac34}_{JI}\bigr) \Bigr) .
$$
Thus if $x$ is in the complement in $\ov{V^{k+1}_J}$ of the neighbourhoods $B^J_{\eta_{k+\frac 12}}\bigl(N^{k+\frac34}_{JI}\bigr)$ which cover the core, then either $x\in \Ti C_J$ or $\|s_J(x)\|\geq \si_{J,\eta_{k+1}} > \|\Ti\nu_J(x)\|$. 
In particular, we obtain the inclusion
\begin{equation}\label{eq:include}
\bigl(s_J|_{\ov{V^{k+1}_J}} + \Ti\nu_J\bigr)^{-1}(0) \;\less\; {\textstyle\bigcup_{I\subsetneq J} } B^J_{\eta_{k+\frac 12}}\bigl(N^{k+\frac34}_{JI}\bigr)   \;\subset\; \Ti C_J.
\end{equation}
From this we can deduce a slightly stronger version of a) at level $k+1$, namely 
$$
(s_J|_{\ov{V^{k+1}_J}}+\Ti\nu_J)^{-1}(0)  \; \subset\; \pi_\Kk^{-1}(\pi_\Kk(\Cc)) \qquad \forall \; |J|\le k+1 .
$$
Indeed, the zero set of $(s+\Ti\nu_J)|_{\ov{V^{k+1}_J}}$ consists of an ``old part'' given by \eqref{eq:zeroset}, which lies in the enlarged core $N^{k+\frac 12}_J$, where by the above arguments we have $s_J + \Ti\nu_J\ne 0$ on $N^{k+\frac12}_{JI}\less \pi_\Kk^{-1}(\pi_\Kk(\Cc))$. The ``new part'' given by \eqref{eq:include} is in fact contained in the open part $\Ti C_J\subset U_J$ of $\pi_\Kk^{-1}(\pi_\Kk(\Cc))$.

\MS
\NI {\bf Transversality:}
Since the perturbation $\Ti\nu_J$ was constructed to be strongly admissible and hence admissible, the induction hypothesis b) together with Lemma~\ref{le:transv} and \eqref{tinu} imply that the transversality condition is already satisfied on the enlarged core, $(s_J + \Ti\nu_J)|_{N^{k+\frac12}_J} \pitchfork 0$. 
In addition, \eqref{eq:zeroset} also implies that the perturbed section $s_J+\Ti\nu_J$ has no zeros on
$B^J_{\eta_{k+\frac 12}}\bigl(N^{k+\frac34}_{JI}\bigr) \less N^{k+\frac 12}_{JI}$,
so that we have transversality
$$
(s_J + \Ti\nu_J)|_{B^J_{\eta_{k+\frac 12}}(N^{k+\frac34}_J)} \; \pitchfork \; 0 
$$
on a neighbourhood $B:= B^J_{\eta_{k+\frac 12}}(N^{k+\frac34}_J) = \bigcup_{I\subsetneq J} B^J_{\eta_{k+\frac 12}}(N^{k+\frac34}_{JI})$ of the core $N:=N_J^{k+1}=\bigcup_{I\subsetneq J} N^{k+1}_{JI}$, on which compatibility c) requires $\nu_J|_N=\Ti\nu_J|_N$.
In fact, 
$B$ also precompactly contains the neighbourhood $B':= B^J_{\eta_{k+1}}(N^{k+1}_J)$ of $N$, so that strong admissibility c) can be satisfied by requiring $\nu_J|_{B'}=\Ti\nu_J|_{B'}$.

To sum up, the smooth map $\Ti\nu_J : V^{k+1}_J \to E_J$ fully satisfies the compatibility a), strong admissibility c), and strengthened zero set condition d). Moreover, $s_J+\Ti\nu_J$ extends to a smooth map on the compact closure $\ov{V^{k+1}_J}\subset U_J$, where it satisfies transversality $(s_J+\Ti\nu_J)|_B\pitchfork 0$ on the open set $B \subset \ov{V^{k+1}_J}$ and the zero set condition from \eqref{eq:include}, 
$$
(s_J+\Ti\nu_J)^{-1}(0) \cap (\ov{V^{k+1}_J}\less B)\; \subset\; O: = \ov{V^{k+1}_J} \cap \Ti C_J .
$$ 
The latter can be phrased as $\| s_J+\Ti\nu_J \| > 0$ on $( \ov{V^{k+1}_J}\less  B ) \less  O$, which is compact since $O$ is relatively open in $\ov{V^{k+1}_J}$.
Since $z \mapsto \| s_J(z) +\Ti\nu_J(z) \|$ is continuous,  it remains nonvanishing on $W\less O$ for some relatively open neighbourhood $W\subset \ov{V^{k+1}_J}$ of $\ov{V^{k+1}_J}\less B$. This extends the zero set condition to $(s_J+\Ti\nu_J)^{-1}(0) \cap W \subset O$.
We can moreover choose $W$ disjoint from the neighbourhood of the core $B' \sqsubset B$.
Now  we wish to find
a smooth perturbation $\nu_\pitchfork:\ov{V^{k+1}_J} \to E_J$ 
supported in $W$ that satisfies the following:
\begin{enumerate}
\item[(T:i)] it provides transversality $(s_J+\Ti\nu_J+\nu_\pitchfork)|_W \pitchfork 0$;
\item[(T:ii)] the perturbed zero set satisfies the inclusion  $(s_J+\Ti\nu_J+\nu_\pitchfork)^{-1}(0) \cap W \subset O$;
\item[(T:iii)] the perturbation is small:  $\|\nu_\pitchfork\|< \si - \|\Ti\nu_J\|$.
\end{enumerate}
To see that this exists, note that for $\nu_\pitchfork=0$ transversality holds outside the compact subset $\ov{V^{k+1}_J}\less B$ of $W$.  Hence by the Transversality Extension theorem in \cite[Chapter~2.3]{GuillP} we can 
fix a nested open precompact subset $\ov{V^{k+1}_J}\less B \subset P \sqsubset W$ and achieve transversality everywhere on $W$ by adding an arbitrarily small perturbation supported in $P$. This immediately provides (T:i).
Moreover, since $\|s_J +\Ti\nu_J\|$ has a positive maximum on the compact set $P\less O$, we can choose $\nu_\pitchfork$ sufficiently small to satisfy (T:ii) and (T:iii).
Setting 
$$
\nu_J:=\Ti\nu_J + \nu_\pitchfork \,:\; V^{k+1}_J \to E_J
$$
then finishes the construction since the choice of $\nu_\pitchfork$ ensures the zero set inclusion a) and transversality b) on $W$; the previous constructions for $\nu_J|_{V^{k+1}_J\less W}=\Ti\nu_J|_{V^{k+1}_J\less W}$ ensure a), b), and d) on $V^{k+1}_J\less W \supset B'$, and compatibility c) on the core $N \subset B'\subset V^{k+1}_J\less W$; and we achieve smallness e) by the triangle inequality 
$$
 \|\Ti\nu_J+\nu_\pitchfork\| \;\leq\;  \|\Ti\nu_J\| +  \si - \|\Ti\nu_J\| \;\le\; \|\mu_J\| \; \le\; \max_{I\subsetneq J} \|\nu_I\| \;<\; \si .
$$
This completes the iterative step and hence completes construction of the required $(\Vv,\Cc,\de,\si)$-adapted section.  The last claim follows from Proposition~\ref{prop:zeroS0}.
\end{proof}

In order to prove uniqueness up to cobordism of the VMC, we moreover need to construct transverse cobordism perturbations with prescribed boundary values as in Definition~\ref{def:csect}.
We will perform this construction by an iteration as in Proposition~\ref{prop:ext}, with adjusted domains $V^k_J$ obtained by replacing $\de$ with $\frac 12 \de$. 
This is necessary since as before the construction of $\nu_J$ will proceed by extending the given perturbations from previous steps, $\mu_J$, and now also the given boundary values $\nu^\al_J$, and then restricting to a precompact subset.
However, the $(\Vv,\Cc,\de,\si)$-adapted boundary values $\nu^\al_J$ on $\p^\al V_J$ only extend to admissible, precompact, transverse perturbations in a collar of $V^{|J|}_J$.
Hence the construction of $\nu_J$ by precompact restriction does not allow to define it on the whole of this collar. 
Instead, we do achieve this construction
by restriction to $V^{|J|+1}_J\sqsubset V^{|J|}_J$, which by \eqref{eq:VIk} is the analog of $V^{|J|}_J$ when $\de$ is replaced by $\frac 12 \de $.
This means that, firstly, we have to adjust the smallness condition for the iterative construction of perturbations by introducing a variation of the constant $\si(\de,\Vv,\Cc)$ of Definition~\ref{a-e}. 
Secondly, we need a further smallness condition on adapted perturbations if we wish to extend these to a Kuranishi cobordism. Fortunately, the latter construction will only be used on product Kuranishi cobordisms, which leads to the following definitions.

\begin{defn}  \label{a-e rel}
\begin{enumerate}
\item
Let $(\Kk,d)$ be a metric tame Kuranishi cobordism with nested cobordism reduction $\Cc\sqsubset\Vv$, and let $0<\de<\min\{\eps,\de_\Vv\}$, where $\eps$ is the collar width of $(\Kk,d)$ and the reductions $\Cc,\Vv$. Then we set
\begin{align*}
 \si' (\de,\Vv,\Cc) &\,:=\; \min_{J\in\Ii_\Kk} \inf \Bigl\{ \; \bigl\| s_J(x) \bigr\| \;\Big| \;
x\in \ov{V^{|J|+1}_J} \;\less\; \Bigl( \Ti C_J \cup {\textstyle \bigcup_{I\subsetneq J}} B^J_{\eta_{|J|+\frac 12}}\bigl(N^{|J|+\frac34}_{JI}\bigr) \Bigr) \Bigr\}  , \\
\qquad\si_{\rm rel}(\de,\Vv,\Cc) &\,:=\; \min\bigl\{ \si(\de,\p^0\Vv,\p^0\Cc), \,\si(\de,\p^1\Vv,\p^1\Cc), \,\si'(\de,\Vv,\Cc) \bigr\} .
\end{align*}
\item
Given a metric Kuranishi atlas $(\Kk,d)$, we say that a perturbation $\nu$ is {\bf strongly adapted} if it is a $(\Vv,\Cc,\de,\si)$-adapted perturbation $\nu$ of $s_\Kk|_\Vv$ for some choice of nested reduction $\Cc\sqsubset\Vv$ and constants $0<\de<\de_\Vv$ and 
$$
\qquad 0\;<\;\si\;\le\;\si_{\rm rel}(\de,\Vv\times[0,1],\Cc\times[0,1]) \;=\; \min\bigl\{
\si(\de,\Vv,\Cc) , \si'(\de,\Vv\times[0,1],\Cc\times[0,1]) \bigr\} .
$$

\end{enumerate}
\end{defn}

Recalling the definition of $\si(\de,\Vv,\Cc)$, and the product structure of all sets and maps involved in the definition of $\si'(\de,\Vv\times[0,1],\Cc\times[0,1])$, we may rewrite the condition on $\si>0$ in the definition of strong adaptivity as
$$
 \si \,<\;   \bigl\| s_J(x) \bigr\| \qquad\forall\; 
 x\in \ov{V^{k}_J} \;\less\; \Bigl( \Ti C_J \cup {\textstyle \bigcup_{I\subsetneq J}} B^J_{\eta_{k-\frac 12}}\bigl(N^{k-\frac14}_{JI}\bigr) \Bigr) \Bigr\} ,\;
 J\in\Ii_\Kk, \; k\in\{|J|,|J|+1\} .
$$
Although the construction of transverse cobordism perturbations with fixed boundary values in part (ii) of the following Proposition will be used only on product cobordisms, we state it here in generality, since we use it to construct transverse cobordism perturbations without fixed boundary values in part (i).

\begin{prop}\label{prop:ext2}
Let $(\Kk,d)$ be a metric tame Kuranishi cobordism with nested cobordism reduction $\Cc\sqsubset\Vv$, 
let $0<\de<\min\{\eps,\de_\Vv\}$, where $\eps$ is the collar width of $(\Kk,d)$
and the reductions $\Cc,\Vv$. 
Then we have $\si_{\rm rel}(\de,\Vv,\Cc)>0$ and the following holds.
\begin{enumerate}
\item
Given any 
$0<\si\le \si_{\rm rel}(\de,\Vv,\Cc)$, there exists an admissible, precompact, transverse cobordism perturbation $\nu$ of $s_\Kk|_\Vv$ with $\pi_\Kk\bigl((s+\nu)^{-1}(0)\bigr)\subset \pi_\Kk(\Cc)$, whose restrictions $\nu|_{\p^\al\Vv}$  for $\al=0,1$ are $(\p^\al\Vv,\p^\al\Cc,\de,\si)$-adapted perturbations of $s_{\p^\al\Kk}|_{\p^\al\Vv}$.
\item
Given any perturbations $\nu^\al$ of $s_{\p^\al\Kk}|_{\p^\al\Vv}$ for $\al=0,1$ that are $(\p^\al\Vv,\p^\al\Cc,\de,\si)$-adapted
with $\si\le \si_{\rm rel}(\de,\Vv,\Cc)$, the perturbation $\nu$ of $s_\Kk|_\Vv$ in (i) can be constructed to have boundary values $\nu|_{\p^\al\Vv}=\nu^\al$ for $\al=0,1$.
\item
In the case of a product cobordism $\Kk\times[0,1]$ with product metric and product reductions $\Cc\times[0,1]\sqsubset\Vv\times[0,1]$, both (i) and (ii) hold without requiring $\de$ to be bounded in terms of the collar width.
\end{enumerate}
\end{prop}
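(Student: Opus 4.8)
The plan is to prove Proposition~\ref{prop:ext2} by repeating the iteration in the proof of Proposition~\ref{prop:ext}, but in the relative setting of a Kuranishi cobordism. The key structural observation is that for $J\in\Ii_\Kk$ with collared boundary, the collar form of all charts, coordinate changes, the metric, and the reductions means that the sets $V^k_J$, $N^k_{JI}$, $\Ti C_J$, and the section $s_J$ all have product form on the $\eps$-collars $\io^\al_J(\p^\al U_J\times A^\al_\eps)$. Thus, once we have fixed boundary values $\nu^\al_J$ (which in case (ii) are given, and in case (i) we construct first by applying Proposition~\ref{prop:ext} to each $\p^\al\Kk$), we can use the collar embeddings to transport $\nu^\al_J$ to a product perturbation on the collar of $V^{|J|+1}_J$, namely $\nu_J(\io^\al_J(x,t)):=\nu^\al_J(x)$ for $t\in A^\al_{\eps}$. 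The work is then to interpolate between these prescribed collar values and the compatibility-prescribed values $\mu_J$ on the core $N^k_J$, while maintaining transversality and the zero-set containment.

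First I would verify $\si_{\rm rel}(\de,\Vv,\Cc)>0$: the three quantities $\si(\de,\p^\al\Vv,\p^\al\Cc)$ are positive by Lemma~\ref{le:admin}~(i) applied to the restricted metric Kuranishi atlases $\p^\al\Kk$ (using Lemma~\ref{le:admin}~(iii) to ensure $\de<\de_{\p^\al\Vv}$), and $\si'(\de,\Vv,\Cc)>0$ follows by the same compactness-and-disjointness argument as in Lemma~\ref{le:admin}~(i), now over $\ov{V^{|J|+1}_J}$ rather than $\ov{V^{|J|}_J}$; this uses the shifted radii and the inclusions \eqref{eq:fantastic}. Next I would set up the iteration over $k=0,\ldots,M_\Kk$, constructing $\nu_J:V^{|J|+1}_J\to E_J$ for each $|J|=k+1$. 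The domain $V^{|J|+1}_J$ replaces $V^{|J|}_J$ precisely because the prescribed boundary perturbation only extends to an admissible transverse perturbation on a collar of $V^{|J|}_J$, and precompact restriction forces us down one level; this is why the smallness bound must use $\si'(\de,\Vv,\Cc)$ with its $\ov{V^{|J|+1}_J}$-domain. For interior charts $J$ with $F_J\sqsubset X\times(0,1)$, the construction is verbatim that of Proposition~\ref{prop:ext}. For boundary charts, I would first fix $\nu_J$ on the collar $\io^\al_J(\p^\al V^{|J|+1}_J\times A^\al_{\de})$ to be the pullback of $\nu^\al_J$ (extended to $\p^\al V^{|J|+1}_J$ using that $\nu^\al$ is $(\p^\al\Vv,\p^\al\Cc,\de,\si)$-adapted and hence extends over $(\p^\al V_J)^{|J|}$), check that this is compatible with $\mu_J$ where they overlap (using the collar form of the coordinate changes and the compatibility conditions a) of the boundary data), and then run the extension-by-partition-of-unity argument of Proposition~\ref{prop:ext} on the complement of the collar, keeping the construction product-like near the collar boundary so the result is smooth and collared. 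Transversality holds on the collar because it holds for $\nu^\al_J$ by Lemma~\ref{le:ctransv}, and is achieved on the interior by the Transversality Extension Theorem exactly as before; the zero-set containment in $\pi_\Kk(\Cc)$ follows from the $\si'$-bound together with \eqref{eq:useful}.

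The main obstacle I anticipate is bookkeeping the compatibility of three sources of prescribed data on a boundary chart $J$: the inductive compatibility values $\mu_J$ on the core $N^k_J$, the prescribed boundary values $\nu^\al_J$ on the collars, and the requirement that $\nu^\al_J$ themselves satisfy the core-compatibility on $\p^\al N^k_J=N^k_{\p^\al J}$. One must check that these agree on all overlaps — i.e. that on $N^k_{JI}\cap\io^\al_J(\p^\al U_J\times A^\al_\de)$ the pullback of $\nu^\al_J$ equals $\mu_J$ — which reduces via the collar form of $\phi_{IJ}$ and $\io^\al_I,\io^\al_J$ to the already-known compatibility $\nu^\al_J|_{N^k_{\p^\al J,\p^\al I}}=\Hat\phi_{\p^\al I\,\p^\al J}\circ\nu^\al_{\p^\al I}\circ\phi_{\p^\al I\,\p^\al J}^{-1}$. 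Once this is in place, the partition-of-unity extension and the transversality/zero-set arguments go through with only notational changes. Part~(iii) is then immediate: for a product cobordism all collars have ``width $1$'' (Lemma~\ref{le:admin}~(v)), so no restriction $\de<\eps$ is needed, and $\si'(\de,\Vv\times[0,1],\Cc\times[0,1])$ is computed directly from the product structure as in Definition~\ref{a-e rel}.
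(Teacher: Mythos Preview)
Your proposal is correct and follows essentially the same approach as the paper: reduce (i) to (ii) by applying Proposition~\ref{prop:ext} to each boundary, then rerun the iteration of Proposition~\ref{prop:ext} on the shifted domains $V^{|J|+1}_J$, augmenting the prescribed region $\mu_J$ to include both the core $N^k_J$ and collar neighbourhoods on which $\nu_J$ must pull back to $\nu^\al_J$, with the key compatibility check on overlaps reducing to the boundary adaptation conditions exactly as you describe.

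The only point where the paper is more explicit than your outline is the bookkeeping of collar widths during the inner extension iteration. Rather than fixing a single collar of width $\de$ and then ``keeping the construction product-like near the collar boundary'', the paper introduces a nested family of collar sets $N^k_{J,\al}:=\io^\al_J\bigl(\p^\al V^k_J\times A^\al_{2^{-k}\eps}\bigr)$ with shrinking widths $2^{-k}\eps$, folds these into the domain $W_\ell$ of the $\ell$-iteration (with intermediate widths indexed by $k_\ell=k+\tfrac{\ell+1}{3k}$), and adds a fourth extension condition (E:iv) requiring $\Ti\mu^j_\ell$ to equal $(\io^\al_J)^*\nu^{\al,j}_J$ on $N^{k_\ell}_{J,\al}$. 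This makes the cutoff between collar and interior explicit at every stage and avoids having to argue separately that the final perturbation can be smoothly patched across the collar edge. Your description would need exactly this refinement to be made rigorous, but the strategy is the same.
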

\begin{proof}
The positivity $\si_{\rm rel}(\de,\Vv,\Cc)>0$ follows from $\si(\de,\p^\al\Vv,\p^\al\Cc)>0$ by Lemma~\ref{le:admin}~(i), and $\si'>0$ by the arguments of Lemma~\ref{le:admin}~(i) applied to the shifted domains.

Next, we reduce (i) for given $0<\si \le \si_{\rm rel}(\de,\Vv,\Cc)$ to (ii). For that purpose recall that $\de<\de_{\p^\al\Vv}$ by Lemma~\ref{le:admin}~(iii) and $\si \le \si(\de,\p^\al\Vv,\p^\al\Cc)$ by definition of $\si_{\rm rel}(\de,\Vv,\Cc)$.
Hence Proposition~\ref{prop:ext} provides $(\p^\al\Vv,\p^\al\Cc,\de,\si)$-adapted perturbations $\nu^\al$ of $s_{\p^\al\Kk}|_{\p^\al\Vv}$ for $\al=0,1$. 
Now (ii) provides a cobordism perturbation $\nu$ with the given restrictions $\nu|_{\p^\al\Vv}=\nu^\al$,
which are $(\p^\al\Vv,\p^\al\Cc,\de,\si)$-adapted by construction.
So (i) follows from~(ii).

To prove (ii) recall that, by assumption, the given perturbations $\nu^\al$ of $s_{\p^\al\Kk}|_{\p^\al\Vv}$ for $\al=0,1$ extend to $\nu^\al_I : (\p^\al V_I)^{|I|} \to E_I$ for all $I\in\Ii_{\p^\al\Kk}$ which satisfy conditions a)-e) of Definition~\ref{a-e} with 
the given constant $\si$.
Here by Lemma~\ref{le:admin}~(iv)
the domains of $\nu^\al_I$ are $(\p^\al V_I)^{|I|} = \p^\al V_I^{|I|}$, and these are the boundaries of the reductions $V_I^k$ which have collars 
$$
V_I^k \cap \io^\al_I\bigl(\p^\al U_I \times A^\al_{\eps-2^{-k}\de}\bigr) 
 = \io^\al_I\bigl( \p^\al V_I^k  \times A^\al_{\eps-2^{-k}\de} \bigr)  ,
$$
where the requirement $2^{-k}\de<\eps$ of Lemma~\ref{le:admin} for $k>0$ is ensured by the assumption $\de<\eps$.
In the case of a product cobordism with product reduction this holds for any $\de>0$ with ${\eps-2^{-k}\de}$ replaced by $\eps:=1$.
The same collar form holds for $C_I \sqsubset V_I$, and hence for any set such as $N^k_{JI}$ or $\Ti C_I$ constructed from these.
Now $\de<\eps$ also ensures $2^{-k}\eps \le \eps - 2^{-k}\de$ for $k\geq 1$, so that we may denote the $2^{-k}\eps$-collar of $V_I^k$ by
$$
N^k_{I,\al}  \,:=\; \io^\al_I\bigl(\p^\al V_I^k \times A^\al_{2^{-k}\eps} \bigr) \;\subset\; V^k_I 
$$
and note the precompact inclusion $N^{k'}_{I,\al} \sqsubset N^k_{I,\al}$ for $k'>k$.

We will now construct the required cobordism perturbation $\nu$ by an iteration as in Proposition~\ref{prop:ext} with adjusted domains obtained by replacing $\de$ with $\frac 12 \de$. This is necessary since the given boundary value $\nu^\al_J$ by assumption only extends to a map $\nu^\al_J : V^{|J|}_J \to E_J$, but as before the construction of $\nu_J$ will proceed by restriction to a precompact subset of the domain of an extension $\Ti\nu_J$, where this agrees both with the push forward of previously defined  $(\nu_I)_{I\subsetneq J}$ and with the given boundary perturbations $\nu^\al_J$ in collar neighbourhoods. 
We achieve this by restriction to $V^{|J|+1}_J\sqsubset V^{|J|}_J$.
That is, in the $k$-th step we construct $\nu_J : V^{k+1}_J \to E_J$ for each $|J| = k$ that, together with the $\nu_I|_{V^{k+1}_I}$ for $|I|< k$ obtained by restriction from earlier steps, satisfies the following.

\begin{itemize}
\item[a)]
The perturbation is compatible with coordinate changes and collars, that is
$$
\quad
\nu_J |_{N^{k+1}_{JI}}  \;=\; \Hat\phi_{IJ} \circ \nu_I \circ \phi_{IJ}^{-1} |_{N^{k+1}_{JI}} 
\qquad \text{on}\quad
N^{k+1}_{JI} = V^{k+1}_J \cap \phi_{IJ}(V^{k+1}_I\cap U_{IJ})
$$
for all $I\subsetneq J$, and for each $\al\in\{0,1\}$ with $J\in\Ii_{\p^\al\Kk}$ we have 
$$
\nu_J |_{N^{k+1}_{J,\al}} \;=\; (\io^\al_J)^*\nu^\al_J
\qquad\text{on}\quad N^{k+1}_{J,\al}  = \io^\al_J\bigl(\p^\al V_J^{k+1} \times A^\al_{2^{-k-1}\eps} \bigr),
$$
where we abuse notation by defining $(\io^\al_J)^*\nu^\al_J : \io^\al_J(x,t) \mapsto \nu^\al_J(x)$. $\phantom{\bigg(}$
\item[b)]
The perturbed section is transverse, that is $(s_J|_{{V^{k+1}_J}} + \nu_J) \pitchfork 0$.
\item[c)]
The perturbation is {\it strongly admissible} with radius $\eta_{k+1}= 2^{-k-1}(1-2^{-\frac 14})$,
$$
\qquad
\nu_J( B^J_{\eta_{k+1}}(N^{k+1}_{JI})\bigr) \;\subset\; \Hat\phi_{IJ}(E_I)
\qquad\forall \;I\subsetneq J .
$$
\item[d)]  
The perturbed zero set is contained in $\pi_\Kk^{-1}\bigl(\pi_\Kk(\Cc)\bigr)$; more precisely
$$
(s_J |_{{V^{k+1}_J}}+ \nu_J)^{-1}(0) \;\subset\; {V^{k+1}_J} \cap \pi_\Kk^{-1}\bigl(\pi_\Kk(\Cc)\bigr) .
$$
\item[e)]
The perturbation is small, that is $\sup_{x\in {V^{k+1}_J}} \| \nu_J (x) \| <  \si$. 
\end{itemize}
The final perturbation $\nu=(\nu_I|_{V_I})_{I\in\Ii_\Kk}$ of $s_\Kk|_\Vv$ then has product form on collars of width $2^{-M_\Kk}\eps$ and thus is a cobordism perturbation, whose boundary restrictions are the given $\nu^\al$ by construction.
Moreover, $\nu$ will be admissible by c), transverse by b), and precompact by d) with $\pi_\Kk( (s+\nu)^{-1}(0))\subset\pi_\Kk(\Cc)$. Compactness of $|(s+\nu)^{-1}(0)|$ then follows from Lemma~\ref{le:czeroS0}.

For $k=0$, there are no indices $|J|=0$ to be considered.
Now suppose that $\bigl(\nu_I : V^{|I|+1}_I\to E_I\bigr)_{I\in\Ii_\Kk, |I|< k}$  are constructed such that a)-e) hold. Then for the iteration step it suffices as before to construct $\nu_J$ for a fixed $J\in\Ii_\Kk$ with $|J|=k$. In the following three construction steps we then unify the cases of $J\in\Ii_{\p^\al\Kk}$ for none, one, or both indices $\al$ by interpreting the collars $N^k_{J,\al}$ as empty sets unless $J\in\Ii_{\p^\al\Kk}$.

\MS\NI
{\bf Construction of extension for fixed $|J|=k$:}  
For each $k\geq 1$ we will construct an extension of a restriction of 
$$
\quad\mu_J : N_J^k \cup N^k_{J,0} \cup N^k_{J,1}   \;\longrightarrow\; E_J , \qquad
\mu_J|_{N^{k}_{JI}} := \Hat\phi_{IJ}\circ \nu_I\circ\phi_{IJ}^{-1}, \qquad
\mu_J|_{N^k_{J,\al}} :=  (\io^\al_J)^*\nu^\al_J ,
$$
where $N^k_{J,\al} = \io^\al_J\bigl(\p^\al V_J^k \times A^\al_{2^{-k}\eps} \bigr)$ is a collar of $V^k_J$.
More precisely, we construct a smooth map $\Ti\nu_J : V^k_J \to E_J$ that satisfies
\begin{equation}\label{ctinu}
\Ti\nu_J|_{N_{k+\frac 12}} = \mu_J|_{N_{k+\frac 12}} 
\qquad\text{on}\;\; N_{k+\frac 12} :=  N_J^{k+\frac 12} \cup N^{k+\frac 12}_{J,0} \cup N^{k+\frac 12}_{J,1} ,
\end{equation}
the bound $\|\Ti\nu_J \| \leq \|\mu_J\| < \si$, and the strong admissibility condition 
\begin{equation}\label{cvalue} 
\Ti\nu_J \bigl( B^J_{\eta_{k+\frac 12}}\bigl(N^{k+\frac 12}_{JI}\bigr) \bigr) \;\subset\; \Hat\phi_{IJ}(E_I) 
\qquad \forall\; I\subsetneq J .
\end{equation}

We proceed as in Proposition~\ref{prop:ext} for fixed $j\in J$ by iteratively constructing smooth maps $\Ti\mu^j_\ell: W_\ell \to \Hat\phi_{jJ}(E_j)$ for $\ell=0,\ldots,k-1$ on the adjusted open sets 
\begin{equation}\label{eq:cW}
W_\ell \,:=\; N^{k_\ell}_{J,0} \;\cup\; N^{k_\ell}_{J,1} \;\cup\;
 {\textstyle \bigcup _{I\subsetneq J,|I|\le \ell}}\, B^J_{r_\ell}(N^{k+\frac 12}_{JI}) 
\end{equation}
with $r_\ell:= \eta_k - \frac {\ell+1} {k} ( \eta_k-\eta_{k+\frac 13})$ and $k_\ell:= k + \frac {\ell+1} {3k}$, that satisfy the conditions
\begin{enumerate}
\item[(E:i)]
$\Ti\mu^j_\ell |_{N^{k+\frac 12}_{JI}} = \mu_J^j|_{N^{k+\frac 12}_{JI}}$ for all $I\subsetneq J$ with $|I|\leq \ell$ and $j\in I$;
\vspace{.07in}
\item[(E:ii)]
$\Ti\mu^j_\ell |_{B^J_{r_\ell}(N^{k+\frac 12}_{JI})} = 0$
for all $I\subsetneq J$ with $|I|\leq \ell$ and $j\notin I$;
\vspace{.07in}
\item[(E:iii)] 
$\bigl\|\Ti\mu^j_\ell \bigr\| \leq \|\mu^j_J\|$;
\vspace{.07in}
\item[(E:iv)]
$\Ti\mu^j_\ell = (\io^\al_J)^*\nu^{\al,j}_J$ on  $N^{k_\ell}_{J,\al} = \io^\al_J\bigl(\p^\al V_J^{k_\ell} \times A^\al_{2^{-k_\ell}\eps} \bigr)$ 
for $\al\in\{0,1\}$ with $J\in\Ii_{\p^\al\Kk}$.
\end{enumerate}
These requirements make sense because  $\eta_{k+\frac 12}< r_\ell < \eta_k$ and $B^J_{\eta_{k}}(N^{k+\frac 12}_J) \subset V^k_J$ by \eqref{eq:fantastic}, so that the domain in (E:ii) is included in $V^k_J$ and is larger than that in \eqref{cvalue}.
After this iteration, we then obtain the extension $\Ti\nu_J:= \beta {\textstyle\sum_{j\in J}} \, \Ti\mu^j_{k-1}$ by multiplication with a smooth cutoff function $\beta:V^k_J \to [0,1]$ with $\beta|_{N_{k+\frac 12}}\equiv 1$ and $\supp\beta\subset N^{k+\frac 13}_{J,0} \cup N^{k+\frac 13}_{J,1} \cup B^J_{\eta_{k+\frac 13}}(N^{k+\frac 12}_J)$, where the latter contains the closure of $N_{k+\frac 12}=N^{k+\frac 12}_{J,0} \cup N^{k+\frac 12}_{J,1} \cup N^{k+\frac 12}_J$ in $V^k_J$, so that $\Ti\nu_J$ extends trivially to $V^k_J\less W_{k-1}$.

For the start of iteration at $\ell=0$,
the domain is $W_0= N^{k_0}_{J,0} \;\cup\; N^{k_0}_{J,1}$ with $k_0 = k + \frac 1{3k}$.
Conditions (E:i) and (E:ii) are vacuous since there are no index sets with $|I|\le 0$, and 
we can satisfy (E:iii) and (E:iv), by setting $\Ti\mu^j_0 (\iota^\al(x,t)) := \nu^{\al,j}_J(x)$.
Next, if the construction is given on $W_\ell$, then we cover $W_{\ell+1}$ by the open sets 
$B_L': = W_{\ell+1}\cap B^J_{r_{\ell-1}}(N^{k+\frac12}_{JL})$
for $L\subsetneq J$, $|L|=\ell+1$ and $C_{\ell+1}\subset W_\ell$
given below, and pick an open subset $C'\subset V^k_J$ such that
$$
C_{\ell+1} \,:=\; W_{\ell+1} \;\less\; {\textstyle \bigcup _{|L| = \ell+1}\, \ov{B^J_{r_{\ell}}(N^{k+\frac12}_{JL})}} \;\sqsubset\; C' \;\sqsubset\; W_\ell \;\less\; {\textstyle \bigcup _{|L| = \ell+1}\, \ov{B^J_{r_{\ell+1}}(N^{k+\frac12}_{JL})}} \;=:\, C_\ell .
$$
As before, this guarantees that $C'$ and $B^J_{r_{\ell+1}}(N^{k+\frac 12}_{JL})$ have disjoint closures for all $|L|=\ell +1$. Then we set $\Ti\mu_{\ell+1}|_{C_{\ell+1}} := \Ti \mu_\ell|_{C_{\ell+1}}$, which inherits properties (E:i)--(E:iv) from $\Ti\mu_\ell$ because $C_{\ell+1}$ is still disjoint from $B^J_{r_{\ell+1}}(N^{k+\frac 12}_{JL})$ for any $|I|=\ell+1$, and we have $N^{k_{\ell+1}}_{J,\al}\subset N^{k_\ell}_{J,\al}$.
So it remains to construct $\Ti\mu_{\ell+1}: B'_L \to \Hat\phi_{jJ}(E_j)$ for a fixed $L\subset J$, $|L|=\ell+1$ such that $\Ti\mu_{\ell+1}=\Ti \mu_\ell$ on $B'_L \cap C'$.

In case $j\notin L$ condition (E:iv) prescribes $\Ti\mu^j_\ell = (\io^\al_J)^*\nu^{\al,j}_J$ on the intersection
$$
B'_L \cap N^{k_{\ell+1}}_{J,\al}
\subset 
\io^\al_J\bigl(B^{J,\al}_{r_{\ell-1}}(\p^\al N^{k+\frac12}_{JL}) \times A^\al_{2^{-k_{\ell+1}}\eps} \bigr)\bigr).
$$ 
Because $r_{\ell-1}<\eta_k$, strong admissibility for $\nu^\al_J$ on $B^{J,\al}_{\eta_k}(\p^\al N^k_{JL})$ implies that $(\io^\al_J)^*\nu^{\al,j}_J=0$ on this intersection.
Moreover, $B'_L\cap C'$ again is a subset of $\bigcup _{I\subsetneq L} B^J_{r_\ell}(N^{k+\frac 12}_{JI})$, where we have $\Ti\mu_\ell|_{B'_L\cap C'}=0$ by iteration hypothesis (E:ii) for each $I\subsetneq J$. 
Thus $\Ti\mu_{\ell+1}:=0$ satisfies all extension properties (E:i)--(E:iv) in this case.

In case $j\in L$ we may again patch together extensions by partitions of unity, so that it suffices to construct smooth maps $\Ti\mu_z: B^J_{r_z}(z)\to \Hat\Phi_{jJ}(E_j)$ on balls of positive radius $r_z>0$ around each fixed $z\in B'_L$, that satisfy
\begin{itemize}
\item[(i)]
$\Ti\mu_z = \mu_J^j$ on $B^J_{r_z}(z)\cap N^{k+\frac 12}_{JI}$ for all $I\subset L$ with $j\in I$ (including $I=L$);
\vspace{.07in}
\item[(ii)]
$\Ti\mu_z = 0$ on $B^J_{r_z}(z) \cap B^J_{r_{\ell+1}}(N^{k+\frac 12}_{JI})$ for all $I\subsetneq L$ with $j\notin I$;
\vspace{.07in}
\item[(iii)] 
$\bigl\| \Ti\mu_z\bigr\| \leq \|\mu_J^j\|$;
\vspace{.07in}
\item[(iv)] 
$\Ti\mu_z = (\io^\al_J)^*\nu^{\al,j}_J$ on  $B^J_{r_z}(z) \cap N^{k_{\ell+1}}_{J,\al}$ for $\al\in\{0,1\}$ with $J\in\Ii_{\p^\al\Kk}$;
\vspace{.07in}
\item[(v)] 
$\Ti\mu_z=\Ti\mu_{\ell}$ on $B^J_{r_z}(z)\cap B'_L\cap C'$.
\end{itemize}
\vspace{.07in}

For $z\in V^k_J \less \ov{N^{k_{\ell+1}}_{J,\al}}$, this is accomplished by the same constructions as in Proposition~\ref{prop:ext} by choosing $r_z>0$ such that $B^J_{r_z}(z)\cap N^{k_{\ell+1}}_{J,\al}=\emptyset$.
For $z\in \ov{N^{k_{\ell+1}}_{J,\al}}\subset N^{k_\ell}_{J,\al}$ we choose $r_z>0$ such that $B^J_{r_z}(z)\subset N^{k_{\ell}}_{J,\al}$. Then $\Ti\mu_z := \Ti\mu_\ell|_{B^J_{r_z}(z)}$ satisfies (v) by construction and (i)-(iv) by iteration hypothesis.

\MS

\NI {\bf Zero set condition:}
For the extended perturbation constructed above, we have $\bigl\|\Ti \nu_J\bigr\| \leq \max\{ \max_{I\subsetneq J} \|\nu_I\| , \|\nu^0_J\| , \|\nu^1_J\|  \} < \si $ by induction hypothesis e).
From \eqref{cvalue} and \eqref{eq:useful} we then obtain as in Proposition~\ref{prop:ext}
\begin{equation} \label{eq:czeroset} 
(s_J |_{V^k_J} + \Ti\nu_J)^{-1}(0)\;\cap\;  B^J_{\eta_{k+\frac 12}}\bigl(N^{k+\frac34}_{JI}\bigr)  
\;\subset\; N^{k+\frac12}_{JI} .
\end{equation}
Next, recall that we allowed only $\si>0$ such that
$$
\si \;\leq\; \inf \Bigl\{ \; \bigl\| s_J(x) \bigr\| \;\Big| \;
x\in \ov{V^{|J|+1}_J} \;\less\; \Bigl( \Ti C_J \cup {\textstyle \bigcup_{I\subsetneq J}} B^J_{\eta_{|J|+\frac 12}}\bigl(N^{|J|+\frac34}_{JI}\bigr) \Bigr) \Bigr\}  .
$$
Hence the same arguments as in the proof of Proposition~\ref{prop:ext} provide the inclusion
\begin{equation}\label{eq:cinclude}
\bigl(s_J|_{\ov{V^{k+1}_J}} + \Ti\nu_J\bigr)^{-1}(0) \;\less\; {\textstyle\bigcup_{I\subsetneq J} } B^J_{\eta_{k+\frac 12}}\bigl(N^{k+\frac34}_{JI}\bigr)   \;\subset\; \Ti C_J.
\end{equation}
Together with the induction hypothesis on $\Ti\nu_J =\mu_J=\Hat\phi_{IJ}\circ\nu_I\circ\phi_{IJ}^{-1}$ on $N^{k+\frac12}_{JI}$ this implies the zero set condition $(s_J|_{\ov{V^{k+1}_J}}+\Ti\nu_J)^{-1}(0) \subset\pi_\Kk^{-1}(\pi_\Kk(\Cc))$.

\MS
\NI {\bf Transversality:}
Admissibility together with induction hypothesis b) imply transversality $(s_J + \Ti\nu_J)|_{N^{k+\frac12}_J} \pitchfork 0$ on the enlarged core.
Together transversality of $\nu^\al_J$ and \eqref{eq:czeroset} we obtain transversality on the open set
$$
(s_J + \Ti\nu_J)|_{B} \; \pitchfork \; 0  , \qquad B:= B^J_{\eta_{k+\frac 12}}(N^{k+\frac34}_J) \cup N^{k+\frac 12}_{0,J} \cup N^{k+\frac 12}_{1,J} \;\subset\; V^k_J .
$$
Now $B$ precompactly contains the neighbourhood $B':= B^J_{\eta_{k+1}}(N^{k+1}_J)\cup N^{k+1}_{0,J} \cup N^{k+1}_{1,J} \subset V^k_J$ of the core and collar $N:= N^{k+1}_J \cup N^{k+1}_{0,J} \cup N^{k+1}_{1,J}$,
so that compatibility with the coordinate changes and collars in a) and strong admissibility in d) can be satisfied by requiring $\nu_J|_{B'}=\Ti\nu_J|_{B'}$.
In this abstract setting, we can finish the iterative step word by word as in Proposition~\ref{prop:ext}. This completes the construction of the required perturbation in case (ii) and thus finishes the proof.
\end{proof}

\subsection{Orientations} \label{ss:vorient}   \hspace{1mm}\\ \vspace{-3mm}

This section develops the theory of orientations of Kuranishi atlases.
We use the method of determinant line bundles as in e.g.\ \cite[App.A.2]{MS}.
but encountered compatibility issues of sign conventions in the literature, e.g.\ all editions of \cite{MS}.
We resolve these by using a different set of conventions most closely related to K-theory and thank Thomas Kragh for helpful discussions.
As shown in the recent work of  \cite{Z3}, these conventions are consistent with some important naturality properties, a fact which may prove useful in the future development of Kuranishi atlases.
 
While the relevant bundles and sections could just be described as tuples of bundles and sections over the domains of the Kuranishi charts, related by lifts of the coordinate changes, we take this opportunity to develop a general framework of vector bundles over Kuranishi atlases, which now no longer are assumed to be additive or tame.

\begin{defn} \label{def:bundle}
A {\bf vector bundle} $\La=\bigl(\La_I,\Ti\phi_{IJ}\bigr)_{I,J\in\Ii_\Kk}$ {\bf over a weak Kuranishi atlas} $\Kk$ is a collection $(\La_I \to U_I)_{I\in \Ii_\Kk}$ of vector bundles together with lifts $\bigl(\Tilde \phi_{IJ}: \La_I|_{U_{IJ}}\to \La_J\bigr)_{I\subsetneq J}$ of the coordinate changes $\phi_{IJ}$, that are linear isomorphisms on each fiber and satisfy the weak cocycle condition $\Tilde \phi_{IK} =  \Tilde \phi_{JK}\circ  \Tilde \phi_{IJ}$ on $\phi^{-1}_{IJ}(U_{JK})\cap U_{IK}$ for all triples $I\subset J\subset K$.  

A {\bf section} of a bundle $\La$ over $\Kk$ is a collection of smooth sections $\si=\bigl( \si_I: U_I\to \La_I \bigr)_{I\in\Ii_\Kk}$ that are compatible with the bundle maps $\Ti\phi_{IJ}$.
In particular, for a vector bundle $\La$ with section $\si$ there are commutative diagrams for each $I\subset J$,
\[
\xymatrix{
  \La_I|_{U_{IJ}}  \ar@{->}[d] \ar@{->}[r]^{\;\;\Tilde\phi_{IJ}}   &  \La_J \ar@{->}[d]   \\
U_{IJ}\ar@{->}[r]^{\phi_{IJ}}  & U_J
}
\qquad\qquad\qquad
\xymatrix{
  \La_I|_{U_{IJ}}  \ar@{->}[r]^{\;\;\Tilde\phi_{IJ}}    &  \La_J  \\
U_{IJ}  \ar@{->}[u]^{\si_I}   \ar@{->}[r]^{\phi_{IJ}}  & U_J  \ar@{->}[u]_{\si_J} .
}
\]
\end{defn}

The following notion of a product bundle will be the first example of a bundle over a Kuranishi cobordism. 

\begin{defn} \label{def:prodbun}
If $\La=\bigl(\La_I,\Ti\phi_{IJ}\bigr)_{I,J\in\Ii_\Kk}$ is a bundle over $\Kk$ and $A\subset [0,1]$ is an interval, then the {\bf product bundle} $\La\times A$ over $\Kk\times A$ is the tuple $\bigl(\La_I\times A,\Ti\phi_{IJ}\times \id_A\bigr)_{I,J\in\Ii_\Kk}$. 
Here and in the following we denote by $\La_I\times A\to U_I\times A$ the pullback bundle under the projection $U_I\times A\to U_I$.
\end{defn}

\begin{defn} \label{def:cbundle}
A {\bf vector bundle over a weak Kuranishi cobordism} $\Kk$ 
is a collection $\La=\bigl(\La_I,\Ti\phi_{IJ}\bigr)_{I,J\in\Ii_\Kk}$ of vector bundles and bundle maps as in Definition~\ref{def:bundle}, together with a choice of isomorphism from its restriction to a
collar of the boundary to a product bundle.
More precisely, this requires for $\al=0,1$ the choice of a {\bf restricted vector bundle} $\La|_{\p^\al\Kk}= \bigl( \La^\al_I \to \partial^\al U_I, \Ti\phi^\al_{IJ}\bigr)_{I,J \in \Ii_{\p^\al\Kk}}$ over $\p^\al\Kk$, and, for some $\eps>0$ less than the collar width of $\Kk$, a choice of lifts of the embeddings $\io^\al_I$ for $I\in\Ii_{\p^\al\Kk}$ to bundle isomorphisms $\ti\io^\al_I : \La^\al_I\times A^\al_\eps \to \La_I|_{\im\io^\al_I}$ such that,  with $A: = A^\al_\eps$, the following diagrams commute
\[
\xymatrix{ \La_I^\al\times A \ar@{->}[d]   \ar@{->}[r]^{\ti\io^\al_I}    &   \La_I|_{\im\io^\al_I} \ar@{->}[d]   \\
\partial^\al U_I\times A  \ar@{->}[r]^{\io^\al_I}   & \im\io^\al_I \subset U_I
}
\qquad\qquad
\xymatrix{
  \La_I^\al|_{\p^\al U_{IJ}} \times A
  \ar@{->}[r]^{\ti\io^{\al}_I} \ar@{->}[d]_{\Ti\phi^\al_{IJ}\times\id_A}    & 
 \La_I |_{\io^\al_I(\p^\al U_{IJ} \times A)}
  \ar@{->}[d]^{\Ti\phi_{IJ}}  \\
  \La_J^\al\times A \ar@{->}[r]^{\ti\io^\al_{J}}  &  \La_J|_{\im\io^\al_J}  
}
\]

A {\bf section} of a vector bundle $\La$ over a Kuranishi cobordism as above is a compatible collection $\bigl(\si_I:U_I\to \La_I\bigr)_{I\in\Ii_\Kk}$ of sections as in Definition~\ref{def:bundle} that in addition have product form in the collar. 
That is we require that for each $\al=0,1$ there is a {\bf restricted section} $\si|_{\p^\al\Kk}= ( \si^\al_I :\partial_\al U_I \to \La^\al_I)_{I\in\Ii_{\p^\al\Kk}}$ of $\La|_{\p^\al\Kk}$ such that for $\eps>0$ sufficiently small we have $(\ti\io^\al_I)^*\si_I  = \si^\al_I\times \id_{A^\al_\eps}$.
 \end{defn}

In Definition~\ref{def:bundle} we implicitly worked with an isomorphism $(\ti\io^\al_I)_{I\in\Ii_{\p^\al\Kk}}$, which satisfies all but the product structure requirements of the following notion of isomorphisms on Kuranishi cobordisms.

\begin{defn} \label{def:buniso}
An {\bf isomorphism} $\Psi: \La\to \La'$ between vector bundles over $\Kk$ is a collection
$(\Psi_I: \La_I\to \La'_I)_{I\in \Ii_\Kk}$ of bundle isomorphisms covering the identity on $U_I$, that intertwine the transition maps, i.e.\ $\Ti\phi'_{IJ}\circ\Psi_I|_{U_{IJ}} = \Psi_J \circ \Ti \phi_{IJ}|_{U_{IJ}}$ for all $I\subset J$.

If $\Kk$ is a Kuranishi cobordism then we additionally require $\Psi$ to have product form in the collar. That is we require that for each $\al=0,1$ there is a restricted isomorphism $\Psi|_{\p^\al\Kk}= ( \Psi^\al_I :\La^\al_I \to \La'_I\,\!\!^\al)_{I\in\Ii_{\p^\al\Kk}}$ from $\La|_{\p^\al\Kk}$ to $\La'|_{\p^\al\Kk}$ such that for $\eps>0$ sufficiently small we have 
$\ti\io'_I\,\!\!^\al \circ \bigl(\Psi^\al_I \times \id_A\bigr) = \Psi_I \circ \ti\io^{\al}_I$ on $\partial^\al U_I\times A^\al_\eps$.
\end{defn}

\begin{remark}\rm
In the newly available language, Definition~\ref{def:cbundle} of a bundle on a Kuranishi cobordism requires isomorphisms (without product structure on the collar) for $\al=0,1$ from the product bundle 
$\La|_{\p^\al\Kk}\times A^\al_\eps$ to the $\eps$-collar restriction 
$(\iota^\al_\eps)^*\La := \bigl((\iota^\al_I)^*\La_I , (\iota^\al_J)^* \circ \Ti\phi_{IJ} \circ (\iota^\al_I)_* \bigr)_{I,J\in\Ii_{\p^\al\Kk}}$,  
given by the collection of pullback bundles and isomorphisms 
under the embeddings $\iota^\al_I : \partial^\al U_I \times A^\al_\eps \to U_I$.
\end{remark}

Note that, although the compatibility conditions are the same, the canonical section 
$s_\Kk = ( s_I : U_I\to E_I)_{I\in\Ii_\Kk}$ of a Kuranishi atlas does not form a section of a 
vector bundle since the obstruction spaces $E_I$ are in general not of the same dimension, 
hence no bundle isomorphisms $\Ti\phi_{IJ}$ as above exist.
Nevertheless, we will see that, 
there is a natural bundle associated with the section $s_\Kk$, namely its determinant line bundle, 
and that this line bundle is isomorphic to a bundle constructed by combining the determinant lines of the obstruction spaces $E_I$ and the domains $U_I$.
 
Here and in the following we will exclusively work with finite dimensional vector spaces.
First recall that the determinant line of a vector space $V$ is its maximal exterior power $\lm V := \wedge^{\dim V}\,V$, with $\wedge^0\,\{0\} :=\R$.
More generally, the {\bf determinant line of a linear map} $D:V\to W$ is defined to be 
$$
\det(D):= \lm\ker D \otimes \bigl( \lm \bigl( \qu{W}{\im D} \bigr) \bigr)^*.
$$ 
In order to construct isomorphisms between determinant lines, we will need to fix various conventions, in particular pertaining to the ordering of factors in their domains and targets.
We begin by noting that every isomorphism $F: Y \to Z$ between finite dimensional vector spaces induces an isomorphism
\begin{equation}\label{eq:laphi}
\La_F :\; \lm Y   \;\overset{\cong}{\longrightarrow}\; \lm Z , \qquad
y_1\wedge\ldots \wedge y_k \mapsto F(y_1)\wedge\ldots \wedge F(y_k) .
\end{equation}
For example, if $I\subsetneq J$ and  $x\in U_{IJ}$, it follows from the index condition in Definition~\ref{def:change} that the map 
for $x\in U_{IJ}$
\begin{equation}\label{eq:bunIJ}
\La_{IJ}(x): = \La_{\rd_x\phi_{IJ}} \otimes 
\bigl(\La_{[\Hat\phi_{IJ}^{-1}]}\bigr)^*
\, :\; \det(\rd_x s_I) \to \det(\rd_{\phi_{IJ}(x)} s_J)
\end{equation}
is an isomorphism, induced by the isomorphisms $\rd\phi_{IJ}:\ker\rd s_I\to\ker\rd s_J$ and
$[\Hat\phi_{IJ}] : \qu{E_I}{\im\rd s_I}\to\qu{E_J}{\im\rd s_J}$.
With this, we can define the determinant bundle $\det(s_\Kk)$ of a Kuranishi atlas. A second, isomorphic, determinant line bundle $\det(\Kk)$ with fibers $\lm \rT_x U_I \otimes \bigl( \lm E_I \bigr)^*$ will be constructed in Proposition~\ref{prop:orient}.

\begin{defn} \label{def:det} 
The {\bf determinant line bundle} of a weak Kuranishi atlas (or cobordism) $\Kk$ is the vector 
bundle $\det(s_\Kk)$ given by the line bundles 
$$
\det(\rd s_I):=\bigcup_{x\in U_I} \det(\rd_x s_I) \;\to\; U_I \qquad 
\text{for}\; I\in\Ii_\Kk, 
$$
and the isomorphisms $\La_{IJ}(x)$ in \eqref{eq:bunIJ} for $I\subsetneq J$ and $x\in U_{IJ}$.
\end{defn}

To show that  $\det(s_\Kk)$ is well defined, in particular that $x\mapsto \La_{IJ}(x)$ is smooth, 
we introduce some further natural\footnote{
Here a ``natural" isomorphism is one that is functorial, i.e.\ it commutes with 
the action on both sides induced by a vector space isomorphism.}
isomorphisms 
and fix various ordering conventions.

\begin{itemlist}
\item
For any subspace $V'\subset V$ the {\bf splitting isomorphism}
\begin{equation}\label{eq:VW}
\lm V\cong \lm V'\otimes \lm\bigl( \qu{V}{V'}\bigr)
\end{equation}
is given by completing a basis $v_1,\ldots,v_k$ of $V'$ to a basis $v_1,\ldots,v_n$ of $V$ and
mapping $v_1\wedge \ldots \wedge v_n \mapsto (v_1\wedge \ldots \wedge v_k) \otimes ([v_{k+1}]\wedge \ldots\wedge [v_n])$.
\item
For each isomorphism $F:Y\overset{\cong}{\to} Z$ the {\bf contraction isomorphism} 
\begin{equation} \label{eq:quotable}
\mathfrak{c}_F \,:\; \lm Y  \otimes  \bigl( \lm Z \bigr)^* \;\overset{\cong}{\longrightarrow}\; \R , 
\end{equation}
is given by the map
$\bigl(y_1\wedge\ldots \wedge y_k\bigr) \otimes \eta \mapsto \eta\bigl(F(y_1)\wedge \ldots 
\wedge F(y_k)\bigr)$.
\item
For any space $V$ we use the {\bf duality isomorphism}
\begin{equation}\label{eq:dual} 
\lm V^* \;\overset{\cong}{\longrightarrow}\; (\lm V)^*, \qquad
 v_1^*\wedge\dots\wedge v_n^* 
\;\longmapsto\;
(v_1\wedge\dots\wedge v_n)^* ,
\end{equation}
which corresponds to the natural pairing
$$
 \lm V \otimes \lm V^*   \;\overset{\cong}{\longrightarrow}\;  \R , \qquad
 \bigl(v_1\wedge\dots\wedge v_n\bigr) \otimes \bigl(\eta_1\wedge\dots\wedge \eta_n\bigr)
 \;\mapsto\;
 \prod_{i=1}^n \eta_i(v_i) 
$$
via the general identification (which in the case of line bundles $A,B$ maps $\eta\neq0$ to a nonzero homomorphism, i.e.\ an isomorphism)
\begin{equation}\label{eq:homid}
\Hom(A\otimes B,\R) \;\overset{\cong}{\longrightarrow}\; \Hom(B, A^*)
,\qquad
H \;\longmapsto\; \bigl( \; b \mapsto H(\cdot \otimes b) \; \bigr) .
\end{equation}
\end{itemlist}

\MS\NI
Next, we combine the above isomorphisms to obtain a more elaborate 
contraction isomorphism.

\begin{lemma} \label{lem:get} 
Every linear map $F:V\to W$ together with an isomorphism $\phi:K\to \ker F$ induces an isomorphism
\begin{align}\label{Cfrak}
\mathfrak{C}^{\phi}_F \,:\; \lm V \otimes \bigl(\lm W \bigr)^* 
&\;\overset{\cong}{\longrightarrow}\;  \lm K \otimes \bigl(\lm \bigl( \qu{W}{F(V)}\bigr) \bigr)^*  
\end{align}
given by
\begin{align}
(v_1\wedge\dots v_n)\otimes(w_1\wedge\dots w_m)^* &\;\longmapsto\;
\bigl(\phi^{-1}(v_1)\wedge\dots \phi^{-1}(v_k)\bigr)\otimes \bigl( [w_1]\wedge\dots [w_{m-n+k}] \bigr)^* ,
\notag
\end{align}
where $v_1,\ldots,v_n$ is a basis for $V$ with ${\rm span}(v_1,\ldots,v_k)=\ker F$, and $w_1,\dots, w_m$ is a basis for $W$ whose last $n-k$ vectors are $w_{m-n+i}=F(v_i)$ for $i=k+1,\ldots,n$.

In particular, for every linear map $D:V\to W$ we may pick $\phi$ as the inclusion $K=\ker D\hookrightarrow V$ to obtain an isomorphism
$$
\mathfrak{C}_{D} \,:\;  \lm V \otimes \bigl(\lm W \bigr)^* \;\overset{\cong}{\longrightarrow}\;  \det(D) .
$$ 
\end{lemma}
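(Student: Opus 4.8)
\textbf{Proof plan for Lemma~\ref{lem:get}.}

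The plan is to verify that the stated formula for $\mathfrak{C}^{\phi}_F$ is independent of the choices of bases, and then to produce $\mathfrak{C}^\phi_F$ instead as a composition of the elementary natural isomorphisms already introduced, which will make both well-definedness and naturality transparent. First I would observe that, since $\phi:K\to\ker F$ is an isomorphism, the induced map $\La_\phi:\lm K\to\lm\ker F$ from \eqref{eq:laphi} is an isomorphism of lines; tensoring with the identity on $\bigl(\lm(W/F(V))\bigr)^*$ reduces the claim to the case $K=\ker F$ with $\phi$ the inclusion, i.e.\ to constructing $\mathfrak{C}_F:\lm V\otimes(\lm W)^*\overset{\cong}\to \det(F)$. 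Here $\det(F)=\lm\ker F\otimes\bigl(\lm(W/\im F)\bigr)^*$ by definition.

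The key step is to build $\mathfrak{C}_F$ from the splitting, contraction, and duality isomorphisms. Choose a complement $C\subset V$ of $\ker F$, so $F|_C:C\to \im F$ is an isomorphism. Apply the splitting isomorphism \eqref{eq:VW} to $\ker F\subset V$ to get $\lm V\cong \lm\ker F\otimes\lm(V/\ker F)\cong \lm\ker F\otimes\lm C$; similarly apply \eqref{eq:VW} to $\im F\subset W$ to get $\lm W\cong \lm\im F\otimes \lm(W/\im F)$, hence $(\lm W)^*\cong(\lm\im F)^*\otimes(\lm(W/\im F))^*$ using \eqref{eq:dual} and the canonical compatibility of duals with tensor products of lines. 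Reassociating the tensor factors — this is where one must fix a convention for the order in which the factors are permuted, and I would match it to the ordering built into \eqref{Cfrak}, namely reading $\ker F$ first, then pairing $C$ against $\im F$, then leaving $W/F(V)$ — we obtain
$$
\lm V\otimes(\lm W)^*\;\cong\;\lm\ker F\otimes\Bigl(\lm C\otimes(\lm\im F)^*\Bigr)\otimes\bigl(\lm(W/\im F)\bigr)^*.
$$
The middle factor $\lm C\otimes(\lm\im F)^*$ is canonically $\R$ via the contraction isomorphism $\mathfrak{c}_{F|_C}$ of \eqref{eq:quotable} (using the general identification \eqref{eq:homid} to read the dual line as $\Hom(\lm\im F,\R)$), and this contraction is precisely what inserts the vectors $F(v_i)$ in the formula. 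Canceling it yields $\det(F)$, and chasing the images of basis vectors through each elementary isomorphism reproduces exactly the stated formula for $\mathfrak{C}_F$; tensoring back with $\La_\phi$ gives $\mathfrak{C}^\phi_F$. Since each elementary isomorphism in the chain is natural (functorial under vector space isomorphisms) and does not depend on the auxiliary choice of $C$ or of bases — only the final identification with $\R$ does, and that is canonical — the composite is well defined, which in particular shows the basis-dependent formula in the statement is consistent.

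I expect the main obstacle to be purely bookkeeping: pinning down the sign/ordering conventions so that the composition of \eqref{eq:VW}, \eqref{eq:quotable}, \eqref{eq:dual}, and \eqref{eq:homid} reproduces the \emph{exact} formula written in \eqref{Cfrak}, rather than that formula up to a reordering of wedge factors (which could introduce a sign). Concretely, one must check that permuting $v_{k+1}\wedge\cdots\wedge v_n$ past $v_1\wedge\cdots\wedge v_k$ in $\lm V$, and the corresponding permutation of the $w_j$ in $(\lm W)^*$, combine to give $+1$ under the chosen associativity convention; this is the kind of computation that is routine once the conventions are nailed down but is the only place an error could creep in. The statement about $\mathfrak{C}_D$ for an inclusion $\phi$ is then immediate, taking $K=\ker D$ and $\phi=\mathrm{incl}$, so that $\La_\phi=\id$ and $\mathfrak{C}^\phi_D=\mathfrak{C}_D$.
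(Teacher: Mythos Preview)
Your approach is essentially the same as the paper's: build $\mathfrak{C}^\phi_F$ as a composite of the splitting isomorphism \eqref{eq:VW}, the contraction \eqref{eq:quotable}, and duality \eqref{eq:dual}, then verify the explicit formula on adapted bases. The paper organizes the dual side slightly differently---it applies the splitting isomorphism to the annihilator $F(V)^\perp\subset W^*$ rather than to $\im F\subset W$ and then dualizing, and it keeps $\phi$ in play throughout rather than first reducing to $K=\ker F$---but these are cosmetic. Your identification of the ordering convention as the only subtle point is exactly right: the paper resolves it by explicitly introducing a reordering isomorphism $R:A\otimes B\to B\otimes A$, $a\otimes b\mapsto b\otimes a$ \emph{with no sign}, composed with $S_W$ so that the contraction lands on adjacent factors; this is precisely the convention needed to match the formula in \eqref{Cfrak}, and once fixed the basis computation is immediate.
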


\begin{proof}
We will construct $\mathfrak{C}^{\phi}_F$ by composition of several isomorphisms.
As a first step let $F(V)^\perp\subset W^*$ be the annihilator of $F(V)$ in $W^*$, then the splitting isomorphism \eqref{eq:VW} identifies $\lm W^*$ with $\lm ( F(V)^\perp )\otimes \lm \bigl(\qu{W^*}{F(V)^\perp}\bigr)$.
Next, we apply \eqref{eq:laphi} to the isomorphisms 
$F(V)^\perp \overset{\cong}{\to} \bigl(\qu{W}{F(V)}\bigr)^*$
and 
$\qu{W^*}{F(V)^\perp}\overset{\cong}{\to} F(V)^*$,
and apply the duality isomorphism \eqref{eq:dual} in all factors to obtain the isomorphism 
$$
S_W \,:\;
\bigl(\lm W\bigr)^* \;\overset{\cong}{\longrightarrow}\;  \bigl(\lm \bigl(\qu{W}{F(V)}\bigr)\bigr)^* \otimes \bigl( \lm F(V) \bigr)^*
$$
given by 
$(w_1\wedge \ldots  \wedge w_m)^* \mapsto ([w_1]\wedge \ldots\wedge [w_\ell])^* \otimes (w_{\ell+1}\wedge \ldots \wedge w_m)^*$
for any basis $w_1,\ldots,w_m$ of $W$ whose last elements $w_{\ell+i}$ for $i=1,\ldots,m-\ell=n-k$ span $F(V)$.
On the other hand, we apply the splitting isomorphism \eqref{eq:VW} for $\ker F\subset V$ and \eqref{eq:laphi} for $\phi^{-1}: \ker F\to K$ to obtain an isomorphism
$$
S_V \,:\;
\lm V \;\overset{\cong}{\longrightarrow}\;   \lm K \otimes \lm \bigl(\qu{V}{\phi(K)}\bigr)  
$$
given by 
$v_1\wedge \ldots \wedge v_n \mapsto (\phi^{-1}(v_1)\wedge \ldots\wedge \phi^{-1}(v_k)) \otimes ([v_{k+1}]\wedge \ldots \wedge [v_n])$
for any basis $v_1,\ldots,v_n$ of $V$ such that $v_1,\ldots,v_k$ spans $\ker F$.
Finally, note that $F$ descends to an isomorphism $[F] : \qu{V}{\phi(K)} \overset{\cong}{\to} F(V)$, so we wish to apply the contraction isomorphism 
$$
\mathfrak{c}_{[F]} : \lm \bigl(\qu{V}{\phi(K)}\bigr) \otimes \bigl( \lm F(V) \bigr)^* \to \R
$$
from \eqref{eq:quotable}.
Since these factors do not appear adjacent after applying $S_V\otimes S_W$, we compose $S_W$ with an additional reordering isomorphism -- noting that we do not introduce signs in switching factors here
$$
R \,:\; A \otimes B  \overset{\cong}{\longrightarrow}\;
B \otimes A , \qquad
 a \otimes b  \; \longmapsto\; b \otimes a .
$$
Finally, using the natural identification $\lm K \otimes \R\otimes  \bigl(\lm \bigl(\qu{W}{F(V)}\bigr)\bigr)^* \cong \lm K \otimes  \bigl(\lm \bigl(\qu{W}{F(V)}\bigr)\bigr)^*$ we obtain an isomorphism 
$$
\bigl( \id_{ \lm K }\otimes \mathfrak{c}_{[F]} \otimes \id_{ (\lm (\qu{W}{F(V)}))^*} \bigr) \circ \bigl( S_V \otimes (R\circ S_W) \bigr) .
$$ 
To see that it coincides with $\mathfrak{C}_F^\phi$ as described in the statement, note that -- using the bases as above -- it maps
$(v_1\wedge \ldots \wedge v_n) \otimes(w_1\wedge \ldots \wedge w_m)^*$ to $(\phi^{-1}(v_1)\wedge \ldots\wedge \phi^{-1}(v_k)) \otimes ([w_1]\wedge \ldots\wedge [w_\ell])^*$ multiplied with the factor
$(w_{\ell+1}\wedge \ldots \wedge w_m)^*\bigl(F(v_{k+1})\wedge \ldots \wedge F(v_n)\bigr)$, and that the latter equals $1$ if we choose $w_{\ell+i}=F(v_i)$ for $i=1,\ldots, n-k$. Note here that the existence of an isomorphism $F$ implies $m-\ell = n-k$, so that $m-n=\ell+k$, and hence $w_{m-n+(k+i)}=w_{\ell+i}$.
\end{proof}

\begin{prop}\label{prop:det0}  
For any weak Kuranishi atlas, $\det(s_\Kk)$ is a well defined line bundle over $\Kk$.
Further, if $\Kk$ is a weak Kuranishi cobordism, then $\det(s_\Kk)$ can be given product form on the collar of $\Kk$ with restrictions $\det(s_\Kk)|_{\p^\al\Kk} = \det(s_{\p^\al\Kk})$ for $\al= 0,1$.
The required bundle isomorphisms from the product $\det(s_{\p^\al\Kk})\times A^\al_\eps$ to the collar restriction  $(\io^\al_\eps)^*\det(s_\Kk)$ are given in \eqref{orient map}.
\end{prop}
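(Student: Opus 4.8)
\textbf{Proof proposal for Proposition~\ref{prop:det0}.}
The plan is to verify the three assertions in turn: that $\det(s_\Kk)$ is a smooth line bundle over each domain $U_I$, that the maps $\La_{IJ}(x)$ are smooth and satisfy the weak cocycle condition, and finally that in the cobordism case these data fit together into a product structure on the collar. First I would recall that for a smooth family of Fredholm (here finite-dimensional) operators the determinant lines $\det(\rd_x s_I)$ assemble into a smooth line bundle; this is the standard local-trivialization argument, using that near any $x_0$ one can choose a fixed finite-dimensional subspace $W_0\subset E_I$ transverse to $\im\rd_{x_0}s_I$ and hence to $\im\rd_x s_I$ for $x$ nearby, which stabilizes the family to a family of surjections whose kernels form a smooth bundle. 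Then $\det(\rd_x s_I)\cong \lm(\ker(\rd_x s_I\oplus 0_{W_0}))\otimes(\lm W_0)^*$ smoothly, which is exactly the content that makes $\det(\rd s_I)\to U_I$ a smooth line bundle. This gives the objects $\La_I := \det(\rd s_I)$ of Definition~\ref{def:bundle}.

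Next I would check that $x\mapsto \La_{IJ}(x)$ from \eqref{eq:bunIJ} is a smooth bundle isomorphism covering $\phi_{IJ}$. Smoothness of the factor $\La_{\rd_x\phi_{IJ}}$ on $\lm\ker\rd_x s_I$ follows because $\rd\phi_{IJ}$ is a smooth bundle map identifying $\ker\rd s_I$ with $\ker\rd s_J$ by the index condition (Definition~\ref{def:change}(i)); smoothness of the factor $(\La_{[\Hat\phi_{IJ}^{-1}]})^*$ on the cokernel lines follows because $\Hat\phi_{IJ}$ is a fixed linear map and the cokernels $\qu{E_I}{\im\rd_x s_I}$, $\qu{E_J}{\im\rd_{\phi_{IJ}(x)}s_J}$ vary smoothly (again by the stabilization argument), with $[\Hat\phi_{IJ}]$ an isomorphism between them by Definition~\ref{def:change}(ii). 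The contraction isomorphism $\mathfrak C_D$ of Lemma~\ref{lem:get}, applied with fixed stabilizing subspaces, makes all of this explicit and manifestly smooth. The weak cocycle condition $\La_{IK}=\La_{JK}\circ\La_{IJ}$ on $\phi_{IJ}^{-1}(U_{JK})\cap U_{IK}$ then reduces to the two separate weak cocycle conditions $\rd\phi_{JK}\circ\rd\phi_{IJ}=\rd\phi_{IK}$ (from the weak cocycle condition on the $\phi_{IJ}$, differentiated) and $\Hat\phi_{JK}\circ\Hat\phi_{IJ}=\Hat\phi_{IK}$ (imposed as an axiom for Kuranishi atlases), together with the functoriality of $F\mapsto\La_F$ and of $F\mapsto\La_{F^{-1}}^*$; I would spell this out using that both $\La_{(\cdot)}$ and $\La_{(\cdot)^{-1}}^*$ are functors, so the composite is computed factorwise. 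This establishes that $\det(s_\Kk)$ is a vector bundle over $\Kk$ in the sense of Definition~\ref{def:bundle}.

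For the cobordism statement I would argue that the product structure on $\det(s_\Kk)$ is forced by the product structure of $\Kk$ near the boundary. For $I\in\Ii_{\p^\al\Kk}$ the collar embedding $\io^\al_I:\p^\al U_I\times A^\al_\eps\hookrightarrow U_I$ satisfies $s_I\circ\io^\al_I = s^\al_I\circ\pr_{\p^\al U_I}$, so $\rd_{(x,t)}(s_I\circ\io^\al_I)$ kills the $\R$-direction and equals $\rd_x s^\al_I$ on $\rT_x\p^\al U_I$; hence $\ker\rd_{(x,t)}(s_I\circ\io^\al_I)=\ker\rd_x s^\al_I\oplus\R$ and $\qu{E_I}{\im\rd_{(x,t)}(s_I\circ\io^\al_I)}=\qu{E_I}{\im\rd_x s^\al_I}$. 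Contracting out the extra $\R$-factor (i.e.\ choosing the orientation $\partial_t$ of $A^\al_\eps$ to trivialize $\lm\R\cong\R$) gives a canonical isomorphism $\det(\rd_{(x,t)}(s_I\circ\io^\al_I))\cong\det(\rd_x s^\al_I)$, and composing with $\La_{\rd\io^\al_I}$ yields the required bundle isomorphism
\begin{equation}\label{orient map}
\ti\io^\al_I \,:\; \det(s_{\p^\al\Kk})\times A^\al_\eps \;\overset{\cong}{\longrightarrow}\; (\io^\al_\eps)^*\det(s_\Kk), \qquad I\in\Ii_{\p^\al\Kk}.
\end{equation}
Compatibility of these $\ti\io^\al_I$ with the bundle maps $\La_{IJ}$ over the collar reduces, via the collar form of the coordinate changes $\Hat\Phi_{IJ}$ (Definition~\ref{def:Ccc}), to the already-established weak cocycle condition for $\det(s_{\p^\al\Kk})$ and the functoriality of $\La_{(\cdot)}$, with the $\partial_t$-contraction commuting with everything because $\p^\al\Hat\Phi_{IJ}$ does not touch the collar coordinate. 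I expect the main obstacle to be purely bookkeeping rather than conceptual: namely keeping the ordering conventions in Lemma~\ref{lem:get} consistent so that no spurious signs appear when one moves the $\R$-factor past the kernel and cokernel factors, and checking that the same stabilizing-subspace choices can be made simultaneously over a collar neighbourhood so that the product identification is genuinely smooth and not merely fiberwise. Once the sign conventions are pinned down (as the authors emphasize they must be), each verification is a routine diagram chase.
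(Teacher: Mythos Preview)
Your overall architecture matches the paper's: stabilize to get smooth local trivializations of each $\det(\rd s_I)$, check that $\La_{IJ}$ is smooth and satisfies the weak cocycle condition, and then build the collar isomorphism from the extra $\R$-factor in the kernel. The cobordism paragraph is essentially right, and your map \eqref{orient map} agrees with the paper's (they wedge $1$ on the \emph{left} via the map $\eta\mapsto 1\wedge\eta$, which is exactly the ordering convention you flag as the ``main obstacle'').

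There is, however, a genuine gap in your smoothness argument for $\La_{IJ}$. You write that ``smoothness of the factor $\La_{\rd_x\phi_{IJ}}$ on $\lm\ker\rd_x s_I$ follows because $\rd\phi_{IJ}$ is a smooth bundle map identifying $\ker\rd s_I$ with $\ker\rd s_J$'', and similarly for the cokernel factor. But $\ker\rd s_I$ and $\qu{E_I}{\im\rd s_I}$ are \emph{not} smooth vector bundles over $U_I$: their ranks jump wherever $\rd s_I$ drops rank. So neither factor of $\La_{IJ}(x)$ is separately a smooth map between smooth line bundles, and the factor-by-factor argument cannot work as stated. You then say that $\mathfrak C_D$ ``applied with fixed stabilizing subspaces makes all of this explicit and manifestly smooth'', which points in the right direction but does not carry the load.

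What the paper actually does is: having fixed a stabilizing injection $R_I:\R^N\to E_I$ near $x_0$, it takes the \emph{compatible} stabilization $R_J:=\Hat\phi_{IJ}\circ R_I$ on the target side (this covers $\coker\rd s_J$ precisely because the index condition identifies cokernels), and then verifies the identity
\[
\Hat T_{J,\phi_{IJ}(x)}\circ \La_{\rd_x\phi_{IJ}\oplus\id_{\R^N}} \;=\; \La_{IJ}(x)\circ \Hat T_{I,x},
\]
where $\Hat T_{\bullet}$ are the stabilized trivializations. The right-hand side is what you want; the left-hand side is manifestly smooth because $\rd_x\phi_{IJ}\oplus\id_{\R^N}$ is a smooth family of isomorphisms between the smooth bundles $\ker(\rd s_I\oplus R_I)$ and $\ker(\rd s_J\oplus R_J)$ (surjectivity onto the latter uses the tangent-bundle form of the index condition, Lemma~\ref{le:change}). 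This is the missing step in your proposal. The paper also spends some care showing the trivialization $\Hat T_{I,x}$ is independent of the choice of $R_I$ (comparing two choices via a third surjective one), which you skip; that part is more routine, but it is needed to make the local trivializations patch.
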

\begin{proof}
To see that $\det(s_\Kk)$ is a line bundle over $\Kk$, we first note that each topological bundle $\det(\rd s_I)$ is a 
smooth line bundle, since it has compatible local trivializations $\det(\rd s_I)\cong\lm\ker(\rd s_I \oplus R_I)$ 
induced from constant linear injections $R_I:\R^{N}\to E_I$ 
which locally cover the cokernel, see  e.g.\ \cite[Appendix~A.2]{MS}. 
There are various natural ways to define these maps; the crucial choice is the sign in equation \eqref{Cfrak}.
 \footnote{
See \cite{Z3} for a discussion of the different  conventions.
Changing the sign in  \eqref{Cfrak} for example by the factor $(-1)^{n-k}$ affects the local trivializations (and hence the topology of the determinant bundle) because \eqref{Cfrak}  is applied below to the family of operators $F_x, x\in U_I,$ the dimension of whose kernels  varies with $x$.} 

At each point $x\in U_I$ we will use the contraction map $\mathfrak{C}^{\phi_x}_{F_x}$ of Lemma~\ref{lem:get} for the linear map $F_x$ and isomorphism to its kernel $\phi_x$, where
\begin{align*}
F_x  \,:\; \ker(\rd_x s_I \oplus R_I) &\; \to \; \im R_I \;\subset\; E_I , 
\qquad\qquad\qquad\qquad (v,r) \;\mapsto\; \rd_x s_I(v) ,
\\
\phi_x \,:\;\;\,  K := \ker \rd_x s_I &\;\to\; \ker( \rd_x s_I \oplus R_I)  \; \subset \; \rT_x U_I\oplus \R^N , 
\qquad
k\mapsto (k,0) .
\end{align*}
Note here that $\ker(\rd_x s_I \oplus R_I)=\bigl\{(v,r)\in \rT_x U_I\oplus \R^N \,\big|\, \rd_x s_I (v) = - R_I(r) \bigr\}$, so that $F_x$ indeed maps to $\im R_I$ with $F_x(v,r)=-R_I(r)$, and its image is $\im F_x = \im \rd_x s_I \cap \im R_I$.

If  we restrict $x$ to an open set $O\subset U_I$ on which $\rd_x s_I\oplus R_I$ is surjective, then the inclusion $\im R_I \hookrightarrow E_I$ induces an isomorphism
$$
\io_x \,: \; \qu{\im R_I}{\im \rd_x s_I \cap \im R_I} \; \overset{\cong}{\longrightarrow} \; \qu{E_I}{\im \rd_x s_I } .
$$
Indeed, $\io_x$ is surjective since $E_I = \im \rd_x s_I + \im R_I$ and injective by construction.
Hence \eqref{eq:laphi} together with dualization defines an isomorphism $\La_{\io_x}^* : \bigl( \lm \qu{E_I}{\im \rd_x s_I }\bigr)^* \to \bigl( \qu{\im R_I}{\im \rd_x s_I \cap \im R_I} \bigr)^*$, which we invert and compose with the contraction isomorphism of Lemma~\ref{lem:get} to obtain isomorphisms
\begin{align}\notag
T_{I,x} \, := \; \bigl(\id_{\lm\ker \rd_x s_I}\otimes (\La_{\io_x}^*)^{-1}\bigr) \circ {\mathfrak C}^{\phi_x}_{F_x} \;:\;\;\; & 
 \lm \ker(\rd_x s_I \oplus R_I) \otimes \bigl( \lm \im R_I \bigr)^*  \\
&\;\overset{\cong}{\longrightarrow}\;
\lm\ker \rd_x s_I \otimes \bigl(\lm \bigl(\qu{E_I}{\im \rd_x s_I }\bigr)\bigr)^*. \label{eq:TIx} 
\end{align}
Precomposing this with the isomorphism $\R\cong \lm \R^N\stackrel{\La_{R_I}}\cong \lm \im R_I$ from
\eqref{eq:laphi}, we obtain a trivialization of $\det(\rd s_I)|_O$ given by isomorphisms
\begin{align}\notag
\Hat T_{I,x} \,:\; 
\lm \ker(\rd_x s_I \oplus R_I) 
&\;\overset{\cong}{\longrightarrow}\; 
\qquad\qquad \det(\rd_x s_I)  \\ \label{eq:HatTx}
\ov v_1\wedge \ldots \wedge \ov v_n
&\;\longmapsto\; 
 (v_1\wedge\dots v_k)\otimes  \bigl( [R_I(e_1)]\wedge\dots [R_I(e_{N-n+k})] \bigr)^*,
\end{align}
where $\ov v_i=(v_i,r_i)$ is a basis of $\ker(\rd_x s_I \oplus R_I)$ such that $v_1,\ldots,v_k$ span $\ker \rd_x s_I$ (and hence $r_1=\ldots=r_k=0$), and $e_{1},\ldots, e_{N}$ is a positively ordered normalized basis of $\R^N$ (that is $e_1\wedge\ldots e_N = 1 \in \R \cong \lm\R^N$) such that $R_I(e_{N-n+i}) = \rd_x s_I(v_i)$ for $i = k+1,\ldots, n$. In particular, the last $n-k$ vectors span $\im \rd_x s_I \cap \im R_I \subset E_I$, and thus the first $N-n+k$ vectors $[R_I(e_1)],\dots, [R_I(e_{N-n+k})]$ span the cokernel $\qu{E_I}{\im\rd_x s_I}\cong\qu{\im R_I}{\im\rd_x s_I\cap \im R_I}$.

Next, we show that these trivializations do not depend on the choice of injection $R_I:\R^N\to E_I$.
Indeed, given another injection $R_I':\R^{N'}\to E_I$ that also maps onto the cokernel of $\rd s_I$,
we can choose a third injection $R_I'':\R^{N''}\to E_I$ that is surjective, and compare it to both of $R_I, R_I'$.  
Hence it suffices to consider the following two cases:

\begin{itemlist}
\item $N=N'$ and $R_I = R_I'\circ \io$ for a bijection $\io: \R^{N} \overset{\cong}{\to} \R^{N'}$;
\item $N<N'$ and $R_I=R_I'\circ\pr$ for the canonical projection $\pr: \R^{N'}\to \R^N\times\{0\} \cong\R^N$.
\end{itemlist}

In the second case denote by $\io: \R^{N}\to \R^N\times\{0\}\subset\R^{N'}$ the canonical injection, then in both cases we have $R_I = R_I'\circ \io$, and thus $\id \times \io$ induces an injection $\ker(\rd s_I\oplus R_I)\to
\ker(\rd s_I\oplus R_I')$ so that there is a well defined quotient bundle
$\qu{\ker(\rd s_I\oplus R_I')}{\ker(\rd s_I\oplus R_I)} \to U_I$.

In case $N<N'$ we claim that an appropriately scaled choice of local trivialization for this quotient over an open set $O\subset U_I$, on which both trivializations of $\det(\rd s_I)|_O$ are defined, induces a bundle isomorphism
$\Psi: \lm \ker(\rd s_I\oplus R_I)|_O\to \lm \ker(\rd s_I\oplus R_I')|_O$ 
that is compatible with the trivializations $\Hat T_I$ and $\Hat T_I': \lm \ker(\rd s_I\oplus R'_I)|_O \to \det(\rd s_I)|_O$ constructed as in \eqref{eq:HatTx}, that is $\Hat T_I = \Hat T_I' \circ \Psi$.

To define $\Psi$, let $n:=\dim \ker(\rd s_I\oplus R_I)$ and fix a trivialization of the quotient, that is a family of smooth sections $\bigl(\ov v^\Psi_{i}=(v^\Psi_{i},r^\Psi_{i})\bigr)_{i=n+1, \ldots,n'}$ of $\ker(\rd s_I\oplus R_I')|_O$ with 
$n':= n+N'-N$, that induces a basis for the quotient space at each point $x\in O$.
Here we may want to rescale $\ov v^\Psi_{n+1}$ by a nonzero real, as discussed below.
Then for fixed $x\in O$, any choice of basis $(\ov v_i)_{i=1,\ldots,n}$ of $\ker(\rd_x s_I\oplus R_I)$ induces a basis $(\id \times \io)(\ov v_1),\dots, (\id \times \io)(\ov v_n), \ov v^\Psi_{n+1}, \ldots, \ov v^\Psi_{n'}$ of $ \ker(\rd_x s_I\oplus R_I')$, and we define $\Psi$ by 
$$
\Psi_x \,: \; \ov v_1\wedge\dots\wedge \ov v_n \;\mapsto\; (\id \times \io)(\ov v_1)\wedge\dots\wedge (\id \times \io)(\ov v_n)\wedge \ov v^\Psi_{n+1}(x)\wedge \ldots \wedge \ov v^\Psi_{n'}(x) ,
$$
which varies smoothly with $x\in O$. It remains to show that, for appropriate choice of the sections $\ov v^\Psi_i$, we have $\Hat T_{I,x} = \Hat T_{I,x}' \circ \Psi_x$ for any fixed $x\in O$.
For that purpose we express the trivializations $\Hat T_{I,x}$ and $\Hat T'_{I,x}$ as in \eqref{eq:HatTx}. This construction begins by choosing a basis $(\ov v_i)_{i=1,\dots,n}$ of $\ker (\rd_x s_I \oplus R_I)$, where the first $k$ elements $\ov v_i=(v_i,0)$ span $\ker \rd_x s_I\times\{0\}$.
A compatible choice of basis $(\ov v'_i)_{i=1,\dots,n'}$ for $\ker (\rd_x s_I \oplus R'_I)$ is given by $\ov v'_i := (\id \times \io)(\ov v_i)$ for $i=1,\dots,n$, and $\ov v'_i:= \ov v^\Psi_i$ for $i=n+1,\ldots,n'$.
Note here that $\ov v'_i = \ov v_i$ for $i=1,\ldots,k$.
Next, one chooses a positively ordered normalized basis $e_{1},\ldots, e_{N}$ of $\R^N$ such that $R_I(e_{N-n+i}) = \rd_x s_I(v_i)$ for $i = k+1,\ldots, n$. 
Then the first $N-n+k$ vectors $[R_I(e_1)],\dots, [R_I(e_{N-n+k})]$ coincide with $[R'_I(\io(e_1))],\dots, [R'_I(\io(e_{N'-n'+k}))]$ and span the cokernel $\qu{E_I}{\im\rd_x s_I}$, and the last $n-k$ vectors span $\im \rd_x s_I \cap \im R_I \subset E_I$. So we obtain a corresponding basis $e'_1,\ldots, e'_{N'}$ of $\R^{N'}$ by taking $e'_i = \io(e_i)$ for $i=1,\ldots,N$ and $e'_{N+i} = (R'_I)^{-1}\bigl( \rd_x s_I(v^\Psi_{n+i}(x))$ for $i=1,\ldots, N'-N = n'-n$. 
To obtain the correct definition of $\Hat T'_{I,x}$, we then rescale $v^\Psi_{n'}$ by the reciprocal of 
\begin{align*}
\la(x) &\,:=\; \io(e_1)\wedge \ldots \wedge \io(e_N) \wedge (R'_I)^{-1}\bigl( \rd_x s_I(v^\Psi_{n+1}(x))\bigr) \wedge\ldots\wedge (R'_I)^{-1}\bigl( \rd_x s_I(v^\Psi_{n'}(x))\bigr) \\
& \;\in\; \lm \R^{N'} \;\cong \; \R ,
\end{align*}
such that $e'_1,\ldots, e'_{N'-1}, \la(x)^{-1} e'_{N'}$ becomes positively ordered and normalized.
Note here that $\la:O\to\R$ is a smooth nonvanishing function of $x$, depending only on the sections $v^\Psi_{n+1}(x), \ldots, v^\Psi_{n'}(x)$ since $\io(e_i)=(e_i,0)$ are a positively ordered normalized basis of $\R^N\times\{0\}\subset\R^{N'}$ for all $x\in O$.
Thus $v^\Psi_{n+1}(x), \ldots, \la(x)^{-1} v^\Psi_{n'}(x)$ defines a smooth trivialization of the quotient bundle $\qu{\ker(\rd s_I\oplus R_I')}{\ker(\rd s_I\oplus R_I)} \to O$, for which the induced map $\Psi$ now provides the claimed compatibility. Indeed, we have by construction
\begin{align}
\bigl( \Hat T'_{I,x}\circ\Psi_x \bigr) \bigl( \ov v_1\wedge\dots\wedge \ov v_n\bigr) &\;=\;  
(v_1\wedge\dots\wedge v_k)\otimes  \bigl([R_I'(e_1')]\wedge\dots\wedge [R_I'(e_{N'-n'+k}')] \bigr)^*  \notag \\ \notag
&\;=\; (v_1\wedge\dots\wedge v_k)\otimes 
\bigl([R_I(e_1)]\wedge\dots\wedge [R_I(e_{N-n+k})]\bigr)^* \\ \label{tpsit} 
&\;=\; \Hat T_{I,x}\bigl(\ov v_1\wedge\dots\wedge \ov v_n\bigr).
\end{align}

In case $N=N'$ we define an isomorphism $\Psi$ as above, which however does not depend on any choice of vectors $\ov v^\Psi_i$. 
Then in the above calculation of $\Hat T_{I,x}$ and $\Hat T'_{I,x}$, the factor $\la = \io(e_1) \wedge \ldots \wedge \io(e_{N})$ is constant (equal to the determinant of $\io = (R_I')^{-1}\circ R_I$), and hence $\la^{-1}\Psi$ intertwines the trivializations $\Hat T_I$ and $\Hat T_I'$. This completes the proof that the local trivializations of $\det(\rd s_I)$ do not depend on the choice of $R_I$.
In particular, $\det(\rd s_I)$ is a smooth line bundle over $U_I$ for each $I\in\Ii_\Kk$.

To complete the proof that $\det(s_\Kk)$ is a vector bundle we must check that the 
lifts $\La_{IJ}$ given in \eqref{eq:bunIJ} of the
coordinate changes  $\Phi_{IJ}$ 
are smooth bundle isomorphisms.
Since the $\La_{IJ}(x)$ are constructed to be fiberwise isomorphisms, and the weak cocycle condition for the coordinate changes transfers directly to these bundle maps,
the nontrivial step is to check that $\La_{IJ}(x)$ varies smoothly with $x\in U_{IJ}$. 
For that purpose note that  
any trivialization $\Hat T_I$ near a given point $x_0\in U_{IJ}$ using a choice of $R_I$ as above, induces a trivialization $\Hat T_J$ of $\det(\rd s_J)$ near $\phi_{IJ}(x_0)\in U_J$ 
using the injection $R_J:=\Hat\phi_{IJ}\circ R_I$, since by the index condition $\Hat\phi_{IJ}$ identifies the cokernels. 
We claim that these local trivializations transform $\La_{IJ}(x)$ into the isomorphisms $\La_{\rd_x \phi_{IJ} \oplus \id_{\R^{N}} }$ of \eqref{eq:laphi} induced by the smooth family of isomorphisms
$$
\rd_x\phi_{IJ} \oplus \id_{\R^{N}} \,:\;  \ker(\rd_x s_I \oplus R_I) \;\overset{\cong}{\longrightarrow}\; \ker\bigl(\rd_{\phi_{IJ}(x)} s_J \oplus (\Hat\phi_{IJ}\circ R_I)\bigr) .
$$
Note here that these embeddings are surjective since for $(v,z)\in \rT U_J \times \R^{N}$ with $\rd s_J(v)=-\Hat\phi_{IJ}(R_I(z))$ the tangent bundle condition $\im\rd s_J \cap \im\Hat\phi_{IJ}=\rd s_J (\im\rd\phi_{IJ})$
from Lemma~\ref{le:change}, the partial index condition $\ker\rd s_J \subset \im\rd\phi_{IJ}$, and injectivity of $\Hat\phi_{IJ}$ imply $v\in \im\rd s_I$ with $\rd s_I(v)=-R_I(z)$.
Moreover, $\rd_x\phi_{IJ}$ varies smoothly with $x\in U_{IJ}$, and hence $\La_{\rd_x\phi_{IJ} \oplus \id_{\R^{N}} }$  varies smoothly with $x\approx x_0$. 
So to prove smoothness of $\La_{IJ}$ near $x_0$ it suffices to prove the transformation as claimed, i.e.\ at fixed $x\in U_{IJ}$ 
\begin{equation}\label{eq:etrans}
\Hat T_{J,\phi_{IJ}(x)}\circ \La_{\rd_x\phi_{IJ} \oplus \id_{\R^{N}}} \;=\; \La_{IJ}(x) \circ \Hat T_{I,x} .
\end{equation}
For that purpose we may simply compare the explicit maps given in \eqref{eq:HatTx}.
So let $\ov v_i=(v_i,r_i)$ be a basis of $\ker(\rd_x s_I \oplus R_I)$ such that $v_1,\ldots,v_k$ span $\ker \rd_x s_I$.
Then, correspondingly, 
$\ov v'_i=\bigl(\rd_x\phi_{IJ} \oplus \id_{\R^{N}}\bigr)(\ov v_i)$ ia a basis of $\ker(\rd_{\phi_{IJ}(x)} s_J \oplus R_J)$ such that $v'_i=\rd_x\phi_{IJ}(v_i)$ for $i=1,\ldots,k$ span $\ker \rd_{\phi_{IJ}(x)} s_J$.
Next, let $e_{1},\ldots, e_{N}$ be a positively ordered normalized basis of $\R^N$ such that $R_I(e_{N-n+i}) = \rd_x s_I(v_i)$ for $i = k+1,\ldots, n$. Then, correspondingly, we have 
$$
R_J(e_{N-n+i}) = \Hat\phi_{IJ}\bigl(R_I(e_{N-n+i})\bigr) =   \Hat\phi_{IJ}\bigl(\rd_x s_I(v_i)\bigr)
 =  \rd_{\phi_{IJ}(x)}s_J \bigl( \rd_x\phi_{IJ}(v_i)\bigr)  =  \rd
 s_J ( v'_i) .
$$
Using these bases in \eqref{eq:HatTx} we can now verify \eqref{eq:etrans},
\begin{align*}
& \bigl(\La_{IJ}(x) \circ \Hat T_{I,x}\bigr)\bigl(\ov v_1\wedge \ldots \wedge \ov v_n\bigr) \\
&\;=\;
\La_{IJ}(x) \bigl(  (v_1\wedge\ldots \wedge v_k)\otimes  \bigl( [R_I(e_1)]\wedge\ldots\wedge [R_I(e_{N-n+k})] \bigr)^* \bigr) \\
&\;=\;
\bigl( \rd_x\phi_{IJ}(v_1)\wedge\ldots\wedge  \rd_x\phi_{IJ}(v_k)\bigr) \otimes  \bigl( [\Hat\phi_{IJ}(R_I(e_1))]\wedge\ldots \wedge [\Hat\phi_{IJ}(R_I(e_{N-n+k}))] \bigr)^* \bigr) \\
&\;=\;
( v'_1\wedge\ldots\wedge  v'_k) \otimes  \bigl( [R_J(e_1)]\wedge\ldots\wedge [R_J(e_{N-n+k})] \bigr)^* \bigr) \\
&\;=\;
\Hat T_{J,\phi_{IJ}(x)}\bigr)\bigl(\ov v'_1\wedge \ldots \wedge \ov v'_n\bigr) 
\;=\;
\bigl( \Hat T_{\phi_{IJ}(x)}\circ \bigl(\rd\phi_{IJ}(x) \oplus \id_{\R^{N}}\bigr) \bigr)\bigl(\ov v_1\wedge \ldots \wedge \ov v_n\bigr).
\end{align*}
This finishes the construction of $\det(s_\Kk)$ for a weak Kuranishi atlas $\Kk$.

In the case of a weak Kuranishi cobordism $\Kk$, we moreover have to
construct bundle isomorphisms from collar restrictions to the product bundles $\det(s_{\p^\al\Kk})\times A^\al_\eps$ to prove that $\det(s_\Kk)$ is a line bundle in the sense of Definition~\ref{def:bundle} with the claimed restrictions.
That is, we have to construct bundle isomorphisms $\ti\io^\al_I : \det(\rd s^\al_I)\times A^\al_\eps \to \det(\rd s_I)|_{\im\io^\al_I}$ for $\al=0,1$, $I\in\Ii_{\p^\al\Kk}$, and $\eps>0$ less than the collar width of $\Kk$, and check the identities $\La_{IJ} \circ \ti\io^{\al}_I = \ti\io^\al_{J} \circ \bigl(\La^\al_{IJ}\times\id_{A^\al_\eps}\bigr)$.

For that purpose recall that $(s_I\circ\io^\al_I )(x,t) = s^\al_I(x)$ for $(x,t)\in\partial^\al U_I \times A^\al_\eps$, so that we have a trivial identification $\id_{E_I} : \im \rd_x s^\al_I \to  \im\rd_{\io^\al_I(x,t)} s_I$ of the images and an isomorphism 
$\rd_{(x,t)}\io^\al_I :\ker \rd_x s^\al_I \times\R \to  \ker\rd_{\io^\al_I(x,t)} s_I$.
The latter gives rise to an isomorphism given by wedging with the canonical positively oriented unit vector $1\in\R =\rT_t A^\al_\eps$,
\begin{equation} \label{wedge 1}
 \wedge_1 \,:\; \lm \ker \rd_x s^\al_I \;\to\; \lm \bigl( \ker \rd_x s^\al_I \times\R \bigr) , \qquad
\eta \;\mapsto\; 1\wedge \eta.
\end{equation}
Here and throughout we identify vectors $\eta_i\in\ker \rd s^\al_I$ with 
$(\eta_i,0)\in \ker \rd s^\al_I \times\R$ and also abbreviate 
$1:= (0,1)\in \ker \rd s^\al_I \times\R$.
This map now composes with the induced isomorphism $\La_{\rd_{(x,t)}\io^\al_I}$ from \eqref{eq:laphi} and can be combined with the identity on the cokernel factor to obtain fiberwise isomorphisms
\begin{equation}\label{orient map}
\ti\io^\al_I (x,t) :=  \bigl( \La_{\rd_{(x,t)}\io^\al_I} \circ \wedge_1 \bigr) 
\otimes \bigl(\La_{\id_{E_I}}\bigr)^*
\;:\; \det(\rd_x s^\al_I)\times A^\al_\eps \;\to\;  \det(\rd_{\io^\al_I(x,t)} s_I) .
\end{equation}
These isomorphisms vary smoothly with $(x,t)\in \p^\al U_I\times A^\al_\eps$ since the compatible local trivializations $\lm\ker(\rd s^\al_I \oplus R_I) \to \det(\rd s^\al_I)$ and $\lm\ker(\rd s_I \oplus R_I) \to \det(\rd s_I)$ transform $\ti\io^\al_I(x,t)$ to $\La_{\rd_{(x,t)}\io^\al_I\oplus\id_{\R^N}}\circ \wedge_1$.
Moreover, $(x,t)\to \ti\io^\al_I(x,t)$ lifts $\io^\al_I$ and thus defines the required bundle isomorphism
$\ti\io^\al_I : \det(\rd s^\al_I)\times A^\al_\eps \to  \det(\rd s_I)|_{\im\io^\al_I}$ for each $I\in\Ii_{\p^\al\Kk}$.
Finally, the isomorphisms \eqref{orient map} intertwine $\La_{IJ} = \La_{\rd\phi_{IJ}} \otimes 
\bigl(\La_{[\Hat\phi_{IJ}]^{-1}}\bigr)^*$
and $\La^\al_{IJ} = \La_{\rd\phi^\al_{IJ}} \otimes \bigl(\La_{[\Hat\phi_{IJ}]^{-1}}\bigr)^*$ by the product form of the coordinate changes $\phi_{IJ}\circ\io^\al_I = \io^\al_ J\circ (\phi^\al_{IJ}\times \id_{A^\al_\eps})$, 
and because $\rd_{\io^\al_I(x,t)}\phi_{IJ}$ maps $\rd_{(x,t)} \io_I^\al (1)$ to $\rd_{(\phi^\al_{IJ}(x),t)} \io_J^\al (1)$,
both of which are wedged on by \eqref{orient map} from the left hand side.
(For an example of a detailed calculation see the end of the proof of Proposition~\ref{prop:orient1}.)
This finishes the proof.
\end{proof}

We next use the determinant bundle $\det(s_\Kk)$ to define the notion of an orientation of a Kuranishi atlas.

\begin{defn}\label{def:orient} 
A  weak Kuranishi atlas or Kuranishi cobordism $\Kk$ is {\bf orientable} if there exists a nonvanishing section $\si$ of the bundle $\det(s_\Kk)$ (i.e.\ with $\si_I^{-1}(0)=\emptyset$ for all $I\in\Ii_\Kk$).  An {\bf orientation} of $\Kk$ is a choice of nonvanishing section $\si$ of $\det(s_\Kk)$. 
An {\bf oriented Kuranishi atlas or cobordism} is a pair $(\Kk,\si)$ consisting of a  Kuranishi atlas or cobordism and an orientation $\si$ of $\Kk$.

For an oriented Kuranishi cobordism $(\Kk,\si)$, the {\bf induced orientation of the boundary} $\p^\al\Kk$ for $\al=0$ resp.\ $\al=1$ is the orientation of $\p^\al\Kk$,
$$
\p^\al\si \,:=\; \Bigl( \bigl( (\ti\io^\al_I)^{-1} \circ\si_I \circ \io^\al_I \bigr)\big|_{\partial^\al U_I \times\{\al\} } \Bigr)_{I\in\Ii_{\p^\al\Kk}}
$$
given by the isomorphism $(\ti\io^\al_I)_{I\in\Ii_{\p^\al\Kk}}$ in \eqref{orient map} between a collar neighbourhood of the boundary in $\Kk$ and the product Kuranishi atlas $\p^\al\Kk\times A^\al_\eps$, followed by restriction to the boundary $\p^\al \Kk=\p^\al\bigl(\p^\al \Kk\times A^\al_\eps\bigr)$, where we identify $\partial^\al U_I \times\{\al\} \cong \partial^\al U_I$.

With that, we say that two oriented weak Kuranishi atlases $(\Kk^0,\si^0)$ and $(\Kk^1,\si^1)$ are {\bf oriented cobordant} if there exists a weak Kuranishi cobordism $\Kk^{[0,1]}$ from $\Kk^0$ to $\Kk^1$ and a section $\si$ of $\det(s_{\Kk^{[0,1]}})$ such that  $\partial^\al\si=\si^\al$ for $\al=0,1$.
\end{defn}

\begin{rmk}\label{rmk:orientb}\rm  
Here we have defined the induced orientation on the boundary $\p^\al \Kk$ of a cobordism so that it is completed to an orientation of the collar by adding the  positive unit vector $1$ along $A^\al_\eps\subset \R$  rather than the more usual outward normal vector.  Further, although we write the collar as $U^\al_I\times A^\al_\eps$, formula \eqref{wedge 1} above shows that if $\eta_1,\dots, \eta_n$ is a positively ordered basis for $\rT_x U^\al_I$ then $1, \eta_1,\dots, \eta_n$ is a positively ordered basis for $\rT_x (U^\al_I\times A^\al_\eps)$. 
While the first convention merely simplifies notation, the second is necessary for compatibility checks in Proposition~\ref{prop:det0} as well as Proposition~\ref{prop:orient1}~(i) below; cf.\ the proof of \eqref{oclaim} below.
The alternative convention of adding the normal vector as last vector leads to sign ambiguities between $\det(s_\Kk)$ and the determinant line bundle $\det(s|_\Vv+\nu)$ for a perturbation, since the dimensions of the kernels of $\rd s_I$ and $\rd s_I+\nu_I$ need not be the same.
\end{rmk}

\begin{lemma}\label{le:cK}
Let $(\Kk,\si)$ be an oriented weak Kuranishi atlas or cobordism. 
\begin{enumerate}\item
The orientation  $\si$ induces a canonical orientation $\si|_{\Kk'}:=(\si_I|_{U'_I})_{I\in\Ii_{\Kk'}}$ on each shrinking $\Kk'$ of $\Kk$ with domains $\bigl(U'_I\subset U_I\bigr)_{I\in\Ii_{\Kk'}}$.
\item
In the case of a Kuranishi cobordism $\Kk$, the restrictions to boundary and shrinking commute, that is
$(\si|_{\Kk'})|_{\p^\al\Kk'} = (\si|_{\p^\al\Kk})|_{\p^\al\Kk'}$.
\item 
In the case of a weak Kuranishi atlas $\Kk$, the orientation $\si$ on $\Kk$ induces an 
orientation  
$\si^{[0,1]}$ 
on $\Kk\times[0,1]$, 
which induces the given orientation $\p^\al\si^{[0,1]}=\si$ of the boundaries $\p^\al(\Kk\times[0,1]) = \Kk$ for $\al=0,1$.
\end{enumerate}
\end{lemma}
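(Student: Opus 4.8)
\textbf{Proof strategy for Lemma~\ref{le:cK}.} The three statements are all essentially bookkeeping exercises about how the bundle $\det(s_\Kk)$ behaves under the operations of shrinking and of taking a product with $[0,1]$, together with the compatibility of these operations with the collar isomorphisms \eqref{orient map}. My plan is to treat them in order, extracting in each case the relevant naturality of the determinant line bundle from Proposition~\ref{prop:det0}.

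For part (i): a shrinking $\Kk'$ of $\Kk$ has the same index set $\Ii_{\Kk'}=\Ii_\Kk$, and its charts $\bK'_I=\bK_I|_{U'_I}$ have the same obstruction spaces and the same section maps $s_I$ restricted to $U'_I\subset U_I$; the coordinate changes $\phi'_{IJ}$ are restrictions of $\phi_{IJ}$ to $U'_{IJ}=\phi_{IJ}^{-1}(U'_J)\cap U'_I$. Hence $\det(\rd_x s'_I)=\det(\rd_x s_I)$ for every $x\in U'_I$, and the gluing isomorphisms $\La'_{IJ}(x)$ of \eqref{eq:bunIJ} agree with $\La_{IJ}(x)$ for $x\in U'_{IJ}$, since they are defined pointwise from $\rd_x\phi_{IJ}$ and $[\Hat\phi_{IJ}]$. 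Therefore $\det(s_{\Kk'})$ is literally the restriction of the bundle $\det(s_\Kk)$ to $\bigcup_I U'_I\subset\Obj_{\bB_\Kk}$, and $\si|_{\Kk'}:=(\si_I|_{U'_I})$ is a section of it; it is nonvanishing because each $\si_I$ is. The only thing to remark is well-definedness: $\si|_{\Kk'}$ is compatible with the restricted gluing maps $\La'_{IJ}$ because $\si$ is compatible with $\La_{IJ}$ and $\La'_{IJ}$ is a restriction of $\La_{IJ}$.

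For part (ii): when $\Kk$ is a Kuranishi cobordism, the collar isomorphism $\ti\io^\al_I$ of \eqref{orient map} is built pointwise from $\rd_{(x,t)}\io^\al_I$, the wedge-with-$1$ map \eqref{wedge 1}, and $\id_{E_I}$; since a cobordism shrinking is required (Definition~\ref{def:shr}, and its cobordism analogue) to respect the collar structure — i.e.\ $U'_I$ is collared with $\p^\al U'_I\subset\p^\al U_I$ and $\io^\al_I$ restricts to the collar embedding of $\Kk'$ — the collar isomorphism $\ti\io^{\prime\,\al}_I$ for $\Kk'$ is simply the restriction of $\ti\io^\al_I$. Consequently $\p^\al(\si|_{\Kk'})$, computed as $\bigl((\ti\io^{\prime\,\al}_I)^{-1}\circ\si_I|_{U'_I}\circ\io^\al_I\bigr)\big|_{\p^\al U'_I\times\{\al\}}$, equals $(\p^\al\si)|_{\p^\al\Kk'}$, computed by first restricting to $\p^\al\Kk$ via $\ti\io^\al_I$ and then shrinking to $\p^\al U'_I$. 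Both sides unravel to the same pointwise formula, giving the claimed commutativity.

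For part (iii): here $\Kk\times[0,1]$ is the product Kuranishi cobordism of Example~\ref{ex:triv}, with charts $\bK_I\times[0,1]$, sections $s_I\circ\pr_{U_I}$, and coordinate changes $\phi_{IJ}\times\id_{[0,1]}$. At a point $(x,t)\in U_I\times[0,1]$ we have $\ker\rd_{(x,t)}(s_I\circ\pr_{U_I})=\ker\rd_x s_I\times\R$ and an unchanged cokernel $E_I/\im\rd_x s_I$, so $\det(\rd_{(x,t)}(s_I\circ\pr))\cong\det(\rd_x s_I)$ via the wedge-with-$1$ isomorphism \eqref{wedge 1} tensored with $(\La_{\id_{E_I}})^*$ — which is exactly the formula \eqref{orient map} with $\io^\al_I$ replaced by the product embedding. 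This defines $\si^{[0,1]}:=\bigl((\wedge_1\otimes(\La_{\id_{E_I}})^*)\circ\si_I\circ\pr_{U_I}\bigr)_{I\in\Ii_\Kk}$, which is nonvanishing since $\si_I$ is and the tensor map is an isomorphism; it is compatible with the gluing maps $\La_{IJ}\times\id$ by the same product-form argument used at the end of the proof of Proposition~\ref{prop:det0} (the map $\rd\phi_{IJ}$ carries the extra factor $\rT[0,1]$ identically, so the wedged-on vector $1$ is preserved). Finally, at $\al=0,1$ the collar embedding of $\Kk\times[0,1]$ near $X\times\{\al\}$ is $\id_{U_I}\times\,(\text{shift})$, so the collar isomorphism $\ti\io^\al_I$ coincides with the map $\wedge_1\otimes(\La_{\id_{E_I}})^*$ used to define $\si^{[0,1]}$; hence $\p^\al\si^{[0,1]}=(\ti\io^\al_I)^{-1}\circ\si^{[0,1]}_I\circ\io^\al_I\big|_{\p^\al}=\si_I$, i.e.\ $\p^\al\si^{[0,1]}=\si$.

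The steps are all of the same flavor, and the only genuinely delicate point — the one I would write out most carefully — is the sign bookkeeping in parts (ii) and (iii): one must verify that the \emph{same} pointwise isomorphism of determinant lines appears whether one first passes to the collar (or product) and then restricts/shrinks, or vice versa. This is where the ordering convention of Remark~\ref{rmk:orientb} (wedging the positive unit vector $1$ on the \emph{left}) is essential; with the opposite convention the two routes could differ by a sign depending on $\dim\ker\rd_x s_I$, which varies with $x$, and the commutativity would fail. So the plan is: carefully fix the pointwise formula \eqref{orient map} as the single source of all collar/product identifications, observe that shrinking acts by pointwise restriction and commutes with everything trivially, and then the three statements follow by composing isomorphisms in the order dictated by that formula. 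I do not anticipate any conceptual obstacle beyond this sign discipline.
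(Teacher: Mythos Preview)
Your proposal is correct and follows essentially the same approach as the paper's proof: for (i) and (ii) the paper also just observes that $\det(s_{\Kk'})$ is the literal restriction of $\det(s_\Kk)$ so that both routes in (ii) yield $\si_I|_{\p^\al U'_I}$, and for (iii) the paper likewise defines $\si^{[0,1]}$ by pushing $\si_I$ forward with the isomorphism $\ti\io_I:=\wedge_1\otimes(\La_{\id_{E_I}})^*$ from \eqref{orient map} (with $[0,1]$ replacing $A^\al_\eps$) and recovers $\p^\al\si^{[0,1]}=\si$ because this same $\ti\io$ serves as the collar isomorphism. Your added emphasis on the left-wedging convention from Remark~\ref{rmk:orientb} is apt and is precisely why these identifications are consistent.
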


\begin{proof}  
By definition, $\det(s_{\Kk'})$ is the line bundle over $\Kk'$ consisting of the bundles $\det(\rd s'_I)=\det(\rd s_I)|_{U'_I}$ and the transition maps $\La'_{IJ}=\La_{IJ}|_{U'_{IJ}}$.
The restricted sections $\si_I|_{U'_I}$ of $\det(\rd s'_I)$ are hence compatible with the transition maps $\La'_{IJ}$ and have product form near the boundary in the case of a cobordism. Since they are nonvanishing, they define an orientation of $\Kk'$.
Commutation of restrictions holds since both $(\si|_{\Kk'})|_{\p^\al\Kk'}$ and $(\si|_{\p^\al\Kk})|_{\p^\al\Kk'}$ are given by $\si_I|_{\p^\al U'_I}$ with $\p^\al U'_I = \p^\al U_I \cap U'_I$.

For  part~(iii)  we consider an oriented, additive, weak Kuranishi atlas $(\Kk,\si)$ and begin by constructing an induced orientation of the product cobordism $\Kk\times[0,1]$. For that purpose we use the bundle isomorphisms 
$$
\ti \io_I:=\wedge_1 \otimes (\La_{\id_{E_I}})^* \,:\; \det(\rd s_I)\times [0,1] \;\to\; \det(\rd s'_I)
$$
with $s'_I(x,t)=s_I(x)$, covering $\io_I=\id_{U_I\times[0,1]}$.
These coincide with the maps defined in \eqref{orient map} for the interval $[0,1]$ instead of $A^\al_\eps$, so the proof of Proposition~\ref{prop:det0}
 shows that they provide an isomorphism $\ti \io$ from the product bundle $\det(s_\Kk)\times [0,1]$ to the determinant bundle of the product $\det(s_{\Kk\times [0,1]})$.
Now an orientation $\si$ of $\Kk$ determines an orientation 
$\si^{[0,1]}:=\ti\io_*\si$ of the product $\Kk\times [0,1]$
given by $(\ti\io_*\si)_I(x,t)=\ti\io_I(x,t)\bigl(\si_I(x)\bigr)$.
Further, using $\ti\io^\al:=\ti\io$ to define the collar structure on $\det(s_{\Kk\times [0,1]})$, the restrictions to both boundaries are $\p^\al( \ti\io_*\si) =\si$ since $\ti\io_I^{-1}\circ (\ti\io_*\si)_I
\circ \io^\al_I|_{U_I\times\{\al\}} = \si_I$.
\end{proof}

The arguments of Proposition~\ref{prop:det0} equally apply for any reduction $\Vv$ of a
 Kuranishi atlas $\Kk$ and an admissible perturbation $\nu$ to define a line bundle $\det(s|_\Vv + \nu)$ over $\Vv$ (or more accurately over the Kuranishi atlas $\Kk^\Vv$ defined in Proposition~\ref{prop:red}).  
Instead of setting up a direct comparison between the bundles $\det(s|_\Vv + \nu)$  for different $\nu$,
we will work with a 
``more universal" determinant bundle $\det(\Kk)$ over $\Kk$.
This will allow us to obtain compatible orientations of the determinant bundles over the perturbed zero set $\det(s|_\Vv + \nu)|_{(s+\nu)^{-1}(0)}$ for different transverse perturbations $\nu$.
We will construct the bundle $\det(\Kk)$ from the determinant bundles of the zero sections in each chart. 
However, since the zero section $0_\Kk$ does not satisfy the index condition, 
we need to construct different transition maps for $\det(\Kk)$, which will now depend on the section $s_\Kk$.
For this purpose, we
again use contraction isomorphisms from Lemma~\ref{lem:get}.  
On the one hand, this provides families of isomorphisms
\begin{equation}\label{Cds}
\mathfrak{C}_{\rd_x s_I} \,:\; \lm \rT_x U_I \otimes \bigl( \lm E_I \bigr)^* \;\overset{\cong}{\longrightarrow}\;  \det(\rd_x s_I)
\qquad\text{for} \; x\in U_I .
\end{equation}
In fact, as we will see in the proof of Proposition~\ref{prop:orient} below, these maps are essentially the special cases of $T_{I,x}$ in \eqref{eq:TIx} in which $R_I$ is surjective.
On the other hand, recall that the tangent bundle condition \eqref{tbc} implies that $\rd s_J$ restricts to an isomorphism $\qu{\rT_y U_J}{\rd_x\phi_{IJ}(\rT_x U_I)}\overset{\cong}{\to} \qu{E_J}{\Hat\phi_{IJ}(E_I)}$ for $y=\phi_{IJ}(x)$. 
Therefore, if we choose a smooth normal bundle $N_{IJ} =\bigcup_{x\in U_{IJ}} N_{IJ,x}
\subset \phi_{IJ}^* \rT U_J
$ to 
the submanifold 
$\im \phi_{IJ}
\subset U_J$, then 
the subspaces 
$\rd_y s_J(N_{IJ,x})$ 
(where we always denote $y:=\phi_{IJ}(x)$ and vary $x\in U_{IJ}$)
form a smooth family of subspaces of $E_J$ that are complements  
to
$\Hat\phi_{IJ}(E_I)$.
Hence letting 
$\pr_{N_{IJ}}(x) : E_J \to \rd_y s_J(N_{IJ,x}) \subset E_J$
be the 
smooth family of 
projections with kernel  
$\Hat\phi_{IJ}(E_I)$,
we obtain 
a smooth family of 
linear maps
$$
F_x \,:= \;
\pr_{N_{IJ}}(x) 
\circ \rd_y s_J \,:\; \rT_y U_J \;\longrightarrow\; 
E_J
\qquad\text{for}\; x\in U_{IJ}, y=\phi_{IJ}(x)
$$
with images $\im F_x=\rd_y s_J(N_{IJ,x})$,
and isomorphisms to their kernel
$$
\phi_x  \,:= \;  \rd_x\phi_{IJ} \,:\; \rT_x U_I \;\overset{\cong}{\longrightarrow}\;  \ker F_x =  \rd_x\phi_{IJ}(\rT_x U_I) \;\subset\; \rT_y U_J .
$$
By Lemma~\ref{lem:get} these induce isomorphisms 
$$
\mathfrak{C}^{\phi_x}_{F_x} \,:\; \lm \rT_{\phi_{IJ}(x)} U_J \otimes \bigl(\lm E_J \bigr)^* 
 \;\overset{\cong}{\longrightarrow}\;  \lm \rT_x U_I \otimes  \Bigl(\lm \Bigl(\qq{E_J}
{\im F_x}
 \Bigr) \Bigr)^* .
$$
We may combine this with the dual of the  isomorphism $\lm \bigl(\qu{E_J}
{\rd_y s_J(N_{IJ,x})}
\bigr) \cong \lm E_I$ induced 
via \eqref{eq:laphi} by $\pr^\perp_{
N_{IJ}}(x) \circ\Hat\phi_{IJ} : E_I \to \qu{E_J}
{\rd_y s_J(N_{IJ,x})}
$ to obtain isomorphisms
\begin{align} \label{CIJ}
\mathfrak{C}_{IJ}(x) \,: \;
 \lm \rT_{\phi_{IJ}(x)} U_J \otimes \bigl(\lm E_J \bigr)^*  
\;\overset{\cong}{\longrightarrow}\;  \lm \rT_x U_I \otimes \bigl(\lm E_I \bigr)^*  
\end{align}
for $x\in U_{IJ}$, given by $\mathfrak{C}_{IJ}(x) :=  \bigl( \id_{ \lm \rT_x U_I} \otimes 
\La_{(\pr^\perp_{
N_{IJ}}(x)\circ\Hat\phi_{IJ})^{-1}}^*  \bigr) \circ \mathfrak{C}^{\phi_x}_{F_x}$.

\begin{prop}\label{prop:orient} 
\begin{enumerate}
\item 
Let $\Kk$ be a weak Kuranishi atlas. 
Then there is a well defined line bundle $\det(\Kk)$ over $\Kk$ given by the line bundles $\La^\Kk_I := \lm \rT U_I\otimes \bigl(\lm E_I\bigr)^* \to U_I$ for $I\in\Ii_\Kk$ and the transition maps $\mathfrak{C}_{IJ}^{-1}: \La^\Kk_I |_{U_{IJ}} \to \La^\Kk_J |_{\im\phi_{IJ}}$ from \eqref{CIJ} for $I\subsetneq J$. 
In particular, the latter isomorphisms are independent of the choice of 
normal bundle $N_{IJ}$.

Furthermore, the contractions $\mathfrak{C}_{\rd s_I}: \La^\Kk_I \to \det(\rd s_I)$ from \eqref{Cds} define an isomorphism $\Psi^{s_\Kk}:=\bigl(\mathfrak{C}_{\rd s_I}\bigr)_{I\in\Ii_\Kk}$ from $\det(\Kk)$ to $\det(s_\Kk)$.
\item 
If $\Kk$ is a weak Kuranishi cobordism, then the determinant bundle $\det(\Kk)$ defined as in (i)  
can be given a product structure on the collar 
such that its 
boundary restrictions are
$\det(\Kk)|{_{\p^\al\Kk}} = \det(\p^\al\Kk)$ for $\al= 0,1$. 

Further, the isomorphism $\Psi^{s_\Kk}: \det(\Kk) \to \det(s_\Kk)$ defined as in (i) has product structure on the collar with restrictions $\Psi^{s_\Kk}|_{\p^\al \Kk}=\Psi^{s_{\p^\al\Kk}}$ for $\al=0,1$.
\end{enumerate}
\end{prop}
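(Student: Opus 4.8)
\textbf{Proof plan for Proposition~\ref{prop:orient}.}
The plan is to prove part~(i) first, then deduce part~(ii) by checking that all constructions respect the collar structure. For part~(i) the main work is to show that $\mathfrak{C}_{IJ}(x)$ is independent of the choice of normal bundle $N_{IJ}$, that the resulting maps $\mathfrak{C}_{IJ}^{-1}$ are smooth in $x$ and satisfy the weak cocycle condition, and that the contractions $\mathfrak{C}_{\rd s_I}$ intertwine the two sets of transition maps. I would begin by recording that each $\La^\Kk_I = \lm \rT U_I \otimes (\lm E_I)^*$ is manifestly a smooth line bundle over $U_I$, and that smoothness of $x\mapsto \mathfrak{C}_{IJ}(x)$ follows from smoothness of $\rd\phi_{IJ}$, $\rd s_J$, a choice of smooth normal bundle $N_{IJ}\subset \phi_{IJ}^*\rT U_J$ (which exists since $\im\phi_{IJ}$ is an embedded submanifold by the index condition, cf.\ Lemma~\ref{le:change}), and the smooth dependence of the projections $\pr_{N_{IJ}}(x)$, $\pr^\perp_{N_{IJ}}(x)$. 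The independence of $N_{IJ}$ would be proved pointwise: given two complements $N, N'$ of $\rd_x\phi_{IJ}(\rT_x U_I)$ in $\rT_{\phi_{IJ}(x)} U_J$, I would pick a basis of $\rT_x U_I$, push it forward by $\rd_x\phi_{IJ}$, complete both ways by vectors of $N$ resp.\ $N'$ chosen to have the same image under $\rd_y s_J$ modulo $\Hat\phi_{IJ}(E_I)$, and trace through Lemma~\ref{lem:get} to see that the two resulting isomorphisms agree — the point being that the explicit formula in Lemma~\ref{lem:get} only sees $\ker F_x = \rd_x\phi_{IJ}(\rT_x U_I)$ and the images $F_x(v_i)$ up to the complement $\rd_y s_J(N_{IJ,x})$, which is accounted for in the dualized $\La_{(\pr^\perp_{N_{IJ}}\circ\Hat\phi_{IJ})^{-1}}^*$ factor.

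Next I would verify the intertwining identity $\mathfrak{C}_{\rd_y s_J}\circ \mathfrak{C}_{IJ}^{-1}(x) = \La_{IJ}(x)\circ \mathfrak{C}_{\rd_x s_I}$ on each fiber, which simultaneously shows that $\Psi^{s_\Kk}$ is a bundle isomorphism and, combined with the fact that $\det(s_\Kk)$ is a well defined line bundle (Proposition~\ref{prop:det0}) whose transition maps $\La_{IJ}$ satisfy the weak cocycle condition, gives for free that the maps $\mathfrak{C}_{IJ}^{-1}$ also satisfy the weak cocycle condition (conjugating a cocycle of isomorphisms by a fixed family of fiberwise isomorphisms preserves the cocycle property). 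Concretely this identity is a bookkeeping comparison of the bases produced on each side: on the left, $\mathfrak{C}_{\rd_x s_I}$ trivializes $\La^\Kk_I$ via a basis of $\rT_x U_I$ adapted to $\ker\rd_x s_I$ together with a basis of $E_I$ adapted to $\im\rd_x s_I$, and $\mathfrak{C}_{\rd_y s_J}^{-1}$ does the analogous thing downstairs; pushing forward by $\rd_x\phi_{IJ}$ and $\Hat\phi_{IJ}$ and using the index condition $\rd\phi_{IJ}(\ker\rd s_I)=\ker\rd s_J$, $\Hat\phi_{IJ}$ identifies cokernels, one checks the two sides agree. This is the same style of computation already carried out in \eqref{eq:etrans} in the proof of Proposition~\ref{prop:det0}, so I expect no surprises, only careful index shuffling — and this is the step I expect to be the main obstacle, since getting every sign and ordering convention consistent (the conventions were precisely the subtlety flagged before Lemma~\ref{lem:get}) requires care.

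For part~(ii), I would follow the template of the cobordism part of Proposition~\ref{prop:det0}. Over the collar of a Kuranishi cobordism, $s_I\circ\io^\al_I$ is independent of the collar coordinate $t$, so there is a trivial identification $\id_{E_I}$ of the obstruction spaces and an isomorphism $\rd_{(x,t)}\io^\al_I:\rT_x(\p^\al U_I)\times\R\to\rT_{\io^\al_I(x,t)}U_I$. Wedging on the positive unit vector $1\in\R$ as in \eqref{wedge 1} and combining with $(\La_{\id_{E_I}})^*$ gives bundle isomorphisms $\La^\Kk_I|_{\p^\al U_I}\times A^\al_\eps \to \La^\Kk_I|_{\im\io^\al_I}$ which, by the product form of the coordinate changes $\phi_{IJ}\circ\io^\al_I = \io^\al_J\circ(\phi^\al_{IJ}\times\id_{A^\al_\eps})$ and the fact that one may choose the normal bundles $N_{IJ}$ to be of product form in the collar, intertwine $\mathfrak{C}_{IJ}^{-1}$ with $(\mathfrak{C}^\al_{IJ})^{-1}\times\id_{A^\al_\eps}$; hence they exhibit $\det(\Kk)$ as a line bundle over the cobordism with $\det(\Kk)|_{\p^\al\Kk}=\det(\p^\al\Kk)$. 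Finally, to check $\Psi^{s_\Kk}$ has product form with restrictions $\Psi^{s_{\p^\al\Kk}}$, I would observe that the contraction $\mathfrak{C}_{\rd_{\io^\al_I(x,t)} s_I}$ is, under the collar isomorphisms on $\det(\Kk)$ and on $\det(s_\Kk)$ (the latter being $\ti\io^\al_I$ from \eqref{orient map}), transformed into $\mathfrak{C}_{\rd_x s^\al_I}$ — again because wedging on $1$ on the $\rT U$ side corresponds under $\rd\io^\al_I$ to wedging on $\rd\io^\al_I(1)$ on the $\ker\rd s_I$ side, exactly as in the compatibility check at the end of the proof of Proposition~\ref{prop:det0}. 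This completes the argument.
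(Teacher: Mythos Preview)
Your proposal is essentially correct and covers the same ground as the paper, but the order of operations differs in a way worth noting. For part~(i), you prove independence of $\mathfrak{C}_{IJ}$ from $N_{IJ}$ directly, then verify the intertwining identity, then deduce the cocycle condition. The paper reverses this: it first \emph{defines} preliminary transition maps $\Ti\phi_{IJ}:=\mathfrak{C}_{\rd s_J}^{-1}\circ\La_{IJ}\circ\mathfrak{C}_{\rd s_I}$, which manifestly satisfy the weak cocycle condition (inherited from $\La_{IJ}$) and manifestly do not depend on $N_{IJ}$, and for which $\Psi^{s_\Kk}$ is an isomorphism by construction; only afterward does it identify $\Ti\phi_{IJ}=\mathfrak{C}_{IJ}^{-1}$ via the commuting square \eqref{cclaim}. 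This buys independence of $N_{IJ}$ and the cocycle condition for free, at the cost of still having to verify \eqref{cclaim} --- which the paper does through an expanded diagram using surjective stabilizations $R_I:\R^N\to E_I$ and $R_J:\R^{N'}\to E_J$ rather than the direct basis comparison you sketch. Your direct route should also work, but the bookkeeping you flag as the main obstacle is indeed substantial; the paper's stabilization argument is one concrete way to organize it.

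For part~(ii), there is a genuine methodological difference. You propose to build collar isomorphisms on $\det(\Kk)$ directly via $\wedge_1$ and then separately check that $\Psi^{s_\Kk}$ respects them. The paper instead \emph{pulls back} the collar isomorphisms $\ti\io^\al_I$ already constructed on $\det(s_\Kk)$ in Proposition~\ref{prop:det0} via the isomorphisms $\mathfrak{C}_{\rd s_I}$, defining the collar structure on $\det(\Kk)$ as $(\mathfrak{C}_{\rd s_I|_{\p^\al U_I}}\times\id)^{-1}\circ\ti\io^\al_I\circ\mathfrak{C}_{\rd s_I}$. This makes the product form of $\Psi^{s_\Kk}$ automatic by construction, so there is nothing left to check. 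Your approach is more hands-on and requires choosing $N_{IJ}$ of product form in the collar (legitimate, since part~(i) gives independence of that choice), but it duplicates work that the paper avoids.
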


\begin{proof}
To begin, note that each $\La^\Kk_I = \lm \rT U_I \otimes \bigl(\lm E_I\bigr)^*$ is a smooth line bundle over $U_I$, since it inherits local trivializations 
from the tangent bundle $\rT U_I\to U_I$.

Next, we will show that the isomorphisms $\mathfrak{C}_{\rd_x s_I}$ from \eqref{Cds} are smooth in this trivialization, where $\det(\rd s_I)$ is trivialized via the maps $\Hat T_{I,x}$ as in Proposition~\ref{prop:det0} using an isomorphism $R_I: \R^{N}\to E_I$. For that purpose we introduce the isomorphisms
$$
G_x: \rT_x U_I\to \ker(\rd_x s_I \oplus  R_I),\qquad   v\mapsto \bigl(v, -R_I^{-1}(\rd_x s_I(v)\bigr),
$$  
and claim that the associated maps on determinant lines fit
into a commutative diagram
with $\mathfrak{C}_{\rd_x s_I}$ and the version of the trivialization $T_{I,x}$ from \eqref{eq:TIx}
\begin{equation}\label{ccord}
\xymatrix{  
\lm  \rT_x U_I \otimes \bigl( \lm E_I \bigr)^*  
\ar@{->}[d]_{\La_{G_x}\otimes \id}
 \ar@{->}[r]^{ \qquad\quad\mathfrak{C}_{\rd_x s_I}}  
 &
\det(\rd_x s_I)  \ar@{->}[d]^{\id} \\
\lm  \ker (\rd_x s_I \oplus R_I) \otimes \bigl(\lm E_I \bigr)^* \ar@{->}[r]^{ \qquad\qquad\quad\;\; T_{I,x}} 
& \det (\rd_x s_I).
}
\end{equation}
Here the trivialization $\Hat T_{I,x}$ of $\det(\rd_x s_I)$ is given by precomposing $T_{I,x}$ with the $x$-independent isomorphism $\La_{R_I^{-1}}^* : \bigl( \lm \R^N \bigr)^* \to \bigl(\lm E_I\bigr)^*$ in the second factor.
Since $G_x$ varies smoothly with $x$ in any trivialization of $\rT U$, this will prove that $\mathfrak C_{\rd s_I}$ is smooth with respect to the given trivializations.

To prove \eqref{ccord} we use the explicit formulas from Lemma~\ref{lem:get} and \eqref{eq:HatTx} at a fixed $x\in U_I$. 
So let $v_1,\ldots,v_n$ be a basis for $\rT_x U_I$ with ${\rm span}(v_1,\ldots,v_k)=\ker \rd_x s_I$, and let $w_1,\dots, w_N$ be a basis for $E_I$ with $w_{N-n+i}=\rd_x s_I(v_i)$ for $i=k+1,\ldots,n$.
Then $\ov v_i:=G_x(v_i)=\bigl(v_i,-R_I^{-1}(\rd_x s_I(v_i)\bigr) $ is a corresponding basis of $\ker(\rd_x s_I \oplus R_I)$. 
In this setting we can verify \eqref{ccord},
\begin{align*}
\mathfrak{C}_{\rd_x s_I} \bigl( (v_1\wedge\dots v_n)\otimes(w_1\wedge\dots w_N)^* \bigr)
&\;=\;
\bigl(v_1\wedge\dots v_k\bigr)\otimes \bigl( [w_1]\wedge\dots [w_{N-n+k}] \bigr)^* \\
&\;=\; T_{I,x}\bigl( (\ov v_1\wedge \ldots  \ov v_n)\otimes(w_1\wedge\dots w_N)^* \bigr) \\
&\;=\; T_{I,x}\bigl( \La_{G_x}(v_1\wedge \ldots  v_n)\otimes(w_1\wedge\dots w_N)^* \bigr).
\end{align*}
This proves the smoothness of the isomorphisms $\mathfrak C_{\rd s_I}$
so that we can
define preliminary transition maps 
\begin{equation}\label{tiphi}
\Ti\phi_{IJ}:= \mathfrak{C}_{\rd s_J}^{-1} \circ \La_{IJ} \circ \mathfrak{C}_{\rd s_I}
\,:\; \La^\Kk_I|_{U_{IJ}}\to \La^\Kk_J \qquad\text{for}\; I\subsetneq J \in \Ii_\Kk
\end{equation}
by the transition maps \eqref{eq:bunIJ} of $\det(s_\Kk)$ and the isomorphisms \eqref{Cds}.
These define a line bundle $\La^\Kk:=\bigl(\La^\Kk_I, \Ti\phi_{IJ} \bigr)_{I,J\in\Ii_\Kk}$ since the weak cocycle condition follows directly from that for the $\La_{IJ}$. Moreover, this automatically makes the family of bundle isomorphisms $\Psi^\Kk:=\bigl(\mathfrak{C}_{\rd s_I}\bigr)_{I\in\Ii_\Kk}$ an isomorphism from $\La^\Kk$ to $\det(s_\Kk)$. 
It remains to see that $\La^\Kk=\det(\Kk)$ and $\Psi^\Kk=\Psi^{s_\Kk}$, i.e.\ we claim equality of transition maps $\Ti\phi_{IJ}=\mathfrak{C}_{IJ}^{-1}$. This also shows that $\mathfrak{C}_{IJ}^{-1}$ and thus $\det(\Kk)$ is independent of the choice of 
normal bundle $N_{IJ}$ in \eqref{CIJ}.
So to finish the proof of (i), it suffices to establish the following commuting diagram at a fixed $x\in U_{IJ}$ with $y=\phi_{IJ}(x)$,
\begin{equation}\label{cclaim}
\xymatrix{
\lm  \rT_x U_I \otimes \bigl( \lm E_I \bigr)^* 
 \ar@{->}[r]^{ \qquad\quad\mathfrak{C}_{\rd_x s_I}}  
 &
\det(\rd_x s_I)  \ar@{->}[d]^{\La_{IJ}
(x)} \\
 \lm \rT_y U_J \otimes \bigl( \lm E_J \bigr)^*
 \ar@{->}[r]^{\qquad\quad\mathfrak{C}_{\rd_y s_J}}  
 \ar@{->}[u]^{\mathfrak{C}_{IJ}
 (x)} 
&
\det(\rd_y s_J) .
}
\end{equation}
Using \eqref{ccord}, for surjective maps $R_I:\R^N\to E_I$ and $R_J:\R^{N'}\to E_J$, and the compatibility of the trivialization $\Hat T_{J,y}$ with $R'_J:=\Hat T'_{J,y}$ arising from $\Hat\phi_{IJ}:\R^N\to E_J$, we can expand this diagram to 
 \[
 \xymatrix{
\lm  \rT_x U_I \otimes \bigl( \lm E_I \bigr)^* 
 \ar@{->}[r]^{\La_{G_x}\otimes \id \qquad}   
 & \;\; \lm  \ker (\rd_x s_I\oplus R_I) \otimes  \bigl( \lm E_I \bigr)^*  \ar@{->}[r]^{ \qquad\qquad\qquad T_{I,x} } 
 \ar@{->}[d]_{ \La_{\rd_x \phi_{IJ} \oplus \id_{\R^N}} \otimes ( \La_{R'_J\,\!^{-1}}^* \circ \La_{R_I}^*) }&
\det(\rd_x s_I)  \ar@{->}[d]^{\La_{IJ}
(x)} \\
 & \;\; \lm  \ker (\rd_y s_J\oplus R'_J) \otimes  \bigl( \lm \im R'_J \bigr)^*  \ar@{->}[d]_{\Psi_y\otimes  ( \La_{R_J^{-1}}^* \circ \La_{R'_J}^*) } \ar@{->}[r]^{ \qquad\qquad\qquad T'_{J,y}} 
&
\det(\rd_y s_J) \\
 \lm \rT_y U_J \otimes \bigl( \lm E_J \bigr)^* \ar@{->}[uu]^{\mathfrak{C}_{IJ}(x)} 
 \ar@{->}[r]^{\La_{G_y}\otimes \id \qquad }  
 & \;\; \lm  \ker (\rd_y s_J\oplus R_J) \otimes  \bigl( \lm E_J \bigr)^*  \ar@{->}[r]^{ \qquad\qquad\qquad T_{J,y}}
&
\det(\rd_y s_J)   \ar@{->}[u]_{\id} .
}
\]
Here the upper right square commutes by \eqref{eq:etrans}. To make the lower right square precise, and in particular to choose suitable $R_J$, we note that $E_J=\rd_y s_J + \im R'_J$ 
and $\rd_y s_J (\rT_y\im\phi_{IJ}) \subset \Hat\phi_{IJ}(E_I) =  \im R'_J$, so that given any normalized basis $e_1,\ldots,e_N\in \R^N$ we can complete the corresponding vectors $R'_J(e_i)$ to a basis for
$E_J$ by adding the vectors $\rd_y s_J(v^\Psi_{N+1}), \ldots, \rd_y s_J(v^\Psi_{N'})$,
where
$v^\Psi_{N+1}, \ldots, v^\Psi_{N'}\in N_{IJ,x}$ is a basis of the normal space $N_{IJ,x}\subset \rT_y U_J$ to $\rT_y\im\phi_{IJ}$ that was used to define $\mathfrak{C}_{IJ}(x)$.
Thus $R_J'$ extends  to a smooth family of bijections
\begin{align*}
R_J: = R_{J,x} 
 \,:\quad \R^N\times \R^{N'-N}  &\; \overset{\cong}{\longrightarrow}\;  \Hat\phi_{IJ}(E_I) \oplus 
 \rd s_y(N_{IJ,x})
 \;=\; E_J , \\
 (\ul r ; r_{N+1},\ldots,r_{N'}) &\;\longmapsto \;  \Hat\phi_{IJ}\bigl( R_I(\ul r) \bigr) + {\textstyle \sum_{i=N+1}^{N'}} r_i \cdot \rd_y s_J(v^\Psi_i) .
\end{align*}
We may choose the vectors $v^\Psi_{i}$ so that 
the $e_{i}:= R_J^{-1}\bigl(\rd_y s_J(v^\Psi_{i})\bigr)$ for $i=N+1,\ldots N'$ extend $e_1,\ldots e_N \in \R^N\cong \R^N \times\{0\}$ to a normalized basis of $\R^{N'}$.  Further, the vectors
 $\ov v^\Psi_{i}:= \bigl( v^\Psi_i , - e_i \bigr)$ span the complement of the embedding $\io: \ker (\rd_y s_J\oplus R'_J) \hookrightarrow  \ker (\rd_y s_J\oplus R_J)$, $(v,\ul r) \mapsto (v,\ul r,0)$. 
Hence \eqref{tpsit} (with $\la(x)=1$) shows that the isomorphism 
\begin{align*}
\Psi_y \,:\;  \lm \ker (\rd_y s_J\oplus R'_J) &\; \overset{\cong}{\longrightarrow}\;  \lm \ker (\rd_y s_J\oplus R_J) ,\\ 
\ov v_1 \wedge\ldots \wedge \ov v_n  &\;\longmapsto \;  
 \io(\ov v_1)\wedge\ldots \io(\ov v_n) \wedge \ov v^\Psi_{N+1} \wedge \ldots \ov v^\Psi_{N'}
\end{align*}
intertwines the trivializations $T'_{J,y}$ and $T_{J,y}$, that is the lower right square in the above diagram commutes. 

Now to prove that the entire diagram commutes it remains to identify $\mathfrak{C}_{IJ}(x)$ with the map given by composition of the other isomorphisms,  which is
 the tensor product of $\La_{R_I^{-1}}^*\circ \La_{R_J}^*$ (composed via $\lm \R^N\cong \R\cong\lm \R^{N'}$) on the obstruction spaces with the inverse of  
\begin{align*}
\lm \rT_x U_I &\;\overset{\cong}{\longrightarrow}\;  \lm \rT_y U_J   \\
v_1 \wedge\ldots \wedge v_n 
  &\;\longmapsto \;  
 \rd_x\phi_{IJ}(v_1) \wedge\ldots \rd_x\phi_{IJ} (v_{n}) \wedge v^\Psi_{N+1} \wedge \ldots v^\Psi_{N'} .
\end{align*}
Here we used the fact that $\io \circ \bigl(\rd_x\phi_{IJ}\oplus\id_{\R^N}\bigr) \circ G_x = G_y \circ \rd_x\phi_{IJ}$ and $\ov v^\Psi_{i}= G_y( v^\Psi_i )$.
Note moreover that we chose the vectors $v^\Psi_{i}\in \rT_y U_J$ to span the complement of $\im\rd_x\phi_{IJ}$, and hence $\rd_x\phi_{IJ}(v_1),\ldots,\rd_x\phi_{IJ} (v_{n}), v^\Psi_{N+1},\ldots ,v^\Psi_{N'}$ forms a basis of $ \rT_y U_J$. 
Moreover, note that $w_i=R_J(e_i)$ is a basis for $E_J$ whose last $N'-N$ vectors are $w_{i}=\rd_y s_J(v^\Psi_i)\in 
\rd_y s_J(N_{IJ,x})$
for $i=N+1,\ldots,N'$. In these bases the explicit formulas \eqref{CIJ} and \eqref{Cfrak} give
\begin{align*}
\mathfrak{C}_{IJ}(x) \;:\; &  \bigl( \rd_x\phi_{IJ}(v_1) \wedge\ldots \rd_x\phi_{IJ} (v_{n}) \wedge v^\Psi_{N+1} \wedge \ldots v^\Psi_{N'}\bigr) \otimes \bigl(R_J(e_1)\wedge\dots R_J(e_{N'})\bigr)^*  \\
&\;\mapsto\;
(v_1\wedge\dots v_n)\otimes  \La_{(\pr^\perp_{
N_{IJ}}\circ\Hat\phi_{IJ})^{-1}}^*  \bigl( [\Hat\phi_{IJ}(R_I(e_1))]\wedge\dots [\Hat\phi_{IJ}(R_I(e_N))] \bigr)^* \\
&\;=\;
(v_1\wedge\dots v_n)\otimes  \bigl( R_I(e_1) \wedge\dots R_I(e_N) \bigr)^* .
\end{align*}
Here in the second factor we have $\La_{R_I^{-1}}\bigl( R_I(e_1) \wedge\dots R_I(e_N) \bigr) =1\in \lm\R^N$
and $\La_{R_J^{-1}}\bigl( R_J(e_1) \wedge\dots R_J(e_{N'}) \bigr)= 1 \in \lm\R^{N'}$, so this proves that  \eqref{cclaim} commutes.

For part (ii) the same arguments apply to define a bundle $\det(\Kk)$ and isomorphism $\Psi^{s_\Kk}$, for which it remains to establish the product structure on a collar. However, we may use the isomorphisms $\mathfrak{C}_{\rd s_I}:\La^\Kk_I \to \det(\rd s_I)$ and $\mathfrak{C}_{\rd s_I|_{\p^\al U_I}}:\La^{\p^\al \Kk}_I \to \det(\rd s_I|_{\p^\al U_I})$ to pull back the isomorphisms $\ti\io^\al_I : \det(\rd s_I)|_{\im\io^\al_I} \to \det(\rd s_I|_{\p^\al U_I}) \times A^\al_\eps$ from Proposition~\ref{prop:det0} to isomorphisms
$$
\bigl( \mathfrak{C}_{\rd s_I|_{\p^\al U_I}} \times \id_{A^\al_\eps} \bigr) ^{-1}  \circ \ti\io^\al_I  \circ \mathfrak{C}_{\rd s_I} \,:\;
\La^\Kk_I |_{\im\io^\al_I} \;\overset{\cong}{\longrightarrow}\; \La^{\p^\al\Kk}_I \times A^\al_\eps .
$$
This provides the product structure for $\det(\Kk)$, and moreover this construction was made such that $\Psi^{s_\Kk}$ has product form in the same collar, and the restrictions are, as claimed, given by pullback of the restrictions of $\det(s_\Kk)$. This completes the proof.
\end{proof}

\begin{prop}\label{prop:orient1} 
\begin{enumerate}
\item
Let $(\Kk,\si)$ be an oriented, tame Kuranishi atlas with reduction $\Vv$, and  let $\nu$ be an admissible, precompact, transverse perturbation of $s_\Kk|_\Vv$.  Then the zero set $|\bZ_\nu|$ is an oriented closed manifold.
\item
Let $(\Kk^{[0,1]},\si)$ be an 
oriented,  
tame Kuranishi cobordism with reduction $\Vv$ and admissible, precompact, transverse perturbation $\nu$. Then the corresponding zero set $|\bZ_{\nu}|$ is an oriented cobordism from $|\bZ_{\nu^0}|$ to $|\bZ_{\nu^1}|$ for $\nu^\al:=\nu|_{\partial^\al \Vv}$, with boundary orientations induced as in (i) by  $\si^\al:=\partial^\al \si$.
\end{enumerate}
\end{prop}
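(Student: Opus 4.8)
The statement is Proposition~\ref{prop:orient1}, with part (i) concerning Kuranishi atlases and part (ii) concerning Kuranishi cobordisms. The plan is to prove (i) first, since (ii) will follow by applying essentially the same construction to a Kuranishi cobordism, with extra bookkeeping in the collar.

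\emph{Step 1 (orienting the zero set from $\det(s_\Kk)$).} By Proposition~\ref{prop:zeroS0} we already know that $|\bZ_\nu| = |(s+\nu)^{-1}(0)|$ is a smooth closed $d$-dimensional manifold, so the only task is to construct a compatible family of orientations of the local zero sets $(s_I|_{V_I}+\nu_I)^{-1}(0)\subset V_I$ and check that the local diffeomorphisms $\phi_{IJ}$ of Lemma~\ref{le:stransv} preserve them. First I would use the isomorphism $\Psi^{s_\Kk}:\det(\Kk)\to\det(s_\Kk)$ from Proposition~\ref{prop:orient} to transport the given orientation $\si$ of $\det(s_\Kk)$ to a nonvanishing section $\hat\si = (\Psi^{s_\Kk})^{-1}\circ\si$ of $\det(\Kk)$, whose fibers are $\lm\rT U_I\otimes(\lm E_I)^*$. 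The key point is that $\det(\Kk)$ has transition maps $\mathfrak{C}_{IJ}^{-1}$ built from the contraction isomorphisms of Lemma~\ref{lem:get}, which work for \emph{any} section (not just $s_\Kk$), in particular for $s|_\Vv+\nu$. Applying the same contraction construction with $s_\Kk$ replaced by $s|_\Vv+\nu$ — which is legitimate precisely because $\nu$ is admissible, so $s|_\Vv+\nu$ still satisfies the index/tangent bundle condition as noted after Definition~\ref{def:sect2} — yields an isomorphism $\Psi^{s+\nu}:\det(\Kk)|_\Vv\to\det(s|_\Vv+\nu)$. Pushing $\hat\si$ forward under $\Psi^{s+\nu}$ gives a nonvanishing section of $\det(s|_\Vv+\nu)$, hence an orientation of each $(s_I+\nu_I)^{-1}(0)$ via the identification $\det(\rd_x(s_I+\nu_I))\cong \lm\ker\rd_x(s_I+\nu_I) = \lm\rT_x\bigl((s_I+\nu_I)^{-1}(0)\bigr)$ at zeros $x$, using transversality to see the cokernel is trivial.

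\emph{Step 2 (compatibility under $\phi_{IJ}$).} I would then check that for $I\subset J$ the diffeomorphism \eqref{eq:ZphiIJ} from $U_{IJ}\cap(s_I+\nu_I)^{-1}(0)$ onto an open subset of $(s_J+\nu_J)^{-1}(0)$ is orientation preserving. This reduces to the fact that the transition maps $\mathfrak{C}_{IJ}^{-1}$ of $\det(\Kk)$ intertwine the contraction identifications for $s_I+\nu_I$ and $s_J+\nu_J$ — which is exactly the content of the commuting square \eqref{cclaim} in the proof of Proposition~\ref{prop:orient}, applied with $s_\Kk$ replaced by $s|_\Vv+\nu$ and using the decomposition \eqref{eq:dnutrans} of $\rd_w(s_J+\nu_J)$ together with admissibility to see $\ker\rd_w(s_J+\nu_J) = \rd_z\phi_{IJ}(\ker\rd_z(s_I+\nu_I))$. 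Since $\hat\si$ is a genuine section of $\det(\Kk)$, its pushforwards under $\Psi^{s+\nu}$ in charts $I$ and $J$ agree over the overlap, which gives the required compatibility of orientations. Hence the groupoid $\bZ_\nu$ of Lemma~\ref{le:stransv} carries compatible orientations, and $|\bZ_\nu|$ is an oriented closed manifold. This proves (i).

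\emph{Step 3 (the cobordism case).} For (ii) I would run the same argument with $\Kk$ replaced by the tame Kuranishi cobordism $\Kk^{[0,1]}$ and use Lemma~\ref{le:czeroS0} for the manifold-with-boundary structure of $|\bZ_\nu|$ and the identification of $\partial|\bZ_\nu|$ with $|\bZ_{\nu^0}|\sqcup|\bZ_{\nu^1}|$. The orientation of $|\bZ_\nu|$ is produced exactly as in Steps 1--2 using $\Psi^{s_{\Kk^{[0,1]}}}$ and the cobordism version of $\det(\Kk^{[0,1]})$ from Proposition~\ref{prop:orient}(ii), which has a product structure on the collar with restrictions $\det(\p^\al\Kk^{[0,1]})$. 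The one genuinely new point — and the step I expect to be the main obstacle — is to verify that the boundary orientation of $|\bZ_\nu|$ induced from $\si$ agrees with the orientation of $|\bZ_{\nu^\al}|$ built from the induced boundary orientation $\si^\al=\p^\al\si$. This is a sign-convention check: one must trace through the isomorphisms $\ti\io^\al_I$ of \eqref{orient map} (which wedge on the positive unit vector $1\in\rT A^\al_\eps$ from the \emph{left}, cf.\ Remark~\ref{rmk:orientb}), the collar product form of $\nu$ (so that $\ker\rd(s_I+\nu_I)$ in the collar splits off the $\R$-direction compatibly), and the definition of $\p^\al\si$ in Definition~\ref{def:orient}, to confirm that completing a positively ordered basis of $\rT_x\bigl((s_I^\al+\nu_I^\al)^{-1}(0)\bigr)$ by adjoining $1$ in front yields a positively ordered basis of $\rT_x\bigl((s_I+\nu_I)^{-1}(0)\bigr)$ in the collar. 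Because both the convention in Proposition~\ref{prop:det0} (for $\det(s_\Kk)$) and the convention in Proposition~\ref{prop:orient}(ii) (for $\det(\Kk)$) were set up with the same left-wedging rule, and because $\Psi^{s_\Kk}$ and $\Psi^{s+\nu}$ both respect this collar product structure, this check should go through without extra sign corrections; the care required is entirely in making the bases and contraction maps explicit in the collar, analogous to the computation \eqref{oclaim} referenced in the proof of Proposition~\ref{prop:orient1}.
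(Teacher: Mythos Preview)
Your proposal is correct and takes essentially the same approach as the paper: the composition $(\Psi^{s_\Kk})^{-1}$ followed by $\Psi^{s+\nu}$ is exactly the paper's map $\mathfrak{C}^\nu_I = \mathfrak{C}_{\rd(s_I+\nu_I)}\circ\mathfrak{C}_{\rd s_I}^{-1}$, and the key point in both is that admissibility forces $\pr_{N_{IJ}}\circ\rd(s_J+\nu_J) = \pr_{N_{IJ}}\circ\rd s_J$, so the transition maps $\mathfrak{C}_{IJ}$ of $\det(\Kk)$ agree whether built from $s$ or from $s+\nu$ on the zero set. For (ii) you have correctly isolated the collar sign-check \eqref{oclaim} as the only new content; the paper carries this out by an explicit two-step computation verifying that the contraction isomorphisms intertwine the collar maps $\ti\io^\al_I$ and $\ti j^\al_I$ via an intermediate collar isomorphism on $\lm\rT U_I\otimes(\lm E_I)^*$.
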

\begin{proof} 
We first show that the local zero sets $Z_I: = (s|_{V_I} + \nu)^{-1}(0)\subset V_I$  have a natural orientation.
By Lemma~\ref{le:stransv} they are submanifolds, and by transversality we have $\im (\rd_z s_I + \rd_z\nu_I)=E_I$ for each $z\in Z_I$, and thus $\lm \qu{E_I}{\im (\rd_z s_I + \rd_z\nu_I)}= \lm\{0\} = \R$, so that we have a natural isomorphism between the orientation bundle of $Z_I$ and the restriction of the determinant line bundle
\begin{align*}
\lm \rT Z_I &\;=\; {\textstyle \bigcup_{z\in Z_I}} \lm \ker (\rd_z s_I + \rd_z\nu_I) \\
&\; \cong\;  {\textstyle \bigcup_{z\in Z_I}} \lm \ker (\rd_z s_I + \rd_z\nu_I) \otimes \R 
\;=\; \det(s_I|_{V_I}+\nu_I)|_{Z_I} .
\end{align*}
Combining this with Proposition~\ref{prop:orient} and Lemma~\ref{lem:get} we obtain isomorphisms
$$
\mathfrak{C}^\nu_I(z) := \mathfrak{C}_{\rd (s_I +\nu_I)} \circ \mathfrak{C}_{\rd s_I}^{-1} \,:\;
\det(\rd s_I)|_{z} \;\longrightarrow\; \lm \rT_z Z_I  \qquad \text{for} \; z\in Z_I .
$$
To see that these are smooth,
recall that smoothness of $\mathfrak{C}_{\rd s_I}$ was proven in Proposition~\ref{prop:orient}.
The same arguments apply to $\mathfrak{C}_{\rd (s_I +\nu_I)}$.
Further, for $I\subsetneq J$ and $z\in Z_I\cap U_{IJ}$ these
isomorphisms 
are intertwined by the transition maps 
$$
\La_{IJ}(z)=\La_{\rd_z\phi_{IJ}} \otimes \bigl( \La_{\Hat\phi_{IJ}^{-1}}\bigr)^* : \det(\rd_z s_I) \to \det(\rd_{\phi_{IJ}(z)} s_J)
$$
and $\La_{\rd_z\phi_{IJ}} : \lm \rT_z Z_I \to \lm \rT_{\phi_{IJ}(z)} Z_J$.
To see this, one combines the commuting diagram \eqref{cclaim} with the analogous 
diagram over $Z_I\cap U_{IJ}$
\[
\xymatrix{
\lm  \rT U_I \otimes \bigl( \lm E_I \bigr)^* 
 \ar@{->}[rr]^{ \mathfrak{C}_{\rd (s_I+\nu_I)}\qquad}  
 & &
\det(\rd (s_I+\nu_I)) = \lm \rT Z_I \otimes \R
 \ar@{->}[d]^{\La_{IJ} = \La_{\rd\phi_{IJ}} \otimes \id_\R } \\
 \lm \rT U_J \otimes \bigl( \lm E_J \bigr)^*
 \ar@{->}[rr]^{\mathfrak{C}_{\rd (s_J+\nu_J)}\qquad}  
 \ar@{->}[u]^{\mathfrak{C}_{IJ}} 
& &
\det(\rd (s_J+\nu_J)) = \lm \rT Z_J \otimes \R.
}
\]
The latter diagram commutes by the arguments in Proposition~\ref{prop:orient} applied to $s_\bullet+\nu_\bullet$ because $\rd s_J$ and $\rd (s_J+\nu_J)$ induce the same map 
$\mathfrak{C}_{IJ}(z)$   
for $z\in Z_I\cap U_{IJ}$.
 Indeed, the admissibility of $\nu$ implies that  
$\im\rd_{\phi_{IJ}(z)} \nu_J\subset \Hat\phi_{IJ}(E_I)$ so that $F_z = \pr_{N_{IJ}}(z) \circ \rd_{\phi_{IJ}(z)} s_J =\pr_{N_{IJ}}(z) \circ \rd_{\phi_{IJ}(z)} (s_J+\nu_J)$ in the construction of $\mathfrak{C}_{IJ}(z)$.
Now the orientation $\bigl( \si_I : U_I \to \det(\rd s_I) \bigr)_{I\in\Ii_\Kk}$ of $\Kk$ induces nonvanishing sections $\si^\nu_I := \mathfrak{C}^\nu_I \circ \si_I : Z_I \to \lm \rT Z_I$ which, by the above discussion and the compatibility $\La_{IJ}\circ \si_I = \si_J$ are related by $\La_{\rd\phi_{IJ}} \circ\si^\nu_I = \si^\nu_J$, i.e.\ the orientations $\si^\nu_I$ in the charts of $|\bZ_\nu|$ are compatible with the transition maps $\phi_{IJ}|_{Z_I\cap U_{IJ}}$.
Hence this determines an orientation of $|\bZ_\nu|$. This proves~(i).

For a Kuranishi cobordism, the above constructions provide an orientation $|\si^\nu| : |\bZ_\nu| \to \lm \rT |\bZ_\nu|$ on the manifold with boundary $|\bZ_\nu|$. Moreover, Lemma~\ref{le:czeroS0} provides diffeomorphisms to the boundary components for $\al=0,1$
$$
|j^\al| \,:\;  |\bZ_{\nu^\al}| \;=\;   \Bigl| \underset{{I\in\Ii_{\Kk^\al}}}{\textstyle\bigcup} 
 (s_I+\nu^\al_I)^{-1}(0)  \Bigr| 
 \;\overset{\cong}{\longrightarrow}\; 
 \partial^\al |\bZ_{\nu}|  \;\subset \; \partial |\bZ_{\nu}|   ,
$$
which in the charts are given by $j^\al_I := \iota^\al_I (\cdot,\al) :  Z^\al_I  \to  Z_I$.
The latter lift to isomorphisms of determinant line bundles
$$
{\ti j}^\al_I := \La_{\rd\io^\al_I}\circ \wedge_1 \,:\; 
\lm \rT Z^\al_I \;\overset{\cong}{\longrightarrow}\; \lm \rT Z_I |_{\im\io^\al_I} ,
$$
given by the same expression as the restriction to $Z^\al_I\times \{\al\}$ of the map \eqref{orient map} on $\det(\rd s^\al_I)\times A^\al_\eps$ in the case of trivial cokernel.
These are the expressions in the charts of an isomorphism of determinant line bundles
$$
|\ti j^\al|  := \La_{\rd |\io^\al|}\circ \wedge_1 \,:\;  \lm \rT |\bZ_{\nu^\al}|  \;\overset{\cong}{\longrightarrow}\; \lm \rT |\bZ_\nu| \bigr|_{|j^\al|( |\bZ_{\nu^\al}|) } ,
$$
which consists of the isomorphism induced by the collar neighbourhood embedding $|\io^\al| :  |\bZ_{\nu^\al}|\times A^\al_\eps  \to  |\bZ_\nu|$ given by $\io^\al_\eps|_{Z^\al_I \times A^\al_\eps}$ in the charts together with 
the canonical isomorphism between the determinant line bundle of the boundary and the boundary restriction of the determinant line bundle of the collar neighbourhood,
$$
\wedge_1 \,:\; \lm \rT |\bZ_{\nu^\al}| \;\to\;  \lm \rT  \bigl( |\bZ_{\nu^\al}| \times A^\al_\eps \bigr) \big|_{|\bZ_{\nu^\al}| \times \{\al\}}
\;=\;  \bigl( \lm \rT  |\bZ_{\nu^\al}|  \bigr) \times \R
, \quad
\eta \mapsto  
1\wedge\eta .
$$
Here, as before, we identify vectors $\eta_i \in \rT |\bZ_{\nu^\al}|$ with $(\eta_i ,0) \in \rT |\bZ_{\nu^\al}| \times \R$ and abbreviate $1:=(0,1)  \in \rT |\bZ_{\nu^\al}| \times \R$.
The latter corresponds to the exterior normal $\rd |\io^1| (0,1) \in \rT |\bZ_\nu| \big|_{\p^1 |\bZ_\nu|}$ and the interior normal $\rd |\io^0| (0,1) \in \rT |\bZ_\nu| \big|_{\p^0 |\bZ_\nu|}$. Hence the boundary orientations\footnote
{
Here, contrary to the special choice for Kuranishi cobordisms discussed in Remark~\ref{rmk:orientb}, 
we use a more standard orientation convention for manifolds with boundary. Namely, a positively ordered basis $\eta_1,\dots,\eta_k$ for the tangent space to the boundary is extended to a positively ordered basis $\eta_{out},\eta_1,\dots,\eta_k$ for the whole manifold by adjoining an outward unit vector $\eta_{out}$ as its first element.
 }
$\p^\al |\si^\nu| :|\bZ_{\nu^\al}| \to \lm \rT |\bZ_{\nu^\al}|$ induced from $|\si^\nu|$ on the two components for $\al=0,1$ differ by a sign,
$$
\p^0 |\si^\nu| := - \, |\ti j^0|^{-1} \circ  |\si^\nu| \circ |j^0| ,\qquad
\p^1 |\si^\nu| :=  |\ti j^1|^{-1} \circ  |\si^\nu| \circ |j^1| .
$$
On the other hand, the restricted orientations $\si^\al:=\partial^\al \si$ of $\Kk^\al$ also induce orientations $|\si^{\nu_\al}|$ of the boundary components $|\bZ_{\nu^\al}|$ by the construction in (i). 
Now to prove the claim that $|\bZ_\nu|$ is an oriented cobordism from $\bigl(|\bZ_{\nu^0}|,|\si^{\nu_0}|\bigr)$ to $\bigl(|\bZ_{\nu^1}|,|\si^{\nu_1}|\bigr)$, it remains to check that $\p^0 |\si^\nu| = - |\si^{\nu_0}|$ and $\p^1 |\si^\nu| = |\si^{\nu_1}|$. This is equivalent to $|\si^{\nu^\al}| = |\ti j^\al|^{-1} \circ  |\si^\nu| \circ |j^\al|$ for both $\al=0,1$. So, recalling the construction of $\p^\al\si_I=(\ti\io^\al_I)^{-1} \circ \si_I \circ \io^\al_I\big|_{Z^\al_I \times\{\al\} }$ and $\si^\nu_I=\mathfrak{C}\nu_I \circ \si_I \big|_{Z_I}$ in local charts, we must show the following identity over
$Z^\al_I \times\{\al\}\cong Z^\al_I$ for all $I\in\Ii_{\p^\al\Kk}$
\begin{equation} \label{oclaim}
\mathfrak{C}_{\rd (s^\al_I +\nu^\al_I)} \circ \mathfrak{C}_{\rd s^\al_I}^{-1} \circ 
\bigl((\ti\io^\al_I)^{-1} \circ \si_I \circ \io^\al_I\bigr)
\; =\;  
(\ti j^\al_I)^{-1} \circ  \bigl( \mathfrak{C}_{\rd (s_I +\nu_I)} \circ \mathfrak{C}_{\rd s_I}^{-1} \circ \si_I \bigr) \circ j^\al_I  .
\end{equation}
We will check this at a fixed point $z\in Z^\al_I$ in two steps. We first show that the contraction isomorphisms $\mathfrak{C}_{\rd s^\al_I}(z)$ and $\mathfrak{C}_{\rd s_I}(\io^\al_I(z,\al))$ intertwine the collar isomorphism $\ti\io^\al_I = \bigl( \La_{\rd \io^\al_I} \circ \wedge_1 \bigr) \otimes \bigl(\La_{\id_{E_I}}\bigr)^*$ from $\det(\rd s^\al_I)$ to $\det(\rd s_I)$ with the analogous collar isomorphism
$$
\Ti I^\al_I  := \bigl( \La_{\rd \io^\al_I} \circ \wedge_1 \bigr) \otimes \bigl(\La_{\id_{E_I}}\bigr)^*
\,:\; \lm \rT \partial^\al U_I \otimes \bigl( \lm E_I \bigr)^* 
\;\to\;
\lm \rT U_I \otimes \bigl( \lm E_I \bigr)^* .
$$
Indeed, we can use the product form of $s_I$ in terms of $s^\al_I$ to check the corresponding identity of maps 
$\lm \rT \partial^\al U_I \otimes \bigl( \lm E_I \bigr)^* \big|_{Z^\al_I}\to \det(\rd s_I)\big|_{\io^\al_I(Z^\al_I)}$ at a fixed vector of the form $\bigl( \eta_{\ker} \wedge \eta_{\perp} \bigr) \otimes  \bigl
( \zeta_{\coker} \wedge \zeta_{\im} \bigr)$ 
with $\eta_{\ker} \in \lm\ker\rd s^\al_I$ and $\zeta_{\im} \in \bigl(\lm \im\rd s^\al_I \bigr)^*$: 
\begin{align*}
&
\Bigl( \ti\io^\al_I \circ \mathfrak{C}_{\rd s^\al_I} \Bigr)
\Bigl( \bigl( \eta_{\ker} \wedge \eta_{\perp} \bigr) \otimes  \bigl(
\zeta_{\coker} \wedge \zeta_{\im}
\bigr) \Bigr)  \\
&\;=\;
\ti\io^\al_I\Bigl(
\mathfrak{c}_{\rd s^\al_I}\bigl( \eta_{\perp} ,  \zeta_{\im}   \bigr) \cdot 
\eta_{\ker} \otimes \zeta_{\coker} \Bigr)\\
&\;=\;
\mathfrak{c}_{\rd s^\al_I}\bigl( \eta_{\perp} ,  \zeta_{\im}   \bigr) \cdot 
\La_{\rd \io^\al_I}(1\wedge \eta_{\ker}) \otimes \zeta_{\coker} \\
&\;=\; 
\mathfrak{c}_{\rd s_I}\bigl( \La_{\rd \io^\al_I} (\eta_{\perp}) , \zeta_{\im}\bigr) \cdot 
 \La_{\rd \io^\al_I} \bigl( 1\wedge \eta_{\ker}  \bigr) \otimes  \zeta_{\coker} \\
&\;=\; 
\mathfrak{C}_{\rd s_I} \Bigl( \bigl( \La_{\rd \io^\al_I}(1\wedge \eta_{\ker} \bigr) 
\wedge  \La_{\rd \io^\al_I}(\eta_{\perp}) \bigr) \otimes  \bigl( \zeta_{\coker} \wedge \zeta_{\im}
 \bigr) \Bigr) \\
&\;=\; 
\mathfrak{C}_{\rd s_I} \Bigl( \La_{\rd \io^\al_I} 
\bigl( 1\wedge \eta_{\ker} \wedge \eta_{\perp} \bigr) \otimes 
 \bigl( \zeta_{\coker} \wedge \zeta_{\im}
  \bigr) \Bigr) \\
&\;=\;  
\Bigl( \mathfrak{C}_{\rd s_I} \circ \Ti I^\al_I   \Bigr) \Bigl( \bigl( \eta_{\ker} \wedge \eta_{\perp} \bigr) \otimes  \bigl(
\zeta_{\coker} \wedge \zeta_{\im} \bigr) \Bigr). 
\end{align*}
Secondly we check that the contraction isomorphisms for the surjective maps 
$\rd_{\io^\al_I(z,\al)}(s_I +\nu_I)$  and $\rd_z (s^\al_I +\nu^\al_I)$ intertwine $\Ti I^\al_I (z)$ with the boundary isomorphism ${\ti j}^\al_I = \La_{\rd\io^\al_I}\circ \wedge_1$ from $\lm \rT_z Z^\al_I$ to $\lm \rT_{\io^\al_I(z,\al)}Z_I $.
For that purpose we also use the product form of $\nu_I$ in terms of $\nu^\al_I$ to check the corresponding identity of maps $\lm \rT U_I \otimes \bigl( \lm E_I \bigr)^* \big|_{\io^\al_I(Z^\al_I)}  \to \lm \rT Z^\al_I$ at a fixed vector of the form $\bigl( \La_{\rd\io^\al_\eps}(1\wedge \eta_{\ker}) \wedge  \La_{\rd\io^\al_\eps}
(\eta_{\perp}) \bigr) \otimes  \zeta$ with  $\eta_{\ker} \in \lm\ker\rd (s^\al_I + \nu^\al_I)$: 
\begin{align*}
&  \Bigl( (\ti j^\al_I)^{-1} \circ  \mathfrak{C}_{\rd (s_I +\nu_I)} \Bigr) \Bigl( \bigl( \La_{\rd\io^\al_\eps}(1\wedge \eta_{\ker}) \wedge  \La_{\rd\io^\al_\eps}(\eta_{\perp}) \bigr) \otimes  \zeta \Bigr) 
  \\
&\;=\; 
\mathfrak{c}_{\rd (s_I +\nu_I)} \bigl(\La_{\rd\io^\al_\eps}( \eta_{\perp}) , \zeta \bigr)
 \cdot (\ti j^\al_I)^{-1} \bigl( \La_{\rd\io^\al_\eps}(1\wedge \eta_{\ker})\bigr) 
\\
&\;=\;  \mathfrak{c}_{\rd (s^\al_I +\nu^\al_I)} \bigl( \eta_{\perp} ,\zeta \bigr) \cdot
\eta_{\ker} 
\\
&\;=\;  \mathfrak{C}_{\rd (s^\al_I +\nu^\al_I)} \Bigl(  \bigl(\eta_{\ker} \wedge \eta_{\perp} \bigr) \otimes  \zeta   \Bigr)
\\
& \;=\; 
 \Bigl( \mathfrak{C}_{\rd (s^\al_I +\nu^\al_I)} \circ  \bigl(\Ti I^\al_I \bigr)^{-1} \Bigr) \Bigl( \bigl( \La_{\rd\io^\al_\eps}(1\wedge \eta_{\ker}) \wedge  \La_{\rd\io^\al_\eps}(\eta_{\perp}) \bigr) \otimes  \zeta \Bigr) .
\end{align*}
This proves \eqref{oclaim} and hence finishes the proof.
\end{proof}

\subsection{Construction of Virtual Moduli Cycle and Fundamental Class}\label{ss:VFC}  \hspace{1mm}\\ \vspace{-3mm}

We are finally in a position to prove Theorem~B. We begin with its first part, which defines the virtual moduli cycle (VMC) as a cobordism class of closed oriented manifolds. 
After a discussion of \v{C}ech homology, we then construct the virtual fundamental class (VFC) as \v{C}ech homology class.

\begin{thm}\label{thm:VMC1}
Let $X$ be a compact metrizable space.
\begin{enumerate}
\item
Let $\Kk$ be an oriented, additive weak Kuranishi atlas of dimension $D$ on $X$.
Then there exists a preshrunk tame shrinking $\Kk_{\rm sh}$ of $\Kk$, an admissible metric on $|\Kk_{\rm sh}|$,
a reduction $\Vv$ of $\Kk_{\rm sh}$, and a 
strongly adapted, admissible, precompact, transverse perturbation $\nu$ of $s_{\Kk_{\rm sh}}|_\Vv$. 
\item
For any choice of data as in (i), the perturbed zero set $|\bZ_\nu|$ is an oriented compact manifold (without boundary) of dimension $D$.
\item
Let $\Kk^0,\Kk^1$ be two oriented, additive weak Kuranishi atlases on $X$ that are oriented, additively cobordant. Then, for any choices of 
strongly adapted perturbations $\nu^\al$ as in (i) for $\al=0,1$, the perturbed zero sets are cobordant (as oriented closed manifolds),
$$
|\bZ_{\nu^0}|  \; \sim \;|\bZ_{\nu^1}|.
$$
\end{enumerate}
\end{thm}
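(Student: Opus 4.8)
\textbf{Proof proposal for Theorem~\ref{thm:VMC1}.}

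The plan is to build everything on the abstract machinery already assembled: parts (i) and (ii) are essentially a bookkeeping exercise threading together the existence results proved earlier, while part (iii) is the substantive statement and requires constructing a suitable cobordism and a transverse cobordism perturbation on it. For (i), given an oriented additive weak Kuranishi atlas $\Kk$, I would first apply Proposition~\ref{prop:proper} (or rather its iterated form giving a preshrunk tame shrinking) together with Proposition~\ref{prop:metric} to obtain a metrizable tame shrinking $\Kk_{\rm sh}$ with an admissible metric on $|\Kk_{\rm sh}|$; the orientation $\si$ of $\Kk$ restricts to an orientation of $\Kk_{\rm sh}$ by Lemma~\ref{le:cK}~(i). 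Then Proposition~\ref{prop:cov2}~(a) provides a reduction $\Vv$ of $\Kk_{\rm sh}$, and Lemma~\ref{le:delred}~(ii) a nested reduction $\Cc\sqsubset\Vv$. Finally, with $0<\de<\de_\Vv$ and $0<\si\le\si_{\rm rel}(\de,\Vv\times[0,1],\Cc\times[0,1])$ (which is positive by Proposition~\ref{prop:ext2}, applied to the product cobordism, or directly), Proposition~\ref{prop:ext} produces a $(\Vv,\Cc,\de,\si)$-adapted, hence strongly adapted, admissible, precompact, transverse perturbation $\nu$ of $s_{\Kk_{\rm sh}}|_\Vv$. Part (ii) is then immediate: by Lemma~\ref{le:admin2}~(i) such a $\nu$ is admissible, precompact and transverse, so Proposition~\ref{prop:zeroS0} shows $|\bZ_\nu|$ is a closed $D$-dimensional manifold, and Proposition~\ref{prop:orient1}~(i) endows it with an orientation coming from $\si$.

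For part (iii), the strategy is to interpolate in two stages: first between the two perturbations $\nu^0,\nu^1$ for a \emph{fixed} choice of tame shrinking, reduction, metric and constants, and then between different such choices. Concretely, suppose first that $\Kk^0=\Kk^1=:\Kk$ (as the same abstract atlas) with the same preshrunk tame shrinking $\Kk_{\rm sh}$, the same admissible metric, the same reduction $\Vv$, nested reduction $\Cc\sqsubset\Vv$, and constants $\de,\si$, but two different $(\Vv,\Cc,\de,\si)$-adapted perturbations $\nu^0,\nu^1$. Then I would form the product Kuranishi cobordism $\Kk_{\rm sh}\times[0,1]$ with the product metric and product reduction $\Vv\times[0,1]\sqsubset$, equipped with the orientation $\si^{[0,1]}$ from Lemma~\ref{le:cK}~(iii), so that $\p^\al(\Kk_{\rm sh}\times[0,1])=\Kk_{\rm sh}$ with the restricted orientation $\si$. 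The key input is Proposition~\ref{prop:ext2}~(iii) applied to this product cobordism: since by definition of strong adaptivity $\si\le\si_{\rm rel}(\de,\Vv\times[0,1],\Cc\times[0,1])$, that proposition (case (ii) for product cobordisms, valid without a bound on $\de$ in terms of collar width) produces an admissible, precompact, transverse cobordism perturbation $N$ of $s_{\Kk_{\rm sh}\times[0,1]}|_{\Vv\times[0,1]}$ with prescribed boundary values $N|_{\p^\al(\Vv\times[0,1])}=\nu^\al$ for $\al=0,1$. By Lemma~\ref{le:czeroS0}, $|\bZ_N|$ is a compact manifold with boundary $\p|\bZ_N|\cong|\bZ_{\nu^0}|\sqcup|\bZ_{\nu^1}|$, and by Proposition~\ref{prop:orient1}~(ii) this is an \emph{oriented} cobordism with the boundary orientations induced by $\p^\al\si^{[0,1]}=\si$ — which are exactly the orientations on $|\bZ_{\nu^0}|,|\bZ_{\nu^1}|$ produced in part (ii). This handles the "same choices" case.

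To pass to the general case I would exploit transitivity. Given oriented, additively cobordant $\Kk^0,\Kk^1$ via an oriented weak Kuranishi cobordism $\Kk^{[0,1]}$, apply Proposition~\ref{prop:cobord2} to obtain a preshrunk tame shrinking $\Kk^{[0,1]}_{\rm sh}$ that is a metrizable tame Kuranishi cobordism between preshrunk tame shrinkings $\Kk^0_{\rm sh},\Kk^1_{\rm sh}$ of $\Kk^0,\Kk^1$; equip it with an admissible collared metric (Proposition~\ref{prop:metcoll}~(v)), a cobordism reduction $\Vv^{[0,1]}$ with prescribed boundary restrictions the given $\Vv^\al$ (Proposition~\ref{prop:cov2}~(c)), a nested cobordism reduction $\Cc^{[0,1]}\sqsubset\Vv^{[0,1]}$ with prescribed boundary restrictions $\Cc^\al$ (Lemma~\ref{le:delred}~(iii)), and an orientation restricting to $\p^\al\si=\si^\al$ (restriction of $\si$ on $\Kk^{[0,1]}$, using Lemma~\ref{le:cK}~(ii)). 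Then Proposition~\ref{prop:ext2}~(ii), with $\de<\min\{\eps,\de_{\Vv^{[0,1]}}\}$ and $0<\si\le\si_{\rm rel}(\de,\Vv^{[0,1]},\Cc^{[0,1]})$ (positive by the first assertion of that proposition), yields a cobordism perturbation whose boundary restrictions are $(\p^\al\Vv,\p^\al\Cc,\de,\si)$-adapted perturbations $\Ti\nu^\al$ of $s_{\Kk^\al_{\rm sh}}$; by Proposition~\ref{prop:orient1}~(ii) the associated zero set is an oriented cobordism $|\bZ_{\Ti\nu^0}|\sim|\bZ_{\Ti\nu^1}|$. It remains to connect each given $\nu^\al$ to the auxiliary $\Ti\nu^\al$, which is exactly the "same choices" case handled above applied on $\Kk^\al_{\rm sh}$ (possibly after first matching up different preshrunk tame shrinkings of $\Kk^\al$ by another application of Proposition~\ref{prop:cobord2} to a product cobordism, and matching reductions/metrics/constants similarly). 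Concatenating these three oriented cobordisms (using Lemma~\ref{le:cobord1}-style gluing of cobordisms, now at the level of the perturbed zero sets) gives $|\bZ_{\nu^0}|\sim|\bZ_{\nu^1}|$ as oriented closed manifolds.

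I expect the main obstacle to be the last bookkeeping step: carefully verifying that an arbitrary strongly adapted perturbation $\nu^\al$ of $s_{\Kk^\al_{\rm sh}}|_{\Vv^\al}$ — for \emph{whatever} preshrunk tame shrinking, admissible metric, reduction, and constants $(\de,\si)$ were used — can be connected by an oriented cobordism of zero sets to the $\Ti\nu^\al$ arising in the cobordism construction, which uses possibly different such choices. This requires interpolating not just between two perturbations but between two metrics (Proposition~\ref{prop:metcoll}~(iv)), two reductions (Proposition~\ref{prop:cov2}~(c)), and two shrinkings (Proposition~\ref{prop:cobord2}), and then checking at each stage that the smallness constant $\si$ can be chosen uniformly small enough — i.e.\ below the relevant $\si_{\rm rel}$ for the interpolating product cobordism — so that Proposition~\ref{prop:ext2} applies; this is precisely why the notion of \emph{strongly} adapted (with $\si$ bounded by $\si_{\rm rel}$ of the product cobordism) was built into the hypothesis. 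Everything else is a direct invocation of the stated results, with orientations tracked via Proposition~\ref{prop:orient1} and Lemma~\ref{le:cK}.
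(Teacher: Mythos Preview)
Your proposal is correct and follows the same approach as the paper. The paper makes the structure of part~(iii) more explicit by proceeding in four nested steps: Step~1 is exactly your ``same choices'' case; Step~2 additionally varies the constants $(\de,\si)$ for fixed $(\Kk,d,\Cc\sqsubset\Vv)$, reducing to Step~1 via the monotonicity in Lemma~\ref{le:admin2}~(ii); Step~3 additionally varies the metric and nested reduction for fixed tame $\Kk$, using Proposition~\ref{prop:metcoll}~(iv) and Proposition~\ref{prop:cov2}~(c) on the product $\Kk\times[0,1]$; and Step~4 handles the general weak cobordism, reducing to Step~3. One small correction: in your general step you invoke Proposition~\ref{prop:cov2}~(c) to obtain a cobordism reduction of $\Kk^{[0,1]}_{\rm sh}$ with \emph{prescribed} boundary reductions $\Vv^\al$, but that result only applies to product cobordisms $\Kk\times[0,1]$; the paper's Step~4 instead uses part~(b) to obtain \emph{some} cobordism reduction (with unprescribed boundaries) and then relies on Step~3 at each end to connect the resulting boundary perturbations to the given $\nu^\al$ --- which is precisely the bookkeeping you anticipate in your final paragraph.
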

\begin{proof}  
Proposition~\ref{prop:proper} provides a shrinking $\Kk'$ of $\Kk$, which is a tame Kuranishi atlas.
Then Proposition~\ref{prop:metric}, again using this theorem, provides a precompact tame shrinking $\Kk_{\rm sh}$ of $\Kk'$, in other words a preshrunk shrinking of $\Kk$, and equips it with an admissible metric. The orientation of $\Kk$ then induces an orientation of $\Kk_{\rm sh}$ by Lemma~\ref{le:cK}. 
Moreover, Proposition~\ref{prop:cov2}~(a) provides a reduction $\Vv$ of $\Kk_{\rm sh}$, and by Lemma~\ref{le:delred} we find another reduction $\Cc$ with precompact inclusion $\Cc\sqsubset \Vv$,
i.e.\ a nested reduction. 
Then we may apply Proposition~\ref{prop:ext} with $\si=\si_{\rm rel}(\de,\Vv\times[0,1],\Cc\times[0,1])$ to obtain a strongly adapted, admissible, transverse perturbation $\nu$ with $\pi_\Kk\bigl((s+\nu)^{-1}(0)\bigr) \subset \pi_{\Kk}(\Cc)$. This proves (i).

Part (ii) holds in this setting since Proposition~\ref{prop:zeroS0} shows that $|\bZ_\nu|$ is a smooth closed $D$-dimensional manifold, which is oriented by Proposition~\ref{prop:orient1}~(i). 

To prove (iii) we will use transitivity of the cobordism relation for oriented closed manifolds to prove increasing independence of choices in the following Steps 1--4.

\MS\NI
{\bf Step 1:} 
{\it
For a fixed oriented, metric, tame Kuranishi atlas $(\Kk,d)$, nested reductions $\Cc\sqsubset\Vv$, and $0<\de<\de_\Vv$, $0<\si\leq \si_{\rm rel}(\de,\Vv\times[0,1],\Cc\times[0,1])$, the cobordism class of $|\bZ_\nu|$ is independent of the choice of $(\Vv,\Cc,\de,\si)$-adapted perturbation~$\nu$.
}

\MS
To prove this we fix $(\Kk,d)$, $\Cc\sqsubset\Vv$, $\de$, and $\si$, consider two $(\Vv,\Cc,\de,\si)$-adapted perturbations $\nu^0,\nu^1$, and need to find an oriented cobordism $|\bZ_{\nu^0}|  \sim |\bZ_{\nu^1}|$.
For that purpose we apply Proposition~\ref{prop:ext2}~(ii) to the Kuranishi cobordism $\Kk\times[0,1]$ with product metric and nested product reductions $\Cc\times[0,1]\sqsubset \Vv\times[0,1]$ to obtain an admissible, precompact, transverse cobordism perturbation $\nu^{01}$ of $s_{\Kk\times[0,1]}|_{\Vv\times[0,1]}$ with boundary restrictions $\nu^{01}|_{\Vv\times\{\al\}}=\nu^\al$ for $\al=0,1$.
Here we use the fact that $\de_{\Vv\times[0,1]}=\de_\Vv>\de$ by Lemma~\ref{le:admin}~(ii). 
Moreover, by Lemma~\ref{le:cK}~(iii) the orientation of $\Kk$ induces an orientation of $\Kk\times[0,1]$, whose restriction to the boundaries $\p^\al( \Kk\times[0,1]) =\Kk$ equals the given orientation on $\Kk$. 
Finally, Lemma~\ref{le:czeroS0} together with Proposition~\ref{prop:orient1}~(ii) imply that $|\bZ_{\nu^{01}}|$ is the required oriented cobordism from $|\bZ_{\nu^{01}|_{\Vv\times\{0\}}}|=|\bZ_{\nu^0}|$ to $|\bZ_{\nu^{01}|_{\Vv\times\{1\}}}|=|\bZ_{\nu^1}|$.

\MS\NI
{\bf Step 2:} {\it 
For a fixed oriented, metric, tame Kuranishi atlas $(\Kk,d)$ and nested reductions $\Cc\sqsubset\Vv$, the cobordism class of $|\bZ_\nu|$ is independent of the choice of strongly adapted perturbation~$\nu$ with respect to $\Cc\sqsubset\Vv$.}

\MS
To prove this we fix $(\Kk,d)$ and $\Cc\sqsubset\Vv$ and consider two $(\Vv,\Cc,\de^\al,\si^\al)$-adapted perturbations $\nu^\al$ for $0<\de^\al<\de_\Vv$, $0<\si^\al\leq \si_{\rm rel}(\de^\al,\Vv\times[0,1],\Cc\times[0,1])$, and  $\al=0,1$.
Then we need to find an oriented cobordism $|\bZ_{\nu^0}| \sim |\bZ_{\nu^1}|$.
To do this, first note that we evidently have $\de:=\max(\de^0,\de^1)<\de_\Vv=\de_{\Vv\times[0,1]}$  
by Lemma~\ref{le:admin}~(ii) with respect to the product metric on $|\Kk|\times[0,1]$.
Next, we have $\de=\de^\al$ for some $\al\in\{0,1\}$ and hence $\si_{\rm rel}(\de,\Vv\times[0,1],\Cc\times[0,1])=\si_{\rm rel}(\de^\al,\Vv\times[0,1],\Cc\times[0,1])\ge\si^\al\geq \min(\si^0,\si^1)$. 
Now choose $\si \leq \min\{ \si^0,\si^1, \si_{\rm rel}(\de,\Vv\times[0,1],\Cc\times[0,1]) \}$. Then Proposition~\ref{prop:ext2}~(i) provides an admissible, precompact, transverse cobordism perturbation $\nu^{01}$ of $s_{\Kk\times[0,1]}|_{\Vv\times[0,1]}$, whose restrictions $\nu^{01}|_{\Vv\times\{\al\}}$ for $\al=0,1$ are $(\Vv,\Cc,\de,\si)$-adapted perturbations of $s_\Kk|_\Vv$.
Since $\de^\al\leq \de$ and $\si \leq \si^\al \leq \si_{\rm rel}(\de^\al,\Vv\times[0,1],\Cc\times[0,1])$, they are also $(\Vv,\Cc,\de^\al,\si^\al)$-adapted.
Then, as in Step~1, the perturbed zero set $|\bZ_{\nu^{01}}|$ is an oriented cobordism from $|\bZ_{\nu^{01}|_{\Vv\times\{0\}}}|$ to $|\bZ_{\nu^{01}|_{\Vv\times\{1\}}}|$.
Moreover, for fixed $\al\in\{0,1\}$ both the restriction $\nu^{01}|_{\Vv\times\{\al\}}$ and the given perturbation $\nu^\al$ are $(\Vv,\Cc,\de^\al,\si^\al)$-adapted, so that Step 1 provides cobordisms $|\bZ_{\nu^0}|  \sim |\bZ_{\nu^{01}|_{\Vv\times\{0\}}}|$ and $|\bZ_{\nu^{01}|_{\Vv\times\{1\}}}| \sim |\bZ_{\nu^1}|$. By transitivity of the cobordism relation this proves $|\bZ_{\nu^0}| \sim |\bZ_{\nu^1}|$ as claimed.

\MS\NI
{\bf Step 3:} {\it
For a fixed oriented, tame Kuranishi atlas $\Kk$, the cobordism class of $|\bZ_\nu|$ is independent of the choice of admissible metric and strongly adapted perturbation~$\nu$.
}

\MS
To prove this we fix $\Kk$ and consider two $(\Vv^\al,\Cc^\al,\de^\al,\si^\al)$-adapted perturbations $\nu^\al$ with respect to nested reductions $\Cc^\al\sqsubset\Vv^\al$ and constants $0<\de^\al<\de_\Vv$, $0<\si^\al\leq \si_{\rm rel}(\de^\al,\Vv\times[0,1],\Cc\times[0,1])$ for $\al=0,1$.
To find an oriented cobordism $|\bZ_{\nu^0}| \sim |\bZ_{\nu^1}|$ we begin by using Proposition~\ref{prop:metcoll}~(iv) to find an admissible metric $d$ on $|\Kk\times[0,1]|$ with 
$d|_{|\Kk|=|\p^\al(\Kk\times[0,1])|}=d^\al$.
Next, we use Proposition~\ref{prop:cov2}~(c) and Lemma~\ref{le:delred} to find a nested cobordism reduction $\Cc\sqsubset\Vv$ of $\Kk\times[0,1]$ with $\p^\al\Cc=\Cc^\al$ and $\p^\al\Vv=\Vv^\al$.
If we now pick any $0<\de<\de_\Vv$ smaller than the collar width of $d$, $\Vv$, and $\Cc$, then we automatically have $\de<\de_{\Vv^\al}$ Lemma~\ref{le:admin}~(iii).
Then for any $0<\si\leq\si_{\rm rel}(\de,\Vv,\Cc)$ Proposition~\ref{prop:ext2}~(i) provides an admissible, precompact, transverse cobordism perturbation $\nu^{01}$ of $s_{\Kk\times[0,1]}|_\Vv$ whose restrictions $\nu^{01}|_{\p^\al\Vv}$ for $\al=0,1$ are $(\Vv^\al,\Cc^\al,\de,\si)$-adapted perturbations of $s_{\Kk}|_{\Vv^\al}$.
As in Step 1, the perturbed zero set $|\bZ_{\nu^{01}}|$ is an oriented cobordism from $|\bZ_{\nu^{01}|_{\p^0\Vv}}|$ to $|\bZ_{\nu^{01}|_{\p^1\Vv}}|$. 
Moreover, we may pick $\si$ such that $\si\leq\si_{\rm rel}(\de,\Vv^\al\times[0,1],\Cc^\al\times[0,1])$ for $\al=0,1$, then each $\nu^{01}|_{\p^\al\Vv}$ is strongly adapted with respect to $d^\al$ and $\Cc^\al\sqsubset\Vv^\al$. 
Then Step 2 applies for $\al=0$ as well as $\al=1$ to provide cobordisms $|\bZ_{\nu^0}|  \sim |\bZ_{\nu^{01}}|_{\p^0\Vv}|$ and $|\bZ_{\nu^{01}}|_{\p^1\Vv}| \sim |\bZ_{\nu^1}|$, which proves the claim by transitivity.

\MS\NI
{\bf Step 4:} 
{\it
Let $\Kk^{01}$ be an oriented, additive, weak Kuranishi cobordism, and for $\al=0,1$ let $\nu^\al$ be strongly adapted perturbations of some preshrunk tame shrinking $\Kk_{\rm sh}^\al$ of $\p^\al\Kk^{01}$ with respect to some choice of admissible metric on $|\p^\al\Kk^{01}|$. Then there is an oriented cobordism of compact manifolds $|\bZ_{\nu^0}|  \sim |\bZ_{\nu^1}|$.
}

This is proven along the lines of (i) and (ii) by first using Proposition~\ref{prop:cobord2} to find a preshrunk tame shrinking $\Kk_{\rm sh}$ of $\Kk$ with $\p^\al\Kk_{\rm sh}=\Kk^\al_{\rm sh}$, and an admissible metric $d$ on $|\Kk_{\rm sh}|$. If we equip $\Kk_{\rm sh}$ with the orientation induced by $\Kk$, then by Lemma~\ref{le:cK} the induced boundary orientation on $\p^\al\Kk_{\rm sh}=\Kk^\al_{\rm sh}$ agrees with that induced by shrinking from $\Kk^\al$.
Next, Proposition~\ref{prop:cov2}~(c) provides a reduction $\Vv$ of $\Kk_{\rm sh}$, and by Lemma~\ref{le:delred}~(ii) we find a nested cobordism reduction $\Cc\sqsubset \Vv$. 
Now we may apply Proposition~\ref{prop:ext2}~(i) with 
$$
\si\;=\; \min\bigl\{ \si_{\rm rel}(\de,\p^0\Vv\times[0,1],\p^0\Cc\times[0,1]) , \si_{\rm rel}(\de,\p^1\Vv\times[0,1],\p^1\Cc\times[0,1]) , \si_{\rm rel}(\de,\Vv,\Cc) \bigr\}
$$
to find an admissible, precompact, transverse cobordism perturbation $\nu^{01}$ of $s_{\Kk_{\rm sh}}|_\Vv$, whose restrictions $\nu^{01}|_{\p^\al\Vv}$ for $\al=0,1$ are $(\p^\al\Vv,\p^\al\Cc,\de,\si)$-adapted perturbations of $s_{\Kk^\al_{\rm sh}}|_{\p^\al\Vv}$. In particular, these are strongly adapted by the choice of $\si$.
Also, as in the previous steps, $|\bZ_{\nu^{01}}|$ is an oriented cobordism from $|\bZ_{\nu|_{\p^0\Vv}}|$ to $|\bZ_{\nu|_{\p^1\Vv}}|$. 
Finally, Step 3 applies to the fixed oriented, tame Kuranishi atlases $\Kk^\al_{\rm sh}$ for fixed $\al\in\{0,1\}$ to provide cobordisms $|\bZ_{\nu^0}|  \sim |\bZ_{\nu^{01}}|_{\p^0\Vv}|$ and $|\bZ_{\nu^{01}}|_{\p^1\Vv}| \sim |\bZ_{\nu^1}|$. By transitivity, this finishes the proof of Theorem~\ref{thm:VMC1}.
\end{proof}

One possible definition of the virtual fundamental class (VFC) is as the cobordism class of the zero set $|\bZ_\nu|$
constructed in the previous theorem. If we think of this as an abstract manifold and hence as representing an element in the $D$-dimensional oriented cobordism ring, it contains rather little information.
These notions are barely sufficient for the basic constructions of e.g.\ Floer differentials $\p_F$ from counts of moduli spaces with $D=0$, and proofs of algebraic relations such as $\p_F\circ\p_F=0$ by cobordisms with $D=1$.
However in the case of interest to us, in which $X = \oMm_{g,k}(A,J)$ is the Gromov--Witten moduli space of $J$-holomorphic curves of genus $g$, homology class $A$, and with $k\geq 1$ marked points, we explained in Section~\ref{s:construct} how to construct the domains $U_I$ of the Kuranishi charts for $X$ to have elements that are $k$-pointed stable maps to $(M,\om)$, so that there are evaluation maps $\ev_I: U_I \to M^k$.
Further, the coordinate changes can be made compatible with these evaluation maps, and Kuranishi cobordisms can be constructed so that the evaluation maps extend over them. Hence, after shrinking to a tame Kuranishi atlas
(or cobordism) $\Kk_{sh}$, there is a continuous evaluation map
$$
\ev: |\Kk_{sh}| \to M^k
$$
both for the fixed tame shrinking used to define $|\bZ_\nu|$ and for any shrinking of a weak Kuranishi cobordism compatible with evaluation maps.  Therefore, for any admissible, precompact, transverse perturbation $\nu$, the map $\ev:|\bZ_\nu|\to M^k$ can be considered as a cycle (the virtual moduli cycle VMC) in the singular homology $H_D(M^k)$ of $M^k$, or even as a cycle in the bordism theory of $M^k$. Similarly, in this case a possible definition of the VFC is as the corresponding singular homology (or bordism) class in $M^k$.
Finally, we will explain how to interpret the VFC more intrinsically as an element in the rational \v{C}ech homology $\check{H}_D(X;\Q)$ of
the compact metrizable space $X$, i.e.\ in the Gromov-Witten example the moduli space itself.
As a first step, we associate to every oriented, metric, tame Kuranishi atlas a $D$-dimensional homology class in any open neighbourhood $\Ww\subset |\Kk|$ of $\io_\Kk(X)$ within the virtual neighbourhood.

For that purpose recall from \eqref{eq:Zinject} that for any precompact, transverse perturbation $\nu$ of $s_\Kk|_\Vv$, the inclusion $(s+\nu)^{-1}(0)\subset\Vv = \Obj_{\bB_\Kk|_\Vv}$ induces a continuous injection $i_\nu: |\bZ_\nu| \to |\Kk|$, which we now compose with the continuous bijection $|\Kk| \to (|\Kk|,d)$ from Lemma~\ref{le:metric} to obtain a continuous injection
\begin{equation}\label{ionu}
\io_\nu \,:\;  |\bZ_\nu| \;\longrightarrow\; \bigl(|\Kk|,d\bigr) .
\end{equation}
Since $ |\bZ_\nu|$ is compact and the restriction of the metric topology to the image $\io_\nu(|\bZ_\nu|)\subset(|\Kk|,d)$ is Hausdorff, this map is in fact a homeomorphism to its image; see Remark~\ref{rmk:hom}, and compare with Proposition~\ref{prop:zeroS0} which notes that $i_\nu:|\bZ_\nu|\to |\Kk|$ is a homeomorphism to its image.
If moreover $|\bZ_\nu|$ is oriented, then it has a fundamental class $\bigl[|\bZ_\nu|\bigr]\in H_D(|\bZ_\nu|)$.
Now we obtain a homology class by push forward into any appropriate subset of $(|\Kk|,d)$,
$$
[\io_\nu] :=  (\io_\nu)_* \bigl[|\bZ_\nu|\bigr] \in H_D(\Ww) \qquad\text{for} \quad \io_\nu(|\bZ_\nu|) \subset \Ww \subset  \bigl(|\Kk|,d\bigr) .
$$
Analogously, any precompact, transverse perturbation $\nu^{01}$ of a metric, tame Kuranishi cobordism $\bigl(\Kk^{01}, d^{01}\bigr)$ gives rise to a topological embedding
\begin{equation}\label{cionu}
\io_{\nu^{01}} \,:\;  |\bZ_{\nu^{01}}| \;\longrightarrow\; \bigl( |\Kk^{01}| , d^{01}\bigr) .
\end{equation}
Now by Lemma~\ref{le:czeroS0} the boundary of the cobordism $ |\bZ_{\nu^{01}}|$ has two disjoint (but not necessarily connected) components
$$
\p |\bZ_{\nu^{01}}| \;=\;  \p^0 |\bZ_{\nu^{01}}| \;\cup\;  \p^1|\bZ_{\nu^{01}}|, \qquad
\p^\al |\bZ_{\nu^{01}}| \,:=\;  \p^\al |\Kk^{01}|  \cap |\bZ_{\nu^{01}}| .
$$
In fact, we also showed there that the embeddings $J^\al:=\rho^\al(\cdot,\al): |\p^\al\Kk^{01}|  \to \p^\al|\Kk^{01}| \subset |\Kk^{01}|$ restrict to diffeomorphisms
$$
J^\al|_{|\bZ_{\nu^\al}|}= |j^\al|  \,:\; |\bZ_{\nu^\al}| \;\longrightarrow\;\p^\al|\bZ_{\nu^{01}}|
\qquad\text{with}\quad
\io_{\nu^{01}} \circ |j^\al| = J^\al \circ \io_{\nu^\al} ,
$$
where $\nu^\al:= \nu^{01}|_{\p^\al\Kk^{01}}$ are the restricted perturbations of the Kuranishi atlases $\p^\al\Kk^{01}$.
Moreover, Proposition~\ref{prop:orient1}~(ii) asserts that the boundary orientations on $\p^\al |\bZ_{\nu^{01}}|$ 
(which are induced by the orientation of $|\bZ_{\nu^{01}}|$ arising from the orientation of $\Kk^{01}$) are related to 
the orientations of $|\bZ_{\nu^\al}|$ (which are induced by the orientation of $\p^\al\Kk^{01}$ obtained by restriction from the orientation of $\Kk^{01}$) by
$$
|j^0| \,:\; |\bZ_{\nu^0}|^-  \;\overset{\cong}{\longrightarrow}\;  \p^0 |\bZ_{\nu^{01}}| \qquad
\text{and}\qquad
|j^1| \,:\;  |\bZ_{\nu^1}| \;\overset{\cong}{\longrightarrow}\; \p^1 |\bZ_{\nu^{01}}|.
$$
In terms of the fundamental classes this yields the identity
\begin{align*}
|j^1|_*\bigl[ |\bZ_{\nu^1}|\bigr]  - |j^0|_*\bigl[ |\bZ_{\nu^0}|\bigr] &\;=\; 
\bigl[\p^1 |\bZ_{\nu^{01}}|\bigr]  + \bigl[\p^0 |\bZ_{\nu^{01}}|\bigr] \\
&\;=\; \bigl[\p |\bZ_{\nu^{01}}|\bigr]  \;=\; \delta \bigl[|\bZ_{\nu^{01}}|\bigr] \;\in\; H_D( \p |\bZ_{\nu^{01}}|)
\end{align*}
for the boundary map $\delta: H_{D+1}(\bigl[ |\bZ_{\nu^{01}}|\bigr] , \p \bigl[ |\bZ_{\nu^{01}}|\bigr] ) \to H_D( \p |\bZ_{\nu^{01}}|)$ that is part of the long exact sequence for $\p |\bZ_{\nu^{01}}|\subset  |\bZ_{\nu^{01}}|$.
Inclusion to $|\bZ_{\nu^{01}}|$ now provides, by exactness of this sequence,
$|j^1|_*\bigl[ |\bZ_{\nu^1}|\bigr]  - |j^0|_*\bigl[ |\bZ_{\nu^0}|\bigr] = 0  \in H_D( |\bZ_{\nu^{01}}|)$.
Finally, we can push this forward with $\io_{\nu^{01}}$ to $H_D(|\Kk^{01}|)$ and use the fact that $\io_{\nu^{01}} \circ |j^\al| = J^\al \circ \io_{\nu^\al}$ to obtain
\begin{align*}
0 &\;=\; 
(\io_{\nu^{01}})_*|j^1|_*\bigl[ |\bZ_{\nu^1}|\bigr]  - (\io_{\nu^{01}})_*|j^0|_*\bigl[ |\bZ_{\nu^0}|\bigr] \\
&\;=\;
|J^1|_*(\io_{\nu^{1}})_*\bigl[ |\bZ_{\nu^1}|\bigr]  - |J^0|_*(\io_{\nu^{0}})_*\bigl[ |\bZ_{\nu^0}|\bigr] 
\;=\;
|J^1|_*\bigl[\io_{\nu^{1}}\bigr]  - |J^0|_*\bigl[\io_{\nu^{0}}\bigr] .
\end{align*}
The same holds in $H_D(\Ww^{01})$ for any subset $\Ww^{01}\subset\bigl(|\Kk^{01}|,d^{01}\bigr)$ that contains $\io_{\nu^{01}}(|\bZ_{\nu^{01}}|)$, that is
\begin{equation} \label{homologous}
J^0_* [\io_{\nu^0}] \;=\; J^1_* [\io_{\nu^1}] \;\in\; H_D(\Ww^{[0,1]}) \qquad\text{when} \quad \io_{\nu^{01}}(|\bZ_{\nu^{01}}|) \subset \Ww^{01} \subset |\Kk^{01}| .
\end{equation}
This will be crucial for proving independence of the VFC from choices.

In the case of a product cobordism $\Kk^{01}=\Kk\times[0,1]$ with product metric we can identify $|\Kk^{01}|\cong |\Kk|\times [0,1]$ so that \eqref{cionu} also induces a cycle  
\begin{equation} \label{pionu}
\pr_{ |\Kk|}\circ \io_{\nu^{01}} \, :\;  |\bZ_{\nu^{01}}|\; \longrightarrow\; (|\Kk|,d) ,
\end{equation}
whose boundary restrictions are $\io_{\nu^\al} \circ |j^\al|^{-1}$, so that the above argument directly gives
\begin{equation} \label{pomologous}
[ \io_{\nu^0} ] = [\io_{\nu^1} ] \in H_D(\Ww) \qquad\text{when} \quad \io_{\nu^{01}}(|\bZ_{\nu^{01}}|) \subset \Ww\times[0,1].
\end{equation}
Now we can associate a well define virtual fundamental class to any choice of open neighbourhood  $\Ww$ of $X$ in the virtual neighbourhood induced by a fixed tame Kuranishi atlas.

\begin{lemma}\label{le:VMC1}
Let $(\Kk,d)$ be an oriented, metric, tame Kuranishi atlas and let $\Ww\subset |\Kk|$ be an open subset with respect to the metric topology such that $\io_\Kk(X)\subset \Ww$. Then there exists a strongly adapted perturbation $\nu$ in a suitable reduction of $\Kk$ such that $\pi_\Kk((s+\nu)^{-1}(0))\subset \Ww$. More precisely, there exists $\nu$ adapted with respect to a nested reduction $\Cc\sqsubset\Vv$ such that $\pi_\Kk(\Cc)\subset\Ww$.
For any such perturbation, the inclusion of the perturbed zero set $\io_\nu : |\bZ_\nu|\hookrightarrow \Ww \subset (|\Kk|,d)$ defines a singular homology class
$$
A^{(\Kk,d)}_{\Ww} \,:=\; \bigl[\io_\nu : |\bZ_\nu| \to \Ww \bigr] \;\in\; H_D(\Ww)
$$
that is independent of the choice of reductions and perturbation.
\end{lemma}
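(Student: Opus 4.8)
\textbf{Proof proposal for Lemma~\ref{le:VMC1}.}
The plan is to break the statement into an existence half and an independence half, both of which should follow from machinery already assembled in Sections~\ref{ss:red}--\ref{ss:vorient}. For existence, I would first invoke Proposition~\ref{prop:cov2}~(a) to obtain a reduction $\Vv$ of $\Kk$, and then Lemma~\ref{le:delred}~(i)-(ii) to shrink it to a nested reduction $\Cc\sqsubset\Vv$. The key point is that $\pi_\Kk(\Cc)$ can be forced into $\Ww$: since $\io_\Kk(X)\subset\Ww$ and $\Ww$ is open in the metric topology, and since $\io_\Kk(X)=|s_\Kk|^{-1}(0)=\pi_\Kk\bigl(\bigcup_I s_I^{-1}(0)\bigr)$, I would use that each $\pi_\Kk(\ov{V_I})$ is compact in $(|\Kk|,d)$ (by Lemma~\ref{le:metric} and Proposition~\ref{prop:Khomeo}) together with Lemma~\ref{le:restr0} to choose the domains $C_I\sqsubset V_I$ so small — shrinking the footprint cover $(Z_I)$ enough that $\bigcup_I \pi_\Kk(\ov{C_I})$ lies in $\Ww$ — while retaining the reduction property $\io_\Kk(X)\subset\pi_\Kk(\Cc)$. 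Concretely, for each $I$ the set $\pi_\Kk^{-1}(\Ww)\cap s_I^{-1}(0)$ is open in $s_I^{-1}(0)$ and contains $\psi_I^{-1}(Z_I)$, so one can pick a footprint shrinking $Z_I'\sqsubset Z_I$ with $\psi_I^{-1}(\ov{Z_I'})\subset\pi_\Kk^{-1}(\Ww)$, and then an open $C_I\sqsubset V_I$ with $C_I\cap s_I^{-1}(0)=\psi_I^{-1}(Z_I')$ and $\pi_\Kk(\ov{C_I})\subset\Ww$, using compactness of $\pi_\Kk(\ov{C_I})$ and the openness of $\Ww$ to absorb the off-zero-set part. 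With $\Cc\sqsubset\Vv$ in hand, Definition~\ref{def:admiss} gives $0<\de<\de_\Vv$ and Proposition~\ref{prop:ext2} (applied to the product cobordism to control $\si_{\rm rel}$) produces a strongly adapted, admissible, precompact, transverse perturbation $\nu$ with $\pi_\Kk((s+\nu)^{-1}(0))\subset\pi_\Kk(\Cc)\subset\Ww$.

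For the well-definedness of the homology class, I would argue that any two choices of data — reductions $\Cc^0\sqsubset\Vv^0$, $\Cc^1\sqsubset\Vv^1$ with $\pi_\Kk(\Cc^\al)\subset\Ww$, and strongly adapted perturbations $\nu^0,\nu^1$ — give homologous cycles in $H_D(\Ww)$. The mechanism is exactly the cobordism argument distilled in equations \eqref{homologous} and \eqref{pomologous} above: I would form the product Kuranishi cobordism $\Kk\times[0,1]$ with product metric, take a nested cobordism reduction $\Cc\sqsubset\Vv$ restricting to $\Cc^\al\sqsubset\Vv^\al$ (Proposition~\ref{prop:cov2}~(c) and Lemma~\ref{le:delred}~(iii)), and apply Proposition~\ref{prop:ext2}~(ii) to obtain a precompact, admissible, transverse cobordism perturbation $\nu^{01}$ with boundary restrictions $\nu^0,\nu^1$. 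Here the crucial bookkeeping is that $\nu^{01}$ must also be kept small enough that $\pi_{\Kk\times[0,1]}\bigl((s+\nu^{01})^{-1}(0)\bigr)$ lies in $\Ww\times[0,1]$; since this set is contained in $\pi_{\Kk\times[0,1]}(\Cc)$ and $\Cc$ restricts to the $\Cc^\al$ near the ends and can be chosen with $\pi_\Kk(\Cc_I)\subset\Ww$ throughout (shrinking as in the existence step, now in the collared setting using the collar forms of Definition~\ref{def:cvicin}), this is arranged. By Lemma~\ref{le:czeroS0} and Proposition~\ref{prop:orient1}~(ii), $|\bZ_{\nu^{01}}|$ is an oriented cobordism between $|\bZ_{\nu^0}|$ and $|\bZ_{\nu^1}|$ whose inclusion into $|\Kk\times[0,1]|\cong|\Kk|\times[0,1]$ maps into $\Ww\times[0,1]$; then \eqref{pomologous} gives $[\io_{\nu^0}]=[\io_{\nu^1}]$ in $H_D(\Ww)$. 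The orientation of $\Kk\times[0,1]$ restricting correctly to the ends is guaranteed by Lemma~\ref{le:cK}~(iii).

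I would also record the two reduction lemmas needed for consistency: that $\io_\nu$ is genuinely a topological embedding into $(|\Kk|,d)$ (this is the remark following \eqref{ionu}, using that $|\bZ_\nu|$ is compact, $\io_\nu$ continuous and injective, and the target Hausdorff, via Remark~\ref{rmk:hom}), and that the fundamental class $[|\bZ_\nu|]$ exists because $|\bZ_\nu|$ is an oriented closed $D$-manifold by Proposition~\ref{prop:zeroS0} and Proposition~\ref{prop:orient1}~(i). The main obstacle I anticipate is not any single deep step but the compactness/openness juggling in the existence half: one must simultaneously (a) keep $\Cc\sqsubset\Vv$ a genuine reduction so that $\io_\Kk(X)\subset\pi_\Kk(\Cc)$, (b) make $\pi_\Kk(\Cc)\subset\Ww$ where $\Ww$ is open only in the metric topology (which by Example~\ref{ex:Khomeo} is strictly weaker than the quotient topology, so $\pi_\Kk^{-1}(\Ww)$ need not be ``nicely'' open in each $U_I$ in a uniform way — though it is open there by Lemma~\ref{le:metric}~(i)), and (c) keep $\nu$ precompact with perturbed zero set inside $\pi_\Kk(\Cc)$, which requires choosing $\si$ below the threshold $\si_{\rm rel}$ determined by $\Cc\sqsubset\Vv$. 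Once one is careful that $\Ww$ being metrically open already implies $\pi_\Kk^{-1}(\Ww)\cap U_I$ is open in $U_I$ for every $I$, the shrinking of footprints $Z_I'\sqsubset Z_I$ together with Lemma~\ref{le:restr0} and the compactness of each $\pi_\Kk(\ov{C_I})$ closes this gap, and the rest is an assembly of already-proven propositions.
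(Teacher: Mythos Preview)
Your existence argument works but is more laborious than necessary. The paper bypasses the careful construction of $C_I$ with $\pi_\Kk(\ov{C_I})\subset\Ww$: starting from \emph{any} nested reduction $\Cc\sqsubset\Vv$, it simply sets $\Cc_\Ww:=\Cc\cap\pi_\Kk^{-1}(\Ww)$ (discarding components with empty footprint). Since $\Ww$ is metrically open, $\pi_\Kk^{-1}(\Ww)$ is open in each $U_I$ by Lemma~\ref{le:metric}, and since $\io_\Kk(X)\subset\Ww$, this $\Cc_\Ww$ is still a nested reduction with $\pi_\Kk(\Cc_\Ww)\subset\Ww$ automatically. No delicate control of $\pi_\Kk(\ov{C_I})$ is needed because the lemma only asks for $\pi_\Kk(\Cc)\subset\Ww$, not for the closure.

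The independence argument has a genuine gap. You propose to apply Proposition~\ref{prop:ext2}~(ii) directly to extend the given $\nu^0,\nu^1$ over a cobordism reduction $\Cc\sqsubset\Vv$ of $\Kk\times[0,1]$. But that proposition requires both boundary perturbations to be $(\p^\al\Vv,\p^\al\Cc,\de,\si)$-adapted for a \emph{common} pair $(\de,\si)$ with $\si\le\si_{\rm rel}(\de,\Vv,\Cc)$ tied to the cobordism reduction. Two arbitrary strongly adapted perturbations $\nu^\al$---built with possibly different $(\Vv^\al,\Cc^\al,\de^\al,\si^\al)$---need not satisfy this, and Lemma~\ref{le:admin2}~(ii) does not let you adjust $(\de,\si)$ freely enough to force it. The paper therefore does \emph{not} extend $\nu^0,\nu^1$ directly; instead it adapts the layered Steps~1--3 of Theorem~\ref{thm:VMC1}. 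Step~1 (same $\Vv,\Cc,\de,\si$) is the only place Proposition~\ref{prop:ext2}~(ii) is invoked. Step~2 (same $\Vv,\Cc$, different $\de,\si$) and Step~3 (different $\Vv,\Cc$) each use Proposition~\ref{prop:ext2}~(i) to produce a cobordism perturbation whose boundary restrictions are \emph{new} strongly adapted perturbations, and then appeal to the previous step at each end; transitivity of equality in $H_D(\Ww)$ finishes. In Step~3 the cobordism reduction is again intersected with $\pi_{\Kk\times[0,1]}^{-1}(\Ww\times[0,1])$ to keep the cobordism zero set inside $\Ww\times[0,1]$, so that \eqref{pomologous} applies. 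Your single-step shortcut collapses precisely at the point where the adaptation constants must be reconciled.
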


\begin{proof}
To see that the required perturbations exist, choose any nested reduction $\Cc \sqsubset \Vv$ of $\Kk$.
Then we obtain a further precompact open set
$$
\Cc_\Ww \,:=\; \Cc \cap \pi_{\Kk_{\rm sh}}^{-1}(\Ww) \;\sqsubset\; \Vv
$$
which, after discarding components $C_I \cap \pi_{\Kk_{\rm sh}}^{-1}(\Ww_k)$ that have empty intersection with $s_I^{-1}(0)$, forms another nested reduction since $\io_\Kk(X)\subset\pi_\Kk(\Cc)\cap \Ww$.
Now Proposition~\ref{prop:ext} guarantees the existence of a $(\Vv,\Cc,\de,\si)$-adapted perturbation $\nu$ for sufficiently small $\de,\si>0$,
and by choice of $\si$ we can ensure that $\nu$ is also strongly adapted.
By Proposition~\ref{prop:zeroS0} and Proposition~\ref{prop:orient1}~(i) its perturbed zero set is an oriented manifold $|\bZ_\nu|$. Moreover, the image of $\io_\nu : |\bZ_\nu| \to (|\Kk|,d)$ is $\pi_\Kk\bigl((s+\nu)^{-1}(0)\bigr) \subset\pi_\Kk(\Cc) \subset \Ww$, so that by the discussion above $\io_\nu : |\bZ_\nu| \to \Ww$ defines a cycle $\bigl[\io_\nu \bigr] \in H_D(\Ww)$.

To prove independence of the choices, we need to show that $\bigl[\io_{\nu^0}\bigr]=\bigl[\io_{\nu^1}\bigr]$ for any given $(\Vv^\al,\Cc^\al,\de^\al,\si^\al)$-adapted perturbation $\nu^\al$ of $s_\Kk|_{\Vv^\al}$ with $\pi_\Kk(\Cc^\al)\subset \Ww$, by adapting Steps 1--3 in the proof of Theorem~\ref{thm:VMC1} so that the cycles $\io_{\nu^{01}}:|\bZ_{\nu^{01}}| \to |\Kk\times[0,1]|$ given by \eqref{cionu} for the respective cobordism perturbations $\nu^{01}$ of $s_{\Kk\times[0,1]}|_{\Vv^{[0,1]}}$ take values in $\Ww\times[0,1]\subset |\Kk\times[0,1]|$.
Note that here we use the product metric on $|\Kk|\times [0,1]$ so that $\Ww\times [0,1]$ is open.
Then in each step the composite map $\pr_{|\Kk|} \circ\io_{\nu^{01}} : |\bZ_{\nu^{01}}| \to |\Kk|$ takes values in $\Ww$, so that \eqref{pomologous} applies to give $\bigl[\io_{\nu^{01}|_{\p^0\Vv^{[0,1]}}}\bigr]=\bigl[\io_{\nu^{01}|_{\p^1\Vv^{[0,1]}}}\bigr]\in H_D(\Ww)$.
By transitivity of equality in $H_D(\Ww)$, Steps 1--3 then prove $\bigl[\io_{\nu^0}\bigr]=\bigl[\io_{\nu^1}\bigr]$.

In Steps 1 and 2, the required inclusion is automatic since the perturbations are constructed so that $|(s+\nu^{01})^{-1}(0)| \subset \pi_{\Kk\times[0,1]}(\Cc\times[0,1])\subset\Ww\times[0,1]$, where the second inclusion follows from $\pi_\Kk(\Cc)\subset\Ww$.
In Step 3, we have a fixed metric $d^0=d^1=d$ but are given nested reductions $\Cc^\al\sqsubset\Vv^\al$ for $\al=0,1$ with $\pi_\Kk(\Cc^\al)\subset\Ww$.
Then we first equip $|\Kk\times[0,1]|\cong |\Kk|\times[0,1]$ with the product metric and choose a nested cobordism reduction $\Cc^{[0,1]}\sqsubset \Vv^{[0,1]}$ such that $\p^\al \Cc^{[0,1]}=\Cc^\al$ and $\p^\al \Vv^{[0,1]}=\Vv^\al$, and then replace $\Cc^{[0,1]}$ by $\Cc': = \Cc^{[0,1]}\cap \pi_{\Kk\times [0,1]}^{-1}( \Ww\times [0,1])$, discarding components $C'_I$ with ${C'_I\cap s_I^{-1}(0)=\emptyset}$.
Note that this construction preserves the collar boundary $\p^\al\Cc'=\p^\al \Cc^{[0,1]}=\Cc^\al$ since $\pi_\Kk(\Cc^\al) \subset\Ww$. In fact, $\Cc'\sqsubset\Vv^{[0,1]}$ is another nested cobordism reduction since $\io_{\Kk\times[0,1]}(X\times[0,1]) \subset \Ww\times[0,1]$.
Using the nested reduction $\Cc'\sqsubset\Vv^{[0,1]}$ in choosing the cobordism perturbation $\nu$ then ensures that $\io_\nu: |\bZ_\nu|\to |\Kk\times[0,1]|=|\Kk|\times[0,1]$  takes values in $\pi_{\Kk\times [0,1]}(\Cc')\subset \Ww\times [0,1]$, as required to finish the proof.
\end{proof}

To construct the VFC as a homology class in $\io_\Kk(X)$ for tame Kuranishi atlases, and later in $X$,
we use rational \v{C}ech homology, rather than integral \v{C}ech or singular homology, because it has the following continuity property.

\begin{remark} \label{rmk:Cech}
Let $X$ be a compact subset of a metric space $Y$, and let
$(\Ww_k\subset Y)_{k\in\N}$ be a sequence of open subsets that is nested, $\Ww_k\subset\Ww_{k-1}$, such that $X=\bigcap_{k\in\N} \Ww_k$.
Then the system of maps $\check{H}_n(X;\Q)  \to \check{H}_n(\Ww_{k+1};\Q)\to \check{H}_n(\Ww_k;\Q)$
induces an isomorphism
$$
\check{H}_n(X;\Q) \;\overset{\cong}{\longrightarrow}\; \underset{\leftarrow }\lim\, \check{H}_n(\Ww_k;\Q)  .
$$

{\rm
To see that singular homology does not have this property, let $X\subset \R^2$ be the union of  the line segment $\{0\}\times [-1,1]$, the graph $\{(x,\sin \tfrac \pi x) \,|\, 0<x\le 1\}$, and an embedded curve joining $(0,1)$ to $(1,0)$ that is otherwise disjoint from the line segment and graph.
Then $H_1^{sing}(X;\Q) = 0$ since it is the abelianization of the trivial fundamental group.  However, $X$ has arbitrarily small neighbourhoods $U\subset\R^2$ with $H_1^{sing}(U;\Q) =\Q$.

Note that we cannot work with integral \v{C}ech homology since it does not even satisfy the exactness axiom (long exact sequence for a pair), because of problems with the inverse limit operation; see the discussion of \v{C}ech cohomology in Hatcher~\cite{Hat}, and \cite[Proposition~3F.5]{Hat} for properties of inverse limits.
However, rational \v{C}ech homology does satisfy the exactness axiom, and because it is dual to
\v{C}ech cohomology has the above stated continuity property by \cite[Ch.6~Exercises~D]{Span}.

Further, rational \v{C}ech homology equals rational singular homology for finite simplicial complexes.
Hence the fundamental class of a closed oriented $n$-manifold $M$ can be considered as
an element $[M]\in \check{H}_n(M;\Q)$ in rational \v{C}ech homology and therefore pushes forward under a continuous map $f:M\to X$ to a well defined element $f_*([M])\in \check{H}_n(X;\Q)$.
Note finally that if one wants an integral theory with this continuity property, the correct theory to use is the Steenrod homology theory developed in \cite{Mi}.
}
\end{remark}

Using this continuity property of rational \v{C}ech homology, we can finish the proof of Theorem~B.

\begin{thm}\label{thm:VMC2}
Let $\Kk$ be an oriented, additive weak Kuranishi atlas of dimension $D$ on a compact, metrizable space $X$.
\begin{enumerate}
\item
Let $\Kk_{\rm sh}$ be a preshrunk tame shrinking of $\Kk$ and $d$ an admissible metric on $|\Kk_{\rm sh}|$. Then there exists a nested sequence of open sets $\Ww_{k+1}\subset \Ww_k\subset \bigl(|\Kk_{\rm sh}|, d\bigr)$ such that $\bigcap_{k\in\N}\Ww_k = \io_{\Kk_{\rm sh}}(X)$.
Moreover, for any such sequence there is a sequence $\nu_k$ of strongly adapted perturbations of $s_{\Kk_{\rm sh}}$ with respect to nested reductions $\Cc_k\sqsubset\Vv_k$ such that $\pi_\Kk(\Cc_k)\subset\Ww_k$ for all $k$. Then the embeddings
$$
\io_{\nu_k} \,:\; |\bZ_{\nu_k}| \;\hookrightarrow \;   \Ww_k \;\subset\; \bigl(|\Kk_{\rm sh}|, d\bigr)
$$
induce a \v{C}ech homology class
$$
\underset{\leftarrow}\lim\, \bigl[ \,
\io_{\nu_k} \, \bigr] \;\in\; \check{H}_D\bigl(\io_{\Kk_{\rm sh}}(X);\Q \bigr),
$$
for the subspace $\io_{\Kk_{\rm sh}}(X) =|s_{\Kk_{\rm sh}}|^{-1}(0)$ of the metric space $\bigl(|\Kk_{\rm sh}|,d\bigr)$.
\vspace{0.05in}
\item
The bijection $|\psi_{\Kk_{\rm sh}}| = \io_{\Kk_{\rm sh}}^{-1}: \io_{\Kk_{\rm sh}}(X) \to X$ from Lemma~\ref{le:Knbhd1} is a homeomorphism with respect to the metric topology on $\io_{\Kk_{\rm sh}}(X)$ so that we can define the {\bf virtual fundamental class} of $X$ as the pushforward
$$
[X]^{\rm vir}_\Kk \,:=\; |\psi_{\Kk_{\rm sh}}|_* \bigl( \, \underset{\leftarrow}\lim\, [\, \io_{\nu_k} \,] \, \bigr)
\;\in\;
\check{H}_D(X;\Q) .
$$
It is independent of the choice of shrinkings, metric, nested open sets, reductions, and perturbations in (i),
and in fact depends on the weak Kuranishi atlas $\Kk$ on $X$ only up to oriented, additive cobordism.
\end{enumerate}
\end{thm}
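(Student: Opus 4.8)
\textbf{Proof plan for Theorem~\ref{thm:VMC2}.} The strategy is to use Lemma~\ref{le:VMC1}, which already produces a well-defined class $A^{(\Kk_{\rm sh},d)}_{\Ww}\in H_D(\Ww)$ for each sufficiently small open neighbourhood $\Ww\supset\io_{\Kk_{\rm sh}}(X)$, together with the continuity property of rational \v{C}ech homology recorded in Remark~\ref{rmk:Cech}. First I would construct the nested sequence $(\Ww_k)_{k\in\N}$: since $\bigl(|\Kk_{\rm sh}|,d\bigr)$ is a metric space and $\io_{\Kk_{\rm sh}}(X)$ is a compact subset (compactness because $X$ is compact, $\io_{\Kk_{\rm sh}}=|\psi_{\Kk_{\rm sh}}|^{-1}$ is continuous by Lemma~\ref{le:Knbhd1}, and $\io_{\Kk_{\rm sh}}$ maps continuously to the weaker metric topology by Lemma~\ref{le:metric}), one may simply take $\Ww_k:=B_{1/k}(\io_{\Kk_{\rm sh}}(X))$, so that $\bigcap_k\Ww_k=\io_{\Kk_{\rm sh}}(X)$ because $\io_{\Kk_{\rm sh}}(X)$ is closed in $(|\Kk_{\rm sh}|,d)$ (being compact in a Hausdorff space). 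Lemma~\ref{le:VMC1} then applies to each $\Ww_k$ to produce $\nu_k$ (one may in fact iterate the construction from nested reductions $\Cc_k\sqsubset\Vv_k$ with $\pi_\Kk(\Cc_k)\subset\Ww_k$) and the class $[\io_{\nu_k}]=A^{(\Kk_{\rm sh},d)}_{\Ww_k}\in H_D(\Ww_k)=\check H_D(\Ww_k;\Q)$ (these agree with rational \v{C}ech homology since the $\Ww_k$ are open subsets of a manifold-like space, more precisely one passes to singular homology which coincides with \v{C}ech homology on such spaces). The inclusions $\Ww_{k+1}\hookrightarrow\Ww_k$ push $A^{(\Kk_{\rm sh},d)}_{\Ww_{k+1}}$ to $A^{(\Kk_{\rm sh},d)}_{\Ww_k}$ by the compatibility built into Lemma~\ref{le:VMC1} (the same perturbation $\nu_{k+1}$ works for $\Ww_k$), so the system is coherent and defines $\varprojlim[\io_{\nu_k}]\in\varprojlim\check H_D(\Ww_k;\Q)\cong\check H_D\bigl(\io_{\Kk_{\rm sh}}(X);\Q\bigr)$, the isomorphism being exactly the continuity property of Remark~\ref{rmk:Cech}. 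This proves (i).

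For (ii), the homeomorphism statement follows from the nesting uniqueness of compact Hausdorff topologies (Remark~\ref{rmk:hom}): $|\psi_{\Kk_{\rm sh}}|:|s_{\Kk_{\rm sh}}|^{-1}(0)\to X$ is a continuous bijection by Lemma~\ref{le:Knbhd1}, it remains continuous as a map from the weaker metric topology on $\io_{\Kk_{\rm sh}}(X)$ (since the metric topology is weaker by Lemma~\ref{le:metric}), its source with the metric topology is compact (as above) and its target $X$ is Hausdorff, hence it is a homeomorphism. Pushing forward the class from (i) then defines $[X]^{\rm vir}_\Kk\in\check H_D(X;\Q)$. It remains to prove independence of all choices. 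I would organize this as a sequence of reductions mirroring Steps~1--4 of the proof of Theorem~\ref{thm:VMC1}, but now at the level of \v{C}ech homology classes rather than cobordism classes: (a) for fixed $(\Kk_{\rm sh},d)$ and fixed nested sequence $(\Ww_k)$, independence of the choice of reductions and perturbations $\nu_k$ follows directly from the independence statement in Lemma~\ref{le:VMC1} applied level by level; (b) for fixed $(\Kk_{\rm sh},d)$, independence of the choice of nested sequence $(\Ww_k)$ follows because any two such sequences have a common refinement (interleave them), and the \v{C}ech class is computed as the inverse limit over all sufficiently small neighbourhoods, which is cofinal-independent; (c) independence of the admissible metric $d$ on $|\Kk_{\rm sh}|$, and of the preshrunk tame shrinking $\Kk_{\rm sh}$, and finally invariance under oriented additive cobordism of $\Kk$ itself, all follow by the cobordism arguments: given two choices, Proposition~\ref{prop:cobord2} and Proposition~\ref{prop:metcoll}~(iv)--(v) produce a metric tame Kuranishi cobordism $\Kk^{[0,1]}$ interpolating between them, Proposition~\ref{prop:cov2}~(c) and Lemma~\ref{le:delred} produce a nested cobordism reduction, and Proposition~\ref{prop:ext2} produces a cobordism perturbation whose restrictions are the given perturbations; then the relation \eqref{homologous} (respectively \eqref{pomologous} in the product case), applied to each $\Ww_k^{[0,1]}$, shows $J^0_*[\io_{\nu^0_k}]=J^1_*[\io_{\nu^1_k}]$ in $H_D(\Ww_k^{[0,1]})$, and taking the inverse limit over $k$ together with the identification of the restriction maps $J^\al$ with the boundary collar embeddings $\rho^\al(\cdot,\al)$ of Remark~\ref{rmk:cobordreal} shows that the two \v{C}ech classes on $X$ agree after transporting along the induced homeomorphisms $|\psi_{\p^\al\Kk^{[0,1]}}|$.

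The step I expect to require the most care is (c), specifically the bookkeeping of topologies and identifications in passing the relation \eqref{homologous} through the inverse limit. The subtlety is that $|\Kk^{[0,1]}|$ with an admissible metric need not have the product topology on the collar $|\p^\al\Kk^{[0,1]}|\times A^\al_\eps$ (cf.\ Example~\ref{ex:triv} and the remarks in Section~\ref{ss:Kcobord}), so one must be careful that the neighbourhoods $\Ww_k^{[0,1]}$ one chooses in $|\Kk^{[0,1]}|$ restrict correctly to neighbourhoods of $\io_{\p^\al\Kk^{[0,1]}}(X)$ under $\rho^\al$; here Example~\ref{ex:mtriv}~(iii) guarantees that $\de$-neighbourhoods of $\io_\Kk(X\times[0,1])$ do have collar form for admissible $\eps$-collared metrics, which is exactly what makes the restriction maps behave well. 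A second, more routine, point to check carefully is that the class $A^{(\Kk_{\rm sh},d)}_{\Ww_k}$ produced by Lemma~\ref{le:VMC1}, a priori in singular homology $H_D(\Ww_k)$, is legitimately viewed in $\check H_D(\Ww_k;\Q)$ and that the restriction/inverse-limit maps used are the \v{C}ech ones; this is handled by the fact that $\Ww_k$ is an open subset of the metric space $|\Kk_{\rm sh}|$ which, while not itself a manifold, has the property that each point has arbitrarily small neighbourhoods homeomorphic to open subsets of Euclidean space intersected with the relevant strata, so that singular and \v{C}ech homology with rational coefficients agree on the relevant open sets (or, more simply, one works throughout with the images $\io_{\nu_k}(|\bZ_{\nu_k}|)$, which are compact subsets, and uses naturality of the fundamental-class pushforward in \v{C}ech homology as recorded at the end of Remark~\ref{rmk:Cech}).
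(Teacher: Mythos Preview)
Your proposal is correct and follows essentially the same approach as the paper: construct the nested $\Ww_k$ as metric $\tfrac{1}{k}$-balls, invoke Lemma~\ref{le:VMC1} to get a coherent inverse system, apply the continuity of rational \v{C}ech homology from Remark~\ref{rmk:Cech}, and then prove independence by cobordism arguments mirroring Steps~1--4 of Theorem~\ref{thm:VMC1}, pushing \eqref{homologous} through the inverse limit. Two small points: your first justification that singular and \v{C}ech homology agree on $\Ww_k$ because it is ``manifold-like'' is not correct (Example~\ref{ex:Khomeo} shows $|\Kk_{\rm sh}|$ is not locally Euclidean), but your alternative via pushforward of the fundamental class of the compact manifold $|\bZ_{\nu_k}|$ (last paragraph of Remark~\ref{rmk:Cech}) is exactly what the paper uses; and in step~(c) you should make explicit, as the paper does, that the two boundary embeddings $J^0,J^1:\io_{\Kk^\al_{\rm sh}}(X)\to\io_{\Kk_{\rm sh}}(X\times[0,1])$ become homotopic after pushing forward to $X\times[0,1]$ via $|\psi_{\Kk_{\rm sh}}|$ (through the obvious homotopy $I^t:x\mapsto(x,t)$), which is what converts the identity $J^0_*[\io_{\nu^0_k}]=J^1_*[\io_{\nu^1_k}]$ into an equality of classes in $\check H_D(X;\Q)$.
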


\begin{proof}
The existence of shrinkings and metric is guaranteed by Theorem~\ref{thm:VMC1}~(i).
We then obtain nested open sets converging to $\io_{\Kk_{\rm sh}}(X)$ by e.g.\ taking the $\frac 1k$-neighbourhoods $\Ww_k = B_{\frac 1 k}(\io_{\Kk_{\rm sh}}(X))$.
Given any such nested open sets $(\Ww_k)_{k\in\N}$, the existence of adapted perturbations $\nu_k$ with respect to some nested reductions $\Cc_k\sqsubset\Vv_k$ with $\pi_{\Kk_{\rm sh}}(\Cc_k)\subset\Ww_k$ is proven in Lemma~\ref{le:VMC1}.
The latter also shows that the embeddings $\io_{\nu_k} : |\bZ_{\nu_k}|\to \Ww_k$ define homology classes $A^{(\Kk_{\rm sh},d)}_{\Ww_k} = [\io_{\nu_k} ]\in H_D(\Ww_k)$, which are independent of the choice of reductions $\Cc_k\sqsubset \Vv_k$ and adapted perturbations $\nu_k$.
In particular, the push forward $H_D(\Ww_{k+1})\to H_D(\Ww_k)$ by inclusion $\Ii_{k+1}:\Ww_{k+1}\to\Ww_k$ maps $A^{(\Kk_{\rm sh},d)}_{\Ww_{k+1}}=[\io_{\nu_{k+1}} ]$ to $A^{(\Kk_{\rm sh},d)}_{\Ww_k}$ since any adapted perturbation $\nu_{k+1}$ with respect to a nested reduction $\Cc_{k+1}\sqsubset\Vv_{k+1}$ with $\Cc_{k+1}\subset\pi_{\Kk_{\rm sh}}^{-1}(\Ww_{k+1})$ can also be used as adapted perturbation $\nu_k:=\nu_{k+1}$.
Then we obtain $\io_{\nu_k} = \Ii_{k+1} \circ \io_{\nu_{k+1}}$, and hence
$A^{(\Kk_{\rm sh},d)}_{\Ww_k}= [\io_{\nu_k}] = (\Ii_{k+1})_* [\io_{\nu_{k+1}}]$. This shows that the homology classes $A^{(\Kk_{\rm sh},d)}_{\Ww_k}$ form an inverse system and thus have a well defined inverse limit, completing the proof of (i),
$$
A^{(\Kk_{\rm sh},d)}_{(\Ww_k)_{k\in\N}}
\,:=\; \underset{\leftarrow}\lim\, \bigl[ \,\io_{\nu_k} \, \bigr]
\;\in\; \check{H}_D\bigl(\io_{\Kk_{\rm sh}}(X);\Q \bigr).
$$
This defines $A^{(\Kk_{\rm sh},d)}_{(\Ww_k)_{k\in\N}}$ as \v{C}ech homology class in the topological space $\bigl( \io_{\Kk_{\rm sh}}(X), d\bigr)$.

Towards proving (ii), recall first that $|\psi_{\Kk_{\rm sh}}|:\io_{\Kk_{\rm sh}}(X)\to X$ is a homeomorphism with respect to the relative topology induced from the inclusion $\io_{\Kk_{\rm sh}}(X)\subset|\Kk_{\rm sh}|$ by Lemma~\ref{le:Knbhd1}.
That the latter is equivalent to the metric topology on $\io_{\Kk_{\rm sh}}(X)$ follows as in Remark~\ref{rmk:hom} from the continuity of the identity map $|\Kk_{\rm sh}| \to \bigl(|\Kk_{\rm sh}|,d\bigr)$ (see Lemma~\ref{le:metric}), which restricts to a continuous bijection from the compact set $\io_{\Kk_{\rm sh}}(X)\subset|\Kk_{\rm sh}|$ to the Hausdorff space $\bigl( \io_{\Kk_{\rm sh}}(X), d\bigr)$, and thus is a homeomorphism.
To establish the independence of choices, we then argue as in Steps 2--4 in the proof of Theorem~\ref{thm:VMC1}~(iii), with Lemma~\ref{le:VMC1} playing the role of Step 1.

\MS\NI
{\bf Step 2:} {\it
Let $(\Kk,d)$ be an oriented, metric, tame Kuranishi atlas, and let $(\Ww^\al_k)_{k\in\N}$ for $\al=0,1$ be two nested sequences of open sets $\Ww^\al_{k+1}\subset \Ww^\al_k\subset \bigl(|\Kk|,d\bigr)$ whose intersection is  $\bigcap_{k\in\N}\Ww^\al_k = \io_{\Kk}(X)$.
Then we have $A^{(\Kk,d)}_{(\Ww^0_k)_{k\in\N}} =A^{(\Kk,d)}_{(\Ww^1_k)_{k\in\N}}$, and hence
$$
A^{(\Kk,d)} \,:=\; A^{(\Kk,d)}_{(\Ww_k)_{k\in\N}}  \;\in\; \check{H}_D\bigl(\io_\Kk(X);\Q \bigr) ,
$$
given by any choice of nested open sets $(\Ww_k)_{k\in\N}$ converging to $\io_\Kk(X)$, is a well defined \v{C}ech homology class.
}

\MS
To see this note that the intersection $\Ww_k:=\Ww^0_k\cap \Ww^1_k$ is another nested sequence of open sets with $\bigcap_{k\in\N}\Ww_k = \io_{\Kk}(X)$.
We may choose a sequence of adapted perturbations $\nu_k$ with respect to nested reductions $\Cc_k\sqsubset\Vv_k$ with $\pi_\Kk(\Cc_k)\subset\Ww_k$ to define $A^{(\Kk,d)}_{\Ww_k}=[\io_{\nu_k}]$. The perturbations $\nu_k$ then also fit the requirements for the larger open sets $\Ww_k^\al$ and hence the inclusions $\Ii^\al_k : \Ww_k\to\Ww^\al_k$ push $A^{(\Kk,d)}_{\Ww_k}=[\io_{\nu_k}]\in H_D(\Ww_k)$ forward to $A^{(\Kk,d)}_{\Ww^\al_k}=[\Ii^\al_k\circ\io_{\nu_k}]\in H_D(\Ww^\al_k)$. Hence, by the definition of the inverse limit, we have equality
$$
A^{(\Kk,d)}_{(\Ww^0_k)_{k\in\N}} \;=\; A^{(\Kk,d)}_{(\Ww_k)_{k\in\N}} \;=\; A^{(\Kk,d)}_{(\Ww^1_k)_{k\in\N}}
 \;\in\; \check{H}_D\bigl(\io_{\Kk}(X);\Q \bigr) .
$$

\MS\NI
{\bf Step 3:} {\it
Let $\Kk$ be an oriented, metrizable, tame Kuranishi atlas with two admissible metrics $d^0,d^1$.
Then we have $A^{(\Kk,d^0)}= A^{(\Kk_,d^1)}$, and hence
$$
[X]^{\rm vir}_\Kk \,:=\; |\psi_\Kk|_* A^{(\Kk_{\rm sh},d)} \;\in\; \check{H}_D\bigl(X;\Q \bigr) ,
$$
given by any choice of metric, is a well defined \v{C}ech homology class.
}

\MS
As in Step 3 of Theorem~\ref{thm:VMC1}, we find an admissible collared metric $d$ on $|\Kk\times [0,1]|$ with $d|_{|\Kk|\times\{\al\}}=d^\al$.
Next, we proceed exactly as in the following Step 4 in the special case $\Kk_{\rm sh}=\Kk\times[0,1]$ and $\Kk^0_{\rm sh}=\Kk^1_{\rm sh}=\Kk$ to find strongly admissible perturbations $\nu^\al_k$ of $(|\Kk|,d^\al)$ that define the \v{C}ech homology classes $A^{(\Kk,d^\al)} =\underset{\leftarrow}\lim\, \bigl[ \,\io_{\nu^\al_k} \, \bigr]  \in \check H_D(\io_\Kk(X))$ and satisfy the identity
$$
J^0_*\,\Bigl(  \underset{\leftarrow}\lim\, \bigl[ \io_{\nu^0_k} \bigr] \Bigr)\;=\; J^1_* \,\Bigl(\underset{\leftarrow}\lim\, \bigl[ \io_{\nu^1_k} \bigr] \Bigr)
\; \in\; \check H_D( \io_{\Kk_{\rm sh}}(X\times[0,1]) )
$$
with the topological embeddings $J^\al=\rho^\al(\cdot,\al): ( |\Kk| , d^\al ) \to ( |\Kk_{\rm sh}|, d)$.
To proceed we claim that the push forwards by $J^\al$ restrict to the same isomorphism
\begin{equation}\label{J01}
\bigl(J^0\big|_{\io_\Kk(X)}\bigr)_* = \bigl(J^1\big|_{\io_\Kk(X)}\bigr)_*
 \;: \; \check H_D( \io_\Kk(X) ) \;\longrightarrow\; \check H_D( \io_{\Kk\times[0,1]}(X\times[0,1]) )
\end{equation}
on the compact set $\io_\Kk(X)$,
on which the two metric topologies induced by $d^0,d^1$ are the same,
since they both agree with the relative topology from $\io_\Kk(X)\subset|\Kk|$.
Indeed, the restrictions $J^\al\big|_{\io_\Kk(X)}$ for $\al = 0,1$ are homotopic  via the
continuous family of maps $J^t :\io_\Kk(X)\to \io_{\Kk_{sh}}(X\times [0,1])$ that arises from the continuous family of maps\footnote
{
We pointed out after
Example~\ref{ex:mtriv} that the metric
topology
on $|\Kk\times [0,1]|$ may not
be a product topology in the canonical identification with $|\Kk|\times [0,1]$.
In fact, the metrics $d^0$ and $d^1$ may well
induce different topologies on $|\Kk|$.
We avoid these issues by homotoping maps to $X\times[0,1]$, which
always has the product topology by Lemma~\ref{le:Knbhd1} and the remarks just before Step 2.
}
$I^t :X\to X\times [0,1]$, $x\mapsto (x,t)$
by composition with the embeddings $\io_\Kk$ and $\io_{\Kk_{sh}}$, i.e.\
\[
J^t \,:
\xymatrix{
\io_\Kk(X) \ar@{->}[r]^{\quad |\psi_\Kk|} & X \ar@{->}[r]^{I^t\quad\;\;} & X\times [0,1] \ar@{->}[r]^{\io_{\Kk_{sh}}\quad\;\;} &\io_{\Kk_{sh}}(X\times [0,1]).
}
\]
These maps are continuous because $\io_\Kk=|\psi_\Kk|^{-1}$ and similarly $\io_{\Kk_{sh}}$ are homeomorphisms to their image with respect to the metric topology by the argument at the beginning of
the proof of (ii).
Moreover,
each $J^t$ is a homotopy equivalence
because, up to homeomorphisms, it equals to the homotopy equivalence $I^t$.
This proves \eqref{J01}, which we can then use to
deduce the claimed identity
$$
A^{(\Kk,d^0)} \;=\;
\underset{\leftarrow}\lim\, \bigl[ \,
\io_{\nu^0_k} \, \bigr]
 \;=\;
\underset{\leftarrow}\lim\, \bigl[ \,
\io_{\nu^1_k} \, \bigr]
\;=\;A^{(\Kk,d^1)}
\quad\in \check H_D( \io_\Kk(X) ).
$$

\MS\NI
{\bf Step 4:} {\it
Let $\Kk^{01}$ be an oriented, additive, weak Kuranishi cobordism, and let $\Kk_{\rm sh}^\al$ be preshrunk tame shrinkings of $\p^\al\Kk^{01}$ for $\al=0,1$. Then we have
$$
[X]^{\rm vir}_{\Kk_{\rm sh}^0}=[X]^{\rm vir}_{\Kk_{\rm sh}^1}.
$$
}

\MS
As in Step 4 of Theorem~\ref{thm:VMC1}, we find a preshrunk tame shrinking $\Kk_{\rm sh}$ of $\Kk^{01}$ with $\p^\al\Kk_{\rm sh}=\Kk^\al_{\rm sh}$, and an admissible collared metric $d$ on $|\Kk_{\rm sh}|$.
We denote its restrictions to $|\Kk^\al_{\rm sh}|$ by $d^\al:= d|_{|\p^\al\Kk_{\rm sh}|}$.

Next, we proceed as in Lemma~\ref{le:VMC1} by choosing a nested cobordism reduction $\Cc \sqsubset \Vv$ of $\Kk_{\rm sh}$ and constructing nested cobordism reductions $\Cc_k \sqsubset \Vv$ by
$$
\Cc_k \,:=\; \Cc \cap \pi_{\Kk_{\rm sh}}^{-1}\bigl(\Ww^{01}_k) \;\sqsubset\; \Vv\qquad\text{with}\quad \Ww^{01}_k: =
B_{\frac 1k}(\io_{\Kk_{\rm sh}}(X\times[0,1]) \bigr) \;\subset\;|\Kk_{\rm sh}|,
$$
in addition discarding components $C_k\cap V_I$ that have empty intersection with $s_I^{-1}(0)$.
Indeed, each $\Ww^{01}_k$ and hence $\Cc_k$ is collared by Example~\ref{ex:mtriv}, with boundaries given by the $\frac 1k$-neighbourhoods $\p^\al \Ww^{01}_k = B_{\frac 1k}^{d^\al}(\io_{\Kk^\al_{\rm sh}}(X) \bigr)\subset|\Kk^\al_{\rm sh}|$ with respect to the metrics $d^\al$ on $|\Kk^\al_{\rm sh}|$.
With that, Proposition~\ref{prop:ext2}~(i) guarantees the existence of admissible, precompact, transverse cobordism perturbations $\nu^{01}_k$ with $|(s + \nu^{01}_k)^{-1}(0)| \subset \Ww^{01}_k$, and with boundary restrictions $\nu^\al_k:= \nu^{01}_k|_{\p^\al\Vv}$ that are strongly admissible perturbations of $(\Kk^\al_{\rm sh},d^\al)$ for $\al=0,1$.
Note here that these boundary restrictions satisfy the requirements of (i) since $\bigcap_{k\in\N} \p^\al \Ww^{01}_k  = \io_{\Kk^\al_{\rm sh}}(X)$, thus they define the \v{C}ech homology classes
$$
A^{(\Kk^\al_{\rm sh},d^\al)} \;=\;
\underset{\leftarrow}\lim\, \bigl[ \,
\io_{\nu^\al_k} \, \bigr] \;\in\; \check{H}_D\bigl(\io_{\Kk^\al_{\rm sh}}(X);\Q \bigr) .
$$
On the other hand, the homology classes $J^\al_* \bigl[ \io_{\nu^\al_k}\bigr]$ also form two inverse systems in $H_D(|\Kk_{\rm sh}|)$, and as in \eqref{homologous} the chains $\io_{\nu^{(01)}_k}: |\bZ_{\nu^{(01)}_k}| \to \Ww^{01}_k$ induce identities in the singular homology of $\Ww^{01}_k$,
$$
J^0 _* \bigl[ \io_{\nu_k^0} \bigr] \;=\; J^1_* \bigl[ \io_{\nu^1_k} \bigr] \; \in\; H_D(\Ww^{01}_k) ,
$$
with the topological embeddings $J^\al=\rho^\al(\cdot,\al): ( |\Kk^\al_{\rm sh}| , d^\al ) \to ( |\Kk_{\rm sh}|, d)$.
Thus taking the inverse limit -- which commutes with push forward -- we obtain
$$
J^0_*\,\Bigl(  \underset{\leftarrow}\lim\, \bigl[ \io_{\nu^0_k} \bigr] \Bigr)\;=\; J^1_* \,\Bigl(\underset{\leftarrow}\lim\, \bigl[ \io_{\nu^1_k}  \bigr] \Bigr)
\; \in\; \check H_D( \io_{\Kk_{\rm sh}}(X\times[0,1]) ) .
$$
So further push forward with the inverse $|\psi_{\Kk_{\rm sh}}|$ of the homeomorphism $\io_{\Kk_{\rm sh}}$ implies
$$
\bigl(|\psi_{\Kk_{\rm sh}}| \circ J^0\bigr)_*\,\Bigl(  \underset{\leftarrow}\lim\, \bigl[ \io_{\nu^0_k} \bigr] \Bigr)\;=\; \bigl(|\psi_{\Kk_{\rm sh}}| \circ J^1\bigr)_* \,\Bigl(\underset{\leftarrow}\lim\, \bigl[ \io_{\nu^1_k}  \bigr] \Bigr)
\; \in\; \check H_D(X\times[0,1] ) ,
$$
where the homeomorphism $|\psi_{\Kk_{\rm sh}}|$ is related to the analogous $|\psi_{\Kk^\al_{\rm sh}}| : \io_{\Kk^\al_{\rm sh}}(X) \to X$ by $J^\al$ and the embedding $I^\al:X\to X\times \{\al\}$ by
$$
|\psi_{\Kk_{\rm sh}}| \circ J^\al \big|_{\io_{\Kk_{\rm sh}}(X)}
\;=\;
I^\al \circ |\psi_{\Kk^\al_{\rm sh}}| .
$$
Now $I^0_* = I^1_* : \check H_D( X) \to \check H_D( X\times[0,1] )$ are the same isomorphisms,
because the two maps
$I^0, I^1$ are both homotopy equivalences and homotopic to each other.
Hence the equality of
$I^0_*  |\psi_{\Kk^0_{\rm sh}}| _* \, \underset{\leftarrow}\lim\, \bigl[ \io_{\nu^0_k}\bigr]
= I^1_*  |\psi_{\Kk^1_{\rm sh}}| _* \, \underset{\leftarrow}\lim\, \bigl[ \io_{\nu^1_k}\bigr]$ in $\check H_D(X\times[0,1] )$ implies as claimed
$$
[X]^{\rm vir}_{\Kk^0_{\rm sh}} \;=\;
|\psi_{\Kk^0_{\rm sh}}| _* \,\Bigl(  \underset{\leftarrow}\lim\, \bigl[ \io_{\nu^0_k} \bigr] \Bigr)
\;=\;
|\psi_{\Kk^1_{\rm sh}}| _* \,\Bigl(  \underset{\leftarrow}\lim\, \bigl[ \io_{\nu^1_k}  \bigr] \Bigr)
\;=\;
[X]^{\rm vir}_{\Kk^1_{\rm sh}}.
$$
This proves Step 4.

\MS
Finally, Step 4 implies uniqueness of the virtual fundamental cycle $[X]^{\rm vir}_\Kk\in \check H_D( X) $ for an oriented, additive weak Kuranishi atlas $\Kk$, since for any two choices of preshrunk tame shrinkings $\Kk^\al_{\rm sh}$ of $\Kk$ we can apply Step 4 to $\Kk^{01}=\Kk\times[0,1]$ to obtain $[X]^{\rm vir}_{\Kk_{\rm sh}^0}=[X]^{\rm vir}_{\Kk_{\rm sh}^1}$.
Moreover, given cobordant oriented, additive, weak Kuranishi atlases $\Kk^0,\Kk^1$ there exists by assumption an oriented, additive, weak Kuranishi cobordism  $\Kk^{01}$ with $\p^\al\Kk^{01}=\Kk^\al$.
If we pick any preshrunk tame shrinkings  $\Kk_{\rm sh}^\al$ of $\Kk^\al$ to define $[X]^{\rm vir}_{\Kk^\al}=[X]^{\rm vir}_{\Kk_{\rm sh}^\al}$, then Step 4 implies the claimed uniqueness under cobordism,
$$
[X]^{\rm vir}_{\Kk^0}\;=\;[X]^{\rm vir}_{\Kk_{\rm sh}^0}\;=\;[X]^{\rm vir}_{\Kk_{\rm sh}^1}\;=\;[X]^{\rm vir}_{\Kk^1}.
$$
This completes the proof of Theorem~\ref{thm:VMC2}.
\end{proof}

\bibliographystyle{alpha}

\appendix

\section{Some comments on recent discussions} \label{app}

Unfortunately, the topic of regularization and Kuranishi structures has recently appeared more like a ``political mine field" than a research question in need of clarification. We are working on taking the non-mathematical parts of this discussion offline since that seems much more appropriate to us.
At this point in time, however, we still feel the need to clarify some impressions given by \cite{FOOO12}, and will thus comment briefly on the recent discussions.

\begin{itemlist}
\item
In connection with talks at IAS, Princeton in March 2012 --- as our work was nearing completion --- the second author raised some basic questions (see below) in a small online discussion group including the cited authors. The purpose of these questions was to pinpoint some foundational issues in the work of Fukaya--Ono \cite{FO} and the subsequent work of Fukaya--Oh--Ohta--Ono \cite{FOOO} and Joyce \cite{J}. 
During this discussion, and with our feedback on various versions, Fukaya et al developed a revision of their approach which can now be found in the mathematical parts of \cite{FOOO12}.
Our manuscript was sent to the group at several stages prior to posting on the arxiv. However, we only learned from the arxiv about \cite{FOOO12} and its criticism as well as portrayal of the discussion. 
We suggest that -- should these private email communications be of scientific interest -- then a complete transcript ought to be released, with permission of all authors.

\item
In \cite{FOOO12}, the authors have reworked many of the details of the 
foundations of their approach to Kuranishi structures.  In particular, as in \cite{FOOO}, they no longer use germs. They also made significant changes to the definition of a good coordinate system in order to deal with issues mentioned in Section~\ref{ss:top} and give a much more detailed construction of the VMC. 
(Here  \cite{FO} only gives a brief analogy with the construction for orbifold bundles, in which e.g.\ the allowed size of perturbation $\frac{\epsilon}{10}$ is not shown to be positive, and Hausdorffness resp.\ compactness are not addressed. Note however, that the Kuranishi setting does not immediately induce an ambient space, let alone a locally compact Hausdorff space.)
We have read \cite{FOOO12} and its earlier versions to some extent and 
could imagine that their definitions are now adequate to prove the results concerning the topic of this paper, i.e.\ smooth Kuranishi structures with trivial isotropy.  However their discussion of some foundational issues, for example the role of reductions, shrinkings and cobordisms, and the different topologies on the Kuranishi quotient neighbourhood, are still so convoluted or inexplicit that we were unable to verify all proofs.
Similar comments apply to the construction of sum charts for Gromov-Witten moduli spaces.
The further issues raised by our questions were discussed in much less detail in the email group. In particular, we cannot comment on the issues of smoothness of gluing and $S^1$-equivariant regularization.
More to the point, we feel that it is not our place to referee \cite{FOOO12}. Instead, there should be a wider engagement with these issues in the community.

\item
We have not removed criticisms of the arguments in \cite{FO, FOOO}, since for many years these have been the only available references on this topic (and still are the only published sources).
So we think it important to give a coherent account of the construction in the simplest possible case, showing where arguments have been lacking and how one might hope to fill them.  

\item
There are still some significant differences between the  notion of a Kuranishi structure in \cite{FOOO12} and ours. 
To clarify these, we have changed the name of the object we construct, calling it a ``Kuranishi atlas" instead of a Kuranishi structure. We have also added some explanatory remarks to the beginning of this paper (cf.\ the paragraph in Section 1 called ``Relation to other Kuranishi notions") and have rewritten Remark~\ref{rmk:otherK}.
We will comment more on this in \cite{MW:ku2}, once we have extended our definitions to the case with isotropy, since in this case the approaches diverge further.
We believe that comparisons of the ease of the different approaches only make sense once their rigor is established and hope that a refereeing process for all Kuranishi type approaches can do the latter.

\item
Finally, we will provide the list of questions that we posed initially, since we hope that these may serve as guiding questions for anyone who wishes to evaluate the literature for themselves.
\end{itemlist}

\NI
{\bf 1.)} Please clarify, with all details, the definition of a Kuranishi structure. And could you confirm that a special case of your work proves the following?
\begin{enumerate}
\item
The Gromov-Witten moduli space $\oMm_1(J,A)$ of $J$-holomorphic curves of genus 0, fixed homology class $A$, with 1 marked point has a Kuranishi structure.
\item
For any compact space $X$ with Kuranishi structure and continuous map $f:X\to M$ to a manifold $M$ (which suitably extends to the Kuranishi structure), there is a well defined $f_*[X]^{\rm vir}\in H_*(M)$.
\end{enumerate}

\MS\NI
{\bf 2.)}  The following seeks to clarify certain parts in the definition of Kuranishi structures and the construction of a cycle.
\begin{enumerate}
\item
What is the precise definition of a germ of coordinate change?
\item
What is the precise compatibility condition for this germ with respect to different choices of representatives of the germs of Kuranishi charts?
\item
What is the precise meaning of the cocycle condition?
\item
What is the precise definition of a good coordinate system?
\item
How is it constructed from a given Kuranishi structure?
\item
Why does this construction satisfy the cocycle condition?
\end{enumerate}

\MS\NI
{\bf 3.)}  Let $X$ be a compact space with Kuranishi structure and good coordinate system. Suppose that in each chart the isotropy group $\Ga_p=\{{\rm id}\}$ is trivial and $s^\nu_p:U_p\to E_p$ is a transverse section. What further conditions on the $s^\nu_p$ do you need (and how do you achieve them) in order to ensure that the perturbed zero set $X^\nu=\cup_p (s^\nu_p)^{-1}(0) / \sim$ carries a global triangulation, in particular
\begin{enumerate}
\item $X^\nu$ is compact,
\item $X^\nu$ is Hausdorff,
\item $X^\nu$ is closed, i.e.\ if $X^\nu=\bigcup_n \Delta_n$ is a triangulation then $\sum_n f(\partial \Delta_n) = \emptyset$.
\end{enumerate}

\MS\NI
{\bf 4.)} For the Gromov-Witten moduli space $\oMm_1(J,A)$ of $J$-holomorphic curves of genus 0 with 1 marked point, suppose that $A\in H_2(M)$ is primitive so that $\oMm_1(J,A)$ contains no nodal or multiply covered curves.
\begin{enumerate}
\item
Given two Kuranishi charts $(U_p, E_p, \Ga_p=\{{\rm id}\}, \ldots)$ and $(U_q, E_q, \Ga_q=\{{\rm id}\},\ldots)$ with overlap at $[r]\in\oMm_1(J,A)$, how exactly is a sum chart $(U_r,E_r,\ldots)$ with $E_r \simeq E_p\times E_q$ constructed?
\item
How are the embeddings $U_p \supset U_{pr} \hookrightarrow U_r$ and $U_q \supset U_{qr} \hookrightarrow U_r$ constructed?
\item
How is the cocycle condition proven for triples of such embeddings?
\end{enumerate}

\MS\NI
{\bf 5.)} How is equality of Floer and Morse differential for the Arnold conjecture proven?
\begin{enumerate}
\item
Is there an abstract construction along the following lines: Given a compact topological space $X$ with continuous, proper, free $S^1$-action, and a Kuranishi structure for $X/S^1$ of virtual dimension $-1$, there is a Kuranishi structure for $X$ with $[X]^{\rm vir}=0$.
\item
How would such an abstract construction proceed?
\item
Let $X$ be a space of Hamiltonian Floer trajectories between critical points of index difference $1$, in which breaking occurs (due to lack of transversality).
How is a Kuranishi structure for $X/S^1$ constructed?
\item
If the Floer differential is constructed by these means, why is it chain homotopy equivalent to the Floer differential for a non-autonomous Hamiltonian?
\end{enumerate}

\end{document}